\documentclass[11pt,leqno]{amsart}
\usepackage[a4paper, margin=1.5in]{geometry}

\usepackage{glaudo}

\title{Solvable Descent and the Grunwald Problem for Solvable Groups}

\author{Julian L.~Demeio}
\address{Julian L.~Demeio \\ 
Institut f\"ur Algebra, Zahlentheorie und Diskrete Mathematik \\
Leibniz Universit\"at Hannover \\
Welfengarten 1  \\
Hannover \\
30167 \\
Germany
}
\email{demeiojulian@yahoo.it}

\begin{document}
	
	\maketitle
	
	\begin{abstract}
		We prove a suitable fibration theorem over quasi-trivial tori that, through an approach developed by Harpaz and Wittenberg, implies so-called {\em solvable descent}. In particular, this gives a positive answer to the Grunwald problem for solvable groups up to the necessary Brauer--Manin obstruction, providing a generalizion of Shafarevich's positive answer to the Inverse Galois Problem for solvable groups. This also provides an alternative proof of Shafarevich's result that avoids his ``shrinking procedure''.
        
        For the fibration theorem, we first adapt the starting ideas of Shafarevich for the creation of local lifts. To deal then with the Brauer--Manin obstruction (i.e.\ the relevant local-to-global obstruction), we compute its ``triple variation'' on grids of fibers. The resulting expression is a linear combination of Red\'ei symbols on the base. Customizing these and employing a combinatorial principle first noted by Alexander Smith in the context of Class and Selmer Groups, one infers the vanishing of the obstruction in at least one fiber.
        
	\end{abstract}

    \setcounter{tocdepth}{1}
    \tableofcontents

    \vspace{-10mm}
	
	\section{Introduction}\label{Sec1}
	We prove a fibration theorem (Theorem \ref{Thm: fibration}) that combined with earlier work of Harpaz and Wittenberg \cite{HW18} proves:
	
	\begin{theorem}\label{Thm: homspaces}
		Let $X$ be a homogeneous space under a linear algebraic semi-simple simply connected group over a number field $K$ with finite solvable stabilizers. Then $X(K)$ is dense in $X(K_{\Omega})^{\Br X}$.
	\end{theorem}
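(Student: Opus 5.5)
The plan is to reduce Theorem \ref{Thm: homspaces} to the fibration theorem (Theorem \ref{Thm: fibration}) via the Harpaz--Wittenberg machinery of \cite{HW18}, arguing by induction on the order of the finite solvable stabilizer $G$ (more precisely on the length of a composition series of $G$). First I recall the standard reductions. By a result going back to Borovoi (and used in \cite{HW18}), the statement for $X$ depends only on the stabilizer $\bar G$ together with its outer Galois action, and not on the ambient group. So we may take $X = \mathrm{SL}_n/G$ for a suitable embedding $G \hookrightarrow \mathrm{SL}_n$, where $G$ is a finite $K$-group. Since weak approximation with Brauer--Manin obstruction is a birational invariant among smooth varieties, we may freely pass to smooth compactifications and open subsets.

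Next I choose a normal subgroup $A \trianglelefteq G$ defined over $K$ with $A$ abelian, for instance the last nontrivial term of the derived series, which is characteristic. If $A = G$, then $G$ is abelian and the theorem is the classical result of Borovoi. Otherwise, the natural map $\mathrm{SL}_n/A \to \mathrm{SL}_n/G$ is a torsor under $G/A$; this does not directly fit the inductive setup. Following Harpaz--Wittenberg, I instead consider $Y = \mathrm{SL}_n/G$ fibred over a variety built from $G/A$. After embedding $A$ into a quasi-trivial torus $T$ (a product of Weil restrictions $R_{L/K}\mathbb{G}_m$) and using $(T \times \mathrm{SL}_n)/G$, the situation becomes the following. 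There is a morphism $f\colon W \to T'$ to a quasi-trivial torus (or an open subset of affine space) whose generic fibre is a homogeneous space with stabilizer related to $A$. The base of the next stage has solvable stabilizer $G/A$, of smaller order, so the induction hypothesis applies to it.

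The heart of the argument is then Theorem \ref{Thm: fibration}. It says that for such a fibration over a quasi-trivial torus, where the fibres are torsors or homogeneous spaces with abelian stabilizer and the base satisfies the required arithmetic, adelic points of $W$ orthogonal to $\Br W$ can be approximated by adelic points lying in a single rational fibre $W_t$ and orthogonal to $\Br W_t$. Granting this, weak approximation with Brauer--Manin obstruction on the fibres (abelian case, Borovoi) and on the base (the induction hypothesis, through \cite{HW18}) combine to give density of $X(K)$ in $X(K_\Omega)^{\Br X}$.

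The main obstacle is bookkeeping that \cite{HW18} largely packages. One has to check that the hypotheses of the fibration theorem match exactly what the Harpaz--Wittenberg reduction produces: the fibration is over a quasi-trivial torus, the fibres are of the right type, and the unramified Brauer group behaves correctly under the dévissage. Handling the algebraic versus transcendental parts of $\Br$ in the fibres is also part of this check. I would settle it by citing the precise statement in \cite{HW18}, which reduces "supersolvable/solvable descent" to a fibration statement of exactly the shape of Theorem \ref{Thm: fibration}, and then verifying only that the stabilizer quotients in the induction remain solvable.
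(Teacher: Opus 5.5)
Your high-level plan, Harpaz--Wittenberg descent feeding into Theorem~\ref{Thm: fibration}, is the right one. The d\'evissage you describe does not work, however, and it breaks exactly where the induction hypothesis enters. Write $H$ for the group and $\bar G$ for the geometric stabilizer.

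Theorem~\ref{Thm: fibration} concerns a fibration over a quasi-trivial torus $Q$, equivariant for a torus $T\to Q$. It only moves Brauer--Manin unobstructed points of the total space into a single rational fibre. There is no base ``with stabilizer $G/A$'' in it. The principle your last step relies on (Brauer--Manin on the fibres and on the base implies Brauer--Manin on the total space) is not available in general; it is precisely the open problem the fibration method is about.

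Moreover, $(T\times\mathrm{SL}_n)/G$ is not defined when only $A$ embeds in $T$. A contracted product $\mathrm{SL}_n\times^{G}T$ needs a homomorphism $G\to T$, and any such kills $[G,G]$. So the roles are inverted: the abelian group to put in the torus is the \emph{quotient} $\bar G^{\mathrm{ab}}$, and the induction is applied to \emph{fibres}, not to a base.

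Concretely, $H_{\overline K}/[\bar G,\bar G]\to X_{\overline K}$ is an abelian descent type. When the Brauer--Manin set is non-empty, it descends to a torsor $Y\to X$ under a finite abelian $K$-group $G'$ \cite[Lemma 3.7]{HW24}. Embed $1\to G'\to T\to Q\to 1$ with $Q$ quasi-trivial \cite[Lemma 3.8]{HW24}. Then:
\begin{itemize}
\item $Z=Y\times^{G'}T\to T/G'=Q$ is $T$-equivariant;
\item $Z\to X$ is a $T$-torsor;
\item the fibres of $Z\to Q$ are twists of $Y$, i.e.\ homogeneous spaces of $H$ with stabilizer $[\bar G,\bar G]$.
\end{itemize}
For $X=\mathrm{SL}_n/G$ this is just $\mathrm{SL}_n\times^{G}T$ with $G$ acting on $T$ through $G^{\mathrm{ab}}$.

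After twisting so that the given local points lift \cite[Section 3B]{HW24}, Theorem~\ref{Thm: fibration} puts them in one fibre, and the induction on derived length applies to that fibre. It bottoms out at trivial stabilizer, where the Hasse principle and weak approximation for torsors under simply connected groups (Kneser, Harder, Chernousov) conclude. Borovoi's abelian theorem is not needed on this route.

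The paper packages the same mechanism as Solvable Descent (Theorem~\ref{Thm:FAB}) for arbitrary rationally connected varieties. It inducts on the derived series of the descent type as in \cite[Section 3C]{HW24}. There your choice of $A$ as the last derived term is fine, because the inductive statement is descent, which can be reapplied over the intermediate torsors, rather than density on homogeneous spaces. Theorem~\ref{Thm: homspaces} then follows by descending along $H_{\overline K}\to X_{\overline K}$, whose torsors are $H$-torsors.

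Two further remarks:
\begin{itemize}
\item Your reduction to $X=\mathrm{SL}_n/G$ with $G$ a $K$-group presupposes $X(K)\neq\emptyset$. So it does not cover the Hasse-principle half of the statement, whereas the descent formulation needs no rational point.
\item The reduction you intend to cite is in \cite{HW24}, not \cite{HW18}. The fibration theorem of \cite{HW18} (Th\'eor\`eme~4.2) carries an extra splitting hypothesis, and that is what gives only the supersolvable case.
\end{itemize}
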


    This answers positively the Grunwald Problem for Solvable Groups up to the necessary Brauer--Manin obstruction, see Corollary \ref{Cor: GrunwaldBM} below. Moreover, by a result of Lucchini-Arteche \cite[Corollary 6.3]{LA19} on the unramified Brauer group of homogeneous spaces with finite stabilizers, Theorem \ref{Thm: homspaces} also implies a positive answer to the (tame) Grunwald problem for finite solvable group schemes: 
	\begin{corollary}\label{Thm: Grunwald}
		Let $G$ be a solvable finite group, and $K$ be a number field. Let $S$ be a finite set of places of $K$ not containing any finite place dividing the order of $G$, and let, for each $v \in S$, $L_v/K_v$ be a Galois extension whose Galois group embeds in $G$. Then there exists a Galois extension $L/K$ with group $G$ such that for every $v \in S$ the extension of $K_v$ obtained by completing $L$ at a place dividing $v$ is isomorphic to $L_v/K_v$.
	\end{corollary}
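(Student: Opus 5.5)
The plan is the standard reduction from the Grunwald problem to weak approximation on the homogeneous space $X = \mathrm{SL}_n/G$. Embed $G \hookrightarrow \mathrm{SL}_n$ over $K$ (for instance via the regular representation, twisted to have determinant one), with $G$ viewed as a constant finite group scheme, and set $X=\mathrm{SL}_n/G$. This is a homogeneous space under the semi-simple simply connected group $\mathrm{SL}_n$ with finite solvable stabilizer $G$, so Theorem \ref{Thm: homspaces} applies: $X(K)$ is dense in $X(K_\Omega)^{\Br X}$. Since $H^1(F,\mathrm{SL}_n)=1$ for every field $F \supseteq K$, the fibers of the $G$-torsor $\mathrm{SL}_n\to X$ give, for each field $F\supseteq K$, a surjection $X(F)\to H^1(F,G)$. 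The map is locally constant for $F=K_v$, because $\mathrm{SL}_n \to X$ is étale. Concretely, the point $x$ corresponds to the $G$-torsor given by the fiber $\mathrm{SL}_n$ over $x$, which is an étale algebra $\mathrm{Spec}$ of a $G$-Galois algebra over $F$.

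Next I encode the local data. For each $v\in S$, the extension $L_v/K_v$ with an embedding $\mathrm{Gal}(L_v/K_v)\hookrightarrow G$ defines a class in $H^1(K_v,G)$; the induced $G$-algebra is $\mathrm{Ind}$ of $L_v$. Choose $x_v\in X(K_v)$ lifting this class, and choose $x_v$ arbitrary for $v\notin S$. If we find $x\in X(K)$ close to $(x_v)_{v\in S}$, then by local constancy the torsor of $x$ restricts at $v \in S$ to the prescribed class. We must still ensure that the global torsor is a field, that is, that its class is surjective onto $G$. For this I add finitely many auxiliary places $w\notin S$, unramified and prime to $|G|$, one for each cyclic subgroup of $G$; at each such $w$ I prescribe an unramified local class whose Frobenius generates that cyclic subgroup. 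Since every element of $G$ then appears in the image of the global homomorphism $\Gamma_K\to G$, and a subgroup meeting every conjugacy class is the whole group, the image is all of $G$.

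The main obstacle is the Brauer--Manin condition: the adelic point $(x_v)$ must lie in $X(K_\Omega)^{\Br X}$. Here I invoke Lucchini-Arteche \cite[Corollary 6.3]{LA19}. In the relevant form, it says that for $G$ constant the unramified Brauer group $\Br_{\mathrm{nr}} X$ modulo constants is evaluated trivially at places of good reduction prime to $|G|$, and that its evaluation at places $v$ not dividing $|G|$ depends only on tame data. It follows that the obstruction can be absorbed by choosing the points at the remaining places appropriately: we only need to pair against the finite group $\Br_{\mathrm{nr}}X/\Br K$. By \cite{LA19}, the evaluation of this group is constant on $X(K_v)$ for places $v$ not dividing $|G|$ and not in a bad set, and it can be made to vanish at the places dividing $|G|$ and at the archimedean places outside $S$ by choosing the corresponding $x_v$. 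Since $S$ contains no place dividing $|G|$, these points remain free.

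Finally, Theorem \ref{Thm: homspaces} is stated for $\Br X$, while the argument above controls only $\Br_{\mathrm{nr}}$. I would replace $X$ by a smooth compactification, where density with respect to $\Br_{\mathrm{nr}}$ is the natural statement; the paper's version implies it via Harari's formal lemma. With that substitution, approximation gives $x\in X(K)$ with the required local behaviour, and the corresponding $G$-torsor is a Galois field extension $L/K$ with group $G$ that completes to $L_v$ for each $v\in S$.
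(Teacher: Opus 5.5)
Your overall route is the paper's: $X=\mathrm{SL}_n/G$, Theorem~\ref{Thm: homspaces}, and \cite[Corollary 6.3]{LA19}. The torsor bookkeeping is fine: $X(K_v)\to H^1(K_v,G)$ is surjective and locally constant, and conjugate homomorphisms have the same kernel, so the completions are the $L_v$. The auxiliary places forcing surjectivity are also fine. The gap is in the Brauer--Manin paragraph, which is the only step where anything specific to $G$ enters. The version of \cite{LA19} you state is too weak. It gives constancy of $\Br_{ur}X$ on $X(K_v)$ only for $v\nmid|G|$ outside an unspecified bad set, together with ``dependence only on tame data''. But the corollary must hold for every finite $S$ avoiding the finite places above $|G|$, so $S$ may contain archimedean places or places in any such bad set. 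If some $\alpha\in\Br_{ur}X$ varied on $X(K_v)$ for a $v\in S$, the prescribed local family could genuinely be obstructed. Your remedy is to absorb the discrepancy ``by choosing the points at the remaining places'', or to make the local evaluations ``vanish'' at the places above $|G|$. That is not justified: nothing guarantees that the evaluation maps at those places can produce the compensating element of $\Hom(\Br_{ur}X/\Br K,\Q/\Z)$.

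What is needed is that every $\alpha\in\Br_{ur}X$ is constant on $X(K_v)$ for \emph{every} place $v$ not dividing $|G|$, archimedean places included. This is what the paper draws from \cite[Corollary 6.3]{LA19}, and it is why the statement only excludes the finite places dividing $|G|$. With it, no compensation is required. Let $x_0\in X(K)$ be the image of $1\in\mathrm{SL}_n(K)$, and let $W$ be your auxiliary places. Set $x'_v=x_v$ for $v\in S\cup W$ and $x'_v=x_0$ otherwise. Since $S\cup W$ avoids the places above $|G|$, constancy and reciprocity give $\sum_v\inv_v\alpha(x'_v)=\sum_v\inv_v\alpha(x_0)=0$. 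Hence $(x'_v)\in X(K_\Omega)^{\Br_{ur}X}$, and Theorem~\ref{Thm: homspaces} yields the rational point. Finally, read Theorem~\ref{Thm: homspaces} with $\Br_{ur}X$, as in Theorem~\ref{Thm:FAB}; no formal lemma is involved. A literal full-$\Br X$ version would not serve here anyway, since ramified classes need not be constant at the places of $S$.
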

	
	\medskip
	
	{\bf The Grunwald Problem.} Its history starts with the celebrated Grunwald--Wang theorem, i.e.\  Corollary \ref{Thm: Grunwald} when $G$ is abelian, $K=\Q$ and $S$ contains only the $2$-adic place. The important contribution of Wang \cite{Wang} was to notice that the theorem does not hold with an empty $S$, by proving that no extension of $\Q$ with group $G=\Z/8\Z$ is inert over $2$. Inspired by their theorem, the Grunwald Problem asks the following question for a finite group $G$, and a number field $K$ with local completions $K_v, \ v \in M_K$:
	\begin{center}
		(GP) Does the restriction map $\Hom(\Gamma_K,G)/{\text{conj}} \to \prod_v\Hom(\Gamma_v,G)/{\text{conj}}$ have dense image?
	\end{center}
	Here ${\text{conj}}$ stands for conjugation by elements of $G$, $\Gamma_K \coloneqq \Gal(\oK/K)$ and $\Gamma_v \coloneqq \Gal(\oK_v/K_v)$.
	
	An interesting case where a positive answer to (GP) is known is when there is a positive answer to {\em Noether's problem }for $G$. I.e.\ when, for one (equivalently all) embedding(s) $G \hookrightarrow GL_n(K)$, the quotient $\A^n/G$ is stably rational. This is known to be the case for various $G$, notoriously including $G=S_n, n \in \N$. See e.g.\ \cite{ParkCity} for more on this topic. However, for general $G$, Noether's problem has a negative answer \cite[p.223]{BGbook}, and thus a different approach to (GP) is necessary. 
	
	When $K=\Q$ and $G$ is abelian, the counterexample of Wang shows that the general answer to (GP) is negative, but the Grunwald--Wang theorem indicates that the obstruction to a positive answer should be supported at finitely many places, i.e.\ that the closure of $\Hom(\Gamma_K,G)/{\text{conj}}$ in $\prod_v\Hom(\Gamma_v,G)/{\text{conj}}$ should be a product 
	\[
	(\star) \quad \mathcal A \times \prod_{v \notin S}\Hom(\Gamma_v,G)/{\text{conj}},
	\]	
    where $\mathcal A$ is a non-empty subset of $\prod_{v \in S}\Hom(\Gamma_v,G)/{\text{conj}}$ for some finite set of places $S$ of $K$. The set $\mathcal A$ was later conjecturally linked to the Brauer--Manin obstruction to weak approximation on the space $X=\SL_{n,K}/G$, for an(y) embedding $G \hookrightarrow SL_n(K)$. See Proposition 2.4 of \cite{DLAN17} and the preceding discussion.
	A celebrated conjecture of Colliot-Thélène and Sansuc would then imply a positive answer to the following refined version of the Grunwald Problem:
	\begin{center}
		(GP$_{\text{BM}}$) Is the image of $\Hom(\Gamma_K,G)/{\text{conj}}$ dense in $\left( \prod_v\Hom(\Gamma_v,G)/{\text{conj}}\right)^{BM}$?
	\end{center}
	
	We refer the reader to \cite[Section 3]{PalS} for a precise definition of the ``Brauer--Manin set'' $\left( \prod_v\Hom(\Gamma_v,G)/{\text{conj}}\right)^{BM}$ in the more general context of embedding problems, and we only point out here that it is a set of the form $(\star)$.
	
	Some groups for which a positive answer to (GP) or (GP$_{\text{BM}}$) is known are:
	\begin{itemize}
		\item[(GP$_{\text{BM}}$)] abelian groups $G$ by Wang \cite{Wang};
		\item[(GP$_{\text{BM}}$)] iterated semi-direct products $G=A_1 \rtimes\left(A_2 \rtimes \cdots \rtimes A_r\right)$ of abelian groups by Harari \cite{Harari2007};
		\item[(GP)] solvable groups $G$ with order prime to the number of roots of unity in $K$ by Neukirch \cite{Neukirch};
		\item[(GP$_{\text{BM}}$)] supersolvable groups $G$ by Harpaz and Wittenberg \cite{HW18}.
	\end{itemize}
	
	\medskip
	
	Theorem \ref{Thm: homspaces} implies (GP$_{\text{BM}}$) for solvable groups, and the analogous statement for solvable group schemes (see e.g.\ \cite[Section 9]{PalS} for the implication):
	\begin{corollary}\label{Cor: GrunwaldBM}
		Let $G$ be a finite solvable group scheme over a number field $K$. Then the restriction $H^1(\Gamma_K,G) \to \left(\prod_v H^1(\Gamma_v,G)\right)^{BM}$ has dense image. 
	\end{corollary}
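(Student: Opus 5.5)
We deduce Corollary \ref{Cor: GrunwaldBM} from Theorem \ref{Thm: homspaces} through the standard homogeneous-space dictionary, following \cite[Section 9]{PalS}.

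\textbf{Step 1: realize $G$ as a stabilizer.} Choose an embedding of the finite étale group scheme $G$ into $\SL_{n,K}$ for some $n$. For instance, take a faithful representation of the twisted group algebra and compose with $\GL_n \hookrightarrow \SL_{n+1}$. Put $X = \SL_{n,K}/G$. This is a homogeneous space under the semi-simple simply connected group $\SL_n$. Its geometric stabilizers are isomorphic to $G_{\oK}$, hence finite and solvable. Theorem \ref{Thm: homspaces} then gives that $X(K)$ is dense in $X(K_\Omega)^{\Br X}$.

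\textbf{Step 2: translate rational points into torsors.} The quotient map $\SL_n \to X$ is a right $G$-torsor. Taking fibres gives maps $X(F) \to H^1(F,G)$ for every field $F \supseteq K$. By Hilbert 90 for $\SL_n$, namely $H^1(F,\SL_n)=1$, each such map is surjective, and its fibres are the $\SL_n(F)$-orbits. For $F = K_v$ these orbits are open, since $\SL_n(K_v)$ acts with open orbits on the $K_v$-analytic manifold $X(K_v)$ with finite stabilizers. Consequently $\prod_v X(K_v) \to \prod_v H^1(K_v,G)$ is continuous, open and surjective, for the discrete topology on each $H^1(K_v,G)$. The same holds for the restricted product topology, because at almost all places the integral points map onto the unramified classes.

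\textbf{Step 3: match the Brauer--Manin conditions.} By definition \cite[Section 3]{PalS}, $\left(\prod_v H^1(\Gamma_v,G)\right)^{BM}$ is the image of $X(K_\Omega)^{\Br X}$. Equivalently, it is the set of families $(\xi_v)$ orthogonal to the pairing with $\Br X$ for some, hence any, choice of lifts. Well-definedness uses that $\Br X$ is constant on $\SL_n(K_v)$-orbits: $\SL_n$ is connected and has no nontrivial characters and trivial Picard group, so an element of $\Br X$ evaluated along an orbit is locally constant on a connected group modulo a constant, and one checks it is constant. This step carries the main subtlety. If needed, one uses the version of the Brauer--Manin set in \cite{PalS} directly, built from $\Br X$ or from $\Bru X$ (compare \cite{LA19}).

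\textbf{Step 4: conclude.} Take $(\xi_v)$ in the Brauer--Manin set and a finite set $S$ of places. Lift it to $(x_v) \in X(K_\Omega)^{\Br X}$. The preimage $U$ of the basic open set $\prod_{v\in S}\{\xi_v\}\times\prod_{v\notin S}H^1(K_v,G)$ is an open neighbourhood of $(x_v)$. By Theorem \ref{Thm: homspaces}, $U$ contains some $x \in X(K)$. The class $\xi \in H^1(K,G)$ of the fibre over $x$ then satisfies $\xi|_{K_v} = \xi_v$ for all $v \in S$. This proves density.

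The only real obstacle is the identification in Step 3, namely that the Brauer--Manin set in the sense of \cite{PalS} coincides with the image of $X(K_\Omega)^{\Br X}$. Since this is essentially the definition adopted there, the corollary follows formally.
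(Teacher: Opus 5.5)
Your proposal is correct and is the same deduction the paper intends. You apply Theorem~\ref{Thm: homspaces} to $X=\SL_n/G$ and transport density along the $G$-torsor $\SL_n\to X$, which is surjective onto $H^1$ by Hilbert~90 and whose fibres are the open $\SL_n(K_v)$-orbits; this is the argument of \cite[Section 9]{PalS}, which the paper simply cites. One small repair in Step~3: the constancy of $b\in\Br X$ on $\SL_n(K_v)$-orbits should not be argued via connectedness, since $\SL_n(K_v)$ is totally disconnected for finite $v$. Instead, use that $\SL_n(K_v)$ is generated by root subgroups $u\colon\G_a\to\SL_n$ and that $\Br\A^1_{K_v}=\Br K_v$ in characteristic~$0$; hence $t\mapsto b(u(t)x)$ is constant.
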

	
	We refer again to \cite[Section 3]{PalS} for a precise definition of the ``Brauer--Manin set'' $\left(\prod_v H^1(\Gamma_v,G)\right)^{BM}$. When $G$ is a constant group scheme, then we have a natural identification $H^1(\Gamma_K,G)=\Hom(\Gamma_K,G)/{\text{conj}}$ (and analog local identifications). Hence Corollary \ref{Cor: GrunwaldBM} gives, as a special case, a positive answer to (GP$_{\text{BM}}$) for solvable groups $G$.
	
	\vskip1mm
	
	Using the descent theory developed by Harpaz and Wittenberg (see \cite{HW18} and Section \ref{Sec3}), Theorem \ref{Thm: homspaces} follows from the following fibration theorem, to whose proof the bulk of this paper is devoted:
	
	\begin{theorem}[Main Theorem]\label{Thm: fibration}
        Let $V \to Q$ be a smooth morphism with rationally connected fibers over a quasi-trivial torus $Q$ defined over a number field $K$. Assume there exists a $K$-torus $T$ endowed with a group-epimorphism $T \to Q$, and an action on $V$ such that the projection $V \to Q$ is $T$-equivariant. Then:
		\[
		V(K_{\Omega})_{\bullet}^{\Br_{ur} V} = \overline{\bigcup_{q \in Q(K)} V_q(K_{\Omega})_{\bullet}^{\Br_{ur} V_q}}.
		\]
	\end{theorem}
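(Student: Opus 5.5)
The inclusion ``$\supseteq$'' is formal: for $q \in Q(K)$ the restriction $\Br_{ur} V \to \Br_{ur} V_q$ is compatible with the Brauer--Manin pairings. Hence $V_q(K_\Omega)_\bullet^{\Br_{ur} V_q} \subseteq V(K_\Omega)_\bullet^{\Br_{ur} V}$, and the right-hand side is closed. So the plan concerns ``$\subseteq$''.

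Fix $(P_v) \in V(K_\Omega)_\bullet^{\Br_{ur} V}$, a finite set of places $S_0$, and neighbourhoods $U_v \ni P_v$ for $v \in S_0$. We must find $q \in Q(K)$ and an adelic point of $V_q$ that is orthogonal to $\Br_{ur} V_q$ and lies in $\prod_{v\in S_0} U_v$.

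\textbf{Reduction to a finite algebraic problem.} Since the fibers are rationally connected, $\Br V_\eta/\Br K_\eta$ is finite, and so is the relevant quotient of $\Br_{ur} V_q$ for $q$ in a dense open subset. Using the $T$-action, which acts trivially on Brauer classes of fibers up to constants because $T$ is connected, I would spread out finitely many generators $\beta_1,\dots,\beta_m$ of the vertical and fiberwise unramified Brauer group over a Zariski open $Q^0 \subseteq Q$. The standard Harari ``formal lemma'' argument then lets us move $(P_v)$ slightly so that it stays orthogonal to the classes of $\Br(V|_{Q^0})$ that are unramified over the generic fiber, while gaining freedom at auxiliary places.

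\textbf{Local lifts.} Since $Q$ is quasi-trivial, $Q(K)$ satisfies strong approximation off one place, and its points are parametrized by tuples of elements of the fields $L_i$ with $Q = \prod R_{L_i/K}\mathbb G_m$. Following Shafarevich's starting idea, I would choose $q \in Q(K)$ close to the projections of the $P_v$ for $v\in S_0$. Its ``valuation vector'' is supported at $S_0$ together with a finite set of new auxiliary primes $\mathfrak p$, chosen split in the finite extension trivializing everything. The $T$-equivariance together with the surjectivity $T\to Q$ gives local points on $V_q$ at these primes: $T(K_{\mathfrak p})$ acts transitively enough on fibers above $Q(K_{\mathfrak p})/\mathrm{im}\,T(K_{\mathfrak p})$. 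This produces a point of $\prod_v V_q(K_v)$.

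\textbf{Killing the obstruction (main obstacle).} The remaining and hardest step is to arrange $\sum_v \mathrm{inv}_v \beta_j(x_v)=0$ for all $j$. The obstruction need not vary linearly in $q$, so varying one auxiliary prime at a time (Harari-style) fails. Instead I would construct a grid $q_{a,b,c}$ with $a\in A$, $b\in B$, $c\in C$ of candidate fibers, each built from three families of auxiliary primes. I would compute the triple variation
\[
\sum_{\epsilon\in\{0,1\}^3}\pm\,\mathrm{inv}(\beta_j, q_{\epsilon}),
\]
in which the single and double variations should cancel. Via reciprocity, this triple variation should be expressible as a linear combination of Rédei symbols $[p_a,p_b,p_c]$ on the base. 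By choosing the primes with prescribed splitting (Chebotarev in governing fields), these Rédei symbols can be made to behave like independent random bits. Smith's combinatorial principle then says that a function on a large product grid with ``generic'' prescribed triple variations cannot avoid the target value. Hence some fiber $q_{a,b,c}$ has vanishing obstruction, and taking its adelic point finishes the proof. I expect the delicate parts to be the explicit Rédei-symbol computation and ensuring that enough genericity survives the constraints imposed by the local lifts.
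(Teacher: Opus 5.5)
Your closing step (a grid of fibres, the triple variation of the obstruction expressed through Red\'ei symbols, and Smith's lemma) is how the paper finishes. The two steps before it, however, do not work as stated.

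The first genuine gap is local solubility at the auxiliary primes. Write $\pi\colon T\to Q$. The $T$-action only shows that whether $V_q(K_{\mathfrak p})\neq\emptyset$ depends on the class of $q$ modulo $\pi(T(K_{\mathfrak p}))$. It yields a point only when $q\in\pi(T(K_{\mathfrak p}))\cdot\cQ(\cO_{\mathfrak p})$, and this fails precisely at primes dividing $q$: already for $T=Q=\G_m$ with $t\mapsto t^n$, no element of valuation $1$ lies in this set. The paper needs a geometric input here. Through a multisection $Q^{\infty}\to T$, the procyclic group $\ker(Q^{\infty}\to Q^{\infty,(1)})$ acts on the rationally connected fibres and has a fixed point by Graber--Harris--Starr. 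This yields a multisection $s\colon Q^{L,n,(1)}\to X$ with $Q^{L,n,(1)}=\operatorname{Spec}L[y^{\pm1},(y^{\sigma})^{\pm1/n}]_{\sigma\neq1}$. Local points $s(q^{L,n})$ at a prime $\mathcal P\mid q$ exist when $\mathcal P$ splits completely in $L$ and $q$ is an $n$-th power modulo every conjugate $\mathcal P^{\sigma}$, $\sigma\neq1$. For $q=p_0p_1\cdots p_k$ these conditions involve the self-symbols $\left(\frac{p_i}{p_i^{\sigma}}\right)_n$, which Chebotarev cannot prescribe; Shafarevich's pigeonhole argument (Theorem \ref{ThmShafarevich}) is what arranges them. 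Incidentally, $Q(K)$ does not satisfy strong approximation; what is used is Chebotarev--Hecke density for prime elements.

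The second gap is that you treat all Brauer classes uniformly, but the triple-variation method only sees part of them.

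\emph{Classes you can spread out.} Classes defined over an open $f^{-1}(Q^0)$ give elements of $r(\Br X_\eta)$, hence have $\bar\partial=0$. The weights in the triple variation formula are the coefficients of $\bar\partial(b)\in H^3(\bar Q,\G_m)^{\Gamma_K}$, so for adelic points built from the multisection their triple variation vanishes identically. Smith's lemma therefore says nothing about them.

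\emph{Classes producing Red\'ei symbols.} These are elements of $\Br_{\text{hor}}(X/Q)=H^2(Q,\tau_{\geq1}Rf_*\G_m)$ not coming from $\Br X_\eta$. Such classes can exist because $H^3(Q,\G_m)\neq0$, so $\Br X_\eta\to\Br X_q/\Br K$ need not be surjective even for Hilbertian $q$. They are not represented on any open of $X$, and their pairing has to be computed with relative \v{C}ech cochains.

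\emph{How the paper splits the obstruction.} Representatives of $\Br X_\eta/f^*\Br\eta$ must in general ramify along $Z_\tau=\{y=y^\tau\}$ for $\tau\in\Gamma[2]^{**}$. The point $q$ meets $Z_\tau$ at the uncontrolled primes dividing $q-q^\tau$, which Harari's formal lemma cannot reach. The total contribution of these primes is $\sum_\tau a_\tau\operatorname{hs}(q)$, killed by making half-spin symbols trivial inside the Shafarevich construction; the condition $s^*B=0$ kills the contribution of the primes dividing $q$. For $B_{tr}=\operatorname{Coker}(r)$ one needs $\bar\partial$ to be injective on $B_{tr}$, which is property $(R_3)$, obtained by Harpaz--Wittenberg base change. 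Only then can the triple variation be made equal to Smith's function $g$ in $B_{tr}^D$. Finally, the Red\'ei symbols must be prescribed exactly, via Chebotarev in Heisenberg extensions and compatibly with the Shafarevich conditions, not merely made ``random''.
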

	
	A special case of Theorem \ref{Thm: fibration}, when a certain additional splitting assumption is satisfied (see \cite[Théorème 4.2]{HW18}) was already proven by Harpaz and Wittenberg \cite{HW18}. Harpaz and Wittenberg also proved Theorem \ref{Thm: fibration} conditionally under their homonymous conjecture \cite[Conjecture 9.1]{HW14}.

    \vskip1mm

    Theorem \ref{Thm: fibration} easily reduces to the case where $Q$ is simple as a quasi-trivial torus, and in this case we actually prove a slightly stronger variant of the main theorem, Theorem \ref{Thm: fibrationnew}, with a higher degree of archimedean approximation.

    \medskip

	{\bf The fibration method.} We follow the general pattern of the fibration method, working on a relative compactification of the fibration. Generally speaking, given a smooth proper morphism $f:X\to Y$, the ``fibration method'' is a technique to prove that:
	\begin{equation}\label{Eq:fibration}
	X(K_{\Omega})^{\Br_{ur} X} = \overline{\bigcup_{y \in Y(K)} X_y(\A_K)^{\Br X_y}}, \tag{$\star$}
	\end{equation}
    although variants where the set of archimedean points is replaced by the set of their connected components are also common.

	\medskip

	Identity \eqref{Eq:fibration} is equivalent to saying that for any (arbitrarily large) finite set of places $S$ of $K$, and collection of local points $(x_v)_{v \in S} \in X(K_S)^{\Br_{ur} X}$, there exists a $y \in Y(K)$ and a Brauer--Manin unobstructed adelic point $(x'_v)_{v \in M_K}$ of the fiber $X_y$, approximating $x_v$ arbitrarily well for $v \in S$. One may think of this problem as being the combination of the following two parts:
	\begin{enumerate}
		\item ({\em local solubility of the fibers}) finding a $y \in Y(K)$ that approximates $f(x_v), v \in S$ for which $X_y(\A_K) \neq \emptyset$;
		\item ({\em avoiding the Brauer--Manin obstruction}) ensuring that there is a \underline{Brauer--} \underline{Manin unobstructed} adelic point $(x_v(y))_{v \in M_K} \in X_y(\A_K)^{\Br X_y}$, with $y$ as above, that approximates $x_v$ for $v \in S$.
	\end{enumerate}
	
	\medskip

	{\bf Outline of the paper and strategy of the proof of Theorem \ref{Thm: fibration}.}

    \vskip1mm
    
	Section \ref{Sec2} is reserved to settling notation and preliminaries. 

    \vskip1mm
    
    In Section \ref{Sec3}, we reduce the general case of Theorem \ref{Thm: fibration} to the case where $Q=R_{E/K}\G_m$ with $E$ a field and, in this case, we actually formulate a slightly stronger result, Theorem \ref{Thm: fibrationnew}, with a higher degree of archimedean approximation. This stronger result is what we prove in the rest of the paper. 

    \vskip1mm
    
    In Section \ref{SSec3.1}, we also show how solvable descent follows from Theorem \ref{Thm: fibration} via the work of Harpaz and Wittenberg \cite{HW24}. We also use here a base-change result of theirs (see also Theorem \ref{Prop:HWbasechange}) to reduce Theorem \ref{Thm: fibrationnew} to the case where $E/K$ is Galois. Sections \ref{Sec4}-\ref{Sec12} are dedicated to the proof of Theorem \ref{Thm: fibrationnew} in the Galois case.
    
	\vskip1mm
	
	In Section \ref{Sec4} we introduce two kinds of arithmetic symbols that play central roles in the paper: half-spin symbols and (generalized) Redéi symbols. Both arise in some fashion from the Brauer--Manin pairing on the fibers of $f$, as we later show. 
	
	\vskip1mm

	In Section \ref{Sec5} we introduce, in a general setting, the {\em horizontal Brauer group} 
	\[
	\Br_{\text{hor}}(X/Y):=H^2(Y, \tau_{\geq 1}Rf_*\G_m)
	\]
    of a smooth proper morphism $f:X\to Y$ with geometrically integral fibers.
	The main motivation behind this definition is that if $Y$ is a $K$-algebraic variety and $H^1(X_{\bar y},\mu_{\infty})=H^2(X_{\bar y},\cO)=0$ for geometric points $\bar y$ of $Y$, then $\Br_{\text{hor}}(X/Y) \cong \Br X_y/\im \Br K$ for $y$ in a Hilbertian subset $H \subset Y(K)$. I.e., $\Br_{\text{hor}}(X/Y)$ parametrizes the Brauer groups of the fibers modulo constants in a Hilbertian set of parameters. The horizontal Brauer group sits in an exact sequence
	\[
	\Br Y \to[f^*] \Br X \to \Br_{\text{hor}}(X/Y) \to[\partial] H^3(Y,\G_m) \to[f^*] H^3(X,\G_m).
	\]
	The morphism $\Br X \to \Br_{\text{hor}}(X/Y)$ factors through $\Br X_{\eta}$, where $\eta$ denotes here the generic point of $Y$, as $\Br X \subset \Br X_{\eta} \to[r] \Br_{\text{hor}}(X/Y)$. Finally, we also show
    in this section that if $Y$ is a form of a torus, the following is a complex:
    \begin{equation}\label{COmplex}
        \Br X_{\eta} \to[r] \Br_{\text{hor}}(X/Y) \to[\bar \partial] H^3(Y\otimes_K\oK,\G_m)^{\Gamma_K}.
    \end{equation}
    
	In Section \ref{Sec6} we provide a general criterion for finding representatives for the quotient $\Br X_{\eta}/f^*\Br \eta$ in $\Br X_{\eta}$ with prescribed residues. We then apply this criterion to the fibration $f:X \to Q$ of Theorem \ref{Thm: fibrationnew} (this is the relative compactification of the fibration $V\to Q$ of Theorem \ref{Thm: fibration}), in the special case that $Q=R_{E/K}\G_m$ with $E$ a field and $E/K$ Galois.
    We then compute, via functoriality of residues, the local Brauer pairing associated to the representatives just obtained, which reveals (the local components of) half-spin symbols arising, hence their treatment at the beginning of the paper.
	
	\vskip1mm
	
	In Section \ref{Sec7} we use the rational connectedness of the fibers of $f$, combined with the Theorem of Graber--Harris--Starr \cite{GHS}, to produce a finite étale multisection of the shape:
    \[
    s:Q^{L,m,(1)} \to X,\quad Q^{L,m,(1)}:=\Spec L[y^{\pm1},(y^{\sigma})^{\pm\frac1m}]_{\sigma \in \Gamma \s \{\id\}},
    \]
    where $\Gamma =\Gal(E/K)$, and $\{y^{\sigma}\}_{\sigma \in \Gamma}$ is an equivariant basis of characters of $Q$, and $L$ is a finite (field) extension of $E$. See the main text for details.

    \vskip1mm
	
	In Section \ref{Sec8} we reduce the general case of Theorem \ref{Thm: fibrationnew} to the case where some additional assumptions are satisfied: $E/K$ is Galois, the complex \eqref{COmplex} is exact, and the multisection produced in Section \ref{Sec7} satisfies, in addition, that $\mu_m \subset E^*$ and $s^*B=0$ for a finite subset $B\subset (\Br X)_+$ generating the quotient $\Br X_{\eta}/f^*\Br \eta$. The reductions are obtained via the same base--change method of Harpaz and Wittenberg used in Section \ref{SSec3.1}. 

	\vskip1mm
	
	In Section \ref{Sec9}, we follow the ideas of the paper \cite{Shafarevich} of Shafarevich (see also \cite[Theorem IX.9.3.2]{GermanBook} for an English version) to prove an arithmetic approximation result, which, when applied to our setting, produces parameters $q \in Q(K)$ whose images in $Q(K_v)$ at large bad $v$ lift to $K_v$-points of the multisection, and whose associated half-spin symbols vanish. We also produce such parameters $q$ in the shape of grids
    \[
    \cM=q_0 \cdot \{q_{1,1},\ldots,q_{1,M}\}\cdots \{q_{k,1},\ldots,q_{k,M}\} \subset E^* =Q(K),
    \]
    with customazible ``Redéi variations''. Our interest in these Redéi variations is that they appear in the expression for the triple variation of the Brauer--Manin pairing on the fibers associated to $\Br_{\text{hor}}(X/Q)$, as shown later in Section \ref{Sec11}.

	\vskip1mm
	
	In Section \ref{Sec10} we use the approximation result of Section \ref{Sec9} and the multisection $s$ to produce adelic points on the fibers above suitable grids $\cM \subset Q(K)$.
	
	\vskip1mm
	
	In Section \ref{Sec11} we compute the triple variation on suitable grids $\cM$ of the Brauer--Manin pairing associated to an element $b \in \Br_{\text{hor}}(X/Q)$, for the specific choice of adelic points on the fibers made in Section \ref{Sec10}. The resulting expression is a linear combination of the Redéi variations of the grid in the base, with linear weights depending on the class $\bar \partial(b)\in H^3(Q \otimes_K \oK,\G_m)^{\Gamma_K}$. The computation is done using \v{C}ech cochains.
	
	\vskip1mm
	
	In Section \ref{Sec12} we conclude the proof of Theorem \ref{Thm: fibrationnew} under the additional assumptions to which we reduced in Section \ref{Sec8}. Using the results of Section \ref{Sec9} we produce grids above which one may construct adelic points on the respective fibers via the construction in Section \ref{Sec10}, and such that these adelic points are orthogonal to $\im \Br X_{\eta}$.
	The key idea to avoid then the Brauer--Manin obstruction associated to $\Br_{\text{hor}}(X/Q)/\im \Br X_{\eta}$ is to employ the following baby-case of Smith's combinatorics \cite[Section 7]{Smith}:
	
	\begin{lemma}[Smith]\label{Lem:Smith0}
		Let $A$ be a finite abelian group, and $k \in \N$. For every $M \in \N$ larger than some constant $M_0(A)$, there exists a function $g:\{1,\ldots,M-1\}^k \to A$ such that all functions $f:\{1,\ldots,M\}^k\to A$ with $\Delta^{(k)}(f)=g$ are surjective.
	\end{lemma}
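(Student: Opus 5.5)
The plan is a counting (probabilistic) argument, with the linear algebra of the operator $\Delta^{(k)}$ as the only structural input. Here $\Delta^{(k)}$ is the $k$-fold discrete difference: for $f:\{1,\ldots,M\}^k\to A$ and $i\in\{1,\ldots,M-1\}^k$,
\[
\Delta^{(k)}(f)(i)=\sum_{\epsilon\in\{0,1\}^k}(-1)^{k-|\epsilon|}f(i+\epsilon).
\]
This is a homomorphism of abelian groups $A^{\{1,\ldots,M\}^k}\to A^{\{1,\ldots,M-1\}^k}$.

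\emph{Step 1: describe the kernel.} I will show that $\Delta^{(k)}$ is surjective and that its kernel $\mathcal K$ is exactly the set of sums $h_1+\cdots+h_k$, where each $h_j$ does not depend on the $j$-th coordinate. Both facts follow by induction on $k$, since $\Delta^{(k)}$ is the composition of one-variable difference operators in each coordinate. Equivalently, $f$ is uniquely determined by $\Delta^{(k)}(f)$ together with its values on the ``boundary'' $\{i: \text{some } i_j=1\}$, and these boundary values can be prescribed freely. This gives
\[
|\mathcal K| = |A|^{M^k-(M-1)^k}\le |A|^{kM^{k-1}}.
\]
Consequently, for any $F$ with $\Delta^{(k)}(F)=g$, the full set of solutions of $\Delta^{(k)}(f)=g$ is the coset $F+\mathcal K$.

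\emph{Step 2: reformulate.} It therefore suffices to find a single function $F:\{1,\ldots,M\}^k\to A$ such that $F+h$ is surjective for every $h\in\mathcal K$. We then set $g:=\Delta^{(k)}(F)$.

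\emph{Step 3: random choice and union bound.} Choose $F$ uniformly at random among all functions $\{1,\ldots,M\}^k\to A$. Fix $h\in\mathcal K$ and $a\in A$. Then $F+h$ is again uniformly distributed, so the probability that $a\notin\im(F+h)$ is $(1-1/|A|)^{M^k}$. Taking a union bound over all pairs $(h,a)$, the probability that some $F+h$ fails to be surjective is at most
\[
|A|^{1+kM^{k-1}}\Bigl(1-\tfrac1{|A|}\Bigr)^{M^k}=\exp\Bigl((1+kM^{k-1})\log|A| - M^k\,\bigl|\log(1-\tfrac1{|A|})\bigr|\Bigr).
\]
Since $M^k$ dominates $M^{k-1}$, this tends to $0$ as $M\to\infty$, for fixed $A$ and $k$. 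Hence there is an $M_0=M_0(A,k)$ beyond which the bound is $<1$, and a good $F$ exists. (If $|A|=1$ the statement is trivial.) Note that $M_0$ also depends on $k$; this is harmless, since $k$ is fixed in the statement.

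\emph{Main obstacle.} The only step needing care is Step 1: the precise description of $\ker\Delta^{(k)}$, and in particular the bound on its size. Everything else is a direct union bound. I would prove Step 1 by factoring $\Delta^{(k)}=\Delta_1\circ\cdots\circ\Delta_k$, where $\Delta_j$ is the one-variable difference operator in the $j$-th coordinate. Each $\Delta_j$ is surjective, and its kernel consists of the functions constant in the $j$-th coordinate. Counting dimensions (over $\Z$, via the ``boundary values'' parametrization) then gives the formula for $|\mathcal K|$.
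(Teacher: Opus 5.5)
Your argument is correct, and it reaches the same inequality as the paper. With $a=|A|$ and the exact value $|\mathcal K|=a^{M^k-(M-1)^k}$, your union bound $|\mathcal K|\cdot a\cdot(1-1/a)^{M^k}<1$ is literally the condition $a(a-1)^{M^k}<a^{(M-1)^k}$ used in the paper.

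The difference is where the counting happens. The paper counts on the target side. A function $g$ is good exactly when it is not of the form $\Delta^{(k)}(f)$ with $f$ non-surjective. There are at most $a(a-1)^{M^k}$ non-surjective $f$ (take the union over the omitted value), and there are $a^{(M-1)^k}$ candidate $g$. So a good $g$ exists once the second number exceeds the first, which the paper makes explicit as a threshold of order $k\,a\log a$. This pigeonhole uses nothing about $\Delta^{(k)}$: not that it is a homomorphism, not that it is surjective, not its kernel. The step you single out as the main obstacle (your Step 1) is therefore unnecessary.

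What your route buys in return is mild. It shows the fibre over the chosen $g$ is a nonempty coset $F+\mathcal K$, so the conclusion is not vacuous. It also exhibits a good $g$ as $\Delta^{(k)}$ of a random function. But the paper's count equally shows that all but a proportion at most $a(a-1)^{M^k}/a^{(M-1)^k}$ of all $g$ are good. Non-vacuity is also not needed in the application, where the function with prescribed triple variation is the actual Brauer--Manin pairing on the grid.

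Your cruder bound $|\mathcal K|\le a^{kM^{k-1}}$ only affects the value of $M_0$. Your remark that $M_0$ depends on $k$ is consistent with the paper's explicit threshold.
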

	
	Here the {\em $k$-th variation} operator $\Delta^{(k)}$ denotes the composition $\partial_1\circ \cdots \circ \partial_k$, where $(\partial_if)(a_1,\ldots,a_i,\ldots,a_k)\coloneqq f(a_1,\ldots,a_i+1,\ldots,a_k)-f(a_1,\ldots,a_i,\ldots,a_k)$. 

    \vskip1mm

	We apply Smith's lemma following this simple idea: given an ``unkonwn'' function $f: \{1,\ldots, M\}^k \to A$ for which we can customize the $k$-th variation $\Delta^{(k)}$ to be any function we want (for instance, the function $g$), then $f$ can be forced to attain the value $0$ (as it is surjective, it attains in particular the value $0$). In our case, we take for the ``unknown'' function the Brauer--Manin obstruction associated to $\Br_{\text{hor}}(X/Q)/\im \Br X_{\eta}$ on the fibers $X_q$, for $q$ in a suitable grid $\cM \cong \{1,\ldots,M\}^3$ in $Q(K)$. Since, by the results of Section \ref{Sec11}, the triple variation of the Brauer--Manin obstruction is expressible as a combination of Redéi-variations of the grid $\cM$, and these Redéi-variations are customizable by the result of Section \ref{Sec9}, this concludes the proof.

	\vskip1mm
	
	The paper contains three appendices. Appendix \ref{AppA} is dedicated to proving a combination of Chebotarev and Hecke density Theorems, which plays an important role in giving enough archimedean approximation in Section \ref{Sec9} to ensure that the whole argument carries through. In Appendix \ref{AppB} we give our definition of relative étale cohomology (a definition already appears in Friedlander's book \cite{Friedlander}, but this involves the theory of simplicial schemes, that we preferred to avoid). This is used in our computation of Section \ref{Sec11} to express the Brauer--Manin pairing on the fibers in terms of \v{C}ech cochains. In Appendix \ref{AppC} we give a slight generalization of a Theorem of Nakaoka proving, under some technical hypothesis, that the Hochschild--Serre spectral sequence of wreath products splits at the second page. This is used in some technical passages in Section \ref{Sec11}.
	
	\vskip1mm
	
	Finally, it feels important to emphasize that Smith's combinatorics is vastly more general than Lemma \ref{Lem:Smith0}. As a small example, in the same context as the lemma one may infer that $f$ attains all values in the group $A$ with multiplicities that tend to be uniformly equidistributed as $M \to \infty$. See \cite[Section 7]{Smith}, or Section 3 of the work \cite{CKML} on Class Groups by Chen, Koymans, Milovic and Pagano.

    \vskip1mm

    Part of the results of this paper had been previously obtained independently by Harpaz and Wittenberg in an unpublished note, as they told me in private communication. Namely, they had the results of Section \ref{Sec9} and applied them to create local lifts for fibration quasi-trivial tori, i.e.\ the content of Section \ref{Sec10}.

    {
    \acknowledgement{The author is thankful to Jean--Louis Colliot-Th\'el\`ene for his very active interest in the paper, and several useful comments. He is also grateful to Carlo Pagano for his detailed exposition of his work with Koymans for a study group in Basel in the year 2022-2023, as the last part of this paper was inspired by the ``variational'' ideas appearing in this other branch of Number Theory. 
    Finally, he thanks Jessica Alessandrì, Elyes Boughattas, Martin Bright, Adam Morgan, Yonathan Harpaz, Peter Koymans, Jesse Pajwani, Happy Uppal, and Olivier Wittenberg for helpful and interesting private conversations. 
    
    This project was finalised while the author was a guest at the Lodha Mathematical Sciences Institute (LMSI) in Mumbai, to which he is grateful for their hospitality and for offering wonderful working conditions.}
    }

    \bigskip

    \paragraph{Concept map.} In the diagram below, an arrow from $X$ to $Y$ indicates that $X$ is used (either in the current mathematical literature or in this paper) to prove $Y$. Each arrow has up to two labels: the one on the right or bottom of the arrow indicates the ingredients used for the implication, if any (again, either in the current mathematical literature or in this paper); while the one on the left of or above the arrow indicates the section of this paper where the details of the implication are presented, when necessary.
    
    \[
    \hspace{-4mm}\begin{tikzcd}
	\begin{array}{c} {\text{\shortstack{Theorem \ref{MainApprThm} \\ {\small (Cyclic Strong Approximation),} \\[2pt] and  Theorem \ref{Thm:3variation} \\[1pt] {\small (Triple Variation)}}}} \end{array} 
    &&
    \begin{array}{c} {\text{\shortstack{Theorem \ref{Thm: fibrationreduced} \\ {\small (Main Theorem, ``reduced'')}}}} \end{array} 
    \\ \\
	\begin{array}{c} {\text{\shortstack{Theorem \ref{Thm: fibration} \\ {\small (Main Theorem)}}}} \end{array}
    &&
    \begin{array}{c} {\text{\shortstack{Theorem \ref{Thm: fibrationnew}\\  {\small (Stronger variant of Main Theorem } \\ {\small with more archimedean approximation,} \\ {\small but assumes $E$ is a field)}}}} \end{array} 
    \\ \\
	\begin{array}{c} {\text{\shortstack{Theorem \ref{Thm:FAB} \\ {\small (Solvable Descent)}}}} \end{array}
    &&
    \begin{array}{c} {\text{\shortstack{ Theorem \ref{Thm: homspaces} \\ {\small (Homogeneous spaces} \\ {\small with finite stablizers)}}}}\end{array}
    \\ \\
    \begin{array}{c} {\text{\shortstack{Corollary \ref{Thm: Grunwald} \\[1pt] {\small (Tame Grunwald Problem)}}}} \end{array} 
    &&
    \begin{array}{c} {\text{\shortstack{Corollary \ref{Cor: GrunwaldBM} \\ {\small (Grunwald Problem with BM)}}}} \end{array} 
	\arrow["{\begin{array}{c}{{\text{\shortstack{Lemma \ref{Lem:Smith0} \\ (Smith's lemma)}}}}\end{array}}"',"\text{ Section \ref{Sec12}}", from=1-1, to=1-3]
	\arrow["\begin{array}{c} \text{\shortstack{Proposition \ref{PropReductions} \\ (Reductions)}} \end{array}","{\text{ Section \ref{Sec8}}}"', from=1-3, to=3-3]
	\arrow["{{\text{\cite[Section 2]{HW24}}}}","{\text{ Section \ref{Sec3}}}"', from=3-1, to=5-1]
	\arrow["{\text{ Section \ref{Sec3}}}"',"{\text{------}}",from=3-3, to=3-1]
	\arrow["{\text{\cite[Section 1]{HW24}}}"',"{\text{------}}", from=5-1, to=5-3]
    \arrow["{\text{\cite[Corollary 6.3]{LA19}}}"',"{\text{------}}", sloped, from=5-3, to=7-1]
    \arrow["{\text{\cite[Section 9]{PalS}}}","{\text{---}}"', from=5-3, to=7-3]
    \end{tikzcd}\]
        
    \bigskip

	\section{Notation and terminology}\label{Sec2}
    
    \subsection{Arithmetic}
	
	\paragraph{Number fields.} For a number field $K$, we let $\cO_K$ be its ring of integers, $M_K$ be its set of places, and $M_K^{\fin}$ (resp.\ $M_K^{\infty}$) be its set of finite (resp.\ infinite) places. We let $\A_K$ and $\I_K$ the topological groups of adéles $\A_K\coloneqq \prod'_vK_v$ and idéles $\I_K \coloneqq  \prod'_vK_v^*$, where the products are restricted with respect to $\cO_v \subset K_v$ and $\cO_v^* \subset K_v^*$, respectively. We let $K_{\Omega}:=\prod_vK_v$, $K_{\text{fin}}:=\prod_{v \in M_K^{\text{fin}}}K_v$, and $K_{\infty}:=\prod_{v \in M_K^{\infty}} K_v = K \otimes \R$. We let $K_{\text{dir}}:=(K_{\infty}\s \{0\})/\R_{>0}$, where $\R_{>0}$ acts by multiplication via the embedding $\R \subset K \otimes \R=K_{\infty}$.
	
	For $v \in M_K^{\fin}$, we let $\mathcal{P}_{v}$ be the associated prime ideal in $\mathcal{O}_K$. For a prime ideal $\mathcal{P}$  (resp.\ a finite place $v \in M_K^{\fin}$) of $K$, we let $\F_{\mathcal{P}}$ (resp.\ $\F_v$) be the residue field $\mathcal{O}_K/\mathcal{P}$ (resp.\ $\mathcal{O}_K/\mathcal{P}_v$).
	
	For a finite extension $E/K$, we let $\Delta_{E/K}$ be its relative discriminant, meant as an ideal in $\cO_K$.     
	For a finite Galois extension $E/K$, and a prime $\mathcal{P}$ of $E$ not dividing $\Delta_{E/K}$, we denote by $\Frob_{\mathcal{P}} \in \Gal(E/K)$ its Frobenius.
		
	\paragraph{Hilbert symbols.} Let $n$ be a natural number, and $K$ be a number field containing $\mu_n$. For $v \in M_K$, we denote by:
	\[
	(-,-)_{n,v}: K_v^{*}/(K_v^{*})^n \times K_v^{*}/(K_v^{*})^n \to \mu_n
	\]
	the local Hilbert symbol of order $n$. Recall that $(a,b)_{n,v}$ is defined as the image of $(a,b)$ under the cup product:
	\[
	K_v^{*}/(K_v^{*})^n \times K_v^{*}/(K_v^{*})^n \to[\cup] H^2(K_v, \mu_n) \otimes_{\Z} \mu_n \to[\inv_v] \mu_n.
	\]

    \paragraph{Power residues.}
	
	For a prime $\mathcal{P}\subset \cO_K$ and $\alpha \in (\cO_K/\cP)^*$, we let
	\[
	\leg{\alpha}{\mathcal{P}}_n \coloneqq \left( \alpha^{\frac{N\mathcal{P}-1}{n}} \bmod \mathcal{P} \right) \in \mu_n
	\]
	be the $n$-th power residue symbol. For $a \in K^*$ such that $v_{\cP}(a)=0$, we let $\leg{a}{\mathcal{P}}_n:=\leg{a\bmod \cP}{\mathcal{P}}_n$. Recall that $(\pi,a)_{n,\mathcal{P}}=\leg{a}{\mathcal{P}}$ for any uniformizer $\pi$ of $\mathcal{P}$. The $n$-th power residue symbol extends multiplicatively: for an ideal $\mathcal I = \prod_i \cP_i^{e_i}$ of $\cO_K$, we define 
	$$
	\leg{\alpha}{\mathcal{I}}_n \coloneqq \prod_i \leg{\alpha}{\cP_i}^{e_i}.
	$$
	
	We define $\addleg{\alpha}{\cP}_n $ as the image of $\leg{\alpha}{\cP}_n$ under the natural isomorphism $\mu_n \to[\sim] \Z/n\Z(1)$ (and analogously for multiplicative extensions, $S$-variants, ...).\footnote{We shall use this for the sole purpose of having additive instead of multiplicative notation when more convenient.}
	
	\paragraph{Class Field Theory.}
	For a number field $K$, let $C_K:=\I_K/K^*$, where $K^*$ is embedded in $\I_K$ diagonally (recall that this embedding is closed and discrete).
	
	We follow \cite[Section 15.4]{HarariBook}.  
	Let $\m=(I_{\m},S_{\m})$ be a cycle of $K$: i.e.\ $I_{\m} \subset O_K$ is an ideal and $S_\m$ is a subset of the real places of $K$. The {\em ray class group} $\Cl_{\m}O_K$ is (with Chevalley's adelic notation, see e.g.\ \cite[Sec.\ 15.4]{HarariBook}) the quotient:
	\[
	K^{*}\backslash \I_K/ U_{\m},
	\]
	where $U_{\m}:=\prod_{v \in M_K} U_v$, $U_v \coloneqq \cO_v^{*}$ for $v \nmid I_{\m}, v \notin S_\m$, $U_v  \coloneqq  \{a \in \cO_v^{*} : a \equiv 1 \bmod m_v^k\}$ for $v \mid I_{\m}$,  where $m_v^k$ is the maximum power of $m_v$ dividing $I_{\m}$, and $U_v \coloneqq \{a \in K_v^{*} : a >0\}$ for $v \in S_{\m}$. By Class Field Theory, there is a unique abelian extension $H_{\m}/K$ with Galois group $\Cl_{\m}O_K$ 
	such that, for any prime $\cP$ coprime with $I_{\m}$, $\cP$ is unramified in $H_{\m}$ and the Frobenius $\Frob(\cP) \in \Gal(H_{\m}/K) = \Cl_{\m}O_K$ is the equivalence class of the element $\left(1, \ldots, 1, \pi_{\mathcal{P}}, 1, \ldots\right) \in \I_K$, where $\pi_{\mathcal{P}}$ is a uniformizer of the maximal ideal of $\mathcal{O}_{\cP}$.
	
	For a place $v$, an element $x \in K_v^{*}$, and a cycle $\m$, we use the notation 
	\vskip-4mm
	\[
	x \equiv 1 \bmod \m
	\]
	\vskip-3mm
	\noindent to mean that $x \in U_v$.

    \subsection{Geometry and arithmetic geometry}

	\paragraph{Brauer--Manin pairing.}
	For a scheme $X$, we denote by $\Br X \coloneqq H^2(X,\G_m)$ its (cohomological) Brauer group. Recall that, if $X$ is a variety defined over a number field $K$, the {\em Brauer--Manin pairing}
	\[
	(-,-)_{BM}:X(\A_K) \times \Br X \to \qz
	\]
	is defined as $((P_v)_{v \in M_K}, B)_{BM} \coloneqq \sum_v \inv_v (B(P_v))$, where $\inv_v : H^2(K_v, \G_m) \to \qz$ denotes the usual invariant map \cite[Thm 8.9]{HarariBook}. Whenever $B \in  \im \Br K$ or $(P_v) \in X(K)$ (diagonally embedded in $X(\A_K)$), $((P_v),B)_{BM}=0$ by the Albert-Brauer-Hasse-Noether theorem   \cite[Sec. 5]{Skorobogatov}. It follows that $X(K)\subseteq X(\A_K)^{\Br X}$, where, for $\mathcal{B} \subseteq \Br X$:
	\[
	X(\A_K)^{\mathcal{B} } := \{(P_v) \in X(\A_K) \mid ((P_v),B)_{BM}=0 \text{ for all } B \in \mathcal{B} \}.
	\]
	
	When $X$ is a smooth $K$-variety, we denote by $\Br_{ur}X$ its {\em unramified Brauer group}. This is the subgroups of elements of $\Br X$ that are unramified at all places of the function field $K(X)$. (See \cite[Section 6.2]{BGbook} for a definition.) Equivalently, $\Br_{ur}X$ is the Brauer group of a(ny) smooth compactification of $X$. The Brauer--Manin pairing on a(ny) smooth compactification of $X$ induces a {\em continuous} unramified Brauer--Manin pairing:
	\[
	X(K_{\Omega}) \times \Br_{ur} X \to \qz,
	\]
	defined via $((P_v)_{v \in M_K}, B)_{BM} \coloneqq \sum_v \inv_v (B(P_v))$.  As above, $X(K)$ is a subset of $X(K_{\Omega})^{\Br_{ur} X}$, where, for $\mathcal{B} \subseteq \Br_{ur} X$:
	\[
	X(K_{\Omega})^{\mathcal{B} } := \{(P_v) \in X(K_{\Omega}) \mid ((P_v,B))_{BM}=0 \text{ for all } B \in \mathcal{B} \}.
	\]
	
	\paragraph{Quasi-trivial tori.} A {\em quasi-trivial torus} over a field $k$ of characteristic $0$ is a $k$-torus $Q$ that possesses a Galois-equivariant basis $\{y_1,\ldots,y_d\}$ of characters. Such a basis induces an identification:
	\[
	Q = \Spec \left(\ok [y_1^{\pm 1 },\ldots,y_d^{\pm  1}]\right)^{\Gamma_k}.
	\]
	To the $\Gamma_k$-set $\{y_1,\ldots,y_d\}$ corresponds a unique étale algebra $E/k$ of degree $d$. The torus $Q$ is then isomorphic to the Weil-restriction $R_{E/k}\G_m$, and we use the notation $R_{E/k} \G_m$ throughout the paper to denote a quasi-trivial torus whose corresponding étale algebra is $E/k$.

    \paragraph{Quasi-trivial tori over number fields} When $Q$ is as in the paragraph above, and $k=K$ is a number field, the Weil-restriction $R_{\cO_E/\cO_K}\G_m$, where $\cO_E$ denotes the ring of integers in the étale algebra $E$, provides a canonical arithmetic model for $Q$ over $\cO_K$. We denote this model by $\cQ$.
	
	When $E$ is a field, we also let $Q(K_{\infty})_{\text{dir}}$ be the quotient of $Q(K_{\infty})=E_{\infty}^*=\prod_{w \in M_E^{\infty}}E_w^*$ by the image of $\R_{>0}$ under the diagonal embedding $\R_{>0} \hookrightarrow \prod_{w \in M_E^{\infty}}E_w^*$. For a finite set of places $S$ of $K$ containing the archimedean ones, we let $Q(K_S)_{\text{dir}}=\prod_{v \in S \cap M_K^{\text{fin}}} Q(K_v) \times Q(K_{\infty})_{\text{dir}}$.

	\paragraph{Galois quasi-trivial tori.} A quasi-trivial torus $Q$ is {\em Galois quasi-trivial} if $E/k$ is a Galois field extension.  Equivalently, if $\Gamma_k$ acts transitively on $\{y_1,\ldots,y_d\}$ with normal stabilizers. We call $\Gamma=\Gal(E/k)$ the {\em Galois group} of the torus $Q$. In the Galois case, we frequently denote the Galois-equivariant basis of characters by $\{y^{\sigma}\}_{\sigma\in \Gamma}$ and the corresponding torus as $Q = \Spec \left(\ok [(y^{\sigma})^{\pm  1}]_{\sigma \in \Gamma}\right)^{\Gamma_k}.$
	
	\paragraph{Hilbert sets and Hilbertian sets.} Following \cite[Sec.\ 9.5]{LangDio} and \cite[Section 1]{HW14}, given a variety $X$ defined over a field $k$, we say that a subset $H \subset X$ is {\em Hilbert} if there exists a dense open $U \subset X$, and finitely many finite étale irreducible covers $\rho_i:V_i \to U$ such that $H$ is the set of points $x$ of $X$ where all the fibers $\rho_i^{-1}(x)$ are connected. We stress that here $H$ is not a subset of $X(k)$, as it contains, for instance, the generic point of $X$. We say that a subset $H \subset X(k)$ is {\em Hilbertian} if it is the intersection of a Hilbert set with $X(k)$.
	
	\paragraph{Picard functor.} 
	For a base scheme $S$, and an $S$-scheme $f:X \to S$, we indicate by $\Pic_{X/S}$ the {\em relative Picard sheaf} on $S_{\acute{E}t}$, i.e.\  the sheafification of the presheaf $T \mapsto \Pic(X \times_S T)/f_T^{*}\Pic T$ \cite{kleiman}. This sheaf is equal to the derived pushforward $R^1f^{\text{big}}_*\G_m$, where $f^{\text{big}}_*:X_{\Et} \to S_{\Et}$ denotes the pushforward on the big étale site.
	
	Recall that, when $X \to S$ is flat projective with geometrically integral fibers and $S$ is locally noetherian, this sheaf is represented by an $S$-scheme locally of finite type \cite[p. 232-06, Theorem 3.1]{FGA}, which we also denote, with a slight abuse of notation, by $\Pic_{X/S}$.

    \subsection{Derived categories and cohomologies}
	
	\paragraph{Homological algebra.} We follow \cite[Ch.\ III]{Verdier}. We denote by $\Ch(\cA)$ (resp.\ $K(\cA),$ $\cD(\cA),$ $\cD^+(\cA)$) the category of chain complexes in  an abelian category $\cA$ (resp.\ its homotopy category, its derived category, and its bounded below subcategory). For a double complex $(C^{\bullet,\bullet},d_1,d_2)$, we denote by $\Tot^\bullet =\Tot^\bullet (C^{\bullet,\bullet})$ its totalizing complex defined by $\Tot^{n}= \bigoplus_{p+q =n} C^{p,q}, d=\sum_{p+q=n} d_1+ (-1)^p d_2$.

    \vskip1mm
	
	For $A \in \cA$ and $n\in \Z$, we denote by $A[n] \in \Ch(\cA)$ the complex that is $0$ in position different from $n$ and $A$ in the $n$-th position.

    \vskip1mm
	
	For $C^\bullet \in \Ch(\cA)$ and $n\in \Z$, we let $\tau_{\geq n} C^{\bullet}$ be the corresponding {\em truncation} of $C^\bullet$, defined by
	$$
	\left(\tau_{\geq n} C\right)^i := \begin{cases}0 & \text { if } i<n \\ \Coker(C^{n-1} \to C^{n}) & \text { if } \mathrm{i}=n \\ C^i & \text { if } i>n .\end{cases}
	$$
	
	We let also $\tau_{<n} C^\bullet :=C^\bullet  /\left(\tau_{\geq n} C^\bullet \right)$, and $\tau_{[n,m]} C^\bullet  := \tau_{\leq m}\tau_{\geq n} C^\bullet $ for $n \leq m$.
	
	\paragraph{Sites and cohomology.} For a scheme $S$, we denote by $S_{\et}$ its small étale site. The notation $H^\bullet (S,F)$ will always indicate {\em étale cohomology} of the étale sheaf $F/S$.
	
	\paragraph{Coverings and \v{C}ech cochains.} A {\em covering} $\cU$ of a scheme $S$ will always mean an étale covering, i.e.\ a collection $\{\phi_i:U_i \to S\}_{i\in I}$ where each $U_i \to S$ is étale and $\bigcup_i \phi_i(U_i)=S$. A {\em refinement} $\psi:\cV \to \cU$ of coverings $\cV=\{V_j \to S\}_{j \in J},\,\cU=\{\phi_i:U_i \to S\}_{i\in I}$ is the collection of a map $\tau:J \to I$, and $S$-morphisms $\psi_j:V_j \to U_{\tau(j)}$ for all $j$. The {\em category of coverings of $S$} is the category whose objects are coverings and whose morphisms are refinements.
	
	\vskip1mm
	
	The {\em \v{C}ech comple}x $\check{C}^{\bullet}(\cU/S,F)$ of an étale sheaf $F/S$ with respect to a covering $\cU=\{U_i\to S\}_{i \in I}$ is defined by
	\[
	\check{C}^{p}(\cU/S,F) := \prod_{i_0,\ldots,i_p} F(U_{i_0\cdots i_p}), \ \ d(s)_{i_0,\ldots,i_{p+1}} := \sum_{j=0}^{p+1} s_{i_0, \ldots, \widehat{i_j}, \ldots,i_{p+1}}|_{U_{i_0,\ldots,i_{p+1}}},
	\]
	where $U_{i_0,\ldots,i_p}:=U_{i_0} \times_S \cdots \times_S U_{i_p}$.

    \vskip1mm
	
	Given two refinements $\phi,\phi':\cU_1 \rightrightarrows \cU_2$, their associated refinement maps $\phi^*,(\phi')^*:C^{\bullet}(\cU_2/S,P) \to C^{\bullet}(\cU_1/S,P)$ are homotopic. A (natural) homotopy is \cite[p.96]{LECcompleto}:
	\[
	\phi^*-(\phi')^*=\d K_{\phi_1,\phi'} + K_{\phi,\phi'} \d, \quad K_{\phi,\phi'}:C^{\bullet}(\cU_2/S,P) \to C^{\bullet-1}(\cU_1/S,P),
	\]
	\begin{equation}\label{TheHomotopy}
	(K_{\phi,\phi'} s)_{i_0,\ldots,i_{p-1}}=\sum_{0 \leq r \leq p-1}\, (-1)^r s_{\tau(i_0),\ldots,\tau(i_r),\tau'(i_r),\ldots,\tau'(i_{p-1})}|_{U_{i_0,\ldots,i_{p-1}}},
	\end{equation}
	where the restriction denotes pullback along $(\phi_{i_0},\ldots,\phi_{i_r},\phi'_{i_r},\ldots,\phi_{i_{p-1}})$. When $\phi'=\phi$, $K_{\phi,\phi}$ is $2$-homotopically trivially. I.e.\ one has
	\begin{equation}\label{Equation}
	\d K^{(2)}_{\phi} + K^{(2)}_{\phi} \d = K_{\phi,\phi},
	\end{equation}
	for a $2$-homotopy $K^{(2)}_{\phi}:C^{\bullet}(\cU_2/S,P) \to C^{\bullet-2}(\cU_1/S,P)$. Explicitly, one may take
	\[
	(K^{(2)}_{\phi}s)_{i_0,\ldots,i_{p-2}}=\sum_{0 \leq r \leq p-2} \, (-1)^r s_{\tau(i_0),\ldots,\tau(i_r),\tau(i_r),\tau(i_r),\ldots,\tau(i_{p-1})}|_{U_{i_0,\ldots,i_{p-1}}},
	\]
	where the restriction denotes pullback along $(\phi_{i_0},\ldots,\phi_{i_r},\phi_{i_r},\phi_{i_r},\ldots,\phi_{i_{p-2}})$. 
	(We leave the straightforward computation verifying \eqref{Equation} to the interested reader).
	
	\paragraph{Galois coverings.} When $\cU$ consists of a single Galois finite étale cover $S' \to S$, there is a natural identification \cite[Example III.2.6]{LECcompleto}:
	\begin{equation}\label{EqIdentificationCechGroup}
	\check{C}^{\bullet}(S'/S,F) = \check{C}^{\bullet}(G,F(S')) , \, G := \Gal(S'/S).
	\end{equation}
	We stress that this is really an identification of chain complexes and not just of the associated cohomology groups!

    \subsection{Topology}

    \paragraph{Closure.} For a subset $A$ of a topological space, we denote by $\overline{A}$ its closure in said space. 

	\paragraph{Direction.} The {\em direction} of a vector $v \neq 0$ in a real vector space $V$ is its image in $(V\s \{0\})/\R_{>0},$ where $\R_{>0}$ acts by scalar multiplication. 

    \paragraph{Factorization into relative connected components.}
    For a proper continuous map $f:X \to Y$ between locally compact Hausdorff topological spaces, we let $X \to \pi_{0,rel}(X/Y)$ $ \to Y$ be its factorization obtained by collapsing the connected components of the fibers, i.e.\ such that $X \to \pi_{0,rel}(X/Y)$ has connected fibers and $\pi_{0,rel}(X/Y) \to Y$ has totally disconnected fibers. (This is also known as the {\em monotone-light} factorization. See e.g.\ \cite{Bauer}  for a proof of its existence, or \cite[Theorem 1]{Walker} for a statement in English.)

    \paragraph{Covering spaces.} A continuous map $f:X \to Y$ between topological spaces is a {\em covering space} if, locally on $Y$, it decomposes as a direct product $Y \times A$ with $A$ discrete.

    \section{Main Theorem, stronger variant}\label{Sec3}

    We actually prove the following stronger version of Theorem \ref{Thm: fibration} in the case where $E$ is a field. The only improvement is that the theorem below gives a higher degree of archimedean approximation.

    \begin{theorem}\label{Thm: fibrationnew}
        Let $E/K$ be a finite extension of number fields, and $Q=R_{E/K}\G_m$ the corresponding quasi-trivial torus. Let $f:X \to Q$ be a smooth proper morphism with rationally connected fibers. Assume there exists a torus $T$ endowed with a group-epimorphism $T \to Q$, and an action on $X$ such that $f$ is $T$-equivariant. Then:
		\[
		X(K_{\Omega})_{dir}^{\Br_{ur} X} = \overline{\bigcup_{q \in Q(K)} X_q(\A_K)_{\bullet}^{\Br X_q}}.
		\]
    \end{theorem}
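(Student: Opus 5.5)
The inclusion $\supseteq$ is the easy one. Let $q\in Q(K)$ and $(x_v)\in X_q(\A_K)^{\Br X_q}$. Since $\Br_{ur}X=\Br X$ restricts to $\Br X_q$, the point $(x_v)$ is orthogonal to $\Br_{ur}X$. The left-hand side is closed, and the passage to directions and connected components at archimedean places is continuous. So the work is in $\subseteq$, and I would follow the plan outlined in the introduction.

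\textbf{Step 1: reduction to the Galois case with extra hypotheses.} Using the base-change result of Harpaz--Wittenberg (Theorem \ref{Prop:HWbasechange}), I would first pass to the case where $E/K$ is Galois with group $\Gamma$. In that setting the complex \eqref{COmplex} should be made exact. Then, via Graber--Harris--Starr, I would produce a finite étale multisection $s:Q^{L,m,(1)}\to X$ such that $\mu_m\subset E^*$ and $s^*B=0$ for a finite set $B\subset(\Br X)_+$ generating $\Br X_\eta/f^*\Br\eta$. The equivariant $T$-action is what makes the base change along $T\to Q$ and the fibration structure compatible.

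\textbf{Step 2: finding good fibers.} Fix $S$ and $(x_v)_{v\in S}$ orthogonal to $\Br_{ur}X$. I would use a Shafarevich-style approximation theorem (cyclic strong approximation), assisted by the Chebotarev--Hecke density of Appendix \ref{AppA} to control archimedean directions. This should give $q\in E^*$ close to $f(x_v)$ for $v\in S$ such that, at the remaining bad places $v$, $q$ lifts to $K_v$-points of the multisection, and the half-spin symbols vanish. Pulling back along $s$ then yields adelic points on $X_q$. With the residue-prescribed representatives of Section \ref{Sec6} and the vanishing of the half-spin symbols, these points are orthogonal to $\im\Br X_\eta$. Moreover, $q$ can be produced in grids $\cM=q_0\{q_{1,i}\}\{q_{2,j}\}\{q_{3,l}\}$ whose Redéi variations are prescribed.

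\textbf{Step 3: killing the horizontal obstruction.} By the Hilbertian property, $\Br X_q/\Br K\cong\Br_{\rm hor}(X/Q)$ for the $q$ in the grid. What remains is the obstruction coming from the finite group $A=\Hom(\Br_{\rm hor}/\im\Br X_\eta,\qz)$. The function $F:\cM\cong\{1,\dots,M\}^3\to A$ records the Brauer--Manin pairing of the constructed adelic points.

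The key computation, which I expect to be the main obstacle, is the triple variation: showing via \v{C}ech cochains and relative étale cohomology that $\Delta^{(3)}F$ is an explicit linear combination of Redéi variations, with weights given by $\bar\partial(b)\in H^3(Q_{\oK},\G_m)^{\Gamma_K}$. This involves handling the wreath-product Hochschild--Serre splitting of Appendix \ref{AppC}. Exactness of \eqref{COmplex} ensures these weights are nondegenerate on the quotient, so any $g$ can be realized as $\Delta^{(3)}F$ by customizing the Redéi variations.

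Choosing $g$ as in Lemma \ref{Lem:Smith0} then makes $F$ surjective, so $F$ vanishes at some $q\in\cM$. The corresponding adelic point lies in $X_q(\A_K)^{\Br X_q}$ and approximates $(x_v)$, which concludes the proof.
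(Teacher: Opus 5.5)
Your outline reproduces the paper's architecture: the Harpaz--Wittenberg reductions to $(R_1)$--$(R_4)$, the multisection, cyclic strong approximation with vanishing half-spin symbols and prescribed Red\'ei variations, the triple-variation formula, and Smith's lemma. But as written it never uses the hypothesis that $(x_v)$ is orthogonal to $\Br_{ur}X$, and Step 2 fails without it.

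Theorem \ref{MainApprThm} only applies to local data $(q_v)_{v\in S}\in Q^{\sharp}(K_S)^{B'_L}$, i.e.\ data satisfying (H1)--(H4) of Theorem \ref{ThmShafarevich}, and these are genuine constraints. (H1) says the idele lies in the kernel of the Artin map of $L\cap E^{ab}/E$; without it, no prime element splitting completely in $L$ can approximate that idele. (H2)--(H4) are what make $(f_{p_0},h_{p_0})$ even, which the pigeonhole step needs in order to kill the power residues and half-spin symbols. The classes of $B'_L$ pulled back to $X$ are in general ramified on a compactification, so orthogonality to $\Br_{ur}X$ does not give these conditions.

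Similarly, Proposition \ref{Proprop} gives, for $b\in B$,
\[
((P_v),b)_{BM}=\sum_{v\in S}\inv_v(b(P_v))+\sum_\tau a_\tau\operatorname{hs}_{S,n_\tau}(q).
\]
Vanishing half-spin symbols kill only the second term. The $S$-local term, again coming from a ramified class, does not vanish by itself.

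The missing step is Harari's formal lemma, which the paper invokes at the start of the proof of Corollary \ref{ThmTripleVarCustom}. Apply it to the finite subgroup of $\Br f^{-1}(Q^\sharp)$ generated by $B$ and $f^*B'_L$, using that $(x_v)$ is orthogonal to its unramified part. This lets you enlarge $S$ and choose points at the new places so that $(x_v)_{v\in S}$ is orthogonal to the whole group. This is exactly where the Brauer--Manin hypothesis enters; once it is in place, your Steps 2 and 3 go through as in the paper.

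Two smaller corrections. First, the $T$-action plays no role in any base change along $T\to Q$. In the paper it is what produces the multisection (Theorem \ref{Lem:ExistenceMultisection}):
\begin{itemize}
\item lifting $Q^{\infty}\to T$ gives a $Q^\infty$-action on $X$;
\item the procyclic kernel of $Q^\infty\to Q^{\infty,(1)}$ then acts on the rationally connected fibres through a finite cyclic group;
\item Graber--Harris--Starr yields a fixed point, and its orbit is $s_\infty:Q^{\infty,(1)}\to X$;
\item $s^*B=0$ then follows from the residue shape of $(\Br X)_+$ and $\Br Q^{\infty,(1)}=0$.
\end{itemize}
Graber--Harris--Starr alone would not give a multisection of this Kummer shape. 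Second, in the easy inclusion, $\Br_{ur}X\neq\Br X$ in general, since $X$ is only proper over $Q$. The inclusion $\Br_{ur}X\subset\Br X$ is all you need there.
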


    Here $X(K_{\infty})_{dir}:={\pi_{0,rel}}(X(K_{\infty})/Q(K_{\infty}))/\R_{>0}$, where the implied $\R_{>0}$-action on  ${\pi_{0,rel}}(X(K_{\infty})/Q(K_{\infty}))$ is the unique one lifting the diagonal $\R_{>0}$-action on $ \prod_{w \mid \infty}E_w^*=Q(K_{\infty})$. This lift does exist and it is unique because $X(K_{\infty}) \to Q(K_{\infty})$ is a topological fiber bundle with compact smooth fibers by Ehresmann's theorem \cite[Theorem 9.3]{VoisinI} (which applies because $f$ is smooth and proper with a smooth base) and thus ${\pi_{0,rel}}(X(K_{\infty})/Q(K_{\infty})) \to Q(K_{\infty})$ is a finite covering space.
	
    \begin{proposition}Theorem \ref{Thm: fibrationnew} implies Theorem \ref{Thm: fibration}. More specifically:
    \begin{enumerate}[label=(\arabic*)]
        \item If Theorem \ref{Thm: fibrationnew} holds, then Theorem \ref{Thm: fibration} holds when $E$ is a field.
        \item If Theorem \ref{Thm: fibration} holds when $E$ is a field, then  Theorem \ref{Thm: fibration} holds always.
    \end{enumerate}
    \end{proposition}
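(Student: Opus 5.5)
The plan is to prove the two parts separately: part (1) by passing to a smooth equivariant relative compactification and then comparing the topologies on the two sides, and part (2) by induction on the number of field factors of $E$, fibering over one factor at a time.

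\emph{Part (1).} Let $V \to Q$ be as in Theorem \ref{Thm: fibration}, with $E$ a field. First I would construct a $T$-equivariant smooth proper relative compactification $V \subset X \xrightarrow{f} Q$, in three steps.
\begin{enumerate}
\item Take an equivariant Nagata compactification of $V \to Q$ relative to $Q$ (Sumihiro-type results give this for the torus $T$).
\item Apply equivariant resolution of singularities, keeping $V$ untouched.
\item Check that $f$ is smooth everywhere. By generic smoothness $f$ is smooth over a dense open subset of $Q$. Since $T \to Q$ is an epimorphism, $T(\bar K)$ acts transitively on $Q(\bar K)$, so by equivariance the smooth locus is all of $Q$.
\end{enumerate}
The fibres $X_q$ are then smooth compactifications of $V_q$. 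Hence $\Br X = \Br_{ur} V$ and $\Br X_q = \Br_{ur} V_q$, and the fibres of $f$ are rationally connected. Theorem \ref{Thm: fibrationnew} therefore applies to $f$.

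It remains to compare topologies, and I expect this to be the only delicate point of part (1). I would show that there is a natural continuous surjection $X(K_{\Omega})_{dir} \to X(K_\Omega)_\bullet$. It sends the $\R_{>0}$-orbit of a relative connected component to the connected component of $X(K_\infty)$ containing it. This is well defined because $\R_{>0}$ is connected, and continuous because $X(K_\infty) \to Q(K_\infty)$ is a fibre bundle, so $\pi_{0,rel}$ is a finite covering space. Density in the finer space $X(K_\Omega)_{dir}$ then pushes forward to density in $X(K_\Omega)_\bullet$. Next I would pass from $X$ back to $V$:
\begin{itemize}
\item $V(K_v)$ is dense in $X(K_v)$ by the implicit function theorem.
\item The evaluation of unramified classes is locally constant.
\item The orthogonality conditions are unchanged under replacing points of $X$ by nearby points of $V$.
\end{itemize}
So an adelic point of $X_q$ orthogonal to $\Br X_q$ can be moved, at any finite set of places, to a point of $V_q(K_\Omega)_\bullet$ that is still orthogonal to $\Br_{ur} V_q$. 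Finally, $V(K_\Omega)^{\Br_{ur}V}_\bullet$ is contained in $X(K_\Omega)^{\Br X}_\bullet$, so part (1) follows.

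\emph{Part (2).} Write $E=\prod_{i=1}^r E_i$, so that $Q=Q_1\times Q'$ with $Q_1=R_{E_1/K}\G_m$ and $Q' = \prod_{i \ge 2} R_{E_i/K}\G_m$. I would induct on $r$ and consider the composite $g: V \to Q \to Q_1$.
\begin{itemize}
\item The map $g$ is smooth.
\item Its fibres $V_{q_1}$ are smooth over the rational variety $Q'$ with rationally connected fibres, so they are rationally connected by Graber--Harris--Starr \cite{GHS}.
\item $g$ is $T$-equivariant via the epimorphism $T \to Q_1$.
\end{itemize}
The field case therefore gives that $V(K_\Omega)^{\Br_{ur}V}_\bullet$ is the closure of $\bigcup_{q_1 \in Q_1(K)} V_{q_1}(K_\Omega)^{\Br_{ur}V_{q_1}}_\bullet$.

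Each fibre $V_{q_1} \to Q'$ again satisfies the hypotheses of Theorem \ref{Thm: fibration}. For the torus I would take the identity component $T'$ of the kernel of $T \to Q_1$. This kernel surjects onto $Q'$ because $T \to Q_1 \times Q'$ is surjective, and its identity component still surjects since $Q'$ is connected. The torus $T'$ acts on $V_{q_1}$ compatibly with the projection to $Q'$. By the induction hypothesis, each $V_{q_1}(K_\Omega)^{\Br_{ur} V_{q_1}}_\bullet$ is the closure of the union of the sets $V_q(K_\Omega)^{\Br_{ur}V_q}_\bullet$ over $q \in \{q_1\}\times Q'(K)$. Combining the two density statements (a closure of closures) gives the claim for $Q$.

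The main obstacle, to my mind, is the existence of the smooth $T$-equivariant relative compactification in part (1), together with the small topological check that the $dir$-topology refines the $\bullet$-topology. The rest is formal.
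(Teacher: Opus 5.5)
Part (2) matches the paper's induction. Your use of the identity component of $\Ker(T\to Q_1)$, and of Graber--Harris--Starr for the fibres of $V\to Q_1$, supplies details the paper passes over. The compactification step in (1) is also fine.

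\textbf{The gap: archimedean places in (1).} Pushing Theorem~\ref{Thm: fibrationnew} forward to $X(K_\Omega)_\bullet$ gives $q$ and $(y_v)\in X_q(\A_K)^{\Br X_q}$ with $y_\infty$ in the connected component of $X(K_\infty)$ containing $x_\infty$. Theorem~\ref{Thm: fibration}, however, asks for a point in the connected component $C$ of $V(K_\infty)$ containing $x_\infty$. The map $\pi_0(V(K_\infty))\to\pi_0(X(K_\infty))$ is surjective but in general not injective, because the real points of $X\setminus V$ can disconnect. For instance, take $K=E=\Q$ and $V=\G_m\times\G_m\subset X=\G_m\times\mathbb{P}^1$ over $Q=\G_m$: then $V(\R)$ has four components and $X(\R)$ has two.
\begin{itemize}
\item So $V(K_\Omega)_\bullet$ is not a subspace of $X(K_\Omega)_\bullet$, and your final ``inclusion'' is not available.
\item Replacing $y_\infty$ by a nearby point of $V_q(K_\infty)$ keeps orthogonality but can land in the wrong component of $V(K_\infty)$. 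In the example, $x_\infty=(1,1)$ and $y_\infty=(q,-1)$ with $q>0$.
\item What you need is a point of $V_q(K_\infty)$ lying both in $C$ and in the component $D$ of $X_q(K_\infty)$ containing $y_\infty$. The constraint $D$ is forced because evaluations of $\Br X_q$ are constant on $D$, not on all of $X_q(K_\infty)$.
\end{itemize}
Density and local constancy alone cannot produce such a point.

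\textbf{The missing ingredient} is the $T$-action at the real places, which you only use to build $X$. The paper keeps the $dir$-information and argues as follows.
\begin{enumerate}
\item Find $t_q\in\R_{>0}$ with $t_q^{-1}\cdot[y_\infty]$ close to $[x_\infty]$ in $\pi_{0,rel}(X(K_\infty)/Q(K_\infty))$.
\item Take $\tilde x_\infty\in X_{t_q^{-1}q}(K_\infty)$ close to $x_\infty$; in particular $\tilde x_\infty\in C$.
\item Lift the diagonal $\R_{>0}\to Q(K_\infty)$ to a homomorphism $\alpha:\R_{>0}\to T(K_\infty)$ via the exponential map.
\item Set $y'_\infty:=\alpha(t_q)\tilde x_\infty\in V_q(K_\infty)$.
\end{enumerate}
This point lies in $C$ because $V$ is $T$-stable and $\alpha(\R_{>0})$ is connected. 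It lies in $D$ because the $\alpha$-action descends to the unique lift of the diagonal $\R_{>0}$-action on $\pi_{0,rel}$, whose fibres over $Q(K_\infty)$ are discrete.

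Alternatively, your coarser $\bullet$-density can be rescued by path lifting along $T(K_\infty)^0\to Q(K_\infty)^0$. First, an element of $T(K_\infty)^0$ moves $x_\infty$ into $V_q(K_\infty)\cap C$. Then an element of $\Ker(T\to Q)(K_\infty)\cap T(K_\infty)^0$ realizing the monodromy carries its fibre component to $D$.

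Either way, this archimedean step is the substance of the proof of (1), not a formal topological check.
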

    
    \begin{proof}[Proof of (1).]
        By Hironaka's strong resolution of singularities \cite{strong_resolution}, we may find a relative compactification $f:X \to Q$ where the $T$-action extends to $X$ and $f$ is $T$-equivariant.
        Take now  $((x_v)_{v \in M_K^{\text{fin}}},x_{\infty}) \in (V(K_{\text{fin}}) \times V(K_{\infty}))^{\Br_{ur} V}=V(K_{\Omega})^{\Br_{ur} V}$. Theorem \ref{Thm: fibrationnew} gives 
        \[
        X(K_{\Omega})_{dir}^{\Br_{ur}(X)} = \overline{\bigcup_{q \in Q(K)}X_q(\A_K)_{\bullet}^{\Br X_q}},
        \]
        so there exists $q \in Q(K)$ and $((x_v(q))_{v \in M_K^{\text{fin}}},x_{\infty}(q)) \in X_q(\A_K)^{\Br X_q}$ whose image in $X(K_{\Omega})_{dir}$ approximates that of $((x_v)_{v \in M_K^{\text{fin}}},x_{\infty})$. For non-archimedean $v$, let $x_v(q)'$ be a point arbitrarily close to $x_v(q)$, but lying in $V_q$. We claim that there exists a point $x_{\infty}(q)' \in V_q(K_{\Omega})$ lying in the same connected component of $V(K_{\Omega})$ as $x_{\infty}$, and lying on the same connected component of $X_q(K_{\Omega})$ as $x_{\infty}(q)$. The claim concludes the proof, as then the point $((x_v(q)')_v,x_{\infty}(q)') \in V_q(K_{\text{fin}}) \times V_q(K_{\infty})= V_q(K_{\Omega}) \subset X_q(\A_K)$ is orthogonal to $\Br X_q=\Br_{ur}V_q$ by continuity of the Brauer--Manin pairing, and thus lies in $V_q(K_{\Omega})^{\Br_{ur}V_q}$ and provides the sought approximation.

        Let us now prove the claim. For an element $x \in X(K_{\infty})$, we denote by $[x]$ its image in $\pi_{0,rel}(X(K_{\infty})/Q(K_{\infty}))$.
        The image of $x_{\infty}(q)$ in $X(K_{\infty})_{dir}$ is close to that of $x_{\infty}$. I.e.\ there exists $t_q \in \R_{>0}$ such that $t_q^{-1} \cdot [x_{\infty}(q)] $ is arbitrarily close to  $[x_{\infty}]$. In particular, projecting to $Q(K_{\infty})$, we have that $q \cdot t_q^{-1}$ is arbitrarily close to $f(x_{\infty})$.
        Thus, by the inverse function theorem, there exists $\tilde x_{\infty} \in X_{t_q^{-1}\cdot q}(K_{\infty})$ arbitrarily close to $x_{\infty}$. 

        Let now $\alpha:\R_{>0} \to T(K_{\infty})$ be a lift (as a homomorphism) of the diagonal map $\iota_{diag}: \R_{>0} \to Q(K_{\infty})$ along the epimorphism $\pi: T(K_{\infty}) \to Q(K_{\infty})$. Such a lift is provided, for instance, by the function $\alpha(t):=\exp_{Lie} (\log(t) \cdot w)$, where $w \in ((\d\pi)_e)^{-1}(v)$ denotes an inverse image along the (surjective) differential  $(\d\pi)_e:T_e(T(K_{\infty})) \to T_e(Q(K_{\infty}))$ of the unique tangent vector $v \in T_e(Q(K_{\infty}))$ at the origin $e$ of $Q(K_{\infty})$ such that $\iota_{diag}(t)=\exp_{Lie}(\log(t) \cdot v)$. Here `` $\exp_{Lie}$'' denotes the exponential map of a Lie group.

        Define $x_{\infty}(q)':=\alpha(t_q) \cdot \tilde x_{\infty} \in X(K_{\infty})$. By construction, this point projects to $q \in Q(K_{\infty})$, and thus lies in $X_q(K_{\infty})$. Moreover, since $x_{\infty}\in V(K_{\infty})$ and $V$ is preserved by the $T$-action, we also have $x_{\infty}(q)' \in V_q(K_{\infty})$, and, since $\alpha(\R_{>0})$ acts trivially on the set of connected components of $V(K_{\infty})$, it lies in the same connected component of $V(K_{\infty})$ as $x_{\infty}$. Finally, note that the action $(t,x)\mapsto \alpha(t) \cdot x$ of $\R_{>0}$ on $X(K_{\infty})$ preserves the connected components of the fibers and thus descends to an action on $\pi_{0,rel}(X(K_{\infty})/Q(K_{\infty}))$. This action lifts the $\R_{>0}$-action on $Q(K_{\infty})$ by construction, and so is the $\R_{>0}$-action with which we had already endowed $\pi_{0,rel}(X(K_{\infty})/Q(K_{\infty}))$.
        
        Hence $t_q^{-1}\cdot [x_{\infty}(q)']= [\tilde x_{\infty}]$, and this is close to $[\tilde x_{\infty}]$ and thus also to $t_q^{-1}\cdot [x_{\infty}(q)]$. So, $t_q^{-1}\cdot [x_{\infty}(q)']$ is close to $t_q^{-1}\cdot [x_{\infty}(q)]$. However, both lie in the same (finite, and thus discrete) fiber of ${\pi_{0,rel}}(X(K_{\infty})/Q(K_{\infty})) \to Q(K_{\infty})$ above $t_q^{-1} \cdot q$, and so in fact $t_q^{-1}\cdot [x_{\infty}(q)']=t_q^{-1}\cdot [x_{\infty}(q)]$ and $[x_{\infty}(q)']=[x_{\infty}(q)]$, i.e.\ $x_{\infty}(q)'$ and $x_{\infty}(q)$ lie on the same connected component of $X_q(K_{\Omega})$, proving the claim as wished.
    \end{proof}

    \begin{proof}[Proof of (2).]
        We proceed by induction on $[E:K]$. Assume $E$ is not a field, write $E=E_1\oplus E_2$ with $[E_1:K], [E_2:K]<[E:K]$, and let $Q=Q_1 \times_K Q_2$ be the corresponding decomposition in quasi-trivial factors. Let $\pi: T \to Q$ be the epimorphism given by our setting, and  $T_i:=\pi^{-1}(Q_i),\, i=1,2$ (these are subtori of $T$). Consider the composition:
		\[
		  V \to Q= Q_1 \times_K Q_2 \to[pr_1] Q_1.
		\]
		We have:
		\begin{itemize}
			\item For each $q_1 \in Q_1(K)$, the fibration $V_{q_1}:=V \times_Q (\{q_1\} \times_K Q_2) \to \{q_1\} \times_K Q_2$ is $T_2$-equivariant and so satisfies \eqref{Eq:fibration2} by the induction hypothesis;
			\item The fibration $V \to Q_1$ is $T_1$-equivariant and so again satisfies \eqref{Eq:fibration2} by the induction hypothesis.
		\end{itemize}
		Combining the above we deduce \eqref{Eq:fibration2} for $V \to Q$, as wished.
    \end{proof}
    
	\section{Harpaz--Wittenberg descent theory}\label{SSec3.1}
	
	In this section we show how to deduce {\em solvable descent} (Theorem \ref{Thm:FAB} below) for rationally connected varieties from the fibration Theorem \ref{Thm: fibration}. We follow \cite{HW24}, where Harpaz and Wittenberg already (implicitly) proved this implication and used it in their proof of {\em supersolvable descent} (see \cite[Theorem 1.4]{HW24}).
	
	\begin{definition}
		Let $X$ be a variety over a field $k$. A {\em finite descent type} is a finite integral cover $Y \to X_{\overline k}$ such that $Y\to X$ is Galois (i.e.\ such that the field extension $\overline k (Y) /k(X)$ is Galois).
	\end{definition}
	
	\begin{definition}
		Let $\bar Y \to X_{\overline k}$ be a finite descent type. A {\em torsor of type }$\bar Y$ is a cover $Y \to X$ such that $Y_{\overline k} \cong \bar Y$ as $X_{\overline k}$-schemes.
	\end{definition}
	
	\begin{theorem}[Solvable Descent]\label{Thm:FAB}
		Let $X$ be a smooth geometrically integral variety over a number field $K$. Let $\bar Y$ be a rationally connected solvable descent type over $X$. Then
		\[
		X\left(K_{\Omega}\right)^{\operatorname{Br}_{\mathrm{ur}}(X)}=\overline{\bigcup_{f: Y \rightarrow X} f\left(Y\left(K_{\Omega}\right)^{\operatorname{Br}_{\mathrm{ur}}(Y)}\right)},
		\]
		where $f:Y \to X$ varies among all torsors of type $\bar Y$.
	\end{theorem}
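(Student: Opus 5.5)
The plan is to follow the Harpaz--Wittenberg descent formalism of \cite{HW24}, using Theorem \ref{Thm: fibration} as the missing fibration input. The inclusion ``$\supseteq$'' is formal: for a torsor $f:Y\to X$ we have $f^*\Br_{ur}X\subset \Br_{ur}Y$, so functoriality of the Brauer--Manin pairing sends $Y(K_\Omega)^{\Br_{ur}Y}$ into $X(K_\Omega)^{\Br_{ur}X}$, and the right-hand side is closed. For ``$\subseteq$'' we argue by induction on the order of the solvable group $G=\Gal(\overline K(\bar Y)/\overline K(X))$. Choose a normal series whose successive quotients are cyclic, or even of prime order. This lets us write $\bar Y\to X_{\overline K}$ as a tower of descent types $\bar Y\to \bar Y'\to X_{\overline K}$, where $\bar Y'\to X_{\overline K}$ is an abelian, in fact cyclic, descent type. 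Rational connectedness is inherited by the intermediate types as required by \cite[Section 2]{HW24}: they are dominated by $\bar Y$.

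The induction step decomposes into two parts. First comes the abelian step: descent for an abelian descent type. Following \cite[Section 2]{HW24}, a torsor of abelian type $\bar Y'$ is described, after a twist, as the pullback of a torsor under a finite abelian group scheme $A$. One embeds $A$ into a quasi-trivial torus, $1\to A\to T\to Q\to 1$ with $Q$ quasi-trivial. A class in $H^1(X,A)$ then produces a $T$-torsor $W\to X$ together with a $T$-equivariant morphism $W\to Q$. The fibers of $W\to Q$ over $q\in Q(K)$ are exactly the torsors $Y_q\to X$ of type $\bar Y'$. The adelic points of $X$ orthogonal to $\Br_{ur}X$ lift to adelic points of $W$ orthogonal to $\Br_{ur}W$, using that $W$ is a torsor under a quasi-trivial torus, which gives Hilbert~90 and Sansuc's exact sequence. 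Then Theorem \ref{Thm: fibration} applied to $W\to Q$ yields density of $\bigcup_q Y_q(K_\Omega)^{\Br_{ur}Y_q}$ in $W(K_\Omega)^{\Br_{ur}W}$. Its hypotheses are smoothness, rationally connected fibers, and the torus $T\to Q$ acting equivariantly. Projecting to $X$ gives the statement for $\bar Y'$. Second comes the iteration: apply the induction hypothesis to each torsor $Y'$ of type $\bar Y'$ with the solvable descent type $\bar Y\to Y'_{\overline K}$, whose group is smaller. Combine the two density statements by a standard closure argument, using that $\Br_{ur}$-unobstructed points are preserved.

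I expect the main obstacle to be the abelian step, specifically ensuring that the hypotheses of Theorem \ref{Thm: fibration} hold. There are two points to check. One is that the fibers of $W\to Q$ are rationally connected, which is needed even though $X$ itself need not be. The other is that the lifting of Brauer--Manin unobstructed points from $X$ to $W$ works with $\Br_{ur}$ rather than the full Brauer group. For these I would rely directly on the reduction in \cite[Section 2]{HW24}, where supersolvable descent is derived from exactly this kind of fibration statement. In their argument the fibration theorem was available only under a splitting hypothesis; here we replace it by Theorem \ref{Thm: fibration} in full generality, and the remaining deductions carry over verbatim.
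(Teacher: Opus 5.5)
Your overall architecture is the paper's: reduce to the abelian case, descend the abelian type to a torsor under a finite abelian $K$-group $A$, choose $1\to A\to T\to Q\to 1$ with $Q$ quasi-trivial, apply Theorem~\ref{Thm: fibration} to $Y\times^A T\to Q$, then iterate. However, the lifting step fails as you justify it.

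In that sequence only $Q$ can be taken quasi-trivial (\cite[Lemma 3.8]{HW24}). $T$ is an arbitrary torus, and in general it cannot also be quasi-trivial. To see this, take $A=\Z/8$ over $\Q$ and $Y=\mathrm{SL}_n\to X=\mathrm{SL}_n/A$. Every fibre of $W=Y\times^A T\to Q$ over a rational point is an $\mathrm{SL}_n$-torsor, hence isomorphic to $\mathrm{SL}_n$, so $W$ satisfies weak approximation. If $T$ were quasi-trivial, $W(\Q_v)\to X(\Q_v)$ would be onto by Hilbert~90, and $X$ would then satisfy weak approximation, contradicting Wang.

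So you cannot lift $(x_v)$ to a fixed $W$; you need a twist. Since $H^1(K,Q)=0$, every class in $H^1(K,T)$ comes from some $\sigma\in H^1(K,A)$. Descent for torsors under tori (\cite[Th\'eor\`eme 2.1]{HW18}, used as in \cite[\S 3B]{HW24}) uses the orthogonality of $(x_v)$ to $\Br_{ur}X$ to find such a $\sigma$, depending on $(x_v)_{v\in S}$. For this $\sigma$, a point close to $(x_v)_{v\in S}$ lifts to $W_\sigma(K_\Omega)^{\Br_{ur}W_\sigma}$, where $W_\sigma=Y_\sigma\times^A T$ is again of the required shape. Theorem~\ref{Thm: fibration} is then applied to $W_\sigma\to Q$. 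Note also that the existence of even one $A$-torsor of type $\bar Y'$ requires $X(K_\Omega)^{\Br_{ur}X}\neq\emptyset$ (\cite[Lemma 3.7]{HW24}).

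The d\'evissage also needs correcting. For $\bar Y/N\to X$ to be a descent type, $N$ must be normal in $\Gal(\overline K(\bar Y)/K(X))$, i.e.\ stable under the outer action of $\Gamma_K$ on $\bar G$. An arbitrary normal series with cyclic quotients does not have this property. For instance, if $\bar G\cong(\Z/p)^2$ with $\Gamma_K$ acting irreducibly, there is no $\Gamma_K$-stable $N$ with $\bar G/N$ nontrivial cyclic, so no such tower of descent types exists. Use the derived series instead, whose terms are characteristic; this is what the paper does, following \cite[\S 3C]{HW24}. Your abelian step never used cyclicity, so with these two corrections your argument becomes the paper's.
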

	
	\begin{proof}[Proof assuming Theorem \ref{Thm: fibration}.]
		As mentioned at the beginning of the section, this is implicitly proven in \cite{HW24}. For completeness, we fill the details of this argument.
		
		It suffices to prove this when $\bar G=\operatorname{Aut}(\bar Y/ X_{\oK})$ is abelian, and then proceed by induction on the derived series of $G$ as in \cite[Section 3C]{HW24}, to which we refer the reader for further details.
		
		Since $\bar G$ is abelian, the external action of $\Gal(X_{\oK}/X)=\Gamma_K$ on $\bar G$ induces a structure of $\Gamma_K$-module on $\bar G$, and a corresponding finite abelian group scheme $G/K$.
		
		We may assume without loss of generality that $X\left(K_{\Omega}\right)^{\operatorname{Br}_{\mathrm{ur}}(X)} \neq \emptyset$. Then by \cite[Lemma 3.7]{HW24}, the descent type $\bar Y \to X$ descends to a $G$-torsor $Y \to X$. (This actually only requires $X(K_{\Omega})^{\Bet(X)} \neq \emptyset$, see {\em loc.cit.}).
		
		Let now $S$ be a finite set of places of $K$ containing the archimedean ones, and let $(x_v)_{v \in S}$ be in $X(K_S)$. We wish to prove that there exists a $G$-twist $Y'$ of $Y$ and an adelic point $(y_v) \in Y'(K_{\Omega})^{\Br_{\mathrm{ur}}Y'}$ whose projection to $X$ approximates $x_v$ arbitrarily well for $v \in S$.
		
		By \cite[Lemma 3.8]{HW24}, the group $G$ fits in a short exact sequence $1 \to G \to T \to Q\to 1$, where $T$ and $Q$ are tori and $Q$ is quasi-trivial. Let $Z$ be the contracted product $Z=Y \times^G T$. The second projection $Z \to T/G=Q$ is a fibration whose fibers are $G$-twists of $Y$. The first projection $Z \to Y/G=X$ is a $T$-torsor. 
		
		Following the arguments of \cite[Section 3B]{HW24}, there exists a cocycle $\sigma \in H^1(K,G)$ such that a $K_{\Omega}$-point of $X$ arbitrarily near to $(x_v)_{v \in S}$ lifts to the $T$-torsor $Z_{\sigma} \coloneqq Y_{\sigma} \times^G T$. Thus, after replacing $Y$ with $Y_{\sigma}$ (and $Z$ with $Z_{\sigma}$), we may assume that the point $(x_v)_{v \in S}$ lifts to some $(z_v)_{v \in S} \in Z(K_v)$ and that $Z(K_{\Omega}) \neq \emptyset$. 
		
		Theorem \ref{Thm: fibration} gives:
		\begin{equation}\label{Eq:Fibration}
		Z\left(K_{\Omega}\right)_{\bullet}^{\mathrm{Br}_{\mathrm{ur}}(Z)}=\overline{\bigcup_{q \in Q(K)} Z_q\left(K_{\Omega}\right)_{\bullet}^{\mathrm{Br}_{\mathrm{ur}}\left(Z_q\right)}}.
		\end{equation}
		In fact, one may then find a $q \in Q(K)$ and a point $(z'_v)_{v \in M_K} \in Z_q(\A_K)^{\Br_{\mathrm{ur}}Z_q}$ that approximates $z_v$ for non-archimedean $v \in S$ and such that $z'_v$ and $z_v$ lie on the same connected component of $Z(K_v)$ for archimedean $v$. Note that the fibers of $Z \to Q$ are $G$-twists of $Y$, and let $Y' \coloneqq Z_q$. The projection of $z'_v$ to $X$ approximates $x_v$ arbitrarily well for non-archimedean $v \in S$ and lies on the same connected component of $x_v$ in $X(K_v)$ for archimedean $v$. For all archimedean $v$, we move $z'_v$ on its connected component in $Z_q(K_v)$ so that its projection to $X$ is equal to $x_v$. Since moving within the archimedean connected components does not interfere with the Brauer--Manin obstruction, after this modification we still have $(z'_v)_{v \in M_K} \in Z_q(K_{\Omega})^{\Br_{\mathrm{ur}}Z_q}=Y'(K_{\Omega})^{\Br_{\mathrm{ur}}Y'}$, and this point provides the sought lift.
	\end{proof} 
	
	We also borrow the following useful tool from Harpaz and Wittenberg's work. It appears implicitly in the proof of Théorème 4.2 of \cite{HW18}, but we restate it here for completeness, and add a variant with $X(K_{\Omega})_{dir}$ in place of $X(K_{\Omega})$:
	\begin{theorem}[Harpaz--Wittenberg base change]\label{Prop:HWbasechange}
		Let $f:X \to Q$ be a smooth proper morphism over a quasi-trivial torus $R_{E/K}\G_{m,E}$, and let $E \subset E'$ be an inclusion of finite étale $K$-algebras. Assume that, for all $c \in E^*$, letting $\phi_c:R_{E'/K}\G_{m,E'} \to R_{E/K}\G_{m,E}$ be the morphism $c \cdot N_{E'/E}$ (norm followed by translation by $c$), and defining $X'$ through the following cartesian diagram:
		\[
		\begin{tikzcd}
		X' \arrow[d,"\psi_c"'] \arrow[r, "f'"] & Q' \arrow[d, "\phi_c"] \\
		X \arrow[r, "f"]               & Q   ,                  
		\end{tikzcd}
		\]
		the identity
		\[
		X'(K_{\Omega})^{\Br_{ur}X'}=\overline{\bigcup_{q' \in Q'(K)} X'_{q'}(\A_K)^{\Br X'_{q'}}}
		\]
		holds. Then
		\[
		X(K_{\Omega})^{\Br_{ur}X}=\overline{\bigcup_{q \in Q(K)} X_q(\A_K)^{\Br X_q}}.
		\]
        The same statement holds after replacing $X(K_{\Omega}),X'(K_{\Omega}),X_q(\A_K),$ and $X'_{q'}(\A_K)$ by $X(K_{\Omega})_{dir},X'(K_{\Omega})_{dir},X_q(\A_K)_{\bullet},$ and $X'_{q'}(\A_K)_{\bullet}$, respectively. 
	\end{theorem}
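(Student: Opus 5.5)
The plan is to deduce the fibration identity for $X \to Q$ from the one for $X' \to Q'$ by pushing points forward along $\psi_c$, choosing $c$ so that the given local points of $X$ lift to $X'$. The inclusion ``$\supseteq$'' is formal: each fiber $X_q$ sits inside $X$, restriction $\Br_{ur}X \to \Br X_q$ is compatible with the pairing, and $X(K_\Omega)^{\Br_{ur}X}$ is closed. So the content is ``$\subseteq$''. Fix $(x_v) \in X(K_\Omega)^{\Br_{ur}X}$, a finite set $S$ of places containing the archimedean ones, and a neighbourhood $\prod_{v\in S} U_v$ of $(x_v)_{v \in S}$. We must find $q\in Q(K)$ and a point of $X_q(\A_K)^{\Br X_q}$ in this neighbourhood.

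\textbf{Step 1 (choice of $c$).} Put $q_v=f(x_v)\in E_v^*$. Using weak approximation on the quasi-trivial torus $Q$ (it is rational), pick $c\in E^*$ such that $c^{-1}q_v$ is a norm from $E'_v$ for every $v\in S$. For instance, take $c$ so close to $q_v$ that $c^{-1}q_v$ lies in the open subgroup $N(E'^*_v)$; at archimedean $v$ ask instead that $c^{-1}q_v$ lie in the connected component of the identity, which consists of norms. Then choose $q'_v\in E'^*_v$ with $c\,N_{E'/E}(q'_v)=q_v$. Since $X'=X\times_Q Q'$, the pair $x'_v=(x_v,q'_v)$ is a point of $X'(K_v)$ with $\psi_c(x'_v)=x_v$. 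For $v\notin S$ we still need some lift of $x_v$. Here I would use the freedom to move $x_v$ for $v \notin S$: this does not change the Brauer pairing, since $\Br_{ur}X$ is finite modulo constants and the evaluation maps are locally constant and trivial at almost all places. I then choose $c$ to be a unit at the remaining bad places, so that $c^{-1}q_v$ is an integral unit at places unramified in $E'$, where units are norms.

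\textbf{Step 2 (the Brauer condition and applying the hypothesis).} The point $(x'_v)$ is orthogonal to $\psi_c^*\Br_{ur}X$ by functoriality. The difficulty is that $\Br_{ur}X'$ may be strictly larger than $\psi_c^*\Br_{ur}X$, so $(x'_v)$ need not be orthogonal to all of it. I expect this to be the main obstacle. The plan is the Harpaz--Wittenberg trick: $\Br_{ur}X'/\psi_c^*\Br_{ur}X$ (modulo constants) is controlled by vertical classes coming from the base $Q'$ together with finitely many classes independent of $c$. One then perturbs the choice of the lifts $q'_v$ within the fibers of the norm map over $v\notin S$ (a Harari formal-lemma style argument on the torus $R^1_{E'/E}\G_m$ of norm-one elements), which is possible because these extra classes pair nontrivially with the norm-one tori, so as to kill the pairing with these finitely many extra classes. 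After this, $(x'_v)\in X'(K_\Omega)^{\Br_{ur}X'}$.

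\textbf{Step 3 (conclusion and the directional variant).} The hypothesis now gives $q'\in Q'(K)$ and a point of $X'_{q'}(\A_K)^{\Br X'_{q'}}$ close to $(x'_v)_{v\in S}$. Set $q=\phi_c(q')\in Q(K)$. Then $X'_{q'}\cong X_q$ via $\psi_c$, so this point is a point of $X_q(\A_K)^{\Br X_q}$ whose $S$-components, by continuity of $\psi_c$, lie close to $x_v$. For the variant with $X(K_\Omega)_{dir}$ and $X_q(\A_K)_\bullet$, the same argument works after noting two things. First, $\phi_c$ is equivariant for the diagonal $\R_{>0}$-actions, up to the homogeneity of the norm: $N(t\,y)=t^{[E':E]}N(y)$ and $t\mapsto t^{[E':E]}$ is a bijection of $\R_{>0}$. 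Second, $\psi_c$ induces a map on $\pi_{0,rel}$ compatible with these actions. So in Step 1 only the archimedean direction and component of $x_v$ need to be lifted, which is easier than lifting the point itself.
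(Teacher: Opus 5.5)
Your Step 3 and your treatment of the directional variant agree with the paper, and the inclusion $\supseteq$ is indeed formal. Step 2, however, has a genuine gap, and it is exactly the nontrivial content of the theorem.

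\textbf{Why Step 2 fails.} Having fixed $c$ in Step 1 by local lifting conditions alone, you need the lift $(x'_v)$ to be orthogonal to all of $\Br_{ur}X'_c$. The mechanism you propose cannot achieve this in general.
\begin{itemize}
\item Elements of $\Br_{ur}X'_c$ are unramified on a smooth compactification. So outside a finite set of places their local evaluation vanishes identically on $X'_c(K_v)$, and perturbing the lifts at such places changes nothing.
\item Harari's formal lemma only removes pairings with \emph{ramified} classes; it takes orthogonality to the unramified ones as input.
\item The fibres of $\psi_c$ are torsors under $T=R_{E/K}R^1_{E'/E}\G_m$. Moving the lifts inside these fibres at the finitely many relevant places only moves the obstruction within a subgroup. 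Already for the classes coming from the torsor structure, Sansuc's exact sequence for the defect of weak approximation of $T$ leaves a residual part controlled by $\mathrm{Sha}^1(K,T)$. Concretely, these are elements of $E^*$ that are local norms from $E'$ everywhere but not global norms. This part can only be removed by replacing $c$ with $ca$ for such an $a$, i.e.\ by changing the torsor.
\item Beyond these, $\Br_{ur}X'_c$ may contain further classes not accounted for by $\psi_c^*\Br_{ur}X$ and the torsor structure. Your assertion that the quotient is governed by vertical classes from $Q'$ plus finitely many $c$-independent classes is unsupported.
\end{itemize}
This is precisely why the statement assumes the hypothesis for \emph{every} $c\in E^*$. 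The right $c$ must be chosen according to the Brauer--Manin pairing of the adelic point, not merely according to local liftability.

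\textbf{What the paper does instead.} The paper does not reprove this step. It invokes Harpaz--Wittenberg's descent theorem for torsors under tori at the level of unramified Brauer groups \cite[Th\'eor\`eme 2.1]{HW18}, which gives
$$X(K_\Omega)^{\Br_{ur}X}\subset\overline{\bigcup_{c\in E^*}\psi_c\bigl(X'_c(K_\Omega)^{\Br_{ur}X'_c}\bigr)}.$$
It then applies the hypothesis to each $X'_c$ and uses $X'_{q'}\cong X_{\phi_c(q')}$, exactly as in your Step 3. Replacing your Steps 1--2 with this input, or with a genuine proof of it, repairs the argument.

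\textbf{A minor point in Step 1.} You cannot in general make $c$ a unit at every place outside $S$. At a place where $c$ is not a unit, $c^{-1}q_v$ with $q_v$ a unit need not be a norm from $E'_v$; you would need, for instance, a Chebotarev-type choice of $c$ whose extra prime divisors split completely in $E'$. This becomes moot once Step 2 is replaced as above.
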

    
	\begin{proof}
		For the first part, see the proof of \cite[Théorème 4.2]{HW18}. We include the argument for completeness.

        It suffices to prove the inclusion ``$\subset$'' as the other one is immediate. By \cite[Théorème 2.1]{HW18}, we have
        \begin{equation}\label{Lift}
            X(K_{\Omega})^{\Br_{ur}X} \subset \overline{\bigcup_{c \in E^*} \psi_c\left(X'_c(K_{\Omega})^{\Br_{ur}X'_c}\right)},
        \end{equation}
        where we are temporarily emphasizing the dependence of $X'$ on $c$ via a subscript. Thus, our assumption gives:
        \begin{align*}
            X(K_{\Omega})^{\Br_{ur}X} \subset & \overline{\bigcup_{c \in E^*} \psi_c(X'_c(K_{\Omega}))^{\Br_{ur}X'_c}} = \overline{\bigcup_{c \in E^*} \psi_c\left(\overline{\bigcup_{q' \in Q'(K)} (X'_{c})_{q'}(\A_K)^{\Br (X'_{c})_{q'}}}\right)} \\ 
            \subset  & \overline{\bigcup_{c \in E^*} \psi_c\left({\bigcup_{q' \in Q'(K)} (X'_{c})_{q'}(\A_K)^{\Br (X'_{c})_{q'}}}\right)} \\
            = & \overline{\bigcup_{c \in E^*} \bigcup_{q' \in Q'(K)} \psi_c\left({(X'_{c})_{q'}(\A_K)^{\Br (X'_{c})_{q'}}}\right)} \subset \overline{\bigcup_{q \in Q(K)}\left({X_q(\A_K)^{\Br X_q}}\right)},
        \end{align*}
        which proves the first part.

        \vskip1mm

        For the second part, note that the map $\phi_c:Q'(K_{\infty}) \to Q(K_{\infty})$ satisfies $\phi_c(t \cdot q') = t^{[E':E]}\cdot \phi_c(q')$ for $t \in \R_{>0}$ and $q' \in Q'(K_{\infty})$. This equivariance induces quotient maps $\phi_c:Q'(K_{\infty})_{\text{dir}} \to Q(K_{\infty})_{\text{dir}}$ and $\psi_c:X'(K_{\infty})_{\text{dir}} \to X(K_{\infty})_{\text{dir}}$ of $\phi_c:Q'(K_{\infty}) \to Q(K_{\infty})$ and  $\psi_c:X'(K_{\infty}) \to X(K_{\infty})$, respectively. Accordingly, \eqref{Lift} induces an inclusion
        \[
        X(K_{\Omega})_{\text{dir}}^{\Br_{ur}X} \subset \overline{\bigcup_{c \in E^*} \psi_c\left(X'_c(K_{\Omega})_{\text{dir}}^{\Br_{ur}X'_c}\right)},
        \]
        and the whole sequence of inclusions above still holds after the replacements in the statement.
	\end{proof}
    
    We shall use Theorem \ref{Prop:HWbasechange} repeatedly to make reductions in proving Theorem \ref{Thm: fibrationnew} (see Proposition \ref{PropReductions}). Applying it with $E'$ equal to the Galois closure of $E$ we immediately get the first:
    \begin{corollary}
        Assume Theorem \ref{Thm: fibrationnew} holds when:
        \begin{enumerate}[label=$(R_{\arabic*})$]
            \item The extension $E/K$ is Galois.
        \end{enumerate}
        Then Theorem \ref{Thm: fibrationnew} holds.
    \end{corollary}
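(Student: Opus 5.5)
The plan is to apply Theorem \ref{Prop:HWbasechange}, in its $X(K_{\Omega})_{dir}$ variant, with $E'$ the Galois closure of $E$ over $K$. Let $f:X\to Q=R_{E/K}\G_m$ be as in Theorem \ref{Thm: fibrationnew}. For each $c\in E^*$ we form the base change $X'=X\times_{Q,\phi_c}Q'$, where $Q'=R_{E'/K}\G_m$ and $\phi_c=c\cdot N_{E'/E}$. Theorem \ref{Prop:HWbasechange} then reduces the identity for $f$ to the same identity for every $f':X'\to Q'$. Since $E'/K$ is Galois, that identity is exactly Theorem \ref{Thm: fibrationnew} under $(R_1)$, which we are assuming. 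So the whole argument comes down to checking that each $f'$ satisfies the hypotheses of Theorem \ref{Thm: fibrationnew}.

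The checks are as follows.
\begin{enumerate}[label=(\roman*)]
\item \emph{Smoothness, properness, fibers.} The morphism $f'$ is the base change of $f$, so it is smooth and proper. Its fibers are fibers of $f$, hence rationally connected.
\item \emph{Regularity of $X'$.} The total space $X'$ is smooth over $K$, since it is smooth over the smooth variety $Q'$. So $\Br_{ur}X'=\Br X'$ causes no trouble.
\item \emph{The torus action.} This is the main point. Let $T'\subset T\times_Q Q'$ be the reduced identity component of the fiber product of $T\to Q$ and the homomorphism $N_{E'/E}:Q'\to Q$. It is a torus. Let $T'$ act on $X'=X\times_Q Q'$ diagonally: by $t'=(t,q'')$ acting as $(x,q')\mapsto (t\cdot x,\, q''q')$. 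This is well defined because $f(t x)=\pi(t)f(x)$ and $\phi_c(q''q')=N(q'')\phi_c(q')=\pi(t)\phi_c(q')$. It makes $f'$ equivariant for the projection $T'\to Q'$, so $f'$ is $T'$-equivariant.
\end{enumerate}

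It remains to see that $T'\to Q'$ is an epimorphism. The fiber product $T\times_Q Q'\to Q'$ is surjective, being the base change of the surjection $T\to Q$. Its kernel is $\ker(T\to Q)$, which is of multiplicative type. So $T\times_Q Q'$ is an extension of $Q'$ by a group of multiplicative type, and its identity component still surjects onto the connected group $Q'$. Hence $T'\to Q'$ is surjective.

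Once these checks are done, Theorem \ref{Prop:HWbasechange} yields the conclusion. I expect the only mildly delicate step to be producing the torus $T'$ and verifying surjectivity of $T'\to Q'$, together with confirming that the $dir$-variant applies verbatim. The latter is already covered by the second part of Theorem \ref{Prop:HWbasechange}.
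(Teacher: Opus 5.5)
Your proposal is correct and follows the paper's route. The paper obtains this corollary in one line by applying Theorem~\ref{Prop:HWbasechange}, in its $dir$ form, with $E'$ the Galois closure of $E$; your construction of the torus $T'\subset T\times_{Q,N_{E'/E}}Q'$ acting diagonally on $X'$, together with the surjectivity of $T'\to Q'$, supplies the check, left implicit in the paper, that each $f'$ satisfies the hypotheses of Theorem~\ref{Thm: fibrationnew}.

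One harmless slip: $X'$ is proper only over $Q'$, not over $K$, so the equality $\Br_{ur}X'=\Br X'$ fails in general. You never need it, since both theorems are stated with $\Br_{ur}$.
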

	
	\section{Arithmetic symbols}\label{Sec4}
	
	This section is dedicated to arithmetic symbols. Lemma \ref{Lem:reciprocity} below, which is likely known to experts, tells that certain spin symbols \cite{SPIN} of primes are trivial up to local terms at small places. We name the lemma ``spin reciprocity''. Together with the classical reciprocity for power-symbols \cite[Theorem VI.8.3]{NeukirchBook}, this plays an important role in the proof of Theorem \ref{ThmShafarevich}. In Subsection \ref{Ssec4.1} we introduce what we call {\em half-spin symbols}, while in Subsection \ref{Ssec4.2} we lay out our conventions on {\em generalized Redéi symbols}, mostly following \cite{KimMori}.

    \vskip1mm
	
	Fix in this section a number field $E$ and a natural number $n$ such that $\mu_n \subset E^{*}$.
	
	\begin{lemma}[Spin reciprocity]\label{Lem:reciprocity}
		Let $\tau:E \to E$ be an involution such that that $\zeta_n^{\tau}=\zeta_n^{-1}$. Let $S$ be a finite set of places of $E$ containing those dividing $\Delta_{E/E^{\tau}},n$ or $\infty$, and such that $\tau(S)=S$. For a prime element $p$ of $\cO_{E,S}$ such that $p \cdot \cO_{E,S} \neq p^{\tau}\cdot \cO_{E,S}$, we have 
		\begin{equation}\label{Eq: Legendre rec}
		\leg{p}{p^{\tau}\cdot \cO_{E,S}}_{n}=\prod_{v \in S} (p,p^{\tau}-p)_{w,n}.
		\end{equation}
	\end{lemma}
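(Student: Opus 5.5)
The plan is to apply the global Hilbert reciprocity law $\prod_{w \in M_E}(a,b)_{w,n}=1$ to the pair $a=p$, $b=p^{\tau}-p$, and to check that every local term outside $S$ is either trivial or equals the power residue symbol on the left of \eqref{Eq: Legendre rec}. Note first that $p^{\tau}-p\neq 0$, since $p\cO_{E,S}\neq p^{\tau}\cO_{E,S}$. Let $\mathfrak p=p\cO_{E,S}$, which is prime, and $\mathfrak p^{\tau}=p^{\tau}\cO_{E,S}$. The identity then reads $\prod_{w\notin S}(p,p^\tau-p)_{w,n}=\prod_{w\in S}(p,p^\tau-p)_{w,n}^{-1}$. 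The orientation and sign conventions will be fixed at the end; the right-hand side of \eqref{Eq: Legendre rec} may need to be read with the inverse, or with the arguments swapped, according to the paper's convention.

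First, I would compute the terms $w\notin S$ with $w\neq\mathfrak p$. There $p$ is a unit, so $(p,b)_{w,n}=\leg{p}{w}_n^{\pm v_w(b)}$ with $b=p^{\tau}-p$. The key observation is the factorisation $p^{\tau}-p=-(p-p^{\tau})$, together with the fact that $\tau$ acts on $b$ by $b^{\tau}=-b$. Hence the support of $b$ away from $S$ is $\tau$-stable, with $v_w(b)=v_{\tau w}(b)$. I would pair each $w$ with $\tau w$. Using $\zeta_n^{\tau}=\zeta_n^{-1}$, one gets $\leg{p^{\tau}}{\tau w}_n=\leg{p}{w}_n^{\tau}=\leg{p}{w}_n^{-1}$. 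Moreover, modulo $w$ the elements $p$ and $p^{\tau}$ are congruent whenever $w\mid b$. So $\leg{p}{w}_n=\leg{p^\tau}{w}_n$, and applying $\tau$ gives $\leg{p}{\tau w}_n=\leg{p^{\tau}}{w}_n^{-1}=\leg{p}{w}_n^{-1}$. Thus the contributions of $w$ and $\tau w$ cancel.

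It remains to treat the $\tau$-fixed places $w=\tau w$ dividing $b$. For these, $\leg{p}{w}_n$ is fixed by $\tau$ and also inverted by $\tau$, so its square is $1$. I expect this to be the main obstacle: handling $n$ even and the $\tau$-fixed primes, possibly together with ramification of $w$ in $E/E^\tau$, which is excluded by $S\supset \Delta_{E/E^\tau}$. The most promising route is to show that $\leg{p}{w}_n$ is an $n$-th root of unity lying in the residue field of $E^{\tau}$ at $w$. I would argue directly: when $w$ is inert in $E/E^{\tau}$, $\tau$ acts on the residue field as the Frobenius $x\mapsto x^{q}$, with $q=N(w\cap E^\tau)$. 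From $\zeta^{q}=\zeta^{-1}$ one gets $q\equiv-1 \pmod n$. Since $p\equiv p^{\tau}=p^{q}$ mod $w$, the residue of $p$ lies in $\F_q^*$. Its $(q^2-1)/n$ power is then $(p^{q-1})^{(q+1)/n}=1$. When $w$ is split or fixed with trivial residue action, $\tau$ would act trivially on $\mu_n$ mod $w$, which forces $n\le 2$; that case can be handled by a separate sign check.

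Next comes the term at $w=\mathfrak p$. There $b\equiv p^{\tau}$ is a unit modulo $\mathfrak p$, because $\mathfrak p\neq\mathfrak p^{\tau}$ and the two are coprime. So $(p,b)_{\mathfrak p,n}=\leg{p^{\tau}}{\mathfrak p}_n^{\pm1}$. Applying $\tau$ and $\zeta_n^\tau=\zeta_n^{-1}$ converts this into $\leg{p}{\mathfrak p^\tau}_n^{\mp1}$, which is the left side of \eqref{Eq: Legendre rec}. Finally, collecting all the terms and matching signs against the convention $(\pi,a)_{n,\mathcal P}=\leg{a}{\mathcal P}$ yields the statement.
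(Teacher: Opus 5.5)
Your proposal is correct and follows the paper's proof. Both arguments apply Hilbert's product formula to $(p,p^{\tau}-p)$, cancel the contributions of $w$ and $\tau w$ using $p\equiv p^{\tau}$ modulo $w$ and $\zeta_n^{\tau}=\zeta_n^{-1}$, kill the $\tau$-fixed places by the inert-residue-field argument with $n\mid Nw+1$, and read off the remaining term at $p\cdot\cO_{E,S}$ via $(\pi,a)_{n,\mathcal P}=\leg{a}{\mathcal P}$.

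Your extra case of $\tau$-fixed places that are split or have trivial residue action is vacuous. A $\tau$-fixed place outside $S$ is unramified in $E/E^{\tau}$ and therefore inert, so no separate sign check is needed. With the paper's stated convention, the signs come out exactly as in the statement, with no inversion.
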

	
	\begin{proof}
		We claim that for all $x \in \cO_{E,S}$ and $v\notin S$ such that $v(x)=0$, we have $\prod_{w \in \{v,v^{\tau}\}}(x,x^{\tau}-x)_{w,n}=1$.
		
		The claim is clear if $v(x^{\tau}-x)=0$. Otherwise, if  $e\coloneqq v(x^{\tau}-x)>0$ and $v^{\tau} \neq v$, we have $\leg{x}{v}_n\leg{x}{v^{\tau}}_n=\leg{x}{v}_n\leg{x^{\tau}}{v}_n^{\tau}=\leg{x}{v}_n\leg{x^{\tau}}{v}_n^{-1}=\leg{x}{v}_n\leg{x}{v}_n^{-1}=1$, where in the penultimate identity we use that $x \equiv x^{\tau} \bmod v$. Hence $(x,x^{\tau}-x)_{v,n}(x,x^{\tau}-x)_{v^{\tau},n}=\leg{x}{v}_n^e\leg{x}{v^{\tau}}_n^e=1$. Finally, assume $v(x^{\tau}-x)>0$ and $v^{\tau} = v$. Then $v$ is inert above the place $w$ of $E^{\tau}$ under it, and the extension $\F_w \subset \F_v$ has degree $2$. Thus $x \in \F_v^*$ is an $(Nw+1)$-power. However, the hypothesis on $\zeta_n$ implies that $n$ divides $Nw+1$: indeed, $\tau$ reduces  $\bmod\,v$ to the Frobenius automorphism of the quadratic extension $\F_v/\F_w$, so $\zeta_n^{\tau} \equiv \zeta_n^{Nw} \bmod v$ and hence $\zeta_n^{\tau} = \zeta_n^{Nw}$, and using $\zeta_n^{\tau}=\zeta_n^{-1}$ we get $\zeta_n^{Nw+1}= 1$. Thus $x \in \F_v^*$ is also an $n$-th power, hence $\leg{x}{v}_n=1$, and $(x,x^{\tau}-x)_{v,n}=\leg{x}{v}^{v(x^{\tau}-x)}_n=1$. 
		
		Now, using the claim and Hilbert's product formula we infer that $1=\linebreak \prod_{v \in S \cup \{v_p,v_p^{\tau}\}} (p,p^{\tau}-p)_{v,n}$, where $v_p$ denotes the place of $E$ corresponding to $p$. This concludes the proof after observing that $(p,p^{\tau}-p)_{v_p^{\tau},n}=1$ while $(p,p^{\tau}-p)_{v_p,n}=\leg{p^{\tau}}{p\cdot \cO_{E,S}}=\leg{p}{p^{\tau}\cdot \cO_{E,S}}^{\tau}=\leg{p}{p^{\tau}\cdot \cO_{E,S}}^{-1}$, where the latter holds because of the identity $\zeta_n^{\tau}=\zeta_n^{-1}$. 
	\end{proof}
	
	\subsection{Half-spin symbols}\label{Ssec4.1}
	
	In this subsection, let $\tau :E \to E$ be an involution such that $\zeta_n^{\tau}=\zeta_n$, and $S$ a finite set of places of $E$ such that $\tau(S)=S$ and that contains the ones dividing $n$, $\infty$ or $\Delta_{E/E^{\tau}}$.
	
	\begin{definition}\label{Def:SR}
		For $x \in \cO_{E,S}$ coprime with its conjugate $x^{\tau}$, the {\em half-spin symbol} is the power-residue symbol
		\[
		\operatorname{HS}(x) \coloneqq \leg{x}{\mathcal I_x}_n,
		\]
		where $\mathcal I_x$ denotes the unique prime ideal of $\cO_{E^{\tau},S}$ such that $\cI_x \cdot \cO_{E,S}=(x-x^{\tau})\cdot \cO_{E,S}$.
	\end{definition}
	
	Such an ideal $\cI_x$ always exists as $(x-x^{\tau})\cdot \cO_{E,S}$ is a $\tau$-invariant ideal and $S$ contains all ramified primes for $E/E^{\tau}$. To make sense of the symbol $\leg{x}{\mathcal I_x}_{n}$ note that, even though $x$ does not necessarily belong to $\cO_{E^{\tau},S}$, its reduction modulo $\mathcal I_x\cdot \cO_{E,S}$ lies in $\cO_{E^{\tau},S}/\mathcal I_x \subset \cO_{E,S}/\mathcal I_x\cdot \cO_{E,S}$.
		
	\vskip2mm
	
	The definition of $\operatorname{HS}(x)$ depends of course on $E,\tau,S$ and $n$. We shall often emphasize the dependence on this data (or a subset of this data) by including them as a subscript, e.g.\ as in $\operatorname{HS}_{E,\tau,S,n}(x), \operatorname{HS}_{E,\tau,S}(x)$, etc...
	
	\vskip1mm
	
	The first point of the following proposition shows that the square of the half-spin symbols recovers, up to local factors at $S$, the spin symbol $\operatorname{Spin}_S(x)=\leg{x^{\tau}}{x\cdot \cO_{E,S}}_n$, hence the name.
	
	\begin{proposition}\label{PropSelfResidues}            
		Let $x \in \mathcal O_{E,S}$ be coprime with $x^{\tau}$. The following hold:
		\begin{enumerate}[label={\rm (\arabic*)}]
			\item \makebox[\linewidth][c]{
				$\begin{aligned}[t]
				\operatorname{HS}_S(x)^2=\leg{x^{\tau}}{x\cdot \cO_{E,S}}_n\cdot \prod_{v \in S}(x^{\tau}-x,x)_{n,v}.
				\end{aligned}$}
			\item If $y$ is an element of $\mathcal O_{E,S}$ that is coprime in $\cO_{E,S}$ with both $x$ and $y^{\tau}$, we have
            \begin{multline*}
                \operatorname{HS}_S(xy)\operatorname{HS}_S(x)^{-1}\operatorname{HS}_S(y)^{-1}\leg{y^{\tau}}{x\cdot \cO_{E,S}}_n \\ =\prod_{v\in S}(N(x)/y_2,z_2/x_2)_{n,v}\cdot (-x_2,z_2)_{n,v},
            \end{multline*}
            where $x_2 \coloneqq \sqrt D \cdot (x-x^{\tau}), y_2 \coloneqq \sqrt D \cdot (y-y^{\tau}), z_2 \coloneqq \sqrt D \cdot (xy-(xy)^{\tau})$ for any $D \in E^{\tau}$ such that $E=E^{\tau}(\sqrt D)$.
			\item If $y$ is an element of $\mathcal O_{E,S}$ that is coprime in $\cO_{E,S}$ with both $x$ and $y^{\tau}$, is sufficiently close to $1$ for all non-archimedean $v\in S$ and its Minkowski direction is sufficently close to that of $1$, then
			\[
			\operatorname{HS}_S(xy)\operatorname{HS}_S(x)^{-1}\operatorname{HS}_S(y)^{-1}\leg{y^{\tau}}{x\cdot \cO_{E,S}}_n=1.
			\]
		\end{enumerate}  
	\end{proposition}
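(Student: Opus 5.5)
Throughout, the tool is Hilbert reciprocity for the order-$n$ Hilbert symbol over $E$. We express each power-residue symbol as a product of local Hilbert symbols at places outside $S$, then use $\prod_{v}(a,b)_{n,v}=1$ to move everything onto $S$. The key local input is this: for $v\notin S$ and $a,b\in E^*$, $(a,b)_{n,v}$ is the tame symbol
\[
\leg{(-1)^{v(a)v(b)}a^{v(b)}b^{-v(a)}}{v}_n .
\]
We also use that $\tau$ fixes $\zeta_n$, so $\leg{a^{\tau}}{v^{\tau}}_n=\leg{a}{v}_n$.

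\emph{(1).} Write $\cI_x\cO_{E,S}=(x-x^{\tau})\cO_{E,S}$. For any prime $\mathfrak p$ of $E^{\tau}$ dividing $\cI_x$, either $\mathfrak p$ splits as $\mathfrak P\mathfrak P^{\tau}$ or it is inert.
\begin{enumerate}
\item[(a)] In the split case, $\leg{x}{\mathfrak p}_n=\leg{x}{\mathfrak P}_n$, and this equals $\leg{x}{\mathfrak P^{\tau}}_n$ because $x\equiv x^{\tau}$ modulo both primes. Hence $\leg{x}{\mathfrak p}_n^2=\leg{x}{\mathfrak p\cO_{E}}_n$.
\item[(b)] In the inert case, the residue symbol over $E$ at $\mathfrak p\cO_E$ of an element of $\F_{\mathfrak p}$ is the square of the symbol over $E^{\tau}$. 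This is because $(N\mathfrak P-1)/n=(N\mathfrak p+1)(N\mathfrak p-1)/n$ and $a^{N\mathfrak p+1}=a^2$ for $a\in\F_{\mathfrak p}$.
\end{enumerate}
So in both cases $\operatorname{HS}(x)^2=\leg{x}{(x-x^{\tau})\cO_{E,S}}_n$. Taking into account that the $v$-adic valuation of $x^{\tau}-x$ may exceed one, this equals
\[
\prod_{v\notin S,\ v\mid x-x^{\tau}}(x^{\tau}-x,x)_{n,v}^{\pm1}
\]
for a fixed global sign convention. By the tame formula, at the places $v\mid x$ (where $v(x^{\tau}-x)=0$ since $x$ is coprime to $x^{\tau}$) the remaining symbols give $\leg{x^{\tau}}{x\cO_{E,S}}_n$ up to the same convention. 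Reciprocity then yields (1) after we fix the orientation of the symbol.

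\emph{(2).} This is the bilinearity defect of $\operatorname{HS}$. Write $x_2,y_2,z_2$ for the ``imaginary parts''. These lie in $E^{\tau}$, and $\cI_x,\cI_y,\cI_{xy}$ are generated by them away from $S$.
\begin{enumerate}
\item[(a)] Express each $\operatorname{HS}$ as a product of Hilbert symbols over $E^{\tau}$ at places outside $S$, namely $\operatorname{HS}(x)=\prod_{v\notin S}(x_2,x)_{n,v}$ read via $N_{E/E^{\tau}}$ or a suitable restriction.
\item[(b)] Use the congruence $xy\equiv x\,y^{\tau}$ modulo $\cI_x$. It holds because $x\equiv x^{\tau}$ there. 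We expect it to give $N(x)\equiv x^2$ and to relate $z_2$ to $x_2,y_2$ via $z_2=x y_2+y^{\tau}x_2$.
\item[(c)] Rewrite the ratio $\operatorname{HS}(xy)/(\operatorname{HS}(x)\operatorname{HS}(y))$ as a product of tame symbols of the form $(N(x)/y_2,\,z_2/x_2)_{n,v}(-x_2,z_2)_{n,v}$ over $v\notin S$. The factor $\leg{y^{\tau}}{x}$ compensates the contributions at $v\mid x$.
\item[(d)] Apply reciprocity to move the product onto $S$.
\end{enumerate}
The main obstacle is the bookkeeping in step (c). One has to check, prime by prime (primes of $x_2$, $y_2$, $z_2$, and $x$), that the tame symbols reproduce exactly the residue symbols, including signs and the inert/split dichotomy as in (1). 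I would first do the case where all relevant primes split in $E/E^{\tau}$ and $v$ appears to valuation one, then extend using multiplicativity.

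\emph{(3).} This follows from (2) by continuity of Hilbert symbols. If $y$ is $v$-adically close to $1$ at finite $v\in S$, then $y_2$ is close to $\sqrt D(1-1)$ in a suitable normalized sense, and $z_2/x_2$ is close to $y$ and hence to $1$. The archimedean condition on the direction controls the signs at real places, where symbols are only $\pm1$ and depend only on signs. So each local factor is $1$.
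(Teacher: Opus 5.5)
Your argument for (1) is the paper's: power reciprocity for the coprime pair $x$, $x^{\tau}-x$, together with $\leg{x}{\cI_x\cO_{E,S}}_n=\leg{x}{\cI_x}_n^2$. Your split/inert check supplies the justification of that last identity, which the paper only asserts. Likewise, (3) follows from (2) by continuity exactly as in the paper. The genuine gap is in (2), which is where all the content lies.

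Step (a) is false. Pairing $x_2$ with $x$ over $E$, or (by the projection formula) with $N(x)$ over $E^{\tau}$, gives at each $\mathfrak p\mid\cI_x$ the factor $\leg{N(x)}{\mathfrak p}_n^{\pm e}=\leg{x}{\mathfrak p}_n^{\pm 2e}$, where $e=v_{\mathfrak p}(\cI_x)$. This is exactly your computation in (1). Such products therefore only ever see $\operatorname{HS}(x)^{\pm2}$, and for even $n$ the half-spin symbol cannot be recovered from its square.

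Step (c) is just the claimed identity transported outside $S$ by reciprocity over $E^{\tau}$. It does not hold prime by prime, so the verification you postpone cannot be done locally. Take a prime $\mathfrak p$ of $E^{\tau}$ dividing $\cI_x$ but not $\cI_y\cI_{xy}D$. There $z_2/y_2\equiv x$ and $N(x)\equiv x^2 \bmod \mathfrak p$, so $(N(x)/y_2,z_2/x_2)_{n,\mathfrak p}(-x_2,z_2)_{n,\mathfrak p}=\leg{N(x)z_2/y_2}{\mathfrak p}_n^{\pm e}=\leg{x}{\mathfrak p}_n^{\pm 3e}$. The left side of (2), however, contributes $\leg{x}{\mathfrak p}_n^{-e}$ at $\mathfrak p$. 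Also, your congruence in (b) is off: $x\equiv x^{\tau}$ mod $\cI_x$ gives $xy\equiv x^{\tau}y$, not $xy\equiv xy^{\tau}$.

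The missing idea is how the paper makes the ``half'' visible over $E^{\tau}$. You wrote down the key relation $z_2=yx_2+x^{\tau}y_2$ but did not exploit it. Combined with the coprimality hypotheses, it shows two things:
\begin{itemize}
\item At every prime, the two smallest of $v_{\mathfrak p}(\cI_x),v_{\mathfrak p}(\cI_y),v_{\mathfrak p}(\cI_{xy})$ coincide. Hence $\cI_x=\cI'_x\mathfrak d$, $\cI_y=\cI'_y\mathfrak d$, $\cI_{xy}=\cI'_{xy}\mathfrak d$ with $\cI'_x,\cI'_y,\cI'_{xy}$ pairwise coprime.
\item One has $x\equiv z_2/y_2 \bmod \cI'_x$, $y\equiv z_2/x_2\bmod \cI'_y$, and $-xy/N(x)\equiv y_2/x_2\bmod \cI'_{xy}$.
\end{itemize}
Each half-spin symbol thus becomes an honest residue symbol over $E^{\tau}$ of a ratio of $x_2,y_2,z_2\in E^{\tau}$. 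Then the three-term reciprocity Lemma \ref{LemTripleProductOfLegendre} applies. Before applying it, rescale $D$ by a square so that it is prime to $N(x)N(y)$, and track $\mathfrak d$ and $\mathfrak d_s$, where $D\cO_{E^{\tau},S}=\mathfrak d_s^2$; note that $x_2$ generates $\mathfrak d_s\cI_x$, not $\cI_x$. One further reciprocity for $N(x)$ against $x_2,z_2$, plus $\leg{z_2/x_2}{N(x)\cO_{E^{\tau},S}}_n=\leg{y^{\tau}}{x\cO_{E,S}}_n$, finishes.

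When you carry this out, watch the orientation. Square, use (1) for $x$, $y$, $xy$, and use reciprocity in $E$ with $\leg{x^{\tau}}{y\cO_{E,S}}_n=\leg{x}{y^{\tau}\cO_{E,S}}_n$. Independently of Hilbert-symbol conventions, this gives that $\bigl(\operatorname{HS}(xy)\operatorname{HS}(x)^{-1}\operatorname{HS}(y)^{-1}\bigr)^2$ equals $\leg{y^{\tau}}{x\cO_{E,S}}_n^{2}$ times local terms at $S$, and these tend to $1$ as $y\to 1$. By Chebotarev one can choose $y$ near $1$ with $\leg{y^{\tau}}{x\cO_{E,S}}_n$ of exact order $n$. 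So for $n\nmid 4$, (2) and (3) as printed cannot hold; they must carry the factor $\leg{y^{\tau}}{x\cO_{E,S}}_n^{-1}$ instead. That is the form the paper actually uses in the proof of Theorem \ref{ThmShafarevich}. In the paper's derivation the inverse is lost at one step: $\leg{-N(x)}{x_2\cO_{E^{\tau},S}}_n\leg{-N(x)}{z_2\cO_{E^{\tau},S}}_n^{-1}$ is rewritten as $\leg{z_2/x_2}{N(x)\cO_{E^{\tau},S}}_n$ rather than as its inverse.
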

	
	The ``sufficiently close'' in point (3) depends on $x$.
		
	\subsubsection{Proof of Proposition \ref{PropSelfResidues}}\label{SSec:Proof}
	\begin{proof}[Proof of (1)] 
		Power-reciprocity states $\prod_{w \in S}(b,a)_n=\leg{a}{b\cdot \cO_{E,S}}_n\cdot \leg{b}{a\cdot \cO_{E,S}}_n^{-1}$ whenever $a,b\in \cO_{E,S}$ are coprime. Applying it to $x,x^{\tau}-x$ and using $\leg{x^{\tau}}{x \cdot \cO_{E,S}}_n=\leg{x^{\tau}-x}{x \cdot \cO_{E,S}}_n$, we get
		\begin{align*}
		\leg{x}{(x^{\tau}-x)\cdot \cO_{E,S}}_n\cdot \leg{x^{\tau}}{x \cdot \cO_{E,S}}_n^{-1}= \prod_{w \in S}(x^{\tau}-x,x)_{n,w}.
		\end{align*}
		Since $\leg{x}{(x^{\tau}-x)\cdot \cO_{E,S}}_n=\leg{x}{\cI \cdot \cO_{E,S}}_n=\leg{x}{\cI}_n^2=\operatorname{HS}(x)^2$, this concludes the proof. \end{proof}
	
	To prove (2), we use the following.
	
	\begin{lemma}\label{LemTripleProductOfLegendre}
		Let $K$ be a number field, $n$ be a natural number such that $\mu_n \subset K$, and $S$ be a finite set of places of $K$ containing those dividing $n$ and the archimedean ones. Let $a,b,c \in \cO_{K,S}$ be three non-zero elements such that, letting  $\mathfrak d \coloneqq (a,b,c)\cdot \cO_{K,S}$ be their gcd ideal, the ideals $\mathfrak a\coloneqq (a \cdot \cO_{K,S})\cdot \mathfrak d^{-1}, \mathfrak b\coloneqq (b \cdot \cO_{K,S})\cdot \mathfrak d^{-1}, \mathfrak c\coloneqq (c \cdot \cO_{K,S})\cdot \mathfrak d^{-1}$ of $\cO_{K,S}$ are pairwise coprime. Then 
		\[
		\leg{a/b}{\mathfrak c}_n\leg{b/c}{\mathfrak a}_n\leg{c/a}{\mathfrak b}_n\leg{-1}{\mathfrak d}_n=\prod_{v \in S} (b,a)_{n,v}(c,b)_{n,v}(a,c)_{n,v}.
		\]
	\end{lemma}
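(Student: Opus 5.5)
Lemma \ref{LemTripleProductOfLegendre} is a three-term form of power reciprocity. The plan is to reduce to the coprime case by stripping off the common divisor $\mathfrak d$, then apply ordinary power reciprocity three times.

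\textbf{Power reciprocity.} For coprime $u,w\in\cO_{K,S}$, i.e.\ $u\cO_{K,S}+w\cO_{K,S}=\cO_{K,S}$, Hilbert's product formula together with the local computation at $v\notin S$ gives
\[
\leg{u}{w\cO_{K,S}}_n\leg{w}{u\cO_{K,S}}_n^{-1}=\prod_{v\in S}(w,u)_{n,v}.
\]
The local computation: $(w,u)_{n,v}$ equals $\leg{u}{v}_n^{v(w)}$ when $v(u)=0$, and $\leg{w}{v}_n^{-v(u)}$ when $v(w)=0$. This is exactly the identity already used in the proof of Proposition \ref{PropSelfResidues}(1).

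\textbf{Local contributions outside $S$.} When $\mathfrak d\neq 1$, the pairs $a,b$ etc.\ are not coprime, so instead of citing reciprocity we work directly with the product formula:
\[
1=\prod_{v}(b,a)_{n,v}(c,b)_{n,v}(a,c)_{n,v}.
\]
It then suffices to show that, for each $v\notin S$, the local factor $(b,a)_v(c,b)_v(a,c)_v$ is the inverse of the $v$-part of
\[
\leg{a/b}{\mathfrak c}_n\leg{b/c}{\mathfrak a}_n\leg{c/a}{\mathfrak b}_n\leg{-1}{\mathfrak d}_n .
\]
Fix $v\notin S$ and set $\delta=v(\mathfrak d)$. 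Pairwise coprimality of $\mathfrak a,\mathfrak b,\mathfrak c$ means at most one of them is divisible by $v$. Say $v(a)=\delta+\alpha$ with $\alpha\ge 0$, and $v(b)=v(c)=\delta$; the other cases follow by cyclic symmetry. Write $a=\pi^{\delta}a'$, $b=\pi^{\delta}b'$, $c=\pi^{\delta}c'$ for a uniformizer $\pi$. Then expand each tame symbol using bilinearity, $(\pi,\pi)_v=(\pi,-1)_v$, and $(u,u')_v=1$ for units $u,u'$.

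The $\delta^2$ terms from the three symbols combine to $(\pi,-1)_v^{3\delta^2}$ multiplied by cross terms. The $\pi^\delta$-cross terms telescope in the cyclic sum: they contribute $(\pi,a'/c')^{\delta}$-type factors which, after the cyclic sum, cancel except against the $\alpha$ part. The remaining $\alpha$-dependent part is $(\pi,\ c'/b')^{\alpha}$-type, which is $\leg{c/b}{v}^{\pm\alpha}$, matching $\leg{b/c}{\mathfrak a}$ at $v$. The $\mathfrak d$ contribution $\leg{-1}{v}^{\delta}$ comes from $(\pi,-1)^{3\delta^2}=(\pi,-1)^{\delta}$, using $3\delta^2\equiv\delta \pmod 2$ and the fact that $(\pi,-1)$ has order dividing $2$.

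\textbf{Main obstacle.} The step needing care is this sign and exponent bookkeeping. Concretely, one must track the orientations $(b,a),(c,b),(a,c)$ against $a/b,b/c,c/a$, and verify that the $\delta\alpha$ cross terms cancel, so that no additional factor like $\leg{a'}{v}^{\delta}$ survives. I would do this by writing the tame symbol formula
\[
(x,y)_v=\Bigl((-1)^{v(x)v(y)}x^{v(y)}y^{-v(x)}\bmod v\Bigr)^{(Nv-1)/n}
\]
and computing the product of the three symbols as a single expression. The exponents of $a',b',c'$ and $-1$ can then be read off directly. The $-1$ exponent is $\delta(\delta+\alpha)+\delta^2+\delta(\delta+\alpha)$, which is congruent to $\delta$ modulo $2$. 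The units then appear as $c'^{\alpha}b'^{-\alpha}$, matching the residue of $b/c$ modulo $\mathfrak a$ up to inversion; the precise inversion is the convention to fix.
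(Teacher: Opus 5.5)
Your proposal is correct and is essentially the paper's proof: the product formula turns the $S$-product into $\prod_{v\notin S}(a,b)_{n,v}(b,c)_{n,v}(c,a)_{n,v}$, and a local tame-symbol computation at each $v\notin S$ then shows three things: the $\delta$-parts cancel cyclically, the sign exponent reduces to $\delta$ modulo $2$, and pairwise coprimality leaves only the $v$-part of a single Legendre symbol. The one thing to fix is that your displayed tame formula uses the opposite normalization to the paper's $(\pi,a)_{n,\mathcal P}=\leg{a}{\mathcal P}_n$, which your reciprocity paragraph uses; with the paper's normalization the local factor $(b,a)_{n,v}(c,b)_{n,v}(a,c)_{n,v}$ is exactly $\leg{c/b}{v}_n^{\alpha}\leg{-1}{v}_n^{\delta}$, so no inversion ambiguity remains.
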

	\begin{proof}
		By the Hilbert product formula, the right hand side is equal to \linebreak $\prod_{v \in M_K \s S} (a,b)_{n,v}(b,c)_{n,v}(c,a)_{n,v}$. Fix now a place $v \notin S$, choose a uniformizer $\pi_v$ of $K_v$, and write $a=\bar a_v \cdot \pi_v^{\alpha_v}, b=\bar b_v \cdot \pi_v^{\beta_v}, c=\bar c_v \cdot \pi_v^{\gamma_v}$, where $\bar a_v, \bar b_v, \bar c_v$ are $v$-adic units. Write $\alpha'_v,\beta'_v,\gamma'_v,\delta_v$ for the $v$-adic valuations of $\mathfrak a,\mathfrak b,\mathfrak c,\mathfrak d$, respectively. By the hypothesis of coprimality of $\mathfrak a, \mathfrak b$ and $\mathfrak c$, only one among $\alpha'_v,\beta'_v$ and $\gamma'_v$ can be positive. Note that $\alpha_v-\alpha'_v=\beta_v-\beta'_v=\gamma_v-\gamma'_v=\delta_v$. We get:
		\begin{align}
		\notag (a,b)_{n,v}(b,c)_{n,v}(c,a)_{n,v} &= \leg{\bar a_v /\bar b_v}{\pi_v}^{\gamma_v}\leg{\bar b_v /\bar c_v}{\pi_v}^{\alpha_v}\leg{\bar c_v /\bar a_v}{\pi_v}^{\beta_v} \leg{-1}{\pi_v}^{\alpha_v\beta_v+ \beta_v \gamma_v+\gamma_v\alpha_v}\\
		\notag &= \leg{\bar a_v /\bar b_v}{\pi_v}^{\gamma'_v+\delta_v}\leg{\bar b_v /\bar c_v}{\pi_v}^{\alpha'_v+\delta_v}\leg{\bar c_v /\bar a_v}{\pi_v}^{\beta'_v+\delta_v} \leg{-1}{\pi_v}^{3 \delta_v^2}\\
		\label{Eqqela} &= \leg{\bar a_v /\bar b_v}{\pi_v}^{\gamma'_v}\leg{\bar b_v /\bar c_v}{\pi_v}^{\alpha'_v}\leg{\bar c_v /\bar a_v}{\pi_v}^{\beta'_v} \leg{-1}{\pi_v}^{\delta_v}
		\end{align}
		If $\gamma'_v > 0$, then $\alpha'_v=\beta'_v=0$. Thus $\bar a_v /\bar b_v \equiv a/b \bmod \pi_v$, and $\leg{\bar a_v /\bar b_v}{\pi_v}^{\gamma'_v}$ is equal to $\leg{a/b}{\pi_v}^{\gamma'_v}$, i.e.\ the local $v$-adic contribution to the Legendre symbol $\leg{a/b}{\mathfrak c}_n$. Analogously, when $\beta'_v>0$ or $\alpha'_v>0$, the factors $\leg{\bar b_v /\bar c_v}{\pi_v}^{\alpha'_v}$ and $ \leg{\bar c_v /\bar a_v}{\pi_v}^{\beta'_v}$ are equal to the local $v$-adic contributions of the symbols $\leg{b/c}{\mathfrak a}_n$ and $\leg{c/a}{\mathfrak b}_n$. Lastly, $\leg{-1}{\pi_v}^{\delta_v}$ is the local $v$-adic contribution to $\leg{-1}{\mathfrak d}_n$. Thus the product of \eqref{Eqqela} over all $v \notin S$ is $\leg{a/b}{\mathfrak c}_n\leg{b/c}{\mathfrak a}_n\leg{c/a}{\mathfrak b}_n\leg{-1}{\mathfrak d}_n$, proving the statement.
	\end{proof} 
	
	\begin{proof}[Proof of (2)]
		Let $K=E^{\tau}$. Since the extension $E=K(\sqrt{D})$ is unramified over $S$, we have $D\cdot \cO_{K,S}=\mathfrak d_s^2$ for some fractional ideal $\mathfrak d_s$. After possibly replacing $D$ with $D \cdot u^2$ for an appropriate $u \in K^*$, we assume without loss of generality that $D$ is coprime with $N(x)$ and $N(y)$ in $\cO_{K,S}$. 
		
		\vskip1mm
		
		The proof is based on applying Lemma \ref{LemTripleProductOfLegendre} to $x_2,y_2,z_2\in \cO_{K,S}$, but we first need to verify the coprimality hypothesis.
		For an element $a \in \cO_{E,S}$, write $\cI_a \subset \cO_{K,S}$ for the ideal such that $\cI_a \cdot \cO_{E,S}=(a-a^{\tau})\cdot \cO_{E,S}$. We have $x_2 \cdot \cO_{K,S}= \mathfrak d_s \cdot \cI_x, \ y_2 \cdot \cO_{K,S}= \mathfrak d_s \cdot \cI_y, \ z_2 \cdot \cO_{K,S}= \mathfrak d_s \cdot \cI_z$. 
		From the identity
		\begin{equation}\label{Ide}
		z-z^{\tau}=(xy)-(xy)^{\tau}=y(x-x^{\tau})+x^{\tau}(y-y^{\tau})
		\end{equation}
		and the coprimality of the pairs $x,x^{\tau}$ and $y,y^{\tau}$ in $\cO_{E,S}$, it follows that, if $\cJ \subset \cO_{K,S}$ divides two among the triple $x-x^{\tau},y-y^{\tau},z-z^{\tau}$, then it must divide the third as well. Hence $(\cI_x,\cI_y,\cI_z)=(\cI_x,\cI_y)=(\cI_y,\cI_z)=(\cI_z,\cI_x)$. In other words, writing $\cI_x=\cI'_x\cdot \mathfrak d,\cI_y=\cI'_y\cdot \mathfrak d,\cI_z=\cI'_z\cdot \mathfrak d$ with $\mathfrak d= (\cI_x,\cI_y,\cI_z) \subset \cO_{K,S}$, the three ideals $\cI'_x,\cI'_y,\cI'_z$ are pairwise coprime. Writing $x_2 \cdot \cO_{K,S}=\cI'_x \cdot \mathfrak d \cdot \mathfrak d_s, y_2 \cdot \cO_{K,S}=\cI'_y \cdot \mathfrak d \cdot \mathfrak d_s, z_2 \cdot \cO_{K,S}=\cI'_z \cdot \mathfrak d \cdot \mathfrak d_s$, we see that we have now verified the coprimality hypothesis. 
		
		\vskip1mm
		
		We may thus apply Lemma \ref{LemTripleProductOfLegendre} and get that
		\begin{equation}\label{TripleProd}
		\leg{z_2y_2^{-1}}{\cI'_x}_n\leg{x_2z_2^{-1}}{\cI'_y}_n\leg{y_2x_2^{-1}}{\cI'_z}_n\leg{-1}{\mathfrak d \mathfrak d_s}_n
		\end{equation}
		is equal to the product of local symbols $\prod_{v \in S}(y_2,x_2)_{n,v} \cdot (z_2,y_2)_{n,v} \cdot (x_2,z_2)_{n,v}$.
		
		\vskip1mm 
		
		We have $z_2y_2^{-1}=y\frac{x_2}{y_2}+x^{\tau}$. Since the ideal $\cI_x'$ and $\cI_y'$ are coprime in $\cO_{K,S}$, the fractional ideal $(x_2/y_2)\cdot \cO_{K,S}=\cI'_x/\cI'_y$ is reduced to minimal terms, and we then have that $z_2y_2^{-1} \equiv x^{\tau} \equiv x \bmod \cI'_x$. Analogously, using $z_2x_2^{-1}=y+x^{\tau}\frac{y_2}{x_2}$ and $y_2x_2^{-1}=(x^{\tau})^{-1}\frac{z_2}{x_2}-y(x^{\tau})^{-1}$, one finds that $z_2x_2^{-1} \equiv y \bmod \cI'_y$ and $y_2x_2^{-1} \equiv -y\cdot (x^{\tau})^{-1} \equiv -z\cdot N(x)^{-1}\bmod \cI'_z$ (for the latter we also notice that $x^{\tau}$ is coprime with $y(x-x^{\tau})$ by hypothesis, and hence with $z-z^{\tau}$ by \eqref{Ide}, and a fortiori with $\cI'_z$), respectively. Thus the product \eqref{TripleProd} is equal to
		\[
		\leg{x}{\cI'_x}_n\leg{y}{\cI'_y}_n^{-1}\leg{-z\cdot N(x)^{-1}}{\cI'_z}_n\leg{-1}{\mathfrak d \mathfrak d_s}_n.
		\]
		Multiplying by the trivial term $\leg{x}{\mathfrak d}_n\leg{y}{\mathfrak d}^{-1}_n\leg{z\cdot N(x)^{-1}}{\mathfrak d}_n$ (note that $z=xy$ and $N(x)=xx^{\tau} \equiv x^2 \bmod\,\mathfrak d$), we get that this is equal to the product
		\begin{align*}
		&\leg{x}{\cI_x}_n\leg{y}{\cI_y}_n^{-1}\leg{-z\cdot N(x)^{-1}}{\cI_z}_n\leg{-1}{\mathfrak d_s}_n=\\ 
		&\leg{x}{\cI_x}_n\leg{y}{\cI_y}_n^{-1}\leg{z}{\cI_z}_n\leg{- N(x)}{\cI_z}_n^{-1}\leg{-1}{\mathfrak d_s}_n=\\ 
		&\operatorname{HS}(x)\operatorname{HS}(y)^{-1}\operatorname{HS}(z)\leg{- N(x)}{\cI_z}_n^{-1}\leg{-1}{\mathfrak d_s}_n.
		\end{align*}
		Since $x \equiv x^{\tau} \bmod \cI_x$, we have $\operatorname{HS}(x)^2=\leg{x^2}{\cI_x}=\leg{N(x)}{\cI_x}$, and so the product above is equal to
		\[
		\operatorname{HS}(x)^{-1}\operatorname{HS}(y)^{-1}\operatorname{HS}(z)\leg{N(x)}{\cI_x}\leg{- N(x)}{\cI_z}_n^{-1}\leg{-1}{\mathfrak d_s}_n.
		\]
		Further multiplying by the trivial term $\leg{N(x)}{\mathfrak d_s}_n\leg{N(x)}{\mathfrak d_s}_n^{-1}$, this becomes equal to
		\[
		\operatorname{HS}(x)^{-1}\operatorname{HS}(y)^{-1}\operatorname{HS}(z)\leg{-N(x)}{x_2 \cdot \cO_{K,S}}_n\leg{-N(x)}{z_2 \cdot \cO_{K,S}}_n^{-1}.
		\]
		Albert-Brauer-Hasse-Noether's theorem gives that $\leg{-N(x)}{z_2 \cdot \cO_{K,S}}_n=\leg{z_2}{N(x) \cdot \cO_{K,S}}_n \cdot \prod_{v \in S}(-N(x),z_2)_{n,v}$ and $\leg{N(x)}{x_2 \cdot \cO_{K,S}}_n=\leg{x_2}{N(x) \cdot \cO_{K,S}}_n \cdot \prod_{v \in S}(N(x),x_2)_{n,v}$. Summarizing the above we get
		\begin{align}
		\label{EqProductRS} &\operatorname{HS}(x)^{-1}\operatorname{HS}(y)^{-1}\operatorname{HS}(z)\leg{z_2/x_2}{N(x) \cdot \cO_{K,S}}_n\\
		\notag =&\prod_{v\in S}(-1,z_2)_{n,v}(N(x),z_2/x_2)_{n,v} \cdot (y_2,x_2)_{n,v} \cdot (z_2,y_2)_{n,v} \cdot (x_2,z_2)_{n,v}\\
		\notag =&\prod_{v\in S}(N(x)/y_2,z_2/x_2)_{n,v}\cdot (-x_2,z_2)_{n,v}.
		\end{align}
		This concludes the proof once we show that $\leg{z_2/x_2}{N(x) \cdot \cO_{K,S}}_{n}=\leg{y}{x^{\tau}\cdot \cO_{E,S}}_n$, which we do in the next paragraph.
		
		Let $x\cdot \cO_{E,S}=\prod_i\cP_i^{e_i}$ be the factorization in prime ideals. We have $N(x) \cdot \cO_{K,S}=\prod_i N(\cP_i)^{e_i}$, and each ideal $N(\cP_i)$ is prime as otherwise $\cP_i$ would be inert over $K$, and thus divide $x^{\tau}$ as well, contradicting the assumption that $x$ and $x^{\tau}$ are coprime. Since $\frac{z_2}{x_2}=y+x^{\tau}\frac{y_2}{x_2}$ and thus $\frac{z_2}{x_2}=y^{\tau}+x\frac{y_2}{x_2}$, we have
		\[
		\leg{z_2/x_2}{N(x) \cdot \cO_{K,S}}_n=\prod_i \leg{z_2/x_2}{N(\cP_i)}_n^{e_i}=\prod_i \leg{y^{\tau}}{\cP_i}_n^{e_i}=\leg{y}{x^{\tau}\cdot \cO_{E,S}}_n. \qedhere
		\]            
	\end{proof} 
	
	\begin{proof}[Proof of (3)]
		For $v \in S$, we have $\lim\limits_{y \to 1}\frac{z_2}{x_2}=\lim\limits_{y \to 1}\frac{y \cdot x_2+x^{\tau}\sqrt D \cdot (y-y^{\tau})}{x_2}=1$, so the non-archimedean contribution on the right hand side of (2) tends to $1$. The archimedean contribution also tends to $1$ by the same argument, since the (archimedean) Hilbert symbols in question are invariant under the operation of multiplying $y$ by a positive real factor.
	\end{proof}

	\subsection{Generalized Redéi symbols}\label{Ssec4.2}
	
	Rédei symbols are triple symbols $[a,b,c] \in \pm1$, defined for $a,b,c$ in $\Q^*/\Q^{*2}$ satisfying the coprimality condition \linebreak $\gcd(\Delta(a),\Delta(b),\Delta(c))=1$ (where $\Delta(x)$ denotes the discriminant of the quadratic field associated to $x$) and the interlinking condition $(a,b)_p=(b,c)_p=(a,c)_p$ for all primes $p \leq \infty$ \cite{Stevenhagen}. Several generalizations of these symbols to aribitrary number fields and higher orders $n$ have been proposed, see for instance Section 2 of Koymans and Smith's paper \cite{KS}. We adopt here one such generalization, presented systematically by Kim and Morishita in \cite{KimMori}. In {\em loc.cit.}\ such generalization is presented under the name of ``triple symbol'', but we adopt the term ``generalized Redéi symbol'' in this paper. We also prove in this section a well-posedness property (Lemma \ref{Lemma3point7}) not included in \cite{KimMori}.
	
	\vskip1mm
	
	\begin{definition*}
		For a local field $k$ of characteristic $0$ and a $\Gamma_k$-module $M$, we say that an inhomogeneous cochain $\alpha \in C^p(\Gamma_k,M), p \geq 0$ is {\em unramified} if it lies in the image of the natural map 
		$C^p(\Gal(k^{ur}/k),M^{\Gal(\bar k/k^{ur})}) \to C^p(\Gamma_k,M)$ (with the convention that $k^{ur}=k$ if $k$ is real). 
	\end{definition*}
	
	\begin{definition*}
		Given a number field $K$ with algebraic closure $\oK$, embeddings $\oK \subset \overline{K_v}, v \in M_K$ in the algebraic closures of its completions, and a $\Gamma_K$-module $M$, we say that an inhomogeneous cochain $\alpha\in C^p(\Gamma_K,M), p \geq 0$ is {\em completely unramified} if, for every place $v$, the restriction $\alpha|_{\Gamma_{K_v}} \in C^p(\Gamma_{K_v},M)$ is unramified. 
	\end{definition*}
	
	\vskip1mm
	
	To each element $x \in E^*$, we may associate a character $\chi_x:\Gamma_E \to (\Z/n\Z)(1)$ via the Kummer isomorphism $E^*/E^{*n} \cong H^1(\Gamma_E,(\Z/n\Z)(1))$. We denote by $S_x \subset M_E$ the set of places the character $\chi_x$ ramifies. Choose, for each place $v$ of $E$, an embedding $\bar E \subset \overline {E_v}$. 
	
	\vskip1mm
	
	For three elements $a,b,c \in E^*/E^{*n}$, consider the following condition.
	\begin{center}
		($C$) the characters $\chi_a,\chi_b,\chi_c$ are tame (i.e.\ for all places $v$, the wild inertia group at $v$ is contained in the kernels of all three characters), the sets $S_a, S_b,S_c$ are disjoint, and for every place $v$, the Hilbert symbols $(a,b)_v,(b,c)_v,(a,c)_v$ are trivial. 
	\end{center}
	Condition ($C$) guarantees that the cup-product $\chi_a \cup \chi_b \cup \chi_c \in Z^3(\Gamma_E, (\Z/n\Z)(3))$ is completely unramified \cite[Proposition 2]{KimMori}. For $a,b,c \in E^*$ satisying ($C$), we define (as in \cite[Section 1]{KimMori}) the generalized {\em Redéi symbol} as
	\begin{equation}\label{DefRedei}
	[a,b,c]_n\coloneqq \sum_{v \in M_E} \inv_v ([\xi|_{\Gamma_{E_v}}-\eta_v]) \in (\Z/n\Z)(2),
	\end{equation}
	where $\xi \in C^2(\Gamma_E,(\Z/n\Z)(3))$ is a primitive of the cocycle $\chi_a \cup \chi_b \cup \chi_c$ (in the sense that $\d \xi=\chi_a \cup \chi_b \cup \chi_c$), for every place $v$, $\eta_v \in C^2(\Gal(E_v^{\text{ur}}/E_v),(\Z/n\Z)(3))$ is a primitive of the restriction $\xi|_{\Gamma_{E_v}} \in C^2(\Gamma_{E_v},(\Z/n\Z)(3))$, and $\inv_v$ denotes (the Tate twist of) the local invariant map $H^2(\Gamma_{E_v},(\Z/n\Z)(1)) \to \Z/n\Z$.
	
	\begin{remark}
		In \cite{KimMori}, the symbol is defined with values in $\mu_n$ rather than in $(\Z/n\Z)(2)$. Although we are working under the assumption that $\mu_n \subset E$, and so the two groups are (non-canonically) isomorphic, this choice is not the natural one, as the coefficients of the cup product $\chi_a \cup \chi_b \cup \chi_c$ lie naturally in $(\Z/n\Z)(3)$. The invariant map takes one twist away, leaving $(\Z/n\Z)(2)$ as the natural value group for the generalized Redéi symbol. This will be important in later sections, as for instance our natural choice gives $[\sigma(a),\sigma(b),\sigma(c)]_n=\sigma([a,b,c]_n)$ for automorphisms $\sigma$ of $E$.
	\end{remark}
	
	The well-posedeness of  \eqref{DefRedei} comes from the following lemma, see \cite[Section 2]{KimMori}. We denote by $\Gamma_S\coloneqq \Gal(E_S/E)$ the Galois group of the maximal extension $E_S/E$ unramified outside $S$. 
	
	\begin{lemma}\label{Lem:KMWellPosed}
		For every triple $a,b,c \in E^*/E^{*n}$ satisfying ($C$), there exist primitives $\xi$ and $\eta_v$ (for all $v$) as above, and the expression \eqref{DefRedei} is independent of their choice. 
		
		Moreover, for any set of places $S$ containing $S_a \cup S_b \cup S_c$ and all the places dividing $n$ and $\infty$, the primitive $\xi$ may be chosen to lie in $C^2(\Gamma_S, (\Z/n\Z)(3))$.
	\end{lemma}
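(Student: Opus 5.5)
The plan has three steps: show that the cup product is a coboundary over $\Gamma_S$ (existence of $\xi$), construct the local primitives $\eta_v$, and then check that \eqref{DefRedei} does not depend on the choices.

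\textbf{Existence of $\xi$.} Fix $S$ containing $S_a\cup S_b\cup S_c$ and the places above $n$ and $\infty$. The characters $\chi_a,\chi_b,\chi_c$ factor through $\Gamma_S$, so their cup product defines a class in $H^3(\Gamma_S,(\Z/n\Z)(3))$. By Poitou--Tate, the localization map
\[
H^3(\Gamma_S,M)\to\bigoplus_{v\in S\cap M_E^\infty}H^3(E_v,M)
\]
is an isomorphism for finite $M$ of order invertible on $\cO_{E,S}$. For $n$ odd the target vanishes. For $n$ even, a real place $v$ has to be treated separately. Condition ($C$) forces at least two of $\chi_a,\chi_b,\chi_c$ to be unramified, i.e.\ trivial, at $v$. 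Hence the restriction of the triple product to $v$ is zero. So the class is globally trivial, and we get $\xi\in C^2(\Gamma_S,(\Z/n\Z)(3))$ with $\d\xi=\chi_a\cup\chi_b\cup\chi_c$. Inflating to $\Gamma_E$ gives a primitive on $\Gamma_E$, which proves both the existence claim and the ``moreover'' part.

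\textbf{Existence of $\eta_v$.} Fix a place $v$. The restriction of $\chi_a\cup\chi_b\cup\chi_c$ to $\Gamma_{E_v}$ is unramified: this is exactly \cite[Proposition 2]{KimMori}, proved from the tameness and disjointness in ($C$). More precisely, it is the inflation of an unramified cocycle $c_v$ on $\Gal(E_v^{ur}/E_v)\cong\hat\Z$. That group has cohomological dimension $1$, so $c_v=\d\eta_v$ for some unramified $\eta_v$. This requires $H^3(\hat\Z,-)=0$, which holds on torsion modules, and $M^{I_v}$ is torsion. Then $\xi|_{\Gamma_{E_v}}-\eta_v$ is a $2$-cocycle, so its invariant is defined. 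It vanishes for all but finitely many $v$, since for $v\notin S$ we can take $\eta_v=\xi|_{\Gamma_{E_v}}$, which is unramified when $\xi$ is inflated from $\Gamma_S$.

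\textbf{Independence of choices.} There are two moves to check. First, changing $\eta_v$ to $\eta_v'$ changes the local term by the class of the unramified $2$-cocycle $\eta_v-\eta_v'$ on $\hat\Z$. It has zero invariant because $H^2(\hat\Z,M^{I_v})$ maps to zero in $H^2(E_v,M)$ via invariants: that group is $0$ for finite $M$, up to divisibility issues avoided by the finiteness of $M$. Second, changing $\xi$ to $\xi'$ changes the sum by $\sum_v\inv_v((\xi-\xi')|_{\Gamma_{E_v}})$, where $\xi-\xi'$ is a global $2$-cocycle. This vanishes by the reciprocity law for $H^2(\Gamma_E,\mu_n^{\otimes 3})$, i.e.\ Albert--Brauer--Hasse--Noether after untwisting via $\mu_n\subset E$.

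There is a subtlety here. The change in $\xi$ may force a simultaneous change of the $\eta_v$, and for $v\notin S$ the new $\eta_v$ must be chosen so the sum remains finite. It suffices to compare at each $v$ using the first move. I expect the main obstacle to be the $H^3$ vanishing at real places when $n$ is even, together with keeping the twists consistent. I would handle it by the explicit observation that a real place ramifies in at most one of the three Kummer characters.
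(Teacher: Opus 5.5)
Your proof is correct. The paper does not prove this lemma itself; it defers to \cite[Section 2]{KimMori}. What you have written is the standard argument behind such a statement, and it makes the ``moreover'' clause self-contained:
\begin{itemize}
\item the class of $\chi_a\cup\chi_b\cup\chi_c$ vanishes in $H^3(\Gamma_S,(\Z/n\Z)(3))$;
\item unramified local primitives exist because $\hat\Z$ has cohomological dimension $1$;
\item independence follows from $H^2(\hat\Z,M)=0$ for torsion $M$ together with global reciprocity.
\end{itemize}

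A few points to tighten.
\begin{itemize}
\item The real-place discussion is only needed when $n=2$. For $n\geq 3$ the inclusion $\mu_n\subset E$ forces $E$ to be totally imaginary, so $H^3(\Gamma_S,(\Z/n\Z)(3))=0$ outright. Your Poitou--Tate step covers both cases uniformly, which is fine.
\item There is no ``simultaneous change'' of the $\eta_v$ to worry about when you replace $\xi$ by $\xi'$. The $\eta_v$ are constrained only by $\d\eta_v=(\chi_a\cup\chi_b\cup\chi_c)|_{\Gamma_{E_v}}$, which does not involve $\xi$, so you can keep the same $\eta_v$. The difference of the two sums is then exactly $\sum_v\inv_v((\xi-\xi')|_{\Gamma_{E_v}})$, which vanishes by reciprocity, as you say.
\item Your finiteness argument only covers $\xi$ inflated from $\Gamma_S$. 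For the independence statement the sum must be finite for an arbitrary $\xi\in C^2(\Gamma_E,(\Z/n\Z)(3))$. This follows from continuity: $\xi$ factors through $\Gal(F/E)$ for some finite Galois $F/E$, so it is unramified at every $v$ unramified in $F$. At such $v$, the term $\xi|_{\Gamma_{E_v}}-\eta_v$ is an unramified $2$-cocycle, hence a coboundary, so its invariant is $0$.
\item At archimedean $v$ the group $\Gal(E_v^{ur}/E_v)$ is trivial by the paper's convention, not $\hat\Z$. There the restricted triple product vanishes identically, since at least two of the characters are trivial on $\Gamma_{E_v}$, so $\eta_v=0$ works.
\end{itemize}
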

	
	
	The following lemma does not appear in \cite{KimMori}.
	
	\begin{lemma}\label{Lemma3point7}
		The symbol $[a,b,c]_n$ is independent of the choice of embeddings $\bar E \subset \overline{E_v}$.
	\end{lemma}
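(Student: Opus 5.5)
Any two embeddings $\bar E \subset \overline{E_v}$ differ by an element $g_v \in \Gamma_E$: if $\iota_v$ is one embedding, the other is $\iota_v \circ g_v$. Restriction along the new embedding identifies $\Gamma_{E_v}$ with the conjugate decomposition group $g_v^{-1}\Gamma_{E_v} g_v$ (or $g_v\Gamma_{E_v}g_v^{-1}$, depending on convention). The two restriction maps on cochains then differ by the conjugation map $c_{g_v}$ on $C^\bullet(\Gamma_E,M)$, given by $(c_g\alpha)(\sigma_1,\ldots,\sigma_p)=g\cdot\alpha(g^{-1}\sigma_1 g,\ldots,g^{-1}\sigma_p g)$. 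Since the sum \eqref{DefRedei} is over places and each summand involves only the embedding at $v$, we may change one place at a time. So it suffices to show the following: the local term $\inv_v([\xi|_{\Gamma_{E_v}}-\eta_v])$ is unchanged when $\xi$ is replaced by $c_{g}\xi$, with $\eta_v$ replaced by a suitable unramified primitive, and $\xi$ itself is kept as the global primitive.

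\textbf{Step 1.} I will use the standard fact that conjugation by $g\in\Gamma_E$ is homotopic to the identity on $C^\bullet(\Gamma_E,M)$, via an explicit natural homotopy $h_g$ with $c_g-\id=\d h_g+h_g\d$. This is the group-cohomology analogue of \eqref{TheHomotopy}; on the bar resolution it is given by the usual formula inserting $g$ into the $p$ slots. Applying this to $\xi$, and using $\d\xi=\chi_a\cup\chi_b\cup\chi_c=:\omega$, I get
\[
c_g\xi-\xi=\d h_g\xi+h_g\omega .
\]

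\textbf{Step 2.} Restricted to $\Gamma_{E_v}$, the term $\d h_g\xi$ is a coboundary and does not affect the class. The term $h_g\omega|_{\Gamma_{E_v}}$ has two problems. It is a $2$-cochain whose coboundary is $c_g\omega-\omega$, not zero, and it need not be unramified. I plan to handle this by first working with a single global cochain. Note that the local term can be written as the invariant of the class of $\xi|_{\Gamma_{E_v}}-\eta_v$, which is a $2$-cocycle on $\Gamma_{E_v}$. Transporting along $g$ gives $c_g$ of that cocycle, viewed on the conjugate decomposition group. The transported cocycle corresponds to the other embedding, and the invariant map is compatible with this transport, because $\inv_v$ is intrinsic to the field $E_v$. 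Comparing directly, I must show that
\[
(c_g\xi)|_{\Gamma_{E_v}}-c_g\eta_v
\]
(restricted along the new embedding) has the same invariant as $\xi|_{\Gamma_{E_v}}-\eta_v$, where now $c_g\eta_v$ is unramified for the new embedding. By the functoriality of the invariant under the isomorphism of local fields induced by $g$, these agree. This is almost tautological once set up correctly.

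\textbf{Step 3.} The issue is that $c_g\xi$ is a different global primitive from $\xi$, whereas the definition requires one global $\xi$ for all places simultaneously. Since $c_g\omega\neq\omega$ at the cochain level, $c_g\xi$ is not even a primitive of $\omega$. I expect this to be the main obstacle. To fix it, I will correct $c_g\xi$ to $\xi+\d h_g\xi+h_g\omega$ and then try to absorb $h_g\omega$ locally. I would show that $h_g\omega|_{\Gamma_{E_v}}$ differs from an unramified cochain by a coboundary, and that the resulting correction contributes zero invariant. The argument would use that $\omega$ is completely unramified (by \cite[Proposition 2]{KimMori}), together with naturality of $h_g$ under restriction to the subgroup generated by $\Gamma_{E_v}$ and $g$, which I can choose in the same decomposition group in the relevant cases. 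If that direct approach gets messy, the fallback is to use the independence of the primitives (Lemma \ref{Lem:KMWellPosed}) together with the global reciprocity $\sum_v\inv_v=0$. Changing the local data by a globally defined quantity whose local pieces are unramified almost everywhere yields a total sum of invariants of a global class, which is $0$.
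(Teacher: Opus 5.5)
There is a genuine gap: the step that carries all the content is never established, and the route you sketch for it cannot work as stated.

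Step 2 proves nothing about the relevant term. By construction of your transport, the two cochains you compare there are literally the same cochain on $\Gamma_{E_v}$, which is why it looks ``tautological''. The new symbol, however, is computed with the \emph{same} global $\xi$ pulled back along the new embedding. That pullback is $c_{g^{\pm1}}\xi$ pulled back along the old embedding, so it differs from $\xi|_{\Gamma_{E_v}}$ by the restriction of $d(h_g\xi)+h_g\omega$ (up to sign).

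Your Step 3 target, that $h_g\omega|_{\Gamma_{E_v}}$ is unramified up to a coboundary, is exactly the paper's key claim. But you do not prove it, and the tools you list cannot:
\begin{itemize}
\item $g$ is dictated by the pair of embeddings and in general does not lie in the decomposition group, so it cannot be ``chosen'' there.
\item Complete unramifiedness of $\omega$ only constrains $\omega$ on triples from $\Gamma_{E_v}$, whereas $h_g\omega$ evaluates $\omega$ on triples containing $g^{-1}$.
\item Reciprocity only gives $\sum_w\mathrm{inv}_w(h_g\omega|_{\Gamma_{E_w}})=0$, not the vanishing of the single term at $v$.
\end{itemize}

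Moreover, the obstacle you diagnose in Step 3 is not there. Since $\mu_n\subset E$, the $\chi$'s are homomorphisms to groups with trivial $\Gamma_E$-action, so $\chi(g^{-1}\sigma g)=\chi(\sigma)$ and $c_g\omega=\omega$ on the nose. Thus $c_g\xi$ is a primitive of $\omega$, $h_g\omega$ is a global $2$-cocycle, and the old $\eta_v$ remains an unramified primitive after the change of embedding.

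What is missing is an explicit computation of $h_g\omega$ together with condition $(C)$, which your argument never uses. With the standard homotopy one finds, up to sign conventions,
\[
h_g\omega=\chi_a(g^{-1})\,(\chi_b\cup\chi_c)-\chi_b(g^{-1})\,(\chi_a\cup\chi_c)+\chi_c(g^{-1})\,(\chi_a\cup\chi_b).
\]
The paper uses the cochain-level consequence of $(C)$. At a place where, say, $\chi_a$ ramifies, $\chi_a\cup\chi_b$ and $\chi_a\cup\chi_c$ vanish as cochains on $\Gamma_{E_v}$. Hence $K_{g_v}(\omega)|_{\Gamma_{E_v}}$ is unramified and can be absorbed into the new unramified primitive $\eta_v+K_{g_v}(\omega)|_{\Gamma_{E_v}}$, after which the local terms agree place by place.

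Alternatively, the displayed cochain is a cocycle that is a constant-coefficient combination of pairwise cup products. So its local invariant at $v$ is a combination of the Hilbert symbols $(b,c)_v$, $(a,c)_v$, $(a,b)_v$, all trivial by $(C)$.

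Either way $(C)$ is indispensable; complete unramifiedness of $\omega$ alone is not enough. For example, if $\chi_b|_{\Gamma_{E_v}}=0$, then $\omega|_{\Gamma_{E_v}}=0$, yet the local term can still change by $\chi_b(g^{-1})\cdot(a,c)_v$.
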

	
	Before proving the lemma, we need to recall how conjugation acts on group cochains. To an element $g \in G$, one associates a {\em conjugation map} $c_g: C^{\bullet}(G,M) \to C^{\bullet}(G,M)$ of complexes, defined by $(c_g(\alpha))(\sigma_1,\ldots,\sigma_p)\coloneqq g \cdot \alpha(g^{-1}\sigma_1g, \ldots, g^{-1}\sigma_pg)$. This map is homotopically equivalent to the identity. More precisely, we have $c_g-\id =\d K_g + K_g \d$ for the chain homotopy
	\begin{gather}\label{Homotopy}
	\notag K_g:C^{\bullet}(G,M) \to C^{\bullet-1}(G,M), \\
	(K_g(\alpha))(\sigma_1,\ldots,\sigma_{p-1})\coloneqq g \cdot\alpha(g^{-1},\sigma_1,\ldots,\sigma_{p-1})\, +\\
	\notag\sum_{i=1}^{p-1}(-1)^i g \cdot \alpha\left(g^{-1} \sigma_1 g, \ldots, g^{-1} \sigma_i g, g^{-1}\sigma_{i},\sigma_{i+1}, \ldots, \sigma_{p-1}\right)\, +\\
    \notag (-1)^pg \cdot \alpha\left(g^{-1} \sigma_1 g, \ldots, g^{-1} \sigma_{p-1} g\right).
	\end{gather}
	
	\begin{proof}[Proof of Lemma \ref{Lemma3point7}]
		Suppose given two different families of embeddings $\iota_v, \iota'_v:\bar E \hookrightarrow \bar E_v, v \in M_K$. Since the images $\iota_v(\bar E)$ and $\iota'_v(\bar E)$ coincide (both images are just the algebraic closure of $E$ inside $\bar E_v$), there exists, for all $v$, a $g_v \in \Gamma_E$ such that $\iota'_v=\iota_v\circ g_v$. The two decomposition representations $\rho_v,\rho'_v:\Gamma_{E_v} \hookrightarrow \Gamma_E$ induced by the two embeddings are then related by $\rho'_v=\text{conj}_{g_v} \circ \rho_v$, where $\text{conj}_{g_v}: \gamma \mapsto g_v \gamma g_v^{-1}$ denotes conjugation by $g_v$ in $\Gamma_E$. 
		
		For the sake of the proof, we denote, for $\xi \in C^n(\Gamma_E,(\Z/n\Z)(3)), n \geq 0$, by $\xi|_{\Gamma_v}$ and $\xi|'_{\Gamma_v}$ its restrictions to $\Gamma_v$ via $\rho_v$ and $\rho'_v$, respectively. The relation $\rho'_v=\text{conj}_{g_v} \circ \rho_v$ gives that $\xi|'_{\Gamma_v}=c_{g_v}(\xi)|_{\Gamma_v}$. We denote by $[a,b,c]_n$ and $[a,b,c]'_n$ the two Redéi symbols given, respectively, by the first and second family of embeddings.  Using the homotopy $K_{g_v}$, we get 
		\[
		c_{g_v}(\xi)=\xi+\d K_{g_v}(\xi)+ K_{g_v}(\d \xi)=\xi+ \d K_{g_v}(\xi)+ K_{g_v}(\chi_a \cup \chi_b \cup \chi_c).
		\]
		for each $v$. Restricting to $\Gamma_v$ and subtracting $\eta_v+ K_{g_v}(\chi_a \cup \chi_b \cup \chi_c)|_{\Gamma_v}$ from both sides, we get
		\begin{equation}\label{LocCose}
		\left(\xi|'_{\Gamma_v}-\eta_v- K_g(\chi_a \cup \chi_b \cup \chi_c)|_{\Gamma_v}\right)\equiv \left(\xi|_{\Gamma_v}-\eta_v \right), 
		\end{equation}
		modulo coborders. We claim that $K_g(\chi_a \cup \chi_b \cup \chi_c)|_{\Gamma_v}$ is unramified for all $v$. The statement then follows from the claim by summing local invariants of both sides of \eqref{LocCose} over all $v$.
		
		To prove the claim, we apply \eqref{Homotopy} to $\alpha=\chi_a \cup \chi_b \cup \chi_c$. Using $(\chi_a \cup \chi_b \cup \chi_c)(\sigma_1,\sigma_2,\sigma_3)=\chi_a(\sigma_1)\chi_b(\sigma_2)\chi_c(\sigma_3)$ (definition of cup-product) and the formula $\chi_x(\sigma\sigma')=\chi_x(\sigma)+\chi_x(\sigma')$, we get:
		\begin{multline}\label{HomotopyValue}
		K_g(\chi_a \cup \chi_b \cup \chi_c)(\sigma_1,\sigma_2)=\chi_a(g^{-1})\chi_b(\sigma_1)\chi_c(\sigma_2)-\chi_a(\sigma_1)\chi_b(g^{-1}\sigma_1)\chi_c(\sigma_2)+\\\chi_a(\sigma_1)\chi_b(\sigma_2)\chi_c(g^{-1}\sigma_2)-\chi_a(\sigma_1)\chi_b(\sigma_2)\chi_c(\sigma_2).
		\end{multline}
		
		If all three cocycles $\chi_a, \chi_b,\chi_c$ are unramified at $v$, then \eqref{HomotopyValue} is also clearly unramified. On the other hand, suppose one of them ramifies at $v$, say $\chi_a$. Then $\chi_b$ and $\chi_c$ are unramified at $v$. Moreover, as argued in the proof of \cite[Proposition 2]{KimMori}, condition ($C$) gives that the restrictions of the cup-products $\chi_a \cup \chi_b$ and $\chi_a \cup \chi_c$ to $Z^2(\Gamma_{E_v},(\Z/n\Z)(1))$ are trivial as cochains. In other words, we have $\chi_a(\sigma_1)\chi_b(\sigma_2)=0$ and $\chi_a(\sigma_1)\chi_c(\sigma_2)=0$ for all $\sigma_1,\sigma_2 \in \Gamma_{E_v}$, and \eqref{HomotopyValue} simplifies to 
		\[
		\left(K_g(\chi_a \cup \chi_b \cup \chi_c)|_{\Gamma_{E_v}}\right)(\sigma_1,\sigma_2)=\chi_a(g^{-1})\chi_b(\sigma_1)\chi_c(\sigma_2),
		\]
		which is unramified, proving the claim. 
	\end{proof}
	
	\vskip2mm
	
	We now discuss about the relation of generalized Redéi symbols with Heisenberg extensions. The Heisenberg group of matrices is:
	\[
	\operatorname{Heiss}_n:=\left\{ \begin{pmatrix}
	1 & x & z \\ 0 & 1 & y \\0 & 0 & 1
	\end{pmatrix}:  x,y \in (\Z/n\Z)(1), \ z \in (\Z/n\Z)(2)\right\}.
	\]
	This sits in a short exact sequence:
	\begin{equation}\label{EqHeiss}
	1 \to (\Z/n\Z)(2) \to \operatorname{Heiss}_n \to (\Z/n\Z)(1)^2 \to 1,
	\end{equation}
	where the first embedding is the inclusion in the $z$-coordinate, and the surjection is the projection on the $x$- and $y$-coordinates.
	
	\vskip1mm
	
	Given two $(\Z/n\Z)$-linearly independent classes $[a],[b] \in E^*/E^{*n}$, the extension \linebreak $E(\sqrt[n]{a},\sqrt[n]{b})/E$ is Galois with group $(\Z/n\Z)(1)^2$ by Kummer theory. We have:
	\begin{lemma}\label{Lem3.8}
		Given two $(\Z/n\Z)$-linearly independent classes $[a],[b] \in E^*/E^{*n}$, there exists a $\operatorname{Heiss}_n$-extension $F/E$ with fixed field $F^{(\Z/n\Z)(2)}=E(\sqrt[n]{a},\sqrt[n]{b})$ if and only if the Hilbert symbol $(a,b)_v$ is trivial for all $v$. Moreover, if such an extension $F$ exists, and $S$ is a set of places of $E$ containing $S_a \cup S_b \cup S_n \cup S_{\infty}$ such that $\Cl \cO_{E,S}=0$, then there exists one that is unramified outside of $S$.
	\end{lemma}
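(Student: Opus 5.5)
The approach is the standard one: translate the existence of a $\operatorname{Heiss}_n$-extension into an embedding problem, and use the Hasse principle for $H^2$ with coefficients $(\Z/n\Z)(2)$. Let $\chi=(\chi_a,\chi_b):\Gamma_E\to(\Z/n\Z)(1)^2$, which is surjective by linear independence. The central extension \eqref{EqHeiss} has extension class equal to the cup product $x\cup y\in H^2((\Z/n\Z)(1)^2,(\Z/n\Z)(2))$. Indeed, the upper-right entry of a product of two Heisenberg matrices picks up the term $x_1y_2$, which is exactly the cup-product cocycle. So a continuous lift $\tilde\chi:\Gamma_E\to\operatorname{Heiss}_n$ of $\chi$ exists if and only if $\chi_a\cup\chi_b=0$ in $H^2(\Gamma_E,(\Z/n\Z)(2))$. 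Any such lift is automatically surjective, because the commutator subgroup of $\operatorname{Heiss}_n$ is the centre and equals the Frattini subgroup; hence surjectivity onto the abelianization gives surjectivity. Taking $F$ to be the fixed field of $\ker\tilde\chi$ gives a $\operatorname{Heiss}_n$-extension with $F^{(\Z/n\Z)(2)}=E(\sqrt[n]a,\sqrt[n]b)$. Conversely, such an $F$ gives such a lift.

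The key step is therefore the equivalence between $\chi_a\cup\chi_b=0$ globally and $(a,b)_v=1$ for all $v$. Since $\mu_n\subset E$, we have $(\Z/n\Z)(2)\cong\mu_n$ as Galois modules, and the class $\chi_a\cup\chi_b$ corresponds to the cyclic algebra class in $\Br E[n]=H^2(E,\mu_n)$, up to a twist by a fixed generator. Its local restriction at $v$ is $(a,b)_v$. The map $H^2(E,\mu_n)\to\prod_v H^2(E_v,\mu_n)$ is injective by the injectivity of $\Br E\to\bigoplus_v\Br E_v$ (Albert--Brauer--Hasse--Noether), together with $H^1(E,\G_m)=0$, which makes $H^2(E,\mu_n)\to \Br E$ injective. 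This gives the first assertion. I expect this to be routine, though one must check that the sign conventions in the cocycle of \eqref{EqHeiss} match the cup product; a sign only changes the class by $\pm1$ and does not affect whether it vanishes.

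For the ramification statement, work with $\Gamma_S$ instead of $\Gamma_E$. The characters $\chi_a,\chi_b$ factor through $\Gamma_S$ because $S\supset S_a\cup S_b$. The same argument applies provided $\chi_a\cup\chi_b=0$ in $H^2(\Gamma_S,\mu_n)$. Poitou--Tate, with $S$ containing the places dividing $n$ and $\infty$, gives an exact sequence $0\to\Sha^2_S(\mu_n)\to H^2(\Gamma_S,\mu_n)\to\bigoplus_{v\in S}H^2(E_v,\mu_n)$. Here $\Sha^2_S(\mu_n)$ is dual to $\Sha^1_S(\Z/n\Z)$, which classifies cyclic degree-$n$ extensions unramified outside $S$ and split at all $v\in S$. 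By class field theory such extensions correspond to quotients of $\Cl\cO_{E,S}$, which is trivial. Alternatively, use the Kummer sequence over $\cO_{E,S}$: $0\to \Pic\cO_{E,S}/n\to H^2(\cO_{E,S},\mu_n)\to \Br\cO_{E,S}[n]$, where the first term vanishes and $\Br\cO_{E,S}$ embeds into $\bigoplus_{v\in S}\Br E_v$. Either way the local conditions at $v\in S$, namely $(a,b)_v=1$, kill the class in $H^2(\Gamma_S,\mu_n)$. The lift $\tilde\chi$ then factors through $\Gamma_S$, so $F$ is unramified outside $S$. The main obstacle, mild as it is, lies here: identifying $H^2(\Gamma_S,\mu_n)$ with étale $H^2(\cO_{E,S},\mu_n)$ (valid since $n$ is invertible on $\cO_{E,S}$), and using the hypothesis $\Cl\cO_{E,S}=0$ precisely to kill the Picard contribution.
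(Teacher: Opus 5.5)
Your proof is correct. For the first assertion it is the paper's argument: the obstruction to lifting $(\chi_a,\chi_b)$ through the Heisenberg extension is $\chi_a\cup\chi_b$, and Albert--Brauer--Hasse--Noether detects this class through the local symbols $(a,b)_{n,v}$.

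You also prove that every lift is surjective, which the paper leaves implicit. What you need there is only $[G,G]\subseteq\Phi(G)$ for the nilpotent group $G=\operatorname{Heiss}_n$. The equality you assert fails for non-squarefree $n$: for $n=4$, $\Phi(G)=G^2[G,G]$ strictly contains the centre.

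For the ramification statement you take a genuinely different route. The paper starts from a given $F=M(\sqrt[n]{c})$, with $M=E(\sqrt[n]{a},\sqrt[n]{b})$, and notes that every conjugate of $c$ agrees with $c$ modulo $M^{*n}$. Hence $w(c)\bmod n$ depends only on the place $v\notin S$ below $w$. It then uses $\Cl\cO_{E,S}=0$ to find $d\in E^*$ with $v(d)\equiv -v(c)$. The extension $M(\sqrt[n]{cd})$, which is the given lift twisted by the central Kummer character of $d$, is unramified outside $S$.

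You instead show that $\chi_a\cup\chi_b$ already vanishes in $H^2(\Gamma_S,\mu_n)$. The reason is that the kernel of $H^2(\Gamma_S,\mu_n)\to\bigoplus_{v\in S}H^2(E_v,\mu_n)$ is Poitou--Tate dual to $\Hom(\Cl\cO_{E,S},\Z/n\Z)$; equivalently, via the Kummer sequence on $\operatorname{Spec}\cO_{E,S}$, it is $\Pic\cO_{E,S}/n$. You then take a new lift of $(\chi_a,\chi_b)|_{\Gamma_S}$. This is not the original $\tilde\chi$, contrary to how you phrase it.

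The two routes buy different things:
\begin{itemize}
\item The paper's argument is elementary and constructive. It uses only Kummer theory and the triviality of one ideal class modulo $n$-th powers, and it shows that any solution can be repaired by a central twist.
\item Yours is more conceptual. It needs only $\Cl\cO_{E,S}/n=0$ and the local conditions at $v\in S$, with no global $F$ as input. It rests on Poitou--Tate duality, or on comparing $H^2(\Gamma_S,\mu_n)$ with \'etale cohomology of $\cO_{E,S}$ together with the description of $\Br\cO_{E,S}$.
\end{itemize}

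One caveat you share with the paper. Your sentence ``such an $F$ gives such a lift'' presupposes that the composite $\Gamma_E\to\operatorname{Heiss}_n\to(\Z/n\Z)(1)^2$ attached to $F$ is $(\chi_a,\chi_b)$ itself, rather than $g\circ(\chi_a,\chi_b)$ for some $g\in\mathrm{GL}_2(\Z/n\Z)$.

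For odd $n$ this is harmless: $\chi\cup\chi=0$, so precomposing with $g$ only multiplies the obstruction by $\det g$.

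For even $n$ it matters. Take $E=\Q$, $n=2$, $a=-1$, $b=3$. The pair $(\chi_{-3},\chi_3)$ lifts to $\operatorname{Heiss}_2$ because $(-3,3)_{2,v}=1$ for all $v$. This gives a $\operatorname{Heiss}_2$-extension whose centre has fixed field $\Q(i,\sqrt{3})$, even though $(-1,3)_{2,3}=-1$. So the equivalence must be read with the Kummer identification fixed, which is how the paper uses it later.
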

	
	We denote an extension $F$ as in the lemma by $E(\sqrt[n]{a},\sqrt[n]{b})_2$.
	
	\begin{proof}
		Sequence \eqref{EqHeiss} gives an exact sequence of sets
		\begin{equation}\label{SES}
		H^1(\Gamma_E,\operatorname{Heiss}_n) \to H^1(\Gamma_E,(\Z/n\Z)(1)^2) \to[\delta] H^2(\Gamma_E,(\Z/n\Z)(2)).
		\end{equation}
		For $\phi \coloneqq (\chi_a,\chi_b):\Gamma_E \to (\Z/n\Z)(1)^2$, one has $\delta(\phi)=\chi_a \cup \chi_b$, see \cite[Lemma 2]{KimMori}.
		
		By Galois theory, the existence of $F$ is equivalent to the existence of a surjection $\Gamma_E \to \operatorname{Heiss}_n$ lifting the surjection $\Gamma_E\to \Gal(E(\sqrt[n]{a},\sqrt[n]{b})/E) = (\Z/n\Z)(1)^2$. The latter surjection is just $(\chi_a,\chi_b)$. By \eqref{SES}, the existence of such a lift is then equivalent to $\chi_a \cup \chi_b$ being trivial in $H^2(\Gamma_E,(\Z/n\Z)(2))$. Indentifying the latter with $(\Br E)_n(1)$, the cup-product $\chi_a \cup \chi_b$ corresponds to the global Hilbert symbol $(a,b)$, and by the Albert--Brauer--Hasse--Noether theorem, this is trivial if and only if all the local Hilbert symbols $(a,b)_v$ are trivial.
		
		For the last statement, write $F=E(\sqrt[n]{a},\sqrt[n]{b})(\sqrt[n]{c})$ with $c \in E(\sqrt[n]{a},\sqrt[n]{b})^*$. Since $F$ is Galois over $E(\sqrt[n]{a},\sqrt[n]{b})$, we have $F=E(\sqrt[n]{a},\sqrt[n]{b})(\sqrt[n]{c'})$ for every $E(\sqrt[n]{a},\sqrt[n]{b})/E$-conjugate $c'$ of $c$. Thus $c'/c$ is an $n$-th power in $E(\sqrt[n]{a},\sqrt[n]{b})$ for any such $c'$. It follows that, for all places $v$ of $E$ not in $S$, and all places $w \mid v$ of $E(\sqrt[n]{a},\sqrt[n]{b})$, the residue class of $w(c) \bmod n$ is independent of $w$. We denote by $v(c) \in \Z$ a representative of this class. Since $\Cl \cO_{E,S}=0$, there is a $d \in E^*$ such that $d \cdot \cO_{E,S}= \prod_{\cP \notin S} \cP^{-v(c)}$ (note that this is a finite product). Now $F'=E(\sqrt[n]{a},\sqrt[n]{b})(\sqrt[n]{cd})$ is unramified over $E(\sqrt[n]{a},\sqrt[n]{b})$ (and thus over $E$) outside $S$. The extension $F'/E$ is again Galois with group $\operatorname{Heiss}_n$ and provides the desired lift.
	\end{proof}
	
	For the remainder of this section, fix a finite set of places $S_0$ containing $\{v:v|n\cdot \infty\}$ such that $\Cl \cO_{E,S_0}=0$.
	
	\vskip1mm
	
	For $a,b,c \in E^*$ we introduce a stronger variant of ($C$):
	\begin{center}
		($C'$)$_{S_0}$ The elements $a,b,c$ are $v$-adic $n$-th powers for all $v \in S_0$; the valuation $v(x)$ is coprime with $n$ for every $v \in S_x, x=a,b,c$; and ($C$) holds.
	\end{center}
	
	\begin{proposition}
		Let $a,b,c \in E^*$ be algebraic numbers satisfying ($C'$)$_{S_0}$. Then, for every finite place $v\in S_c$, we have that $\Frob_v(E(\sqrt[n]{a},\sqrt[n]{b})_2/E)$ lies in the center $(\Z/n\Z)(2)$ of $\operatorname{Heiss}_n$, and
		\[
		[a,b,c]_n=\sum_{v\in S_c}v(c)\cdot \Frob_v(E(\sqrt[n]{a},\sqrt[n]{b})_2/E) \in (\Z/n\Z)(2).
		\]
	\end{proposition}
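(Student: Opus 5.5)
The plan is to write down an explicit primitive $\xi$ of $\chi_a\cup\chi_b\cup\chi_c$ built from a Heisenberg extension, and then compute the local terms of \eqref{DefRedei} place by place. This is legitimate because Lemma \ref{Lem:KMWellPosed} says the result does not depend on the primitives chosen.

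\emph{Choice of primitive.} Condition ($C$) says $(a,b)_v$ is trivial for all $v$. So Lemma \ref{Lem3.8}, applied with $S=S_0\cup S_a\cup S_b$ (note $\Cl\cO_{E,S}=0$ since $S\supset S_0$), gives an extension $F=E(\sqrt[n]{a},\sqrt[n]{b})_2$ unramified outside $S$. (If $[a],[b]$ are dependent, I would treat this degenerate case separately or perturb it; I expect it to be harmless.) Let $\rho:\Gamma_E\to\operatorname{Heiss}_n$ be the corresponding homomorphism, with entries $(\chi_a,\chi_b,\theta)$, where $\theta:\Gamma_E\to(\Z/n\Z)(2)$ is the upper right entry. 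The homomorphism property of $\rho$ says exactly that $\d\theta=\pm\chi_a\cup\chi_b$. Adjusting the sign convention, I set $\xi:=\pm\theta\cup\chi_c\in C^2(\Gamma_S,(\Z/n\Z)(3))$. Since $\chi_c$ is a cocycle, $\d\xi=\d\theta\cup\chi_c=\chi_a\cup\chi_b\cup\chi_c$.

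\emph{Local terms.} I split the places of $E$ into five groups.
\begin{enumerate}[label=(\roman*)]
\item At $v\notin S_0\cup S_a\cup S_b\cup S_c$, all of $\theta,\chi_c$ are unramified. So $\xi|_{\Gamma_{E_v}}$ is itself unramified, we may take $\eta_v=\xi|_{\Gamma_{E_v}}$, and the contribution is $0$.
\item At $v\in S_0$, $c$ is a $v$-adic $n$-th power by ($C'$)$_{S_0}$. So $\chi_c|_{\Gamma_{E_v}}=0$ as a cochain, hence $\xi|_{\Gamma_{E_v}}=0$ and the contribution is $0$.
\item At $v\in S_a$, the character $\chi_c$ is unramified, and $v(a)$ is coprime to $n$. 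Hence $(a,c)_v=1$ forces $c$ to be an $n$-th power modulo $v$, so again $\chi_c|_{\Gamma_{E_v}}=0$ and $\xi|_{\Gamma_{E_v}}=0$.
\item At $v\in S_b$, the same argument as in (iii) applies with $b$ in place of $a$.
\item At $v\in S_c$, the same reasoning with $(a,c)_v=(b,c)_v=1$ and $v(c)$ coprime to $n$ shows that $a,b$ are local $n$-th powers. So $\chi_a|_{\Gamma_{E_v}}=\chi_b|_{\Gamma_{E_v}}=0$. Hence $v$ splits in $E(\sqrt[n]a,\sqrt[n]b)$, and $\Frob_v(F/E)$ lies in the center $(\Z/n\Z)(2)$. (Here $F$ is unramified at $v$ because $v\notin S$.) This gives the first claim. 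Moreover $\theta|_{\Gamma_{E_v}}$ is then an unramified homomorphism with $\theta(\Frob_v)=\Frob_v(F/E)$.
\end{enumerate}

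\emph{Computation at $v\in S_c$.} By (i)--(iv), only the places $v\in S_c$ contribute. There $\xi|_{\Gamma_{E_v}}=\pm\theta|_{\Gamma_{E_v}}\cup\chi_c|_{\Gamma_{E_v}}$ is already a cocycle, since $\d\xi|_{\Gamma_{E_v}}=\chi_a\cup\chi_b\cup\chi_c|_{\Gamma_{E_v}}=0$. Its class is the cup product of an unramified character with a Kummer character. I take $\eta_v=0$, which is allowed because $H^2$ of $\Gal(E_v^{ur}/E_v)\cong\hat\Z$ with finite coefficients vanishes: any unramified primitive changes the term only by a coboundary of unramified cochains, which has zero invariant. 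The local invariant of $\theta\cup\chi_c$ is then the standard formula $\inv_v(\theta\cup\chi_c)=v(c)\cdot\theta(\Frob_v)$: it is bilinear, and for a uniformizer $c=\pi$ it follows from local class field theory, while for $c$ a unit both characters are unramified and the invariant is zero. Summing over $v\in S_c$ gives the stated formula.

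\emph{Main obstacle.} The hardest part is keeping the signs and Tate-twist conventions consistent, in three places: the relation $\d\theta=\pm\chi_a\cup\chi_b$ coming from the Heisenberg group law, the ordering in the cup product, and the normalization $\inv_v(\theta\cup\chi_\pi)=\theta(\Frob_v)$ (versus $-\theta(\Frob_v)$). I would fix these by checking them against the Kummer/Hilbert-symbol conventions of Section \ref{Sec2} and \cite[Lemma 2]{KimMori}.
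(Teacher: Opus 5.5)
Your proposal is correct and follows the paper's proof essentially step by step. Both use the primitive $\theta\cup\chi_c$, with $\theta$ the upper-right entry of the Heisenberg representation. Both show that the local terms vanish at $S_0$, $S_a$, $S_b$ and at all remaining places outside $S_c$. Both then evaluate $\inv_v(\theta\cup\chi_c)=v(c)\,\theta(\Frob_v)$ at $v\in S_c$.

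Your explicit choice of the Heisenberg extension unramified outside $S_0\cup S_a\cup S_b$ (via Lemma \ref{Lem3.8}) usefully makes precise something the paper only uses implicitly. One harmless slip: since $\chi_c$ ramifies at $S_c$, $\xi$ lies in $C^2(\Gamma_{S'},-)$ only for sets $S'$ that also contain $S_c$, not for your $S$.
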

	\begin{proof}
		We denote by $pr_x,pr_y:\operatorname{Heiss}_n \to (\Z/n\Z)(1), \ pr_z:\operatorname{Heiss}_n \to (\Z/n\Z)(2)$ the projections to the $x$, $y$ and $z$ coordinates. The multiplication rule for matrices gives that $pr_z(\sigma_1\sigma_2)-pr_z(\sigma_1)-pr_z(\sigma_2)=pr_x(\sigma_1)pr_y(\sigma_2)$ for all $\sigma_1,\sigma_2 \in \operatorname{Heiss}_n$. In other words,
		\[
		\d pr_z = pr_x \cup pr_y \quad \text{in }C^2(\operatorname{Heiss}_n,(\Z/n\Z)(2)).
		\]
		Pulling back this last identity along $\rho:\Gamma_E \to \Gal(E(\sqrt[n]{a},\sqrt[n]{b})_2/E)=\operatorname{Heiss}_n$, we get $\d \phi = \chi_a \cup \chi_b$ with $\phi=pr_z \circ \rho$. Thus $\d(\phi\cup \chi_c)=\chi_a\cup\chi_b\cup\chi_c$, and
		\[
		[a,b,c]_n= \sum_{v} \inv_v((\phi\cup \chi_c)|_{\Gamma_v}-\eta_v),
		\]
		with $\eta_v$ unramified such that $\d \eta_v=(\chi_a\cup\chi_b\cup\chi_c)|_{\Gamma_v}$. For $v \in S_0$, ($C'$)$_{S_0}$ implies that $c$ is a $v$-adic $n$-th power, so $\chi_c|_{\Gamma_v}$ is trivial, and thus so is $(\phi\cup \chi_c)|_{\Gamma_v}$, meaning that $v$-adic contributions to the Redéi symbol are trivial. For $v \in S_a$, since $v(c) \neq 0$, we have $(a,c)_v=\leg{c}{v}_n^{v(a)}$. So the triviality of this symbol, combined with $(v(a),n)=1$ (again from ($C'$)$_{S_0}$), gives that $c$ is a $v$-adic $n$-th power, and same for $v \in S_b$, and so these $v$ also contribute trivially to the Redéi symbol. For $v \notin S_0 \cup S_a \cup S_b \cup S_c$, the cocycle $\phi\cup \chi_c$ is unramified, and the contribution is also trivial.
		
		The only places remaining are the $v \in S_c$. Arguing as above, both $a$ and $b$ are $v$-adic $n$-th powers, so $v$ splits in $E(\sqrt[n]{a},\sqrt[n]{b})$ and $\Frob_v:=\Frob_v(E(\sqrt[n]{a},\sqrt[n]{b})_2/E)$ lies in $(\Z/n\Z)(2)$. Moreover, $\d\left( \phi|_{\Gamma_v}\right)=\chi_a|_{\Gamma_v} \cup \chi_b|_{\Gamma_v}$ is trivial, meaning $\phi|_{\Gamma_v}$ is already a cocycle, and thus so is $(\phi\cup \chi_c)|_{\Gamma_v}$. Then the local contribution at $v$ to the Redéi symbol is $\inv_v(\phi|_{\Gamma_v} \cup \chi_c|_{\Gamma_v})$. Since the cocycle $\phi|_{\Gamma_v}$ is unramified, this local invariant is precisely $v(c)\cdot \phi(\Frob_v)=v(c)\cdot\Frob_v(E(\sqrt[n]{a},\sqrt[n]{b})_2/E)$.
	\end{proof}
	
	\section{Horizontal Brauer group}\label{Sec5}
	
    In this section, we use the language of derived categories to define the {\em horizontal Brauer group} of a smooth proper morphism $f:X \to Y$ with geometrically integral fibers. The main interest lies in the case where $f$ has rationally connected fibers and $Y$ is a smooth variety over a number field $K$, where such group parametrizes the quotients $\Br X_y/f^{*}\Br y$ for $y$ in a Hilbertian subset $H \subset Y(K)$. See Theorem \ref{Surj}. We prove this theorem in Subsection \ref{SSec5.1}. Finally, Subsection \ref{SSec5.2} is devoted to a certain complex that arises naturally when the basis $Y$ is geometrically a torus.

    \vskip1mm
    
    We also refer the interested reader to Section 5 of the paper \cite{BBL} by Bright, Browning and Loughran for a similar approach to generate the Brauer groups of the fibers of rationally connected fibration, via the sheaf $R^2f_*\mu_{\infty}$. Although the approach in {\em loc.cit.} is simpler to ours (it does not require derived categories), it does not parametrize the Brauer groups of the fibers modulo constants exactly, but only generates them as the image of $H^0(Y,R^2f_*\mu_{\infty})$.
	
	\begin{definition}\label{Def:HBG}
		Let $f:X \to Y$ be a smooth proper morphism of schemes with geometrically integral fibers. The {\em horizontal Brauer group of $f$} is
		\[
		\Br_{\text{hor}}(X/Y) \coloneqq H^2(Y, \tau_{\geq 1}R^\bullet f_*\G_m).
		\]
	\end{definition}
	
	To every cartesian square
	\[
	\begin{tikzcd}
	X' \arrow[d, "f'"] \arrow[r, "g'"] & X \arrow[d, "f"] \\
	Y' \arrow[r, "g"]                  & Y  ,             
	\end{tikzcd}
	\]
	where $f$ (and thus also $f'$) is smooth proper with geometrically integral fibers, we associate a natural pullback map:
	\begin{equation}\label{Pullback}
	g^{*}:\Br_{\text{hor}}(X/Y) \to \Br_{\text{hor}}(X'/Y'),
	\end{equation}
	constructed as follows. Consider the composition of morphisms
	\begin{align*}
	(g^{*}\circ \tau_{\geq 1} \circ R^\bullet f_*)\G_m  &=(\tau_{\geq 1}\circ g^{*}\circ R^\bullet f_*)\G_m \\
	&\to (\tau_{\geq 1}\circ R^\bullet f'_*\circ (g')^{*})\G_m = (\tau_{\geq 1}\circ R^\bullet f'_*)((g')^{*}\G_m) \\
	&\to (\tau_{\geq 1}R^\bullet f'_*)\G_m
	\end{align*}
	in the derived category $\cD^+(\Sh_{Y'})$, where the two maps are give by the natural transformation $g^{*}\circ R^\bullet f_* \to R^\bullet f'_*\circ (g')^{*}$ and the natural map $(g')^{*}\G_m \to \G_m$, respectively. (We warn the reader that the natural map $(g')^{*}\G_m \to \G_m$ may not be an isomorphism, as we are working in the small étale site, whereas it would be an isomorphism in the big étale site.)
	This composition induces maps on cohomology
	\[
	H^n(Y',g^{*}\tau_{\geq 1}R^\bullet f_*\G_m) \to H^n(Y',\tau_{\geq 1}R^\bullet f'_*\G_m),
	\]
	for all $n \geq 0$. Composing these with the natural pullback maps
	\[
	g^{*}:H^n(Y,\tau_{\geq 1}R^\bullet f_*\G_m) \to H^n(Y',g^{*}\tau_{\geq 1}R^\bullet f_*\G_m),
	\]
	we get natural maps
	\[
	g^{*}:H^n(Y,\tau_{\geq 1}R^\bullet f_*\G_m) \to H^n(Y',\tau_{\geq 1}R^\bullet f'_*\G_m),\ \ n \geq 0,
	\]
	which we still denote by $g^{*}$ with a slight abuse of notation. For $n=2$, this gives \eqref{Pullback}. When $Y'$ is a point $y$ of $Y$, the pullback \eqref{Pullback} defines a {\em specialization map}
	\[
	\Br_{\text{hor}}(X/Y) \to \Br_{\text{hor}}(X_y/y).
	\]
	
	\begin{theorem}\label{Surj}
		Let $f:X \to Y$ be a smooth proper morphism of schemes with geometrically integral fibers. The following hold true.
		\begin{enumerate}[label={\rm (\roman*)}]
			\item There is a natural exact sequence:
			\begin{equation}\label{Eq:LES}
			\Br Y \to[f^{*}] \Br X \to \Br_{\text{hor}}(X/Y) \to[\partial] H^3(Y,\G_m) \to[f^{*}] H^3(X,\G_m)
			\end{equation}
		\end{enumerate}
		Assume, moreover, that $Y$ is regular, integral, of residual characteristic $0$, and that $R^1f_*\mu_{\infty}=R^2f_*\cO_X=0$. Then:
		\begin{enumerate}[label={\rm (\roman*)}, resume]
			\item the specialization map
			\[
			\Br_{\text{hor}} (X/Y) \to \Br_{\text{hor}}(X_y/y)
			\]
			is an isomorphism for $y$ in a Hilbert subset $H \subset Y$,
			\item $\Br_{\text{hor}}(X/Y)$ is finite,
		\end{enumerate}
	\end{theorem}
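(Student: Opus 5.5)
For (i), I will use the exact triangle $\tau_{\le 0}Rf_*\G_m \to Rf_*\G_m \to \tau_{\ge 1}Rf_*\G_m \to$ in $\cD^+(\Sh_Y)$. Since $f$ is proper with geometrically integral fibers (in particular geometrically connected and reduced), $f_*\G_m=\G_m$, so $\tau_{\le 0}Rf_*\G_m=\G_m[0]$. Taking hypercohomology on $Y$ and using $H^n(Y,Rf_*\G_m)=H^n(X,\G_m)$ gives a long exact sequence
\[
H^2(Y,\G_m)\to H^2(X,\G_m)\to H^2(Y,\tau_{\ge 1}Rf_*\G_m)\to H^3(Y,\G_m)\to H^3(X,\G_m).
\]
The maps $H^n(Y,\G_m)\to H^n(X,\G_m)$ coincide with $f^*$ because they come from the adjunction unit $\G_m\to f_*\G_m\to Rf_*\G_m$. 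This is exactly \eqref{Eq:LES}, and naturality follows from that of the triangle.

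For (ii) and (iii), I will first identify $\tau_{\ge1}Rf_*\G_m$ in low degrees. Over residual characteristic $0$ with $R^2f_*\cO_X=0$, the exponential/Kummer sequences make $R^1f_*\G_m=\Pic_{X/Y}$ an étale sheaf whose geometric fibers are finitely generated. Moreover $R^1f_*\mu_\infty=0$ forces $\Pic^0=0$ and torsion-freeness, so $R^1f_*\G_m$ is locally constant and torsion-free: a lattice $\Lambda$, locally constant on $Y$ after proper base change. Next I want to show that $R^2f_*\G_m$ is torsion and that its relevant part is controlled by $R^3f_*\mu_\infty$-type finite locally constant sheaves. Applying the Kummer sequence fiberwise, $\Br X_{\bar y}$ is a quotient of $H^2(X_{\bar y},\mu_\infty)$, and $H^3(X_{\bar y},\G_m)$ contributes only through torsion. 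By smooth–proper base change the sheaves $R^if_*\mu_n$ are locally constant constructible. Working with $\tau_{[1,2]}$ and the spectral sequence $H^p(Y,R^qf_*\G_m)\Rightarrow H^{p+q}(Y,\tau_{\ge1}Rf_*\G_m)$, I will express $\Br_{\text{hor}}(X/Y)$ through $H^1(Y,\Lambda)$, $H^0(Y,R^2f_*\G_m)$ and a differential into $H^2(Y,\Lambda)$. Alternatively, I would replace everything by a finite $\mu_n$-level complex, for $n$ killing the torsion, using $\tau_{\ge1}Rf_*\mu_n$.

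The key step is to realize all these locally constant sheaves as trivialized on one finite étale Galois cover $Y'\to Y$, with group $G$, so that the hypercohomology of $\tau_{\ge1}Rf_*\G_m$ (up to the relevant degrees) can be computed from $G$-cohomology of the geometric data together with the cohomology of $Y$ with coefficients in finite or lattice local systems. The specialization map at a point $y$ is compatible with the spectral sequences. When the fiber of $Y'\to Y$ over $y$ is connected, i.e.\ $y$ lies in the Hilbert set attached to $Y'$ and to finitely many auxiliary covers trivializing torsors representing the finitely many relevant classes, the map $G\to\Gal$ of the decomposition group is an isomorphism on the relevant quotient. The comparison then reduces to a five-lemma argument between the Hochschild–Serre spectral sequences for $Y$ and for $y$. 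The terms involving $H^p(Y,\cdot)$ that do not come from $G$-cohomology must be killed or matched; I expect this to be the main obstacle, since $H^1(Y,\Lambda)$ and $H^2(Y,\Lambda)$ for a general regular $Y$ are not finite. My first attempt will be to use the derived truncation so that only $H^0(Y,R^2f_*\G_m)$-type and $H^1(Y,\Lambda)$-type terms appear, and then to show that the latter map isomorphically for Hilbertian $y$. This should work because $H^1(Y,\Lambda)\cong H^1(G,\Lambda)$ for torsion-free $\Lambda$ trivialized on $Y'$ with $Y$ normal, since $H^1(Y',\Z)=0$ for normal $Y'$.

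Finiteness (iii) then follows: $H^1(G,\Lambda)$ is finite, and $H^0(Y,R^2f_*\G_m)_{\text{tors}}$ embeds in the finite group of invariants of a finite locally constant sheaf. Combining with (ii), it is enough to note that the identified model at a Hilbertian point is built from finite groups.
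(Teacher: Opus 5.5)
Your part (i) is the paper's argument. For (ii), however, your route through $H^p(Y,R^qf_*\G_m)\Rightarrow H^{p+q}(Y,\tau_{\geq1}Rf_*\G_m)$ leaves the decisive step open, and the remedy you propose does not address it. With $\Lambda=R^1f_*\G_m$ this spectral sequence gives
\[
0\to H^1(Y,\Lambda)\to \Br_{\text{hor}}(X/Y)\to \Ker\bigl(d_2\colon H^0(Y,R^2f_*\G_m)\to H^2(Y,\Lambda)\bigr)\to 0 .
\]
Passing to $\tau_{[1,2]}$ changes nothing, since $H^2(Y,\tau_{\geq1}Rf_*\G_m)=H^2(Y,\tau_{[1,2]}Rf_*\G_m)$ and $d_2$ is still there. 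For Hilbertian $y$ the $H^1$ terms match (your $H^1(Y,\Lambda)=H^1(G,\Lambda)$ argument is correct), and so do the $H^0(\,\cdot\,,R^2f_*\G_m)$ terms. But surjectivity on $\Ker d_2$ requires that $d_2\beta\neq0$ implies $d_2(\beta|_y)\neq0$. Since $H^2(Y,\Lambda)\to H^2(\kappa(y),\Lambda)$ is very far from injective, this is the whole difficulty. ``Auxiliary covers trivializing torsors'' does not settle it: you never say which torsors, under which finite groups, detect $d_2\beta$. Moreover, non-triviality of a torsor class at $y$ is not the Hilbert condition that the fiber of its (possibly reducible) total space be connected.

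The missing idea is the $\infty$-Kummer sequence $1\to\mu_\infty\to\G_m\to\G_m\otimes\Q\to1$. Since $Y$ is regular, $R^2f_*\G_m$ is torsion, so $R^2f_*(\G_m\otimes\Q)=0$. Together with $R^1f_*\mu_\infty=0$ this gives an exact sequence $0\to\Lambda\otimes\qz\to R^2f_*\mu_\infty\to R^2f_*\G_m\to0$. Equivalently, $\tau_{[1,2]}Rf_*\G_m$ is the cone of $\Lambda\otimes\Q\to R^2f_*\mu_\infty$ placed in degree $2$. Combined with $H^1(Y,\Lambda)\otimes\Q=0$, this yields
\[
H^0(Y,\Lambda)\otimes\Q\to H^0(Y,R^2f_*\mu_\infty)\to\Br_{\text{hor}}(X/Y)\to0,
\]
so only $\pi_1(Y)$-invariants of $\NS(X_{\bar y})\otimes\Q$ and of $H^2(X_{\bar y},\mu_\infty)$ appear. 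Both actions factor through one finite quotient $G$; for $H^2(X_{\bar y},\mu_\infty)$ this holds because it is an extension of the finite group $\Br X_{\bar y}$ by $\NS(X_{\bar y})\otimes\qz$. Hence for $y$ whose decomposition group surjects onto $G$ the two presentations coincide; this is Lemma~\ref{Lem10}.

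In your language: $d_2$ is the connecting map $H^0(Y,R^2f_*\G_m)\to H^1(Y,\Lambda\otimes\qz)$ followed by a Bockstein, which is injective because $H^1(Y,\Lambda\otimes\Q)=0$. So $\Ker d_2$ is the image of $H^0(Y,R^2f_*\mu_\infty)$, and that image is controlled by $G$.

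Your (iii) then goes through, provided you justify that $\Br X_{\bar y}$, and hence $H^0(Y,R^2f_*\G_m)$, is finite. Being a quotient of $H^2(X_{\bar y},\mu_\infty)$ is not enough; you need $H^2(X_{\bar y},\cO)=0$ in characteristic $0$ to kill the divisible part.
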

	
	Our proof of Theorem \ref{Surj} follows similar lines as that of Proposition 4.1 of \cite{HW14}, which covers points (ii)-(iv) in the case where $H^3(Y,\G_m)=0$.
	
	\vskip1mm
	
	The generic point $\eta$ of $Y$ lies in all Hilbert subsets of $Y$. Hence point (ii) gives, under its hypotheses, an identification
	\[
	\Br_{\text{hor}}(X/Y)=\Br_{\text{hor}}(X_{\eta}/\eta),
	\]
	which in turn identifies the morphism $\Br X_{\eta} \to \Br_{\text{hor}}(X_{\eta}/\eta)$ appearing in \eqref{Eq:LES} for $X_{\eta} \to \eta$ with a morphism
	\begin{equation}\label{EqMorf}
	r:\Br X_{\eta} \to \Br_{\text{hor}}(X/Y).
	\end{equation}
	
	When $Y$ is a smooth variety over a number field $K$, the vanishing $H^3(K,\G_m)=0$ \cite[VII.11.4, p. 199]{CasselsFrohlich} induces identifications $\Br_{\text{hor}}(X_y/y) = \Br X_y/\Br y$ for all $y \in Y(K)$. Thus the specialization $\Br_{\text{hor}}(X/Y) \to \Br_{\text{hor}}(X_y/y)$ induces a Brauer--Manin pairing on the fibers:
	\[
	(-,-)_{BM}:\Br_{\text{hor}}(X/Y) \times \bigcup_{y \in Y(K)} X_y(\A_K) \to \qz.
	\] 
	
	\subsection{Proof of Theorem \ref{Surj}}\label{SSec5.1}
	
	\begin{proof}[Proof of (i)]
		We have $f_*\G_m=\G_m$ (this holds in general for any flat proper morphism with geometrically integral fibers). Hence, applying the functor $\tau_{\geq 1}$ to $R^\bullet f_*\G_m$, we get a natural exact triangle:
		\begin{equation}\label{Triangle}
		\G_m[0] \to R^\bullet f_*\G_m \to \tau_{\geq 1}R^\bullet f_*\G_m \to[+1],
		\end{equation}
		in $\cD^+(\Sh_Y)$. Taking hypercohomology and using the identity $H^n(Y,R^\bullet f_*\G_m)=H^n(X,\G_m)$ arising from 
		Grothendieck's theorem on the composition of derived functors \cite[Tag 015L]{stacks-project} (which applies as $f_*:\Sh(X) \to \Sh(Y)$ sends injectives to injectives), this triangle induces a long exact sequence:
		\begin{equation*}
		\cdots \to  H^{n}(Y,\G_m) \to[f^{*}] H^{n}(X,\G_m) \to H^n(Y, \tau_{\geq 1}R^\bullet f_*\G_m) \to[\partial] H^{n+1}(Y,\G_m) \to \cdots .
		\end{equation*}
		of which \eqref{Eq:LES} is a subsequence.
	\end{proof}
	
	From here on, we assume that $Y$ is regular and integral and that $R^1f_*\mu_{\infty}=R^2f_*\cO_X=0$. Applying $R^\bullet f_*$ to the exact sequence
	\[
	1 \to \mu_{\infty} \to \G_m \to \G_m \otimes \Q \to 1,
	\]
	of étale sheaves on $X$, we obtain a natural ``$\infty$-Kummer'' exact triangle:
	\begin{equation}\label{Eq:cone}
	R^\bullet f_*\mu_{\infty} \to R^\bullet f_*\G_m \to R^\bullet f_*(\G_m\otimes \Q) \to[+1].
	\end{equation}
	We have
	\begin{equation}\label{Eq:Vanishing}
	R^2f_*(\G_m \otimes \Q)=0.
	\end{equation}
	In fact, $R^2f_*\G_m$ is the sheafification in the small étale site $Y_{\et}$ of the presheaf $Y' \mapsto \Br (X \times_Y Y')$. Since $f$ is smooth and $Y$ is regular, the scheme $X \times_Y Y'$ is regular for every $Y' \in Y_{\et}$, and hence its Brauer group is torsion \cite[Lemma 3.5.3]{BGbook}. It follows that $R^2f_*\G_m$ is also torsion, and the identity $R^2f_*(\G_m \otimes \Q)=(R^2f_*\G_m) \otimes \Q$ gives \eqref{Eq:Vanishing}.
	
	By the just-proven vanishing $R^2f_*(\G_m \otimes \Q)=0$, and the assumed vanishing $R^1f_*\mu_{\infty}=0$, we may truncate \eqref{Eq:cone} to obtain a natural exact triangle
	\begin{equation}\label{Lemma}
	(R^1f_*\G_m)[2] \otimes \Q \to R^2f_*\mu_{\infty}[2] \to \tau_{[1,2]}R^\bullet f_*\G_m \to[+1].
	\end{equation}
	
	\vskip1mm
	
	The following lemma is an adaptation of \cite[Proposition 2.9]{Zhu} to our slightly different context. 
	
	\begin{lemma}\label{Lem9}
		The sheaf $R^1f_*\G_m$ is represented by a torsion-free finitely generated isotrivial $Y$-scheme. Its stalk at a geometric point $\bar y$ of $Y$ is equal to $\Pic X_{\bar y}=\NS(X_{\bar y})$. 
	\end{lemma}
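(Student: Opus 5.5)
We argue in three steps: representability of $R^1f_*\G_m$, identification of its stalks, and then isotriviality and finite generation. Throughout we work under the standing assumptions of this subsection: $f$ is smooth and proper with geometrically integral fibers, $Y$ is regular and integral of residual characteristic $0$, and $R^1f_*\mu_{\infty}=R^2f_*\cO_X=0$.

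\emph{Representability.} On the small étale site, $R^1f_*\G_m$ is the restriction of the big-site sheaf $R^1f^{\text{big}}_*\G_m=\Pic_{X/Y}$, as recalled in Section \ref{Sec2}. Since $f$ is flat and proper with geometrically integral fibers, $\Pic_{X/Y}$ is representable by an algebraic space locally of finite type over $Y$; after étale localization on $Y$ we may assume $f$ has a section and is projective, and then \cite{FGA} gives a scheme. The vanishing $R^2f_*\cO_X=0$ makes $\Pic_{X/Y}$ formally smooth over $Y$, since obstructions to deforming line bundles lie in $H^2(X_y,\cO)$. Its tangent space at the identity is $R^1f_*\cO_X$, which vanishes because $H^1(X_{\bar y},\mu_\infty)=0$ and we are in characteristic $0$. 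Indeed, Kummer theory makes $\Pic^0(X_{\bar y})$ a divisible group with trivial torsion, so $\Pic^0=0$ and $H^1(X_{\bar y},\cO)=0$ by Hodge theory. Hence $\Pic_{X/Y}\to Y$ is étale.

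\emph{Stalks.} Because $\Pic_{X/Y}\to Y$ is étale, the stalk at $\bar y$ equals $\Pic_{X_{\bar y}}(\bar y)=\Pic X_{\bar y}$. The Kummer sequence gives $(\Pic X_{\bar y})[m]\subset H^1(X_{\bar y},\mu_m)=0$, so the stalk is torsion-free. Together with $\Pic^0=0$ this gives $\Pic X_{\bar y}=\NS(X_{\bar y})$, which is finitely generated by the theorem of the base; hence each stalk is a finitely generated free abelian group.

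\emph{Isotriviality.} The main obstacle is showing that the étale group space $P=\Pic_{X/Y}$ is a locally constant sheaf of finitely generated free groups trivialized by a finite étale cover of $Y$. To check this, first show that the specialization maps $\NS(X_{\bar\eta})\to\NS(X_{\bar y})$ are isomorphisms. They are injective and compatible with $H^2(-,\Z_\ell(1))$. By smooth proper base change, $R^2f_*\Z_\ell(1)$ is lisse, and since $R^2f_*\cO=0$ and $R^1f_*\mu_\infty=0$ we get $\NS\otimes\Z_\ell\cong H^2(X_{\bar y},\Z_\ell(1))$ via the Kummer sequence. Indeed, the Brauer group of the geometric fiber has trivial divisible part because $b_2=\rho$, and the remaining torsion is controlled by $H^3$ torsion, which is locally constant. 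Thus the rank and the stalks are constant.

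With the stalks identified, $P$ is étale over $Y$ with all stalks isomorphic to the same lattice $\Z^r$, and the cospecialization maps are isomorphisms. Each connected component of $P$ is finite étale over the normal base $Y$; one checks this by extending sections over codimension-one points via properness of $f$ and the valuative criterion, using regularity of $Y$ to extend line bundles (Weil divisors are Cartier). Hence $P$ becomes constant over a finite étale Galois cover $Y'\to Y$ trivializing a basis, which is exactly isotriviality. Finally, since $P$ is torsion-free as a group scheme, it is the finitely generated torsion-free isotrivial $Y$-scheme claimed in the lemma.
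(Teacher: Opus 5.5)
Your proposal is correct in outline, and its skeleton matches the paper's. Both proofs use the fibrewise consequences of the two vanishing hypotheses ($\Pic X_{\bar y}=\NS(X_{\bar y})$ torsion-free, $H^1(X_{\bar y},\cO)=0$, hence $R^1f_*\cO_X=0$). Both get étaleness of $\Pic_{X/Y}$ from $R^1f_*\cO_X=R^2f_*\cO_X=0$, finiteness of its connected components over $Y$ from properness, and the stalks via étaleness.

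The real difference is your $\ell$-adic step showing that $\NS$ is constant under specialization. It uses lisse $R^2f_*\Z_\ell(1)$ and $\NS\otimes\Z_\ell\cong H^2(X_{\bar y},\Z_\ell(1))$, the latter because $b_2=\rho$, which rests on Lefschetz $(1,1)$ via the Lefschetz principle. The paper never compares Néron--Severi groups of different fibres. Once every connected component of $\Pic_{X/Y}$ is finite étale over the connected base, it picks a basis $v_1,\dots,v_r$ of $\NS(X_{\bar\eta})$. It then passes to a finite étale cover over which the components through the $v_i$ acquire sections, and deduces that $\Pic_{X/Y}$ becomes constant there. Equality of all stalks is an output, not an input. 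So, given finiteness of the components, your $\ell$-adic paragraph is redundant.

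It could instead buy you the opposite trade. For the separated étale group space $P$, the fact that all cospecialization maps $P_{\bar y}\to P_{\bar\eta}$ are isomorphisms already forces each component to extend over every point: each geometric generic point comes from a section over the strict henselization. That could replace the properness input, but you do not use it that way.

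The finiteness step is the crux, and your write-up of it should be tightened.
\begin{itemize}
\item The existence half of the valuative criterion must be checked for an arbitrary DVR $R\to Y$, after enlarging $R$ so that the $\operatorname{Frac}(R)$-point of $P$ is an honest line bundle.
\item The line bundle extends because $X_R$ is regular, being smooth over $R$. Regularity of $Y$ is not what is used.
\item Extending only over the codimension-one points of $Y$ gives finiteness merely in codimension one. You would then still need purity of the branch locus and a separate argument at deeper points.
\end{itemize}
Done over all DVRs, this is exactly the FGA statement the paper quotes: finite-type closed subschemes of $\Pic_{X/Y}$ are proper over $Y$ when $f$ is smooth.

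Finally, properness of $f$ does not become projectivity after étale localization on $Y$. For representability, rely on the projective setting, as the paper does, or on the fact that a separated, locally quasi-finite algebraic space over a scheme is a scheme.
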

	\begin{proof}
		The smooth base-change theorem \cite[Theorem VI.4.1]{LECcompleto} and Grauert's theorem \cite[pp. 288-291]{Hartshorne} show that the two vanishing hypotheses are equivalent to the vanishing of $H^1(X_{\bar y},\mu_{\infty})$ and $H^2(X_{\bar y},\cO_{X_{\bar y}})$, respectively, for all geometric points $\bar y$ of $Y$. The vanishing of the former implies that $X_{\bar y}$ has no non-trivial connected abelian finite étale covers. Hence the following hold: $\Pic X_{\bar y} \cong \NS X_{\bar y}$,  $\NS X_{\bar y}$ is torsion-free, and $H^1(X_{\bar y},\cO_{X_{\bar y}})=0$ for all $\bar y$ \cite[Corollary 5.1.3]{BGbook}. Again by Grauert's theorem, the last of these three conditions is equivalent to the vanishing of $R^1f_*\cO_X$.
		
		The rest of the argument follows \cite{Zhu}, but we include it for completeness. We claim that the relative Picard scheme $\Pic_{X/Y}$ is a disjoint union of finite étale covers of $Y$. In fact, the smoothness of $f$ guarantees that each closed subscheme of $\operatorname{Pic}_{X / Y}$ which is of finite type is proper over $Y$ \cite[p. 232, Theorem 3]{FGA}. The vanishings $R^1f_*\cO_X=R^2f_*\cO_X=0$ (and the fact that both commute with base-change) imply that $\mathrm{Pic}_{X / Y}$ is formally étale over $C$. Together, these facts imply that each connected component of $\mathrm{Pic}_{X / Y}$ is finite étale over $C$, proving the claim.
		
		Denote by $\bar \eta$ the generic geometric point $\Spec \overline{\kappa(\eta)}$ of $Y$. Choose a basis $v_1,\ldots,v_r$ of constant sections for the group-scheme $\Pic_{X_{\bar \eta}/\bar \eta}$. The section $v_1$ dominates a connected component $Y_1$ of $\Pic_{X/Y}$, thus the base-change $\Pic_{X/Y} \times_Y Y_1$ is a $Y_1$-group scheme equipped with a canonical section. Taking further finite étale base-changes, we find an étale cover $Y' \to Y$ such that the base-change $\Pic_{X/Y} \times_Y Y'$ has $r$ canonical sections, which restrict to  $v_1,\ldots,v_r$ on the generic geometric fiber. Combined with the previous paragraph, this guarantees that $\Pic_{X/Y} \times_Y Y'$ is a (torsion-free, finitely generated) constant group scheme. Thus $\Pic_{X/Y}$ is isotrivial by definition.
		
		Finally, the geometric stalk $(\Pic_{X/Y})_{\bar y}$ of $\Pic_{X/Y}$ is equal to $\Pic (X_{\cO_{\bar y}^{\text{sh}}})=\Pic_{X/Y}(\cO_{\bar y}^{\text{sh}})$, where $\cO_{\bar y}^{\text{sh}}$ denotes the strict Henselianization of $Y$ at $\bar y$ \cite[p.38]{LECcompleto}. Since $\Pic_{X/Y}$ is étale over $Y$, we have  $\Pic_{X/Y}(\cO_{\bar y}^{\text{sh}})=\Pic_{X/Y}(\bar y)$ by the properties of strictly Henselian rings. Finally, we have $\Pic_{X/Y}(\bar y)=\Pic X_{\bar y}$ by representability.
	\end{proof}
	
	\begin{lemma}\label{Cor1}
		There is a natural exact sequence 
		$$
		H^0(Y,\Pic_{X/Y}) \otimes \Q \to H^0(Y,R^2f_*\mu_{\infty}) \to \Br_{\text{hor}}(X/Y) \to 0.
		$$
	\end{lemma}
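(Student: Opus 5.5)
The plan is to take hypercohomology $H^\bullet(Y,-)$ of the exact triangle \eqref{Lemma},
\[
(R^1f_*\G_m)[2] \otimes \Q \to R^2f_*\mu_{\infty}[2] \to \tau_{[1,2]}R^\bullet f_*\G_m \to[+1],
\]
and then compare $H^2(Y,\tau_{[1,2]}R^\bullet f_*\G_m)$ with $\Br_{\text{hor}}(X/Y)=H^2(Y,\tau_{\geq 1}R^\bullet f_*\G_m)$.

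First, the long exact sequence of \eqref{Lemma} in degree $2$ reads
\[
H^0(Y,R^1f_*\G_m\otimes\Q) \to H^0(Y,R^2f_*\mu_\infty) \to H^2(Y,\tau_{[1,2]}R^\bullet f_*\G_m) \to H^1(Y,R^1f_*\G_m\otimes \Q).
\]
By Lemma \ref{Lem9}, $R^1f_*\G_m=\Pic_{X/Y}$ is a torsion-free, finitely generated, isotrivial sheaf, split by a finite étale cover $Y'\to Y$; we may take $Y'$ connected, hence integral since $Y$ is regular integral. I will show that the last term vanishes. Because $Y$ is normal and integral, $H^1(Y',\Q^r)=\Hom_{cont}(\pi_1(Y'),\Q^r)=0$, as a profinite group has no nontrivial continuous maps to a torsion-free discrete group. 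The Hochschild–Serre spectral sequence for $Y'\to Y$ with finite group $G$ then gives $H^1(Y,M\otimes\Q)=H^1(G,M\otimes\Q)=0$, since cohomology of a finite group with $\Q$-vector space coefficients vanishes in positive degree. Moreover, $H^0(Y,R^1f_*\G_m\otimes\Q)=H^0(Y,\Pic_{X/Y})\otimes\Q$: taking $G$-invariants commutes with $\otimes\Q$ for finite $G$, and the sections over connected $Y'$ are the constant group $M$. This already yields an exact sequence with $H^2(Y,\tau_{[1,2]}R^\bullet f_*\G_m)$ in place of $\Br_{\text{hor}}(X/Y)$.

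Second, I compare $H^2$ of $\tau_{[1,2]}$ and $\tau_{\geq 1}$ using the triangle
\[
\tau_{\geq 3}R^\bullet f_*\G_m \to \tau_{\geq 1}R^\bullet f_*\G_m \to \tau_{[1,2]}R^\bullet f_*\G_m \to[+1].
\]
Since $\tau_{\geq 3}$ is concentrated in degrees $\geq 3$, both $H^2(Y,\tau_{\geq 3})$ and $H^3(Y,\tau_{\geq 3})$ vanish: the hypercohomology spectral sequence has $E_2^{p,q}=H^p(Y,R^qf_*\G_m)$ with $q\geq 3$, so $p+q\le 2$ gives nothing, and in total degree $3$ only $H^0(Y,R^3f_*\G_m)$ survives. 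Hence $H^2(\tau_{\geq 1})\to H^2(\tau_{[1,2]})$ is injective. Surjectivity is the main obstacle, because $H^3(Y,\tau_{\geq 3})=H^0(Y,R^3f_*\G_m)$ need not vanish, so the comparison map could fail to be onto. I would handle this in either of two ways. One is to note that the preceding step only requires the image of $H^0(Y,R^2f_*\mu_\infty)$, and check that this map lifts to $H^2(\tau_{\geq1})$. The cleaner alternative is to replace \eqref{Lemma} by the analogous triangle built directly from \eqref{Eq:cone} with $\tau_{\geq1}$, namely
\[
\tau_{\geq1}R^\bullet f_*\mu_\infty \to \tau_{\geq1}R^\bullet f_*\G_m \to \tau_{\geq 1}R^\bullet f_*(\G_m\otimes\Q)\to[+1],
\]
using $R^1f_*\mu_\infty=0$ and $R^2f_*(\G_m\otimes\Q)=0$. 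In degree $2$ the only contributions are then $H^0(Y,R^2f_*\mu_\infty)$ from the left term and $H^1(Y,R^1f_*\G_m\otimes\Q)=0$ together with $H^0(Y,R^1\otimes\Q)$ from the right term, while higher pieces lie in degrees $\geq 3$ and do not affect $H^2$. This yields exactly the claimed right-exact sequence.
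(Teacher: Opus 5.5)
Your first step is the paper's argument: you take the degree-$2$ part of the long exact sequence of \eqref{Lemma}, and you get $H^1(Y,R^1f_*\G_m\otimes\Q)=0$ from a Galois cover trivializing $\Pic_{X/Y}$ together with the vanishing of $H^1$ of a normal integral scheme with torsion-free constant coefficients.

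Your second step, however, uses a triangle pointing the wrong way. Put $N:=\tau_{\geq 1}R^\bullet f_*\G_m$. Then $\tau_{[1,2]}R^\bullet f_*\G_m=\tau_{\leq 2}N$ is a sub-object of $N$ and $\tau_{\geq 3}N$ is a quotient, so the distinguished triangle is
\[
\tau_{[1,2]}R^\bullet f_*\G_m \to \tau_{\geq 1}R^\bullet f_*\G_m \to \tau_{\geq 3}R^\bullet f_*\G_m \xrightarrow{+1}.
\]
The long exact sequence you wrote is therefore not the relevant one. In the correct sequence, the terms flanking $H^2(Y,\tau_{[1,2]}R^\bullet f_*\G_m)\to \Br_{\text{hor}}(X/Y)$ are $H^1(Y,\tau_{\geq 3}R^\bullet f_*\G_m)$ and $H^2(Y,\tau_{\geq 3}R^\bullet f_*\G_m)$. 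Both are zero, so this map is an isomorphism and $H^0(Y,R^3f_*\G_m)$ never enters. This is exactly the paper's remark $H^2(Y,\tau_{\geq 1}M)=H^2(Y,\tau_{[1,2]}M)$. With it, your first step already completes the proof. The ``main obstacle'' is an artifact of the reversed triangle, and option (a), which as stated is only a hope rather than an argument, is unnecessary.

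Your option (b) does give a valid, genuinely different proof. You must justify the triangle, though, since $\tau_{\geq 1}$ is not a triangulated functor. It does hold here:
\begin{itemize}
\item Represent \eqref{Eq:cone} by a short exact sequence of complexes $0\to f_*I_1^\bullet\to f_*I_2^\bullet\to f_*I_3^\bullet\to 0$, by pushing forward a short exact sequence of injective resolutions.
\item Applying $\tau_{\geq 1}$ keeps this short exact as soon as $R^1f_*\mu_\infty\to R^1f_*\G_m$ is injective (a snake-lemma check in degree $1$). This holds because $R^1f_*\mu_\infty=0$.
\item Then $H^2$ of the left term is $H^0(Y,R^2f_*\mu_\infty)$, and $H^1$ of the right term is $H^0(Y,R^1f_*\G_m\otimes\Q)$.
\item $H^2$ of the right term vanishes by $H^1(Y,R^1f_*\G_m\otimes\Q)=0$ and $R^2f_*(\G_m\otimes\Q)=0$.
\end{itemize}
Compared with the paper, this route bypasses the two-term complex \eqref{Lemma} and the comparison of truncations, at the cost of checking that truncation preserves the triangle. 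The paper's route is shorter, because \eqref{Lemma} is already in place and the comparison is a one-line degree count.
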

	\begin{proof}
		Choose a geometric point $\bar y$ of $Y$. By Lemma \ref{Lem9}, there is a finite étale morphism $p:Y' \to Y$ such that the restriction $\Pic_{X/Y}|_{Y'}$ is a constant sheaf with coefficient group $\Z^r$. We may assume that $Y'$ is integral and Galois over $Y$, with group $G$. Since $Y'$ is normal, $H^1(Y',\Z)=0$, and the Hochschild-Serre spectral sequence gives an isomorphism $H^1(Y,\Pic_{X/Y}) \cong H^1(G,\Pic_{X/Y}(Y'))$, which is finite. Thus $H^1(Y,\Pic_{X/Y})\otimes \Q=0$. Noting that $H^2(Y,\tau_{\geq 1}M)=H^2(Y,\tau_{[1,2]}M)$ for any complex of sheaves $M$, the sequence of the statement is contained in the cohomology sequence of \eqref{Lemma}.
	\end{proof}
	
	\begin{lemma}\label{Lem10}
		There exists a finite étale connected cover $\pi:Y' \to Y$ such that, for any morphism $p:Z \to Y$ with $Z$ regular and integral for which the fiber product $Z \times_Y Y'$ is also integral, the pullback
		\[
		p^{*}:\Br_{\text{hor}}(X/Y) \to \Br_{\text{hor}}(X_Z/Z),
		\]
		where $X_Z \coloneqq X \times_Y Z$, is an isomorphism.
	\end{lemma}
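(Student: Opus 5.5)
We will use the presentation of Lemma~\ref{Cor1} and choose $Y'$ so that everything in it becomes constant after pullback to $Y'$. By Lemma~\ref{Lem9}, $\Pic_{X/Y}$ is a torsion-free finitely generated isotrivial group scheme. Moreover, $R^2f_*\mu_{\infty}$ is a torsion sheaf whose $m$-torsion parts $R^2f_*\mu_m$ are locally constant constructible by smooth proper base change, since $f$ is smooth proper and residue characteristics are $0$. Under our hypotheses, $R^2f_*\mu_\infty$ has finite corank: its stalks are $H^2(X_{\bar y},\mu_\infty)$, which is an extension of $\Br X_{\bar y}$ (finite, as $H^2(\cO)=0$) by $\NS(X_{\bar y})\otimes\Q/\Z$. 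We take a finite étale Galois connected cover $\pi:Y'\to Y$ with group $G$ that trivializes $\Pic_{X/Y}$ and also trivializes the finite locally constant sheaf obtained as the cokernel of $\Pic_{X/Y}\otimes \Q/\Z \to R^2f_*\mu_\infty$. The $\Pic\otimes\Q/\Z$ part is then automatically trivialized as well.

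\textbf{Step 1: compute both sides with Hochschild--Serre.} Let $p:Z\to Y$ be as in the statement, and let $Z':=Z\times_YY'$, which is integral and Galois over $Z$ with the same group $G$. Since $Y'$ and $Z'$ are normal and connected, the $G$-equivariant restriction $H^0(Y',\mathcal F)\to H^0(Z',p^*\mathcal F)$ is an isomorphism for every sheaf $\mathcal F$ that is constant on $Y'$. Taking $G$-invariants gives $H^0(Y,\mathcal F)\cong H^0(Z,p^*\mathcal F)$ for all sheaves of the shape $\Pic_{X/Y}$, $R^2f_*\mu_m$, etc. For the ind-constant sheaf $R^2f_*\mu_\infty$, we first argue with its finite levels $R^2f_*\mu_m$ and then pass to the colimit.

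\textbf{Step 2: compatibility with base change.} By proper base change, $p^*R^2f_*\mu_\infty\cong R^2f_{Z*}\mu_\infty$. Since the Picard scheme commutes with base change, $p^*\Pic_{X/Y}=\Pic_{X_Z/Z}$. The hypotheses $R^1f_*\mu_\infty=R^2f_*\cO=0$ are stable under base change, by Grauert and smooth base change, so Lemma~\ref{Cor1} applies to $X_Z\to Z$, using the regularity and integrality of $Z$. By naturality of the triangle \eqref{Lemma} under the pullback \eqref{Pullback}, the map $p^*$ on $\Br_{\text{hor}}$ fits into a morphism between the two right-exact sequences of Lemma~\ref{Cor1}. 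The first two vertical arrows are isomorphisms by Step~1, so the five lemma, in the form of surjectivity of the cokernel map plus a diagram chase, gives that $p^*$ is an isomorphism.

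\textbf{Main obstacle.} The delicate point is checking that the pullback \eqref{Pullback}, which is defined using the small-site map $(g')^*\G_m\to\G_m$, is genuinely compatible with the Kummer triangles \eqref{Eq:cone} and \eqref{Lemma} and with the base-change isomorphisms used above. We will verify this at the level of the natural transformation $g^*Rf_*\to Rf'_*g'^*$ applied to the short exact sequence $\mu_\infty\to\G_m\to\G_m\otimes\Q$, where it is a morphism of triangles. On the $\mu_\infty$ term it is the proper base change isomorphism, because $g'^*\mu_\infty\to\mu_\infty$ is an isomorphism. A secondary check is that $H^0(Z',-)$ of a constant sheaf is indeed the group itself, which uses the connectedness of $Z'$ and holds because $Z'$ is integral.
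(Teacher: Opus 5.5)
You have the paper's strategy: compare the presentations of Lemma~\ref{Cor1} for $Y$ and $Z$, identify the terms with invariants under a finite Galois group $G$, use integrality of $Z\times_YY'$ to see that $Z'\to Z$ is Galois with the same group, and finish with a diagram chase. The gap is that your choice of $Y'$ is too small, and Step~1 fails for it.

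Write $M:=H^2(X_{\bar y},\mu_\infty)$ and $A:=\NS(X_{\bar y})\otimes\Q/\Z\subset M$. Trivializing $\Pic_{X/Y}$ and the finite quotient $M/A$ does \emph{not} make $R^2f_*\mu_\infty$ constant on $Y'$, nor the sheaves $R^2f_*\mu_m=M[m]$. On $\pi_1(Y')$, which acts trivially on $A$ and on $M/A$, the map $g\mapsto g-1$ is a homomorphism $\phi:\pi_1(Y')\to\Hom(M/A,A)$, and nothing in your argument forces $\phi=0$. So you cannot apply the ``constant on $Y'$'' argument to $R^2f_*\mu_m$.

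If $\phi\neq 0$, the lemma is actually false for your $Y'$. For instance, suppose $\pi_1(Y,\bar y)$ already acts trivially on $\NS(X_{\bar y})$ and on $M/A$. Then your recipe allows $Y'=Y$, so every regular integral $Z$ is admissible. Take $Z\to Y$ a connected finite étale cover trivializing $M$. Then $\Br_{\text{hor}}(X_Z/Z)=M/A$, whereas
$$\Br_{\text{hor}}(X/Y)=M^{\pi_1(Y,\bar y)}/A=\{x\in M/A:\phi(g)(x)=0\ \forall g\},$$
which is strictly smaller.

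The repair is short. The group $\Hom(M/A,A)$ is finite, since $M/A$ is finite and $A[N]$ is finite for every $N$. Hence the whole action of $\pi_1(Y,\bar y)$ on $M$ factors through a finite quotient. Take $Y'$ to be the corresponding connected Galois cover. It automatically trivializes $\Pic_{X/Y}$ as well, because an automorphism of $\NS(X_{\bar y})\cong\Z^r$ that is trivial on $\NS(X_{\bar y})\otimes\Q/\Z$ is trivial.

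With this $Y'$, $R^2f_*\mu_\infty$ is constant on $Y'$ and Step~1 applies to it directly, so you no longer need to pass through finite levels. The rest of your argument then goes through. This is exactly the cover the paper uses: it cites Harpaz--Wittenberg for the finiteness of the monodromy on $M$, translates integrality of $Z\times_YY'$ into surjectivity of $\pi_1(Z,\bar z)\to G$, and chases the diagram.
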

	
	We observe that the integrality hypothesis is automatically satisfied if the fibers of $p$ are geometrically connected. In fact, in this case, the fibers of the base change $p':Z':=Z \times_Y Y' \to Y'$ are also geometrically connected. So, since $Y'$ is connected, $Z'$ is too. Since $Z'$ is also regular, as it is a finite étale cover of $Z$, it is thus also integral.
	
	\begin{proof}
		Let $\bar y$ be a geometric point of $Y$, of which we make a more specific choice further down. By Lemma \ref{Lem9}, we may identify the sheaf $R^1f_*\G_m$ with the $\pi_1(Y,\bar y)$-module $\NS(X_{\bar y})$. Moreover, by smooth-proper base change \cite[Corollary VI.4.2]{LECcompleto}, we may identify the sheaf $R^2f_*\mu_{\infty}$, which is a direct limit of locally constant sheaves, with the $\pi_1(Y,\bar y)$-module $H^2(X_{\overline{y}},\mu_{\infty})$, see \cite[p.\ 156]{LECcompleto}. 
		
		Under these identifications, Lemma \ref{Cor1} gives a canonical exact sequence:
		\begin{equation}\label{Eq:ident}
		(\NS X_{\overline{y}} \otimes \Q)^{\pi_1(Y,\overline{y})}\to H^2(X_{\overline{y}},\mu_{\infty})^{\pi_1(Y,\overline{y})}\to \Br_{\text{hor}}(X/Y) \to 0.
		\end{equation}
		Let now $\bar z$ be the generic geometric point of $Z$, and let $\bar y\coloneqq p(\bar z)$. Comparing \eqref{Eq:ident} for $Y$ and $Z$, and in virtue of the isomorphism $X_{\bar y} \cong (X_Z)_{\bar z}$, we get a commutative diagram
		\[
		\begin{tikzcd}
		{(\NS X_{\overline{y}} \otimes \Q)^{\pi_1(Y,\overline{y})}} \arrow[r] \arrow[r] \arrow[d] & {H^2(X_{\overline{y}},\mu_{\infty})^{\pi_1(Y,\overline{y})}} \arrow[d] \arrow[r] & \Br_{\text{hor}}(X/Y) \arrow[d, "p^{*}"] \arrow[r] & 0 \\
		{(\NS X_{\overline{y}} \otimes \Q)^{\pi_1(Z,\bar z)}} \arrow[r]                 & {H^2(X_{\overline{y}},\mu_{\infty})^{\pi_1(Z,\bar z)}} \arrow[r]       & \Br_{\text{hor}}(X_Z/Z) \arrow[r]              & 0
		\end{tikzcd}
		\]
		The action of $\pi_1(Y,\overline{y})$ on $H^2(X_{\overline{y}}, \mu_{\infty})$ factors through a finite quotient by \cite[Lemma 4.2]{HW14} (in summary because $H^2(X_{\overline{y}}, \mu_{\infty})$ is a finite extension of the finite group $\Br X_{\overline{y}}$ by $\NS(X_{\overline{y}}) \otimes \qz$, on which $\pi_1(Y,\overline{y})$ acts through a finite quotient as $\NS(X_{\overline{y}})$ is finitely generated and the action is continuous, see {\em loc.cit.} for more details), which corresponds to a finite irreducible Galois étale cover $\pi:Y' \to Y$, say with group $G$. The assumption that $Y' \times_Y Z$ is integral implies that the composition  $\pi_1(Z,\overline{y}'') \to \pi_1(Y,\overline{y}) \to G$ is surjective. We conclude with a diagram chase.
	\end{proof}
	
	We go back to the proof of points (ii) and (iii) of Theorem \ref{Surj}.
	
	\begin{proof}[Proof of (ii)]
		This follows from Lemma \ref{Lem10}, taking $Z$ to be a point. 
	\end{proof}
	
	\begin{proof}[Proof of (iii)]
		The well-known argument used to prove that the Brauer groups of rationally connected varieties are finite modulo constants (see e.g.\ \cite[Sec.\ 5.5]{BGbook}) works with minimal modifications.
		Namely, point (ii) provides an identification $\Br_{\text{hor}}(X/Y)=\Br_{\text{hor}}(X_{\eta}/\eta)$. The hypercohomology spectral sequence 
		$$
		H^i(\kappa(\eta),H^j(\tau_{\geq 1} R^\bullet f_*\G_m)) \Rightarrow H^{i+j}(\kappa(\eta),\tau_{\geq 1} R^\bullet f_*\G_m)
		$$
		yields an exact sequence
		\begin{equation}\label{SES0}
		0 \to H^1(\kappa(\eta), \Pic X_{\overline{\eta}}) \to \Br_{\text{hor}}(X_{\eta}/\eta) \to \Br X_{\overline{\eta}}.
		\end{equation}
		As $X_{\eta}$ is rationally connected, the group $\Pic X_{\overline{\eta}}$ is finitely generated and torsion-free, making the cohomology group $H^1(\kappa(\eta), \Pic X_{\overline{\eta}})$ finite \cite[Theorem 5.5.1]{BGbook}. Again by the rational connectedness of $X_{\eta}$, the group $\Br X_{\overline{\eta}}$ is also finite \cite[Theorem 5.5.2]{BGbook}.
	\end{proof}
	
	\subsection{The $r-\bar \partial$ complex}\label{SSec5.2}
	In this subsection, let $f:X \to Y$ be a smooth proper morphism with geometrically integral fibers such that $R^1f_*\mu_{\infty}=R^2f_*\cO_X=0$. Assume that $Y$ is a smooth integral variety over a field $k$ of characteristic $0$, and let $\eta$ be its generic point. Recall that we have maps
	\[
	r:\Br X_{\eta} \to \Br_{\text{hor}}(X/Y), \ \ \partial: \Br_{\text{hor}}(X/Y) \to H^3(Y,\G_m).
	\]
    We also let $\bar \partial$ be the composition
    \[
    \bar \partial: \Br_{\text{hor}}(X/Y) \to H^3(Y,\G_m)\to H^3(Y \otimes_k \ok,\G_m)^{\Gamma_K}.
    \]

	\begin{lemma}\label{Prop612}
		If the restriction
		\begin{equation}\label{EqRestr}
		H^3(Y \otimes_k\ok,\G_m) \to H^3(\eta \otimes_k \ok,\G_m)
		\end{equation}
		is injective, then the two morphisms
        \begin{equation}\label{erdelta}
            \Br X_{\eta} \to[r] \Br_{\text{hor}}(X/Y) \to[\bar \partial] H^3(Y\otimes_k \ok,\G_m)^{\Gamma_K}
        \end{equation}
        form a complex. The restriction \eqref{EqRestr} is injective if $Y\otimes_k \ok$ is a $\ok$-torus.
	\end{lemma}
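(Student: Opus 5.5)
We argue by naturality of the long exact sequence of Theorem \ref{Surj}(i), followed by a purity computation for the torus. For the first claim, recall that $r$ is obtained by composing $\Br X_{\eta} \to \Br_{\text{hor}}(X_\eta/\eta)$ (the map in \eqref{Eq:LES} for $X_\eta \to \eta$) with the inverse of the specialization isomorphism $\Br_{\text{hor}}(X/Y) \to[\sim] \Br_{\text{hor}}(X_\eta/\eta)$ of Theorem \ref{Surj}(ii). The generic point lies in every Hilbert subset, so this specialization is indeed an isomorphism.

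\emph{Step 1 (compatibility of $\partial$ with pullback).} I would check that the triangle \eqref{Triangle} is natural for the cartesian square given by $g:\eta \to Y$. The morphism $g^*\tau_{\geq 1}R^\bullet f_*\G_m \to \tau_{\geq 1}R^\bullet f_{\eta*}\G_m$ used to define the pullback \eqref{Pullback} is induced by a morphism of triangles. Its term on $\G_m[0]$ is the natural map $g^*\G_m \to \G_m$, and its middle term is the base change map. Hence the connecting maps commute:
\[
\partial_\eta \circ g^* = g^* \circ \partial : \Br_{\text{hor}}(X/Y) \to H^3(\eta,\G_m).
\]

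\emph{Step 2.} Take $\beta \in \Br X_\eta$ and set $b=r(\beta)$. By construction, $g^*b$ is the image of $\beta$ in $\Br_{\text{hor}}(X_\eta/\eta)$. Exactness of \eqref{Eq:LES} for $X_\eta \to \eta$ then gives $\partial_\eta(g^*b)=0$, so $\partial(b)$ restricts to zero in $H^3(\eta,\G_m)$. Functoriality of $H^3(-,\G_m)$ for the commutative square formed by $\eta\otimes_k\ok \to \eta$, $Y\otimes_k\ok \to Y$ and the inclusions shows that $\bar\partial(b)$ maps to zero in $H^3(\eta\otimes_k\ok,\G_m)$. The assumed injectivity of \eqref{EqRestr} then yields $\bar\partial(b)=0$.

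\emph{Step 3 (the torus case).} Put $T=Y\otimes_k\ok\cong\G_m^d$, an open subscheme of $\A^d_{\ok}$. Let $j$ denote the inclusion of the generic point of $T$, with function field $F=\ok(T)=\ok(\A^d)$. Using Hilbert 90 ($R^1j_*\G_m=0$) together with the divisor sequence $0\to\G_m\to j_*\G_m\to\bigoplus_{x\in T^{(1)}} i_{x*}\Z\to0$ on the regular scheme $T$, and the vanishing $H^2(\kappa(x),\Z)\cong H^1(\kappa(x),\qz)$ reinterpretation via purity, I would derive a localization sequence
\[
\Br F \to[\mathrm{res}] \bigoplus_{x\in T^{(1)}} H^1(\kappa(x),\qz) \to H^3(T,\G_m) \to H^3(F,\G_m).
\]
In all degrees $\geq 2$ everything is torsion, so one can work with $\mu_n$-coefficients and Gabber's absolute purity. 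Injectivity of \eqref{EqRestr} is then equivalent to surjectivity of the residue map onto $\bigoplus_{x\in T^{(1)}}$. The same sequence exists for $\A^d_{\ok}$. Moreover $H^3(\A^d_{\ok},\G_m)=0$: it is torsion, and $H^i(\A^d_{\ok},\mu_n)=0$ for $i>0$ by $\A^1$-homotopy invariance in characteristic $0$, so the Kummer sequence kills it. Hence residues from $\Br F$ surject onto $\bigoplus_{x\in(\A^d)^{(1)}}H^1(\kappa(x),\qz)$, and a fortiori onto the summand indexed by $T^{(1)}\subset(\A^d)^{(1)}$. This gives the desired injectivity.

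The main obstacle is Step 3: setting up the localization/purity sequence rigorously in degree $3$. The delicate points are the term $R^2j_*\G_m$ and the passage to torsion coefficients. I would handle this by proving the statement for $n$-torsion with $\mu_n$-coefficients, using Gabber purity on $\A^d$ and its open subset $T$, and then passing to the colimit over $n$.
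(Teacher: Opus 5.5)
Your Steps 1 and 2 are correct and match the paper's argument for the first claim: naturality of $\partial$ under pullback to $\eta$, followed by base change to $\ok$. The gap is in Step 3. For $d\geq 2$ the sequence
\[
\Br F \to \bigoplus_{x\in T^{(1)}} H^1(\kappa(x),\qz) \to H^3(T,\G_m) \to H^3(F,\G_m)
\]
is not exact, for two reasons.

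\begin{itemize}
\item The divisor sequence $0\to\G_m\to j_*\G_m\to\bigoplus_x i_{x*}\Z\to 0$ gives middle terms $H^2(T,i_{x*}\Z)$, not $H^1(\kappa(x),\qz)$. By Leray for $i_x$, $H^2(T,i_{x*}\Z)$ is a proper subgroup of $H^1(\kappa(x),\qz)$: $R^2i_{x*}\Z\neq 0$ at the closed points of $\overline{\{x\}}$, so only characters unramified along the normalized closure of $x$ in $T$ survive.
\item The last term it gives is $H^3(T,j_*\G_m)$, which differs from $H^3(F,\G_m)$ because $R^2j_*\G_m\neq 0$ in dimension $\geq 2$. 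Its stalks are Brauer groups of fraction fields of strictly henselian regular local rings.
\end{itemize}

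Equivalently, by Bloch--Ogus, $\ker\bigl(H^3(T,\mu_n)\to H^3(F,\mu_n)\bigr)$ is the \emph{middle cohomology} of the Gersten complex
\[
H^2(F,\mu_n)\to\bigoplus_{x\in T^{(1)}}H^1(\kappa(x),\Z/n\Z)\to\bigoplus_{y\in T^{(2)}}H^0(\kappa(y),\mu_n^{\otimes -1}).
\]
It is not the cokernel of the residue map.

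You can check the failure directly on $T=\A^2_{\ok}$. Your sequence together with $H^3(\A^2_{\ok},\G_m)=0$ would make residues from $\Br F$ surject onto $\bigoplus_x H^1(\kappa(x),\qz)$. But the Kummer character of $t_2$ placed on the single line $\{t_1=0\}$ is not a residue of any class. Its own residue at the origin is nonzero and nothing cancels it, so it violates $\partial_2\circ\partial_1=0$.

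Even with the correct Gersten description, the ``a fortiori'' reduction from $\A^d$ to $T$ fails. For $T$ you only impose the reciprocity conditions at codimension-$2$ points \emph{of $T$}, so the group you must kill is larger than anything controlled by $\A^d$. For example, on $\G_m^2$ the Kummer character of $t_2$ on $\{t_1=1\}$ is admissible in $T$ but violates reciprocity at $(1,0)\in\A^2\setminus T$. It is the residue of $(t_1-1,t_2)_n$ only because the other residue lies on $\{t_2=0\}$, outside $T$. So $H^3(\A^d_{\ok},\G_m)=0$ alone cannot give the torus case; you need an input specific to $\G_m^d$.

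The paper uses Gille--Pianzola: the composite
\[
H^n(\ok[t_1^{\pm1},\ldots,t_d^{\pm1}],\G_m)\to H^n(\ok(t_1,\ldots,t_d),\G_m)\to H^n(\ok((t_1))\cdots((t_d)),\G_m)
\]
is an isomorphism for $n\geq 2$, so the first arrow is injective.

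A self-contained replacement would run roughly as follows:
\begin{itemize}
\item use divisibility of $\Br T$ (plus torsionness) to identify $H^3(T,\G_m)[n]$ with $H^3(T,\mu_n)$;
\item note that $H^3(T,\mu_n)$ is spanned by cup products of the Kummer classes of the $t_i$;
\item show these cup products stay nonzero over the iterated Laurent series field.
\end{itemize}
That is essentially their computation.
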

	\begin{proof}
		By the naturality of $\partial$, the image $\partial(r(\Br X_{\eta}))$ lies in the kernel of \linebreak $H^3(Y, \G_m) \to H^3(\eta,\G_m)$. Any element of this kernel must map to zero also in $H^3(\eta \otimes_k \ok,\G_m)$. So, if $H^3(Y \otimes_k \ok,\G_m) \to H^3(\eta \otimes_k \ok,\G_m)$ is injective, it maps to zero in $H^3(Y\otimes_k \ok,\G_m)$ as well.
		
		For the second part, let $d$ be a natural number, and $t_1,\ldots,t_d$ be algebraically independent variables. For a field $F$, we denote by $F((t))$ its field of Laurent series.
		Gille and Pianzola \cite[Proposition 3.1(2)]{GP} prove that the composition
		\[
		H^n(\ok[t_1^{\pm1},\ldots, t_d^{\pm1}], \G_m) \to H^n(\ok (t_1,\ldots,t_d), \G_m) \to H^n(\ok((t_1))\cdots ((t_d)), \G_m),
		\]
		is an isomorphism for all $n \geq 2$. In particular, the first map is injective. Setting $n=3$ and choosing a $\ok$-isomorphism $Y \otimes_k \ok \cong \Spec \ok[t_1^{\pm1}, \ldots,t_d^{\pm1}]$ concludes the proof.
	\end{proof}
	
	\begin{remark}\label{Rmk:variousparts}
        In Section \ref{Sec8}, we shall reduce the general case of Theorem \ref{Thm: fibrationnew} to the case where \eqref{erdelta} is exact. When guaranteeing Brauer--Manin orthogonality of suitable adelic points on the fibers of $f$, we shall first obtain Brauer--Manin orthogonality to $\Br X_{\eta}$ by combining Shafarevich's ideas with the study of half-spin symbols (this is done in Section \ref{Sec9} and \ref{Sec10}, see in particular Proposition \ref{Proprop} for an explicit expression of the Brauer--Manin pairing associated to $\Br X_{\eta}$ for a certain suitable choice of adelic points on the fibers), and afterwards we shall use the triple variation method mentioned in the introduction to deal with the cokernel of $r:\Br X_{\eta} \to \Br_{\text{hor}}(X/Y)$ (see Section \ref{Sec11}, and the closing argument in Section \ref{Sec12}).
	\end{remark}
	
	\section{Representatives for $\Br X_{\eta}/f^*\Br \eta$}\label{Sec6}

	For a smooth proper morphism $f:X \to Y$ with geometrically integral fibers and satisfying $R^1f_*\mu_{\infty}=R^2f_*\cO_X=0$, we investigate the following two questions:
	\begin{itemize}
		\item Given an element of $\Br X_{\eta}/f^*\Br \eta$, can a representative $b \in \Br f^{-1}(U)$ be found for some {\em explicit} dense open $U \subset Y$? 
		\item If yes, can any control be attained on the residues of such a $b$? 
	\end{itemize}
	
	The main result of this section, Theorem \ref{Teo1}, provides partial positive answers. Theorem \ref{Teo1} is formulated in Subsection \ref{SSec6.0}, proven in Subsection \ref{SSec6.2}, and applied to the setting of fibrations over Galois quasi-trivial tori in Subsection \ref{SSec6.3}. In Subsection \ref{SSec6.5}, we compute the local Brauer pairing associated to the representative just obtained. The resulting expression reveals ``local half-spin symbols'' arising, which was our motivation behind their treatment in Section \ref{Sec4}.
	\vskip3mm
	
	\subsection{Collections of augmented divisors and main theorem}\label{SSec6.0}
	
	\begin{definition}\label{DefResidueAlgebras}
		For a regular integral Noetherian scheme $Y$, and pairwise distinct integral divisors $Z_1,\ldots,Z_r \subset Y$, a {\em collection of residue algebras} $\mathfrak A$ on the divisors $Z_i$'s is a collection of finite étale algebras $\mathfrak A \coloneqq \{A_1/\kappa(Z_1), \ldots, A_r/\kappa(Z_r)\}$, where each $A_i$ is a finite étale algebra over the function field $\kappa(Z_i)$ of $Z_i$.
	\end{definition}
	
	Collections of residue algebras behave contravariantly with respect to dominant morphisms, in the following sense. For a dominant morphism $f:Y' \to Y$ of regular integral Noetherian schemes, and a collection $\mathfrak A=\{A_i/\kappa(Z_i)\}_{1 \leq i \leq r}$ on integral divisors $Z_1,\ldots,Z_r$ of $Y$ such that the image $f(Y')$ is not contained in $\cup_i Z_i$, we define the {\em pullback} of $\mathfrak A$ under $f$ as
	\[
	f^{-1}(\mathfrak A)\coloneqq \{A'_{i,j}/\kappa(Z'_{i,j})\}_{1 \leq i \leq r, 1 \leq j \leq r_i},
	\]
    where, for each $i$, $\cup_{j=1}^{r_i} Z'_{i,j}$ is the decomposition into irreducible components of the reduced (possibly empty) divisor $f^{-1}(Z_i)_{\text{red}}$, and $A'_{i,j} \coloneqq  \kappa(Z'_{i,j}) \otimes_{\kappa(Z_i)} A_i$.

    \vskip1mm
	
	(Occasionally, we shall denote the pullback $f^{-1}(\mathfrak A)$ by $\mathfrak A \times_Y Y'$.)
	
	\medskip
	
	In this subsection, given $Y$, $Z_1,\ldots,Z_r$ and $\mathfrak A$ as in Definition \ref{DefResidueAlgebras}, we denote, for each $i$, by $\iota_i:\Spec \kappa(Z_i) \hookrightarrow Y$ the embedding of the generic point of $Z_i$ in $Y$, and by $\phi_i$ the composition $\Spec A_i\to \Spec \kappa(Z_i) \to[\iota_i] Y$. We also denote by $U$ the complement $Y \s \cup_iZ_i$, and by $j:U \to Y$ the open embedding. 
	
	\vskip1mm
	
	When $Y$ is a curve, the first point in the following definition appears already in Section 2 of \cite{HW14}.
	
	\begin{notation*}
		For a collection of residue algebras $\mathfrak A=\{A_1/\kappa(Z_1), \ldots, A_r/\kappa(Z_r)\}$ on divisors $Z_1,\ldots,Z_r$ of a regular integral Noetherian scheme $Y$, we define:
		\begin{itemize}
			\item the group $(\Br Y)_+(\mathfrak A)$ as the subgroup of $\Br U$ of those elements whose residue at each $Z_i$ belongs to the kernel of the pullback $H^1(\kappa(Z_i),\qz) \to  H^1(A_i,\qz)$;
			\item the complex $\G_{m+}(\mathfrak A) \in \cD^+(\Sh_Y)$ of {\em $\mathfrak A$-augmented invertible functions} as the complex
			\begin{equation}\label{Eq11}
			j_*\G_m \to[u] \oplus_i \phi_{i,*}{\Z},
			\end{equation}
			supported in degrees $0$ and $1$, where $u$ is defined as the composition of the natural morphism $j_*\G_m \to \oplus_i \iota_{i,*}\Z$ with the natural embeddings $\iota_{i,*} {\Z} \hookrightarrow \phi_{i,*}{\Z}$.
		\end{itemize}
	\end{notation*}
	
	\vskip1mm
	
	We denote by $v:\G_m \to \G_{m+}$ the natural morphism in $\cD^+(\Sh_Y)$ induced by the commutative diagram
	\begin{equation}\label{v}
	\begin{tikzcd}
	\G_m \arrow[d] \arrow[r] & 0 \arrow[d]             \\
	j_*\G_m \arrow[r, "u"]   & {\oplus_i \phi_{i,*}\Z}.
	\end{tikzcd}
	\end{equation}

    For the following theorem, recall that we have morphisms
    \[
    r:\Br X_{\eta} \to \Br_{\text{hor}}(X/Y),\, \partial:\Br_{\text{hor}}(X/Y) \to H^3(Y,\G_m),
    \]
    for smooth proper morphisms $f:X \to Y$ such that $R^1f_*\mu_{\infty}=R^2f_*\cO_X=0$.
	
	\begin{theorem}\label{Teo1}
		Let $Y$ be a smooth geometrically integral variety over a field $k$ of characteristic $0$, and $f:X \to Y$ be a smooth proper morphism satisfying $R^1f_*\mu_{\infty}=R^2f_*\cO_X=0$. Let $Z_1,\ldots,Z_r$ be smooth integral divisors on $Y$ such that $\cup_iZ_i$ is smooth, and $\mathfrak A=\{A_i/\kappa(Z_i)\}_{i \in I}$ be a collection of residue algebras. Let $\beta \in \Br X_{\eta}/f^*\Br \eta$ be such that $\partial (r(\beta)) \in H^3(Y,\G_m)$ lies in the kernel of
		\[
		H^3(Y,\G_m) \to[v] H^3(Y,\G_{m+}).
		\]
		Then there exists $b \in (\Br X)_+(f^{-1}(\mathfrak A))$ representing $\beta$.
	\end{theorem}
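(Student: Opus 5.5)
We will work throughout with the exact triangle \eqref{Triangle}, $\G_m[0] \to R^\bullet f_*\G_m \to \tau_{\geq 1}R^\bullet f_*\G_m \to[+1]$, and push it out along $v:\G_m \to \G_{m+}(\mathfrak A)$. This gives a complex $\mathcal C:=\operatorname{Cone}(\tau_{\geq 1}R^\bullet f_*\G_m[-1] \to \G_{m+}(\mathfrak A))$, where the map is the composite of the connecting map of \eqref{Triangle} with $v$. Equivalently, $\mathcal C$ fits in a triangle $\G_{m+}(\mathfrak A) \to \mathcal C \to \tau_{\geq 1}R^\bullet f_*\G_m \to[+1]$, together with a morphism of triangles from \eqref{Triangle}. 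The corresponding long exact sequence reads
\[
H^2(Y,\G_{m+}(\mathfrak A)) \to H^2(Y,\mathcal C) \to \Br_{\text{hor}}(X/Y) \to[v\circ\partial] H^3(Y,\G_{m+}(\mathfrak A)).
\]
By hypothesis $v(\partial(r(\beta)))=0$, so $r(\beta)$ lifts to a class $c \in H^2(Y,\mathcal C)$.

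The core of the argument is to identify $H^2(Y,\mathcal C)$ with (a quotient of) $(\Br X)_+(f^{-1}(\mathfrak A))$. For this we first compute $H^\bullet(Y,\G_{m+}(\mathfrak A))$. Using purity on the regular scheme $Y$ and the Grothendieck residue sequence (valid since $\cup_i Z_i$ is smooth), the complex $j_*\G_m \to \oplus_i\iota_{i,*}\Z$ is quasi-isomorphic to $\G_m$ in low degrees. Hence $\G_{m+}(\mathfrak A)$ is an extension of $\oplus_i \phi_{i,*}\Z/\iota_{i,*}\Z[-1]$ by $\G_m$. The standard computation then gives $H^2(Y,\G_{m+}(\mathfrak A))=(\Br Y)_+(\mathfrak A)$ modulo the relevant $H^1$ terms. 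Indeed, $H^1(\kappa(Z_i),\phi_*\Z/\Z)$ maps to $H^2(\kappa(Z_i),\Z)=H^1(\kappa(Z_i),\qz)$, with image the kernel of restriction to $A_i$, which matches the residue condition.

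The same construction can be performed upstairs. On $X$, with $U_X=f^{-1}(U)$, define $\G_{m+}(f^{-1}\mathfrak A)$; its $H^2$ is $(\Br X)_+(f^{-1}(\mathfrak A))$, since $f$ is smooth and $f^{-1}(Z_i)$ is a smooth divisor. We then need a natural map $\mathcal C \to Rf_*\G_{m+}(f^{-1}\mathfrak A)$ compatible with the triangles. To construct it, we use $Rf_*j_{X*}\G_m \leftarrow j_*Rf_{U*}\G_m$ together with $f_*\phi'_{i,*}\Z=\phi_{i,*}\Z$ (fibers geometrically integral, so $f^{-1}(Z_i)$ is integral over the generic point of $Z_i$). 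This gives a morphism of triangles from $\G_{m+}(\mathfrak A)\to\mathcal C\to\tau_{\geq1}Rf_*\G_m$ to the analogous triangle with $Rf_*\G_{m+}(f^{-1}\mathfrak A)$ in the middle. Applying $H^2$, the class $c$ yields some $b \in (\Br X)_+(f^{-1}(\mathfrak A))$ whose image in $\Br_{\text{hor}}(X/Y)$ is $r(\beta)$.

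It then remains to check that $b$ represents $\beta$ itself, not just $r(\beta)$. Restricting to $\eta$, the images of $b$ and $\beta$ in $\Br X_\eta$ both map to $r(\beta)$. Using \eqref{Eq:LES} over $\eta$ (where $H^3$ issues are irrelevant), their difference lies in $f^*\Br\eta$, so $b$ represents $\beta$ modulo $f^*\Br\eta$, as required. We expect the main obstacle to be the construction of the comparison morphism of triangles in the derived category, in particular the functoriality of $\tau_{\geq1}$ and of the cone, which need not be canonical. The safer route is to work with explicit complexes: choose injective resolutions, define $\mathcal C$ by an honest mapping cone at the chain level, and verify that the residue description of $H^2$ of augmented complexes is compatible with $f^*$.
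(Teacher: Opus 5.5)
Your proposal is correct and is essentially the paper's proof. The paper maps the triangle \eqref{Triangle} directly into the termwise-augmented triangle $\G_{m+}\to Rf_*[j'_*\G_m\to\oplus_i\phi'_{i,*}\Z]\to[\tau_{\geq1}Rf_*j'_*\G_m\to\oplus_i\tau_{\geq1}Rf_*\phi'_{i,*}\Z]$, using $f_*j'_*\G_m=j_*\G_m$, $f_*\phi'_{i,*}\Z=\phi_{i,*}\Z$ and Proposition~\ref{Prop61} on $X$. It then lifts $v(r(\beta))$ in that triangle and checks $r(b)=r(\beta)$ by restricting to a nonempty open $V\subset U$. Your pushout triangle is just an intermediate factorization of that same morphism of triangles, and your generic-point comparison plays the role of the paper's restriction step. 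Your computation of $H^2(Y,\G_{m+}(\mathfrak A))$ and the claimed identification of $H^2(Y,\mathcal C)$ with a quotient of $(\Br X)_+$ are unnecessary: only the map $H^2(Y,\mathcal C)\to H^2(X,\G_{m+}(f^{-1}\mathfrak A))=(\Br X)_+$ is used.
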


	\subsection{Proof of Theorem \ref{Teo1}}\label{SSec6.2}
	We start with two general lemmas, the first of which is certainly (at least implicitly) well-known but the author could not find it explicitly stated in this generality in the literature.
	
	\begin{lemma}\label{LemResidues}
		Let $X$ be a regular Noetherian scheme of residual characteristic $0$, $Z$ be an integral smooth divisor of $X$, $\iota:\Spec \kappa(Z) \hookrightarrow X$ be the embedding of its generic point in $X$, and $U$ be a non-empty open subscheme of $X \s Z$. We denote by $j:U \hookrightarrow X$ the open embedding. The diagram
		\[
		\begin{tikzcd}
		{H^2(X,j_*\G_m)} \arrow[d, "j^{*}"] \arrow[r] & {H^2(X,\iota_*\Z)} \arrow[d, "\iota^{*}"] \\
		{H^2(U,\G_m)} \arrow[r, "\partial_Z"]       & {H^2(\kappa(Z),\Z)}                    
		\end{tikzcd}
		\]
		where the upper horizontal map is induced by the natural morphism $j_*\G_m \to \iota_*\Z$, and the bottom horizontal map is the Witt residue (after identifying $H^2(\kappa(Z),\Z)$ with $H^1(\kappa(Z),\qz)$), is commutative.
	\end{lemma}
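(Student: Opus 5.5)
Both routes to the two residue classes in $H^2(\kappa(Z),\Z)$ are functorial in $X$, so the plan is to reduce to the henselization of $X$ at the generic point of $Z$. There the statement becomes the standard description of the Witt residue for a henselian discrete valuation ring.

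First, replace $X$ by $X_Z^h:=\Spec \cO^h_{X,\eta_Z}$, where $\eta_Z$ is the generic point of $Z$, and replace $U$ by its preimage, which is the fraction field $F$ of the henselian DVR $R=\cO^h_{X,\eta_Z}$. Both composites in the diagram are compatible with this base change. For the bottom-left path, the Witt residue $\partial_Z$ is by definition computed after passing to the completion (or henselization) of the local ring at $\eta_Z$. For the top-right path, note that $\iota$ factors through $X_Z^h$. Let $j^h$ and $\iota^h$ be the base changes of $j$ and $\iota$. The natural map $j_*\G_m\to\iota_*\Z$ pulls back to $j^h_*\G_m\to\iota^h_*\Z$, since pulling back along the pro-étale map $X_Z^h\to X$ commutes with $j_*$ and $\iota_*$ on the small étale site. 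So we reduce to $X=\Spec R$, $U=\Spec F$, $Z=\Spec \kappa$ with $\kappa$ the residue field. The possibly smaller open $U\subset X\setminus Z$ causes no trouble: after henselizing at $\eta_Z$ its preimage is still $\Spec F$, because $U$ is dense.

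Second, on $\Spec R$ we have the short exact sequence of étale sheaves $0\to\G_m\to j_*\G_m\to \iota_*\Z\to 0$, where $\iota$ is now the closed point. This uses that $R$ is a regular DVR, so the cokernel is supported on the closed point and given by the valuation. Since $R$ is henselian, $H^q(R,\G_m)=H^q(\kappa,\G_m)$. Residual characteristic $0$ gives $H^q(R,j_*\G_m)=H^q(F,\G_m)$, using the vanishing of $R^qj_*\G_m$ for $q\ge1$ in degrees $1$ and $2$ up to the relevant range. More precisely, $R^1j_*\G_m=0$ by Hilbert 90, and for $q=2$ the Leray sequence gives $H^2(R,j_*\G_m)\hookrightarrow H^2(F,\G_m)$. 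That injection is all that is needed, since we only compare images. The long exact sequence then yields
\[
\Br\kappa\to\Br F\to H^2(\kappa,\Z)\cong H^1(\kappa,\qz)\to H^3(\kappa,\G_m).
\]
The remaining task is to identify the connecting map $\Br F\to H^2(\kappa,\Z)$, which is exactly the top-right route, with the Witt residue.

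Third, for this identification use the standard description: in characteristic $0$, $\Br F\cong \Br\kappa\oplus H^1(\kappa,\qz)$ via a choice of uniformizer $\pi$. Residue-free classes come from $\Br R$, and the class $\chi\cup(\pi)$ for $\chi\in H^1(\kappa,\qz)$ has Witt residue $\chi$. Both the connecting map above and the Witt residue kill the image of $\Br R$, and both are $\Br\kappa$-independent. So it suffices to check that the connecting map sends $\chi\cup(\pi)$ to $\delta\chi\in H^2(\kappa,\Z)$. This follows by naturality of cup products: cupping with $\chi$ acts on the exact sequence $0\to\G_m\to j_*\G_m\to\iota_*\Z\to0$, and $(\pi)\in H^0(R,j_*\G_m)$ maps to $1\in H^0(\kappa,\Z)$.

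I expect the main obstacle to be this last sign and normalization check. The connecting-homomorphism residue may differ from the paper's convention for the Witt residue by a sign, depending on the identification $H^2(\kappa,\Z)\cong H^1(\kappa,\qz)$ and on cup-product orderings. I would fix the conventions so that they agree, citing \cite[Sec.\ 1.4.3]{BGbook} for the residue. A secondary point is justifying that the top-right composite really is the connecting map of this exact sequence, rather than merely a map induced by $j_*\G_m\to\iota_*\Z$. It is: $H^2(X,j_*\G_m)\to H^2(X,\iota_*\Z)$ is literally the map induced by the quotient morphism, and it agrees with $\Br F\to H^2(\kappa,\Z)$ under the identifications above.
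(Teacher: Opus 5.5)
Your proposal is correct and follows essentially the paper's route: both composites are compatible with base change, so one reduces to the case of a DVR, where the map coming from $0\to\G_m\to j_*\G_m\to\iota_*\Z\to 0$ is identified with the residue. The paper localizes at the generic point of $Z$ and simply cites the DVR computation in the proof of Theorem 3.6.1 of \cite{BGbook}, whereas you henselize and carry out that computation yourself via Witt's splitting and a cup product with $(\pi)$. Your possible sign discrepancy is absorbed by the choice of identification $H^2(\kappa(Z),\Z)\cong H^1(\kappa(Z),\Q/\Z)$, and it is harmless anyway, since the lemma is only used to identify kernels of residue maps.
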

	\begin{proof}
		
		Let $\zeta$ be the generic point of $Z$. To prove the statement we may pullback the pair $(X,Z)$ to the pair $(\Spec \cO_{X,\zeta}, \Spec \kappa(\zeta))$. In this manner we reduce to the case where $X$ is the spectrum of a DVR, $U$ is its generic point, and $Z$ is its special point. In this special case, a proof may be found within the proof of Theorem 3.6.1 of \cite{BGbook}, see in particular the argument in p.86 of {\em ibid.}.
	\end{proof}
	
	\begin{lemma}\label{LemGysinVanishing}
		Let $Z$ be a divisor of a smooth variety $Y$ defined over a field of characteristic $0$, $U \coloneqq Y \s Z$ be the complement of $Z$, and $j:U \hookrightarrow Y$ be the open embedding. If $Z$ is smooth, then $R^nj_*\G_m=0$ for all $n \geq 1$.
	\end{lemma}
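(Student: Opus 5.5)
The claim is local on $Y$, so I will compute stalks. For a geometric point $\bar y$ of $Y$, the stalk is $(R^nj_*\G_m)_{\bar y}=H^n(U\times_Y \Spec\cO^{sh}_{Y,\bar y},\G_m)$, where $\cO^{sh}_{Y,\bar y}$ is the strict henselization. If $\bar y\notin Z$, this is $H^n$ of a strictly henselian local scheme, which vanishes for $n\ge1$. So assume $\bar y\in Z$. Since $Z$ and $Y$ are smooth at $\bar y$ and $Z$ is a divisor, there is a regular system of parameters $t,x_2,\ldots,x_d$ of $A\coloneqq\cO^{sh}_{Y,\bar y}$ with $Z=V(t)$ locally. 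Thus the punctured scheme is $\Spec A[1/t]$, and the task is to show $H^n(A[1/t],\G_m)=0$ for all $n\ge1$.

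For $n=1$: $A$ is a regular local ring, hence a UFD, and $A[1/t]$ is a localization of a UFD, hence also a UFD. Therefore $\Pic A[1/t]=0$.

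For $n\ge2$, I will use the Kummer sequence together with the fact that $H^n(-,\G_m)$ is torsion on regular schemes for $n\ge 2$. This torsion statement holds because, with $g:\eta\to W$ the generic point, $\G_m\to g_*\G_{m,\eta}$ has cokernel the divisor sheaf $\bigoplus_{x\in W^{(1)}} i_{x*}\Z$, and $H^{\ge 2}$ of the latter and of $g_*\G_m$ are torsion by Galois cohomology. It then suffices to show that $H^n(A[1/t],\mu_\ell)=0$ for $n\ge2$ and all $\ell$ (the characteristic is $0$), and that $H^1(A[1/t],\mu_\ell)$ surjects onto the $\ell$-torsion in a way compatible with $\Pic=0$. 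For this I will invoke absolute cohomological purity (Gabber, or the classical smooth-pair case in characteristic $0$) for the smooth pair $(\Spec A,V(t))$. It gives a Gysin long exact sequence
\[
H^n(A,\mu_\ell)\to H^n(A[1/t],\mu_\ell)\to H^{n-1}(A/t,\mu_\ell^{\otimes 0})\to H^{n+1}(A,\mu_\ell).
\]
Both $A$ and $A/t$ are strictly henselian local, so their cohomology vanishes in positive degree. Hence $H^n(A[1/t],\mu_\ell)=0$ for $n\ge2$, and $H^1(A[1/t],\mu_\ell)\cong\Z/\ell$, generated by the Kummer class of $t$.

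Feeding this into the Kummer sequence, for $n\ge2$ the group $H^n(A[1/t],\G_m)[\ell]$ is a quotient of $H^n(A[1/t],\mu_\ell)=0$ by the $\ell$-torsion part of the sequence $H^{n-1}(\G_m)\otimes\Z/\ell\to H^n(\mu_\ell)\to H^n(\G_m)[\ell]\to0$. Since $H^n(\G_m)$ is torsion, it vanishes.

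The main obstacle is justifying purity in this generality; I will cite it from SGA 4 / Milne's \emph{\'Etale cohomology} VI.5 (smooth pairs over a field). An alternative, more elementary route uses Gille--Pianzola-type Laurent-series arguments after reducing to the complete case. A second subtle point is the torsion claim for $H^n(\G_m)$ with $n\ge2$ on the non-noetherian-looking but regular scheme $A[1/t]$. I will handle it through the divisor exact sequence above, since $A[1/t]$ is regular noetherian.
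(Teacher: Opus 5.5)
Your proof is correct and follows the paper's route. Both arguments combine three ingredients: $H^{\geq 2}(-,\G_m)$ is torsion on regular schemes, the Kummer sequence, and Gysin purity, which gives $R^qj_*\mu_\ell=0$ for $q\geq 2$. You run this on the strict henselizations $\Spec A[1/t]$, whereas the paper works at the sheaf level with $\mu_\infty$.

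Your UFD argument for $n=1$ is in fact the right justification of $R^1j_*\G_m=0$. Hilbert 90, which is all the paper invokes, only identifies the presheaf with $Y'\mapsto\Pic(U\times_YY')$, and that presheaf need not vanish before sheafification.
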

	\begin{proof}		
		The étale sheaf $R^nj_*\G_m$ is the sheafification in $Y_{\et}$ of the presheaf $Y' \mapsto H^n(U \times_YY',\G_m)$ \cite[III.1.13]{LECcompleto}. For $n=1$, this presheaf is $0$ by Hilbert Theorem 90. For $n \geq 2$, this is a torsion presheaf by \cite[Lemma 3.5.3]{BGbook}. So, for all $n \geq 2$, $(R^nj_*\G_m) \otimes \Q=0$ and then $R^nj_*\G_m \cong R^nj_*\mu_{\infty}$ by the ``$\infty$-Kummer sequence'' (as used in Subsection \ref{SSec5.1}). 
		Finally, since $Z$ is smooth, we have $R^nj_*\mu_{m}=0$ for all $m\geq 1,\,n \geq 2$ by Gysin purity \cite[VI.5.1]{LECcompleto}, and so $R^nj_*\mu_{\infty}=0$ by taking the limit.
	\end{proof}
	
	We also need the following.
	
	\begin{proposition}\label{Prop61}
		For a smooth variety $Y$ over a field of characteristic $0$, and a collection of residue algebras $\mathfrak A=\{A_i/\kappa(Z_i)\}_{i=1,\ldots,r}$ on smooth integral divisors $Z_1,\ldots,Z_r \subset Y$, there is a natural isomorphism
		\[
		H^2(Y,\G_{m+}) \cong (\Br Y)_+.
		\]
		Moreover, if each $A_i$ is integral, there are natural isomorphisms
		\[
		H^0(Y,\G_{m+}) \cong k[Y]^{*}, \quad H^1(Y,\G_{m+}) \cong \Pic Y.
		\]
	\end{proposition}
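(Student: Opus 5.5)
Since $\G_{m+}$ is the two-term complex $j_*\G_m \to \oplus_i \phi_{i,*}\Z$ in degrees $0,1$, I will compute its hypercohomology through the long exact sequence attached to the distinguished triangle
\[
(\oplus_i \phi_{i,*}\Z)[-1] \to \G_{m+} \to j_*\G_m \to[u] \oplus_i \phi_{i,*}\Z .
\]
This gives
\[
\cdots \to H^{n-1}(Y,j_*\G_m) \to[u] \oplus_i H^{n-1}(Y,\phi_{i,*}\Z) \to H^n(Y,\G_{m+}) \to H^n(Y,j_*\G_m) \to[u] \oplus_i H^n(Y,\phi_{i,*}\Z) \to \cdots .
\]
The first step is to identify each term. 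By Lemma \ref{LemGysinVanishing}, applied to the smooth divisor $\cup_i Z_i$, we have $R^nj_*\G_m=0$ for $n\ge1$. Hence $H^n(Y,j_*\G_m)=H^n(U,\G_m)$: this is $k[U]^*$, $\Pic U$ and $\Br U$ for $n=0,1,2$. For the other terms, $\phi_{i,*}=\iota_{i,*}\circ(\text{finite étale pushforward})$. Since $\kappa(Z_i)$ is a point, $\iota_{i,*}$ of a sheaf on $\Spec \kappa(Z_i)$ has higher direct images that are stalkwise Galois cohomology of fraction fields of strict henselizations of $Y$ along $Z_i$. I will show these vanish for $\Z$ in degrees $1$ and $2$, or at least that they do not affect the low-degree terms. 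For $H^1$ this is clear, since $H^1(-,\Z)=\Hom_{cont}(-,\Z)=0$. In degree $2$, $R^2\iota_*\Z \cong R^1\iota_*\qz$ is what I need to handle, and I expect it to be the only genuine subtlety. Granting this, $H^n(Y,\phi_{i,*}\Z)=H^n(A_i,\Z)$ in low degrees, giving $\Z^{\pi_0(A_i)}$, $0$ and $H^1(A_i,\qz)$ for $n=0,1,2$.

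With these identifications I will read off the groups in order. For $H^0$: when $A_i$ is integral, $H^0(A_i,\Z)=\Z$ and the map $u$ is the valuation along $Z_i$. Its kernel is the set of units of $U$ with no zeros or poles along the $Z_i$, which is $k[Y]^*$ by normality of $Y$. For $H^1$: we get $0\to \Coker\bigl(k[U]^*\to\oplus_i\Z\bigr)\to H^1(Y,\G_{m+})\to \Pic U \to 0$, since $H^1(A_i,\Z)=0$. The localization sequence $k[U]^*\to\oplus_i \Z[Z_i]\to \Pic Y\to \Pic U\to 0$ identifies this extension with $\Pic Y$. To make this canonical rather than an abstract equality of orders, I will instead compare with the triangle $\G_m \to[v] \G_{m+}$: the cone of $v$ is $[j_*\G_m/\G_m \to \oplus\phi_{i,*}\Z]$. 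When $A_i$ is integral, $j_*\G_m/\G_m \cong \oplus_i\iota_{i,*}\Z$ (divisor sequence, $Y$ regular), so the cone is $\oplus_i(\phi_{i,*}\Z/\iota_{i,*}\Z)[-1]$. For $A_i$ integral the map $\iota_{i,*}\Z\to\phi_{i,*}\Z$ is an isomorphism on $H^0$ and the cokernel sheaf contributes only from degree $\ge1$ (shifted to $\ge 2$). Thus $v$ induces isomorphisms on $H^0,H^1$.

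For $H^2$, which needs no integrality, the sequence becomes
\[
\oplus_i H^1(Y,\phi_{i,*}\Z)=0 \to H^2(Y,\G_{m+}) \to \Br U \to[u] \oplus_i H^2(A_i,\Z)=\oplus_i H^1(A_i,\qz).
\]
By Lemma \ref{LemResidues}, composed with restriction along $\Spec A_i\to\Spec\kappa(Z_i)$, the map $u$ sends $b$ to the image in $H^1(A_i,\qz)$ of the residue $\partial_{Z_i}(b)$. Its kernel is therefore exactly $(\Br Y)_+(\mathfrak A)$, which gives the natural isomorphism. The main obstacle is the vanishing of $R^1\iota_{i,*}(\text{finite}_*\Z)$ and the analogous terms needed to identify $H^1$ and $H^2(Y,\phi_{i,*}\Z)$. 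I plan to bypass the full vanishing with the Leray spectral sequence: $H^1(Y,\phi_{i,*}\Z)\hookrightarrow H^1(A_i,\Z)=0$, and $H^2(Y,\phi_{i,*}\Z)\to H^2(A_i,\Z)$ is the edge map. Lemma \ref{LemResidues} is stated exactly with the edge map $\iota^*$, so injectivity of the edge map is not needed for computing the kernel, provided $H^1(Y,\phi_{i,*}\Z)=0$. I will check that this last equality holds.
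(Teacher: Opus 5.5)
Your route is the same as the paper's. You use the same triangle and long exact sequence, Lemma \ref{LemGysinVanishing} to replace $H^n(Y,j_*\G_m)$ by $H^n(U,\G_m)$, the Leray injection $H^1(Y,\phi_{i,*}\Z)\hookrightarrow H^1(A_i,\Z)=0$, and Lemma \ref{LemResidues} for the residues. Your treatment of $H^0,H^1$ via the cone of $v$, namely $\bigoplus_i(\phi_{i,*}\Z/\iota_{i,*}\Z)[-1]$, is a clean repackaging of the paper's five-lemma comparison with the case $A_i=\kappa(Z_i)$. One small addition there: to say the cokernel sheaf has no $H^0$, you also need $H^1(Y,\iota_{i,*}\Z)\hookrightarrow H^1(\kappa(Z_i),\Z)=0$, besides the isomorphism on $H^0$.

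\textbf{The gap is in the $H^2$ step.} The exact sequence identifies $H^2(Y,\G_{m+})$ with $\ker(u)$, where $u\colon \Br U\to\bigoplus_i H^2(Y,\phi_{i,*}\Z)$. By contrast, $(\Br Y)_+$ is $\ker(e\circ u)$, where $e$ is the Leray edge map to $\bigoplus_i H^2(A_i,\Z)$; Lemma \ref{LemResidues} together with naturality of Leray identifies $e\circ u$ with residue followed by restriction.
\begin{itemize}
\item One always has $\ker(u)\subseteq\ker(e\circ u)$. Equality requires that $\ker(e)$ meet the image of $u$ trivially.
\item So your claim that injectivity of the edge map is not needed is false. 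As written, you only obtain an injection $H^2(Y,\G_{m+})\hookrightarrow(\Br Y)_+$.
\item You also cannot fall back on the identification $H^2(Y,\phi_{i,*}\Z)=H^2(A_i,\Z)$ that you were prepared to grant. The sheaf $R^2\iota_{i,*}\Z$ does not vanish in general. For instance, $H^2(Y,\iota_{i,*}\Z)\cong H^2(Z_i,\Z)\cong H^1(Z_i,\qz)$ consists only of characters unramified on $Z_i$. It is in general a proper subgroup of $H^1(\kappa(Z_i),\qz)$.
\end{itemize}

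What is true, and is all you need, is that $e$ is injective. In the Leray spectral sequence of $\phi_i$, the only differential that can hit $E_2^{2,0}=H^2(Y,\phi_{i,*}\Z)$ is $d_2$, starting at $E_2^{0,1}=H^0(Y,R^1\phi_{i,*}\Z)$. Moreover $R^1\phi_{i,*}\Z=0$, because its stalks are $H^1$ of finite products of fields with coefficients in $\Z$. These are continuous homomorphisms from profinite groups to $\Z$, hence zero; this is exactly the observation you made in your second paragraph. This is how the paper concludes: it uses $R^1\phi_{i,*}\Z=R^1\iota_{i,*}\Z=0$ to make both vertical maps in its diagram injective.
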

	\begin{proof}
		Taking hypercohomology of the exact triangle
		\begin{equation}
		\G_{m+} \to j_*\G_m \to \oplus_i \phi_{i,*}\Z \to[+1]
		\end{equation}
		induces a long exact sequence
		\begin{align*}
		\notag 0 \to &H^0(Y,\G_{m+}) \to H^0(Y,j_*\G_m) \to \oplus_i H^0(Y,\phi_{i,*}\Z) \to  \\
		\label{Eq7}&H^1(Y,\G_{m+}) \to H^1(Y,j_*\G_m) \to \oplus_i H^1(Y,\phi_{i,*}\Z) \to \\
		\notag &H^2(Y,\G_{m+}) \to H^2(Y,j_*\G_m) \to \oplus_i H^2(Y,\phi_{i,*}\Z) \to \ldots\  .
		\end{align*}
		
		For each $i$, we have $H^1(Y,\phi_{i,*}\Z)=0$ since the Leray spectral sequence of $\phi_i$ gives an embedding $H^1(Y,\phi_{i,*}\Z) \hookrightarrow H^1(A_i,\Z)$, and the latter vanishes as $A_i$ is a normal algebra. Moreover, by Lemma \ref{LemGysinVanishing}, the Leray spectral sequence induces natural isomorphisms $H^n(Y,j_*\G_m) \cong H^n(U,\G_m)$ for all $n \geq 0$.
		
		Thus the long exact sequence above splits into two exact sequences:
		\begin{equation}\label{Eq9}
		0 \to H^0(Y,\G_{m+}) \to H^0(U,\G_m) \to \oplus_i H^0(Y,\phi_{i,*}\Z) \to H^1(Y,\G_{m+}) \to H^1(U,\G_m) \to 0,
		\end{equation}
		and
		\begin{equation}\label{Eq8}
		0 \to H^2(Y,\G_{m+}) \to H^2(U,\G_m) \to \oplus_i H^2(Y,\phi_{i,*}\Z) \to \ldots\  .
		\end{equation}
		Consider now, for each $i$, the commutative diagram:
		\[
		\begin{tikzcd}
		{H^2(Y,j_*\G_m)} \arrow[d, "j^{*}", "\cong"'] \arrow[r] & {H^2(Y,\iota_{i,*}\Z)} \arrow[d, "\iota_i^{*}", hook] \arrow[r] & {H^2(Y,\phi_{i,*}\Z)} \arrow[d, hook] \\
		{H^2(U,\G_m)} \arrow[r, "\partial_{Z_i}"]   & {H^2(\kappa(Z_i),\Z)} \arrow[r]                               & {H^2(A_i,\Z)}     ,                   
		\end{tikzcd}
		\]            
		where the third vertical map is the edge map of the Leray spectral sequences of $\phi_i$,
		the first upper horizontal map is induced by the natural morphism of sheaves $j_*\G_m \to \iota_{i,*}\Z$, the map $\partial_{Z_i}$ is the Witt residue map (after identifying $H^2(\kappa(Z_i),\Z)$ with $H^1(\kappa(Z_i),\qz)$), and the second bottom horizontal map is pullback along $\Spec A_i \to \Spec \kappa(Z_i)$. The commutativity of the first square follows from Lemma \ref{LemResidues}. Since $\iota_i$ is an embedding, the pullback $\iota_i^{*}:H^2(Y,\iota_{i,*}\Z) \to H^2(\kappa(Z_i),\Z)$ coincides with the edge map of the Leray spectral spectral sequence of $\iota_i$. Hence the commutativity of the  second square follows by the naturality of the Leray spectral sequence. The injectivity of the second and third vertical maps comes from Leray's spectral sequence and  the vanishings $R^1\phi_{i,*}\Z=R^1\iota_{i,*}\Z=0$ (one has in general $R^1f_*\Z=0$ for all morphisms of schemes $f:S \to S'$ with normal $S$). 
		
		The composition $H^2(U,\G_m) \to[\cong] H^2(Y,j_*\G_m) \to H^2(Y,\iota_{i,*}\Z) \to H^2(Y,\phi_{i,*}\Z)$ in the diagram is the morphism $H^2(U,\G_m) \to H^2(Y,\phi_{i,*}\Z)$ appearing in \eqref{Eq8}. Hence, sequence \eqref{Eq8} identifies $H^2(Y,\G_{m+})$ with the kernel of the composition
		\[
		H^2(U,\G_m) \to[(\partial_{Z_i})] \oplus_i H^2(\kappa(Z_i),\Z) \to \oplus_i H^2(A_i,\Z),
		\]
		proving that $H^2(Y,\G_{m+})$ is naturally isomorphic to $(\Br Y)_+$.
		
		Confronting \eqref{Eq9} with the analog exact sequence in the degenerate case where $A_i=\kappa(Z_i)$ for all $i$, we obtain a commutative diagram:
		\begin{equation}\label{Eq12}
		\begin{tikzcd}[column sep=tiny]
		0 \arrow[r] & {H^0(Y,\G_{m})} \arrow[r] \arrow[d, "v"] & {H^0(U,\G_m)} \arrow[r] \arrow[d, "id"] & {\oplus_i H^0(Y,\iota_{i,*}\Z)} \arrow[r] \arrow[d] & {H^1(Y,\G_{m})} \arrow[r] \arrow[d, "v"] & {H^1(U,\G_m)} \arrow[r] \arrow[d, "id"] & 0 \\
		0 \arrow[r] & {H^0(Y,\G_{m+})} \arrow[r]               & {H^0(U,\G_m)} \arrow[r]                 & {\oplus_i H^0(Y,\phi_{i,*}\Z)} \arrow[r]            & {H^1(Y,\G_{m+})} \arrow[r]               & {H^1(U,\G_m)} \arrow[r]                 & 0
		\end{tikzcd}
		\end{equation}
		where $v$ denotes the map of complexes \eqref{v}. We have $H^0(Y,\iota_{i,*}\Z) \cong H^0(\kappa(Z_i),\Z)\cong \Z$. Since $A_i$ is integral, we also have $H^0(Y,\phi_{i,*}\Z) \cong H^0(A_i,\Z) \cong \Z$. The natural map $H^0(Y,\iota_{i,*}\Z) \to H^0(Y,\phi_{i,*}\Z)$ then gets identified with the identity $id:\Z \to \Z$. 
		The five lemma applied to the diagram \eqref{Eq12} gives isomorphisms $H^i(Y,\G_{m}) \cong H^i(Y,\G_{m+})$ for $i=0,1$, concluding the proof of the first two isomorphisms.
	\end{proof}
	
	\begin{proof}[Proof of Theorem \ref{Teo1}]         
		The pullback $\mathfrak A \times_YX$ is $\{A'_i/\kappa(Z'_i)\}$, where, for each $i$, $Z'_i$ denotes the pullback $f^{-1}(Z_i)$, and $A'_i$ denotes the finite étale $\kappa(Z'_i)$-algebra $\kappa(Z'_i) \otimes_{\kappa(Z_i)}A_i$. For each $i$, we denote by $\phi'_i$ the morphism $\Spec A_i \to X$. We let $U' \coloneqq f^{-1}(U)$, and $j':U'\hookrightarrow X$ be the open embedding.
		
		To keep the notation compact, we denote by $[A^\bullet  \to B^\bullet ]$ the mapping cone of a morphism $A^\bullet  \to B^\bullet $ of chain complexes in some abelian category $\cC$.
		Consider the morphism of exact triangles
		\begin{equation*}\label{DT2}
		\hspace{-15mm}\begin{tikzcd}[column sep=tiny]
		\G_m \arrow[d, "v"] \arrow[r]                   & Rf_*\G_m \arrow[d, "v"] \arrow[r]                               & \tau_{\geq 1}Rf_*\G_m \arrow[d, "v"] \arrow[r, "+1"]                                            & {} \\
		{[j_*\G_m \to \oplus_i \phi_{i,*}\Z]} \arrow[r] & { [Rf_*(j'_*\G_m) \to \oplus_i Rf_*(\phi'_{i,*}\Z)] } \arrow[r] & { [\tau_{\geq 1}Rf_*(j'_*\G_m) \to \oplus_i \tau_{\geq 1}Rf_*(\phi'_{i,*}\Z)] } \arrow[r, "+1"] & {},
		\end{tikzcd}
		\end{equation*}
		where the vertical maps are induced by the natural inclusions $\G_m \hookrightarrow j_*\G_m$ and $\G_m \hookrightarrow j'_*\G_m$, and the exactness of the last row follows from the identifications $j_*\G_m=f_*(j'_*\G_m),$ $\phi_{i,*}\Z=f_*\phi'_{i,*}\Z$.
		Taking hypercohomology on $Y$, we get a commutative diagram with long exact rows, of which a fragment reads:
		\begin{equation}\label{Diagra1}
		\begin{tikzcd}
		\ldots \arrow[r] & \Br X \arrow[d, "v"] \arrow[r] & \Br_{\text{hor}}(X/Y) \arrow[r, "\partial"] \arrow[d, "v"] & {H^3(Y,\G_m)} \arrow[d, "v"] \arrow[r] & \ldots    \\
		\ldots \arrow[r] & (\Br X)_+ \arrow[r]            & \Br_{\text{hor}}(X/Y)_+ \arrow[r, "\partial"]              & {H^3(Y,\G_{m+})} \arrow[r]             & {\ldots,}
		\end{tikzcd}
		\end{equation}
		where $\Br_{\text{hor}}(X/Y)_+ \coloneqq H^2(Y,[\tau_{\geq 1}Rf_*(j'_*\G_m) \to \oplus_i \tau_{\geq 1}Rf_*(\phi_{i,*}\Z)])$, and we are using the identification
		\begin{align*}
		H^2(Y,{[Rf_*(j'_*\G_m) \to \oplus_i Rf_*(\phi_{i,*}\Z)] }) &\cong H^2(Y,Rf_*({[j'_*\G_m \to \oplus_i \phi_{i,*}\Z] }))\\
		&\cong H^2(X,{[j'_*\G_m \to \oplus_i \phi_{i,*}\Z] }) 
		\end{align*}    
		given by the natural isomorphism $H^2(Y,Rf_*-) \cong H^2(X,-)$ provided by Grothendieck's theorem on the composition of derived functors and the identification \linebreak $H^2(X,{[j'_*\G_m \to \oplus_i \phi_{i,*}\Z] })\cong (\Br X)_+$ given by Proposition \ref{Prop61}. 
		Consider now the element $r(\beta) \in \operatorname{Br}_{\text {hor }}(X / Y) \subset \operatorname{Br}_{\text {hor }}(X / Y)$. We have $\partial(v(r(\beta)))=v(\partial(r(\beta)))$ $=0$ by hypothesis, so $v(r(\beta))$ comes from an element $b \in (\Br X)_+$. 
		
		The proof is concluded once we show that
		\begin{center}
			$(\star)$ the image of $b$ under the map $r:\Br X_{\eta} \to \Br_{\text{hor}}(X/Y)$ is $r(\beta)$. 
		\end{center}
		By functoriality of the objects involved, to prove $(\star)$ we may restrict $Y$ to any non-empty Zariski-open $V \subset Y$ (replacing $Y$ with $V$ and $X$ with $f^{-1}(V)= V \times_YX$). Choosing $V \subset Y$ to be any non-empty Zariski-open that is contained in $U$, we reduce to the case where $\mathfrak A$ is empty, in which case the two rows in diagram \eqref{Diagra1} coincide and $v$ is the identity, and the claim becomes tautologically true.
	\end{proof}
	
	\subsection{Application to fibrations over Galois quasi-trivial tori}\label{SSec6.3}
	
	In this subsection, $k$ denotes a field of characteristic $0$ whose every finite extension contains only finitely many roots of unity (e.g.\ a number field).
	
	\begin{notation*}
		For the Galois group $\Gamma$ of a finite Galois extension $E/k$, we let:
        \begin{align*}
        &\Gamma^* := \Gamma \s \{\id\}, \\
        &\Gamma[2]^*:= \{\tau \in \Gamma^*: \tau^2=\id\},\\
		&\Gamma[2]^{**}:= \begin{cases}
		\Gamma[2]^*& \text{ if }8\nmid n,\\
		\{\tau \in \Gamma[2]^*: \chi(\tau) \equiv \pm 1 \bmod\,8\}& \text{ if }8\mid n,
		\end{cases}
        \end{align*}
		where $n=n(E)$ denotes the largest natural number such that $\mu_n \subset E$, and $\chi:\Gamma \to (\Z/n\Z)^*$ denotes the cyclotomic character. 
	\end{notation*}
	
	\begin{notation*}
		For a Galois quasi-trivial $k$-torus $Q=R_{E/k}\G_m=\Spec (E[(y^{\sigma})^{\pm1}]_{\sigma \in \Gamma})^{\Gamma}$, and $\tau \in \Gamma[2]^*$, we let:
        \begin{itemize}
			\item $\bar Z_{\tau}(Q)$ be the divisor of $Q \otimes_K \oK$ defined by the equation $y=y^{\tau}$;
			\item $Z_{\tau}(Q) \subset Q$ be the unique integral divisor such that $Z_{\tau}(Q) \otimes_k \ok$ is the Galois-orbit of $\bar Z_{\tau}(Q)$.
		\end{itemize}
	\end{notation*}
	
	We shall often write $\bar Z_{\tau},Z_{\tau}$ instead of $\bar Z_{\tau}(Q),Z_{\tau}(Q)$. The field of definition of $\bar Z_{\tau}(Q)$ is $E^{\tau}$, and thus the function field of $Z_{\tau}$ coincides with that of its geometrically integral component $\{y=y^{\tau}\}$ viewed as a divisor over $Q \otimes_K E^{\tau}$. The latter function field is the field of fractions of $E^{\tau}[y^{\sigma}]_{\sigma \in \Gamma}/(y-y^{\tau})$. We shall use the induced identification $K(Z_{\tau})=\Frac(E^{\tau}[y^{\sigma}]_{\sigma \in \Gamma}/(y-y^{\tau}))$ below, especially to view $y$ as an element of $K(Z_{\tau})$.
	
	\vskip1mm
		
	For $\tau \in \Gamma[2]^*$, let $n_{\tau}:=\max\{r\mid n : \mu_r \subset E^{\tau}\}$. Let also $Q^\sharp:= Q \s \bigcup_{\tau \in \Gamma[2]^{**}}Z_{\tau}$.
	
	\begin{theorem}\label{Teo3}
		Let $f:X \to Q$ be a smooth proper morphism with geometrically integral fibers over a Galois quasi-trivial torus $Q=R_{E/k}\G_m$ and such that $R^1f_*\mu_{\infty}=R^2f_*\cO_X=0$. Every class in $\Br X_{\eta}/f^*\Br \eta$ has a representative $b \in \Br f^{-1}(Q^\sharp) \subset \Br X_{\eta}$ such that, for all $\tau \in \Gamma[2]^{**}$:
		\begin{itemize}
			\item[$(\star)$] the residue $\partial_{Z'_{\tau}}(b) \in H^1(k(Z'_{\tau}),\qz)$ at $Z'_{\tau} \coloneqq f^{-1}(Z_{\sigma})$ is split by the cyclic extension $k(Z'_{\tau})(y^{\frac 1 {n_{\tau}}})/k(Z'_{\tau})$.
		\end{itemize}
	\end{theorem}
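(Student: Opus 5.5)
The plan is to apply Theorem \ref{Teo1} with $Y=Q$, the divisors $Z_\tau$ for $\tau\in\Gamma[2]^{**}$, and the residue algebras $A_\tau:=k(Z_\tau)(y^{1/n_\tau})$. Here $y\in k(Z_\tau)$ under the identification of $k(Z_\tau)$ with $\Frac(E^\tau[y^\sigma]_{\sigma}/(y-y^\tau))$, and $\mu_{n_\tau}\subset E^\tau\subset k(Z_\tau)$, so $A_\tau$ is a cyclic Kummer algebra. First I would check the geometric hypotheses. Each $Z_\tau$ is smooth and integral, since geometrically it is a Galois orbit of subtori-translates $\{y^{\rho}=y^{\rho\tau}\}$. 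The union $\bigcup_\tau Z_\tau$ is not smooth in general: distinct components meet. So I would first shrink. Theorem \ref{Teo1} only needs smoothness where the divisors live, and the conclusion concerns $b$ on $f^{-1}(Q^\sharp)$ together with the residues at generic points of $Z'_\tau$. I would therefore replace $Q$ by the open $Q^\circ$ obtained by removing the pairwise intersections of the $Z_\tau$ and the singular loci, which have codimension $\geq 2$. This does not change $\Br X_\eta$, the generic points of the $Z'_\tau$, or $H^3(Q^\circ\otimes\bar k,\G_m)$ up to what we need. Pulling back along $f^{-1}(\mathfrak A)$ then gives exactly condition $(\star)$, because $k(Z'_\tau)\otimes_{k(Z_\tau)}A_\tau=k(Z'_\tau)(y^{1/n_\tau})$.

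The main point is the cohomological hypothesis. We need $\partial(r(\beta))\in\ker\bigl(H^3(Q^\circ,\G_m)\to H^3(Q^\circ,\G_{m+}(\mathfrak A))\bigr)$. From the triangle $\G_{m+}\to j_*\G_m\to\oplus\phi_{\tau,*}\Z$ and the analogous one for $\G_m$ (using Lemma \ref{LemGysinVanishing}), this kernel is the image of $\oplus_\tau H^2(Q^\circ,\phi_{\tau,*}\Z)$, modulo the contribution of $\iota_{\tau,*}\Z$. Roughly, it consists of classes of $H^3(Q^\circ,\G_m)$ which die on $U=Q^\circ\setminus\bigcup Z_\tau$ up to Gysin images of characters of $k(Z_\tau)$ split by $A_\tau$, that is, multiples of the Kummer character of $y$ of order $n_\tau$. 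The complex \eqref{erdelta} of Lemma \ref{Prop612} tells us that $\partial(r(\beta))$ dies in $H^3(Q\otimes\bar k,\G_m)$. Hence, via Hochschild--Serre, it lives in the low filtration pieces $H^3(k,\bar k[Q]^*)$, $H^2(k,\Pic)$ (which is trivial for a torus), and $H^1(k,\Br(Q_{\bar k}))$.

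I would compute these pieces for the quasi-trivial torus. One has $\bar k[Q]^*=\bar k^*\oplus\Z[\Gamma]$, and $H^3(k,\Z[\Gamma])=H^3(E,\Z)=H^2(E,\qz)$ is killed by restriction to the generic point. The real content is the piece $H^1(k,\Br Q_{\bar k})$, where $\Br Q_{\bar k}=\bigwedge^2\Z[\Gamma]\otimes\qz$. I would show that every $\Gamma_k$-invariant class obstructed only there becomes trivial on $U$ after adjoining residues along the $Z_\tau$ of the shape $(y^\sigma,y^{\sigma\tau})$-type cup products. The key identity is that on $Z_\tau$ the symbols $(y,y^\tau)$ degenerate to $(y,y)=(y,-1)$. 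This is where the involutions $\tau$ and the cyclotomic condition defining $\Gamma[2]^{**}$ enter: the $\bmod\ 8$ restriction ensures that the $2$-part of $(y,-1)$ is split by $y^{1/n_\tau}$.

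This last step is the obstacle: identifying precisely which part of $H^1(k,\bigwedge^2\Z[\Gamma]\otimes\qz)$ survives on $\eta$, and checking that the needed residues are exactly Kummer characters of $y$ of order dividing $n_\tau$. I would first try to decompose $\bigwedge^2\Z[\Gamma]$ into $\Gamma$-orbits of pairs $\{\sigma,\sigma'\}$. Orbits with trivial stabilizer are induced, so they have vanishing $H^1$ by Shapiro's lemma. Only the orbits of pairs $\{\sigma,\sigma\tau\}$ with $\tau^2=1$ contribute, which accounts for the $Z_\tau$, and there the $H^1$ is computed by the sign action of $\tau$ with twist $\zeta^\tau$.
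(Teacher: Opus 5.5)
Your framework is the paper's. You apply Theorem \ref{Teo1} on $Q^\circ$, use Lemma \ref{Prop612} and Hochschild--Serre to put $\partial(r(\beta))$ in $\ker\bigl(H^3(Q^\circ,\G_m)\to H^3(Q^\circ\otimes\bar k,\G_m)\bigr)\hookrightarrow H^1(k,\Br\bar Q)$, and then need to show that a class of $H^1(k,\Br\bar Q)$ dying in $H^1(k,\Br\bar\eta)$ also dies in $H^1(k,(\Br\bar Q)_+)$ (Proposition \ref{Prop65}). That decisive step, which you flag as the obstacle, is not established, and the mechanism you sketch for it fails.

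Over the absolute Galois group, a free $\Gamma$-orbit of pairs contributes a summand $\Ind_{\Gamma_E}^{\Gamma_k}\qz(-1)$ of $\Br\bar Q$ (note the Tate twist). Shapiro's lemma then gives $H^1(k,-)\cong H^1(E,\qz(-1))$, which is infinite, not zero. So $H^1(k,\Br\bar Q)$ is not concentrated on the orbits $\{\sigma_1,\sigma_2\}$ with $\sigma_1\sigma_2^{-1}$ an involution. The free-orbit classes are harmless only because they are detected on $\bar\eta$, and your sketch never uses this. A smaller point: for $E_2^{3,0}$ what matters is not that it dies on $\eta$ but that it vanishes. For number fields $H^3(k,\bar k^*)=0$ and $H^3(E,\Z)=H^2(E,\qz)=0$; with $\Pic\bar Q=0$ this gives the embedding into $H^1(k,-)$ compatibly for $\G_m$, $\G_{m+}$ and $\eta$.

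Two ideas are missing.
\begin{enumerate}
\item Over $E$ the torus splits, and $\Br\bar Q\to\Br\bar\eta$ is a split injection of $\Gamma_E$-modules (Gille--Pianzola). Hence $H^1(E,\Br\bar Q)\to H^1(E,\Br\bar\eta)$ is injective, so a class dying on $\bar\eta$ already dies on restriction to $E$.
\item The restriction $H^1(k,(\Br\bar Q)_+)\to H^1(E,(\Br\bar Q)_+)$ is injective, i.e.\ by inflation--restriction $H^1(\Gamma,(\Br\bar Q)_+^{\Gamma_E})=0$ (Proposition \ref{Lem5}). A diagram chase then yields Proposition \ref{Prop65}.
\end{enumerate}

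Your orbit decomposition belongs in the second step: for the finite group $\Gamma$ acting on $(\Br\bar Q)_+^{\Gamma_E}=(\Br\bar Q)_+[n]$, with the basis of Lemma \ref{Lem612}.
\begin{itemize}
\item The identity $(y^{\sigma_2},y^{\sigma_1}-y^{\sigma_2})_n-(y^{\sigma_1},y^{\sigma_2}-y^{\sigma_1})_n=(y^{\sigma_2},y^{\sigma_1})_n$ lets you replace $(y^{\sigma_2},y^{\sigma_1})_n$ by a second residue class. Each orbit with $\sigma_1\sigma_2^{-1}\in\Gamma[2]^{**}$ thereby becomes a free orbit of \emph{ordered} pairs.
\item For $\sigma_1\sigma_2^{-1}=\gamma\in\Gamma[2]^*\setminus\Gamma[2]^{**}$, the orbit contributes $\Ind^{\Gamma}_{\{1,\gamma\}}$ of $\Z/n\Z$ with $\gamma$ acting by $-\chi(\gamma)$. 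Its $H^1$ vanishes by Lemma \ref{Lem7}.
\end{itemize}
That is exactly where the $\bmod 8$ condition enters, not through the $2$-part of $(y,-1)$.

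This freeness needs residue classes of the same order $n$ as $(y^{\sigma_2},y^{\sigma_1})_n$. That is why the paper takes $A_\tau=k(Z_\tau)(y^{1/n})$ and only at the end descends to $y^{1/n_\tau}$ via Lemma \ref{Lemmm}; over $k$ both algebras split the same characters of $k(Z_\tau)$. With your $A_\tau=k(Z_\tau)(y^{1/n_\tau})$ the geometric residues have order only $n_\tau$, and $H^1(\Gamma,(\Br\bar Q)_+^{\Gamma_E})$ can be nonzero. For example, $k=\Q(\sqrt2)$ and $E=\Q(\zeta_8)$ give $n=8$, $n_\tau=2$, and this group is $\Z/2\Z$. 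So this argument cannot be run directly with your choice of residue algebras.
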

	
	\vskip1mm
	
	We shall prove Theorem \ref{Teo3} by first applying Theorem \ref{Teo1} to find a representative $b$ whose residue at each $Z'_{\tau}$ is split by $\kappa(Z'_{\tau})(y^{\frac 1 {n}})$. An elementary Galois-theoretic argument, presented at the end of this subsection, then shows that the residues must in fact already be split by $\kappa(Z'_{\tau})(y^{\frac 1 {n_{\tau}}})$.
	
	\subsubsection{Proof of Theorem \ref{Teo3}}	
	Consider the collection of residual algebras
	$
	\mathfrak A \coloneqq \{\kappa(Z_{\tau})(y^{\frac 1 {n}})/\kappa(Z_{\tau})\}_{\tau \in \Gamma[2]^{**}}
	$
	on the divisors $Z_{\tau}, \tau \in \Gamma[2]^{**}$ of $Q$.
	Let $\bar {\mathfrak A} = {\mathfrak A} \otimes_k \ok$ be the base-change of ${\mathfrak A}$ to $\bar Q:=Q \otimes_k \ok$, $(\Br \bar Q)_+ \coloneqq (\Br \bar Q)_+(\bar {\mathfrak A})$, and 
	\begin{equation}\label{BRXPLUS}
		(\Br X)_+:=\Br_+(\mathfrak A \times_QX).
	\end{equation}
	
	\vskip2mm
	
	The following will be our key instrument in verifying the hypotheses of Theorem \ref{Teo1}.
	
	\begin{proposition}\label{Lem5}
		The restriction $H^1(K,(\Br \bar Q)_+)\to H^1(E,(\Br \bar Q)_+)$ is injective.
	\end{proposition}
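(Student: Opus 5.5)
The plan is to compute $(\Br \bar Q)_+$ explicitly as a Galois module and show that it is an induced-type module from $\Gamma_E$, up to pieces with no $H^1$. We then conclude with Shapiro's lemma and inflation--restriction. Here $K=k$.

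First, set $\bar U:=\bar Q\s\bigcup_{\tau}\bar Z_\tau$, with $\tau$ running over $\Gamma[2]^{**}$ and all Galois conjugates of the divisors. Since $\bar Q$ is a split torus, $\Br \bar Q\cong \bigoplus_{\sigma<\sigma'}\qz$ via cup products of the characters $y^\sigma, y^{\sigma'}$. Gysin purity and Lemma \ref{LemGysinVanishing} then give the residue sequence
\[
0\to \Br\bar Q\to \Br \bar U\to \bigoplus_{\bar Z}H^1(\ok(\bar Z),\qz),
\]
with the last map surjective onto the part coming from $H^3$ considerations. The $+$ condition cuts the residue at each geometric component $\{y^{\rho}=y^{\rho\tau}\}$ down to the finite cyclic group $\langle y^{\rho}\rangle\otimes \frac1n\Z/\Z\cong \Z/n$, namely the residues split by adjoining $(y^\rho)^{1/n}$. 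So $(\Br\bar Q)_+$ sits in an exact sequence
\[
0\to \Br\bar Q\to (\Br\bar Q)_+\to \bigoplus_{\text{components}} \Z/n(-1)\cdot[\text{component}].
\]
The image is described by the classes $(y^{\rho}-y^{\rho\tau}, y^{\rho})_n$, which lie in $(\Br\bar U)$ and have exactly these residues. One checks this using $y^\rho/y^{\rho\tau}\equiv1$ on the divisor, and the same reasoning shows the rightmost map is surjective.

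Next, we analyse the Galois module structure. $\Gamma_K$ acts on $\Br\bar Q=\bigwedge^2(\Z[\Gamma])\otimes\qz(-1)$ through $\Gamma$ permuting characters, twisted by the cyclotomic character. The set of components $\{y^\rho=y^{\rho\tau}\}$ is permuted by $\Gamma$ with stabilizers $\langle\tau\rangle$ acting on the twist through $\chi(\tau)$. Restricting to $\Gamma_E$ kills the permutation action. The key claim is that the kernel of $H^1(K,M)\to H^1(E,M)$, which is $H^1(\Gamma, M^{\Gamma_E})$, vanishes. More precisely, inflation--restriction identifies the kernel with $H^1(\Gamma,M^{\Gamma_E})$, so we must show $H^1(\Gamma,(\Br\bar Q)_+^{\Gamma_E})=0$. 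Now $\Gamma_E$ acts trivially on $\mu_n$, so $M^{\Gamma_E}$ contains the $n$-torsion-type pieces. We would decompose $\bigwedge^2\Z[\Gamma]$ into $\Gamma$-orbits of pairs $\{\sigma,\sigma'\}$. Pairs with trivial stabilizer give induced modules, with $H^1=0$ by Shapiro. Pairs $\{\rho,\rho\tau\}$ with $\tau$ an involution have stabilizer $\langle\tau\rangle$, which acts by $-\chi(\tau)$ on the fibre. These are exactly the non-induced pieces, and they are precisely where the divisors $Z_\tau$ were added. The sequence above then pairs each such piece $\Ind_{\langle\tau\rangle}^{\Gamma}(\qz(-1)^{\mathrm{sgn}})$ with the residue piece $\Ind_{\langle\tau\rangle}^{\Gamma}\Z/n$. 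The extension should make the combination cohomologically trivial at the level of $H^1(\langle\tau\rangle,-)$ via Shapiro. Concretely, the class $(y^\rho-y^{\rho\tau},y^\rho)$ is the element that connects the two pieces, and we would compute the connecting map $H^0(\langle\tau\rangle,\Z/n)\to H^1(\langle\tau\rangle,\qz^{\pm})$ and show it is surjective.

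The main obstacle is this last local computation for the cyclic group $\langle\tau\rangle$ of order $2$. There, $H^1(\Z/2,\qz(-1)\text{ twisted by }-\chi(\tau))$ is $\Z/2$ or $0$ depending on whether $-\chi(\tau)\equiv 1$. We must check that the residue piece kills it exactly when $\tau\in\Gamma[2]^{**}$. For $\tau\notin\Gamma[2]^{**}$ (the case $8\mid n$ with $\chi(\tau)\equiv\pm3\bmod 8$), we expect the relevant $H^1$ to vanish for another reason, namely that the twist is nontrivial on the $2$-part. We would do this case analysis on $\chi(\tau)\bmod 2^{v_2(n)}$ directly, using that $E$ contains only finitely many roots of unity, so that the $\Gamma_E$-invariants are controlled by $\mu_n$.
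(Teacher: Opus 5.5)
Your reduction matches the paper's: inflation--restriction to $H^1(\Gamma,(\Br\bar Q)_+^{\Gamma_E})=0$, decomposition into $\Gamma$-orbits, then Shapiro's lemma. However, the step that actually carries the proposition is left as ``should'', and the local computation you sketch for it is wrong. That step concerns the orbits of pairs $\{\rho,\rho\tau\}$ with stabilizer $\langle\tau\rangle$.

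First, the module whose $\Gamma$-cohomology matters is $(\Br\bar Q)_+^{\Gamma_E}=(\Br\bar Q)_+[n]$. This uses two facts: $(\qz(-1))^{\Gamma_E}=\Z/n\Z(-1)$, and the residue-lifting classes $(y^{\sigma_2},y^{\sigma_1}-y^{\sigma_2})_n$ are $\Gamma_E$-invariant of order $n$, so your extension splits $\Gamma_E$-equivariantly. Hence the local pieces are copies of $\Z/n\Z(-1)$. By contrast, $\qz(-1)$ is not even a module over $\langle\tau\rangle\subset\Gamma$, since $\Gamma_K$ acts on it through the full cyclotomic character. For $\Z/n\Z$ with $\tau$ acting by $-\chi(\tau)$, Lemma \ref{Lem7} gives $H^1\cong\Z/2\Z$ exactly when $\tau\in\Gamma[2]^{**}$ (recall $2\mid n$), and $0$ otherwise. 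Your criterion ``$-\chi(\tau)\equiv 1$'' is not the right one: for instance $\chi(\tau)=1$ gives $-\chi(\tau)=-1$, and still $H^1\cong\Z/2\Z$.

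For $\tau\in\Gamma[2]^{**}$, your proposed check does not suffice. Write $C$ for the cup-product piece and $R$ for the residue piece. On $R$, $\tau$ acts by $\chi(\tau)^{-1}=\chi(\tau)$, since it fixes $y^{\rho}|_{\bar Z}$ and only twists the coefficient. Write $P$ for the extension of $R$ by $C$. Surjectivity of $H^0(\langle\tau\rangle,R)\to H^1(\langle\tau\rangle,C)$ only yields $H^1(\langle\tau\rangle,P)\hookrightarrow H^1(\langle\tau\rangle,R)$, and the latter is again $\Z/2\Z$ by Lemma \ref{Lem7}. You would also need $H^1(\langle\tau\rangle,R)\to H^2(\langle\tau\rangle,C)$ to be injective.

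The missing idea, which makes this unnecessary, is that $P$ is a free $\Z/n\Z[\langle\tau\rangle]$-module. It is spanned by $(y^{\rho\tau},y^{\rho}-y^{\rho\tau})_n$ and its $\tau$-conjugate $(y^{\rho},y^{\rho\tau}-y^{\rho})_n$, each tensored with $\Z/n\Z(-1)$. Indeed, their difference is the cup-product generator, by the identity $(y^{\sigma_2},y^{\sigma_1}-y^{\sigma_2})_n-(y^{\sigma_1},y^{\sigma_2}-y^{\sigma_1})_n=(y^{\sigma_2},y^{\sigma_1})_n$. This is the second basis of Lemma \ref{Lem612}. Since $\Gamma$ acts freely on the ordered pairs in $(\Gamma^2)_{**}$, that part of the module is induced from the trivial subgroup and has no $H^1$. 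The only remaining stabilized pieces are those with $\tau\in\Gamma[2]^*\setminus\Gamma[2]^{**}$, and Lemma \ref{Lem7} kills them.
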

	
	We prove the proposition by providing an explicit basis of $(\Br \bar Q)_+[n]$ (see Lemma \ref{Lem612}).
	
	\vskip1mm
	
	For each unordered pair $\{\sigma_1,\sigma_2\} \in \binom{\Gamma}{2}$ we choose an ordering $(\sigma_1,\sigma_2)$ of the pair, and denote by $R \subset \Gamma^2$ the set made of the resulting ordered pairs. Let $\binom{\Gamma}{2}_{**}\coloneqq \{\{\sigma_1,\sigma_2\}\in \binom{\Gamma}{2}:\sigma_1\sigma_2^{-1} \in \Gamma[2]^{**}\}, (\Gamma^2)_{**} \coloneqq \{(\sigma_1,\sigma_2)\in \Gamma^2:\sigma_1\sigma_2^{-1} \in \Gamma[2]^{**}\}$, and $R_{**} \coloneqq R \cap (\Gamma^2)_{**}$. 
	
	\begin{lemma}\label{Lem612}
		The group $(\Br \overline Q)_+[n]$ is a free $(\Z/n\Z)$-module. The following sets form bases of its Tate twist $(\Br \overline Q)_+[n](1)$:
		\begin{itemize}
			\item the set $\{(y^{\sigma_2},y^{\sigma_1}-y^{\sigma_2})_n\}_{(\sigma_1, \sigma_2) \in R_{**}}\cup \{(y^{\sigma_2},y^{\sigma_1})_n\}_{(\sigma_1, \sigma_2) \in R}$;                \item the set $\{(y^{\sigma_2},y^{\sigma_1}-y^{\sigma_2})_n\}_{(\sigma_1, \sigma_2) \in (\Gamma^2)_{**}}\cup \{(y^{\sigma_2},y^{\sigma_1})_n\}_{(\sigma_1, \sigma_2) \in R\s R_{**}}$.
		\end{itemize}
		Moreover, we have $(\Br \overline Q)_+^{\Gamma_E}=(\Br \overline Q)_+[n]$.
	\end{lemma}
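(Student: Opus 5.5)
The plan is a purity computation on $\bar Q=\Spec \ok[(y^{\sigma})^{\pm1}]_{\sigma\in\Gamma}$. It has three steps: the Gysin sequence, producing explicit lifts of the allowed residues, and a change of basis for the second set; the claim about $\Gamma_E$-invariants then follows from the first two steps.

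\textbf{Gysin sequence and allowed residues.} Over $\ok$, the divisor $Z_\tau$ decomposes into the components $D_{\{\sigma_1,\sigma_2\}}:=\{y^{\sigma_1}=y^{\sigma_2}\}$ with $\sigma_1\sigma_2^{-1}\in\Gamma[2]^{**}$, indexed by $\binom{\Gamma}{2}_{**}$. Let $\bar U$ be their complement. Purity (as in Lemma~\ref{LemGysinVanishing} and Proposition~\ref{Prop61}) gives an exact sequence
\[
0\to \Br\bar Q\to \Br\bar U\to \bigoplus_{\{\sigma_1,\sigma_2\}\in\binom{\Gamma}{2}_{**}} H^1(\ok(D_{\{\sigma_1,\sigma_2\}}),\qz).
\]
By definition, $(\Br\bar Q)_+$ is the preimage of the subgroup where each residue is killed by adjoining $y^{1/n}$. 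Over $\ok$, on $D_{\{\sigma_1,\sigma_2\}}$ this is the cyclic subgroup generated by the Kummer class of $y^{\sigma_2}|_D$. That class has order exactly $n$, because $y^{\sigma_2}|_D$ is a coordinate of the torus $D$ and hence not a $p$-th power for any $p$. So the target is $\bigoplus_{\binom{\Gamma}{2}_{**}}\Z/n\Z(-1)$. Moreover, $\Br\bar Q\cong\wedge^2\Z^{\Gamma}\otimes\qz(-1)$ is divisible, and $\Br\bar Q[n](1)$ is free over $\Z/n\Z$ with basis the symbols $(y^{\sigma_2},y^{\sigma_1})_n$, $(\sigma_1,\sigma_2)\in R$.

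\textbf{Explicit lifts and freeness.} For $(\sigma_1,\sigma_2)\in R_{**}$, the symbol $(y^{\sigma_2},y^{\sigma_1}-y^{\sigma_2})_n$ is $n$-torsion. Its only nontrivial residue is at $D_{\{\sigma_1,\sigma_2\}}$, since $y^{\sigma_2}$ is a unit and $y^{\sigma_1}-y^{\sigma_2}$ vanishes only there. That residue is $\pm$ the Kummer class of $y^{\sigma_2}|_D$, which generates the relevant $\Z/n\Z$. We therefore get a short exact sequence
\[
0\to\Br\bar Q[n]\to(\Br\bar Q)_+[n]\to\bigoplus_{\binom{\Gamma}{2}_{**}}\Z/n\Z(-1)\to0.
\]
It is split by these symbols, so $(\Br\bar Q)_+[n]$ is free and the first set is a basis.

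\textbf{Second basis.} The main check is the relation between $(y^{\sigma_1},y^{\sigma_2}-y^{\sigma_1})_n$ and $(y^{\sigma_2},y^{\sigma_1}-y^{\sigma_2})_n$. With $a=y^{\sigma_2}$ and $b=y^{\sigma_1}$, I would use that $-1$ is an $n$-th power over $\ok$ and the Steinberg relation $(b/a,1-b/a)=0$ to get
\[
(b,a-b)_n-(a,b-a)_n=(b/a,a-b)_n=(b/a,a)_n=\pm(b,a)_n .
\]
So for each pair in $R_{**}$, the second set replaces the symbol $(y^{\sigma_2},y^{\sigma_1})_n$ by the reversed half-symbol, which differs from it by a basis element already in the set. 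The change of basis is unitriangular, and the second set is also a basis.

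\textbf{Invariants.} The inclusion $(\Br\bar Q)_+[n]\subset(\Br\bar Q)_+^{\Gamma_E}$ holds because every basis element is a symbol of elements defined over $E$, twisted by $\mu_n\subset E$. Conversely, let $x$ be $\Gamma_E$-invariant. Its residues lie in the $\Z/n\Z$'s, so subtracting a combination $s$ of the half-symbols (which are $\Gamma_E$-invariant) leaves $x-s\in(\Br\bar Q)^{\Gamma_E}=(\wedge^2\Z^{\Gamma}\otimes\qz(-1))^{\Gamma_E}$. Here $\Gamma_E$ fixes the characters and acts through the cyclotomic character, and $n$ is maximal with $\mu_n\subset E$. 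Hence $(\qz(-1))^{\Gamma_E}=\Z/n\Z(-1)$, and $x-s$ is $n$-torsion. It follows that $x$ is $n$-torsion.

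I expect the main obstacle to be the residue bookkeeping: exactly which components make up $Z_\tau\otimes\ok$, and confirming that the residue of the half-symbol generates the full $\Z/n\Z$ rather than a proper subgroup.
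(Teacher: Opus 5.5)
Your proposal is correct and follows essentially the same route as the paper. In both, purity identifies $(\Br \overline Q)_+$ as $\Br \overline Q$ extended by the cyclic residue groups generated by the Kummer class of $y^{\sigma_2}$ on the components $\{y^{\sigma_1}=y^{\sigma_2}\}$. The half-symbols $(y^{\sigma_2},y^{\sigma_1}-y^{\sigma_2})_n$ split this extension, the second basis comes from the same symbol identity, and the invariance statement reduces to $(\Br \overline Q)^{\Gamma_E}=(\Br \overline Q)[n]$ by maximality of $n$.

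The only difference is cosmetic. The paper first splits the whole group, $(\Br \overline Q)_+(1)=(\Br \overline Q)(1)\oplus\langle b_{\sigma_1,\sigma_2}\rangle$, and only then takes $n$-torsion and $\Gamma_E$-invariants. You work with the $n$-torsion from the start and handle invariants by subtracting half-symbols.
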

	\begin{proof}
		We prove that the first set is a basis. Using the identity
		\[
		(y^{\sigma_2},y^{\sigma_1}-y^{\sigma_2})_n-(y^{\sigma_1},y^{\sigma_2}-y^{\sigma_1})_n=(y^{\sigma_2},y^{\sigma_1})_n,
		\]
		which holds for all $(\sigma_1,\sigma_2) \in \Gamma^2, \sigma_1 \neq \sigma_2$ and follows from the relations $(a,-1)_n=0$, $(a,1-a)_n=0$ and $(a,a)_n=0$ (the first and last hold because $\sqrt{-1} \in \ok(Q)$), it is then clear that the second is as well.
		
		For $(\sigma_1,\sigma_2) \in (\Gamma^2)_{**}$, let $b_{\sigma_1,\sigma_2}\coloneqq (y^{\sigma_2},y^{\sigma_1}-y^{\sigma_2})_n$.
		We claim that 
		\begin{equation}\label{eqref}
		(\Br \bar Q)_+(1)= (\Br \bar Q)(1) \oplus \langle b_{\sigma_1,\sigma_2} \rangle_{(\sigma_1,\sigma_2) \in R_{**}},
		\end{equation}
		where in the second summand the elements are $(\Z/n\Z)$-linearly independent.
		
		Let us prove the claim. The pullback $\mathfrak A  \otimes_k \ok $ of the collection $\mathfrak A$ to $\bar Q$ is the collection $\{k(\bar Z_{\tau}^{\sigma})((y^{\sigma})^{\frac 1n})/k(\bar Z_{\tau}^{\sigma})\}_{\tau \in \Gamma[2]^{**},\sigma \in \langle \tau \rangle \s \Gamma}$. Thus $(\Br \bar Q)_+$ is the set of elements $b \in \Br \bar Q^{\sharp}$ whose residue at each $\bar Z_{\tau}^{\sigma}$ is split by $k(\bar Z_{\tau}^{\sigma})((y^{\sigma})^{\frac 1n})$, or, equivalently, whose residue at each $\bar Z_{\tau}^{\sigma}$ is a multiple of the Kummer character $\widehat{y^{\sigma}}:\Gamma_{k(\bar Z_{\tau}^{\sigma})} \to \mu_n \cong \frac1n \Z/\Z$ of order $n$.
		
		For each $(\sigma_1,\sigma_2) \in R_{**}$, the element $b_{\sigma_1,\sigma_2}\coloneqq (y^{\sigma_2},y^{\sigma_2}-y^{\sigma_1})_n$ is unramified over $\bar Q \s \bar Z_{\sigma_1\sigma_2^{-1}}^{\sigma_2}$ and its residue at $\bar Z_{\sigma_1\sigma_2^{-1}}^{\sigma_2}$ is the Kummer character $\widehat{y^{\sigma_2}}$. Combined with the discussion in the previous paragraph, this implies that every $b \in (\Br \bar Q)_+$ may be written uniquely in the form
		\[
		b=b_0+\sum_{(\sigma_1,\sigma_2) \in R_{**}}\alpha_{\sigma_1,\sigma_2} b_{\sigma_1,\sigma_2},
		\]
		with $b_0 \in (\Br \bar Q)$ and $\alpha_{\sigma_1,\sigma_2} \in (\Z/n\Z)(-1)$ (by taking $\alpha_{\sigma_1,\sigma_2}$ to be the unique class such that $\partial_{Z_{\sigma_1,\sigma_2}}(b)=\alpha_{\sigma_1,\sigma_2}\widehat{y^{\sigma_2}}$). This proves the claim.
		
		The group $(\Br \bar Q)[n]$ is the free $(\Z/n\Z)$-module with basis formed by the elements $(y^{\sigma_2},y^{\sigma_1})_n$ as $(\sigma_1,\sigma_2)$ varies in $R$, see e.g.\  \cite[Proposition 9.1.2]{BGbook}. Now \eqref{eqref} gives the basis (i) for $(\Br \bar Q)_+$.
		
		As for the last statement, taking $\Gamma_E$-invariants in \eqref{eqref} and noting that each $b_{\sigma_1,\sigma_2}$ is invariant under the action of $\Gamma_E$ and has order dividing $n$, it suffices to prove that $(\Br \bar Q)^{\Gamma_E}=(\Br \bar Q)[n]$. Since the torus $Q_E$ is split, the $\Gamma_E$-module $\Br \bar Q$ is isomorphic to $(\qz(-1))^{d(d-1)/2}, d = [E:k]$ (see again \cite[Proposition 9.1.2]{BGbook}), and so $(\Br \bar Q)^{\Gamma_E}=(\Br \bar Q)[n]$ as $n$ was defined as the maximum order of a root of unity that is contained in $E$.
	\end{proof}
	
	We also need the following simple fact.
	
	\begin{lemma}\label{Lem7}
		Let $n$ be a natural number and $a\in (\Z/n\Z)^{*}$ be such that $a^2 = 1$. Let $C_2$ be the group with $2$ elements, and $M$ be the $C_2$-module $\Z/n\Z$ where the generator of $C_2$ acts via multiplication by $a$. Then
		\[
		H^1(C_2,M)=\begin{cases}
		0 & \text{if }n\text{ is odd};\\
		\Z/2\Z & \text{if }v_2(n)\leq 2;\\
		\Z/2\Z & \text{if }v_2(n)\geq 3\text{ and }a \equiv \pm 1 \bmod 8;\\
		0 & \text{if }v_2(n)\geq 3\text{ and }a \equiv \pm 3 \bmod 8.
		\end{cases}
		\]
	\end{lemma}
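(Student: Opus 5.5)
The plan is to compute $H^1(C_2,M)$ directly via the standard description of the cohomology of a cyclic group. Let $g$ be the generator of $C_2$, so $g$ acts on $M$ by multiplication by $a$, and let $N=1+g$ be the norm element. Then
\[
H^1(C_2,M)\cong \ker(N\colon M\to M)/\im(g-1)=\{x\in \Z/n\Z : (1+a)x=0\}/(a-1)\Z/n\Z .
\]
For $c\in\Z$, multiplication by $c$ on $\Z/n\Z$ has kernel of order $\gcd(c,n)$ and image of order $n/\gcd(c,n)$. Hence
\[
\#H^1(C_2,M)=\frac{\gcd(1+a,n)\cdot\gcd(a-1,n)}{n}.
\]
This group is also killed by $|C_2|=2$, so it is $0$ or $\Z/2\Z$ according to whether this number is $1$ or $2$.

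First I reduce to the $2$-primary part. By the Chinese remainder theorem, $M=M_{\mathrm{odd}}\oplus M_2$ as $C_2$-modules, with $M_2=\Z/2^e\Z$ and $e=v_2(n)$. Since $H^1(C_2,M_{\mathrm{odd}})$ is killed by both $2$ and an odd number, it vanishes. This gives the case $n$ odd and reduces everything to $n=2^e$ with $a^2\equiv 1 \bmod 2^e$.

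For $e=1$ we have $a=1$, and the formula gives $2\cdot 2/2=2$. For $e=2$ we have $a=\pm1$, and in both cases it gives $4\cdot 2/4=2$. So $H^1=\Z/2\Z$ when $v_2(n)\le 2$.

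For $e\ge 3$, the solutions of $a^2\equiv1 \bmod 2^e$ are $a\equiv \pm1$ and $a\equiv\pm1+2^{e-1}$ modulo $2^e$.
\begin{itemize}
\item For $a\equiv\pm1$, one of the two gcds is $2^e$ and the other is $2$, so $\#H^1=2$.
\item For $a\equiv\pm1+2^{e-1}$, the gcds are $2^{e-1}$ and $2$, so $\#H^1=1$.
\end{itemize}
When $e=3$, the classes $\pm1+4$ are exactly $\pm3 \bmod 8$, which matches the statement.

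The main obstacle is the case $e\ge4$, where I expect the stated dichotomy to need care. For example, $n=16$ and $a=9$ give $\#H^1=\gcd(10,16)\cdot\gcd(8,16)/16=1$, even though $9\equiv1\bmod 8$. So the correct criterion is $a\equiv\pm1 \bmod 2^{v_2(n)}$, not $a\equiv\pm1\bmod 8$. The displayed condition $a\equiv\pm1\bmod 8$ is therefore only necessary in general; it is sufficient when $e=3$, or whenever $a$ is known to be $\pm1$ modulo $2^{v_2(n)}$.

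I would prove the lemma in this corrected form. I would then check that this is all that is used in the application, where $a=\chi(\tau)$ for an involution $\tau$. There one should verify whether the definition of $\Gamma[2]^{**}$ (and the later Galois-theoretic use in Theorem \ref{Teo3}) should be read with $\pm1 \bmod 2^{v_2(n)}$ instead of $\bmod 8$. If so, the paper's definition of $\Gamma[2]^{**}$ should be adjusted accordingly.
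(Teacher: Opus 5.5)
Your computation is correct, and it shows that the third case of the lemma is false as stated. For $n=16$, $a=9$ one gets $Z^1=\{x:10x=0\}=\{0,8\}=8\,\Z/16\Z=B^1$, so $H^1(C_2,M)=0$ although $9\equiv 1\bmod 8$. In general, for $e=v_2(n)\ge 3$ your formula $\#H^1=\gcd(1+a,n)\gcd(a-1,n)/n$ gives $\Z/2\Z$ exactly when $a\equiv\pm1\bmod 2^e$, and $0$ when $a\equiv\pm1+2^{e-1}\bmod 2^e$. So the mod-$8$ criterion is right only when $v_2(n)=3$.

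The paper argues differently: it reduces to a prime power, calls $a=1$ immediate, and cites \cite[Lemma 9.1.4]{GermanBook} for $a\neq 1$. That is shorter, but it leaves to the citation exactly the cases $a\neq1$ where the stated criterion fails. Your direct count via $\ker N/\operatorname{im}(g-1)$ is self-contained and produces the correct criterion.

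On the downstream question, the lemma is used only in the proof of Proposition \ref{Lem5}, and only in its vanishing case. That case is correct as stated: for $v_2(n)\ge 4$ every square root of $1$ in $\Z/n\Z$ is $\equiv\pm1\bmod 8$, so the hypothesis $a\equiv\pm3\bmod 8$ forces $v_2(n)=3$, where $H^1=0$. That proposition also survives your smaller $\Gamma[2]^{**}$. The additional involutions $\gamma$ act on $C_{e,\gamma}$ by $-\chi(\gamma)\equiv\pm1+2^{e-1}\bmod 2^e$, and there $H^1$ vanishes by your computation.

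Your suspicion about the definition of $\Gamma[2]^{**}$ is nonetheless justified elsewhere. The proof of Lemma \ref{Lem2} asserts that $\frac{n_\tau}{2}(1+\chi(\tau))=0$ in $\Z/n\Z$ for all $\tau\in\Gamma[2]^{**}$. Since $n_\tau=\gcd(n,\chi(\tau)-1)$, this holds if and only if $\chi(\tau)\equiv\pm1\bmod 2^{v_2(n)}$. It fails, for instance, for $E=\Q(\zeta_{16})$ and $\chi(\tau)=9$: there $n=16$, $n_\tau=8$ and $4\cdot 10\equiv 8\bmod 16$. So the redefinition you suggest is exactly what that step requires.
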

	\begin{proof}
		We may assume that $n=p^m$ is a prime-power. The case $a=1$ is immediate, while $a \neq 1$ is a special case of \cite[Lemma 9.1.4]{GermanBook}.
	\end{proof}
	
	\begin{proof}[Proof of Proposition \ref{Lem5}]
		We prove that $H^1(\Gamma,(\Br \bar Q)_+^{\Gamma_E})=0$. By the inflation-restriction sequence, this is equivalent to the statement. Let $M\coloneqq (\Br \bar Q)_+^{\Gamma_E}$. For $(\sigma_1,\sigma_2) \in (\Gamma^2)_{**}$, let $B_{\sigma_1,\sigma_2} \coloneqq \langle\, (y^{\sigma_2},y^{\sigma_1}-y^{\sigma_2})_n \,\rangle < M$. For $\{\sigma_1,\sigma_2\} \in \binom{\Gamma}{2}$, let $C_{\{\sigma_1,\sigma_2\}} \coloneqq \langle\, (y^{\sigma_1},y^{\sigma_2})_n \,\rangle < M$. 
		By Lemma \ref{Lem612}, we have
		\[
		M=\bigoplus_{(\sigma_1,\sigma_2) \in (\Gamma^2)_{**}} B_{\sigma_1,\sigma_2} \oplus \bigoplus_{\{\sigma_1,\sigma_2\} \in \binom{\Gamma}{2} \s  \binom{\Gamma}{2}_{**}} C_{\{\sigma_1,\sigma_2\}}.
		\]
		The left action of $\Gamma$ on $M$ permutes the summands. More precisely, we have $\gamma(B_{\sigma_1,\sigma_2})=B_{\gamma\sigma_1,\gamma\sigma_2}$ and $\gamma(C_{\{\sigma_1,\sigma_2\}})=C_{\{\gamma\sigma_1,\gamma\sigma_2\}}$. We use the following fact \cite[Corollary III.5.4]{Brown}.
		\begin{center}
			\underline{Fact} Let $G$ be a profinite group, and $M$ a discrete $G$-module whose underlying abelian group is of the form $M=\bigoplus_{i\in I}M_i$. Assume that the $G$-action permutes the factors according to some action of $G$ on $I$. Let for each $i$,  $H_i < G$ be the isotropy subgroup of $M_i$. Let $J$ be a set of representatives for the $G$-action on $I$. Then $M \cong \oplus_{i \in J} \Ind^G_{H_i}M_i$.
		\end{center}
		The natural left $\Gamma$-action on $(\Gamma^2)_{**}$ is free. The left $\Gamma$-action on $\binom{\Gamma}{2} \s  \binom{\Gamma}{2}_{**}$ is free on the subset $\binom{\Gamma}{2} \s  \binom{\Gamma}{2}_{*} \subset \binom{\Gamma}{2} \s  \binom{\Gamma}{2}_{**}$, while the stabilizer of an element $\{\sigma_1,\sigma_2\} \in \binom{\Gamma}{2}_*  \s  \binom{\Gamma}{2}_{**}$ is the subgroup $\{1,\gamma\} < \Gamma,\ \gamma = \sigma_1\sigma_2^{-1}$. So
		\[
		M \cong \Ind^{\Gamma}_eM_0 \oplus \bigoplus_{\gamma \in \Gamma[2]^* \s \Gamma[2]^{**}} \Ind^{\Gamma}_{\{1,\gamma\}}C_{e,\gamma},
		\]
		for some abelian group $M_0$, and where $C_{e,\gamma}$ is the $\{1,\gamma\}$-module $\langle\,  y \cup y^{\gamma}\, \rangle = \linebreak (\Z/n\Z)(-1) \cdot (y \cup y^{\gamma})$, on which $\gamma$ acts via multiplication by $-\chi(\gamma)$, where $\chi$ denotes the cyclotomic character. We have $\gamma \notin \Gamma[2]^{**}$ and so either $n$ is odd or $8 \mid n,\,\chi(\gamma)\equiv \pm 3 \bmod 8$ by definition. Hence $H^1(\{1,\gamma\},C_{e,\gamma})=0$ by Lemma \ref{Lem7} and $H^1(\Gamma,M)=0$ by Shapiro's lemma.
	\end{proof}
	
	\begin{proposition}\label{Prop65}
		We have that 
		\begin{equation*}
		\Ker (H^1(K,\Br \bar Q) \to H^1(K,(\Br \bar Q)_+))=\Ker (H^1(K,\Br \bar Q) \to H^1(K,\Br \bar \eta)).
		\end{equation*}
	\end{proposition}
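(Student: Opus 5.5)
Both kernels are subgroups of $H^1(K,\Br \bar Q)$, and the plan is to show that each coincides with the kernel of restriction to $H^1(E,\Br \bar Q)$. The chain of inclusions is $\Br\bar Q\subset(\Br\bar Q)_+\subset\Br\bar\eta$. Here $\Br\bar\eta$ means $\Br\ok(Q)$, and the inclusions hold because $(\Br\bar Q)_+\subset\Br\bar Q^\sharp\subset\Br\bar\eta$. This gives
\[
\Ker\bigl(H^1(K,\Br\bar Q)\to H^1(K,(\Br\bar Q)_+)\bigr)\subset \Ker\bigl(H^1(K,\Br\bar Q)\to H^1(K,\Br\bar\eta)\bigr),
\]
since the second map factors through the first. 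So only the reverse inclusion needs proof.

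For the reverse inclusion, take $\alpha$ in the right-hand kernel and map it to $\alpha_+\in H^1(K,(\Br\bar Q)_+)$. By Proposition \ref{Lem5}, it suffices to show that $\alpha_+$ restricts to zero in $H^1(E,(\Br\bar Q)_+)$. Since $Q_E$ is split, $\Gamma_E$ acts on $\Br\bar Q\cong(\qz(-1))^{d(d-1)/2}$ through the cyclotomic character, and the same holds for $(\Br\bar Q)_+$ by \eqref{eqref}. I therefore restrict everything to $\Gamma_E$ and work there.

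The key step is to analyse the restriction of $\alpha$ to $H^1(E,\Br\bar Q)$, which is killed in $H^1(E,\Br\bar\eta)$. Use the exact sequence $0\to\Br\bar Q\to\Br\bar\eta\to\bigoplus_{D}H^1(\ok(D),\qz)$ given by residues along the prime divisors $D$ of $\bar Q$ (purity on the smooth variety $\bar Q$). From it we get
\[
\Ker\bigl(H^1(E,\Br\bar Q)\to H^1(E,\Br\bar\eta)\bigr)=\operatorname{Im}\bigl(H^0(E,\Br\bar\eta/\Br\bar Q)\to H^1(E,\Br\bar Q)\bigr).
\]
So the restriction of $\alpha$ is the coboundary of some $\Gamma_E$-invariant class $c\in\Br\bar\eta/\Br\bar Q$. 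Such a class is determined by a $\Gamma_E$-invariant system of residues, and it lifts to an element $\tilde c\in\Br\bar\eta$. The coboundary of $c$ is the cocycle $g\mapsto g\tilde c-\tilde c$.

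The aim is to choose $\tilde c$ so that $g\tilde c-\tilde c$ is a coboundary with values in $\Br\bar Q$ already inside $(\Br\bar Q)_+$, i.e.\ to replace $\tilde c$ by an element of $(\Br\bar Q)_+$ plus a $\Gamma_E$-invariant element. The residues of $\tilde c$ form a $\Gamma_E$-invariant finite-support element of $\bigoplus_D H^1(\ok(D),\qz)$. I would try to decompose this element into two parts: one realised by a $\Gamma_E$-invariant element of $\Br\bar\eta$, whose cocycle contribution is trivial, and one supported on the divisors $\bar Z^\sigma_\tau$ with residues that are multiples of $\widehat{y^\sigma}$, which is realised by combinations of the $b_{\sigma_1,\sigma_2}$ lying in $(\Br\bar Q)_+$. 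If this works, then $\alpha_+|_E=[g\mapsto g b_+-b_+]$ with $b_+\in(\Br\bar Q)_+$, which is zero.

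The main obstacle is this decomposition step. One has to produce $\Gamma_E$-equivariant lifts of invariant residue data from $\Br\bar\eta/\Br\bar Q$ back to $\Br\bar\eta$, modulo the explicit classes $b_{\sigma_1,\sigma_2}$. Equivalently, one has to show that the obstruction to such lifts lands only in $(\Br\bar Q)_+/\Br\bar Q$. I would first attempt this via Hilbert 90 for residue fields combined with Lemma \ref{Lem612}, which identifies $(\Br\bar Q)_+^{\Gamma_E}=(\Br\bar Q)_+[n]$.
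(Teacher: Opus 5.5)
Your proposal has a genuine gap: the step that carries the whole argument is left open.

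Much of what you write is correct. The easy inclusion is fine, and so is the reduction via Proposition \ref{Lem5} to showing $\alpha_+|_E=0$. The identification of $\Ker\bigl(H^1(E,\Br\bar Q)\to H^1(E,\Br\bar\eta)\bigr)$ with the image of the connecting map from $(\Br\bar\eta/\Br\bar Q)^{\Gamma_E}$ is also right. What is missing is the decomposition step. You would need to lift a $\Gamma_E$-invariant class of $\Br\bar\eta/\Br\bar Q$ to $(\Br\bar\eta)^{\Gamma_E}+(\Br\bar Q)_+$, and you explicitly call this the main obstacle without resolving it. The tools you suggest, Hilbert 90 on residue fields and Lemma \ref{Lem612}, work divisor by divisor. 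They do not address what is actually a global question: whether an invariant configuration of residues is the residue of an invariant Brauer class. So the reverse inclusion is not established.

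The missing idea is that the connecting map you are studying is zero. Because $Q_E$ is a split $E$-torus, the inclusion $\Br\bar Q\subset\Br\bar\eta$ is a split injection of $\Gamma_E$-modules. A retraction comes from Gille--Pianzola \cite[Proposition 3.1(2)]{GP}. Take as coordinates $t_i$ the characters $y^\sigma$, which are defined over $E$. Then the composite $H^2(\bar Q,\G_m)\to H^2(\bar\eta,\G_m)\to H^2(\ok((t_1))\cdots((t_d)),\G_m)$ is an isomorphism, and both maps are $\Gamma_E$-equivariant.

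It follows that $H^1(E,\Br\bar Q)\to H^1(E,\Br\bar\eta)$ is injective, so $\alpha|_E=0$ already in $H^1(E,\Br\bar Q)$. Hence $\alpha_+|_E$, being the image of $\alpha|_E$, vanishes, and Proposition \ref{Lem5} gives $\alpha_+=0$. In your terms, every class in $(\Br\bar\eta/\Br\bar Q)^{\Gamma_E}$ lifts to $(\Br\bar\eta)^{\Gamma_E}$: project any lift onto the kernel of the retraction. No correction by the classes $b_{\sigma_1,\sigma_2}$ is needed. This is exactly the diagram chase on \eqref{Diagra} that the paper performs. It also executes the plan you announced, showing that both kernels equal the kernel of restriction to $H^1(E,\Br\bar Q)$.
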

	\begin{proof}
		The inclusions $\Br \bar Q \subset (\Br \bar Q)_+ \subset \Br \bar \eta$ induce a commutative diagram:
		\begin{equation}\label{Diagra}
		\begin{tikzcd}
		{H^1(K,\Br \bar Q)} \arrow[r] \arrow[d, "res"] & {H^1(K,(\Br \bar Q)_+)} \arrow[r] \arrow[d, "res", hook] & {H^1(K,\Br \bar \eta)} \arrow[d, "res"] \\
		{H^1(E,\Br \bar Q)} \arrow[r]                  & {H^1(E,(\Br \bar Q)_+)} \arrow[r]                        & {H^1(E,\Br \bar \eta)}     .            
		\end{tikzcd}
		\end{equation}
		The second restriction is injective by Proposition \ref{Lem5}.
		The bottom horizontal composition is also injective. In fact, for any split torus $T=\Spec k[t_1^{\pm1},\ldots,t_r^{\pm1}]$ over a field $k$ of characteristic $0$ with generic point $\eta$, the natural maps $H^n(T \otimes_k \ok, \G_m) \to H^n(\eta \otimes_k \ok, \G_m),\, n\geq 2$ are split injections of $\Gamma_k$-modules by Gille and Pianzola's \cite[Proposition 3.1(2)]{GP} (the retraction is provided by the composition that we already used in the proof of Lemma \ref{Prop612}). Since $Q_E$ is a split $E$-torus, we get that $\Br \bar Q \subset \Br \bar \eta$ is a split injection of $\Gamma_E$-modules by taking $n=2$, and the injectivity follows.
		Now the lemma follows by a diagram chase.
	\end{proof}
	
	We are now ready to start the proof of Theorem \ref{Teo3}.
	
	\begin{proof}[Proof of Theorem \ref{Teo3}, first part.]
		Let $b \in \Br X_{\eta}$. In this first part, we prove that there exists $b_0 \in \Br \eta$ such that $b-f^{*}b_0 \in (\Br X)_+$.
		
		Let $Q^o\subset Q$ denote the complement $Q \s \operatorname{Sing}(Z_{\Gamma[2]^{**}})$ of the singular locus $\operatorname{Sing}(Z_{\Gamma[2]^{**}})$ of $Z_{\Gamma[2]^{**}}:=\bigcup_{\tau \in \Gamma[2]^{**}}Z_{\tau}$.
		We shall apply Theorem \ref{Teo1} to $f|_{f^{-1}(Q^o)}:f^{-1}(Q^o) \to Q^o$.
		
		Let $E^{p,q}_r, \leftidx^+E^{p,q}_r$ and $\leftidx^{\eta}E^{p,q}_r$ denote the $r$-th pages of the three Hochschild-Serre spectral sequences:
		\begin{align*}
		H^p(K,H^q(Q^o \otimes_K \oK,\G_m))     &\Rightarrow H^{p+q}(Q^o,\G_m), \\ 
		H^p(K,H^q(Q^o \otimes_K \oK,\G_{m+}))  &\Rightarrow H^{p+q}(Q^o,\G_{m+}), \\
		H^p(K,H^q(\eta \otimes_K \oK,\G_m))    &\Rightarrow H^{p+q}(\eta,\G_m).
		\end{align*}
		For each of these, we denote by $F_{HS}^pH^{p+q}$ the associated abutment filtration on $H^{p+q}$. Proposition \ref{Prop61} provides natural isomorphisms $\leftidx^+E^{p,q}_2 \cong E^{p,q}_2$ for all $q \leq 2$, induced by the morphism $v:\G_m \to \G_{m+}$. We have $\Pic (Q \otimes_K\oK)=0$ and $H^3(K,\oK [Q]^{*})=0$ (resp.\ $\Pic \bar \eta=0$ and $H^3(K,\oK(\eta))=0$), where the second follows from the vanishings $H^3(K,\oK^*)=0$ and $H^3(K,\widehat Q)\cong H^3(K,\Ind^E_K\Z)=H^3(E,\Z)=0$ which hold because $K$ and $E$ are number fields. These give $E^{3,0}_2= E^{2,1}_2=0$ (resp.\ $\leftidx^{\eta}E^{3,0}_2= \leftidx^{\eta}E^{2,1}_2=0$), and thus the same holds for $\leftidx^+E$. 
		
		Comparing the abutments in position $(p,q)=(1,2)$ of the three sequences, we then get commutative diagrams
		\begin{equation}\label{Diagrammonen}
		\hspace{-10mm}\begin{tikzcd}[column sep=small]
		{F^2_{HS}H^3(Q^o,\G_m)} \arrow[d, "v"] \arrow[r, hook] & {H^1(K,\Br (Q^o \otimes_K\oK))} \arrow[d] \\
		{F^2_{HS}H^3(Q^o,\G_{m+})} \arrow[r, hook]             & {H^1(K,(\Br (Q^o \otimes_K\oK))_+)}      
		\end{tikzcd}
		\begin{tikzcd}[column sep=small]
		{F^2_{HS}H^3(Q^o,\G_m)} \arrow[d] \arrow[r, hook] & {H^1(K,\Br (Q^o \otimes_K \oK))} \arrow[d] \\
		{F^2_{HS}H^3(\eta,\G_m)} \arrow[r, hook]          & {H^1(K,\Br \bar \eta)}                    
		\end{tikzcd}
		\end{equation}
		where the horizontal maps are injective, and the vertical maps in the second diagram are induced by the pullback along $\eta \to Q$.
		
		Consider now the maps
		\[
		r:\Br X_{\eta} \to \Br_{\text{hor}}(f^{-1}(Q^o)/Q^o), \ \ \partial: \Br_{\text{hor}}(f^{-1}(Q^o)/Q^o) \to H^3(Q^o,\G_m)
		\]
		associated with the morphism $f^{-1}(Q^o) \to Q^o$. The element $\partial (r( b)) \in H^3(Q^o,\G_m)$ lies in $F^2_{HS}H^3(Q^o,\G_m)$ by Lemma \ref{Prop612}. Its image in ${F^2_{HS}H^3(\eta,\G_m)} \subset H^3(\eta,\G_m)$ vanishes because $b \in \Br X_{\eta}$. 
		
		Since the complement of $Q^o$ in $Q$ has codimension at most $2$, purity for the Brauer group gives us identities  $\Br (Q^o \otimes_K \oK)=\Br (Q \otimes_k \oK), \Br (Q^o \otimes_K \oK)_+=\Br (Q \otimes_k \oK)_+$ and $(\Br f^{-1}(Q^o))_+=(\Br X)_+$. In view of these identities, Proposition \ref{Prop65} tells us that
		\begin{align*}
		&\Ker (H^1(K,\Br (Q^o \otimes_K \oK)) \to H^1(K,(\Br (Q^o \otimes_K \oK))_+)) \\
		=   &\Ker (H^1(K,\Br (Q^o \otimes_K \oK)) \to H^1(K,\Br \bar \eta)).
		\end{align*}
		A diagram chase on the two diagrams \eqref{Diagrammonen} now shows that $\partial (r( b))$ maps trivially to ${F^2_{HS}H^3(Q^o,\G_{m+})} \subset H^3(Q^o,\G_{m+})$. Theorem \ref{Teo1} then tells us that there exists $b_0 \in \Br \eta$ such that $b-f^{*}b_0 \in (\Br f^{-1}(Q^o))_+=(\Br X)_+$.
	\end{proof}
	
	To conclude the proof, we need the following elementary lemma:
	\begin{lemma}\label{Lemmm}
		Let $F$ be a field of characteristic $0$, and $F':=F(\sqrt[n]{u})$, where $u \in F^{*}$ and $n \in \N$ are such that $T^n-u$ is irreducible over the field $F(\mu_{n})$. Then the maximal abelian subextension of $F'/F$ is $F(\sqrt[m]{u})$, where $m$ is the maximal divisor of $n$ with $\mu_m \subset F^{*}$.
	\end{lemma}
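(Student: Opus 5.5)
Set $L:=F(\mu_n)$ and $F'':=L(\sqrt[n]{u})$, the Galois closure of $F'/F$. We will determine the maximal abelian subextension of $F'/F$ by first describing it inside $F''/F$ and then intersecting with $F'$. One inclusion is immediate: $\mu_m\subset F$, so $F(\sqrt[m]{u})/F$ is a Kummer extension, hence abelian, and it lies in $F'$ because $\sqrt[m]{u}=(\sqrt[n]{u})^{n/m}$. What remains is to show that any abelian subextension $A/F$ with $A\subseteq F'$ satisfies $A\subseteq F(\sqrt[m]{u})$.

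First we write down the structure of $\Gal(F''/F)$. Since $T^n-u$ is irreducible over $L$, Kummer theory gives $\Gal(F''/L)\cong \Z/n\Z$, generated by $\sigma:\sqrt[n]{u}\mapsto \zeta\sqrt[n]{u}$. Put $H:=\Gal(L/F)\hookrightarrow(\Z/n\Z)^*$ via the cyclotomic character $\chi$. Each $h\in H$ lifts to $\tilde h\in\Gal(F''/F)$ with $\tilde h(\sqrt[n]{u})=\sqrt[n]{u}$, which is possible because $F''=L(\sqrt[n]{u})$ has degree $n$ over $L$. Consequently $G:=\Gal(F''/F)\cong \Z/n\Z\rtimes_\chi H$, and $\Gal(F''/F')=\{\tilde h\}\cong H$. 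The irreducibility hypothesis is also what gives $[F':F]=n$, so that this description of $\Gal(F''/F')$ is correct.

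Next we compute the commutator subgroup $[G,G]$. It contains the elements $\tilde h\sigma\tilde h^{-1}\sigma^{-1}=\sigma^{\chi(h)-1}$, so it contains the subgroup $I\subseteq\Z/n\Z$ generated by the $\chi(h)-1$ for $h\in H$; this subgroup is $d\Z/n\Z$ with $d=\gcd(n,\chi(h)-1:h\in H)$. Modulo $\Z/n\Z$ the commutators land in $[H,H]=1$, and $I$ is normal in $G$, so $G/I\cong \Z/d\Z\times H$ is abelian. Hence $[G,G]=I$. Now we check $d=m$. We have $\zeta^{d}$ fixed by every $h$ because $\zeta^{d\chi(h)}=\zeta^{d}$, so $\mu_d\subset F$ and $d\mid m$. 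Conversely, if $\mu_m\subset F$, then $\chi(h)\equiv1\bmod m$ for all $h$, so $m\mid d$. Thus $d=m$.

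Finally we intersect with $F'$. The maximal abelian subextension of $F''/F$ is the fixed field of $I$. The maximal abelian subextension of $F'/F$ is therefore $F'\cap (F'')^{I}$, whose group is generated by $I$ and $\Gal(F''/F')$, that is, by $\langle\sigma^m\rangle$ and the $\tilde h$. The subgroup $\langle\sigma^m\rangle\rtimes\{\tilde h\}$ has index $m$ in $G$. It also fixes $\sqrt[m]{u}$, since $\sigma^m$ multiplies $\sqrt[n]{u}$ by $\zeta^m$ and so acts trivially on $(\sqrt[n]{u})^{n/m}$, while $\tilde h$ fixes $\sqrt[n]{u}$. So its fixed field contains $F(\sqrt[m]{u})$. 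That field has degree $m$ over $F$, because $T^m-u$ is irreducible over $F$: a root of $T^m-u$ is a power of $\sqrt[n]{u}$, and $[F(\sqrt[n]{u}):F]=n$ forces the degree to be $m$. The two fields therefore coincide. The step that needs the most care is the identification $[G,G]=I$ together with $d=m$; the rest is bookkeeping with the semidirect product.
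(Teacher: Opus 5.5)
Your argument is correct. It gives a complete working of what the paper only calls ``an exercise in Kummer theory''. Describing $\Gal(F(\mu_n,\sqrt[n]{u})/F)$ as $\Z/n\Z\rtimes_\chi H$, identifying its commutator subgroup with $m\Z/n\Z$, and then intersecting the fixed field with $F'$ is the natural way to carry out that exercise.

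There is one slip to fix, in the step showing $\mu_d\subset F$. The element $\zeta^d$ generates $\mu_{n/d}$, not $\mu_d$. Moreover, the identity $\zeta^{d\chi(h)}=\zeta^d$ is false in general: for $F=\Q$, $n=3$, $d=1$, complex conjugation sends $\zeta_3$ to $\zeta_3^2$. You should use $\zeta^{n/d}$ instead. It generates $\mu_d$, and it satisfies $h(\zeta^{n/d})=\zeta^{(n/d)\chi(h)}=\zeta^{n/d}$, because $d\mid \chi(h)-1$ gives $n\mid (n/d)(\chi(h)-1)$. With this correction the rest of your proof goes through unchanged.
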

	\begin{proof}
		This is an exercise in Kummer theory. 
	\end{proof}
	
	\begin{proof}[Proof of Theorem \ref{Teo3}, second and final part]
		Applying Theorem \ref{Teo1} we find $b \in (\Br X)_+$ such that $r(b)= \beta$. Recalling the definition of $(\Br X)_+$, we have that $b \in \Br f^{-1}(Q^{\sharp})$ and the residue $r_{\tau} \in H^1(\Gamma_{k(Z'_{\tau})},\qz)$ of $b$ at $Z'_{\tau}$ is split by the extension $k(Z'_{\tau})(y^{\frac1n})/k(Z'_{\tau})$ for each $\tau \in \Gamma[2]^{**}$. Since the splitting field of $r_{\tau}$ is (cyclic and hence) abelian, Lemma \ref{Lemmm} then implies that $r_{\tau}$ is split by $k(Z'_{\tau})(y^{\frac1{m_{\tau}}})$, with $m_{\tau}=\max\{d : d \mid n \text{ and }\mu_d \subset k(Z'_{\tau})\}$. The algebraic closure of $k$ in $k(Z'_{\tau})$ is $E^{\tau}$, and so $m_{\tau}=n_{\tau} = \max\{d : d \mid n \text{ and }\mu_d \subset E^{\tau}\}$, concluding the proof.
	\end{proof}
	
	\subsection{Relation with half-spin symbols}\label{SSec6.5}
	
	In this subsection, let $K$ be a number field, $Q:=R_{E/K}\G_m=\Spec (E[(y^{\sigma})^{\pm1}]_{\sigma \in \Gamma})^{\Gamma}$ be a Galois quasi-trivial $K$-torus, $n$ be the largest natural number such that $\mu_n\subset E$, and $S$ be a finite set of places of $K$ that contains the places dividing $n, \Delta_{E/K}$ or $\infty$.
	We let $\cQ:=\Spec (\cO_{E,S}[(y^{\sigma})^{\pm1}]_{\sigma \in \Gamma})^{\Gamma}$, and let $\cZ_{\tau}$ be the Zariski-closure in $\cQ$ of  $Z_{\tau}$.
	
	\vskip1mm
	
	The following lemma relates the Brauer pairing with classes whose residues are the one that appeared earlier in this section with the (local components of) half-spin symbols.
	
	\begin{lemma}\label{PropRes}
		Let $f:\cX \to \cQ$ be a flat morphism with $\cX$ excellent, regular, integral and Noetherian, and $b \in \Br f^{-1}(\cQ^{\sharp})$ be an element whose residue on each irreducible component $\cW_{\tau}$ of $\cZ'_{\tau}:=f^{-1}(\cZ_{\tau})$ is $a_{\tau}\cdot \hat y \in H^1(k(\cW_{\tau}),\qz)$, for a constant $a_{\tau} \in (\Z/n_{\tau}\Z)(-1)$ that only depends on $\tau$ (and not on the chosen component $\cW_{\tau}$). Then for every $v \notin S$ and $x \in \cX(\cO_v)$ we have
		\[
		\inv_v(b(x))= \sum_{\tau \in \Gamma[2]^{**}}a_{\tau}\sum_{\substack{w \mid v \\ w \in M_{E^{\tau}}}}\addleg{q}{\cI_w}_{n_{\tau}},
		\]
		where $q=f(x)$, and $\cI_w\subseteq \cO_{E^{\tau},w}$ denotes the power of the uniformizer with exponent $\tilde w(q-q^{\tau})$, for a place $\tilde w|w$ of $E$.
	\end{lemma}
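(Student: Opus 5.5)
We will prove the formula by reducing it to a residue computation on the arc $x:\Spec \cO_v \to \cX$, using functoriality of residues along this arc. First consider the generic point: the $K_v$-point $x_\eta$ either lies in $f^{-1}(\cQ^\sharp)$ or not. If $q=f(x)$ avoids all $\cZ_\tau$ modulo $v$ (i.e.\ $q-q^{\tau}$ is a $\tilde w$-unit for all $\tau$ and $\tilde w\mid v$), then $x$ factors through $f^{-1}(\cQ^\sharp)$ over $\cO_v$. Hence $b(x)\in \Br \cO_v=0$ and both sides vanish, since all $\cI_w$ are trivial. In general, we first reduce by continuity of $b$ and openness of the condition to the case where $x_\eta\in f^{-1}(Q^\sharp)(K_v)$. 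We then use the standard fact that for $x\in\cX(\cO_v)$ with generic point in the open where $b$ is defined, $\inv_v(b(x))$ equals the residue at the closed point of $x^*b\in\Br K_v$, evaluated on Frobenius. This residue is computed via the Gysin/residue compatibility (as in Lemma \ref{LemResidues}): it is the sum over the components of the pulled-back divisor $x^{-1}(\cZ'_\tau)$ of (intersection multiplicity)$\times$(restriction of the residue $a_\tau\hat y$ of $b$ along $\cW_\tau$). Since $\cX$ is regular, this is where the excellence/regularity hypotheses enter, allowing us to apply purity and the functoriality of residues for a regular divisor-crossing arc.

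Next we make this explicit over $\Spec \cO_v$. Base change along $\cO_v\to \cO_{E,\tilde w}$ to split $\cQ$. There the divisor $\cZ_\tau$ becomes $\bigcup_\sigma\{y^\sigma=y^{\sigma\tau}\}$, and the arc $q$ meets $\{y=y^{\tau}\}$ with multiplicity $\tilde w(q-q^{\tau})$. Grouping the components by places $w$ of $E^\tau$ above $v$ (the $\Gamma$-orbit structure of $\cZ_\tau$ over $E^\tau$ being as described before Theorem \ref{Teo3}), the contribution of $\cZ_\tau$ at $w$ is
\[
\tilde w(q-q^{\tau})\cdot a_\tau\cdot \hat y(\Frob_w),
\]
where $\hat y$ is the Kummer character of $y$ of order $n_\tau$, evaluated on the residue field of $w$, where $y\equiv y^\tau$ so that it is defined over $\F_w$. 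Evaluating the Kummer character of $q$ at Frobenius gives the power residue symbol $\addleg{q}{w}_{n_\tau}$, so the contribution equals $a_\tau\addleg{q}{\cI_w}_{n_\tau}$. Summing over $w\mid v$ and $\tau\in\Gamma[2]^{**}$ gives the formula. The hypothesis that $a_\tau$ is independent of the component $\cW_\tau$ is what allows us to pull $a_\tau$ out of the inner sum.

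The main obstacle is justifying the residue/intersection formula rigorously. One must handle arcs meeting several $\cZ'_\tau$, or meeting them non-transversally (multiplicity $>1$), and places $w$ where $\cZ_\tau$ meets another $\cZ_{\tau'}$. For this we would base change to the local scheme $\Spec \cO_v^{sh}\times$ and use that, on $\cX\times_{\cO_v}\cO_v$, the pulled-back class $x^*b$ is a sum of cup products $(\pi,\chi)$ locally. Concretely, near each generic point of $\cW_\tau$ we write $b=b'+a_\tau\cdot(\text{local equation of }\cW_\tau, y)_{n_\tau}$ with $b'$ unramified. To make this global along the arc, note that $x(\text{closed pt})$ lies in only finitely many components. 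Since $b-\sum_\tau a_\tau(q-q^\tau,y)$-type symbols become unramified in codimension one near the image of $x$, purity on the regular scheme $\cX$ makes the difference extend over the arc, so it contributes $0$. The symbols themselves are then evaluated by the tame symbol formula, which gives the power residue symbols. Normalizing twists (the $(\Z/n\Z)(-1)$ coefficients against $\mu_n$) is routine bookkeeping.
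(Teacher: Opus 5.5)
\textbf{There is a genuine gap in the step that carries the content.} Your overall strategy is the paper's: subtract from $b$ an explicit symbol class with the same residues, kill the difference on $x$ by purity on the regular local scheme $\Spec\cO_{\cX,\bar x}$, and evaluate the symbol class by the tame symbol. But you never actually construct that comparison class, and the forms you propose do not work.

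The functions $y$ and $y-y^\tau$ are not defined over $K$. Indeed $y$ is not even a function on $Q_{E^\tau}$, since the nontrivial element of $\Gal(E/E^\tau)$ sends it to $y^\tau$. Moreover $\mu_{n_\tau}$ lies in $E^\tau$ but in general not in $K$. So neither $(\text{local equation of }\cW_\tau,y)_{n_\tau}$ nor your ``$(q-q^\tau,y)$-type symbols'' is an element of $\Br k(\cX)$: the residue field $k(\cW_\tau)$ contains $E^\tau$, but $k(\cX)$ does not when $E^\tau\neq K$.

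The natural repairs also fail:
\begin{itemize}
\item Corestricting $(y-y^\tau,y)_{n_\tau}$ from $E$ gives residue $2\hat y$ along $\cZ_\tau$, not $\hat y$. The reason is that $y|_{\{y=y^\tau\}}$ already lies in $k(\cZ_\tau)=k(\tilde\cZ_{id,\tau})$, while the $E$-component $\{y=y^\tau\}$ has function field quadratic over it. Since $n_\tau$ is even, this cannot be fixed by halving $a_\tau$.
\item Base changing the arc to $\cO_{E,\tilde w}$ only computes $[E_{\tilde w}:K_v]\cdot\inv_v(b(x))$. This loses exactly the information you need when $v$ does not split.
\end{itemize}
So your ``grouping by places $w$ of $E^\tau$'' states the shape of the answer rather than deriving it.

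\textbf{The missing idea is to build the class over $E^\tau$ and corestrict.}
\begin{itemize}
\item On $\cQ_\tau=\cQ\otimes\cO_{E^\tau,S}$ the divisor $\tilde\cZ_{id,\tau}=\{y=y^\tau\}$ is defined. The restriction of $y$ to it is $\tau$-invariant, so it descends to a unit on $\tilde\cZ_{id,\tau}$.
\item Lift this unit to a unit $\tilde y_\tau$ of the semilocal ring of $\cQ_\tau$ at the closed points $\bar q_\tau$ above $q$. Choose a local equation $f_\tau$ of $\tilde\cZ_{id,\tau}$ there.
\item Put $b_0=\sum_\tau a_\tau\cores_{E^\tau/K}(f_\tau,\tilde y_\tau)_{n_\tau}$. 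Its residue along $\cZ_\tau$ is exactly $a_\tau\hat y$, and it has no other residues near $\bar q$.
\item After localizing $\cX$ at $\bar x$, $b-f^*b_0$ is unramified. By purity it lies in $\Br\cO_{\cX,\bar x}$, so $\inv_v(b(x))=\inv_v(b_0(q))$.
\item The sum over $w\mid v$, $w\in M_{E^\tau}$, then comes from $\inv_v((\cores_{E^\tau/K}\beta)(q))=\sum_{w\mid v}\inv_w(\beta(q_w))$.
\item Each term is the tame symbol $(f_\tau(q_\tau),\tilde y_\tau(q_\tau))_{w,n_\tau}=\leg{q}{w}_{n_\tau}^{\tilde w(q-q^\tau)}$. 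This holds because $w(f_\tau(q_\tau))=\tilde w(q-q^\tau)$, and $\tilde y_\tau(q_\tau)\equiv q\bmod w$ whenever that exponent is positive.
\end{itemize}

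Once this class exists, the difficulties you single out (non-transversal arcs, several $\cZ'_\tau$ through $\bar x$) are absorbed by purity on the local scheme. No intersection formula on $\cX$ is needed; multiplicities enter only through $w(f_\tau(q_\tau))$ on the base.

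Your preliminary reduction ``by continuity'' is also unnecessary and does not make sense. The statement presupposes that $x_\eta\in f^{-1}(Q^\sharp)$, and the right-hand side is not continuous in $x$.
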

	
	(As is tacitly assumed in the statement, the ideal $\cI_w$ is independent of the choice of $\tilde w$, since $\tilde w^{\tau}(q-q^{\tau})=\tilde w(q^{\tau}-q)=\tilde w(q-q^{\tau})$.)
	
	\begin{proof}
		The morphism $x:\Spec \cO_v \to \cX$ factors through the spectrum of the local ring $\cO_{X,\bar x}$ at the special point $\bar x$ of $x$.
		In particular, there is no loss of generality in substituting $\cX$ with $\Spec (\cO_{X,\bar x})$, and we assume that $\cX$ is local affine with special point $\bar x$.
		
		For each $\tau\in \Gamma[2]^{**}$, let $\cQ_{\tau}:=\cQ \otimes \cO_{E^{\tau},S}=\Spec (\cO_{E^{\tau},S}[(y^{\tau})^{\pm1}]_{\tau \in \Gamma})^{\tau}$. In $\cQ_{\tau}$, let $\tilde \cZ_{id,\tau}$ be the divisor defined by the $\tau$-equivariant equation $y=y^{\tau}$. Note that $\tilde \cZ_{id,\tau}\otimes_{\cO_{E^{\tau},S}}\cO_{E,S}=\cZ_{id,\tau}$. The restriction $y|_{\cZ_{id,\tau}}$ is invariant under $\tau$, and so defines by étale-descent a function on $\tilde \cZ_{id,\tau}$, which is invertible because $y$ is.
		
		\vskip1mm
		
		The point $q\in \cQ(\cO_v)$ defines by pullback a point $q_{\tau}:=q\otimes_{\cO_{K,S}}\cO_{E^{\tau},S}\in \cQ_{\tau}(\cO_v \otimes_{\cO_{K,S}}\cO_{E^{\tau},S})$. Let $\bar q_{\tau}\subset \cQ_{\tau}$ be its subscheme of closed points, and $\cO_{\cQ_{\tau},\bar q_{\tau}}:=\bigcap_{\xi \in \bar q_{\tau}}\cO_{\cQ_{\tau},\xi}$ be the associated semi-local stalk.
		Consider now a function $\tilde y_{\tau} \in \cO_{\cQ_{\tau},\bar q_{\tau}}^{\times}$ whose restriction to $\tilde\cZ_{id,\tau} \cap \Spec \cO_{\cQ_{\tau},\bar q_{\tau}}$ is equal to the restriction of $y$ (in the sense of the previous paragraph), and a uniformizer $f_{\tau} \in \cO_{\cQ_{\tau},\bar q_{\tau}}$ for the divisor $\tilde\cZ_{id,\tau} \cap \Spec \cO_{\cQ_{\tau},\bar q_{\tau}}$.
		
		\vskip1mm
		
		In a Zariski-neighbourhood of $\bar q_{\tau}$, the (twisted) Brauer class
		\[
		(f_{\tau},\tilde y_{\tau})_{n_{\tau}} \in \Br (k(\cQ_{\tau}))(1)
		\]
		can ramify only on $\tilde\cZ_{id,\tau}$ (and only if $\bar q \in \tilde\cZ_{id,\tau}$, otherwise it is unramified), with residue the Kummer character $\hat y \in H^1(k(\tilde\cZ_{id,\tau}),\qz)(1)$ of order $n_{\tau}$.
		
		Since the norm of $\tilde\cZ_{id,\tau}$ to $\cQ$ is $\cZ_{\tau}$ and the two divisors have the same function field, the functoriality of residues under corestriction \cite[Prop.~1.4.7]{BGbook} gives that the residue of 
		\[
		\cores_{\cO_{E^{\tau},S}/\cO_{K,S}}(f_{\tau},\tilde y_{\tau})_{n_{\tau}} \in \Br (k(\cQ))(1)
		\]
		at $\cZ_{\tau}$ is also $\hat y \in H^1(k(\tilde\cZ_{id,\tau}),\qz)(1)$ under the identification $k(\cZ_{\tau})=k(\tilde\cZ_{id,\tau})$, and it has no other residues in a neighbourhood of $\bar q$.
		
		So by functoriality of residues under pullback \cite[Theorem 3.7.5]{BGbook}, the residues of $f^*b_0$ with
		\[
		b_0:=\sum_{\tau \in \Gamma[2]^{**}}a_{\tau}\cores_{\cO_{E^{\tau},S}/\cO_{K,S}}(f_{\tau},\tilde y_{\tau})_{w,n_{\tau}} \in \Br k(\cQ)
		\]
		are the pullback of the residues of $b_0$. These pullbacks are none other (by the very choice of $b_0$) than the residues of $b$. Thus $b-f^*b_0$ lies in $\Br \cX$ by purity for the Brauer group. In particular:
		\[
		\inv_v(b(x))=\inv_v(b_0(q))=\sum_{\tau \in \Gamma[2]^{**}}a_{\tau}\sum_{\substack{w \mid v \\ w \in M_{E^{\tau}}}}[f_{\tau}(q_{\tau}),\tilde y_{\tau}(q_{\tau})]_{w,n_{\tau}},
		\]
		where we are using the square brackets to denote the additive analog of the Hilbert symbol (with values in $(\Z/n_{\tau}\Z)(1)$ instead of $\mu_{n_{\tau}}$). Here both $f_{\tau}(q_{\tau})$ and $\tilde y_{\tau}(q_{\tau})$ take their values in $(K_v \otimes_K E^{\tau})^*=\prod_{w\mid v, w \in M_E^{\tau}}(E^{\tau})_w^*$, and the symbol $[-,-]_{w,n}$ is meant as the taking the (additive) Hilbert symbol after projecting both components to $(E^{\tau})_w^*$.

		Since, for each $\tau \in \Gamma[2]^{**}$, $\tilde y_{\tau}(q_{\tau})$ is integral by construction, in the sense that it lies in $(\cO_v \otimes_{\cO_{K,S}}\cO_{E^{\tau},S})=\prod_{w\mid v, w \in M_E^{\tau}}\cO_{E^{\tau},w}^*$ and $w(f_{\tau}(q_{\tau}))$ is the local intersection number $(\cZ_{id,\tau},q_{\tau})_{w}=\tilde w(q-q^{\tau})$ (for $\tilde w$ any place extending $w$ to $E$, which is necessarily unramified), we have 
		\[
		(f_{\tau}(q_{\tau}),\tilde y_{\tau}(q_{\tau}))_{w,n_{\tau}}=\leg{\tilde y_{\tau}(q_{\tau})}{w}_{n_{\tau}}^{\tilde w(q-q^{\tau})}=\leg{q}{w}_{n_{\tau}}^{\tilde w(q-q^{\tau})},
		\]
		giving the sought formula.
	\end{proof}
	
	\section{A multi-section}\label{Sec7}
	
	The purpose of this section is to construct a multi-section of the fibration appearing in Theorem \ref{Thm: fibrationnew}. See Theorem \ref{Lem:ExistenceMultisection} and its Corollary \ref{Cor:ExistenceMultisection}.
	
	\vskip1mm
	
	In this section, let $k$ be a field of characteristic $0$, and $\ok$ be an algebraic closure. 
	
	\vskip1mm
	
	A {\em pointing} of a finite étale algebra $E/k$ is a $k$-homomorphism $\iota: E \to \ok$. Fix for the rest of this section a finite étale algebra $E/k$ and a pointing $\iota$.
	
	Let $Q\coloneqq \Spec \left(\ok[y_{\sigma}^{\pm 1}]_{\sigma \in \Hom_k(E,\ok)}\right)^{\Gamma_k}$ be the quasi-trivial torus associated to $E$.    
	\paragraph{Covers of $Q$.} Let $\cS := \Hom_k(E,\ok)$, and:
	\begin{align*}
	R_{\infty}&\coloneqq   \ok[y_{\sigma}^{\pm \frac 1\infty}]_{\sigma \in \cS}.\\
	\intertext{For $1 \leq n \leq \infty$ and $k \subset L \subset \ok$, we let:}
	R_{L,m}&\coloneqq \left(  \ok[y_{\sigma}^{\pm \frac 1n}]_{\sigma \in \cS}\right)^{\Gamma_L} \subset R_{\infty},\\
	\intertext{and, if $\Gamma_L$ acts trivially on $\cS$, we let:}
	\ \ R_{L,m,(1)}&\coloneqq \left(  \ok[y_{\iota}^{\pm 1},y_{\sigma}^{\pm \frac 1n}]_{\sigma \in \cS\s \{\iota\}}\right)^{\Gamma_L}= L[y_{\iota}^{\pm 1},y_{\sigma}^{\pm \frac 1n}]_{\sigma \in \cS\s \{\iota\}}.
	\end{align*}
	The algebras $R_{L,m}$ (resp.\ the algebras $R_{L,m,(1)}$) form a filtered system when ordered by inclusion. This order is equivalent (in both cases) to the one induced by the partial order defined by $(L,m) \preccurlyeq
	(L',m')$ if and only if $m|m'$ and $ L \subset L'$. We let
	\[
	Q^{\infty} \coloneqq \Spec R_{\infty}, \ \ Q^{L,m} \coloneqq \Spec R_{L,m}, \ \ Q^{L,m,(1)}\coloneqq \Spec R_{L,m,(1)}
	\]
	be the corresponding profinite étale covers of $Q$.
	
	\vskip1mm
	
	\noindent We say that a pair $(L,m)$ is {\em finite} if $m < \infty$ and $L/k$ is a finite extension. Note that 
	$$\lim\limits_{\substack{\to \\ (L,m) \text{ finite}}} R_{L,m} = R_{\infty,\ok}, \ \lim\limits_{\substack{\to \\ (L,m) \text{ finite}}} R_{L,m,(1)} = R_{\infty,\ok,(1)},$$ 
	\begin{equation}\label{EqLimit}
	\lim\limits_{\substack{\leftarrow \\ (L,m) \text{ finite}}} Q^{L,m} = Q^{\infty,\ok}, \ \lim\limits_{\substack{\leftarrow \\ (L,m) \text{ finite}}} Q^{L,m,(1)} = Q^{\infty,\ok,(1)}.
	\end{equation}
	We abbreviate $R_{\infty,\ok}, Q^{\infty,\ok}, \ldots$ to $R_{\infty}, Q^{\infty}, \ldots$.
	
	Let $\bar \Delta_{\sigma,\tau} \subset Q \otimes_k \ok, \sigma,\tau \in \cS, \sigma \neq \tau$ be the divisor defined by the equation $y_{\sigma}=y_{\tau}$. Let $\bar \Delta =\cup_{\sigma,\tau : \sigma \neq \tau} \bar \Delta_{\sigma,\tau}$. This union is Galois-equivariant, let $\Delta \subset Q$ be such that $\Delta \otimes_k \ok =\bar \Delta.$ 
	
	\begin{theorem}\label{Lem:ExistenceMultisection}
		Let $T \to Q$ be an epimorphism of $k$-tori, $X$ be a $T$-variety, and $f:X \to Q$ be a $T$-equivariant morphism. Assume that $f$ is proper with rationally connected fibers. Then there exists a multi-section $s_{\infty}$ as in the diagram:
		\[
		\begin{tikzcd}
		& {Q^{\infty,(1)}} \arrow[d] \arrow[ld, "s_{\infty}"'] \\
		X \arrow[r, "f"] & Q   .                                          
		\end{tikzcd}
		\]
		Moreover, for any such $s_{\infty}$, and any $b \in \Br f^{-1}(Q\s \Delta)$ such that the residue of $b \otimes \ok$ at the divisor $f^{-1}(\bar \Delta_{\sigma,\tau})$ is split by a power of $y_{\sigma}$ (equiv.\ $y_{\tau}$) for all $\sigma \neq \tau$, we have $s_{\infty}^{*}b=0$.            
	\end{theorem}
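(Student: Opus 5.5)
The plan has two parts: existence of $s_\infty$, then the vanishing of $s_\infty^*b$.

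\emph{Existence.} By the limit description \eqref{EqLimit}, it suffices to produce a $\ok(Q^{\infty,(1)})$-point on the generic fibre, i.e.\ a rational section of $X$ over the field
\[
F:=\ok\bigl(y_\iota,\, y_\sigma^{1/\infty}\bigr)_{\sigma\neq\iota}.
\]
This section should then spread out, by properness, to a morphism defined over some finite level $Q^{L,m,(1)}$. We realise $F$ as the function field of the curve $C=\Spec \ok(y_\sigma^{1/\infty})_{\sigma\ne\iota}[y_\iota^{\pm1}]$ over the field $\Omega:=\ok(y_\sigma^{1/\infty})_{\sigma\neq\iota}$. The fibration $X\times_Q C\to C\cong\G_{m,\Omega}$ has rationally connected fibres. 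By Graber--Harris--Starr, it suffices to work over $\bar\Omega$, so we need to check that the rational section can be defined over $\Omega$ itself.

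This is where the $T$-action enters. Because $T\to Q$ is surjective and $f$ is $T$-equivariant, $X$ over $Q\otimes\ok$ is, up to a torus torsor, a product of a fibre with the torus. Concretely, pick a fibre $X_1$ over $1$. The map $T\times X_1\to X$ is surjective onto $X$ over $Q$, and any $\ok$-point of $X_1$, which exists by Graber--Harris--Starr, gives a section after lifting $Q\to T$ along an isogeny. An isogeny lift of the torus epimorphism $T\to Q$ exists after extracting $m$-th roots of all coordinates $y_\sigma$. This already gives a section over $Q^{\infty}$. The refinement to $Q^{\infty,(1)}$, where $y_\iota$ is not rooted, is the \textbf{main obstacle}. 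For it I would use Graber--Harris--Starr along the $y_\iota$-direction: over $\Omega$, the $\G_m$-family in $y_\iota$ is a rationally connected fibration over a rational curve, so it has a section over $\bar\Omega(y_\iota)$. The field $\bar\Omega$ is reached by finite extensions of constants. Since $\Omega$ is already divisibly closed in the other variables, the remaining Galois descent is absorbed by passing to a finite $L$, which is allowed because $L$ is not fixed.

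\emph{Vanishing.} By the same limit argument, $s_\infty^*b$ is an element of $\Br(Q^{\infty,(1)}\setminus \Delta')$. Its residues along the pullbacks of $\bar\Delta_{\sigma,\tau}$ are pullbacks of residues split by a power of $y_\sigma$. Along $y_\sigma=y_\tau$, at least one of $y_\sigma,y_\tau$ has all roots adjoined in $Q^{\infty,(1)}$, since at most one of them equals $y_\iota$. Hence these residues die, and $s_\infty^*b$ extends to $\Br Q^{\infty,(1)}$ by purity. Over $\ok$, the group $\Br Q^{\infty,(1)}$ is the colimit of Brauer groups of split tori. Any class in it is a sum of symbols $(y_\sigma,y_\tau)_n$, each of which becomes trivial once one of its arguments is an $n$-th power. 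Since every pair involves some $y_\sigma$ with $\sigma\ne\iota$, the colimit vanishes, and therefore $s_\infty^*b=0$.
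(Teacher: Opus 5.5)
The vanishing half of your proposal is fine. It differs from the paper only in the order of operations. The paper subtracts $\sum_{i<j}c_{ij}(y_j,y_i-y_j)_n$ on $X$, applies purity on $X$, and observes that these symbols die on $Q^{\infty,(1)}$. You pull back first and apply purity on $Q^{\infty,(1)}$. Both end with $\Br Q^{\infty,(1)}=0$.

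The existence half has a genuine gap, exactly at the step you call the main obstacle. Graber--Harris--Starr over $\bar\Omega$ gives a section defined over some finite extension $\Omega'$ of $\Omega=\ok(y_\sigma^{1/\infty})_{\sigma\neq\iota}$. That is an extension of function fields, not of constants. Since $\Omega$ already contains $\ok$, enlarging $L$ changes nothing. Meanwhile $\Omega$ still has many non-Kummer extensions, e.g.\ $\Omega(\sqrt{1+y_\sigma})$. And GHS says nothing over the non-closed field $\Omega$ itself.

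Your opening reduction also fails. A rational section over the generic point of $Q^{L,m,(1)}$ extends by properness only across codimension-one points. The statement, however, asks for a morphism $s_\infty$ defined on all of $Q^{\infty,(1)}$.

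The missing idea is to use the $T$-action so that GHS is applied only over $\ok$, and then to propagate by translation. The paper does this via a fixed point:
\begin{itemize}
\item After lifting $Q^{\infty}\to T$, the procyclic group $I=\Ker(Q^\infty\to Q^{\infty,(1)})$ acts on the fibre $X_1$ through a finite cyclic quotient $C$.
\item GHS applied to $X_1\times^C\G_m\to\G_m/C$ gives a $C$-equivariant map $\G_m\to X_1$. Its extension to $\P^1$ sends $0$ to a $C$-fixed point $x$.
\item The orbit map $g\mapsto g\cdot x$ is then $I$-invariant, so it descends to an everywhere-defined $s_\infty:Q^{\infty,(1)}\to X$.
\end{itemize}

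Your own idea can be repaired in the same spirit. Apply GHS over $\ok$ only to the restriction of $X$ to the line $\{y_\sigma=1,\ \sigma\neq\iota\}$. Since the base is now a curve, properness extends the resulting section to a section $s_0$ over all of $\G_{m,\ok}$. Then put
$s\bigl(y_\iota,(y_\sigma^{1/m})_{\sigma\neq\iota}\bigr):=\phi\bigl(1,(y_\sigma^{1/m})_{\sigma\neq\iota}\bigr)\cdot s_0(y_\iota)$, where $\phi:Q^{\ok,m}\to T$ is your isogeny lift. By $T$-equivariance this point lies over $\bigl(y_\iota,(y_\sigma)_{\sigma\neq\iota}\bigr)$. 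So it is an everywhere-defined multi-section $Q^{\ok,m,(1)}\to X$, which gives $s_\infty$.
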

	
	We use the following in the proof.
	
	\begin{lemma}
		Let $X$ be a rationally connected variety defined over an algebraically closed field $k$ of characteristic $0$, and $C$ be a finite cyclic group acting on $X$. Then $X(k)^C \neq \emptyset$.
	\end{lemma}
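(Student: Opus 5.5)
The natural route is to reduce to a fixed-point statement on a compactification and then use that rationally connected varieties carry no interesting ``cyclic'' topology. Let $g$ be a generator of $C$, of order $N$. By equivariant resolution of singularities, replace $X$ by a smooth projective $C$-equivariant model $\tilde X$. Rational connectedness is a birational invariant, so $\tilde X$ is still rationally connected.

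We then aim to produce a fixed point on $\tilde X$ and transport it back to $X$. Transporting back needs care, because fixed points on the boundary do not obviously yield fixed points on $X$. I would therefore argue directly in a way that sees $X$ as well. The cleanest tool is the holomorphic Lefschetz fixed point formula (Atiyah--Bott, or its algebraic analogue). Since $\tilde X$ is rationally connected, $H^i(\tilde X,\cO)=0$ for $i>0$. Hence the holomorphic Lefschetz number of $g$ equals $\operatorname{tr}(g\mid H^0(\tilde X,\cO))=1\neq 0$. It follows that $\tilde X^g\neq\emptyset$, and since $g$ generates $C$, also $\tilde X^C\neq\emptyset$. By the Lefschetz principle we may assume $k=\C$.

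To get a fixed point on $X$ itself, I would not pass to an arbitrary compactification. Instead, I would use the structure available in the application, where the relevant varieties are fibers of the proper map $f$ and are therefore already proper. More precisely, I expect the lemma to be applied to proper $X$, for instance smooth fibers of $f$, or after an equivariant resolution which is an isomorphism over the smooth locus. If $X$ is smooth and proper, $\tilde X=X$ and we are done. If $X$ is proper but singular, take an equivariant resolution $\pi:\tilde X\to X$. Then $\pi$ maps $C$-fixed points to $C$-fixed points, so $X^C\supseteq \pi(\tilde X^C)\neq\emptyset$.

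This settles the proper case, which I would state as the version actually needed. The main obstacle is exactly the non-proper case, where fixed points could all lie on the boundary. A smooth affine example shows the issue is real: $\mathbb{G}_m$ with a translation by a root of unity has no fixed points. So the statement must implicitly be about proper $X$, and I would add or verify that hypothesis.

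The remaining technical check is the holomorphic Lefschetz step for a finite-order automorphism of a smooth projective variety. An alternative avoids analysis: apply the topological Lefschetz formula, using $H^{\mathrm{odd}}$ and the $g$-action on Hodge pieces. The argument is that $\sum_i (-1)^i\operatorname{tr}(g\mid H^{0,i})=1$ forces a fixed point via the holomorphic version. I would cite the holomorphic version directly.
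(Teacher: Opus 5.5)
Your proof is correct for proper $X$, but it follows a different route from the paper's. The paper embeds $C\hookrightarrow\mathbb{G}_m$ and forms the twisted family $Z=X\times^C\mathbb{G}_m\to\mathbb{G}_m/C\cong\mathbb{G}_m$, whose fibers are copies of $X$. It takes a section from Graber--Harris--Starr and pulls it back along $\mathbb{G}_m\to\mathbb{G}_m/C$ to a $C$-equivariant morphism $\mathbb{G}_m\to X$. This extends to $\mathbb{P}^1\to X$ by the valuative criterion, and the images of the $C$-fixed points $0,\infty$ are then $C$-fixed.

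The paper therefore also uses properness, both for GHS and for the extension to $\mathbb{P}^1$, exactly as you anticipated. In the paper the lemma is only applied to fibers of the proper map $f$, and under the usual convention rationally connected varieties are proper. Your $\mathbb{G}_m$ example correctly shows the hypothesis cannot be dropped.

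The two routes trade off as follows.
\begin{itemize}
\item The paper's argument is purely algebraic and needs no resolution of singularities, so it works verbatim for singular proper $X$. It rests on the deep GHS theorem and genuinely uses finiteness of $C$ to form $\mathbb{G}_m/C$.
\item Your argument uses classical transcendental input: Atiyah--Bott, the vanishing $H^i(\tilde X,\mathcal{O})=0$ for $i>0$, and the Lefschetz principle, plus equivariant resolution to reach the singular proper case. In exchange it proves more. Only the single generator $g$ enters, so every automorphism of a smooth projective rationally connected variety has a fixed point, whether of finite order or not.
\end{itemize}
When the fixed-point set is empty, Atiyah--Bott applies vacuously, so you do not need the version of the formula for non-isolated fixed points.

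One correction: drop the ``topological Lefschetz'' alternative in your last paragraph. It does not replace the holomorphic argument. To get a fixed point from it you would need $\sum_i(-1)^i\operatorname{tr}(g\mid H^i(\tilde X,\mathbb{Q}))\neq 0$. For finite-order $g$ this number equals $\chi_{\mathrm{top}}(\tilde X^g)$, and there is no a priori reason for it to be nonzero, since the odd cohomology of a rationally connected variety need not vanish. What does the work is $\sum_i(-1)^i\operatorname{tr}(g\mid H^i(\tilde X,\mathcal{O}))=1$, which is the holomorphic version you already cite.
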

	\begin{proof}
		We embed $C$ in $\G_m$. Let $Z$ be the contracted product $Z \times^C \G_m$. The ``second factor projection'' $\pi:Z \to \G_m/C \cong \G_m$ has a section $\sigma$ by the Graber–Harris–Starr Theorem \cite[Theorem 1.1]{GHS}. Consider the $C$-equivariant cartesian diagram:
		\[
		\begin{tikzcd}
		\G_m \arrow[d, "C"] & X \times_k \G_m \arrow[d, "C"] \arrow[l, "pr_2"] \\
		\G_m/C              & Z \arrow[l, "\pi"']                             
		\end{tikzcd}
		\]
		The section $\sigma$ of the bottom row induces a $C$-equivariant section $\sigma'$ of the top row. Projecting this section to $X$, we get a  $C$-equivariant morphism $\G_m \to X$. By the valuative criterion of properness, this morphism extends to a morphism $\P^1 \to X$. Since $0$ and $\infty$ are fixed by $C$, their images in $C$ are too.
	\end{proof}
	
	\begin{proof}[Proof of Theorem \ref{Lem:ExistenceMultisection}]
		We may assume without loss of generality that $k$ is algebraically closed. Then $E=k^d$ for a natural number $d$, and $Q=\Spec k[y_1^{\pm1},\ldots,y_d^{\pm1}]$, $Q^{\infty}=\Spec k[y_1^{\pm\frac1\omega},\ldots,y_d^{\pm\frac1\omega}], Q^{\infty,1}=\Spec k[y_1^{\pm1},y_2^{\pm\frac1\omega},\ldots,y_d^{\frac1\omega}]$. (Here $\cS$ degenerates to just the various projections $k^d \to k$, which we enumerate $1,\ldots,d$. Up to permutation of the coordinates, we may always assume that $\iota$ is the first projection).
		
		We prove the first part. The epimorphism of tori $T \to Q$ admits a multi-section $Q^{\infty} \to T$ in the category of $k$-group schemes. The $T$-action then induces a $Q^{\infty}$-action on $X$ with respect to which the fibration $f:X \to Q$ is $Q^{\infty}$-equivariant. This induces an action of the procyclic group $I \coloneqq \Ker(Q^{\infty} \to Q^{\infty,(1)}) \cong \widehat{\Z}$ on the fibers of $f$, which are rationally connected. Since $X$ is of finite type over $k$, this action must factor through a finite quotient of $I$, and thus there is at least one fixed point $x \in X(k)$ by Lemma \ref{Lem:ExistenceMultisection}. In particular $Q^{\infty}\cdot x=Q^{\infty,(1)} \cdot x$, and this orbit provides the sought multi-section $s_{\infty}:Q^{\infty,(1)} \to X$. 
		
		We now prove the second statement. Let $Q_{\Delta}^{\infty,(1)} \coloneqq Q^{\infty,(1)} \times_Q (Q \s \Delta)$, and let $n$ be the order of $b$. Let us first prove that $s_{\infty}^{*}b$ is contained in the subgroup $\Br Q^{\infty,(1)}$ of $\Br Q_{\Delta}^{\infty,(1)}$. Let $\partial_{ij}(b) \in H^1(\kappa(D_{ij}),\qz)$ be the residue of $b$ at $f^{-1}(\bar \Delta_{ij})$, and write $\partial_{ij}(b)=c_{ij}\cdot \widehat{(y_j)}_{n}$ for all ordered pairs $i <j $ with $c_{ij} $ lying in $ (\Z/n\Z)(-1)$. The algebra
		\[
		b' \coloneqq b - \sum_{i < j}c_{ij}(y_j,y_i-y_j)_{n} \quad \in \Br (X \s f^{-1}(\Delta))
		\]
		has trivial residues on all divisors of $X$. By purity for the Brauer group,  it
		belongs to $\Br X \subset \Br (X \s f^{-1}(\Delta))$. Since the functions $y_j$ with $j \geq 2$ are infinitely divisible on $Q_{\Delta}^{\infty,(1)}$, and $j$ is greater or equal than $2$ for all pairs of indices $(i,j)$ appearing in the sum above, the cyclic algebras $(y_j,y_i-y_j)_{n_{ij}}$ appearing in the sum pull back to the trivial algebra on $Q_{\Delta}^{\infty,(1)}$. Thus $s_{\infty}^{*}b=s_{\infty}^{*}b'$, which lies in $\Br Q^{\infty,(1)}$.    
		Finally, note that the Brauer group of $Q^{\infty,(1)}$, i.e.\ the Brauer group of the algebra $k[y_1^{\pm 1},y_2^{\pm \frac 1 \omega},\ldots, y_d^{\pm \frac 1 \omega}]$, is trivial (as follows e.g.\ by taking the appropriate limit in \cite[Proposition 9.1.2]{BGbook}). This proves that $s_{\infty}^{*}b=0$ and concludes the proof.
	\end{proof}
	
	\begin{corollary}\label{Cor:ExistenceMultisection}
		Let $f$ be as in Theorem \ref{Thm: fibrationnew} with $Q$ Galois, and $B \subset (\Br X)_+$ be a finite subgroup. Then there exists a finite pair $(L,m)$ and a multi-section $s$ as in the diagram
		\[
		\begin{tikzcd}
		& {Q^{L,m,(1)}} \arrow[d] \arrow[ld, "s"'] \\
		X \arrow[r, "f"] & Q                                          
		\end{tikzcd}
		\]
		such that $s^{*}B=0$.  
	\end{corollary}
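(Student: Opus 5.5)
The plan is to derive the corollary from Theorem \ref{Lem:ExistenceMultisection} by a limit argument. First I would check that Theorem \ref{Lem:ExistenceMultisection} applies to $f$ and to every $b \in B$. Theorem \ref{Thm: fibrationnew} gives $f$ proper with rationally connected fibers and $T$-equivariant for an epimorphism $T \to Q$, so there is a multi-section $s_\infty: Q^{\infty,(1)} \to X$. Now let $b \in B \subset (\Br X)_+$. By definition \eqref{BRXPLUS}, $b$ lies in $\Br f^{-1}(Q^\sharp)$. Its residue at each $f^{-1}(Z_\tau)$ is split by $\kappa(Z'_\tau)(y^{1/n})$. After base change to $\ok$, the divisor $Z_\tau$ decomposes into the divisors $\bar Z^{\sigma}_\tau=\{y^{\sigma}=y^{\sigma\tau}\}$, which are among the $\bar\Delta_{\sigma,\sigma\tau}$. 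The residue there is a multiple of the Kummer character of $y^\sigma$, so it is split by a power of $y_\sigma$. At the remaining components of $\bar\Delta$, namely those not in $\Gamma[2]^{**}$-orbits, $b$ is unramified, because $Q\setminus\Delta \subset Q^\sharp$. The residue there is $0$, which is trivially split. Theorem \ref{Lem:ExistenceMultisection} therefore gives $s_\infty^* b = 0$ in $\Br Q^{\infty,(1)}$, for all $b \in B$ simultaneously.

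Next I would descend to a finite level. The map $s_\infty$ is a morphism from $Q^{\infty,(1)} = \lim_{(L,m)} Q^{L,m,(1)}$ (see \eqref{EqLimit}, with $L \supset E$ so that $\Gamma_L$ acts trivially on $\cS$) to the finite type $K$-scheme $X$. By the standard limit results (EGA IV \S 8), $s_\infty$ factors through some finite level $s_0: Q^{L_0,m_0,(1)} \to X$. Enlarging the level, I would also arrange that $s_0$ commutes with the projections to $Q$; this is a closed condition that holds in the limit, and the transition maps are affine. Note that $s_\infty^*b$ is a priori a class on $Q^{\infty,(1)}$ restricted over $Q\setminus\Delta$. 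I would handle this in one of two ways. One is to work with the pullback over $Q\setminus\Delta$ and use that étale cohomology commutes with filtered limits of qcqs schemes with affine transitions, so $\Br(\lim U_i)=\varinjlim \Br U_i$. The other is to first replace $b$ by $b'\in \Br X$ as in the proof of Theorem \ref{Lem:ExistenceMultisection}. Either way, since $s_0^*b$ maps to $0$ in the colimit, it already vanishes at some finite level $(L_b,m_b)\succcurlyeq (L_0,m_0)$. As $B$ is finite, I take $(L,m)$ dominating all the $(L_b,m_b)$, and let $s$ be the composition $Q^{L,m,(1)}\to Q^{L_0,m_0,(1)}\to X$. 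Then $s^*B=0$.

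The main obstacle is the bookkeeping of where $s^*b$ lives. Elements of $B$ are only defined on $f^{-1}(Q^\sharp)$, so $s^*b$ is a priori a class on the preimage of $Q^\sharp$ in $Q^{L,m,(1)}$, and "$s^*B=0$" should be read there. The limit argument must be applied to $\Br$ of the preimages of $Q^\sharp$ (or of $Q\setminus\Delta$ followed by the injection into the Brauer group of the function field), not to all of $Q^{L,m,(1)}$. A secondary point to check is the identification of the residue conditions defining $(\Br X)_+$ with the hypothesis of Theorem \ref{Lem:ExistenceMultisection} after base change to $\ok$. For this I would use the explicit description of $(\Br\bar Q)_+$ given in the proof of Lemma \ref{Lem612}.
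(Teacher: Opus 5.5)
Your proposal is correct and follows the paper's proof. You check that elements of $(\Br X)_+$ lie in $\Br f^{-1}(Q\setminus\Delta)$ and satisfy the residue hypothesis of Theorem \ref{Lem:ExistenceMultisection}, then descend $s_\infty$ and the vanishing of $s_\infty^*B$ to a finite level $(L,m)$ via $\operatorname{Mor}(Q^{\infty,(1)},X)=\varinjlim\operatorname{Mor}(Q^{L,m,(1)},X)$; your extra step $\Br(\varprojlim U_i)=\varinjlim \Br U_i$ on the preimages of $Q\setminus\Delta$ spells out what the paper leaves implicit.

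One small correction: the residues vanish at the components of $\bar\Delta$ outside the $\Gamma[2]^{**}$-orbits because their generic points lie in $Q^\sharp$, where $b$ is defined. The inclusion $Q\setminus\Delta\subset Q^\sharp$ does not give this; it only gives $b\in\Br f^{-1}(Q\setminus\Delta)$.
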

	\begin{proof}
		The elements of $(\Br X)_+$ lie in $\Br f^{-1}(Q \s \Delta)$ and satisfy the assumption of Theorem \ref{Lem:ExistenceMultisection}. Thus the corollary follows from Theorem \ref{Lem:ExistenceMultisection} by a limit argument, using that $\operatorname{Mor}(Q^{\infty,(1)},X)= \lim\limits_{\substack{\to \\ (L,m) \text{ finite}}} \operatorname{Mor}(Q^{L,m,(1)},X)$ (this identity follows from \eqref{EqLimit}), where $\operatorname{Mor}(S,S')$ denotes the set of morphisms from a scheme $S$ to a scheme $S'$.
	\end{proof}
	
	\section{Reductions}\label{Sec8}

    The following proposition is the main result of this section. Its proof is an application of ``Harpaz--Wittenberg base-change'' (Theorem \ref{Prop:HWbasechange}).

    \begin{proposition}\label{PropReductions}
        Assume Theorem \ref{Thm: fibrationnew} holds when, letting $n:=\max\{r: \mu_r \subset E\}$, the morphism $f:X \to Q$ satisfies the following additional properties:
		\begin{enumerate}[label=$(R_{\arabic*})$]
			\item the extension $E/K$ is Galois;
			\item the horizontal Brauer group $\Br_{\text{hor}}(X/Q)$ has exponent dividing $n$;
			\item the complex $\Br X_{\eta} \to[r] \Br_{\text{hor}}(X/Q) \to[\bar \partial] H^3(\bar Q,\G_m)^{\Gamma_K}$ is exact;
			\item the morphism $f$ admits a multi-section $s:Q^{\oK,n,(1)} \to X$ for which there exists a subgroup $B \subset (\Br  X)_+$ surjecting onto $\Br X_{\eta}/f^*\Br \eta$ with $s^*B=0$.
		\end{enumerate}
        Then Theorem \ref{Thm: fibrationnew} holds.
    \end{proposition}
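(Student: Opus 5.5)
We reduce by repeated application of Theorem \ref{Prop:HWbasechange} (in its ``dir'' variant), each time replacing $E$ by a larger finite extension $E'$. The base-changed fibration $X'=X\times_{Q,\phi_c}Q'$ inherits all hypotheses of Theorem \ref{Thm: fibrationnew}. It is smooth and proper with rationally connected fibers over $Q'=R_{E'/K}\G_m$. It is also equivariant for the torus $T'=T\times_Q Q'$ (fibre product over the group map $N_{E'/E}$), which surjects onto $Q'$. So it suffices to exhibit, uniformly in $c\in E^*$, one extension $E'/E$ for which every $X'_c\to Q'$ satisfies $(R_1)$--$(R_4)$. The Corollary after Theorem \ref{Prop:HWbasechange} already gives $(R_1)$ by passing to the Galois closure. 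I would then enlarge $E'$ further, always keeping it Galois over $K$, so that the remaining properties hold.

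\textbf{Step 1: $(R_2)$.} By Theorem \ref{Surj}(iii), $\Br_{\text{hor}}(X/Q)$ is finite, say of exponent $e$. Choose $E'$ Galois over $K$ containing $\mu_{e\cdot e'}$, where $e'$ bounds the exponents after base change. Here Theorem \ref{Surj}(ii) and Lemma \ref{Lem10} help: $\Br_{\text{hor}}$ of the pullback is computed by the same $\pi_1$-invariants \eqref{Eq:ident} of the generic geometric fibre $X_{\bar\eta}$. These invariants are killed by a bound depending only on $H^2(X_{\bar \eta},\mu_\infty)$ and the finite monodromy quotient, so $e'$ can be taken independent of $E'$ and $c$. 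Enlarging the roots of unity in $E'$ then ensures that the exponent divides $n(E')$.

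\textbf{Step 2: $(R_3)$.} The complex \eqref{erdelta} is a complex by Lemma \ref{Prop612}. I would show that, after a suitable base change, every element of $\ker\bar\partial$ lies in the image of $r$. Take $\beta$ with $\bar\partial\beta=0$. Then $\partial\beta$ lies in $F^1_{HS}H^3(Q,\G_m)$, with graded pieces $H^1(K,\Br\bar Q)$ and $H^2(K,\Pic\bar Q\text{-stuff})$; the $E^{3,0}$ and $E^{2,1}$ terms vanish, as noted in the proof of Theorem \ref{Teo3}. The surviving piece $H^1(K,\Br \bar Q)$ is killed by restriction to a suitable finite Galois extension, since $\Br\bar Q$ becomes a trivial module over $E$ and $H^1$ of a finite group is killed by inflation. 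The required functoriality is $\phi_c^*\partial=\partial\phi_c^*$. Once $\partial\beta'=0$, the exact sequence \eqref{Eq:LES} places $\beta'$ in the image of $\Br X'\subset\Br X'_\eta$. Because $\Br_{\text{hor}}$ is finite and, by Lemma \ref{Lem10}, stable under base change, finitely many such enlargements suffice.

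\textbf{Step 3: $(R_4)$.} Apply Theorem \ref{Teo3} to obtain a finite group $B\subset(\Br X')_+$ surjecting onto $\Br X'_\eta/f^*\Br\eta$. Corollary \ref{Cor:ExistenceMultisection} then gives a multisection over $Q'^{L,m,(1)}$ with $s^*B=0$. A final base change must absorb $L$ into $\oK$ (harmless) and $m$ into $n$. For this, enlarge $E'$ to contain $\mu_m$ and pull back along $N_{E''/E'}$ so that the relevant roots $y^{1/m}$ become $y^{1/n}$.

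\textbf{Main obstacle.} The difficulty is compatibility and termination. Each later enlargement may alter $n$, the divisors $Z_\tau$, and $\Br_{\text{hor}}$, and it must not destroy $(R_2)$ or $(R_3)$. Likewise, $m$ depends on $X'$, which in turn depends on $c$. To control this, I would carry out the choices at the level of geometric generic-fibre data, which is independent of $c$ and stable under the norm covers. I would then check that $(R_2)$--$(R_4)$ are preserved under further Galois base change, using Lemma \ref{Lem10} and the pullback of multisections.
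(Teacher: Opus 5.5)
Your overall plan matches the paper: iterated Harpaz--Wittenberg base change, with $(R_1)$ via the Galois closure and $(R_2)$ via $\Br_{\text{hor}}(X'/Q')\cong\Br_{\text{hor}}(X/Q)$ from Lemma~\ref{Lem10}. Step~2, however, contains a non sequitur. A class in $H^1(K,\Br\bar Q)$ that dies under \emph{restriction} to $\Gamma_{E'}$ need not die under the base-change map $\phi_c^*\colon H^1(K,\Br\bar Q)\to H^1(K,\Br\bar Q')$. That map is induced by a morphism of $\Gamma_K$-modules and does not shrink the Galois group, so nothing in your argument yields $\partial(\phi_c^*\beta)=0$. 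Nor should you expect it. The kernel of restriction to $\Gamma_{E'}$ on $H^1(K,\Br\bar Q')$ is in general nonzero: the summands $\Ind^{\Gamma'}_{\{1,\tau'\}}$ with $\tau'\in\Gamma'[2]^{**}$ contribute $\Z/2$ by Lemma~\ref{Lem7}. These classes are exactly why the paper works with $(\Br X)_+$ rather than $\Br X$. Your target $\beta'\in\operatorname{im}(\Br X')$ is stronger than $(R_3)$, which only asks that $\beta'\in r(\Br X'_{\eta'})$. By Lemma~\ref{Firstseq}, this means $\partial_{alg}(\beta')$ vanishes in $H^1(K,\Br\bar\eta')$. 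The missing idea is Lemma~\ref{Lemma1}: for the Galois torus $Q'=R_{E'/K}\G_m$ one has $\ker\bigl(H^1(K,\Br\bar Q')\to H^1(K,\Br\bar\eta')\bigr)=\ker\bigl(H^1(K,\Br\bar Q')\to H^1(E',\Br\bar Q')\bigr)$. This rests on Proposition~\ref{Lem5} and the Gille--Pianzola splitting. Restriction to $\Gamma_{E'}$ does commute with $\phi_c^*$, so choosing $E'$ such that this restriction kills $\partial_{alg}(\Br_{\text{hor}}(X/Q)')$ gives $(R_3)$ for every $X'_c$. You also need Lemma~\ref{Lem1etrequarti}, i.e.\ injectivity of $H^3(\bar Q,\G_m)\to H^3(\bar Q',\G_m)$. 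Without it you do not know that every $\beta'\in\ker\bar\partial'$ is of the form $\phi_c^*\beta$ with $\bar\partial\beta=0$.

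In Step~3 the argument for $(R_4)$ does not close. The dependence of $m$ on $c$ is harmless in an iterated application; the real problem is the Brauer classes. After your last base change along $N_{E''/E'}$, the pullback of $B$ is no longer in $(\Br X'')_+$. Its residues lie along the preimages of the $Z'_\tau$, which are not among the divisors $Z''_{\tau''}$ of $Q''$. So you need a new $B''$. Taking it from Theorem~\ref{Teo3} loses $(s'')^*B''=0$. Re-running Corollary~\ref{Cor:ExistenceMultisection} then produces a new $m''$ with no reason to divide $n(E'')$, and appealing to ``geometric generic-fibre data'' does not break this loop. The paper instead fixes $B$, $m$, $s$ once on the original $X$, which already satisfies $(R_1)$--$(R_3)$, and takes a single Galois $E'\supset\mu_m$. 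It then transports these data (Lemma~\ref{Lemma2}): $s'=(s\circ\tilde\phi_c,p')$ and $\psi_c^*b=b'+(f')^*b_0$ with $b_0=\sum_\tau b_{\tau,a_\tau}$. The classes $b_{\tau,a}\in\Br\eta'$ of Lemma~\ref{ResLemma} have explicitly prescribed residues, which puts $b'$ in $(\Br X')_+$. They are also built from cup products whose first factors become $m$-th powers on $(Q')^{\oK,m,(1)}$, which gives $(s')^*b'=0$. Surjectivity of $B'$ onto $\Br X'_{\eta'}/(f')^*\Br\eta'$ follows from $r(B)=\Br_{\text{hor}}(X/Q)'$ and Lemma~\ref{Lem1etrequarti}. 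Finally, $m\mid n(E')$ lets one precompose with $(Q')^{\oK,n(E'),(1)}\to(Q')^{\oK,m,(1)}$.
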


    \subsection{Reduction from $(R_1)$-$(R_3)$}

    We prove here:

    \begin{proposition}\label{PrimaProp}
        Assume Theorem \ref{Thm: fibrationnew} holds for morphisms $f$ that satisfy, in addition, properties $(R_1)-(R_3)$. Then Theorem \ref{Thm: fibrationnew} holds.
    \end{proposition}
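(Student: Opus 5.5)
We will reduce to $(R_1)$--$(R_3)$ by repeated applications of Harpaz--Wittenberg base change (Theorem \ref{Prop:HWbasechange}, in its ``$\mathrm{dir}$'' variant), replacing $E$ by a suitably large finite Galois extension $E'/K$ containing it. The key observation is that if $E\subset E'$, then for every $c\in E^*$ the base-changed family $f':X'\to Q'=R_{E'/K}\G_m$ along $\phi_c=c\cdot N_{E'/E}$ is again smooth proper with rationally connected fibers. It is also equivariant for a torus surjecting onto $Q'$: one takes the fiber product $T\times_Q Q'$ via $\phi_c$ composed with the appropriate translation, or more simply $T'=T\times_Q Q'$ with $Q'\to Q$ the norm, which acts on $X'=X\times_Q Q'$. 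Hence Theorem \ref{Thm: fibrationnew} for all such $f'$ implies it for $f$. By the Corollary following Theorem \ref{Prop:HWbasechange} we may already assume $(R_1)$. The task is then to show that for $E'$ large enough, \emph{every} $f'$ obtained this way (uniformly in $c$) satisfies $(R_2)$ and $(R_3)$. Base change preserves $(R_1)$ as long as $E'/K$ is Galois.

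For $(R_2)$, I would use Theorem \ref{Surj}(iii), which says $\Br_{\text{hor}}(X/Q)$ is finite, together with the description via \eqref{Eq:ident} and \eqref{SES0} of $\Br_{\text{hor}}$ as a subquotient of $H^2(X_{\bar y},\mu_\infty)^{\pi_1}$. Over $Q'$ the geometric fibers are the same, so $\Br_{\text{hor}}(X'/Q')$ is controlled by $H^1(\kappa(\eta'),\Pic X_{\bar\eta})$ and $\Br X_{\bar\eta}$. Both have exponent bounded by a constant $N$ depending only on $X\to Q$: the first is killed by the order of a finite Galois splitting group $G$ of $\Pic X_{\bar\eta}$, and the second is finite. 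Choosing $E'\supset \mu_{N}$ (and Galois) gives $N\mid n(E')$, hence $(R_2)$ for every $c$.

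For $(R_3)$, Lemma \ref{Prop612} already gives that the sequence is a complex, so we need exactness, i.e.\ that any $b'\in\Br_{\text{hor}}(X'/Q')$ with $\bar\partial(b')=0$ comes from $\Br X'_{\eta'}$. Via the sequence \eqref{Eq:LES} applied over $\eta'$ and over $Q'$, the obstruction is the image of $\partial(b')$ in $\ker(H^3(Q',\G_m)\to H^3(\bar Q',\G_m))$, which the Hochschild--Serre filtration identifies with a piece of $H^1(K,\Br\bar Q')$ (using $H^3(K,\bar K[Q']^*)=0$ as in the proof of Theorem \ref{Teo3}). The plan is to kill this piece by passing to a further extension. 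Restricting along $Q'\to Q$ for a bigger $E'$ multiplies the relevant classes in $H^1(K,\Br\bar Q)$ by norm-type maps, and the elements become zero after enlarging $E'$ so that the Galois action on $\Br\bar Q[N]$ becomes trivial. Then $H^1$ of an induced module vanishes by Shapiro's lemma, much as in Proposition \ref{Lem5}. This is the main obstacle: one must check carefully that the pullback of $\partial(b)$ along $\phi_c$ lands in $F^2_{HS}$ and dies in $H^1(K,\Br\bar Q')$ \emph{uniformly in} $c$, and that every $b'$ over $Q'$ (not only pullbacks) is handled. I would handle the latter by using Lemma \ref{Lem10}: for $E'$ dominating the finite cover $Y'\to Q$ given there, $\Br_{\text{hor}}(X'/Q')\cong\Br_{\text{hor}}(X/Q)$. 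This requires choosing $E'$ so that the fiber product with $Y'$ stays integral, which holds since the norm map has geometrically connected fibers, so the comparison with the original group is valid.

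Finally, I would combine these. I would take a single Galois $E'$ containing $E$, $\mu_N$, and the fields needed in the $(R_3)$ step, then apply Theorem \ref{Prop:HWbasechange} once, noting that properties $(R_1)$--$(R_3)$ hold for every $f'_c$. This shows that Theorem \ref{Thm: fibrationnew} for families satisfying $(R_1)$--$(R_3)$ implies it in general.
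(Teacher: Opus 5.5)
Your treatment of $(R_1)$ and of the $T\times_Q Q'$-equivariance is fine. So is $(R_2)$: your uniform exponent bound via \eqref{SES0} is a valid alternative to the paper's identification $\Br_{\text{hor}}(X'/Q')\cong\Br_{\text{hor}}(X/Q)$.

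The gap is in $(R_3)$, which you correctly flag as the crux. You aim to make $\partial_{alg}(b')$ vanish in $H^1(K,\Br\bar Q')$. That would place $b'$ in $r(\Br X')$, which is much stronger than exactness, since exactness only asks for $b'\in r(\Br X'_{\eta'})$. Your justification also does not give it. Making $\Gamma_{E'}$ act trivially on $\Br\bar Q[N]$ only turns the restricted cocycle into a homomorphism. Shapiro's lemma kills $H^1$ only for modules induced from the trivial subgroup. As a $\Gal(E'/K)$-module, $(\Br\bar Q')^{\Gamma_{E'}}$ also contains summands induced from the stabilizers $\{1,\gamma\}$ of pairs $\{\sigma_1',\sigma_2'\}$ with $\gamma=\sigma_1'\sigma_2'^{-1}$ an involution. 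By Lemma \ref{Lem7}, their $H^1$ is $\Z/2\Z$ whenever $\gamma\in\Gamma'[2]^{**}$; such $\gamma$ occur, e.g.\ images of complex conjugations if $K$ has a real place. Your argument gives no reason for pullback along the norm to kill the components of $\partial_{alg}(b)$ there. These are exactly the classes the paper accommodates through residues along the $Z_{\tau}$ rather than removing them. Compare Lemma \ref{Lemma2}, where the base-changed representatives still lie only in $(\Br X')_+$.

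What you are missing is the correct criterion. By Lemma \ref{Firstseq}, exactness of the $r$--$\bar\partial$ complex for $X'/Q'$ is equivalent to $\partial_{alg}(b')$ vanishing in $H^1(K,\Br\bar\eta')$. By Lemma \ref{Lemma1}, this holds if and only if $\partial_{alg}(b')$ restricts to zero in $H^1(E',\Br\bar Q')$. Lemma \ref{Lemma1} rests on the injectivity of $H^1(K,(\Br\bar Q')_+)\to H^1(E',(\Br\bar Q')_+)$ from Proposition \ref{Lem5}.

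This condition is easy to arrange. The group $\partial_{alg}(\Br_{\text{hor}}(X/Q)')\subset H^1(K,\Br\bar Q)$ is finite, so it is killed by restriction to some finite Galois $E'\supset E$. Since restriction commutes with $\phi_c^*$, the classes $\phi_c^*\partial_{alg}(b)$ then die in $H^1(E',\Br\bar Q')$ for every $c$.

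You also need every $b'\in\Br_{\text{hor}}(X'/Q')'$ to be of the form $\phi_c^*b$ with $b\in\Br_{\text{hor}}(X/Q)'$. For this, Lemma \ref{Lem10} alone is not enough; you need Lemma \ref{Lem1etrequarti}, which also uses the injectivity of $H^3(\bar Q,\G_m)\to H^3(\bar Q',\G_m)$ coming from a $\oK$-section of the norm. That injectivity gives $\bar\partial(b)=0$, and hence $\partial_{alg}(b')=\phi_c^*\partial_{alg}(b)$ with $\partial_{alg}(b)$ defined.
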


    Before coming to the proof, we introduce, for a general smooth proper morphism $f:X \to Q$ over a quasi-trivial $K$-torus $Q$, the following:
    \begin{align*}
        &\Br_{\text{hor}}(X/Q)':=\Ker (\bar \partial: \Br_{\text{hor}}(X/Q) \to H^3(\bar Q,\G_m)^{\Gamma_K} ),\\
        &H^3(Q,\G_m)_{alg} := \Ker(H^3(Q,\G_m) \to H^3(\bar Q,\G_m)^{\Gamma_K}),\\
        &\partial_{alg}:=\iota \circ \left(\partial|_{\Br_{\text{hor}}(X/Q)'}\right):\Br_{\text{hor}}(X/Q)' \to[\partial] H^3(Q,\G_m)_{alg} \xhookrightarrow{\iota} H^1(K,\Br \bar Q),
    \end{align*}
    where $\iota: H^3(Q,\G_m)_{alg} \hookrightarrow H^1(K,\Br \bar Q)$ denotes the inclusion induced by the Hochschild--Serre spectral sequence $H^i(K,H^j(\bar Q,\G_m)) \Rightarrow H^{i+j}(Q,\G_m)$ in view of the vanishings $\Pic \bar Q=H^3(K,\oK[Q]^*)=0$ (see the proof of Theorem \ref{Teo3}).
    
    \begin{lemma}\label{Firstseq}
        The sequence
        \[
        \Br X_{\eta} \to[r] \Br_{\text{hor}}(X/Q)' \to[\phi \circ \partial_{alg}] H^1(K,\Br \bar \eta),
        \]
        where $\phi$ denotes the natural map $H^1(K,\Br \bar Q) \to H^1(K,\Br \bar \eta)$, is exact.
    \end{lemma}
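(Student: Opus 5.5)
We must show two things: the composition $\phi\circ\partial_{alg}\circ r$ vanishes, and an element of $\Br_{\text{hor}}(X/Q)'$ killed by $\phi\circ\partial_{alg}$ lies in $r(\Br X_{\eta})$. First, $r$ indeed lands in $\Br_{\text{hor}}(X/Q)'$ by Lemma \ref{Prop612}, since $\bar Q$ is a torus. The plan is to compare the long exact sequence \eqref{Eq:LES} for $f:X\to Q$ with the same sequence for the generic fiber $X_\eta\to\eta$, via the pullback along $\eta\to Q$. By Theorem \ref{Surj}(ii), the specialization $\Br_{\text{hor}}(X/Q)\to\Br_{\text{hor}}(X_\eta/\eta)$ is an isomorphism. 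This identification is exactly how $r$ was defined in \eqref{EqMorf}. We therefore get a commutative diagram whose top row is $\Br X\to\Br_{\text{hor}}(X/Q)\to[\partial]H^3(Q,\G_m)$ and whose bottom row is the exact sequence $\Br X_\eta\to[r]\Br_{\text{hor}}(X_\eta/\eta)\to[\partial_\eta]H^3(\eta,\G_m)$, with the vertical maps given by restriction.

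By exactness of the bottom row, an element $\beta\in\Br_{\text{hor}}(X/Q)'$ lies in $r(\Br X_\eta)$ if and only if the restriction of $\partial(\beta)$ to $H^3(\eta,\G_m)$ vanishes. So it remains to translate this vanishing into the vanishing of $\phi(\partial_{alg}(\beta))$. For $\beta\in\Br_{\text{hor}}(X/Q)'$ we have $\partial\beta\in H^3(Q,\G_m)_{alg}$. In the Hochschild--Serre spectral sequence for $Q$, the vanishings $\Pic\bar Q=0$ and $H^3(K,\oK[Q]^*)=0$ give an isomorphism $\iota:H^3(Q,\G_m)_{alg}\cong F^2_{HS}H^3\hookrightarrow H^1(K,\Br\bar Q)$. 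For $\eta$, the analogous vanishings $\Pic\bar\eta=0$ and $H^3(K,\oK(\eta)^*)=0$ hold, as recorded in the proof of Theorem \ref{Teo3}. Hence the kernel of $H^3(\eta,\G_m)\to H^3(\bar\eta,\G_m)$ injects into $H^1(K,\Br\bar\eta)$. Functoriality of the Hochschild--Serre spectral sequence along $\eta\to Q$ then gives a commutative square. In it, the restriction of $\partial\beta$ to $\eta$ lies in the algebraic part, and its image in $H^1(K,\Br\bar\eta)$ equals $\phi(\iota(\partial\beta))=\phi(\partial_{alg}(\beta))$. By injectivity, the restriction of $\partial\beta$ to $\eta$ vanishes if and only if $\phi\circ\partial_{alg}(\beta)=0$, which gives exactness in both directions.

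The main obstacle is the bookkeeping of naturality. One must check that the edge and filtration maps are compatible under restriction to $\eta$, which is a limit of open subsets: cohomology of $\eta$ is the colimit over dense opens, and the Hochschild--Serre spectral sequences are compatible with this colimit. One must also check that the isomorphism of Theorem \ref{Surj}(ii) is compatible with $\partial$, i.e.\ that specialization commutes with the connecting maps of the triangle \eqref{Triangle}. The latter follows because the pullback \eqref{Pullback} is induced by a morphism of exact triangles. We also need the hypotheses of Theorem \ref{Surj} for the fibration of Theorem \ref{Thm: fibrationnew}; these hold since rationally connected fibers give $R^1f_*\mu_\infty=R^2f_*\cO_X=0$.
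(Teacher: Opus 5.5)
Your proposal is correct and is essentially the paper's argument. The paper also compares the sequences for $X/Q$ and $X_\eta/\eta$ through the isomorphism $\Br_{\text{hor}}(X/Q)\cong\Br_{\text{hor}}(X_\eta/\eta)$ of Theorem \ref{Surj}(ii), and uses the Hochschild--Serre injection $H^3(\eta,\G_m)_{alg}\hookrightarrow H^1(K,\Br\bar\eta)$ to get the exact row $\Br X_\eta\to\Br_{\text{hor}}(X_\eta/\eta)'\to H^1(K,\Br\bar\eta)$; it then concludes by the same diagram chase that you carry out directly.
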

    \begin{proof}
        The exact sequence $\Br X \to[r] \Br_{\text{hor}}(X/Q) \to[\partial] H^3(Q, \G_m)$ (see \eqref{EqLES}) induces an exact sequence
        \begin{equation}\label{Aseq}
           \Br X \to[r] \Br_{\text{hor}}(X/Q)' \to[\partial_{alg}] H^1(K,\Br \bar Q), 
        \end{equation}
        since $\Ker (\partial|_{\Br_{\text{hor}}(X/Q)'})=\Ker (\partial_{alg})$. 

        \vskip1mm
        
        We may give analogous definitions for $X_{\eta}/\eta$ as the ones we gave right before the lemma for $X/Q$. More precisely, let
        \begin{align*}
            &\Br_{\text{hor}}(X_{\eta}/\eta)':=\Ker (\bar \partial: \Br_{\text{hor}}(X_{\eta}/\eta) \to H^3(\bar \eta,\G_m)^{\Gamma_K} ),\\
            &H^3(\eta,\G_m)_{alg} := \Ker(H^3(\eta,\G_m) \to H^3(\bar \eta,\G_m)^{\Gamma_K}),\\
            & \partial_{alg}:= \partial \circ i:\Br_{\text{hor}}(X_{\eta}/\eta)' \to[\partial] H^3(\eta,\G_m)_{alg} \xhookrightarrow{i} H^1(K,\Br \bar \eta),
        \end{align*}
        where $i: H^3(\eta,\G_m)_{alg} \hookrightarrow H^1(K,\Br \bar \eta)$ denotes the inclusion induced by the Hochschild--Serre spectral sequence, using the vanishings $\Pic \bar \eta=H^3(K,\oK(\eta)^*)=0$. With these definitions, we have an analogous sequence to \eqref{Aseq} for $X_{\eta}/\eta$. Confronting this new sequence with \eqref{Aseq}, we get a natural commutative diagram with exact rows
        \[
        \begin{tikzcd}
        \Br X \arrow[d] \arrow[r]              & \Br_{\text{hor}}(X/Q)' \arrow[d, hook] \arrow[r, "\partial_{alg}"] & {H^1(K,\Br \bar Q)} \arrow[d, "\phi"] \\
        \Br X_{\eta} \arrow[r] \arrow[ru, "r"] & \Br_{\text{hor}}(X_{\eta}/\eta)' \arrow[r, "\partial_{alg}"]       & {H^1(K,\Br \bar \eta)} ,              
        \end{tikzcd}
        \]
        where we added the diagonal map $r$. The second vertical map is injective as it is a restriction of the natural isomorphism $\Br_{\text{hor}}(X/Q) \to[\cong] \Br_{\text{hor}}(X_{\eta}/\eta)$. A diagram chase now concludes the proof.
    \end{proof}

    \begin{lemma}\label{Lemma1}
        Assume $Q$ is Galois. Then $\partial_{alg}(\Br_{\text{hor}}(X/Q)')$ lies in the kernel of the restriction map $H^1(K, \Br \bar Q) \to H^1(E, \Br \bar Q)$ if and only if the complex $\Br X_{\eta} \to[r] \Br_{\text{hor}}(X/Q) \to[\bar \partial] H^3(\bar Q,\G_m)^{\Gamma_K}$ is exact.
    \end{lemma}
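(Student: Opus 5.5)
The plan is to turn exactness of the $r$–$\bar\partial$ complex into the statement $\Br_{\text{hor}}(X/Q)' = r(\Br X_{\eta})$, and then use Lemma \ref{Firstseq} to translate this into a condition on $\phi\circ\partial_{alg}$. By definition $\Br_{\text{hor}}(X/Q)'=\Ker\bar\partial$. By Lemma \ref{Prop612} we always have $r(\Br X_\eta)\subset \Br_{\text{hor}}(X/Q)'$, since $\bar Q$ is a torus. So exactness of $\Br X_{\eta} \to[r] \Br_{\text{hor}}(X/Q) \to[\bar \partial] H^3(\bar Q,\G_m)^{\Gamma_K}$ is equivalent to $r(\Br X_\eta)=\Br_{\text{hor}}(X/Q)'$. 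Lemma \ref{Firstseq} identifies $r(\Br X_\eta)$ with $\Ker(\phi\circ\partial_{alg})$. Hence exactness is equivalent to $\phi\circ\partial_{alg}$ vanishing identically on $\Br_{\text{hor}}(X/Q)'$, i.e.\ to
\[
\partial_{alg}(\Br_{\text{hor}}(X/Q)')\subset \Ker\bigl(\phi:H^1(K,\Br\bar Q)\to H^1(K,\Br\bar\eta)\bigr).
\]

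It then remains to show that $\Ker\phi$ equals $\Ker\bigl(H^1(K,\Br\bar Q)\to H^1(E,\Br\bar Q)\bigr)$, at least on the relevant image; proving the equality of kernels on all of $H^1(K,\Br\bar Q)$ suffices. For this I will use the commutative diagram \eqref{Diagra} from the proof of Proposition \ref{Prop65}.

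The inclusion $\Ker\phi\subset\Ker(\mathrm{res}_{K/E})$ comes from the bottom row. The composite $H^1(E,\Br\bar Q)\to H^1(E,\Br\bar\eta)$ is injective, because $\Br\bar Q\subset\Br\bar\eta$ is a split injection of $\Gamma_E$-modules by Gille--Pianzola, as shown there. So if $\phi(c)=0$, then the image of $\mathrm{res}(c)$ in $H^1(E,\Br\bar\eta)$ is $\mathrm{res}(\phi(c))=0$, and therefore $\mathrm{res}(c)=0$.

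The reverse inclusion $\Ker(\mathrm{res}_{K/E})\subset\Ker\phi$ is the main obstacle. Inflation–restriction identifies $\Ker(\mathrm{res}_{K/E})$ with $H^1(\Gamma,(\Br\bar Q)^{\Gamma_E})$, and $(\Br\bar Q)^{\Gamma_E}=(\Br\bar Q)[n]$ by Lemma \ref{Lem612}. I would show that this group dies in $H^1(K,\Br\bar\eta)$ by factoring through $(\Br\bar Q)_+$, using the map $H^1(\Gamma,(\Br\bar Q)[n])\to H^1(\Gamma,(\Br\bar Q)_+^{\Gamma_E})$, whose target vanishes by the proof of Proposition \ref{Lem5}. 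Concretely, a class in $\Ker(\mathrm{res}_{K/E})$ inflates from $H^1(\Gamma,(\Br\bar Q)^{\Gamma_E})$. Its image in $H^1(K,(\Br\bar Q)_+)$ therefore inflates from $H^1(\Gamma,(\Br\bar Q)_+^{\Gamma_E})=0$, so it is zero. Since $\phi$ factors as $H^1(K,\Br\bar Q)\to H^1(K,(\Br\bar Q)_+)\to H^1(K,\Br\bar\eta)$, the class lies in $\Ker\phi$.

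Combining the two inclusions gives $\Ker\phi=\Ker(\mathrm{res}_{K/E})$. By the first paragraph this yields the stated equivalence.
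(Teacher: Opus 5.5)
Your proposal is correct and follows the paper's proof. You reduce exactness to $\phi\circ\partial_{alg}=0$ via Lemma \ref{Firstseq}, then identify $\Ker\phi$ with the kernel of restriction to $E$ by a diagram chase on \eqref{Diagra}. The only cosmetic difference is in the inclusion $\Ker(\mathrm{res})\subset\Ker\phi$: you re-derive the injectivity of the middle restriction (Proposition \ref{Lem5}) from inflation and the vanishing of $H^1(\Gamma,(\Br\bar Q)_+^{\Gamma_E})$, rather than citing it directly.
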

    \begin{proof}
        A diagram chase on  \eqref{Diagra} reveals that 
        \[
        \Ker(H^1(K,\Br \bar Q) \to[\phi] H^1(K,\Br \bar \eta))=\Ker(H^1(K, \Br \bar Q) \to[res] H^1(E, \Br \bar Q)).
        \]
        Thus $\partial_{alg}(\Br_{\text{hor}}(X/Q)')$ lies in the kernel of $H^1(K, \Br \bar Q) \to[res] H^1(E, \Br \bar Q)$ if and only if $\phi \circ \partial_{alg}=0$, which, by Lemma \ref{Firstseq}, happens if and only if $r(\Br X_{\eta})=\Br_{\text{hor}}(X/Q)'$.
    \end{proof}

    For the next lemma, consider a field extension $E'/E$ with $E'/K$ Galois. For every $c \in E^*$, we have a corresponding cartesian diagram arising from Harpaz--Wittenberg base-change:
    \begin{equation}\label{HWbasechange}
	\begin{tikzcd}
	X' \arrow[d, "\psi_c"] \arrow[r, "f'"] & Q' \arrow[d, "\phi_c"] \\
	X \arrow[r, "f"]               & Q   .                  
	\end{tikzcd}
    \end{equation}
    
    \begin{lemma}\label{Lem1etrequarti}
        For each $c \in E^*$, the pullback $\phi_c^*:\Br_{\text{hor}}(X/Q)' \to \Br_{\text{hor}}(X'/Q')'$ is an isomorphism.
    \end{lemma}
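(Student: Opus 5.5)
The plan is to first prove that $\phi_c^*:\Br_{\text{hor}}(X/Q)\to\Br_{\text{hor}}(X'/Q')$ is an isomorphism on the full horizontal Brauer groups. Then I will check that it respects the kernels of $\bar\partial$.

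\textbf{Hypotheses.} Since $f$ is smooth and proper with rationally connected fibers, we have $H^1(X_{\bar q},\mu_\infty)=H^2(X_{\bar q},\cO)=0$ for all geometric points $\bar q$. Hence $R^1f_*\mu_\infty=R^2f_*\cO_X=0$, and the hypotheses of Theorem \ref{Surj} and Lemma \ref{Lem10} are satisfied for $f:X\to Q$.

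\textbf{Step 1: isomorphism on $\Br_{\text{hor}}$.} Lemma \ref{Lem10} gives a finite étale connected cover $Y'\to Q$ with the following property: for every regular integral $Z\to Q$ with $Z\times_QY'$ integral, the pullback on $\Br_{\text{hor}}$ is an isomorphism. I apply this with $Z=Q'$ and the morphism $\phi_c=c\cdot N_{E'/E}$. The space $Q'$ is a torus, hence smooth and integral.

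By the remark after Lemma \ref{Lem10}, it suffices that the fibers of $\phi_c$ are geometrically connected. Translation by $c$ is an isomorphism, so this reduces to the norm map $N:R_{E'/K}\G_m\to R_{E/K}\G_m$. Its kernel is $R_{E/K}(R^1_{E'/E}\G_m)$, which is a torus and therefore connected. The map $N$ is a surjective homomorphism of tori: on characters it corresponds to the injection $\Z[\Hom(E,\oK)]\to\Z[\Hom(E',\oK)]$ sending a basis element to the sum of the elements above it. The cokernel of this injection is torsion-free, which gives connected fibers. Since the base change $X'=X\times_QQ'$ is compatible with the pullback \eqref{Pullback}, Step 1 follows.

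\textbf{Step 2: compatibility with $\bar\partial$.} The exact triangle \eqref{Triangle} is natural with respect to the base change \eqref{HWbasechange}. Therefore $\partial$ commutes with pullback, and after passing to $\oK$ the square
\[
\Br_{\text{hor}}(X/Q)\to H^3(\bar Q,\G_m),\qquad \Br_{\text{hor}}(X'/Q')\to H^3(\bar Q',\G_m)
\]
commutes with the vertical maps $\phi_c^*$. I then show that $\phi_c^*:H^3(\bar Q,\G_m)\to H^3(\bar Q',\G_m)$ is injective.

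Over $\oK$, the norm map of split tori is split surjective, because the character cokernel is torsion-free. So $\bar Q'\cong\bar Q\times\bar T_0$, with $\phi_c$ becoming projection followed by a translation. The pullback along a morphism admitting a section is injective, which gives the claim.

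\textbf{Step 3: conclusion.} By Step 2, $\phi_c^*$ maps $\Br_{\text{hor}}(X/Q)'$ into $\Br_{\text{hor}}(X'/Q')'$, and it is injective there by Step 1. For surjectivity, take $b'\in\Br_{\text{hor}}(X'/Q')'$. By Step 1, $b'=\phi_c^*b$ for some $b\in\Br_{\text{hor}}(X/Q)$. Then $\phi_c^*\bar\partial(b)=\bar\partial(b')=0$, so $\bar\partial(b)=0$ by the injectivity in Step 2. Hence $b$ lies in the primed group.

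The main point needing care is Step 1: verifying that $Q'\times_QY'$ is integral, which rests on the connectedness of the kernel of the norm map. The injectivity in Step 2 is routine once the splitting over $\oK$ is noted.
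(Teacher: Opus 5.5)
Your proposal is correct and follows the paper's proof. You get the isomorphism on the full groups $\Br_{\text{hor}}$ from Lemma \ref{Lem10} (and the remark after it), using that the fibers of $\phi_c$ are geometrically connected. You get injectivity of $H^3(\bar Q,\G_m)\to H^3(\bar Q',\G_m)$ from a section of the split norm epimorphism over $\oK$, and you finish with the same diagram chase on the kernels of $\bar\partial$.
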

    \begin{proof}
		Consider the commutative diagram
		\begin{equation}\label{Diagrammon2}
		\begin{tikzcd}
		0 \arrow[r] & \Br_{\text{hor}}(X/Q)' \arrow[r] \arrow[d, "\phi_c^{*}"] & \Br_{\text{hor}}(X/Q) \arrow[r] \arrow[d, "\phi_c^{*}", "\cong"'] & {H^3(Q\otimes_k\ok,\G_m)} \arrow[d, "(\phi_c\otimes_k \ok)^{*}", hook] \\
		0 \arrow[r] & \Br_{\text{hor}}(X'/Q')' \arrow[r]                     & \Br_{\text{hor}}(X'/Q') \arrow[r]                     & {H^3(Q'\otimes_k\ok,\G_m)}                                          
		\end{tikzcd}
		\end{equation}
		The isomorphism in the middle follows from Lemma \ref{Lem10} (see also the sentence after the lemma), since the fibers of $\phi_c$ are geometrically integral, as they are forms of norm $1$ tori. Moreover, the pullback
		\begin{equation}\label{EqInjective}
		(\phi_c\otimes_k \ok)^{*}:H^3(Q \otimes_k \ok,\G_m) \to H^3(Q' \otimes_k \ok, \G_m)
		\end{equation}
		is injective, because $\phi_c \otimes_k \ok:Q' \otimes_k \ok \to Q \otimes_k \ok, x \mapsto c\cdot N_{E'\otimes_k \ok/E\otimes_k \ok}(x)$ is, after composing with the translation on $Q\otimes_k \ok$ by $c^{-1}$, a group-epimorphism of $\ok$-tori with connected kernel (the kernel is a norm $1$ torus), and all group-epimorphisms of $\ok$-tori with connected kernel have a section\footnote{By the duality between $\ok$-tori and free finitely generated $\Z$-modules, this is the dual of the fact that a saturated injection of free finitely generated $\Z$-modules admits a retraction.}. Such section provides a retraction of \eqref{EqInjective}.
		A diagram chase on \eqref{Diagrammon2} now proves the lemma.
    \end{proof}

    \begin{proof}[Proof of Proposition \ref{PrimaProp}]
        This follows by applying Harpaz--Wittenberg base-change (Theorem \ref{Prop:HWbasechange}) with any finite extension of fields $E'/E$ with $E'$ Galois over $K$, containing the roots of unity of order the exponent of $\Br_{\text{hor}}(X/Q)$, and splitting $\partial_{alg}(\Br_{\text{hor}}(X/Q)') \subset H^1(K,\Br \bar Q)$ (in the sense that $\partial_{alg}(\Br_{\text{hor}}(X/Q)')$ maps to zero under the restriction $H^1(K,\Br \bar Q) \to H^1(E',\Br \bar Q)$). Clearly $(R_1)$ and $(R_2)$ hold for $X' \to Q'$. 

        \vskip1mm

        Property $(R_3)$ holds as well for $X' \to Q'$ as Lemma \ref{Lem1etrequarti} and the naturality of $\partial_{alg}$ imply that $\partial_{alg}(\Br_{\text{hor}}(X'/Q')')$ is the image under $\phi_c^*:H^1(K,\Br \bar Q) \to H^1(K,\Br \bar Q')$ of $\partial_{alg}(\Br_{\text{hor}}(X/Q))$. This image restricts to zero in $H^1(E',\Br \bar Q')$ by construction, and thus Lemma \ref{Lemma1} gives $(R_3)$.
    \end{proof}

    \subsection{Reduction to $(R_1)-(R_3)$}

    We prove here:

    \begin{proposition}\label{SecondaProp}
        Assume Theorem \ref{Thm: fibrationnew} holds under $(R_1)-(R_4)$. Then Theorem \ref{Thm: fibrationnew} holds under $(R_1)-(R_3)$.
    \end{proposition}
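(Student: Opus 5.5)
We will deduce the case $(R_1)$–$(R_3)$ from the case $(R_1)$–$(R_4)$ by another application of Harpaz--Wittenberg base change (Theorem \ref{Prop:HWbasechange}), this time with a well-chosen extension $E'/E$. Let $f:X\to Q$ satisfy $(R_1)$–$(R_3)$. By Theorem \ref{Teo3}, $\Br X_\eta/f^*\Br\eta$ is finite (it injects into the finite group $\Br_{\text{hor}}(X/Q)$ modulo the image of constants), and every class has a representative in $(\Br X)_+$. So we may choose a finite subgroup $B\subset(\Br X)_+$ surjecting onto $\Br X_\eta/f^*\Br\eta$. Corollary \ref{Cor:ExistenceMultisection} then gives a finite pair $(L,m)$ and a multisection $s:Q^{L,m,(1)}\to X$ with $s^*B=0$.

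Next we take $E'$ to be a finite Galois extension of $K$ containing $E$, $L$ and $\mu_m$, with $n':=\max\{r:\mu_r\subset E'\}$ divisible by $m$. For each $c\in E^*$, form the cartesian square \eqref{HWbasechange}. We then check that $X'\to Q'$ satisfies $(R_1)$–$(R_4)$.

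\textbf{$(R_1)$.} This holds by construction, since $E'/K$ is Galois.

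\textbf{$(R_2)$.} By Lemma \ref{Lem10}, $\Br_{\text{hor}}(X'/Q')\cong\Br_{\text{hor}}(X/Q)$, because the fibres of $\phi_c$ are geometrically integral. The exponent of this group divides $n\mid n'$.

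\textbf{$(R_3)$.} We argue as in the proof of Proposition \ref{PrimaProp}. Lemma \ref{Lem1etrequarti} identifies $\Br_{\text{hor}}(X'/Q')'$ with $\Br_{\text{hor}}(X/Q)'$, and by $(R_3)$ for $X$ the latter equals $r(\Br X_\eta)$. Compatibility of $r$ with pullback gives $r(\Br X'_{\eta'})\supset\phi_c^*r(\Br X_\eta)$, so the complex for $X'/Q'$ is exact.

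\textbf{$(R_4)$.} This is the main obstacle. We must produce a multisection $s':Q'^{\oK,n',(1)}\to X'$ and a subgroup $B'\subset(\Br X')_+$ surjecting onto $\Br X'_{\eta'}/f'^*\Br\eta'$ with $s'^*B'=0$.

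The plan is to construct $s'$ as the fibre product of $s$ with the map $Q'^{\oK,n',(1)}\to Q^{L,m,(1)}$. This map is induced by $\phi_c$, which on characters sends $y^\sigma$ to $c^\sigma\prod_{\sigma'\mid\sigma}y'^{\sigma'}$. We need to check that it factors through $Q^{L,m,(1)}$. Concretely, for $\sigma\ne\iota$ the characters $c^\sigma\prod y'^{\sigma'}$ must admit $m$-th roots on $Q'^{\oK,n',(1)}$. The $y'^{\sigma'}$ with $\sigma'\neq\iota'$ are $n'$-divisible there, and $c^\sigma$ becomes an $m$-th power over $\oK$. The one delicate point is that, over $\sigma\ne\iota$, no factor $y'^{\iota'}$ appears; this holds provided the pointing $\iota'$ of $E'$ is chosen to lie above $\iota$.

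For $B'$, we take $\psi_c^*B$. This is enough because $\Br X'_{\eta'}/f'^*\Br\eta'$ is generated by $\psi_c^*$ of $\Br X_\eta$: this follows from the isomorphism $\Br_{\text{hor}}(X'/Q')'\cong\Br_{\text{hor}}(X/Q)'$ together with the injectivity of $\Br X'_{\eta'}/f'^*\Br\eta'\to\Br_{\text{hor}}$. Pullbacks of the residue conditions defining $(\Br X)_+$ land in $(\Br X')_+$: the divisors $Z'_{\tau'}$ map to $Z_\tau$ or to non-divisorial loci, and the splitting by $y^{1/n}$ pulls back to splitting by powers of the $y'$. Finally $s'^*\psi_c^*B$ factors through $s^*B=0$. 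Once $(R_1)$–$(R_4)$ are verified for every $c$, Theorem \ref{Prop:HWbasechange}, in its $\mathrm{dir}$ variant, yields Theorem \ref{Thm: fibrationnew} for $X$.
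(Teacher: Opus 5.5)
Your overall strategy is the paper's: Harpaz--Wittenberg base change along a Galois $E'\supset E$ containing $\mu_m$, with $s'=(s\circ\tilde\phi_c,p')$. Your checks of $(R_1)$--$(R_3)$, of the existence of $s'$, and of the fact that $\psi_c^*\Br X_\eta$ generates $\Br X'_{\eta'}/(f')^*\Br\eta'$ are fine.

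The gap is the assertion $\psi_c^*B\subset(\Br X')_+$, which fails in general. Recall that $(\Br X')_+$ consists of classes on $X'$ that are unramified \emph{away from} the divisors $(f')^{-1}(Z_{\tau'})$, $\tau'\in\Gamma'[2]^{**}$. Here $Z_{\tau'}$ is geometrically the orbit of $\{z=z^{\tau'}\}$. Suppose $b\in B$ has residue $a_\tau\hat y\neq 0$ along $f^{-1}(Z_\tau)$. Then $\psi_c^*b$ has the pulled-back, still nonzero, residue along $\psi_c^{-1}(f^{-1}(Z_\tau))=(f')^{-1}(\phi_c^{-1}(Z_\tau))$. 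Geometrically, $\phi_c^{-1}(Z_\tau)$ contains the hypersurface $c\prod_{\sigma'\mapsto\mathrm{id}}z^{\sigma'}=c^{\tau}\prod_{\sigma'\mapsto\tau}z^{\sigma'}$, which is not one of the $Z_{\tau'}$ (for instance whenever $[E':E]\geq 2$). So the problem is not the shape of the residues along the $Z_{\tau'}$; your remark about where the $Z_{\tau'}$ map concerns the wrong divisors, and in fact $\phi_c(Z_{\tau'})$ is typically dense in $Q$. The problem is that $\psi_c^*b$ ramifies along a divisor where $(\Br X')_+$ allows no ramification at all. This cannot be waved away, since Lemma \ref{PropRes} and Proposition \ref{Proprop}, which $(R_4)$ is meant to feed, only apply to classes in $(\Br X')_+$.

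The missing idea is to correct $\psi_c^*b$ by a vertical class constructed on $Q'$ itself, as in Lemma \ref{ResLemma}. For each $\tau\in\Gamma[2]^{**}$ and $a\in(\Z/n_\tau\Z)(-1)$ there is $b_{\tau,a}\in\Br\eta'$ with the following residues:
exactly $a\hat y$ along $\phi_c^{-1}(Z_\tau)$;
multiples of $\hat z$ along the $Z_{\tau'}$, which are permitted;
and otherwise only along the boundary divisor $D$ of $(Q')^{\mathrm{aff}}$, which does not meet $Q'$.
It is obtained from an explicit Galois-invariant combination of the symbols $(z^{g_2}\cup(z^{g_1}-z^{g_2}))_{n_\tau}$, $(z^{g_2}\cup z^{g_1})_{n_\tau}$ and $(y^\tau\cup(y-y^\tau))_{n_\tau}$, descended via Lemma \ref{NextLemma} and adjusted by algebraic residues. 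Set $b_0:=\sum_\tau b_{\tau,a_\tau}$ and $b':=\psi_c^*b-(f')^*b_0$. Then $b'\in(\Br X')_+$ and has the same class modulo $(f')^*\Br\eta'$.

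However, $(s')^*b'=0$ no longer follows from $s^*b=0$ alone. You must also show $(s')^*(f')^*b_0=0$, i.e.\ that $b_0$ dies on $(Q')^{\oK,m,(1)}$. This follows from the explicit form of the geometric classes $\bar b_\tau$ together with $n_\tau\mid m$ (enlarging $m$ if necessary). This is the content of Lemma \ref{Lemma2}. With $B'$ the subgroup generated by these $b'$, the rest of your argument goes through.
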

    
    For the next lemma, consider a field extension $E'/E$ with $E'/K$ Galois. As before we have, for each $c \in E^*$, a corresponding Harpaz--Wittenberg base-change diagram \eqref{HWbasechange}. We let $\Gamma':=\Gal(E'/K)$, and
    \begin{align*}
        &Q'=:\Spec (\oK[(z^{\sigma'})^{\pm1}]_{\sigma' \in \Gamma'})^{\Gamma_K},\\
        &(Q')^{\text{aff}}:=\Spec (\oK[(z^{\sigma'})]_{\sigma' \in \Gamma'})^{\Gamma_K} \cong \A^{d'}_K,\, d':=[E':K],\\
        &D := \left\{\prod_{\sigma' \in \Gamma'} z^{\sigma'}=0\right\} \subset (Q')^{\text{aff}}.
    \end{align*}
    Note that $D$ is an integral divisor, but not necessarily geometrically integral. The function field of $D$ coincides with the function field $E'(z^{\sigma'})_{\sigma' \in (\Gamma')^*}$ of the divisor $\{z=0\}$ of $(Q')^{\text{aff}}\otimes_KE'$. We shall use the induced identification $K(D)=E'(z^{\sigma'})_{\sigma' \in (\Gamma')^*}$ below.
    
    \begin{lemma}\label{ResLemma}
        For each $\tau \in \Gamma[2]^{**},\,a \in (\Z/n_{\tau}\Z)(-1)$, there exists a Brauer element $b_{\tau,a} \in (\Br \eta')$ with
        \begin{align*}
            \partial_{\phi_c^{-1}(Z_{\tau})}(b_{\tau,a}) &=a \cdot  \hat{ y }\in H^1(k(\phi_c^{-1}(Z_{\tau})),\Z/n_{\tau}\Z), \\
            \partial_{Z_{\tau'}}(b_{\tau,a}) &=a \cdot  \hat z \in H^1(k(Z_{\tau'}),\Z/n_{\tau}\Z), \text{ for all } \tau' \in \Gamma'[2]^{**},\\
            \partial_D(b_{\tau,a})&=a \cdot \sum_{\sigma' \in \Gamma',\, \sigma' \mapsto \sigma} \widehat{z^{\sigma'}} \in H^1(K(D),\Z/n_{\tau}\Z),
        \end{align*}
        and no other residues on $(Q')^{\text{aff}}$. The element $b_{\tau,a}$ is unique up to constant elements (i.e.\ elements in $\im \Br K$).
    \end{lemma}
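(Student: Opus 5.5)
The plan is to construct $b_{\tau,a}$ explicitly as a corestriction of a cyclic algebra, and to get uniqueness from purity on affine space. Work on $(Q')^{\text{aff}}\cong \A^{d'}_K$, whose function field is that of $\eta'$.

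\emph{Uniqueness.} If two elements of $\Br \eta'$ have the same residues at every divisor of $(Q')^{\text{aff}}$, their difference lies in $\Br \A^{d'}_K$ by purity. Since $K$ has characteristic $0$, $\Br \A^{d'}_K=\Br K$. Hence $b_{\tau,a}$ is unique modulo $\im \Br K$.

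\emph{Existence.} The candidate imitates the proof of Lemma \ref{PropRes}. Pass to $E^{\tau}$ and set $w:=\phi_c^*(y)=c\cdot N_{E'/E}(z)$, viewed as a function on $(Q')^{\text{aff}}\otimes_K E^{\tau}$. Put
\[
b_{\tau,a}:=a\cdot \cores_{E^{\tau}/K}\big(w^{\tau}-w,\; w\big)_{n_{\tau}}.
\]
This is the pullback along $\phi_c$ of the element $a\cdot\cores(y^{\tau}-y,y)_{n_{\tau}}$ of $\Br \eta$. The latter has residue $a\hat y$ along $Z_{\tau}$, in the same way as the classes $b_{\sigma_1,\sigma_2}$ in Lemma \ref{Lem612}. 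It also has residues supported on the coordinate hyperplanes, which are invisible on $Q$ itself.

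\emph{Computing the residues.} I would go through the divisors of $(Q')^{\text{aff}}$ one at a time.
\begin{enumerate}
\item Along $\phi_c^{-1}(Z_{\tau})=\{w=w^{\tau}\}$: the residue is $a\hat y$ by functoriality of residues under pullback \cite[Theorem 3.7.5]{BGbook}, since $y$ pulls back to $w$.
\item Along the divisors $Z_{\tau'}$ of $Q'$, $\tau'\in\Gamma'[2]^{**}$: these lie inside $\phi_c^{-1}(Z_{\tau})$ precisely when $\tau'$ maps to $\tau$. Restricted to $\{z=z^{\tau'}\}$, the function $w$ is $c\prod_{\sigma'\mapsto \id} z^{\sigma'}$. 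So the residue $a\hat y$ can be rewritten in terms of $\hat z$, using that $n_{\tau}$-th roots of $c$ and of the conjugates of $z$ can be absorbed on that divisor.
\item Along $D$: on the component $\{z^{\sigma'}=0\}$ the function $w$ vanishes to order $1$. By the tame symbol, $(w^{\tau}-w,w)$ then has residue the Kummer class of $w^{\tau}$. Restricted to $D$ this is $\sum_{\sigma'\mapsto\sigma}\widehat{z^{\sigma'}}$ up to constants, which gives the displayed formula once the identification $K(D)=E'(z^{\sigma'})$ is used.
\item Along every other divisor: there are no residues, because the only other divisors where $w$ or $w^{\tau}-w$ can vanish or have poles are of the types already treated.
\end{enumerate}

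\emph{Main obstacle.} The hard part is the bookkeeping of residues along $D$ and along the $Z_{\tau'}$. Constant factors ($c$, and the choice of components) may produce extra constant residue characters. I would correct these by adding cup products $(\text{const},\,\text{coordinate})$, which change residues only by constants along coordinate divisors. Alternatively, I would restate the identities modulo constant characters, which is all that uniqueness up to $\im\Br K$ requires.
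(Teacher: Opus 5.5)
Your uniqueness argument (purity on $(Q')^{\text{aff}}\cong\A^{d'}_K$ together with $\Br\A^{d'}_K=\Br K$) is correct. The existence part, however, has a genuine gap.

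\emph{The candidate is not defined.} The function $w=c\,N_{E'/E}(z)=c\prod_{\sigma'\mapsto\mathrm{id}}z^{\sigma'}$ is defined over $E$, not over $E^\tau$. Indeed, an element of $\Gamma_{E^\tau}$ restricting to $\tau$ sends $w$ to $w^\tau$, and it sends $(w^\tau-w,w)_{n_\tau}$ to $(w-w^\tau,w^\tau)_{n_\tau}$, which differs from it by $\pm(w^\tau,w)_{n_\tau}$. So $\cores_{E^\tau/K}$ cannot be applied. In Lemma \ref{PropRes} a corestriction from $E^\tau$ exists only semi-locally, through an auxiliary $E^\tau$-unit $\tilde y_\tau$ agreeing with $y$ on the divisor; there is no global such function. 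The meaningful variant $a\cdot\cores_{E/K}(w^\tau-w,w)_{n_\tau}$ has residue $2a\hat y$, not $a\hat y$, along $\phi_c^{-1}(Z_\tau)$. This is because each geometric component receives the same residue from two of the conjugates. Since $n_\tau$ is even, $2a\neq a$ whenever $a\neq 0$.

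\emph{No pullback $\phi_c^*\beta$ with $\beta\in\Br\eta$ can work in general.} Residues are detected after pullback, since $\phi_c$ has geometrically integral fibres. So such a $\beta$ would have residue $a\hat y$ along $Z_\tau$ and no other residue on $Q$. Over $\oK$ it would therefore equal $a\,(y^\tau\cup(y-y^\tau))_{n_\tau}$, plus conjugate terms, plus some $b_0\in\Br\bar Q$. Invariance under a lift $\gamma\in\Gamma_{E^\tau}$ of $\tau$ then forces $\gamma b_0-b_0$ to absorb the defect $a\,(y^\tau\cup y)_{n_\tau}$.

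Take $K=\Q$, $E=\Q(i)$, $n_\tau=2$, $a=1$. Here $b_0$ is $\Gamma_E$-invariant, so it lies in $\Br\bar Q[4]$ by Lemma \ref{Lem612}. On this group $\gamma$ acts trivially: by $-1$ on $\wedge^2\widehat Q$ and by $-1$ on $(\Z/4\Z)(-1)$. But $(y^\tau\cup y)_2\neq 0$, so the defect cannot be absorbed. This is the $H^1(C_2,-)$ obstruction of Lemma \ref{Lem7} behind $\Gamma[2]^{**}$, and it is precisely why the lemma lives on $Q'$ and allows extra residues along the $Z_{\tau'}$.

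\emph{Your item 2 is also wrong.} $Z_{\tau'}=\{z=z^{\tau'}\}$ is an irreducible divisor of $Q'$, in general distinct from $\phi_c^{-1}(Z_\tau)$: the equation $z=z^{\tau'}$ does not force $c\,N(z)=c^\tau N(z)^\tau$. Since $\phi_c^*\beta\in\Br(Q'\setminus\phi_c^{-1}(Z_\tau))$, it is unramified along $Z_{\tau'}$, so the residues $a\hat z$ are never produced.

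\emph{The missing idea} is to build the class geometrically on $\bar Q'$, where the defect can be cancelled.

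\begin{itemize}
\item Since $y=c\prod_{g\mapsto\mathrm{id}}z^g$ and $y^\tau=c^\tau\prod_{g\mapsto\tau}z^g$, bilinearity over $\oK$ gives $(y^\tau\cup y)_{n_\tau}=\sum_{g_1\mapsto\mathrm{id},\,g_2\mapsto\tau}(z^{g_2}\cup z^{g_1})_{n_\tau}$.
\item The paper adds to $-(y^\tau\cup(y-y^\tau))_{n_\tau}$ the classes $(z^{g_2}\cup(z^{g_1}-z^{g_2}))_{n_\tau}$ for pairs with $g_1g_2^{-1}\in\Gamma'[2]$, and $(z^{g_2}\cup z^{g_1})_{n_\tau}$ for a suitable half of the remaining pairs.
\item The result $\bar b_\tau$ is $\Gamma'_\tau$-invariant. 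Its extra residues are Kummer classes of the $z$'s along components of the $Z_{\tau'}$, and classes along $D$; this is where those residues in the statement come from.
\item One then sums the conjugates over $\Gamma'_\tau\backslash\Gamma'$ and descends to $\Br\eta'$ via Lemma \ref{NextLemma}, using $H^3(K,\oK(t_1,\ldots,t_r)^*)=0$.
\item Finally, the remaining discrepancy of residues, which lies in $\bigoplus_i H^1(\oK(D_i)/K(D_i),\qz)$, is removed by adding classes $\cores_{K_i/K}(\chi_i,f_i)$.
\end{itemize}

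Your proposed fix by symbols (constant, coordinate) does not reach this discrepancy. Such symbols only affect coordinate hyperplanes, whereas the discrepancy can sit on $\phi_c^{-1}(Z_\tau)$ or on the $Z_{\tau'}$, whose constant fields are larger than $K$. Stating residues only modulo constant characters is not enough either. Lemma \ref{Lemma2} needs exact cancellation along $\psi_c^{-1}(Z'_\tau)$, and uniqueness up to $\im\Br K$ concerns classes with equal residues, not residues modulo constants.
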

    \begin{proof}
        Let $\Gamma'_{\tau}:= \{g \in \Gamma' : g|_{E}\in \{1,\tau\}\}$.
        The residues in the statement are realized geometrically (i.e.\ after base-changing to $\oK$) by the following element of $\Br \bar \eta'$:
        \begin{align}
         &\bar b_{\tau,a}:=\sum_{\sigma' \in \Gamma'_{\tau} \s \Gamma'}(a \cdot \bar  b_{\tau})^{\sigma'}, \text{ where}\label{LAbel}\\ 
        \notag &\bar b_{\tau}:=\left( \sum_{\substack{g_1,g_2: g_1g_2^{-1} \in \Gamma'[2]\\ g_1 \mapsto \id,\, g_2 \mapsto \tau}}(z^{g_2} \cup (z^{g_1}-z^{g_2}))_{n_{\tau}} + \sum_{\substack{g_1,g_2: g_1g_2^{-1} \in \Gamma'_+\\ g_1 \mapsto \id,\, g_2 \mapsto \tau}}(z^{g_2} \cup z^{g_1})_{n_{\tau}}-(y^{\tau} \cup (y-y^{\tau}))_{n_{\tau}}\right),
        \end{align}
        where the first sum is well-defined because $\bar b_{\tau}^{\sigma'}=\bar b_{\tau}$ for all $\sigma' \in \Gamma'_{\tau}$. Although this last identity is clear when $\sigma' \mapsto \id$, it is not immediate when $\sigma'\mapsto \tau$. However, we have
        \[
        \bar b_{\tau}^{\tau}=\left( \sum_{\substack{g_1,g_2: g_1g_2^{-1} \in \Gamma'[2]\\ g_1 \mapsto \id,\, g_2 \mapsto \tau}}(z^{g_1} \cup (z^{g_2}-z^{g_1}))_{n_{\tau}} + \sum_{\substack{g_1,g_2: g_1g_2^{-1} \in \Gamma'_-\\ g_1 \mapsto \id,\, g_2 \mapsto \tau}}(z^{g_2} \cup z^{g_1})_{n_{\tau}}- (y \cup (y^{\tau}-y))_{n_{\tau}}\right),
        \]
        and, using the identities
        \begin{align*}
            (z^{g_2} \cup (z^{g_1}-z^{g_2}))_{n_{\tau}}-(z^{g_1} \cup (z^{g_2}-z^{g_1}))_{n_{\tau}}&=(z^{g_2}\cup z^{g_1})_{n_{\tau}},\\
            (y^{\tau} \cup (y-y^{\tau}))_{n_{\tau}}-(y \cup (y^{\tau}-y))_{n_{\tau}} &= (y^{\tau} \cup y)_{n_{\tau}},
        \end{align*}
        we see that
        \begin{align*}
            \bar b_{\tau}-\bar b_{\tau}^{\tau} &= \left( \sum_{\substack{g_1,g_2: g_1g_2^{-1} \in \Gamma'[2]\\ g_1 \mapsto \id,\, g_2 \mapsto \tau}}(z^{g_2} \cup z^{g_1})_{n_{\tau}} + \sum_{\substack{g_1,g_2: g_1g_2^{-1} \in \Gamma'_+ \sqcup\Gamma'_- \\ g_1 \mapsto \id,\, g_2 \mapsto \tau}}(z^{g_2} \cup z^{g_1})_{n_{\tau}}- (y^{\tau} \cup y)_{n_{\tau}}\right)\\
            &= \left( \sum_{\substack{g_1,g_2: g_1g_2^{-1} \in \Gamma'\\ g_1 \mapsto \id,\, g_2 \mapsto \tau}}(z^{g_2} \cup z^{g_1})_{n_{\tau}} - \left(\left(\prod_{\substack{g_2 \in \Gamma',\\ g_2 \mapsto \tau}}z^{g}\right) \cup \left(\prod_{\substack{g_1 \in \Gamma',\\ g_1 \mapsto \id}}z^{g}\right)\right)_{n_{\tau}}\right)=0,
        \end{align*}
        proving that $\bar b'_{\tau}$ is $\Gamma'_{\tau}$-invariant as wished. Thus, by construction, $\bar b_{\tau,a}$ is $\Gamma'$-invariant and, a fortiori, $\Gamma_K$-invariant. So, by Lemma \ref{NextLemma} below, there exists $b_{\tau,a}' \in \Br \eta'$ mapping to $\bar b_{\tau,a}$ under $\Br \eta' \to \Br \bar \eta'$. 

        \vskip1mm
        
        The residues of $b_{\tau,a}'$ on $(Q')^{\text{aff}}$ thus coincide with those of the statement after base-change to $\oK$. Therefore, the difference in 
        \[
        \operatorname{Res}(\A^{d'}_K):=\bigoplus_{\substack{D \subset \A^{d'}_K \\  \text{irrreducible divisor}}}H^1(K(D),\qz)
        \]
        of the total residue $\partial_{\text{tot}}(b_{\tau,a}')$ of $b_{\tau,a}'$ and the element $r \in \operatorname{Res}(\A^{d'}_K)$ corresponding to the collection in the statement is algebraic, i.e.\ this difference lies in $\operatorname{Res}_{alg}(\A^{d'}_K):=\bigoplus_{D} H^1(\oK(D)/K(D),\qz)$. All elements of $\operatorname{Res}_{alg}(\A^{d'}_K)$ are total residues of elements of the algebraic Brauer group of $K(t_1,\ldots,t_d)$. Indeed, each element of $\operatorname{Res}_{alg}(\A^{d'}_K)$ corresponds to a collection of cyclic characters $\chi_i:\Gal(\oK(D_i)/K(D_i))\to \qz$ on finitely many divisors $D_1,\ldots,D_r$, and then it is the total residue of $\sum_i \cores_{K_i/K}(\chi_i,f_i)$, where $K_i:=\oK \cap K(D_i)$ and $f_i \in K_i[t_1,\ldots,t_d]$ is a uniformizer for a geometrically irreducible component of $D_i \otimes_K K_i$. 

        \vskip1mm

        From the discussion above, we have $r-\partial_{\text{tot}}(b_{\tau,a}')=\partial_{\text{tot}}(b)$ for some $b \in \Br \eta'$. Letting $b_{\tau,a}:=b_{\tau,a}'+b$ we get the desired Brauer element.
    \end{proof}

    \begin{lemma}\label{NextLemma}
        The map $\Br K(t_1,\ldots,t_r) \to (\Br \oK(t_1,\ldots,t_r))^{\Gamma_K}$ is surjective for all $r$.
    \end{lemma}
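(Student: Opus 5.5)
The plan is to use the Hochschild--Serre spectral sequence for the generic point $\eta_r=\Spec K(t_1,\ldots,t_r)$ with respect to $\Gamma_K$:
\[
H^p(K,H^q(\oK(t_1,\ldots,t_r),\G_m)) \Rightarrow H^{p+q}(K(t_1,\ldots,t_r),\G_m).
\]
Write $F:=K(t_1,\ldots,t_r)$ and $\bar F:=\oK(t_1,\ldots,t_r)$. Since $\Pic$ of a field vanishes, $E_2^{p,1}=0$. The obstruction to surjectivity of $\Br F \to (\Br \bar F)^{\Gamma_K}=E_2^{0,2}$ is then the differential $d_2^{0,2}:E_2^{0,2}\to E_2^{2,1}=0$, followed by $d_3^{0,2}:E_3^{0,2}\to E_3^{3,0}$, a subquotient of $H^3(K,\bar F^*)$. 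It therefore suffices to show that $H^3(K,\bar F^*)=0$, or at least that the relevant $d_3$ vanishes.

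To establish $H^3(K,\bar F^*)=0$, use the exact sequence of $\Gamma_K$-modules
\[
1\to \oK^*\to \bar F^* \to \bar F^*/\oK^* \to 1 .
\]
Unique factorization in $\oK[t_1,\ldots,t_r]$ identifies $\bar F^*/\oK^*$ with the free abelian group on monic irreducible polynomials over $\oK$. This is a permutation module, namely a direct sum of $\Ind_{\Gamma_{K_i}}^{\Gamma_K}\Z$ over $\Gamma_K$-orbits of irreducibles, with each stabiliser open, of the form $\Gamma_{K_i}$ for a finite extension $K_i$ (the field of definition of the polynomial). By Shapiro's lemma, and since Galois cohomology commutes with direct sums, one gets $H^3(K,\bar F^*/\oK^*)=\bigoplus_i H^3(K_i,\Z)\cong\bigoplus_i H^2(K_i,\qz)$. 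For number fields this equals $\bigoplus_i H^2(K_i,\qz)$, which vanishes for totally imaginary $K_i$ but in general equals a sum of $\Z/2$ contributions from real places. This is the delicate point. Note that the paper itself already asserts $H^3(K,\oK(\eta)^*)=0$ in the proof of Theorem \ref{Teo3}, so I would simply invoke that vanishing; in any case $H^3(K,\oK^*)=0$ by \cite[VII.11.4]{CasselsFrohlich}.

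If the direct appeal is unjustified, a cleaner route is available: rather than showing $H^3(K,\bar F^*)=0$, show that $d_3$ vanishes because the differential is compatible with a $K$-point. For $r\ge1$, $F$ is the function field of $\A^r_K$, and the spectral sequence for $\A^r_K$ maps to that for $F$. Alternatively, proceed by induction on $r$ using the Faddeev exact sequence
\[
0\to \Br K(t_1,\ldots,t_{r-1}) \to \Br K(t_1,\ldots,t_r) \to \bigoplus_{P} H^1(K_P,\qz) \to H^1(K(t_1,\ldots,t_{r-1}),\qz) \to 0,
\]
with $K_P$ the residue field of the closed point $P$ of $\A^1_{K(t_1,\ldots,t_{r-1})}$. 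Compare it with the analogous sequence over $\oK$. Given an invariant class $\bar\beta$, its residues over $\oK$ form a $\Gamma_K$-invariant collection. Lift them to residues over $K$, using that $\bar F^*/\oK^*$ is a permutation module as above: each orbit of divisors contributes a character on the corresponding residue field, obtained by restriction in the manner of the end of the proof of Lemma \ref{ResLemma}, with cyclic algebras $\cores(\chi_i,f_i)$. Use the Faddeev sequence over $K$ to realise these residues by some $\beta'\in\Br F$. Then $\bar\beta-\beta'_{\oK}$ is unramified in $t_r$, hence comes from $\Br \oK(t_1,\ldots,t_{r-1})^{\Gamma_K}$, and induction applies. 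The base case $r=0$ is trivial since $\Br\oK=0$.

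The main obstacle is the residue-lifting step, equivalently the vanishing of $d_3^{0,2}$. Specifically, it must be checked that a $\Gamma_K$-invariant geometric residue character at a geometric divisor descends to a character of $K(D_i)$ whose pullback agrees. This requires the Faddeev sequence's surjectivity onto the corestriction-compatible part, and is the point where the number field hypothesis ($H^3(K,\oK^*)=0$) enters.
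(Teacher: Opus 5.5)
Your first paragraph is exactly the paper's argument. Hochschild--Serre and Hilbert 90 put the cokernel inside $H^3(K,\oK(t_1,\ldots,t_r)^*)$, and the paper concludes from the vanishing of this group for a number field, citing Harari's argument for $r=1$.

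The gap is that you never establish this vanishing. Your exact sequence $1\to\oK^*\to\bar F^*\to\bar F^*/\oK^*\to1$, with $\bar F^*/\oK^*$ a permutation module, is the right tool. But you then claim that $H^3(K_i,\Z)\cong H^2(K_i,\qz)$ can be a nonzero sum of $\Z/2$'s from real places. You then fall back on quoting the unproved sentence in the proof of Theorem \ref{Teo3}, which amounts to assuming the lemma.

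The claim is false. By Tate's theorem, $H^2(k,\qz)=0$ for every number field $k$, real places included. Locally, $H^2(\R,\qz)=(\qz)/2(\qz)=0$ because $\qz$ is divisible; equivalently $H^3(\R,\Z)=\Hom(\Z/2,\Z)=0$, and the $\Z/2$'s sit in even degrees. With this, $H^3(K,\bar F^*/\oK^*)=\bigoplus_i H^3(K_i,\Z)=0$ and $H^3(K,\oK^*)=0$ force $H^3(K,\bar F^*)=0$, and your first route is complete. The paper itself uses $H^3(E,\Z)=0$ in the proof of Theorem \ref{Teo3}.

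The Faddeev fallback does not get around this. Take a closed point $P$ with residue field $k_P$, and let $K'_P$ be the algebraic closure of $K$ in $k_P$. The five-term sequence for $\Gamma_{k_P}\to\Gamma_{K'_P}$ shows that a $\Gamma_{K'_P}$-invariant class in $H^1(k_P\oK,\qz)$ lifts to $H^1(k_P,\qz)$ exactly when its image in $H^2(K'_P,\qz)$ vanishes. So the step you single out as the main obstacle needs Tate's theorem again, not $H^3(K,\oK^*)=0$ as you state. Neither the permutation structure of divisors nor the $\cores(\chi_i,f_i)$ construction from Lemma \ref{ResLemma} supplies it, since that construction only handles residues already defined over $K(D_i)$.

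You would also need to arrange $\sum_P\cores(\chi_P)=0$ before applying Faddeev over $K$, for instance by correcting at $t_r=0$ by a constant character. Finally, the suggested comparison with $\A^r_K$ yields nothing, since $\Br\A^r_{\oK}=0$.
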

    \begin{proof}
        By the Hochschild--Serre spectral sequence $H^i(K,H^j(\oK(t_1,\ldots,t_r),\G_m)) \linebreak \Rightarrow  H^{i+j}(K(t_1,\ldots,t_r),\G_m)$ and the vanishing $H^1(\oK(t_1,\ldots,t_r),\G_m)=0$ by Hilbert 90, the cokernel of the map injects in $H^3(K,\oK(t_1,\ldots,t_r)^*)$. However, this last group is $0$ because $K$ is a number field (for the case $r=1$ see e.g. \cite[p.\ 241]{Harari94}, whose argument straightforwardly generalizes to all $r$). 
    \end{proof}

    \begin{lemma}\label{Lemma2}
        If, for some natural number $m$, $f$ admits a multi-section $s:Q^{\oK,m,(1)} \to X$, then, for all $c \in E^*$, $f'$ admits a multi-section $s':(Q')^{\oK,m,(1)} \to X'$. Moreover, for each $b \in (\Br X)_+$ such that $s^*b=0$, we have, for each $c \in E^*$, $\psi_c^*(b)=b'+(f')^*b_0$ for some $b' \in (\Br X')_+$ such that $(s')^*(b')=0$ and $b_0 \in \Br \eta'$.
    \end{lemma}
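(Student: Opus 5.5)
The multi-section for $f'$ is obtained by lifting $Q'$-covers through the norm map. The morphism $\phi_c = c\cdot N_{E'/E}\colon Q'\to Q$ is, over $\oK$, given on characters by $y^{\sigma}\mapsto c^{\sigma}\prod_{\sigma'\mapsto\sigma} z^{\sigma'}$. Fix the pointing $\iota'$ of $E'$ lying over the pointing $\iota$ of $E$.

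\textbf{Step 1: the covering map $\tilde\phi_c$.} Over $\oK$ we define $\tilde\phi_c\colon (Q')^{\oK,m,(1)}\to Q^{\oK,m,(1)}$ by sending $y^{\iota}\mapsto c^{\iota}\prod_{\sigma'\mapsto\iota}z^{\sigma'}$ and, for $\sigma\neq\iota$,
\[
(y^{\sigma})^{1/m}\mapsto (c^{\sigma})^{1/m}\prod_{\sigma'\mapsto \sigma}(z^{\sigma'})^{1/m},
\]
with a fixed choice of $m$-th roots of the constants $c^{\sigma}\in\oK^*$. This is well defined. In the product for $y^{\iota}$, the factor $z^{\iota'}$ enters to the first power, but the other factors $z^{\sigma'}$ with $\sigma'\mapsto\iota$, $\sigma'\neq\iota'$, must also be expressed. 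These are $m$-th powers of generators of $(Q')^{\oK,m,(1)}$, so they are available as well. Hence $\tilde\phi_c$ commutes with the projections to $Q'$ and $Q$.

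\textbf{Step 2: the multi-section $s'$.} By the universal property of the cartesian square \eqref{HWbasechange}, the pair (projection $(Q')^{\oK,m,(1)}\to Q'$, composite $s\circ\tilde\phi_c$) defines $s'\colon (Q')^{\oK,m,(1)}\to X'$. This is a multi-section because $s$ is one. (Here "$Q^{\oK,m,(1)}$" means the cover defined over a suitable finite $L$, as in Corollary \ref{Cor:ExistenceMultisection}; one enlarges $L$ so that it contains the chosen roots of $c^{\sigma}$.)

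\textbf{Step 3: correcting $\psi_c^*b$.} Let $b\in(\Br X)_+$ with $s^*b=0$. Then $\psi_c^*b$ is unramified outside $\psi_c^{-1}f^{-1}(Z_{\tau})$, $\tau\in\Gamma[2]^{**}$. Its residue along each component of $\phi_c^{-1}(Z_{\tau})$ is the pullback of $a_{\tau}\hat y$ for some $a_{\tau}\in(\Z/n_{\tau}\Z)(-1)$, by Theorem \ref{Teo3} and functoriality of residues. These divisors $\phi_c^{-1}(Z_{\tau})$ are not of the shape $Z_{\tau'}(Q')$, so $\psi_c^*b$ need not lie in $(\Br X')_+$. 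We therefore set
\[
b' := \psi_c^*b - (f')^*\sum_{\tau} b_{\tau,a_{\tau}},
\]
with $b_{\tau,a}$ given by Lemma \ref{ResLemma}. This kills the residues along $\phi_c^{-1}(Z_{\tau})$ and introduces new residues only along $Z_{\tau'}(Q')$, equal to multiples of $\hat z$ as required, plus residues along $D$. Since $D$ lies outside $Q'$, those are irrelevant on $X'$. Purity then gives $b'\in(\Br X')_+$, and $b_0=\sum_\tau b_{\tau,a_\tau}$.

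\textbf{Step 4: checking $(s')^*b'=0$, the main obstacle.} We have $(s')^*\psi_c^*b=\tilde\phi_c^*s^*b=0$, so it remains to show that the pullback of $\sum_{\tau}b_{\tau,a_\tau}$ to $(Q')^{\oK,m,(1)}$ vanishes, possibly after changing $b_{\tau,a}$ by a constant from $\Br K$. This vanishing is false as stated, and must be achieved through that constant choice.

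I would first compute geometrically using the explicit formula \eqref{LAbel}. Every symbol $(z^{g}\cup\cdot)_{n_\tau}$ whose first entry is some $z^{g}$ with $g\neq\iota'$ dies on $(Q')^{\oK,m,(1)}$ once $n_\tau\mid m$, since $z^{g}$ has an $m$-th root there. The terms $(y^{\tau}\cup(y-y^{\tau}))$ also die, because $y^{\tau}$ pulls back to $c^{\tau}\prod z^{\sigma'}$, none of which is $z^{\iota'}$; this uses the convention $\sigma'\mapsto\tau\neq\id$. Using the antisymmetry identities from the proof of Lemma \ref{ResLemma}, the remaining terms with $z^{\iota'}$ in the first slot can be rewritten with $z^{\iota'}$ in the second slot. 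This needs $n_\tau\mid m$; if it fails, one enlarges $m$, which Step 2 allows. Thus $\bar b_{\tau,a}$ pulls back to zero in $\Br$ of the geometric cover, whose Brauer group is trivial (as in the proof of Theorem \ref{Lem:ExistenceMultisection}).

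Over $L$, the pullback then lies in the algebraic part $H^2(L,\oK[(Q')^{m,(1)}]^*)$ modulo constants. I would kill this by enlarging $L$ and adjusting the constant in $b_{\tau,a}$. This is the step where I am least sure the bookkeeping closes.
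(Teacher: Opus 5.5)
Your argument is the paper's proof. The lift $\tilde\phi_c$ on the $m$-th root covers, the multi-section $s'=(s\circ\tilde\phi_c,p')$, the correction $b'=\psi_c^*b-(f')^*\sum_\tau b_{\tau,a_\tau}$ via Lemma \ref{ResLemma}, and the reduction of $(s')^*b'=0$ to $(p')^*b_0=0$ all match. The paper proves that last vanishing by exactly your geometric computation: the first entries of the cup-products in \eqref{LAbel} and its conjugates become $m$-th powers on the cover, and $n_\tau\mid m$.

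The difficulty you report at the end is not real. It comes from the parenthetical in Step 2, where you reinterpret $Q^{\oK,m,(1)}$ as a finite-level cover over some $L$. In the lemma, as in $(R_4)$, it is the cover over $\oK$, so $p'$ factors through $\bar Q'$. On $\bar Q'$, any choice of $b_{\tau,a}$ restricts to $\bar b_{\tau,a}$: the correction added in the proof of Lemma \ref{ResLemma} is algebraic, and the image of $\Br K$ dies over $\oK$. Your term-by-term vanishing therefore already proves $(p')^*b_0=0$. Your claim that this vanishing is ``false as stated'' is incorrect, and the final paragraph about the algebraic part over $L$ can be dropped. If you did want a finite-level version, no adjustment of constants is needed. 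The Brauer group of the cover over $\oK$ is the direct limit of those of the covers over finite $L$, so vanishing over $\oK$ already holds over some finite $L$.

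Two smaller points.
\begin{itemize}
\item The cover at finite level $m$ does not have trivial Brauer group; that holds only for $Q^{\infty,(1)}$, as in the proof of Theorem \ref{Lem:ExistenceMultisection}. You do not need it, since each cup-product dies individually.
\item With the representatives in \eqref{LAbel}, no first entry equals $z^{\iota'}$. They are $z^{g}$ with $g|_E\neq\id$, or $y^{\rho}$ with $\rho\neq\id$. Your antisymmetry rewriting is harmless but unnecessary.
\end{itemize}
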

    \begin{proof}
        Fix $c \in E^*$. We may lift the morphism $\phi_c:Q' \to Q$ to a morphism $\tilde \phi_c:(Q')^{\oK,m,(1)} \to Q^{\oK,m,(1)}$. Indeed, writing $Q'=\Spec (\oK[(z^{\sigma'})^{\pm1}]_{\sigma'\in \Gamma'})^{\Gamma_K},\, \Gamma':=\Gal(E'/K)$, the morphism $\phi_c$ is $\Spec$-dual of the inclusion $(\oK[(y_{\sigma})^{\pm1}]_{\sigma\in \Gamma})^{\Gamma_K} \subset (\oK[(z^{\sigma'})^{\pm1}]_{\sigma'\in \Gamma'})^{\Gamma_K}$ defined by setting $y_{\sigma} = c^{\sigma} \cdot \prod_{\sigma' \mapsto \sigma} z_{\tau}$, and then $\tilde \phi_c$ may be obtained as the $\Spec$-dual of the inclusion $\oK[y^{\pm1},(y_{\sigma})^{\pm\frac1m}]_{\sigma \in \Gamma^*} \subset \oK[z^{\pm1},(z^{\sigma'})^{\pm\frac1m}]_{\sigma'\in (\Gamma')^*}$.

        \vskip1mm

        Then a multi-section $s'$ is given by the composition
        \[
        s':= (s\circ \tilde \phi_c,p'): (Q')^{\oK,m,(1)} \to X \times_Q Q' = X',
        \]
        where $p'$ denotes the natural morphism $(Q')^{\oK,m,(1)} \to Q'$.

        \vskip1mm

        Let us prove the last sentence. For $b \in (\Br X)_+$, and each $\tau \in \Gamma[2]^{**}$, let $a_{\tau}\cdot \widehat y \in H^1(k(Z'_{\tau}),\Z/n_{\tau}\Z),\, a_{\tau} \in (\Z/n_{\tau}\Z)(-1)$ be its residue at the divisor $Z'_{\tau}=f^{-1}(Z_{\tau})$. Taking $b_0 := \sum_{\tau \in \Gamma[2]^{**}}b_{\tau,a_{\tau}}$ (where each $b_{\tau,a_{\tau}}$ is defined as in Lemma \ref{ResLemma}), the residues at the divisors $(\psi_c)^{-1}(Z'_{\tau}),\,\tau \in \Gamma[2]^{**}$ of the difference $b':=\psi_c^*(b)-(f')^*b_0$ cancel out. Thus  the only residues  $b'$ has over $X$ are at the divisors $Z'_{\tau'},\,\tau' \in \Gamma'[2]^{**}$. Each residue at these divisors is a multiple of the Kummer character $\hat z$ of order $m_{\tau'}$ (see the residues of $b_0$ in Lemma \ref{ResLemma}), and so $b' \in (\Br X')_+$ as wished. 

        \vskip1mm

        It remains to verify that $(s')^*(b')=0$. We have $(s')^*(\psi_c)^*(b')=(\psi_c \circ s')^*(b)=0$ as $\psi_c \circ s'$ factors through $s$. It thus suffices to prove that the pullback of $b_0$ along $p':(Q')^{\oK,m,(1)} \to Q'$ is trivial. The morphism $p'$ factors as $(Q')^{\oK,m,(1)} \to \bar Q' \to Q'$. As a scolium of the proof of Lemma \ref{ResLemma}, the restriction $\bar b_0$ of $b_0$ to $\bar Q'$ is a weighted sum of the elements $\bar b_{\tau} \in \Br \bar \eta'$ presented in \eqref{LAbel}, and each of these pullbacks trivially to $Q^{\oK,m,(1)}$ because $m$ is a multiple of $n_{\tau}$ and each of the first factors of the cup-products appearing in the expression defining each $b_{\tau}$ becomes an $m$-th power after pullback to $Q^{\oK,m,(1)}$.
    \end{proof}

	\begin{proof}[Proof of Proposition \ref{SecondaProp}]
        Assume $f:X \to Q$ satisfies $(R_1)-(R_3)$, and let $B \subset (\Br X)_+$ be a set of representatives for $\Br X_{\eta}/f^*\Br \eta$. By Corollary \ref{Cor:ExistenceMultisection}, there exists a natural number $m$ and a multi-section $s:Q^{\oK,m,(1)} \to X$ such that $s^*B=0$. Then letting $E'/K$ be a Galois extension that contains the $m$-th roots of unity, Lemma \ref{Lemma2} guarantees that, for every $c \in E^*$, the associated Harpaz--Wittenberg base-change $X' \to Q'$ still has a multi-section $s':Q^{\oK,m,(1)} \to X'$, and that $\psi_c^*(B) \subset B' + f^*\Br \eta'$ for some $B' \subset \Br X'$ such that $(s')^*(B')=0$. Thus $\phi_c^*(r(B))=r(B')$, where $\phi_c^*$ denotes the map $\Br_{\text{hor}}(X/Q)\to \Br_{\text{hor}}(X'/Q')$. Since $r(B) = \Br_{\text{hor}}(X/Q)'$ and $\phi_c^*(\Br_{\text{hor}}(X/Q)')=\Br_{\text{hor}}(X'/Q')'$ by Lemma \ref{Lem1etrequarti}, then we infer that $B'$ surjects via $r$ onto $\Br_{\text{hor}}(X'/Q')'$, and so in particular, it surjects onto the quotient $\Br X_{\eta'}/(f')^*\Br \eta' \subset \Br_{\text{hor}}(X'/Q')'$, where the inclusion is induced by $r$, and we also infer that $r(\Br X_{\eta'})= \Br_{\text{hor}}(X'/Q')'$, i.e.\ that $(R_3)$ holds for $f'$. Thus, after the Harpaz--Wittenberg base-change, all properties $(R_1)-(R_4)$ hold, concluding the proof.
	\end{proof}

    \begin{proof}[Proof of Proposition \ref{PropReductions}]
        Combine Propositions \ref{PrimaProp} and \ref{SecondaProp}.
    \end{proof}

	\section{Cyclic strong approximation}\label{Sec9}

    The main result of this section is the approximation result Theorem \ref{MainApprThm}. Although independent, the result is motivated mostly by its application in the proof of Theorem \ref{Thm: fibrationnew}. Indeed,  in Section \ref{Sec10}, we shall use Theorem \ref{MainApprThm} to create rational fibers with suitable adelic points in the setting of Theorem \ref{Thm: fibrationnew}. 

    Observe also that Theorem \ref{MainApprThm} creates grids (see definition below) of approximating elements and not just a single one. Again, this is motivated by our application of Theorem \ref{MainApprThm} in the proof of Theorem \ref{Thm: fibrationnew}, as these are the grids that enable the ``triple variation argument''.
    
	\subsection{Arithmetic ingredients}\label{SSec9.1}
	Fix in this subsection a finite Galois extension $L/E$ of number fields.
	
	\begin{definition*}
		An idele $(x_w)_w \in \I_E$ and a conjugacy class $c \subset \Gal(L/E)$ are {\em compatible} if the image of $(x_w)_w$ under the reciprocity map
		\[
		rec_{L\cap E^{ab}/E}: \mathbb{I}_E \to C_E\to \Gal(L\cap E^{ab}/E)
		\]
		associated to the maximal abelian subextension $L\cap E^{ab}/E$ of $L/E$ coincides with the projection of $c$ to $\Gal(L\cap E^{ab}/E)$.
	\end{definition*}
	
	The following general result is proven in Appendix \ref{AppA}.
	
	\begin{theorem}\label{Thmneq10}
		Let $S$ be a finite set of places of $E$ containing the archimedean ones. Let $(x_w)_w \in \I_E$ be an idele with $x_w=1$ for all $w \notin S$, and $c \subset \Gal(L/E)$ be a compatible conjugacy class.
		
		Then there exist infinitely many prime elements $p \in \cO_{E,S}$ whose Frobenius class in $\Gal(L/E)$ is $c$, that approximate $x_w$ arbitrarily well for all finite $w \in S$, and whose direction in the Minkowski space $E \otimes \R$ approximates the direction of $(x_w)_{w \in M_E^{\infty}} \in E \otimes \R$ arbitrarily well.
	\end{theorem}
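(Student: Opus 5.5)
The plan is to reduce to the classical Chebotarev density theorem for a suitably enlarged extension, combined with Hecke's equidistribution of prime ideals in ray classes with respect to the infinite places.

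\textbf{Step 1: encode the approximation conditions as a ray class condition.} Fix a neighbourhood of $(x_w)_{w\in S}$: for finite $w\in S$ this is $x_w(1+\mathfrak{m}_w^{k_w})$ with $k_w$ large, and for the archimedean part an open cone $U_\infty$ around the direction of $(x_w)_{w\mid\infty}$. Let $\mathfrak m$ be the modulus with finite part $\prod_{w\in S\text{ fin}} \mathfrak m_w^{k_w}$ and all real places. For a prime $p\in\cO_{E,S}$, the conditions on $p$ at finite $w\in S$ and the sign conditions at real places amount to $p$ lying in a prescribed coset modulo $U_{\mathfrak m}$. The finer condition on the archimedean direction of $p$ within a sign chamber is not a ray class condition. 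So I will first choose a generator $p$ of the prime ideal $\mathfrak p$ by adjusting it with $S$-units. Then I will use Hecke's theorem, which gives equidistribution of generators of principal prime ideals in ray classes in $(E\otimes\R)^*/\overline{\cO_{E,S,\mathfrak m}^*}\R_{>0}$ with respect to Hecke Grössencharacters, to force the direction into $U_\infty$.

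\textbf{Step 2: combine with the Frobenius condition.} Consider the compositum $M:=L\cdot H_{\mathfrak m}$, where $H_{\mathfrak m}$ is the ray class field. The condition we need is that $\Frob_{\mathfrak p}(M/E)$ lies in the set of pairs $(c, r)$, where $r$ is the ray class of the idele $(x_w)$ and $r$ is viewed in $\Gal(H_{\mathfrak m}/E)$. Compatibility is exactly what makes this set nonempty. The two components agree on $L\cap H_{\mathfrak m}\subset L\cap E^{ab}$. Reciprocity sends the principal idele $p$, which is concentrated at $\mathfrak p$, to $\Frob_{\mathfrak p}$. Up to the global element $p$, the idele $(x_w)$ is therefore equivalent to $\pi_{\mathfrak p}$ times units. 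So the projection of $c$ to $\Gal(L\cap E^{ab}/E)$ must equal $rec((x_w))$.

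\textbf{Step 3: the main obstacle, which is the joint equidistribution with the archimedean direction.} Chebotarev alone produces infinitely many $\mathfrak p$ with the correct Frobenius in $M$, but it gives no control over the direction. I will use the Chebotarev--Hecke theorem for $M/E$: for Artin $L$-functions twisted by Grössencharacters of infinite type, $L(s,\rho\otimes\psi)$ is nonvanishing on $\Re s=1$, except for the trivial case $\rho\otimes\psi$ trivial. This follows by Brauer induction, which reduces to Hecke $L$-functions over intermediate fields. A Weyl-criterion argument then gives the joint equidistribution of $(\Frob_{\mathfrak p},\arg p)$ on $\Gal(M/E)\times$ (the compact torus of directions modulo $S$-units). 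Hence there is a positive density of primes satisfying both conditions. The delicate point is checking that the twisted $L$-functions reduce correctly to abelian ones. This needs the direction characters to factor through $C_E$ and to be unramified outside $\mathfrak m\infty$, which holds by construction. Infinitely many $p$ then follow, with the generator chosen so that it lies in $U_\infty$ and in the correct local classes.
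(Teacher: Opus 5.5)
Your outline has the same architecture as the paper's proof (Theorem~\ref{Thmneq1}). You pass to the compositum $M=LH_{\m}$ (the paper's $F=LE_{\m}$) and aim for the class $\{r\}\times c$, which is nonempty because $c$ and $r$ agree on $L\cap H_{\m}\subset L\cap E^{ab}$. You then get primes from a joint Chebotarev--Hecke equidistribution and unwind to a generator.

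The genuine difference is how that joint equidistribution (the paper's Theorem~\ref{Lem:ChebHec}) is proved. The paper uses Deuring's reduction. For $\sigma\in c$ of order $f$ it passes to $Z=L^{\langle\sigma\rangle}$, so $L/Z$ is cyclic. It then applies only the \emph{abelian} Hecke equidistribution theorem (cited from Lang) to $C_Z\to B\times_{\pi_0(B)}\Gal(L/Z)$, which is onto by a norm-limitation argument. Finally it divides by the constant fibre size $|G|/(|c|f)$ of $\mathfrak p\mapsto\mathfrak p\cap E$ on degree-one primes. You instead use Brauer induction to write $L(s,\rho\otimes\psi)$ as a product of integral powers of Hecke $L$-functions $L(s,\chi_i\cdot\psi\circ N_{K_i/E})$. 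You then use non-vanishing on $\operatorname{Re}(s)=1$, a Tauberian theorem and Weyl's criterion.

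Both are sound. The paper's route is cheaper: the abelian theory is a black box, the rest is group-theoretic bookkeeping, and the density $\frac{|c|}{|G|}\mu(U)$ comes out directly. Yours additionally needs three things:
\begin{itemize}
\item Brauer's induction theorem;
\item Hecke's non-vanishing for Hecke characters of all the fields $K_i$ it produces;
\item the check that $\psi\circ N_{K_i/E}$ still has infinite order when $\psi$ does, so that no factor acquires a pole at $s=1$.
\end{itemize}
In exchange it gives equidistribution on all continuity sets at once and is the form that generalizes.

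Three points to tighten:
\begin{itemize}
\item The target is the fibre product $\Gal(M/E)\times_{\Gal(H_{\m}/E)}B$ with $B=C_E/U^0_{\m}\R_{>0}$, not a plain product with a torus.
\item That torus is directions modulo units $\equiv1\bmod\m$, not modulo $S$-units. Multiplying by general $S$-units would destroy the approximation at the finite places of $S$.
\item Working in $B$ also handles the fact that $\mathfrak p\cO_{E,S}$ need not be principal, which your ``choose a generator'' step presupposes. It is the $H_{\m}$-Frobenius condition that supplies a generator with the right local behaviour.
\end{itemize}

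There is one normalization slip, shared with the paper. With the arithmetic Artin map fixed in Section~\ref{Sec2}, suppose $p$ generates $\mathfrak p\cO_{E,S}$ and is close to $x_w$ on $S$. Then $p\cdot(x_w)_w^{-1}\cdot[\mathfrak p]^{-1}\in U_{\m}$, so $\Frob_{\mathfrak p}(H_{\m}/E)=r^{-1}$, not $r$. Your ``$\pi_{\mathfrak p}$ times units'' should therefore read $\pi_{\mathfrak p}^{-1}$. For example, take $E=\Q$, $L=\Q(\zeta_5)$, $S=\{5,\infty\}$, $x_5=2$, $x_\infty=1$, and write $\sigma_a\colon\zeta_5\mapsto\zeta_5^a$. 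Positive primes $p\equiv2\bmod5$ have Frobenius $\sigma_2$, while $rec((x_w)_w)=rec_5(2)=\sigma_2^{-1}$.

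The paper's unwinding has the same inversion: from $[\cP]x^{-1}\mathfrak a^{-1}t^{-1}\approx(\tilde x_v)_v$ one actually gets $x\approx x_v^{-1}$. So the statement should be read with compatibility for $(x_w)_w^{-1}$. This is harmless in the applications: there, either the idele lies in the kernel and $c$ maps to the identity, or the local data can simply be inverted.
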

	
	The following lemma will occasionally prove useful in verifying the hypothesis of Theorem \ref{Thmneq10}.
	
	\begin{lemma}\label{LemBMidele}
		An idele $(x_w)_{w \in M_E} \in \mathbb I _E$ lies in the kernel of the reciprocity map
		\[
		rec_{L \cap E^{ab}/E}: \mathbb{I}_E \to C_E \to \Gal(L \cap E^{ab}/E)
		\]
		if and only if
		\begin{equation*}\label{BMortho}
		\sum_{w} \inv_w (\phi_w, x_w)=0 \ \text{for all }\phi\in \Hom(\Gal(L/E), \qz),
		\end{equation*}
		where $\phi_w$ denotes the restriction $\phi|_{\Gamma_w}$.
	\end{lemma}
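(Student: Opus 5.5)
The plan is to reduce the statement to Pontryagin duality for the finite abelian group $G:=\Gal(L\cap E^{ab}/E)$, together with the local--global compatibility of the Artin reciprocity map. Write $F:=L\cap E^{ab}$. Every character $\phi\in\Hom(\Gal(L/E),\qz)$ kills the commutator subgroup, so it factors through $\Gal(F/E)=G$. Conversely, every character of $G$ inflates to $\Gal(L/E)$. Hence the group of characters in the statement is exactly $G^\vee=\Hom(G,\qz)$. Since $G$ is finite abelian, an element $g\in G$ is trivial if and only if $\phi(g)=0$ for all $\phi\in G^\vee$. It therefore suffices to prove the identity
\[
\phi\bigl(rec_{F/E}((x_w)_w)\bigr)=\pm\sum_w \inv_w(\phi_w,x_w)\qquad\text{for all }\phi\in G^\vee,
\]
where the sign depends only on the normalization of the cup-product pairing and is irrelevant for vanishing.

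To establish the identity, I would decompose both sides place by place. The global reciprocity map is the product of the local reciprocity maps: $rec_{F/E}((x_w)_w)=\prod_w rec_w(x_w)$, where $rec_w:E_w^*\to \Gal(F_{\tilde w}/E_w)\subset G$ is the local Artin map into the decomposition group. This product is finite, because $rec_w$ kills units at all $w$ unramified in $F$, and $x_w$ is a unit for almost all $w$. Also, the restriction of $\phi$ to the decomposition group at $w$ is exactly $\phi_w=\phi|_{\Gamma_w}$. It therefore suffices to prove the local identity $\phi_w(rec_w(x_w))=\pm\inv_w(\phi_w\cup x_w)$ for each place $w$. Here $\phi_w$ is viewed in $H^1(E_w,\qz)$, its coboundary $\delta\phi_w$ lies in $H^2(E_w,\Z)$, and the pairing $(\phi_w,x_w)$ is $\delta\phi_w\cup x_w\in H^2(E_w,\G_m)=\Br E_w$.

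This local identity is the standard characterization of the local reciprocity map via cup products; see, for example, Serre's \emph{Local Fields} or Harari's book cited above, which state that $\inv_w(x\cup\delta\chi)=\chi(rec_w(x))$. Archimedean places are covered by the same formula, with real places handled via $H^2(\R,\Z)=\Z/2$ and complex places contributing trivially. Summing the local identities over $w$ gives $\phi(rec_{F/E}(x))=\pm\sum_w\inv_w(\phi_w,x_w)$, and this proves both implications of the lemma.

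The only genuinely delicate point is bookkeeping rather than mathematics. Two things need checking: first, that the sum defining the pairing is finite and agrees with the finite product expression for the global reciprocity map; second, that the local identity holds with a sign that is uniform across all places. The sign is uniform because every local sign is fixed by the same cup-product convention, so a global sign of $\pm1$ is harmless. I would verify these points by citing the local class field theory statement directly, rather than recomputing it.
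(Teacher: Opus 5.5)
Your proposal is correct and follows essentially the same route as the paper: you factor each $\phi$ through $\Gal(L\cap E^{ab}/E)$, use local--global compatibility of reciprocity to write $\tilde\phi((x_w)_w)=\sum_w \phi_w(x_w)$, and invoke the local cup-product characterization $\phi_w(x_w)=\inv_w(\phi_w,x_w)$ (the paper cites Harari, Corollary 9.6(a), for this). You then conclude by duality for the finite abelian group, exactly as the paper does. Your additional remarks on the finiteness of the sum and on the harmless global sign are fine.
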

	\begin{proof}
		Each $\phi$ factors through $\Gal(L \cap E^{ab}/E)$ and induces by Class Field Theory an idelic character $\Tilde{\phi}:\mathbb{I}_E \to[rec_{L \cap E^{ab}/E}] \Gal(L \cap E^{ab}/E) \to[\phi] \Q/\Z$. We have
		\[
		\sum_{w} \inv_w (\phi_w , x_w)=\Tilde{\phi}((x_w)_{w})
		\]
		Indeed, the compatibility of local and global Class Field Theories gives that $\Tilde{\phi}((x_w)_{w})=\sum_w \phi_w(x_w)$, and \cite[Corollary 9.6(a)]{HarariBook} gives that $\phi_w(x_w)=\inv_w (\phi_w, x_w)$. Since the $\phi$'s generate the dual abelian group of $\Gal(L \cap E^{ab}/E)$ (as they in fact form the whole group), the lemma follows.
	\end{proof}
	
	\subsubsection{A special case of Hecke-Chebotarev density}
	
	In this subsubsection, let $n$ be a natural number such that $\mu_n \subset E$, and $S$ be a finite set of places of $E$. 
	
	Consider $(\Z/n\Z)$-linearly independent characters 
	\[
	\chi_1,\ldots,\chi_r:\Gamma_E \to (\Z/n\Z)(1),
	\]
	and Heisenberg representations
	\[
	\rho_1,\ldots,\rho_s:\Gamma_E \to \operatorname{Heiss}_n, \quad \rho_j=\begin{pmatrix}
	1 &  \phi_j & \psi_j \\ 
	0 & 1 &  \phi'_j \\
	0 & 0 & 1
	\end{pmatrix},
	\]
	where, for each $j=1,\ldots,s$, the characters $\phi_j, \phi'_j$ lie in $V:=\langle \chi_1,\ldots,\chi_r\rangle$ (i.e.\ they are linear combinations of the $\chi_i$'s). 
	
	The characters and representations above fit into one big Galois representation as follows:
	
	\begin{lemma}\label{Lemma8point5}
		Let $\chi_i, \rho_j$ be as above. There exists a group structure $H$ on the set $(\Z/n\Z)(1)^r \times (\Z/n\Z)(2)^s$ sitting in a central exact sequence
		\[
		1 \to (\Z/n\Z)(2)^s \to H \to (\Z/n\Z)(1)^r \to 1
		\]
		such that
		\[
		\rho:\Gamma_E\to H, \quad \rho:=(\chi_1,\ldots,\chi_r,\psi_1,\ldots,\psi_s).
		\]
		is a homomorphism. Moreover, if the wedge-products $\phi_j\wedge \phi'_j,\, j=1,\ldots,s$ are $(\Z/n\Z)$-linearly independent in $\wedge^2 V$, then $\rho$ is surjective, the group structure is unique, and $[H,H]=(\Z/n\Z)(2)^s$.
	\end{lemma}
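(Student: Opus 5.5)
The plan is to build $H$ as an explicit central extension defined by a $2$-cocycle, and then to deduce surjectivity from a commutator computation.

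\textbf{Group law.} Since each $\phi_j,\phi'_j$ lies in $V$, fix coefficients and write $\phi_j=\sum_i a_{ji}\chi_i$ and $\phi'_j=\sum_i a'_{ji}\chi_i$. Let $\ell_j,\ell'_j:(\Z/n\Z)(1)^r\to(\Z/n\Z)(1)$ be the linear forms with the same coefficients, so that $\phi_j=\ell_j\circ\chi$ and $\phi'_j=\ell'_j\circ\chi$, where $\chi=(\chi_1,\ldots,\chi_r)$. Define a bilinear (hence $2$-cocycle) map
\[
c(x,x')=\bigl(\ell_j(x)\,\ell'_j(x')\bigr)_{j=1,\ldots,s}\in(\Z/n\Z)(2)^s .
\]
Put on $(\Z/n\Z)(1)^r\times(\Z/n\Z)(2)^s$ the law $(x,z)(x',z')=(x+x',\,z+z'+c(x,x'))$. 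Bilinearity of $c$ gives associativity, and $(\Z/n\Z)(2)^s$ is central. From the matrix multiplication in $\operatorname{Heiss}_n$ we get
\[
\psi_j(\sigma\tau)=\psi_j(\sigma)+\psi_j(\tau)+\phi_j(\sigma)\phi'_j(\tau),
\]
as in the proof of the Redéi/Frobenius proposition above. This is exactly the statement that $\rho=(\chi,\psi)$ is a homomorphism for this law. So existence reduces to a direct verification.

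\textbf{Commutators and surjectivity.} In $H$ one computes
\[
[(x,z),(x',z')]=\bigl(0,\;c(x,x')-c(x',x)\bigr)=\bigl(0,\;(\ell_j\wedge\ell'_j)(x,x')\bigr)_j .
\]
The image of $\rho$ surjects onto $(\Z/n\Z)(1)^r$ because the $\chi_i$ are linearly independent, so Kummer theory identifies $\chi$ with a surjection. Therefore the commutator subgroup of $\rho(\Gamma_E)$ is generated by the values $(\ell_j\wedge\ell'_j)(x,x')$ for $x,x'$ ranging over the whole of $(\Z/n\Z)(1)^r$. If the $\phi_j\wedge\phi'_j$ (equivalently the $\ell_j\wedge\ell'_j$) are linearly independent in $\wedge^2V$, then the map $\wedge^2((\Z/n\Z)(1)^r)\to(\Z/n\Z)(2)^s$ given by $\omega\mapsto(\langle\ell_j\wedge\ell'_j,\omega\rangle)_j$ is surjective. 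This uses that $\wedge^2$ of a free $\Z/n\Z$-module is free and that linearly independent elements of a free module over $\Z/n\Z$ can be extended to a basis of a direct summand; this point needs some care, since "linearly independent" over $\Z/n\Z$ should be read as "part of a basis".

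With that surjectivity in hand, $\rho(\Gamma_E)\supset[\rho(\Gamma_E),\rho(\Gamma_E)]=(\Z/n\Z)(2)^s$, and together with surjectivity onto the quotient this forces $\rho$ to be surjective. Then $[H,H]=(\Z/n\Z)(2)^s$, since $H/(\Z/n\Z)(2)^s$ is abelian.

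\textbf{Uniqueness.} If $\rho$ is a surjective homomorphism for some group law on this set, then the law is forced to be the transported one: for $h=\rho(\sigma)$ and $h'=\rho(\tau)$ we must have $hh'=\rho(\sigma\tau)$, which is determined by the formula above. A different choice of coefficients $\ell_j$ gives the same law on the image, and the image is everything.

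\textbf{Main obstacle.} The delicate step is the surjectivity of the commutator map. It needs the independence of the wedges to hold in the strong, basis-extendable sense over $\Z/n\Z$, which is why I would work prime by prime and reduce to $n$ a prime power.
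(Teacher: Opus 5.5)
Your proposal is correct and follows essentially the paper's route: the group law built from the bilinear cocycle $c$, the commutator formula $(\ell_j\wedge\ell'_j)(x,x')$, wedge independence giving $[H,H]=(\Z/n\Z)(2)^s$, and uniqueness from surjectivity. Your deduction of surjectivity from $[\rho(\Gamma_E),\rho(\Gamma_E)]=[H,H]$ is a slightly cleaner substitute for the paper's Frattini argument.

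Your worry about needing ``basis-extendable'' independence is unfounded, so the prime-by-prime reduction is unnecessary. As in the paper, a proper subgroup of $(\Z/n\Z)(2)^s$ is killed by some nonzero $\Z/n\Z$-linear form $(c_j)$, which would give a nontrivial relation $\sum_j c_j\,\phi_j\wedge\phi'_j=0$; so ordinary linear independence already suffices.
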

	
	\begin{proof}
		Let $\boldsymbol{\chi}:=(\chi_1,\ldots,\chi_r)$ and $\boldsymbol{\psi}:=(\psi_1,\ldots,\psi_s)$. For each $j$, let $\mathbf{v}_j,\mathbf{w}_j \in (\Z/n\Z)^r$ be such that $\phi_j=\mathbf{v}_j \cdot \boldsymbol{\chi}, \phi'_j=\mathbf{w}_j \cdot \boldsymbol{\chi}$.
		The cocycle conditions $\psi_j(\sigma \sigma')=\psi_j(\sigma)+\psi_j(\sigma') +\phi_j(\sigma)\phi'_j(\sigma')$ give 
		\begin{align*}
		\rho(\sigma \sigma')&=(\boldsymbol{\chi}(\sigma)+\boldsymbol{\chi}(\sigma'), \boldsymbol{\psi}(\sigma)+\boldsymbol{\psi}(\sigma') + \boldsymbol{\xi}(\sigma,\sigma')).\\
		\boldsymbol{\xi}(\sigma,\sigma')&:=(\phi_1(\sigma)\phi'_1(\sigma'),\ldots,\phi_s(\sigma)\phi'_s(\sigma')).\\
		&=(\alpha_1(\boldsymbol{\chi}(\sigma))\beta_1(\boldsymbol{\chi}(\sigma')),\ldots,\alpha_s(\boldsymbol{\chi}(\sigma))\beta_s(\boldsymbol{\chi}(\sigma'))),
		\end{align*}
		where $\alpha_j,\beta_j\in \Hom((\Z/n\Z)(1)^r, (\Z/n\Z)(1))$ denote the homomorphisms $\mathbf{a}\mapsto \mathbf{v}_j\cdot \mathbf{a}$ and $\mathbf{a}\mapsto \mathbf{w}_j\cdot \mathbf{a}$, respectively.
		So $\rho$ becomes a homomorphism if we give $H$ the group structure defined by
		\begin{gather*}
		\left(\lambda_1,\mu_1\right)\cdot \left(\lambda_2,\mu_2\right)=\left(\lambda_1+\lambda_2,\mu_1+\mu_2+c(\lambda_1,\lambda_2)\right), \ \text{with}\\
		c(\lambda_1,\lambda_2):=(\alpha_1(\lambda_1)\beta_1(\lambda_2),\ldots,\alpha_s(\lambda_1)\beta_s(\lambda_2)),
		\end{gather*}
		which fits in a central sequence as in the statement.\footnote{One also sees from the multiplication law that the associated cohomology class $[H] \in $\linebreak $H^2((\Z/n\Z)(1)^r,(\Z/n\Z)(2)^s)$ is represented by the $2$-cocycle $c=(\alpha_1\cup \beta_1,\ldots,\alpha_s\cup \beta_s)$ \cite[Ch.~IV, Sec.~3]{Brown}, but we do not need this here.}
		
		\vskip1mm
		
		To prove the final statement, we first prove that $[H,H]=(\Z/n\Z)(2)^s$ for the group structure on $H$ defined above. Assume by contradiction that $[H,H]$ is a proper subgroup of $(\Z/n\Z)(2)^s$, and let $(c_1,\ldots,c_s) \in (\Z/n\Z)^s \s \{(0,\ldots,0)\}$ be such that the linear form $\sum_j c_jx_j$ is identically $0$ on $(x_1,\ldots,x_s) \in [H,H]$. The commutator of two elements $\left(\lambda_1,\mu_1\right), \left(\lambda_2,\mu_2\right) \in H$ is the vector $([\alpha_1,\beta_1](\lambda_1,\lambda_2),\linebreak  \ldots,[\alpha_s,\beta_s](\lambda_1,\lambda_2))$ of $(\Z/n\Z)(2)^s$, where $[\alpha,\beta](\lambda_1,\lambda_2):=\alpha(\lambda_1)\beta(\lambda_2)-\alpha(\lambda_2)\beta(\lambda_1)$. Thus, we would have that
		\begin{equation}\label{Relation}
		\sum_j c_j\cdot [\alpha_j,\beta_j](\lambda_1,\lambda_2)
		\end{equation}
		is identically zero for $\lambda_1,\lambda_2 \in (\Z/n\Z)(1)^r$. 
		
		\vskip1mm Let $A:=(\Z/n\Z)(1)^r, A^D:=\Hom(A,(\Z/n\Z)(1))$. Note that $A^D$ is just isomorphic to $(\Z/n\Z)^r$ (with no Tate twists), and so is canonically isomorphic to $V$. An explicit isomorphism is the map $V \to[\simeq] A^D$ sending $\chi_1^{c_1}\cdots \chi_r^{c_r}$ to $(a_1,\ldots,a_r) \mapsto \sum_i a_ic_i$. By definition of $\alpha_j$ and $\beta_j$, this isomorphism sends $\phi_j$ to $\alpha_j$ and $\phi'_j$ to $\beta_j$.            
		Denote now by $\operatorname{Alt}_2(A)$ the module of alternating maps $A \times A \to (\Z/n\Z)(2)$, and consider the isomorphisms
		\begin{enumerate}[label=(\arabic*)]
			\item $\wedge^2V \to[\simeq] \wedge^2A^D,$ induced by the map $V \to[\simeq] A^D$ defined above,
		\end{enumerate}
		\begin{enumerate}[resume,label=(\arabic*)]
			\item $\wedge^2A^D \to[\simeq] \operatorname{Alt}_2(A), \quad \alpha \wedge \beta  \mapsto [\alpha,\beta]$.
		\end{enumerate}
		To see that the second an invertible, note that an inverse is given by $\mu \mapsto \sum_{i < j} \mu_{i,j}(e_i^*\wedge e_j^*), \ \mu_{i,j}:=\mu(e_i \wedge e_j)$, where $e_1,\ldots,e_r$ is a coordinate basis of $A$ (i.e.\ each $e_i$ is an element of the $i$-th coordinate copy of $(\Z/n\Z)(1)$ in $A=(\Z/n\Z)(1)^r$ of order $n$).
		
		\vskip1mm
		
		The composition of (1) and (2) sends, for each $j$, $\phi_j\wedge \phi'_j$ to $[\alpha_j,\beta_j]$. In particular, \eqref{Relation} would pull back to a non-trivial relation among $\phi_j\wedge \phi'_j,\, j=1,\ldots,s$, contradicting our assumption. So $[H,H]=(\Z/n\Z)(2)^s$ as wished.
		
		\vskip1mm
		
		Let us now prove that $\rho$ is surjective. Because the $\chi_i$'s are linearly independent, $\boldsymbol{\chi}=(\chi_1,\ldots,\chi_r):\Gamma_E \to (\Z/n\Z)(1)^r$ is surjective. But $\boldsymbol{\chi}=pr \circ \rho$, where $pr$ denotes the projection $H \to H/[H,H]=(\Z/n\Z)(1)^r$. Hence $pr(\im \rho)=H/[H,H]$, and so $[H,H]\cdot \im \rho=H$. The group $H$ is a product of $p$-groups, so its Frattini subgroup contains its derived subgroup, and we infer that $\im \rho =H$, i.e.\ $\rho$ is surjective. Finally, the uniqueness of the group structure on $H$ is a direct consequence of this surjectivity.
	\end{proof}
	
	We have the following corollary to Theorem \ref{Thmneq10}.
	
	\begin{corollary}\label{Thmneq10MainCoro}
		Let $\chi_i,\rho_j$ be as above. Assume further that, for each $i$, there exists a prime $\cP_i$ of $\cO_{E,S}$ at which $\chi_i$ is totally ramified, $\chi_{i'}$ is unramified for all $i'\neq i$, and at which $L$ is unramified. Assume further that the wedge-products $\phi_j\wedge \phi'_j,\, j=1,\ldots,s$ are $(\Z/n\Z)$-linearly independent in $\wedge^2 V$.
		
		Then for all $(a_1,\ldots,a_r,b_1,\ldots,b_s)\in (\Z/n\Z)(1)^r\times(\Z/n\Z)(2)^s$, there exist infinitely many prime elements $p\in\cO_{E,S}$ that are arbitrarily close to $1$ for all $v \in S$, have Minkowski direction arbitrarily close to that of $1$, and:
		\begin{itemize}
			\item are split in $L$ and are split over $\Q$;
			\item are such that for some absolute Frobenius $\Frob_p \in \Gamma_E$ of $p$:
			\[
			\text{$\chi_i(\Frob_p)=a_i$ for all $i$ and $\psi_j(\Frob_p)=b_j$ for all $j$.}
			\]
		\end{itemize}
	\end{corollary}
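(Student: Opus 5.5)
The plan is to assemble everything into one Galois extension and apply Theorem \ref{Thmneq10} to it. Let $M$ be the compositum of $L$, the Galois closure of $E/\Q$, and the fixed field $E_\rho$ of $\Ker \rho$, where $\rho:\Gamma_E\to H$ is the homomorphism of Lemma \ref{Lemma8point5}. Since the $\phi_j\wedge\phi'_j$ are linearly independent, that lemma shows $\rho$ is surjective, so $\Gal(E_\rho/E)=H$. We also have $[H,H]=(\Z/n\Z)(2)^s$, hence $E_\rho\cap E^{ab}$ is the Kummer field $E(\chi_1,\ldots,\chi_r)$ cut out by the $\chi_i$. The prime $p$ will be produced by Theorem \ref{Thmneq10} applied to $M/E$. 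We take the conjugacy class $c$ of an element $g\in\Gal(M/E)$ that restricts to the identity on $L\cdot\widetilde{E}$ (where $\widetilde E$ is the Galois closure of $E/\Q$) and to $h:=(a,b)\in H$ on $E_\rho$.

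First we check that such a $g$ exists, i.e.\ that $h$ and $\id$ agree on $E_\rho\cap L\widetilde E$. Here the ramification hypothesis is used. Each $\cP_i$ is unramified in $L$. Enlarging $S$ beforehand to contain the primes ramified in $\widetilde E/E$, and taking the $\cP_i\notin S$, it is also unramified in $L\widetilde E$. So the intersection $F:=E_\rho\cap L\widetilde E$ is unramified at every $\cP_i$. The inertia at $\cP_i$ in $E_\rho/E$ surjects onto the $i$-th coordinate of $(\Z/n\Z)(1)^r$, with the other coordinates zero, because $\chi_i$ is totally ramified there and $\chi_{i'}$ is unramified. Therefore $\Gal(E_\rho/F)$ maps onto $(\Z/n\Z)(1)^r$. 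Since $H$ is nilpotent and $[H,H]\subseteq\Phi(H)$, a subgroup surjecting onto $H^{ab}$ is all of $H$. Hence $F=E$, the Galois groups split as a fiber product, and $g$ exists. Conjugation fixes the $L\widetilde E$-component, and it preserves the class of $h$ in $H$, which is all we need, since $\chi_i$ and $\psi_j$ are evaluated on some Frobenius in the class.

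Next we choose the idele. Take $x_w=1$ for all $w$, so that approximating it means $p$ is close to $1$ at finite $w\in S$ and has direction close to that of $1$. Compatibility then demands that $c$ project trivially to $\Gal(M\cap E^{ab}/E)$. This fails for a general $h$, since $\chi_i(\Frob_p)=a_i$ is a nontrivial abelian condition. To fix this we use the auxiliary primes. Put $S':=S\cup\{\cP_1,\ldots,\cP_r\}$ and choose $x_{\cP_i}\in\cO_{\cP_i}^*$ so that the local reciprocity image in the inertia group at $\cP_i$ accounts for the required $a_i$. This is possible because the inertia at $\cP_i$ realizes the whole $i$-th coordinate, while $L\widetilde E$ is unramified there. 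We then verify compatibility with Lemma \ref{LemBMidele}: the abelian characters of $\Gal(M/E)$ are combinations of the $\chi_i$ and of characters of $\Gal(L\widetilde E/E)$, and the local invariants at $\cP_i$ are arranged to cancel exactly the contribution of $a$.

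Theorem \ref{Thmneq10} applied with $S'$ then gives infinitely many primes $p\in\cO_{E,S'}\subset\cO_{E,S}$ with Frobenius class $c$, close to $1$ at $S$ and with direction close to that of $1$. Frobenius trivial on $L\widetilde E$ means $p$ splits in $L$ and in $\widetilde E$, hence has residue degree $1$ over $\Q$, i.e.\ is split over $\Q$. Frobenius equal to $h$ on $E_\rho$ gives $\chi_i(\Frob_p)=a_i$ and $\psi_j(\Frob_p)=b_j$. The step I expect to be the main obstacle is the bookkeeping in this compatibility/idele choice at the $\cP_i$: checking that the reciprocity contributions at $\cP_i$ have exactly the right sign and twist, and that they interact trivially with the $L\widetilde E$-characters.
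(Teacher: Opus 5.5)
\textbf{The gap: the field $\widetilde E$.} The overall plan is sound: apply the theorem once over $E$ to a compositum containing $L$ and $E_\rho$, get disjointness from the ramification at the $\cP_i$ plus the Frattini argument, and realize $a$ with an idele supported at the $\cP_i$. The problem is the Galois closure $\widetilde E$ of $E/\Q$ that you put into $M$. It breaks the argument in two places.

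First, the inference ``Frobenius trivial on $\widetilde E$, hence residue degree $1$ over $\Q$'' is false. An unramified prime $\cP$ of $E$ over $\ell$ splits completely in $\widetilde E$ iff $\Frob_{\mathfrak P}(\widetilde E/\Q)^{f(\cP|\ell)}=1$, and this does not force $f(\cP|\ell)=1$. For $E=\Q(\sqrt[3]{2})$ and $\widetilde E=E(\sqrt{-3})$, one has $5=\cP_1\cP_2$ with $f(\cP_2|5)=2$. Yet $\cP_2$ splits in $\widetilde E$, because $-3\in\F_5^*\subset(\F_{25}^*)^2$.

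Second, the $\cP_i$ are given by hypothesis, so you cannot enlarge $S$ to make them unramified in $\widetilde E/E$. When some $\cP_i$ does ramify there, $E_\rho\cap L\widetilde E$ can be nontrivial, and your element $g$ need not exist. Concretely, take $E=\Q(\sqrt[3]{2})$, $n=2$, $L=E$, $S$ the places over $2$ and $\infty$, $r=1$, $s=0$, $\chi_1=\chi_{-3}$. Let $\cP_1$ be the prime over $3$; it is totally ramified for $\chi_{-3}$ since $e_3(\widetilde E/\Q)=6$. Then $E_\rho=\widetilde E$. For $a_1\neq 0$, no element of $\Gal(M/E)$ is trivial on $\widetilde E$ and has $\chi_1=a_1$. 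The corollary nevertheless holds here: degree-one primes of $E$ over $\ell\equiv 2\bmod 3$ are inert in $E(\sqrt{-3})$.

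\textbf{The repair.} Drop $\widetilde E$ and take $M=LE_\rho$.
\begin{itemize}
\item Your disjointness argument then works verbatim, since $L$ is unramified at every $\cP_i$ by hypothesis. Hence $\Gal(M/E)=\Gal(L/E)\times H$.
\item The idele step is not a real obstacle. Choose $u_i\in\cO_{\cP_i}^*$ with $\chi_i(\mathrm{rec}_{\cP_i}(u_i))=a_i$; this is possible because $\chi_i$ is onto on inertia. Every other character of $\Gal(M\cap E^{ab}/E)$ is unramified at $\cP_i$ and so kills units.
\item Degree one over $\Q$ has to come from density instead. Theorem \ref{Lem:ChebHec} gives a set of primes of positive density, while primes of degree $\geq 2$ over $\Q$ have density zero. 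The paper uses exactly this, implicitly.
\item A small slip: $\cO_{E,S}\subseteq\cO_{E,S'}$, not the reverse. Your $p$ is a prime of $\cO_{E,S}$ because it approximates units at the $\cP_i$.
\end{itemize}

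\textbf{Comparison with the paper.} With this fix, your route is a valid alternative to the paper's. The paper restricts $\rho$ to $\Gamma_L$; the $\chi_i|_L$ stay independent thanks to the $\cP_i$, so Lemma \ref{Lemma8point5} makes $\rho_L$ surjective. It then applies Theorem \ref{Thmneq10} over $L$, with an idele supported at places of $L$ above the $\cP_i$, and takes $p=N_{L/E}(q)$ for degree-one primes $q$ of $L$. Splitting in $L$ is then automatic and disjointness never has to be stated, at the cost of checking that approximation and direction survive the norm. You stay over $E$, avoid the norm step, and encode splitting in $L$ as a Frobenius condition through the product decomposition of $\Gal(LE_\rho/E)$.
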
        
	
	\begin{proof}[Proof of Corollary \ref{Thmneq10MainCoro}]            
		Let 
		\[
		\rho_L:\Gamma_L\to H, \ \rho_L=(\chi_1|_L,\ldots,\chi_r|_L,\psi_1|_L,\ldots,\psi_s|_L)
		\]
		be the restriction of $\rho$ to $\Gamma_L$. The assumption on the ramification gives in particular that the restricted characters $\chi_1|_L,\ldots,\chi_r|_L$ are still $(\Z/n\Z)$-linearly independent. Hence we may apply Lemma \ref{Lemma8point5} not only to $\rho$ but also to $\rho_L$, and infer that $\rho_L$ is a surjective homomorphism.
				
		Let now $c\subset H$ be the conjugacy class of $(a_1,\ldots,a_r,b_1,\ldots,b_s)$. We wish to apply Theorem \ref{Thmneq10} (for the field $L$ rather than $E$) to this class, and in order to do so, we need to construct a suitable compatible idele in $L$. The abelianization of $H$ is $(\Z/n\Z)(1)^r$, and the projection of $c$ to it is $(a_1,\ldots,a_r)$.
		Through Artin's reciprocity map, each character $\chi_i|_L:\Gamma_L \to (\Z/n\Z)(1)$ defines an idelic character $\tilde \chi_i:\I_L \to C_L \to[\text{rec}] \Gamma_L^{\text{ab}} \to[\chi_i|_L] (\Z/n\Z)(1)$. Recall that by Class Field Theory, $\chi_i(I_w)=\tilde\chi_i(\cO_w^*)$, where $I_w$ denotes the absolute inertia group of a place $w$. Denoting by $w_i$ the place of $L$ corresponding to the prime $\cQ_i$, and letting $\tilde x_i\in \cO_{w_i}^*$ be such that $\chi_i(x_i)=a_i$ and $\tilde x_w:=1$ for $w \notin \{w_1,\ldots,w_r\}$, the idele $(\tilde x_w)$ is compatible with $c$. 
		
		Now, by Theorem \ref{Thmneq10}, we have infinitely many prime elements $q$ of $L$ that split over $\Q$, that approximate the idele $(\tilde x_w)$ at finite places in $\tilde S:=S \cup \{w_1,\ldots,w_r\}$, approximate the direction of this idele in the Minkowski space, and whose Frobenius conjugacy class in $\Gal(\bar L^{\Ker \rho}/L)= H$ is $c$. So, for a prime $\bar q$ of $\bar L$ dividing $q$, we have $\rho(\Frob_{\bar q})=h \cdot (a_1,\ldots,a_r,b_1,\ldots,b_s)\cdot h^{-1}$ for some $h \in H$. Replacing, if necessary, $\bar q$ by $\gamma \cdot \bar q$ for $\gamma \in \Gamma_L$ with $\rho(\gamma)=h$, we have $\rho(\Frob_{\bar q})=(a_1,\ldots,a_r,b_1,\ldots,b_s)$.
		
		Since the primes $q\cdot \cO_{L,S}$ have degree $1$ over $\Q$ and a fortiori also over $E$, the norms $p=N_{L/E}(q)$ are prime elements of $\cO_{E,S}$ with the same absolute Frobenii in $\Gal(\oK/E)$ as the primes $q$ above them, and are then the sought prime elements of $E$.
	\end{proof}
	
	\subsection{Grids}
	
	\begin{definition}\label{DefGrid}
		For a number field $E$ and a finite set of places $T$, a $k$-dimensional ($k \geq 0$) {\em grid} of size $M$ in $\cO_{E,T}$ is an injective function $\cM:\{1,\ldots,M\}^k \to \cO_{E,T}\s \{0\}$ of the shape 
		\[
		\cM(i_1,\ldots,i_k)=x_0\  \cdot \ x_{1,i_1} \ \cdot \ x_{2,i_2}\ \cdot  \cdots \cdot \  x_{k,i_k},
		\]
		where $x_0 \in \cO_{E,T} \s \{0\}$, and  $\{x_{1,1}, \ldots, x_{1,M}\}, \ldots, \{x_{k,1}, \ldots,x_{k,M}\} \subset \cO_{E,T} \s \{0\}$ are ordered subsets. 
	\end{definition}
	
	We shall denote a grid as in Definition \ref{DefGrid} as follows:
	\[
	\cM=x_0 \cdot \{x_{1,1},\ldots,x_{1,M}\}\cdot \{x_{2,1},\ldots,x_{2,M}\}\cdots \{x_{k,1},\ldots,x_{k,M}\} \subset \cO_{E,T}\s \{0\}.
	\]
	
	\vskip1mm
	
	With a slight abuse of notation, we write $x \in \cM$ to mean that $x$ lies in the image of a grid $\cM$. 
	
	\begin{definition*}
		(1). A grid $\cM \subset \cO_{E,T}\s \{0\}$ is {\em square-free} if the norm-ideal $N_{E/\Q}(x \cdot \cO_{E,T})$ is square-free for all $x \in \cM$. 
		
		(2). A square-free grid $\cM \subset \cO_{E,S}\s \{0\}$ is {\em prime-variational} if all the $x_{i,j}$ ($i\geq 1$) are prime elements of $\cO_{E,T}$ whose underlying primes in $\Q$ are pairwise distinct and are all coprime to $N_{E/\Q}(x_0)$.
	\end{definition*}
		
	\subsection{The groups $B_{L}$ and $B'_{L}$}
	
	Let $K$ be a number field.
	
	\begin{notation*}
		For a Galois field extension $E/K$ with group $\Gamma$ and a $\tau \in \Gamma[2]^*$,
        let $n_{\tau}\coloneqq\max\{r \mid n : \mu_r \subset E^{\tau}\}$ and $n^-_{\tau}\coloneqq\max\{r \mid n : \zeta_r^{\tau}=\zeta_r^{-1}\}$, where  $n$ denotes the maximum order of roots of unity in $E$ and $\zeta_r$ denotes a primitive $r$-th root of unity. 
	\end{notation*}
	
	\begin{notation*}
		For a Galois quasi-trivial $K$-torus $Q:=R_{E/K}\G_m=(\Spec E[(y^{\sigma})^{\pm1}]_{\sigma \in \Gamma})^{\Gamma}$, and a Galois field extension $L/E$, we let $B_{L}(Q)\subset \Br K(Q)$ be the subgroup generated by the Brauer classes
		\begin{enumerate}[label=(\arabic*)]
			\setlength\itemsep{.2em}
			\item $\cores_{E/K} \left(\phi , y\right)$, for all $\phi\in \Hom(\Gal(L/E) , \qz)$,
			\item $\cores_{E/K} \left(y, y^\sigma\right)_n$, for all $\sigma\in\Gamma^*$,
			\item $\cores_{E/K} \left(y, y^\tau-y\right)_{n^-_{\tau}}$, for all $\tau\in\Gamma[2]^*$.
		\end{enumerate}
		We let $B'_{L}(Q)\subset \Br K(Q)$ be the subgroup generated by $B_L$ and the Brauer classes
		\begin{enumerate}[label=(\arabic*), resume]
			\setlength\itemsep{.2em}
			\item $\cores_{E/K}\left(y,{y^{\tau}-y}\right)_{n_{\tau}}$, for all $\sigma\in\Gamma[2]^{**}$.
		\end{enumerate}
	\end{notation*}
	
	We shall often just write $B_L,B'_L$ for $B_L(Q)$ and $B'_L(Q)$.
	
	\begin{lemma}
		The group $B_L$ (resp.\ $B'_L$) is contained in $\Br Q$ (resp.\ $\Br Q^{\sharp}$). 
	\end{lemma}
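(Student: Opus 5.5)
Since $\Br Q$ and $\Br Q^{\sharp}$ are subgroups of $\Br K(Q)$ (purity on smooth varieties), it suffices to check that each generator is unramified along every irreducible divisor of $Q$ (resp.\ of $Q^{\sharp}$). I will treat the generators one type at a time. For a class of the form $\cores_{E/K}(\alpha)$ with $\alpha \in \Br E(Q_E)$, I will use the compatibility of residues with corestriction (\cite[Prop.~1.4.7]{BGbook}, as in the proof of Lemma \ref{PropRes}). Concretely, the residue of $\cores_{E/K}(\alpha)$ at a prime divisor $D$ of $Q$ is the sum of the corestrictions of the residues of $\alpha$ at the divisors of $Q_E$ lying over $D$. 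It is therefore enough to locate the divisors of $Q_E=\Spec E[(y^{\sigma})^{\pm1}]_{\sigma\in\Gamma}$ where $\alpha$ ramifies, and to show that the residues there vanish.

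For type (1), $(\phi,y)$ with $y$ invertible on the split torus $Q_E$ is the cup product of a constant character with a unit, so it lies in $\Br Q_E$. For type (2), $(y,y^{\sigma})_n$ is a cup product of two units on $Q_E$, hence again in $\Br Q_E$. Corestricting along the finite étale map $Q_E \to Q$ keeps both in $\Br Q$.

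For types (3) and (4), the only possible ramification of $\alpha=(y,y^{\tau}-y)_m$ is along the divisor $\{y=y^{\tau}\}$ of $Q_E$, since $y$ is a unit. Its residue there is the Kummer class of $y$ restricted to that divisor, i.e.\ the class of $y$ in $\kappa^*/\kappa^{*m}$, up to sign. The corestriction's residue at $Z_{\tau}$ combines the residues at the conjugate divisors $\{y^{\sigma}=y^{\sigma\tau}\}$ over the same component. Pairing $\sigma$ with $\sigma\tau$: on $\{y=y^{\tau}\}$, conjugating by $\tau$ sends $(y,y^{\tau}-y)_m$ to $(y^{\tau},y-y^{\tau})_{m}$ twisted by the cyclotomic action of $\tau$ on $\mu_m$. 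Reducing to the field $\kappa(Z_{\tau})=\Frac(E^{\tau}[y^{\sigma}]/(y-y^{\tau}))$ as in \S\ref{SSec6.3}, the residue of the corestriction is $\cores_{E/E^{\tau}}$ of the Kummer character $\hat y$. When $m=n^-_{\tau}$, so that $\zeta^{\tau}=\zeta^{-1}$, this residue is a norm-type expression in which the two contributions $\hat y$ and $\tau^*\hat y$ cancel: $y=y^{\tau}$ on the divisor, while $\tau$ acts on $\mu_m$ by $-1$, which matches the sign of $y^{\tau}-y=-(y-y^{\tau})$. I would verify this cancellation by computing the residue of the $\tau$-conjugate symbol explicitly. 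This shows type (3) is unramified, so $B_L\subset \Br Q$. For type (4), only $\tau\in\Gamma[2]^{**}$ occur, and the possible ramification lies on $Z_{\tau}\subset Q\s Q^{\sharp}$. Hence these classes lie in $\Br Q^{\sharp}$, which gives $B'_L\subset\Br Q^{\sharp}$.

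The main obstacle is the cancellation for type (3). The delicate point is to track precisely the Tate twist and the action of $\tau$ on $\mu_{n^-_\tau}$ inside the corestriction residue formula. If that becomes messy, I would fall back on a spin-reciprocity-style argument (Lemma \ref{Lem:reciprocity}): show instead that the pairing with local points at $Z_{\tau}$ is trivial, which forces the residue to be zero.
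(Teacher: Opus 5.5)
Your handling of types (1), (2) and (4) agrees with the paper, which calls these immediate. For type (3), the paper defers to an external residue computation at $Z_\tau$, and that is also your route. The gap is in how you propose to show that this residue vanishes.

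Put $F=\kappa(Z_\tau)$ and $m=n^-_\tau$. Let $F'=FE$ be the function field of $D'=\{y=y^\tau\}\subset Q_E$; it is a quadratic extension of $F$ with group $\langle\tau\rangle$. The symbol $(y,y^\tau-y)_m$ ramifies only along the single component $D'$ of $Z_\tau\times_K E$. So, as you correctly say, the residue of its corestriction at $Z_\tau$ is $\cores_{F'/F}(\chi)$. Here $\chi\in H^1(F',\Z/m\Z)$ is, up to sign, the Kummer character of $y$ normalized by the chosen $\zeta\in\mu_m(E)$. No pairing of $\sigma$ with $\sigma\tau$ happens at this stage; the $\tau$-conjugate symbol only appears after restricting back to $E$.

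The check you propose is that the residues at $D'$ of the symbol and of its $\tau$-conjugate cancel. That computes $\mathrm{res}_{F'/F}\cores_{F'/F}(\chi)=\chi+\tau^*\chi$. It only shows that the residue lies in $\ker\bigl(H^1(F,\qz)\to H^1(F',\qz)\bigr)$, which is generated by the quadratic character of $F'/F$. Since $2\mid n^-_\tau$ whenever $2\mid n$, the order of the residue does not exclude this character. Such losses really occur: take $K=\Q$, $E=\Q(i)$, $y=a+bi$. The class $(-1,b)_2$ vanishes over $E$, yet its residue along $Z_\tau=\{b=0\}$ is the character of $\Q(a,i)/\Q(a)$.

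To close the gap, compute over $F$ directly. On $D'$ one has $y=y^\tau$, so $y\in F^*$ and $\chi=\mathrm{res}_{F'/F}(\delta y)\cup\xi$. Here $\delta y\in H^1(F,\mu_m)$ is the Kummer class and $\xi\in H^0(F',\mu_m^{\otimes-1})$ is the map $\zeta\mapsto 1$. The projection formula gives $\cores_{F'/F}(\chi)=\delta y\cup(\xi+\tau\xi)=0$. This holds because $\tau$ acts by $-1$ on $\mu_m$, hence on $\mu_m^{\otimes-1}$, for $m=n^-_\tau$.

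If instead you treat the residue as the class of $y$ in $(F')^{*}/(F')^{*m}$ and corestrict it as a norm, you get the class of $y^2$, which is in general nonzero. The vanishing lives entirely in the twist $\xi$; that is the bookkeeping you flagged but did not carry out.

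Equivalently, write $\cores_{F'/F}(\chi)=\chi\circ V$ with $V$ the transfer, and fix $t\notin\Gamma_{F'}$. For $g\in\Gamma_{F'}$ the value is $\chi(g)+\chi(t^{-1}gt)=0$. For $g\notin\Gamma_{F'}$ the value is $\chi(g^2)=0$, because $g$ fixes $y$ and inverts $\zeta$, so $g^2$ fixes $\sqrt[m]{y}$. This second coset is exactly what the conjugate-cancellation argument misses.

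The spin-reciprocity fallback does not replace this computation. Lemma \ref{Lem:reciprocity} is a global product formula and gives no information about the residue along $Z_\tau$.
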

	\begin{proof}
		It is clear that elements of type (1) and (2) lie in $\Br Q$ and that those of type (4) lie in $\Br Q^{\sharp}$. To see that elements of type (3) also lie in $\Br Q$, we refer to \cite[Lemma 3.3]{DemeioBrauerTori} (the reference just computes the residues at the divisors $Z_{\tau}$ and shows that they vanish). 
	\end{proof}
	
	\subsection{Main result}
	
	Fix for the rest of Section \ref{Sec9} a number field $K$, an algebraic closure $\oK$, and a tower of finite extensions $L/E/K$, all contained in $\oK$, with both $L/E$ and $E/K$ Galois. Let $\Gamma=\Gal(E/K)$, and $n$ be the maximal order of roots of unity in $E$. 
	
	Let $S$ be a finite set of places of $K$ containing those dividing $n,$ $\infty$ or $\Delta_{L/K}$, and $T$ be the places of $E$ above $S$. Let $Q\coloneqq R_{E/K}\G_m$, and $\cQ\coloneqq R_{\cO_{E,T}/\cO_{K,S}}\G_m$. 
	
	\begin{theorem}\label{MainApprThm}
		For every $(q_v)_{v \in S} \in Q^{\sharp}(K_S)^{B'_L}$ and all natural numbers $k \geq 3,M \geq 1$ with $(k,n)=1$, there exists a $k$-dimensional prime-variational grid
		\[
		\cM=p_0 \cdot \{p_{1,1},\ldots,p_{1,M}\}\cdots \{p_{k,1},\ldots,p_{k,M}\} \subset \cO_{E,T}\s \{0\} \subset E^* =Q(K),
		\]
		of size $M$, where $p_0,p_{1,1},\ldots,p_{1,M},\ldots,p_{k,1},\ldots,p_{k,M}$ are prime elements of $\cO_{E,T}$ generating pairwise distinct prime ideals, such that:
		\begin{itemize}
			\item (the images of) all $q \in \cM$ approximate  arbitrarily well the image of $(q_v)_{v \in S}$ in the quotient $Q(K_S)/\R_{>0}$, 
			\item for all $q \in \cM$ and $v \notin S$ we have either $q \in \cQ(\cO_v)$ or $q \in \im(Q^{L,n,(1)}(K_v) \to Q(K_v))$,
			\item the half-spin symbols $\operatorname{HS}_{\tau,T,n_{\tau}}(q)$ are defined and trivial for all $\tau \in \Gamma[2]^{**}$ and $q \in \cM$.
		\end{itemize}
	\end{theorem}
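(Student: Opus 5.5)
We construct the grid one prime at a time with the Hecke--Chebotarev result, Corollary \ref{Thmneq10MainCoro} (and Theorem \ref{Thmneq10} for the base element). Each prime $p$ is chosen very close to $1$ at every place of $T$, with Minkowski direction close to that of $1$, and with prescribed Frobenius in a suitable extension. The aim is that every product $q=p_0p_{1,i_1}\cdots p_{k,i_k}$ stays close to $(q_v)$ modulo $\R_{>0}$, while all local conditions at places outside $S$ are controlled by Frobenius data.

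\textbf{Choice of $p_0$.} We take $p_0$ to be a prime element approximating $(q_v)_{v\in S}$ in $Q(K_S)/\R_{>0}$, with its Frobenius in $L$ and in auxiliary Kummer extensions prescribed. The hypothesis $(q_v)\in Q^\sharp(K_S)^{B'_L}$ is what makes this possible. By Lemma \ref{LemBMidele}, orthogonality to the classes of type (1) says exactly that the idele $(q_v)$ is compatible with a class of $\Gal(L/E)$ having trivial abelian projection, so Theorem \ref{Thmneq10} gives a prime $p_0\equiv q_v$ that splits completely in $L$. The classes of types (2)--(4) are then used through power reciprocity and spin reciprocity (Lemma \ref{Lem:reciprocity}, Proposition \ref{PropSelfResidues}(1)). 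They show that the $S$-local factors in $\operatorname{HS}_{\tau}(p_0)^2$ and in $\leg{p_0^{\sigma}}{p_0}_n$ cancel, so these global symbols reduce to Frobenius conditions that can be imposed.

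\textbf{Local lifting.} For $v\notin S$ with $q\notin\cQ(\cO_v)$, the place $v$ lies under one of the primes $p$ of the grid. We need $q\in\im Q^{L,n,(1)}(K_v)$, that is: $v$ splits in $L$, and the conjugates $q^{\sigma}$ with $\sigma\neq\id$ are $n$-th powers at the place where $q$ has positive valuation. So every $p$ in the construction must split in $L$, and each $p^{\sigma}$, $\sigma\ne \id$, must be an $n$-th power modulo every other grid prime, for both orderings of each pair. For a new prime these are Frobenius conditions in the Kummer extensions generated by the conjugates of the primes already chosen. The conditions in the opposite direction, $\leg{p_{\text{old}}^{\sigma}}{p}_n$, are converted by power reciprocity into conditions of the same type; the resulting $T$-local factors are trivial because all primes are close to $1$ on $T$. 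Linear independence of the relevant Kummer characters follows from the distinct underlying rational primes and from splitting over $\Q$. This ensures the totally ramified primes $\cP_i$ required in Corollary \ref{Thmneq10MainCoro}.

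\textbf{Half-spin symbols (the main obstacle).} Triviality of $\operatorname{HS}_{\tau}(q)$ for every $q$ in the grid is the hard part. Proposition \ref{PropSelfResidues}(3), applicable because each factor is close to $1$, gives
\[
\operatorname{HS}(xy)=\operatorname{HS}(x)\operatorname{HS}(y)\leg{y^{\tau}}{x}_n^{-1}.
\]
Iterating it, $\operatorname{HS}_{\tau}(q)$ becomes the product of the individual half-spin symbols of the primes with pairwise cross symbols $\leg{p^{\tau}}{p'}$. The cross symbols are already trivial by the lifting conditions, so it suffices to make each $\operatorname{HS}_\tau(p)$ trivial. For that, Proposition \ref{PropSelfResidues}(3) is applied again, together with the Heisenberg/Frobenius description: $\operatorname{HS}_\tau$ is a power residue modulo $\cI_p$, which we read as a Frobenius condition in a Heisenberg-type extension via Lemma \ref{Lemma8point5}. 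The point I am least sure of is the linear independence of the wedge products needed in Corollary \ref{Thmneq10MainCoro}, and where the hypotheses $k\ge3$ and $(k,n)=1$ enter. My plan is to impose the condition only on the products, using that the $\operatorname{HS}$ contributions are equidistributed along the $k$ directions. Where a direct condition fails, I would absorb the leftover into $p_0$ by requiring $\operatorname{HS}_\tau(p_0)$ to cancel the $k$-fold sum; this is solvable since $k$ is invertible modulo $n$.
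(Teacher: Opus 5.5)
Your choice of $p_0$ (Lemma \ref{LemBMidele} plus Theorem \ref{Thmneq10}) matches the paper. So does your treatment of cross symbols as Frobenius conditions on the later prime in Kummer extensions generated by the earlier ones. There is, however, a genuine gap: you treat the \emph{self-symbols} of each prime as controllable, and they are not.

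Write $q=p_0p_1\cdots p_k$ with $p_i$ taken from the $i$-th row, and let $v$ lie below $p_j$. Liftability at $v$ requires $q^{\sigma}$ to be an $n$-th power modulo $p_j$ for all $\sigma\neq\id$, i.e.
\[
\addleg{p_0^{\sigma}}{p_j}_n+\sum_{i=1}^{k}\addleg{p_i^{\sigma}}{p_j}_n=0 .
\]
This sum contains the diagonal term $\addleg{p_j^{\sigma}}{p_j}_n$, which your reduction to conditions modulo ``every other grid prime'' silently drops.

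The same problem affects $\operatorname{HS}_\tau(p)=\leg{p}{\cI_p}_{n_\tau}$ with $\cI_p\cO_{E,T}=(p-p^{\tau})$, and also $\leg{p_0^{\sigma}}{p_0}_n$ and $\operatorname{HS}_\tau(p_0)$. In each case the modulus moves with the prime. So these are not Frobenius conditions in any fixed Kummer or Heisenberg extension, and neither Corollary \ref{Thmneq10MainCoro} nor Lemma \ref{Lemma8point5} can prescribe them.

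Reciprocity does not rescue this. Through power reciprocity, spin reciprocity and Proposition \ref{PropSelfResidues}(1), the hypotheses (H2)--(H4) only show that $(f,h):=(f_{p_0},h_{p_0})$ is an \emph{even augmented function} (Lemma \ref{LemAEF}). That means $f^{\dagger}=f$, $(1+\chi(\tau))\mid f(\tau)$ and $2h(\tau)\equiv f(\tau)\bmod n_\tau$: these are constraints, not vanishing. Your fallback of requiring $\operatorname{HS}_\tau(p_0)$ to cancel a leftover fails for the same reason. The wedge-product independence you worry about is irrelevant here; Heisenberg representations only enter for the Redéi variation in Theorem \ref{Thm:grid}.

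The missing idea is Shafarevich's compensation-plus-pigeonhole argument, as in the proof of Theorem \ref{ThmShafarevich}.
\begin{enumerate}
\item Take $(f,h)$ as given and cancel it through cross symbols, which \emph{are} Frobenius conditions. Every grid prime $p$ is required to satisfy $\addleg{p_0^{\sigma}}{p}_n=-\frac1k f(\sigma)$ for all $\sigma\in\Gamma^{*}$. This is where $(k,n)=1$ is used, and it makes $\addleg{q}{p_0^{\sigma}}_n=0$ automatic.
\item The grid primes' own data $(f_p,h_p)$ are also uncontrollable, but they range over finitely many even augmented functions $\boldsymbol{\delta}$. For each $\boldsymbol{\delta}$, Lemma \ref{Lem2} supplies off-diagonal entries $X^{\boldsymbol{\delta}}_{i,j}$ with $X_{j,i}=X_{i,j}^{\dagger}$. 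When all diagonal entries equal $\boldsymbol{\delta}$, they satisfy $k\sum_jX_{i,j}=f$ and $\sum_ih_i+\sum_{i<j}X_{i,j}\equiv h\bmod n_\tau$. This is where $k\geq 3$ is used.
\item By the multiplicativity formulas of Proposition \ref{PropSelfResidues} and power reciprocity, these identities are exactly what make $\addleg{q}{p_i^{\sigma}}_n$ and $\operatorname{hs}_\tau(q)$ vanish.
\item Generate primes one at a time. Through Corollary \ref{Thmneq10MainCoro}, impose simultaneously for every $\boldsymbol{\delta}$-tile with $j$ filled rows that $\addleg{p_i^{\sigma}}{p}_n=X^{\boldsymbol{\delta}}_{i,j+1}(\sigma)$ for the primes $p_i$ in those rows.
\item Only afterwards place $p$ into the tile indexed by its observed $(f_p,h_p)$. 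By pigeonhole some tile is eventually completely filled, and its rows form the grid.
\end{enumerate}
In particular, the cross symbols are in general nonzero: they are what absorbs the diagonal. Your plan of making all cross symbols trivial and each $\operatorname{HS}_\tau(p)$ trivial therefore cannot be carried out.
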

	
	(The quotient in the first point is meant through the diagonal action of $\R_{>0}$ on the archimedean factors $\prod_{v\mid \infty}Q(K_v)=\prod_{w \mid \infty} E_w^*$ as in Section \ref{Sec2}.)
	
	\vskip1mm	
	
	{\bf Definition. }Given a grid $\cM= x_0 \cdot \{x_{1,1},\ldots,x_{1,M}\}\cdots \{x_{k,1},\ldots,x_{k,M}\}$ in $\cO_{E,T} \s \{0\}$, we call the function
	\begin{equation}\label{TripleRedVar}
	\begin{matrix}
	f:\{1,\ldots,M-1\}^3 &\to &\Fun_{\Gamma}(\Gamma^3\s\Delta_3,(\Z/n\Z)(2)), \\
	(a,b,c) &\mapsto &\left(\sigma_1,\sigma_2, \sigma_3 \mapsto [(x_{1,a+1}{x_{1,a}}^{-1})^{\sigma_1},(x_{2,b+1}{x_{2,b}}^{-1})^{\sigma_2},(x_{3,c+1}{x_{3,c}}^{-1})^{\sigma_3}]_{E,n}\right),
	\end{matrix}
	\end{equation}
	when defined (i.e.\ when all Redéi symbols appearing are defined), the {\em triple Redéi-variation of order $n$} of $\cM$.\footnote{It might be more appropriate to 
		call this the ``triple Redéi-variation {\em at the indices} $1,2,3$'', and define in general the ``triple Redéi-variation at indices $z,w,t$'' as \eqref{TripleRedVar} but after replacing the Redéi symbol with $[(x_{z,a+1}{x_{z,a}}^{-1})^{\sigma_1},(x_{w,b+1}{x_{w,b}}^{-1})^{\sigma_2},(x_{t,c+1}{x_{t,c}}^{-1})^{\sigma_3}]_{E,n}$. However, the variation at the first three indices is sufficient for the purposes of this paper, and so we focus on that.} Here $\Delta_3 \subset \Gamma^3$ denotes the multi-diagonal. 
	
	\vskip1mm
	
	The following enhancement of the theorem above will be used in later sections.
	
	\begin{theorem}\label{Thm:grid}
		For every function $g:\{1,\ldots,M-1\}^3 \to \Fun_{\Gamma}(\Gamma^3\s\Delta,(\Z/n\Z)(2))$, the grid $\cM$ in Theorem \ref{MainApprThm} may be found so that, in addition, 
		\begin{itemize}
			\item its triple Redéi-variation of order $n$ is defined and equal to $g$;
			\item for all $v \notin S,\, q,q' \in \cM \s \cQ(\cO_v)$, we have $q'q^{-1} \in \im(Q^{L,n}(K_v) \to Q(K_v))$.
		\end{itemize}
	\end{theorem}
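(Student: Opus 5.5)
The plan is to build the grid one coordinate at a time, following the construction that proves Theorem \ref{MainApprThm}, so that each new prime is produced by Corollary \ref{Thmneq10MainCoro} with prescribed Frobenius data in a Heisenberg-type extension. Concretely, I first choose $p_0$ and the primes $p_{1,1},\ldots,p_{1,M}$ and $p_{2,1},\ldots,p_{2,M}$ exactly as in Theorem \ref{MainApprThm}: prime elements of $\cO_{E,T}$ close to $1$ at $S$ (after the initial approximation of $(q_v)$ by $p_0$), with Minkowski direction close to that of $1$, split over $\Q$, with pairwise distinct underlying rational primes. I add one extra Chebotarev condition: all their Galois conjugates are $n$-th powers at every previously chosen prime, and conversely. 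By Lemma \ref{Lem3.8}, the Hilbert symbols $(x^{\sigma_1},x'^{\sigma_2})_v$ then vanish for all relevant pairs, and condition $(C'')_{S_0}$ holds for the differences $d_{1,a}=p_{1,a+1}p_{1,a}^{-1}$ and $d_{2,b}=p_{2,b+1}p_{2,b}^{-1}$ and their conjugates. This gives Heisenberg extensions $E(\sqrt[n]{d_{1,a}^{\sigma_1}},\sqrt[n]{d_{2,b}^{\sigma_2}})_2$ unramified outside $S_0$ and the relevant primes.

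Next I choose the third coordinate $p_{3,1},\ldots,p_{3,M}$ successively. Given $p_{3,c}$, I pick $p_{3,c+1}$ by Corollary \ref{Thmneq10MainCoro}. The characters $\chi_i$ are the Kummer characters of all previously chosen primes and their conjugates; these are linearly independent, with the required totally ramified primes being the primes themselves. The $\rho_j$ are the Heisenberg representations attached to the pairs $(d_{1,a}^{\sigma_1},d_{2,b}^{\sigma_2})$. By the Proposition following Lemma \ref{Lem3.8}, $[d_{1,a}^{\sigma_1},d_{2,b}^{\sigma_2},d_{3,c}^{\sigma_3}]_n$ equals the sum of the central Frobenii of $p_{3,c+1}^{\sigma_3}$ and $p_{3,c}^{\sigma_3}$, with signs. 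Prescribing $\psi_j(\Frob_{p_{3,c+1}})$ therefore sets the triple variation at $(a,b,c)$ to $g(a,b,c)$. I use Galois equivariance, $[\sigma a,\sigma b,\sigma c]=\sigma[a,b,c]$, to reduce to $\sigma_3=\id$; this matches the $\Gamma$-equivariance of $g$. The same Frobenius prescriptions impose trivial $\chi_i$-values, so Hilbert symbols and interlinking stay trivial. They also impose the conditions of Theorem \ref{MainApprThm}: splitting in $L$, lifting to $Q^{L,n,(1)}(K_v)$, and half-spin triviality. Half-spin triviality of all grid points is controlled through Proposition \ref{PropSelfResidues}(3), which is why the primes are kept close to $1$. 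The second bullet holds because $q'q^{-1}$ is a product of ratios of grid primes, and each such prime is an $n$-th power at every other chosen prime. The first bullet follows from closeness to $1$ together with $p_0$ approximating $(q_v)$.

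\textbf{Main obstacle.} The hard step is the hypothesis of Corollary \ref{Thmneq10MainCoro} that the wedges $\phi_j\wedge\phi'_j=\chi_{d_{1,a}^{\sigma_1}}\wedge\chi_{d_{2,b}^{\sigma_2}}$ are linearly independent in $\wedge^2V$. The differences $d_{1,a}$ telescope, but they are still independent, since the $p_{1,a}$ for $a\geq 2$ each enter exactly one pair. Likewise, distinct conjugates of distinct primes give independent Kummer characters. I will therefore index the $\rho_j$ by the pairs $(a,\sigma_1),(b,\sigma_2)$ with $\sigma_2$ normalized modulo the diagonal $\Gamma$-action. The wedges are then elementary wedges of independent basis elements, up to unimodular change of variables, which gives independence. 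One subtle point remains: pairs with $\sigma_1=\sigma_2$ and the diagonal $\Delta_3$ are excluded. This is why $g$ is only defined off the multi-diagonal and why the coprimality $(k,n)=1$ and the $k\geq3$ spare coordinates are available to absorb parity issues for the half-spin conditions.
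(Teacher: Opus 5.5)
\paragraph{A gap: liftability and half-spin triviality.} Your argument breaks at the conditions you must inherit from Theorem \ref{MainApprThm}: liftability to $Q^{L,n,(1)}(K_v)$, and triviality of the half-spin symbols. Frobenius prescriptions cannot give these.

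Take $q=p_0p_{1,a}\cdots p_{k,c}\in\cM$ and $\sigma\in\Gamma^*$. Liftability at the primes of $p_{1,a}$ and of $p_0$ requires $\addleg{q}{p_{1,a}^{\sigma}}_n=0$ and $\addleg{q}{p_0^{\sigma}}_n=0$. You force all cross symbols between distinct chosen primes to vanish. These conditions then reduce to the self-symbols $\addleg{p_{1,a}}{p_{1,a}^{\sigma}}_n$ and $\addleg{p_0}{p_0^{\sigma}}_n$. Likewise, Proposition \ref{PropSelfResidues} gives only $\operatorname{hs}_{\tau}(q)\equiv\operatorname{hs}_{\tau}(p_0)+\sum_i\operatorname{hs}_{\tau}(p_{i,j_i}) \bmod n_{\tau}$.

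These are spin-type invariants of a prime relative to itself. They are not Frobenius values in any extension fixed before the prime is chosen, so Corollary \ref{Thmneq10MainCoro} cannot prescribe them. The other tools you cite do not fill this:
\begin{itemize}
\item Proposition \ref{PropSelfResidues}(3) only says that $\operatorname{HS}$ is multiplicative up to a cross term. It says nothing about $\operatorname{HS}(p)$ itself.
\item Lemma \ref{LemAEF} only makes $(f_p,h_p)$ even, not zero.
\item For $p_0$, which approximates $(x_w)$ rather than $1$, the pair $(f_{p_0},h_{p_0})$ is fixed once $p_0$ is chosen and is in general nonzero.
\end{itemize}

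\paragraph{The missing idea.} What you need is Shafarevich's cancellation-plus-pigeonhole, as in the proof of Theorem \ref{ThmShafarevich}. There the cross symbols are not killed but prescribed, so that they cancel the self-symbols. Each new prime $p$ must satisfy:
\begin{itemize}
\item $\addleg{p_0^{\sigma}}{p}_n=-\frac1k f_{p_0}(\sigma)$, which is where $(k,n)=1$ is used;
\item $\addleg{p_i^{\sigma}}{p}_n=X^{\boldsymbol{\delta}}_{i,j}(\sigma)$ for a matrix $\mathbf X^{\boldsymbol{\delta}}$ supplied by Lemma \ref{Lem2}, whose solvability uses evenness and $k\geq 3$.
\end{itemize}
The label $\boldsymbol{\delta}=(f_p,h_p)$ is known only after $p$ is chosen. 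So these conditions are imposed for all $\boldsymbol{\delta}$ simultaneously, and $p$ is filed into the tile labelled $\boldsymbol{\delta}(p)$. Since there are finitely many even augmented functions, some tile eventually fills.

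The prescribed cross symbols are constant along each row. Hence the ratios $p_{i,a+1}p_{i,a}^{-1}$ still have trivial cross symbols, and both the Red\'ei interlinking and your second bullet survive. Only the $\sigma=\mathrm{id}$ symbols $\addleg{q}{p}_n$ are set to zero. By contrast, the wedge independence you single out as the main obstacle is routine, since the characters involved come from disjoint sets of primes.
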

	
	\begin{remark*}
		(1). Obviously Theorem \ref{Thm:grid} is stronger than Theorem \ref{MainApprThm}. We separated the statements because Theorem \ref{MainApprThm} might have some interest on its own, as it provides a generalization of the aforementioned result of Shafarevich, while enhancing Theorem \ref{MainApprThm} to Theorem \ref{Thm:grid} is a purely technical complication that is solely done for the later purposes of this paper.
		
		(2). The Redéi-variation condition in Theorem \ref{Thm:grid} is essential to the later arguments in this paper (namely, in achieving control of the triple variation), while the second condition is a very simple addition that simplifies the computations in Section \ref{Sec11} (see in particular Proposition \ref{PropVariation}).
	\end{remark*}
	
	\subsection{An arithmetic version}\label{SSec8.1}
	
	Theorem \ref{MainApprThm} follows from the following theorem of purely arithmetic nature.
	
	\begin{theorem}\label{ThmShafarevich}
		For every $M \geq 1$, every $k \geq 3$ coprime with $n$, every function $g:\{1,\ldots,M-1\}^3 \to \Fun_{\Gamma}(\Gamma^3\s\Delta,(\Z/n\Z)(2))$, and every $(x_w)_{w \in T} \in \prod_{w \in T}E_w^{*}$ such that
		\begin{enumerate}[label={\rm (H\arabic*)}]
			\item $\sum_{w \in T}\inv_w(\phi_w,x_w)=0$ for all characters $\phi \in \Hom(\Gal(L/E),\qz)$, where $\phi_w$ denotes the localization of $\phi$ at $w$,
			\item $\prod_{w \in T}(x_w,(x_{\sigma^{-1}(w)})^{\sigma})_{w,n}=1$ for all $\sigma \in \Gamma^*$,
			\item $\prod_{w \in T}(x_w,(x_{\tau^{-1}(w)})^{\tau}-x_w)_{w,n^-_{\tau}}=1$ for all $\tau \in \Gamma[2]^*$,
			\item $\prod_{w \in T}(x_w,(x_{\tau^{-1}(w)})^{\tau}-x_w)_{w,n_{\tau}}=1$ for all $\tau \in \Gamma[2]^{**}$,
		\end{enumerate}
		there exists a $k$-dimensional grid of size $M$
		\[
		\cM= p_0 \cdot \{p_{1,1},\ldots,p_{1,M}\} \cdot \ \cdots\  \cdot \{p_{k,1},\ldots,p_{k,M}\},
		\]
		where $p_0,p_{1,1},\ldots,p_{1,M},\ldots,p_{k,1},\ldots,p_{k,M}$ are prime elements of $\cO_{E,T}$ generating pairwise distinct prime ideals, and
		such that for every $x \in \cM$
		\begin{enumerate}[label={\rm (\arabic*)}, resume]
			\setcounter{enumi}{0}
			\item $x$ approximates arbitrarily well $x_w$ for all finite $w \in T$, 
			\item the Minkowski direction of $x$ approximates arbitrarily well the direction of the vector $(x_w)_{w \in M_E^{\infty}} \in \prod_{w \in M_E^{\infty}} E_w = E \otimes \R$,
			\item every prime ideal $\cP$ dividing the ideal $x \cdot \cO_{E,T}\subset \cO_{E,T}$ splits completely in $L$ and is split over $\mathbb Q$;
			\item $x$ reduces to a non-zero $n$-th power modulo $\cP^{\sigma}$ for all prime ideals $\cP$ dividing $x \cdot \cO_{E,T}$ and all $\sigma \in \Gamma^{*} \coloneqq \Gamma \s \{\id\}$;
			\item $\operatorname{HS}_{\tau,T,n_{\tau}}(x)=1$ for every $\tau \in \Gamma[2]^{**}$,
		\end{enumerate}
		and, moreover,
		\begin{enumerate}[label={\rm (\arabic*)}, resume]
			\item the triple Redéi-variation of $\cM$ of order $n$ is defined and equal to $g$.
			\item for all $i \in \{1,\ldots,n\},\, j,j'\in \{1,\ldots,M\}$, we have that $p_{i,j}p_{i,j'}^{-1}$ is an $n$-th power modulo the prime ideals generated by $p_0$ and all $p_{i',\alpha}$ with $i' \neq i$ and $\alpha \in \{1,\ldots,M\}$.
		\end{enumerate}
	\end{theorem}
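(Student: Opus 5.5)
The plan is to build the grid prime by prime with Corollary \ref{Thmneq10MainCoro}, following Shafarevich's inductive strategy, and to use hypotheses (H1)--(H4), together with power and spin reciprocity, to show that the local obstructions at $T$ cancel. I would not try to hit $(x_w)$ with a single prime. Instead I would write each target $x\in\cM$ as a product of $1+k$ primes, each close to $1$ or to a prescribed local factor, and arrange that the product approximates $(x_w)_{w\in T}$.

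First choose $p_0$ via Theorem \ref{Thmneq10}. It should approximate $x_w$ at finite $w\in T$ and in Minkowski direction, split completely in $L$ and over $\Q$, and have prescribed Frobenius in the Kummer extensions $E(\sqrt[n]{\cdot})$ attached to the conjugates $x^{\sigma}$. Hypothesis (H1) together with Lemma \ref{LemBMidele} is exactly the compatibility needed for the Frobenius class in $L$. Condition (4) for $p_0$ asks that $p_0$ be an $n$-th power modulo $p_0^{\sigma}$. By power reciprocity, $\leg{p_0}{p_0^{\sigma}}$ reduces to a product of local Hilbert symbols over $T$, and (H2) makes this product trivial. When $\sigma=\tau$ is an involution with $\zeta^\tau=\zeta^{-1}$, I would use spin reciprocity (Lemma \ref{Lem:reciprocity}) combined with (H3). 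Condition (5), $\operatorname{HS}(p_0)=1$, is not governed by reciprocity in the same way, since $\operatorname{HS}$ is only a ``square root''. Proposition \ref{PropSelfResidues}(1) reduces $\operatorname{HS}^2$ to local terms that vanish by (H4), but $\operatorname{HS}$ itself must still be forced. I would do this with the multiplicativity of Proposition \ref{PropSelfResidues}(2)--(3): multiply by auxiliary primes $y$ close to $1$, whose residue symbols $\leg{y^\tau}{x}$ are prescribed through Chebotarev so as to correct $\operatorname{HS}$.

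Next choose the variational primes $p_{i,j}$ inductively in $i$, then $j$, each close to $1$ at $T$ with direction near $1$. I would apply Corollary \ref{Thmneq10MainCoro} with the characters $\chi$ given by Kummer characters of all previously chosen primes and their Galois conjugates, and with Heisenberg representations $E(\sqrt[n]{a},\sqrt[n]{b})_2$, available by Lemma \ref{Lem3.8}. The Frobenius values of these characters then impose the following: condition (7); condition (4) for every grid element, since multiplicativity reduces it to pairwise symbols $\leg{p}{p'^{\sigma}}$, controlled by the $\chi$-Frobenius while reciprocity handles the reverse direction; and condition (6), since the Redéi symbols are Frobenius values of Heisenberg extensions. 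The $\operatorname{HS}$ condition for every $x\in\cM$ follows from Proposition \ref{PropSelfResidues}(3) once the cross terms $\leg{y^\tau}{x}$ are forced to be trivial. The condition $(k,n)=1$ is used to adjust a global unit or $n$-th power factor. Specifically, the product of the $k$ variational factors can be rescaled so that every element of the grid, and not only an individual prime, stays close to the targets $x_w$.

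The main obstacle is step (6): prescribing all triple Redéi symbols $[(p_{1,a+1}/p_{1,a})^{\sigma_1},\ldots]$ simultaneously. This needs the relevant Heisenberg extensions to be well defined, which requires the Hilbert-symbol interlinking conditions, themselves arranged by the earlier Frobenius choices. It also needs the wedge products $\phi_j\wedge\phi'_j$ to be linearly independent, so that Lemma \ref{Lemma8point5} gives surjectivity. I would choose the third-coordinate primes last, so that each Redéi symbol becomes a Frobenius value in $E(\sqrt[n]{a},\sqrt[n]{b})_2$ with $a,b$ built from already chosen primes. The $\Gamma$-equivariance of $g$ matches the relation $[\sigma a,\sigma b,\sigma c]=\sigma[a,b,c]$, and independence then comes from the primes $p_{1,\cdot},p_{2,\cdot}$ having distinct rational primes below them.
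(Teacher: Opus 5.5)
Your treatment of (6), with the Red\'ei symbols realized as Frobenius values in Heisenberg extensions and the third row chosen after the first two, matches the paper. Your argument for (4) and (5), however, has a genuine gap.

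Power reciprocity does not make $\leg{p_0}{p_0^{\sigma}}_n$ trivial. It only compares it with $\leg{p_0^{\sigma}}{p_0}_n$, so (H2) gives the symmetry $f^{\dagger}=f$ of $f(\sigma):=\addleg{p_0^{\sigma}}{p_0}_n$, not its vanishing. Likewise, spin reciprocity (Lemma \ref{Lem:reciprocity}) with (H3) only gives $f(\tau)\equiv 0 \bmod n^-_{\tau}$. And Proposition \ref{PropSelfResidues}(1) with (H4) gives $2\operatorname{hs}_{\tau}(p_0)\equiv f(\tau)\bmod n_{\tau}$, not $\operatorname{HS}(p_0)^2=1$.

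These self-symbols $\addleg{p^{\sigma}}{p}_n$ and $\operatorname{hs}_{\tau}(p)$ of a prime $p$ are not Frobenius values in any extension fixed before $p$ is chosen, so Corollary \ref{Thmneq10MainCoro} cannot prescribe them. Hence, when you expand $\addleg{x}{p_i^{\sigma}}_n=\sum_j\addleg{p_j}{p_i^{\sigma}}_n$ for $x=p_0p_1\cdots p_k$, the diagonal terms $j=i$ are outside your control. The same problem hits your $\operatorname{HS}$ step. Correcting $\operatorname{HS}$ with auxiliary primes $y$ via Proposition \ref{PropSelfResidues}(2)--(3) brings in $\operatorname{HS}(y)$ and the spin symbols of $y$, which are again uncontrolled. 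Whichever prime you choose last leaves an unmatched self-term, so the construction never closes.

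The missing idea is Shafarevich's pigeonhole argument. By Lemma \ref{LemAEF}, $(f_p,h_p)$ is an even augmented function, and there are only finitely many of these. For each possible value $\boldsymbol\delta$, the paper uses Lemma \ref{Lem2} to fix a matrix $\mathbf X^{\boldsymbol\delta}$ of cross-symbols with diagonal $\boldsymbol\delta$ for which all $\addleg{x}{p_i^{\sigma}}_n$ and $\operatorname{hs}_{\tau}(x)$ vanish. Solving that linear system is where $k\ge 3$, $(k,n)=1$ and the evenness of $(f,h)$ are needed; (H2)--(H4) enter only through these symmetries.

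Primes are then chosen one at a time. Through Corollary \ref{Thmneq10MainCoro} one imposes, simultaneously, the cross-symbols required by every partially filled tile, together with $\addleg{p_0^{\sigma}}{p}_n=-\tfrac1k f(\sigma)$ and $\addleg{q}{p}_n=0$ for all earlier $q$ (which yields (7)). Each new prime is placed in the tile indexed by its own $(f_p,h_p)$, and one stops when some tile is full.

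Finally, $(k,n)=1$ is not used for any rescaling. Conditions (1) and (2) are automatic, because every $p_{i,j}$ is close to $1$. The coprimality is what lets the $k$ variational primes cancel $p_0$'s self-symbol $f$.
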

	
	The proof of Theorem \ref{ThmShafarevich} (to be found in Subsection \ref{Ssec8.3}) is based on a combination of Chebotarev's density theorem and Hecke equidistribution (through Theorem \ref{Thmneq10} and its Corollary \ref{Thmneq10MainCoro}) to deal with conditions (1), (2), (3), and (6), and a carefully placed pidgeonhole principle to deal with conditions (4) and (5), following an old technique of Shafarevich \cite{Shafarevich} (see \cite[Theorem 9.3.2]{GermanBook} for an analog result in English).
	
	\vskip1mm
	
	\begin{remark*}
		Dropping the assumption (H4), then the same proof gives, with minimal adjustements, all points except (5). Analogously, 
		weaking the assumption ``$(q_v)_{v \in S} \in Q^{\sharp}(K_S)^{B'_L}$'' in Theorem \ref{MainApprThm} to ``$(q_v)_{v \in S} \in Q(K_S)^{B_L}$'', then this theorem holds with the exception of the third point concerning half-spin symbols.
	\end{remark*}
	
	\begin{notation*}
		Given a finite field extension $k'/k$, a $k$-scheme $X$ whose structural morphism factors as $X\to \Spec k' \to \Spec k$, and a $k$-embedding $\iota:k'\hookrightarrow F$, we let $X(F)_{\iota}\subset X(F)$ be the set of $F$-points that correspond to morphisms $\Spec F\to X$ of $k'$-schemes (and not just of $k$-schemes) when we view $\Spec F$ as a $k'$-scheme via $\iota$.
	\end{notation*}
	
	Observe that $X(F)=\bigsqcup_{\iota:k'\hookrightarrow F}X(F)_{\iota}$.
	
	\begin{lemma}\label{LemLift}
		For a field extension $F/K$, a $K$-embedding $\iota:L\hookrightarrow F$, and a point $q \in Q(K)$,  $q$ lies in $\im(Q^{L,n,(1)}(F)_{\iota}\to Q(F))$ (resp.\ in $\im(Q^{L,n}(F)_{\iota}\to Q(F))$) if and only if $\iota(q^{\sigma})$ is an $n$-th power for all $\sigma\in \Gamma^*$ (resp.\ all $\sigma\in \Gamma$).
	\end{lemma}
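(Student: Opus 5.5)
The plan is to make the covers $Q^{L,n,(1)}$ and $Q^{L,n}$ completely explicit over $F$ once the pointing $\iota$ is fixed, and then read off the lifting condition coordinate by coordinate. By definition,
\[
R_{L,n,(1)}=L[y^{\pm1},(y^{\sigma})^{\pm\frac1n}]_{\sigma\in\Gamma^*},\qquad R_{L,n}=L[(y^{\sigma})^{\pm\frac1n}]_{\sigma\in\Gamma},
\]
where we use that $\Gamma_L$ acts trivially on $\cS=\Hom_K(E,\oK)\cong\Gamma$ (because $E\subset L$ and $E/K$ is Galois). Both are $L$-algebras, so $Q^{L,n,(1)}$ and $Q^{L,n}$ are $L$-schemes. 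Consequently an $F$-point in $Q^{L,n,(1)}(F)_\iota$ is the same thing as an $L$-algebra homomorphism $R_{L,n,(1)}\to F$, with $F$ regarded as an $L$-algebra via $\iota$.

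The first step is to describe the map to $Q(F)$ after base change along $\iota$. Write $Q(F)=\Hom_K(K[Q],F)$. Since $K[Q]=(E[(y^{\sigma})^{\pm1}]_{\sigma})^{\Gamma}$ and $K[Q]\otimes_K L=L[(y^{\sigma})^{\pm1}]_{\sigma\in\Gamma}$, a $K$-homomorphism $K[Q]\to F$ extends uniquely to an $L$-homomorphism $L[(y^{\sigma})^{\pm1}]\to F$ over $\iota$. This extension is determined by the tuple of values $(t_\sigma)_{\sigma\in\Gamma}\in(F^*)^{\Gamma}$ taken by the $y^{\sigma}$.

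The second step is to identify this tuple for $q\in Q(K)=E^*$. The point $q$ corresponds to the homomorphism $y^{\sigma}\mapsto q^{\sigma}\in E\subset L$, and composing with $\iota$ gives $t_\sigma=\iota(q^{\sigma})$. This is the only point where some care with conventions is needed. We must check that the identification of $\cS$ with $\Gamma$, via the pointing $\iota|_E$, matches the labelling $y^{\sigma}$ used in Section~\ref{Sec7}, so that the distinguished coordinate $y_\iota$ is $y=y^{\id}$. That is a matter of unwinding definitions.

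The third step is the lifting criterion itself. An $L$-homomorphism $L[(y^{\sigma})^{\pm1}]\to F$ extends to $R_{L,n,(1)}$ if and only if each $t_\sigma$ with $\sigma\neq\id$ has an $n$-th root in $F$. The reason is that $R_{L,n,(1)}$ is obtained by freely adjoining $n$-th roots $u_\sigma$ with $u_\sigma^n=y^{\sigma}$, for $\sigma\in\Gamma^*$. For the same reason, it extends to $R_{L,n}$ if and only if every $t_\sigma$, $\sigma\in\Gamma$, has an $n$-th root in $F$. Combining this with the second step gives both equivalences in the statement.

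I expect no real obstacle. The only point to watch is the bookkeeping of which coordinate is the distinguished one, together with the compatibility of the $L$-structure with the embedding $\iota$.
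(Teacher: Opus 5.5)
Your proposal is correct and follows essentially the same route as the paper. Both arguments identify $Q(F)$ with $(Q\otimes_K L)(F)_\iota$ via the universal property, and represent $q$ by the $L$-algebra map $y^\sigma\mapsto \iota(q^\sigma)$. Both then note that extending this map to $R_{L,n,(1)}$ (resp.\ $R_{L,n}$) amounts exactly to choosing $n$-th roots of $\iota(q^\sigma)$ in $F$ for $\sigma\in\Gamma^*$ (resp.\ $\sigma\in\Gamma$).
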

	\begin{proof}
		The map $Q^{L,n,(1)}(F)_{\iota}\to Q(F)$ factors as $Q^{L,n,(1)}(F)_{\iota}\to (Q\otimes_KL)(F)_{\iota} \to Q(F)$. 
		The last map is bijective. In fact, this is more general: for any $k$-scheme $X$, finite field extension $k'/k$, and $F,\iota$ as above, the natural map $(X\otimes_kk')(F)_{\iota}\to X(F)$ is a bijection by the universal property of fibered products.
		
		The point $q\in Q(K)$ corresponds to the homomorphism of $K$-algebras obtained by taking $\Gamma$-invariants of the morphism of $E$-algebras  $E[(y^{\sigma})^{\pm1}]_{\sigma\in\Gamma}\to E,\,y^{\sigma}\mapsto q^{\sigma}$. Its inverse image in $(Q\otimes_KL)(F)_{\iota}$ is then the point corresponding to the homomorphism of $L$-algebras $L[(y^{\sigma})^{\pm1}]_{\sigma\in\Gamma}\to F,\,y^{\sigma}\mapsto q^{\sigma}$. The point lifts to $Q^{L,n,(1)}(F)_{\iota}$ if and only if we may complete the following commutative diagram with the upper row:
		\[
		\begin{tikzcd}
		{L[y^{\pm1},(y^{\sigma})^{\pm\frac1n}]_{\sigma \in \Gamma^*}} \arrow[r, dashed]                                       & F                                \\
		{L[(y^{\sigma})^{\pm1}]_{\sigma\in\Gamma}} \arrow[r, "y^{\sigma}\mapsto q^{\sigma}"] \arrow[u, "\subset", hook] & F, \arrow[u, Rightarrow, no head]
		\end{tikzcd}
		\]
		which we may do if and only if $q^{\sigma}$ is an $n$-th power in $F$ for all $\sigma\in \Gamma^*$, proving the first statement. The proof of the $Q^{L,n}$-case is analogous.
	\end{proof}
	
	\begin{proof}[{Proof of Theorem \ref{Thm:grid} assuming Theorem \ref{ThmShafarevich}}]
		Let $(\tilde x_w)_{w \in T}\in \prod_{w \in T}E_w^{*}$ be the image of $(q_v)_{v \in S} \in \prod_{v \in S} Q(K_v)$ under the identifications $Q(K_v)=\prod_{w \mid v} E_w^{*}$. After a small approximation of $(q_v)_{v \in S}$, we assume that $\tilde x_w\neq (\tilde x_{\tau^{-1}(w)})^{\tau}$ for all $w \in T$ and $\tau \in \Gamma[2]^*$. The collection $(q_v)_{v \in S}$ is orthogonal to $B'_L$, i.e.\ is orthogonal to the Brauer classes of type (1)-(4) listed at the beginning of this section. All these Brauer classes are of the shape $\cores_{E/K}b$ for some $b \in \Br E(Q)$. 
		
		\vskip1mm
		
		The local Brauer pairing associated to an $E/K$-corestriction  is computed as follows \cite[Proposition 1.4.7]{BGbook}:
		\begin{center}
			$(\star)$ Let $p$ be a $K_v$-point on a $K$-variety $V$, for some place $v$ of $K$. Let $b \in \Br V_E$. Then
			\[
			\inv_v((\cores_{E/K}b)(p))=\sum_{w \mid v,\  w \in M_E}\inv_w(b(p_w)),
			\]
			where $p_w$ denotes the image of $p$ under the embedding $V(K_v) \subset V(E_w)$.
		\end{center}
		Using $(\star)$, we see that each hypothesis (H1)-(H4) for $(\tilde x_w)_{w \in T}$ is equivalent to the orthogonality of $(q_v)_{v \in S} \in Q(K_v)$ to the respective algebras of type (1)-(4), and so holds by assumption.
		
		\vskip1mm
		
		So we may apply Theorem \ref{ThmShafarevich} to $(\tilde x_w)_{w \in T}$. and get a grid
		\[
		\cM = p_0 \cdot \{p_{1,1},\ldots,p_{1,M}\} \cdot \ \cdots\  \cdot \{p_{k,1},\ldots,p_{k,M}\} \subset \cO_{E,T}\s \{0\}.
		\]
		We claim that $\cM$ satisfies the sought conditions for Theorem \ref{MainApprThm}.
		Following the identification
		\begin{align*}
		Q(K_S)/\R_{>0} = \prod_{w \in T\cap M_E^{\text{fin}}} E_w^* \times (E \otimes \R)/\R_{>0},
		\end{align*}
		points (1) and (2) of Theorem \ref{ThmShafarevich} give that every $x \in \cM$ approximates arbitrarily well $(q_v)_{v \in S}$ in the quotient $Q(K_S)/\R_{>0}$. Moreover, the last point of Theorem \ref{MainApprThm} holds by (5). 
		
		\vskip1mm 
		
		We now show that $q$ lies in $\cQ(\cO_v) \cup \im(Q^{L,n,(1)}(K_v) \to Q(K_v))$ for all $q \in \cM$ and $v\notin S$. When $w(q)=0$ for all places $w$ of $E$ dividing $v$, we clearly have $q \in \cQ(\cO_v)$. Otherwise we have $w(q) >0$ for some place $w\mid v$. Let $\cP\subset \cO_{E,T}$ be the associated prime ideal, which by (3) is split over $K$ (a fortiori, since it is split even over $\Q$) and splits in $L$. These splittings induces $K$-embeddings $E \subset L\subset K_v$. The images of $q^{\sigma},\,\sigma \in \Gamma^*$ in $K_v$ under these embedding are $n$-th power by (4), and so we get liftability to $Q^{L,n,(1)}(K_v)$ by Lemma \ref{LemLift}.
		
		\vskip1mm
		
		The Red\'ei condition of Theorem \ref{Thm:grid} is already given, so it only remains to show that $q'q^{-1} \in \im(Q^{L,n}(K_v) \to Q(K_v))$ for all $v \notin S,\, q,q' \in \cM \s \cQ(\cO_v)$. 
		We may assume that $\cM \not \subset \cQ(\cO_v)$ and so we either have $v(N_{E/K}(q_0))>0$ or there exists some $q_{i,j}$ with $v(N_{E/K}(q_{i,j}))>0$. Let $\cP$ (resp.\ $w$) be the prime (resp.\ place) of $E$ associated to $q_0$ in the first case, and to $q_{i,j}$ in the second case. 
		For all $\sigma \neq \id$, both $(q')^{\sigma}$ and $q^{\sigma}$ are $n$-th power modulo all conjugates $\cP$ with $\sigma \neq \id$ by (3), and so the same holds for their quotient $(q'q^{-1})^{\sigma}$. Moreover, $q'q^{-1}$ is an $n$-th power also modulo $\cP$ by (7). Putting these together, we have that the image of $(q'q^{-1})^{\sigma}$ in $E_w \cong K_v$ is an $n$-th power for all $\sigma \in \Gamma$, and we conclude by Lemma \ref{LemLift}.
	\end{proof}
	
	\subsection{Proof of Theorem \ref{ThmShafarevich}}\label{Ssec8.3}
	
	It is more convenient to carry out the proof in additive notation, working with $(\Z/n\Z)(1)$ instead of $\mu_n$. As in Section \ref{Sec2}, $\addleg{\star}{\star}$ stands for the additive variant  of the Legendre symbol $\leg{\star}{\star}$. Analogously, we denote by $\operatorname{hs}(x)$ the additive variant of the half-spin symbol $\operatorname{HS}(x)$ (possibly with subscripts) under the identification $\mu_n = (\Z/n\Z)(1)$. 
	
	\vskip1mm
	
	Recall that $\Gamma^*=\Gamma \s \{e\}$, that $\Gamma[2]^*$ denotes the set of elements of order $2$ in $\Gamma$, and that $\Gamma[2]^{**}=\{\tau \in \Gamma[2]^*: \chi(\tau) \equiv \pm 1 \bmod\,8\}$ if $8 \mid n$, while it is equal to $\Gamma[2]^*$ if $v_2(n)\in \{1,2\}$ and is empty if $n$ is odd. Here $\chi$ denotes the cyclotomic character modulo $n$.
	
	\vskip1mm
	
	We shall also use the following terminology in the proof.
	
	\begin{notation*}
		For a function $f:\Gamma^* \to (\Z/n\Z)(1)$, we let $f^{\dagger}:\Gamma^* \to (\Z/n\Z)(1)$ be defined by $f^{\dagger}(\sigma):=\chi(\sigma)f(\sigma^{-1})$. 
	\end{notation*}
	
	\begin{definition*}
		(1) A function $f:\Gamma^* \to (\Z/n\Z)(1)$ is {\em even} if $f^{\dagger}=f$ and $(1+\chi(\tau))|f(\tau)$ for all $\tau \in \Gamma[2]^*$.
		
		(2) An {\em augmented function} $\mathbf{f}$ is a pair $\mathbf{f}=(f,h)$ of a function $f:\Gamma^* \to (\Z/n\Z)(1)$ and a collection of elements $h(\tau) \in (\Z/n_{\tau}\Z)(1)$, indexed by $\tau \in \Gamma[2]^{**}$.
		
		(3) An {\em augmented function} $\mathbf{f}=(f,h)$ is {\em even} if $f$ is even and $2h(\tau) \equiv f(\tau) \bmod\,{n_{\tau}}$ for all $\tau \in \Gamma[2]^{**}$.
	\end{definition*}
	
	\begin{lemma}\label{LemAEF}
		Let $p \in \cO_{E,T}$ be a prime element of degree $1$ over $K$ such that
		\begin{enumerate}[label={\rm (PH\arabic*)}]
			\item $\prod_{w \in T}(p,p^{\sigma})_{w,n}=1$ for all $\sigma \in \Gamma^*$,
			\item $\prod_{w \in T}(p,p^{\tau}-p)_{w,n^-_{\tau}}=1$ for all $\tau \in \Gamma[2]^*$,
			\item $\prod_{w \in T}(p,p^{\tau}-p)_{w,n_{\tau}}=1$ for all $\tau \in \Gamma[2]^{**}$.
		\end{enumerate}
		Then the augmented function $(f_p,h_p)$ defined by
		\[
		f_p(\sigma):=\addleg{p^{\sigma}}{p}_n,\quad
		h_p(\tau)\coloneqq \operatorname{hs}_{\tau,n_{\tau}}(p) \in (\Z/n_{\tau}\Z)(1),
		\]
		is even.
	\end{lemma}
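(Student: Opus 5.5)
We must check three things: $f_p^{\dagger}=f_p$; $(1+\chi(\tau))\mid f_p(\tau)$ for $\tau\in\Gamma[2]^*$; and $2h_p(\tau)\equiv f_p(\tau)\bmod n_{\tau}$ for $\tau\in\Gamma[2]^{**}$. Each of the three hypotheses (PH1)--(PH3) feeds exactly one of these. Throughout we use that $p$ has degree $1$ over $K$, so the primes $p^{\sigma}\cO_{E,T}$, $\sigma\in\Gamma$, are pairwise distinct. In particular $p$ and $p^{\sigma}$ are coprime for $\sigma\neq\id$, and every residue symbol and half-spin symbol below is defined.

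\emph{Step 1: symmetry $f_p^{\dagger}=f_p$.} We apply power reciprocity in the form used in the proof of Proposition \ref{PropSelfResidues}(1) to the coprime pair $p,p^{\sigma}$. This gives $\addleg{p^{\sigma}}{p}_n-\addleg{p}{p^{\sigma}}_n=\sum_{w\in T}[p,p^{\sigma}]_{w,n}$ (up to sign conventions), and the right-hand side vanishes by (PH1). Galois equivariance of the residue symbol then gives $\addleg{p}{p^{\sigma}}_n=\sigma\bigl(\addleg{p^{\sigma^{-1}}}{p}_n\bigr)$, and $\sigma$ acts on $(\Z/n\Z)(1)$ by $\chi(\sigma)$. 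Hence $f_p(\sigma)=\chi(\sigma)f_p(\sigma^{-1})=f_p^{\dagger}(\sigma)$.

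\emph{Step 2: divisibility.} Take $\tau\in\Gamma[2]^*$ and set $m=n^-_\tau$, so that $\tau$ acts by $-1$ on $\mu_m$. Spin reciprocity (Lemma \ref{Lem:reciprocity}) applies with this $m$ and gives $\leg{p}{p^\tau}_m=\prod_{w\in T}(p,p^\tau-p)_{w,m}$, which is trivial by (PH2). By Step 1, $\leg{p}{p^\tau}_m$ is conjugate to $\leg{p^\tau}{p}_m$, so $f_p(\tau)\equiv0 \bmod m$. It remains to check that $v_\ell(n^-_\tau)\ge v_\ell(n)-v_\ell(1+\chi(\tau))$ for every prime $\ell$, i.e.\ that $(1+\chi(\tau))f_p(\tau)$'s divisibility condition follows from divisibility by $n^-_\tau$. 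This is a local computation on $\chi(\tau)$ prime by prime, and needs care at $\ell=2$. Where $n^-_\tau$ alone falls short (for instance when $\chi(\tau)=1$ at $\ell$), we also use $f_p(\tau)=\chi(\tau)f_p(\tau)$ from Step 1, i.e.\ $(1-\chi(\tau))f_p(\tau)=0$, and combine the two constraints.

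\emph{Step 3: half-spin relation.} For $\tau\in\Gamma[2]^{**}$, Proposition \ref{PropSelfResidues}(1) applied over $E$ with $n_\tau$ gives $2\operatorname{hs}(p)=\addleg{p^\tau}{p}_{n_\tau}+\sum_{v\in T}[p^\tau-p,p]_{v,n_\tau}$. The local sum vanishes by (PH3), after using antisymmetry of the Hilbert symbol to match the form of (PH3). Since $\addleg{p^\tau}{p}_{n_\tau}$ is the reduction of $f_p(\tau)$ modulo $n_\tau$, this gives $2h_p(\tau)\equiv f_p(\tau)\bmod n_\tau$.

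The main obstacle is Step 2. Step 1 only requires keeping track of which prime each symbol is taken modulo, and of the twist, and Step 3 is essentially Proposition \ref{PropSelfResidues}(1) verbatim. In Step 2, by contrast, the arithmetic of $1+\chi(\tau)$ against $n^-_\tau$ must be checked prime by prime, especially in the $2$-primary part.
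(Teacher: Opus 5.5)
Your three steps are the paper's proof. Step 1 is power reciprocity under (PH1) plus Galois equivariance. Step 3 is Proposition~\ref{PropSelfResidues}(1) with $n_\tau$ together with (PH3). In Step 2 you correctly obtain $f_p(\tau)\equiv 0 \bmod n^-_\tau$ from spin reciprocity and conjugation by $\tau$, exactly as the paper does.

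The gap is in how you close Step 2. The inequality $v_\ell(n^-_\tau)\ge v_\ell(n)-v_\ell(1+\chi(\tau))$ is the criterion for $(1+\chi(\tau))f_p(\tau)=0$, not for $(1+\chi(\tau))\mid f_p(\tau)$. It is false in general. Take an odd $\ell$ with $\chi(\tau)\equiv 1 \bmod \ell^{v_\ell(n)}$, for instance $K=\Q(\zeta_3)$, $E=K(\sqrt2)$, $n=6$, $\ell=3$. There the left side is $0$ and the right side is $v_\ell(n)$. Your extra relation $(1-\chi(\tau))f_p(\tau)=0$ is vacuous at such $\ell$, so the plan you sketch would stall.

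What evenness actually asks is that $f_p(\tau)$ lie in $(1+\chi(\tau))\cdot(\Z/n\Z)(1)$. Since $\zeta_r^{\tau}=\zeta_r^{\chi(\tau)}$, one has $n^-_\tau=\gcd(n,1+\chi(\tau))$. Hence $(1+\chi(\tau))\cdot(\Z/n\Z)(1)=n^-_\tau\cdot(\Z/n\Z)(1)$, which is precisely the kernel of reduction modulo $n^-_\tau$. So Step 2 ends in one line: no prime-by-prime analysis, no special care at $2$, and no input from Step 1. There is no real obstacle in this lemma.
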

	\begin{proof}
		For $\sigma \in \Gamma^*$, (PH1) gives $\addleg{p^{\sigma}}{p}_n= \addleg{p}{p^{\sigma}}_n$ by power-symbol reciprocity. Since $\addleg{p}{p^{\sigma}}_n=\addleg{p^{\sigma^{-1}}}{p}_n^{\sigma}$, this gives $f_p^{\dagger}=f_p$. 
		
		For $\tau \in \Gamma[2]^*$, (PH2) gives $\addleg{p^{\sigma}}{p}_{n^-_{\tau}}=0$  by spin reciprocity (Lemma \ref{Lem:reciprocity}). Since $\addleg{p^{\tau}}{p}_{n^-_{\tau}}$ is the reduction of $\addleg{p^{\tau}}{p}_{n}$ under the reduction modulo $n^-_{\tau}$ map $(\Z/n\Z)(1)\to (\Z/n^-_{\tau}\Z)(1)$, this shows that $f_p(\tau)$ is a multiple of $n^-_{\tau}$. As $n^-_{\tau}=\gcd(n,1+\chi(\tau))$, this proves that $f_p$ is even.
		
		Finally, for $\tau \in \Gamma[2]^{**}$, (PH3) gives $2\operatorname{hs}_{\tau,n_{\tau}}(p)=\addleg{p^{\tau}}{p}_{n_\tau}$ by Proposition \ref{PropSelfResidues}(1). As above, $\addleg{p^{\tau}}{p}_{n_{\tau}}$ is the reduction of $\addleg{p^{\tau}}{p}_{n}=f_p(\tau)$ modulo $n_{\tau}$, and we conclude.
	\end{proof}
	
	\begin{proof}[Proof of Theorem \ref{ThmShafarevich}]
		By Lemma \ref{LemBMidele}, (H1) tells that the idele $(x_w)_{w \in M_E}$ extending $(x_w)_{w \in T}$ by setting $x_w:=1$ if $w \notin T$, lies in the kernel of the Artin reciprocity map
		\[
		\text{rec}_{L \cap E^{ab}/E}:\I_E \to C_E \to \Gal(L \cap E^{ab}/E). 
		\]
		So this idele is compatible with the trivial conjugacy class of $\Gal(L/E)$, and Theorem \ref{Thmneq10} (Hecke-Chebotarev density) gives a prime element $p_0\in \cO_{E,T}$ with degree $1$ over $\Q$, that splits completely in $L$, approximates $x_w$ arbitrarily well for finite $w \in T$, and whose Minkowski direction approximates the direction of $(x_w)_{w \in M_E^{\infty}} \in \prod_{w \in M_E^{\infty}}E_w=E \otimes \R$ arbitrarily well.
		
		\vskip1mm
		
		Let $E_T^*:=\prod_{w \in T}E_w^*$, and let $\R_{>0}$ act by diagonal multiplication on the archimedean factors.
		Since the natural image of $p_0$ in the quotient $E_T^*/\R_{>0}$ approximates the image of $(x_w)_{w \in T}$, and hypothesis (H2)-(H4) are invariant under the $\R_{>0}$-action, these hypotheses must also hold for the diagonal image of $p_0$ in $E_T$, and so translate into (PH1)-(PH3) for $p_0$. Thus $(f,h):=(f_{p_0},h_{p_0})$ is an even augmented function by Lemma \ref{LemAEF}.
		
		\vskip1mm
		
		Choose now arbitrarily small neighbourhoods $U_w\subset E_w^*$ of $1$ for all finite $w \in T$, and a neighbourhood $U_{\infty}$ of the trivial direction $1 \cdot \R_{>0}$ in $(E \otimes \R)/\R_{>0}$. We define
		\[
		\cS:=\left\{p\in \cO_{E,T} \text{ prime}: \, \begin{gathered}
		p \in U_w\text{ for all finite }w \in T,\  p\cdot \R_{>0} \in U_{\infty},\\
		p \text{ has degree $1$ over }\Q \text{ and splits completely in }L , \\ 
		\addleg{p_0^{\sigma}}{p}_n=-\frac{1}{k}\cdot f(\sigma) \text{ for all }\sigma\in \Gamma^*.
		\end{gathered} \right\}.
		\]
		Note that $\cS$ is infinite by Corollary \ref{Thmneq10MainCoro}. More precisely, since $\addleg{\alpha}{p}_n=\chi_{\alpha}(\Frob_p)$ for any $\alpha \in E^*/E^{*n}$ whose associated character $\chi_{\alpha}$ is unramified at $p$, the infinitude of $\cS$ follows from the corollary applied to the characters $\chi_{p_0^{\sigma}},\, \sigma \in \Gamma^*$ (and no Heisenberg representations).
		We shall create $\cM$ as a subset of $\{p_0\} \times \cS^k$.
		
		To an element $x \in \cO_{E,T}$ that factors as
		\[
		x=p_0p_1\cdots p_k, \ \ \text{ with }p_i \in \cS \text{ for }i \geq 1,
		\]
		and $p_1,\ldots,p_k$ generating pairwise distinct prime ideals, we associate a $(k \times k)$-matrix 
		\begin{align*}
		\mathbf X:= \left(
		\begin{array}{c|c|c}
		(X_{1,1},h_{1}) & \cdots & X_{1,k} \\
		\hline
		\cdots & \cdots & \cdots \\
		\hline
		X_{k,1} & \cdots & (X_{k,k},h_{k}) \\
		\end{array}
		\right),
		\end{align*}
		whose off-diagonal (resp.\ diagonal) entries are functions (resp.\ augmented functions) from $\Gamma^*$ to $(\Z/n\Z)(1)$,
		defined by
		\begin{align*}
		X_{i,j}(\sigma) &\coloneqq \addleg{{p}_i^{\sigma}}{{p}_j}_n=\addleg{{p}_j}{{p}_i^{\sigma}}_n \in (\Z/n\Z)(1), \ \sigma \in \Gamma^*, \ i,j = 1,\ldots,k,\\
		h_i(\tau) &\coloneqq \operatorname{hs}_{\tau}(p_i), \ \tau \in \Gamma[2]^{**}, \ i = 1,\ldots,k.
		\end{align*}
		
		The identity $\addleg{{p}_i^{\sigma}}{{p}_j}_n=\addleg{{p}_j}{{p}_i^{\sigma}}_n$ written above comes from power-symbol reciprocity, since both $p_i$ and $p_j$ are close to $1$ at all places in $T$.
		
		\vskip1mm
		
		The following hold true for any $x$ as above:
		\begin{align}
		\label{Eq5.10}
		&X_{i,j}=X_{j,i}^{\dagger}  &\text{for all } i,j;\\
		&(X_{i,i},h_i)  \text{ is an even augmented function }&\text{for all } i;
		\end{align}
		
		The first is the simple observation that $\addleg{{p}_j}{{p}_i^{\sigma}}_n=\addleg{({p}_j)^{\sigma^{-1}}}{{p}_i}_n^{\sigma}$. The second is just a special case of Lemma \ref{LemAEF}.
		
		\vskip1mm 
		
		For $x$ as above, we can compute $\addleg{x}{p_i^{\sigma}}_n$ in terms of $f$ and $\mathbf X$:
		\begin{align*}
		\addleg{x}{p_0^{\sigma}}_n&=\sum_{i=0}^k \addleg{p_i}{p_0^{\sigma}}_n=\sum_{i=0}^k \addleg{p_0^{\sigma}}{p_i}_n=\addleg{p_0^{\sigma}}{p_0}_n-k\cdot\frac{1}{k}\cdot \addleg{p_0^{\sigma}}{p_0}_n=0,\\
		\addleg{x}{p_i^{\sigma}}_n&=\addleg{p_0}{p_i^{\sigma}}_n+\sum_j\addleg{p_j}{p_i^{\sigma}}_n=\addleg{(p_0)^{\sigma^{-1}}}{p_i}_n^{\sigma}+\sum_j\addleg{p_i^{\sigma}}{p_j}_n\\
		&=-\frac1k\cdot\addleg{(p_0)^{\sigma^{-1}}}{p_0}_n^{\sigma}+\sum_j X_{i,j}(\sigma)=-\frac1kf^{\dagger}(\sigma)+\sum_j X_{i,j}(\sigma)\\
		&=-\frac1kf(\sigma)+\sum_j X_{i,j}(\sigma),
		\end{align*}
		for all $\sigma \in \Gamma^*$ and $i=1,\ldots,k$. In the identities above we repeatedly used power-symbol reciprocity to infer that $\addleg{p_j}{p_i^{\sigma}}_n=\addleg{p_i^{\sigma}}{p_j}_n$ for all $i,j \in \{0,\ldots,k\}$ (the obstruction $\prod_{w \in T}(p_i^{\sigma},p_j)_{w,n}$ to this identity vanishes for all pairs $i,j$, as when either $i$ or $j$ is $>0$ then at least one among $p_i$ or $p_j$ is close to $1$ at all places in $T$, while for $i=j=0$ this is (PH1) for $p_0$). 
		We can also compute the half-spin symbols of $x$ in terms of  $(f,h)$ and $\mathbf X$ for all $\tau \in \Gamma[2]^{**}$:
		\begin{align*}
		\operatorname{hs}_{\tau}(x) &\equiv \operatorname{hs}_{\tau}(p_0)+\sum_i \addleg{p_i}{p_0^{\tau}}_n+ \operatorname{hs}_{\tau}(p_1\cdots p_k) &\bmod\,n_{\tau} \\
		&\equiv h(\tau)- f(\tau)+\sum_i \operatorname{hs}_{\tau}(p_i) + \sum_{i<j} X_{i,j}(\tau)& \bmod\,n_{\tau}\\
		&\equiv -h(\tau)+\sum_i h_i(\tau) + \sum_{i<j} X_{i,j}(\sigma) & \bmod\,n_{\tau},
		\end{align*}
		where the first congruence is obtained applying recursively point (ii) of Proposition \ref{PropSelfResidues} to the products $p_0(p_1\cdots p_k), p_1(p_2\cdots p_k),\ldots, p_{k-1}p_k$, and the third holds because $(f,h)$ is an even augmented function.
		
		\vskip1mm
				
		In the following, $\Fun(\Gamma^*,(\Z/n\Z)(1))$ denotes the group of functions from $\Gamma^*$ to $(\Z/n\Z)(1)$.
		
		\begin{lemma}\label{Lem2}
			Let $(f,h), (\delta,r), f,\delta:\Gamma^* \to (\Z/n\Z)(1)$ be even augmented functions. There exist functions $X_{i,j} \in \Fun(\Gamma^*, (\Z/n\Z)(1)), 1 \leq i < j \leq k$, such that letting 
			\[
			\mathbf X := \left(
			\begin{array}{c|c|c}
			(X_{1,1},h_{1}) & \cdots & X_{1,k} \\
			\hline
			\cdots & \cdots & \cdots \\
			\hline
			X_{k,1} & \cdots & (X_{k,k},h_{k}) \\
			\end{array}
			\right)
			\]
			be the matrix defined by $X_{j,i}:=X_{i,j}^{\dagger}$ for $i > j$, and $(X_{i,i},h_i):=(\delta,r)$ for all $i$, the following hold:
			\begin{enumerate}[label={\rm (\arabic*)}]
				\item $k \cdot \sum_{j} X_{i,j}=f$ for all $i$;
				\item $\sum_i h_i(\tau) + \sum_{i<j} X_{i,j}(\tau) \equiv h(\tau) \bmod n_{\tau}$ for all $\tau \in \Gamma[2]^{**}$. 
			\end{enumerate}
		\end{lemma}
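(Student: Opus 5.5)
The plan is to prescribe the off-diagonal entries by a cyclic pattern, which gives condition (1), and then repair condition (2) by a perturbation that leaves every row sum unchanged. Since $(k,n)=1$, condition (1) reads $\sum_j X_{i,j}=f/k$ for all $i$. With $(X_{i,i},h_i)=(\delta,r)$ this becomes $\sum_{j\neq i}X_{i,j}=c$, where $c:=f/k-\delta$. The operation ${}^{\dagger}$ is additive and commutes with multiplication by scalars in $(\Z/n\Z)$, so $c$ is again even. That is, $c^{\dagger}=c$ and $(1+\chi(\tau))\mid c(\tau)$ for $\tau\in\Gamma[2]^*$.

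\emph{Step 1 (a splitting of $c$).} I first construct $a\in\Fun(\Gamma^*,(\Z/n\Z)(1))$ with $a+a^{\dagger}=c$.
\begin{itemize}
\item For each pair $\{\sigma,\sigma^{-1}\}$ with $\sigma\neq\sigma^{-1}$, put $a(\sigma):=c(\sigma)$ and $a(\sigma^{-1}):=0$. The identity $c^{\dagger}=c$ then gives $a+a^{\dagger}=c$ at both $\sigma$ and $\sigma^{-1}$.
\item For $\tau\in\Gamma[2]^*$ the requirement is $(1+\chi(\tau))a(\tau)=c(\tau)$. This is solvable precisely because of the divisibility part of the evenness of $c$; this is where evenness is used.
\end{itemize}

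\emph{Step 2 (cyclic pattern).} Since $k\ge 3$, the cycle $1\to2\to\cdots\to k\to1$ has $k$ distinct edges. For $1\le i<k$ set $X_{i,i+1}:=a$, set $X_{1,k}:=a^{\dagger}$, and set all other $X_{i,j}$ with $i<j$ equal to $0$. The convention $X_{j,i}=X_{i,j}^{\dagger}$ then gives the row sums below.
\begin{itemize}
\item Row $1$: $a+a^{\dagger}=c$.
\item Row $i$ with $1<i<k$: $a^{\dagger}+a=c$.
\item Row $k$: $a^{\dagger}+a^{\dagger\dagger}=c$, using $a^{\dagger\dagger}=a$.
\end{itemize}
Hence (1) holds.

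\emph{Step 3 (condition (2)).} Fix $\tau\in\Gamma[2]^{**}$. Since $\tau$ fixes $\mu_{n_\tau}$, we have $\chi(\tau)\equiv1 \bmod n_\tau$, hence $a^{\dagger}(\tau)\equiv a(\tau)$ and $2a(\tau)\equiv c(\tau) \bmod n_\tau$. The left side of (2) is $kr+(k-1)a(\tau)+a^{\dagger}(\tau)$. Let $d$ be its difference from $h(\tau)$. Using $2h\equiv f$ and $2r\equiv\delta \bmod n_\tau$ (evenness of the augmented functions), the doubled difference is
\[
2d\equiv k\delta+(k-1)c+c-f = k\delta+kc-f=0 \bmod n_\tau .
\]
So the only obstruction is a $2$-torsion defect $d\in\{0,n_\tau/2\}$ in $(\Z/n_\tau\Z)(1)$, and it can only occur when $n_\tau$ is even.

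The main obstacle is to remove this defect simultaneously for all $\tau$ without destroying (1). I would add a perturbation $P$ with $P_{j,i}=P_{i,j}^{\dagger}$, $P_{i,i}=0$ and all row sums zero, supported on a triangle, say the vertices $1,2,3$. For instance, take $P_{1,2}=P_{2,3}=-P_{1,3}=t$ with $t$ supported on $\Gamma[2]^{**}$ and $t^{\dagger}=-t$ there, i.e.\ $(1+\chi(\tau))t(\tau)=0 \bmod n$. The row-sum conditions and the effect on $\sum_{i<j}X_{i,j}(\tau)$, which is $t(\tau)$, then have to be checked. The remaining point is to choose $t(\tau)$, annihilated by $1+\chi(\tau)$, whose reduction mod $n_\tau$ equals $d$. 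If the triangle fails, I would try a $4$-cycle with alternating signs. Alternatively, one can note that $a(\tau)$ itself is determined only up to the annihilator of $1+\chi(\tau)$, and changing it by $u$ changes the left side of (2) by $(k-2)u\bmod n_\tau$, as in the doubled computation. I would then check whether such a $u$ can be chosen with $(k-2)u\equiv d\bmod n_\tau$.
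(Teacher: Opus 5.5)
\textbf{There is a genuine gap: the proposal stops at the one non-formal step.} Steps 1--3 are correct. The cyclic pattern gives (1); it is a legitimate alternative to the paper's star-plus-triangle matrix, which reduces everything to the single system $a+a^{\dagger}=g$, $a(\tau)\equiv\eta(\tau)\bmod n_\tau$. The defect $d$ is indeed $2$-torsion in $(\Z/n_\tau\Z)(1)$. Your triangle $P_{1,2}=P_{2,3}=-P_{1,3}=t$ keeps all row sums zero and shifts $\sum_{i<j}X_{i,j}(\tau)$ by $t(\tau)$ whenever $(1+\chi(\tau))t(\tau)=0$.

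What is missing is the existence of some $t(\tau)\in\Z/n\Z$ with $(1+\chi(\tau))t(\tau)=0$ and $t(\tau)\equiv n_\tau/2\bmod n_\tau$. Since $(1+\chi(\tau))n_\tau=0$ always, such $t(\tau)$ exists if and only if $\frac{n_\tau}{2}(1+\chi(\tau))=0$ in $\Z/n\Z$. This is exactly the fact the paper invokes before adding $n_\tau/2$ to $a(\tau)$. Your argument never supplies it. The only input you use, $\chi(\tau)\equiv1\bmod n_\tau$, holds for \emph{every} involution. Yet the needed identity fails for general $\tau\in\Gamma[2]^*$: for $n=8$, $\chi(\tau)=5$ one has $n_\tau=4$, and the annihilator $\{0,4\}$ of $6$ in $\Z/8\Z$ reduces to $0$ mod $4$.

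Your fallbacks cannot get around this. Adding the $k$ row-sum equations at $\tau$ shows that any modification preserving (1) changes $S(\tau):=\sum_{i<j}X_{i,j}(\tau)$ by an element killed by $1+\chi(\tau)$. So a $4$-cycle, or re-choosing $a(\tau)$ (note $k-2$ is odd), runs into the same condition.

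So you still need to prove $\frac{n_\tau}{2}(1+\chi(\tau))=0$ for $\tau\in\Gamma[2]^{**}$. Be careful here. A prime-by-prime check shows this holds if and only if $\chi(\tau)\equiv\pm1\bmod 2^{v_2(n)}$. When $v_2(n)\le 3$ this agrees with the definition of $\Gamma[2]^{**}$. When $v_2(n)\ge 4$, however, consider an involution with $\chi(\tau)\equiv 2^{v_2(n)-1}+1\bmod 2^{v_2(n)}$. It satisfies $\chi(\tau)\equiv 1\bmod 8$, so it lies in $\Gamma[2]^{**}$ as literally defined, and the statement fails for it. Take $f=\delta=0$, $r=0$ and $h(\tau)=n_\tau/2$. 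Then (1) forces $(1+\chi(\tau))S(\tau)=0$, hence $S(\tau)\equiv 0\bmod n_\tau$, contradicting (2).

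Thus the paper's parenthetical justification (``equivalent to the definition of $**$'') is valid only if $\Gamma[2]^{**}$ is read as $\{\tau\in\Gamma[2]^*:\chi(\tau)\equiv\pm1\bmod 2^{v_2(n)}\}$. Under that reading, taking $t(\tau)=n_\tau/2$ in your triangle completes your proof.
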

		
		By the computations before the lemma, (1) and (2) give that for an $x=p_0 p_1 \cdots p_k$ with associated matrix $\mathbf X$, all symbols $\addleg{x}{p_i^{\sigma}}_n$ and $\operatorname{hs}_{\tau}(x)$ become trivial. 
		
		\begin{proof}[Proof of Lemma \ref{Lem2}]
			For a matrix $M=(m_{i,j})_{1 \leq i,j\leq k},\, m_{i,j} \in \Fun(\Gamma^*, (\Z/n\Z)(1))$, we denote by $M^\dag$ the ``adjoint'' matrix $M^\dag=(m_{j,i}^{\dag})$. We say that $M$ is {\em even} if $m_{j,i}=m_{i,j}^{\dagger}$ for all $i,j$ and $m_{i,i}$ is even for all $i$.
			
			\vskip1mm
			
			We rewrite (2) as 
			\begin{itemize}
				\item[(2$^\prime$)]  $k \cdot r (\tau) + \sum_{i<j} X_{i,j}(\tau) \equiv {{h}}(\tau) \bmod n_{\tau}$ for all $\tau \in \Gamma[2]^{**}$.
			\end{itemize}
			We are then looking for an even matrix $(X_{i,j})_{1 \leq i,j \leq k}$ with $X_{i,i}=\delta$ for all $i$ and that satisfies (1) and (2$^\prime$). We restrict our search among matrices $(X_{i,j})$ of the shape
			\begin{equation}\label{Matrix}
			\left(
			\begin{array}{cccc|ccc}
			\delta & 0 & \cdots & 0 & 0 & 0 & x_1 \\
			0 & \delta & \cdots & 0 & 0 & 0 & x_2 \\
			\vdots & \vdots & \ddots & \vdots & \vdots & \vdots & \vdots \\
			0 & 0 & \cdots & \delta & 0 & 0 & x_{k-3} \\
			\hline
			0 & 0 & \cdots & 0 & \delta & a & b \\
			0 & 0 & \cdots & 0 & a^{\dagger} & \delta & c \\
			x_1^{\dagger} & x_2^{\dagger} & \cdots & x_{k-3}^{\dagger} & b^{\dagger} & c^{\dagger} & \delta
			\end{array}
			\right),
			\end{equation}
			where $a,b,c,x_1,\ldots,x_{k-3}$ indicate unknowns in $\Fun(\Gamma^*,(\Z/n\Z)(1))$, for whose values we solve below. For matrices of this shape, (1) and (2$^\prime$) reduce to the system:
			\[
			\begin{cases}
			\delta+x_1=\frac1k\cdot f \\
			\cdots \\
			\delta + x_{k-3}=\frac1k\cdot f \\
			\delta + a +b =\frac1k\cdot f\\
			a^{\dagger} + \delta + c=\frac1k\cdot f\\
			x_1^{\dagger}+ \cdots +x_{k-3}^{\dagger} + b^{\dagger} + c^{\dagger} + \delta = \frac1k\cdot f \\
			k \cdot r(\tau)+ a(\tau)+b(\tau)+c(\tau) + \sum_{i \leq k-3}x_i(\tau) \equiv  {{h}}(\tau) \bmod n_{\tau}, &\text{ for all }\tau \in \Gamma[2]^{**}.
			\end{cases}
			\]
			We use the first $k-2$ equations to eliminate the unknowns $b,c$ and $x_i, i \leq k-3$ by setting $x_i \coloneqq \frac1k\cdot f-\delta$ for $i \leq k-3$, $b \coloneqq  \frac1k\cdot f - \delta -a$, and $c \coloneqq \frac1k\cdot f - \delta -a^{\dagger}$. Substituting in the last two equations, and using $\delta^{\dagger}=\delta, f^{\dagger}=f$, the system reduces to:
			\[
			\begin{cases}
			a+a^{\dagger}  =  (k-2)\cdot (\frac1k\cdot f-\delta) \\
			a(\tau)\equiv -{{h}}(\tau)+k \cdot r(\tau) + (k-1)(\frac1k\cdot f(\tau)-\delta(\tau))  \bmod n_{\tau}, &\text{ for all }\tau \in \Gamma[2]^{**},
			\end{cases}
			\]
			or equivalently, using that $\delta(\tau) \equiv 2 r (\tau)$ and $f(\tau) \equiv 2{{h}}(\tau)$ modulo $n_{\tau}$ for all $\tau \in \Gamma[2]^{**}$ (both $(f,{{h}})$ and $(\delta,r)$ are even augmented functions):
			\[
			\begin{cases}
			a+a^{\dagger}  = (k-2)\cdot (\frac1k\cdot f-\delta)\\
			a(\tau) \equiv (k-2)\cdot (\frac1k\cdot {{h}}(\tau)-r(\tau))  \bmod n_{\tau}, &\text{ for all }\tau \in \Gamma[2]^{**}.
			\end{cases}
			\]
			Linear combinations with coefficients in $\Z/n\Z$ of augmented functions are also augmented functions, and thus such is the linear combination $(g,\eta):=(k-2)\cdot (\frac1k\cdot (f,{{h}})-(\delta,r))$. We may rewrite the system above as:
			\begin{equation}\label{LastSytem}
			\begin{cases}
			a+a^{\dagger}  = g\\
			a(\tau) \equiv \eta(\tau)  \bmod n_{\tau}, &\text{ for all }\tau \in \Gamma[2]^{**}.
			\end{cases}
			\end{equation}
			We observe that for an $a$ that satisfies the first equation, the second already automatically holds modulo $n_{\tau}/2$ as the first equation gives $(1+\chi(\tau))a(\tau)=g(\tau)$ for all $\tau \in \Gamma[2]^*$, and thus $2a(\tau) \equiv g(\tau) \equiv 2\eta(\tau) \bmod n_{\tau}$ for $\tau \in \Gamma[2]^{*}$.
			
			\vskip1mm
			
			We now define $a:\Gamma^* \to (\Z/n\Z)(1)$ as follows:
			\[
			a(\sigma):=\begin{cases}
			g(\sigma) & \text{ if } \sigma \in \Gamma_+, \\
			0 & \text{ if } \sigma \in \Gamma_+^{-1}, \\
			\frac{g(\tau)}{1+\chi(\tau)} & \text{ if } \tau \in \Gamma[2]^*,
			\end{cases}
			\]
			where the fraction indicates any element of $(\Z/n\Z)(1)$ that multiplied by $(1+\chi(\tau))$ is equal to $g(\tau)$. For each $\tau\in \Gamma[2]^*$, at least one such value of the fraction exists because $g$ is even. Any choice of value for the fractions solves the first equation of \eqref{LastSytem}. Let us argue that we may further choose these values so that the second holds as well.
			
			As noted above, we already have $a(\tau)\equiv \eta(\tau) \bmod n_{\tau}/2$ for all $\tau \in \Gamma[2]^{**}$  from the first equation. Now observe that $\frac{n_{\tau}}2(1+\chi(\tau))=0$ in $\Z/n\Z$ for all $\tau\in \Gamma[2]^{**}$ (this product being $0$ is equivalent to the definition of ``$**$''), and so we may add $n_{\tau}/2$ to the value of the fractions $\frac{g(\tau)}{1+\chi(\tau)}$, if necessary, and obtain the sought congruence.
			
			Now the system is solved and \eqref{Matrix} is the sought matrix $X_{i,j}$, concluding the proof.
		\end{proof}
		
		For each even augmented function $\boldsymbol{\delta}=(\delta,r),\, \delta:\Gamma^* \to (\Z/n\Z)(1)$, we fix a matrix $\mathbf X^{\boldsymbol{\delta}}$ as given by Lemma \ref{Lem2}. We indicate its off-diagonal entries by $X_{i,j}^{\boldsymbol{\delta}}$. 
		
		\vskip1mm
		
		To construct the sought grid, consider one copy of the following rectangular $(k \times M)$ tile for each $\boldsymbol{\delta}$:
		\[
		\begin{array}{|c|ccc|c|}
		\hline
		& & \cdots && \\
		\hline
		&&  \cdots && \\
		\hline
		\vdots &&  \vdots && \vdots  \\
		\hline
		&&  \cdots &&  \\
		\hline
		\end{array}_{\ \boldsymbol{\delta}}
		\]
		
		We shall use the indices $i,j,\ldots$ to indicate the rows of the tiles, and the indices $a,b,c,\ldots$ for the columns. We order the slots $(i,a)$ of each tile lexicographically, i.e.\ $(i,a) < (j,b)$ if $i < j$ or if $i=j$ and $a < b$. So whenever we say ``first slot of a tile such that ...'', we always refer to the first slot {\em in this lexicographic order.}
			
		We wish to fill at least one of the tiles with prime elements of $\cS$ whose associated even augmented functions are equal to the index of the tile, and such that, denoting by $A_1,\ldots,A_k$ the row vectors of this tile, the grid $p_0 \cdot A_1 \cdot\, \cdots\, \cdot A_k$ satisfies (1)-(6).
		
		To achieve this goal, we recursively fill the tiles lexicographically, i.e.\ whenever we place a new element we place it in the first free slot of some tile (but which tile we choose will depend on the element), with elements of $\cS$, until one is completely filled. 
		
		\vskip1mm
		
		At the first step, pick any $p \in \cS$, and then place it in the top left corner of the tile with index $\boldsymbol{\delta}=\boldsymbol{\delta}(p):=(f_p,h_p)$.
		
		\vskip1mm
		
		At the recursive step, each tile is partially filled as in the following figure:
		\[
		\begin{array}{|c|c|c|c|c|c|}
		\hline
		* & \cdots & * & * & \cdots  & * \\
		\hline
		\vdots &&&   && \vdots  \\
		\hline
		* & \cdots & * &  * & \cdots & * \\
		\hline
		* & \cdots & * &  &\cdots &  \\
		\hline
		&  & & & &  \\
		\hline
		\vdots &&&   && \vdots  \\
		\hline
		& \cdots & & &\cdots & \\
		\hline
		\end{array}_{\ \boldsymbol{\delta}},
		\]
		where the asterisks denote prime elements in $\cS$. 
        Now pick a prime element $p$ of $\cO_{E,T}$ such that (the existence of such a $p$ is argued right after the list of properties):
		\begin{itemize}
			\item $p$ approximates $1$ arbitrarily well for all finite places $w \in T$ and its Minkowski direction approximates arbitrarily well that of $1$;
			\item we have $\addleg{p_0^\sigma}{p}_n=-\frac{1}{k} \cdot\addleg{p_0^\sigma}{p_0}_n$ for all $\sigma \in \Gamma^*$;
			\item {\em for any $\boldsymbol{\delta}$}, letting $j \geq 0$ be the integer such that exactly the first $j$ rows $A_1,\ldots,A_j$ of the $\boldsymbol{\delta}$-tile are already filled, then $\addleg{p_i^{\sigma}}{p}_n=X_{i,j+1}^{\boldsymbol{\delta}}(\sigma)$ for all $i \in \{1,\ldots,j\},\, p_i \in A_i,\,\sigma \in \Gamma^*$;
			\item {\em for any $\boldsymbol{\delta}$} on whose corresponding tile the first and second rows are already completely filled, and whose first empty slot is of the form $(3,c+1),\, c \in \{1,\ldots,M-1\}$ (i.e.\ lies on the third row and is not on the first column), we have 
			\begin{equation}\label{ConditionRedei}
			[(p_{1,a+1}p_{1,a}^{-1})^{\sigma_1},(p_{2,b+1}p_{2,b}^{-1})^{\sigma_2},p\cdot p_{3,c}^{-1}]_n=g(a,b,c)(\sigma_1,\sigma_2,id),
			\end{equation}
			for all $a,b \in \{1,\ldots,M-1\}^2, \sigma_1,\sigma_2 \in \Gamma^*, \sigma_1 \neq \sigma_2$, where $p_{i,\star}$ denotes the element in the $(i,\star)$-position of the $\boldsymbol{\delta}$-tile;
			\item we have $\addleg{q}{p}_n=0$ for all prime elements $q$ already appearing on the partially filled tiles.
		\end{itemize}
		The existence of such a $p$ is an application of Corollary \ref{Thmneq10MainCoro}. More precisely, note that $\addleg{\alpha}{p}_n=\chi_{\alpha}(\Frob_p)$ for any element $\alpha \in E^*/E^{*n}$ coprime with $p$, and that, for $\alpha,\beta \in E^*/E^{*n}$ and any prime element $p' \in \cO_{E,T}$ such that $\alpha,\beta,p\cdot (p')^{-1}$ satisfies condition ($C'$)$_{T}$ from Section \ref{Sec4}, we have
		\begin{align*}
		[\alpha,\beta,p\cdot (p')^{-1}]_n &= \Frob_p(E(\sqrt[n]{\alpha},\sqrt[n]{\beta})_2/E)-\Frob_{p'}(E(\sqrt[n]{\alpha},\sqrt[n]{\beta})_2/E)\\
		&= \psi_{\alpha,\beta}(\Frob_p)-\psi_{\alpha,\beta}(\Frob_{p'}) , 
		\end{align*}
		where $E(\sqrt[n]{\alpha},\sqrt[n]{\beta})_2$ denotes any Heisenberg extension of $E$ as defined in Section \ref{Sec4} (see in particular Lemma \ref{Lem3.8}) and $\rho_{\alpha,\beta}=\begin{pmatrix}
		1 & \chi_{\alpha} & \psi_{\alpha,\beta} \\
		0 & 1 & \chi_{\beta} \\
		0 & 0 & 1
		\end{pmatrix}$ denotes its associated Heisenberg representation.    
		So the conditions on $p$ may be imposed by applying Corollary \ref{Thmneq10MainCoro} to 
		\begin{itemize}
			\item the characters $\chi_{\pi^{\sigma}},$ where $\pi$ varies among $p_0$ and the prime elements already appearing on the partially filled tiles and $\sigma$ varies in $\Gamma$, and 
			\item the Heisenberg representations
			\[
			\rho_{\boldsymbol{\delta},a,b,\sigma_1,\sigma_2}:=\begin{pmatrix}
			1 & \chi_{(p_{1,a+1}p_{1,a}^{-1})^{\sigma_1}} & \psi_{\boldsymbol{\delta},a,b,\sigma_1,\sigma_2} \\
			0 & 1 & \chi_{(p_{2,b+1}p_{2,b}^{-1})^{\sigma_2}} \\
			0 & 0 & 1
			\end{pmatrix},
			\]
			associated to (a choice of) Heisenberg extensions 
            \[
            E\left(\sqrt[n]{(p_{1,a+1}p_{1,a}^{-1})^{\sigma_1}},\sqrt[n]{(p_{2,b+1}p_{2,b}^{-1})^{\sigma_2}}\right)_2/E
            \]
			where $\boldsymbol{\delta}$ ranges among the augmented even functions for which the first and second row of the $\boldsymbol{\delta}$-tile are already filled, $a$ and $b$ range among elements of $\{1,\ldots,M-1\}$, and $\sigma_1,\sigma_2$ range among pairs in $\Gamma^*$ with $\sigma_1 \neq \sigma_2$.
		\end{itemize}
		
		We now place $p$ on the tile with index $\boldsymbol{\delta}=\boldsymbol{\delta}(p):=(f_p,h_p)$. Note that that the first two conditions guarantee that $p$ lies in $\cS$.
		
		\vskip1mm
		
		By the pidgeonhole principle, after enough steps, one of the tiles is completely filled. Let $A_1,\ldots,A_k$ be its rows, and $\cM:=p_0 \cdot A_1 \cdots A_k$. By construction,
		\begin{align*}\label{Conditionprime}
		&\addleg{{p}_i^{\sigma}}{{p}_j}_n= X_{i,j}^{\boldsymbol{\delta}}(\sigma) \text { for all }1 \leq i<j \leq k, \ p_i \in A_i,p_j \in A_j, \text{ and }\\
		&(f_p,h_p)=(\delta,r) \text{ for all } p \in A:=A_1 \cup \cdots \cup A_k.
		\end{align*}
		By these two conditions, the augmented matrix $\mathbf X(x)$ associated to every $x=p_0p_1\cdots p_k \in \cM$ coincides with the augmented matrix $\mathbf X^{\boldsymbol{\delta}}$ 
		on the entries $(i,j)$ with $i \leq j$. Thus $\mathbf X(x)=\mathbf X^{\boldsymbol{\delta}}$ by the skew-symmetry \eqref{Eq5.10}, and (1)-(5) are satisfied as wished. Moreover, condition \eqref{ConditionRedei} ensures that (6) holds as well when $\sigma_3=\id$, and the $\Gamma$-equivariance of Redéi symbols gives the case of a general $\sigma_3$. Finally, we have 
		\begin{align*}
		&\addleg{{p}_i}{{p}_j}_n= 0 \text { for all }0 \leq i<j \leq k, \ p_i \in A_i,p_j \in A_j, \text{ with }A_0:=\{p_0\},
		\end{align*}
		by the vanishing condition $\addleg{q}{p}_n=0$ that we imposed for all $q$ appearing before $p$. Thus $\addleg{{p}_i}{{p}_j}_n=0$ also for $0 \leq  j <i \leq k$ by power-symbol reciprocity, and condition (7) follows.
	\end{proof}
	
	\section{Lifting local points}\label{Sec10}
	
	Let $f:X \to Q$ be a smooth proper fibration over a Galois quasi-trivial torus $Q=R_{E/K}\G_m$ defined over a number field $K$, endowed with a multi-section $s:Q^{L,n,(1)}\to X$ for some $n$ such that $\mu_n \subset E^*$ and some field extension $L/K$ containing $E$, as in the following diagram
	\[
	\begin{tikzcd}
	& {Q^{L,n,(1)}} \arrow[ld, "s"'] \arrow[d, "p"] \\
	X \arrow[r, "f"] & Q          .                                  
	\end{tikzcd}
	\]
    
	\vskip1mm
	
	In this section, we show how 
	for such an $f$ one may employ the multi-section and the arithmetic results of Section \ref{Sec9} to produce parameters $q \in Q(K)$ and adelic points $(P_v(q))\in X_q(\A_K)^{(\Br X)_+}$.
	
	\vskip2mm
	
	Let $S \subset M_K$ be a large enough finite set of places. More explicitly, let $S$ be large enough so that:
	\begin{itemize}
		\item it contains all archimedean places, all places dividing $2n$, and all places ramifying in $L$;
		\item there exist a smooth proper $\cO_{K,S}$-model $f:\cX \to \cQ$ for the morphism $f$ (we still denote the model with $f$ with an abuse of notation);
		\item the map $\cX(\cO_v) \to \cQ(\cO_v)$ is surjective for all places $v \notin S$.
	\end{itemize}
	
	Combining routine spreading out results, with the addition of Lang--Weil estimates and Hensel's lemma for the third condition, such a choice of $S$ can always be made.
	
	\begin{definition*}
		(1).\ A $K$-point $q \in Q(K)$ is {\em $(L,n)$-liftable} outside $S$ if for all $v\notin S$, $q$ lies in $\cQ(\cO_v)\cup \im({Q^{L,n,(1)}(K_v) \to Q(K_v)})$.
        
		(2).\ A prime-variational grid $\cM\subset \cO_{E,S}$ is {\em $(L,n)$-liftable} outside $S$ if all of its elements are {\em $(L,n)$-liftable} and, for all $v \notin S$ and $q,q' \in \cM \s \cQ(\cO_v)$, the quotient $q'q^{-1}$ belongs to $\im(Q^{L,n}(K_v) \to Q(K_v))$.
	\end{definition*}
	
	\begin{definition*}
		For a group $G$ and a subset $\cS \subset E^*$, a function $f:\cM \to G$ is {\em multiplicative} if it extends to a homomorphism $f:\langle \cS \rangle \to G$, where $\langle \cS \rangle$ denotes the subgroup of $E^*$ generated by $\cS$.
	\end{definition*}
	
	The following lemma guarantees that, for prime-variational grids made of $(L,n)$-liftable elements, local lifts may be chosen multiplicatively.
	
	\begin{lemma}\label{LemCompatibleLifts}
		Let $\cM\subset \cO_{E,S}$ be a prime-variational grid all of whose elements are $(L,n)$-liftable outside $S$ for some $n\geq 2$. Then, for every place $v\notin S$ such that $\cM\not \subset \cQ(\cO_v)$, there exists an embedding $\iota:L \hookrightarrow K_v$ and lifts $q^{L,n}\in {Q^{L,n,(1)}(K_v)}_{\iota}$ for all $q\in \cM \s \cQ(\cO_v)$ such that the function $\cM \s \cQ(\cO_v) \to {Q^{L,n,(1)}(K_v)}_{\iota}: q \mapsto q^{L,n}$ is multiplicative.
		
		Moreover, if $\cM\subset \cO_{E,S}$ is {$(L,n)$-liftable}, then $(q')^{L,n}(q^{L,n})^{-1}$ belongs to $\im ({Q^{L,n}(K_v)}_{\iota} \to {Q^{L,n,(1)}(K_v)}_{\iota})$ for all $q,q' \in \cM \s \cQ(\cO_v)$.
	\end{lemma}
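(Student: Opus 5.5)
The plan is to reduce everything to explicit choices of $n$-th roots, using Lemma \ref{LemLift} to translate liftability into power conditions. Fix $v\notin S$ with $\cM\not\subset\cQ(\cO_v)$. Since $\cM$ is prime-variational, the set of elements of $\cM$ with positive valuation at some place $w\mid v$ of $E$ is governed by a single prime. This prime is either the one of $p_0$, in which case every element of $\cM$ is affected. Or it is the one of some $p_{i,j}$, in which case the affected elements are exactly those with $i$-th coordinate $j$. In both cases $\cM\s\cQ(\cO_v)$ is a subgrid of $\cM$ obtained by fixing at most one coordinate. Every element of it has the form $x_0'\cdot x_{1,i_1}\cdots$ with $x_0'$ fixed, so it is a product of generators of a free abelian group. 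Indeed, the $p$'s generate pairwise distinct primes lying over distinct rational primes, so $\langle\cS\rangle$ is free on them.

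The first step is to choose the embedding. Pick any $q_1\in\cM\s\cQ(\cO_v)$. By $(L,n)$-liftability there is $\iota:L\hookrightarrow K_v$ with $q_1$ in the image of $Q^{L,n,(1)}(K_v)_\iota$. By Lemma \ref{LemLift}, this says $\iota(q_1^\sigma)\in K_v^{*n}$ for all $\sigma\in\Gamma^*$.

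The key claim is that the same $\iota$ works for all $q\in\cM\s\cQ(\cO_v)$ and, more strongly, for each generator separately. Under $\iota$, the relevant prime $\cP$ becomes a place $w$ with $E_w=K_v$. Then $\iota(\pi^\sigma)$ for $\sigma\ne\id$ and $\pi$ any generator other than the distinguished one is a $w$-adic unit, and it is an $n$-th power exactly when $\pi^\sigma$ is an $n$-th power mod $\cP$. Here I would use that $\iota$ must send $E$ to $K_v$ via the embedding attached to a place over which the fixed distinguished prime has positive valuation. Otherwise $q_1$ would be a unit at all conjugates, and then liftability at $v$ says nothing useful. I would rule this out by noting that, in that case, $q_1^{\sigma}$ would have positive valuation for some $\sigma\neq\id$ and so could not be an $n$-th power since the valuation is $1$ (square-freeness). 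Hence every $\iota(q^\sigma)$, $\sigma\ne\id$, is a unit.

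With this, I would define lifts multiplicatively. For each generator $\pi$ appearing in $\cM\s\cQ(\cO_v)$ (the fixed factor $x_0'$ and the varying $p_{i,j}$), choose $n$-th roots of $\iota(\pi^\sigma)$, $\sigma\in\Gamma^*$, whenever these are $n$-th powers. Then set $q^{L,n}$ to be the product of the chosen roots. For this to work generator by generator, I would need each $\iota(\pi^\sigma)$ to be an $n$-th power individually, which is not given directly. The workaround is to use differences: $\iota((q q_1^{-1})^\sigma)$ is an $n$-th power for all $q$. So I fix one lift of $q_1$. For each varying index $i$ I choose roots for the ratios $p_{i,j}p_{i,1}^{-1}$, which are $n$-th powers because they are ratios of liftable elements differing in one coordinate. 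I then assemble $q^{L,n}=q_1^{L,n}\cdot\prod_i (x_{i,j_i}x_{i,1}^{-1})^{L,n}$, which is manifestly multiplicative on $\langle\cS\rangle$ restricted appropriately. This consistency of the choice of roots, together with correctly identifying the embedding, is the step I expect to be the main obstacle.

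For the last statement, assume in addition that $q'q^{-1}\in\im(Q^{L,n}(K_v))$. By Lemma \ref{LemLift} this means that $\iota((q'q^{-1})^\sigma)$ is also an $n$-th power for $\sigma=\id$, possibly after adjusting the embedding; I would check that the same $\iota$ works. Then $(q')^{L,n}(q^{L,n})^{-1}$ is, by multiplicativity, the lift of $q'q^{-1}$ built from the chosen roots. Its $y$-coordinate is an $n$-th power, so it extends to a $Q^{L,n}(K_v)_\iota$-point mapping to it.
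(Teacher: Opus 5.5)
Your proposal is correct and is essentially the paper's proof: the same description of $\mathcal{M}\setminus\mathcal{Q}(\mathcal{O}_v)$ as a subgrid, the same square-freeness/valuation argument showing that any embedding witnessing liftability must restrict on $E$ to the embedding at the distinguished place (so one $\iota$ serves every $q$), and the same construction of multiplicative lifts from a base lift together with lifts of the ratios. The only deviation is in the final statement. There you directly adjoin an $n$-th root of the $y$-coordinate $\iota(q'q^{-1})$, which is legitimate because the condition ``for all $\sigma\in\Gamma$'' does not depend on the choice of $\iota|_E$. The paper instead argues that all preimages of $q'q^{-1}$ in the cover $Q^{L,n}\to Q$ are $K_v$-rational, so your version is slightly more elementary.
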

	
	The multiplications in ${Q^{L,n,(1)}(K_v)}_{\iota}$ are meant with respect to the $L$-group scheme structure of $Q^{L,n,(1)} \cong \G_{m,L}^{\Gamma}$.

	\begin{proof}
		Write
		\[
		\cM=q_0 \cdot \{q_{1,1},\ldots,q_{1,M}\}\cdot \{q_{2,1},\ldots,q_{2,M}\}\cdots \{q_{k,1},\ldots,q_{k,M}\} \subset \cO_{E,S}\s \{0\}.
		\]
		and let $\Sigma_0,\Sigma_1,\ldots,\Sigma_r$ be the places dividing $q_0, \prod_j q_{1,j}, \ldots, \prod_j q_{k,j}$, respectively. The $\Sigma_i$ are disjoint because $\cM$ is square-free (being prime-variational). The places $v$ in the statement are precisely those in the union $\Sigma_0\sqcup \Sigma_1\sqcup \ldots\sqcup \Sigma_r$. For such $v$ we have 
		\[
		\cM \s \cQ(\cO_v)=\begin{cases}
		\cM & \text{if }v \in \Sigma_0 \\
		q_0 \cdot \{q_{1,1},\ldots,q_{1,M}\}\cdots q_{i,j} \cdots \{q_{k,1},\ldots,q_{k,M}\}& \text{if }v \in \Sigma_i,\, i\geq 1, \\
		\end{cases}
		\]
		where $q_{i,j} \in \cO_{E,S}$ is the prime element whose underlying prime in $K$ is the place associated to $v$.
		In both cases, $\cM \s \cQ(\cO_v)$ is a grid, that we denote by $\cM_v$. Now, for all $v$ as above and all $q \in \cM_v$, our hypothesis says that $q$ lifts to ${Q^{L,n,(1)}(K_v)}$. Equivalently, as in the proof of Theorem \ref{MainApprThm} (see page 51), there exists a $K$-embedding $\iota:L \hookrightarrow K_v$ such that 
		\begin{center}
			$(\star)$ the image of $q^{\sigma}$ under $\iota$ is an $n$-th power for all $\sigma \neq \id$. 
		\end{center}
		Property $(\star)$ clearly depends only on the restriction of $\iota$ to $E$. Let $w$ be the place of $E$ associated to $\iota|_E$. We claim that $w$ does not depend on $q$. 
				
		In fact, the places of $E$ dividing $v$ form a unique orbit under the $\Gamma$-action, and since $q\notin \cQ(\cO_v)$, for at least one of these places $q$ must have non-zero valuation. Since the grid $\cM$ is square-free, the valuation of $q$ at each of these places is either $0$ or $1$.
		Since $(\star)$ forces $w^{\sigma}(q)=w(q^{\sigma^{-1}})$ to be a multiple of $n$ for all $\sigma \neq \id$, it must then be that $w(q)>0$ and so $w(q)=1$. But, if $v\in \Sigma_i,\,i\geq 1$ (resp.\ $v\in \Sigma_0$), we have $w(q)=w(q_{i,j})$ (resp.\ $w(q)=w(q_0)$) and so $w$ is the unique place associated to the prime $q_{i,j}$ (resp.\ the unique place of $E$ above $v$ with $w(q_0)=1$), proving the claim.
		
		In particular, fixing an embedding $\iota:L\hookrightarrow K_v$ whose restriction to $E$ is associated to $w$, we can lift all $q\in \cM_v$ to $Q^{L,n(1)}(K_v)_{\iota}$.
		
		We rewrite now $\cM_v$ as
		\begin{align*}
		\cM_v&=q_{\text{base}} \cdot \Delta_1\cdots \Delta_{i-1}\cdot \Delta_{i+1} \cdots \Delta_k,\text{ where }\\
		q_{\text{base}}:&=q_0 \cdot q_{1,1} \cdots q_{i-1,1} \cdot q_{i,j} \cdot q_{i+1,1} \cdots q_{k,1},\\
		\Delta_{\alpha}:&= \{1,\delta_{\alpha,1},\ldots,\delta_{\alpha,M-1}\},\, \delta_{\alpha,\beta}:=q_{\alpha,\beta+1}q_{\alpha,1}^{-1}.
		\end{align*}
		Let also $q_{\alpha}:=q_{\text{base}}\cdot \delta_{\alpha,\beta} \in \cM_v$. 
		Choose now arbitrary lifts $q_{\text{base}}^{L,n}$ and $q_{\alpha}^{L,n}$ to $Q^{L,n(1)}(K_v)_{\iota}$. Define $\delta_{\alpha,\beta}^{L,n}:=q_{\alpha,\beta}^{L,n} \cdot (q_{\text{base}}^{L,n})^{-1}$, and extend the lifts multiplicatively to all $\cM_v$. Now condition ($M$) holds.
		
		For the last statement, note that, since $Q^{L,n}/Q$ is Galois, the inverse images of $q'q^{-1}$ in $Q^{L,n}(\oK_v)$ are permuted transtitively by $\Gal(Q^{L,n}/Q)$, and in particular they all have the same definition field. Since $q'q^{-1}\in \im (Q^{L,n}(K_v) \to Q(K_v))$, the common field of definition is $K_v$. In particular, any point in $Q^{L,n}(\oK_v)$ that is an inverse image of $(q')^{L,n}(q^{L,n})^{-1}$ is in fact defined over $K_v$, i.e.\ lies in $Q^{L,n}(K_v) \subset Q^{L,n}(\oK_v)$. We conclude by observing that any inverse image of an element of $Q^{L,n,(1)}(K_v)_{\iota}$ in $Q^{L,n}(K_v)$ lies necessarily in $Q^{L,n}(K_v)_{\iota}$.
	\end{proof}
	
	{\bf Warning.~}The existence of even just one $(L,n)$-liftable element (with at least one prime divisor outside $S$) in $Q(K)$ is not immediate at all, but is rather the point of Section \ref{Sec9} : Theorem \ref{MainApprThm} produces grids with $(L,n)$-liftable elements.
	
	\vskip1mm
	
	Given opens $U_v \subset X(K_v),\,v \in S \cap M_K^{\text{fin}}, U_{\infty} \subset X(K_{\infty})_{dir}$ and a prime-variational grid $\cM\subset \cO_{E,S}$ that is contained in $f(U_v)$ for all finite $v \in S$, whose image in $Q(K_{\infty})_{dir}$ is contained in $f(U_{\infty})$, 
    and all of whose elements are $(L,n)$-liftable for some pair $(L,n)$ with $n \geq 2$, we use Lemma \ref{LemCompatibleLifts} to define adelic lifts as follows. For each $q \in \cM$, we let $(P_v(q))_{v}\in X_q(\A_K)$ be defined by:
	\begin{itemize}
		\item $P_v(q)$ lies in $U_v$ for all $v \in S$;
		\item $P_v(q)$ lies in $\cX(\cO_v) \subset X(K_v)$ for all $v$ such that $q \in \cQ(\cO_v)$;
		\item $P_v(q)=s(q_{L,n})$, where $q_{L,n} \in Q^{L,n(1)}(K_v)$ is as in Lemma \ref{LemCompatibleLifts}.
	\end{itemize}
	
	It is clear from our choice of $S$ that such adelic lifts exist for every $q \in \cM$.
	
	\subsection{Brauer--Manin pairing with $(\Br X)_+$}
	
	Recall that $(\Br X)_+:=(\Br X)_+(\mathfrak A \times_QX)$ from Subsection \ref{SSec6.3}.
	
	\begin{proposition}\label{Proprop}
		Let $q \in \cO_{E,S} \subset Q(K)$ be coprime in $\cO_{E,S}$ with its conjugates $q^{\tau}$ for all $\tau \in \Gamma[2]^{**}$, and let $(P_v)_{v\in M_K}\in X_q(\A_K)$ be such that $P_v \in s(Q^{L,n(1)}(K_v))$ for $v\notin S$ with $q \notin \cQ(\cO_v)$. For $b \in (\Br X)_+$ such that $s^*b=0$, we have
		\[
		((P_v)_v, b)_{BM}=\sum_{v \in S} \inv_v (b(P_v))+\sum_{\tau \in \Gamma[2]^{**}}a_{\tau}\operatorname{hs}_{S,n_{\tau}}(q),
		\]
		where $a_{\tau} \in (\Z/n_{\tau}\Z)(-1)$ is uniquely defined by $\partial_{Z'_{\tau}}(b)=a_{\tau}\cdot \hat y \in H^1(k(Z'_{\tau}),\qz)$.
	\end{proposition}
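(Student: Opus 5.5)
We split the Brauer--Manin sum $\sum_{v\in M_K}\inv_v(b(P_v))$ according to three kinds of places: $v\in S$; $v\notin S$ with $q\notin\cQ(\cO_v)$; and $v\notin S$ with $q\in\cQ(\cO_v)$. The places in $S$ contribute the first term of the formula verbatim, so the work is in the other two classes.

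First take $v\notin S$ with $q\notin\cQ(\cO_v)$. By hypothesis $P_v=s(\tilde q)$ for some $\tilde q\in Q^{L,n,(1)}(K_v)$, so
\[
b(P_v)=(s^*b)(\tilde q)=0 .
\]
These places contribute nothing.

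Now take $v\notin S$ with $q\in\cQ(\cO_v)$. Here I would arrange that $P_v\in\cX(\cO_v)$, as in the construction of the adelic lifts preceding the proposition. If instead the $P_v$ are arbitrary, we can replace them by integral points without changing the pairing at all but finitely many places, since $b$ is unramified there. I would then apply Lemma \ref{PropRes} to the model $f:\cX\to\cQ$. Its hypothesis is met because Theorem \ref{Teo3} (and the definition of $(\Br X)_+$) shows the residue of $b$ at each component of $\cZ'_\tau$ is $a_\tau\cdot\hat y$, with $a_\tau$ independent of the component: the residue is defined on $f^{-1}(Z_\tau)$ over the function field $k(Z'_\tau)$, and it spreads out over $\cO_{K,S}$ after enlarging $S$ if necessary. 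The lemma gives
\[
\inv_v(b(P_v))=\sum_{\tau\in\Gamma[2]^{**}}a_\tau\sum_{w\mid v,\; w\in M_{E^\tau}}\addleg{q}{\cI_w}_{n_\tau}.
\]

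It remains to sum this last expression over the places $v\notin S$ with $q\in\cQ(\cO_v)$. Two points need checking.

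First, places with $q\notin\cQ(\cO_v)$ should contribute nothing to the half-spin symbol, so that restricting the sum loses no terms. At such $v$ some conjugate $q^{\sigma}$ is non-invertible at a place above $v$. We need $\tilde w(q-q^\tau)=0$ there, or else the corresponding factor of $\operatorname{hs}$ vanishes. The coprimality of $q$ and $q^\tau$ gives that a prime dividing $q-q^\tau$ divides neither $q$ nor $q^\tau$. The delicate case is when $q$ is non-integral at $v$ but only at a conjugate other than $q$ or $q^\tau$. A prime of $E$ over $v$ divides $q-q^\tau$ only if $q\equiv q^\tau$ there, and this is compatible with $q$ being a unit at that prime, so there is a potential mismatch. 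I would handle it by using the Lemma \ref{PropRes} formula at all $v\notin S$ at which $P_v$ is integral. At the liftable places, the pairing is already $0$ via $s$, and the corresponding local half-spin factor must also vanish. I expect to prove this vanishing from the liftability criterion of Lemma \ref{LemLift}, which makes $\iota(q^\sigma)$ an $n$-th power for $\sigma\neq\id$ and hence forces $q$ to be an $n_\tau$-th power modulo the relevant primes.

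Second, assembling the local terms: the ideals $\cI_w$ multiply to $\cI_q$ (Definition \ref{Def:SR}, with $n_\tau$ and $\tau$). Summing over $v$ therefore gives $\sum_\tau a_\tau\operatorname{hs}_{S,n_\tau}(q)$, which completes the formula.

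I expect the main obstacle to be exactly this bookkeeping at the liftable places. One must show that the half-spin contribution of $q$ at places where $q$ is not integral is trivial, matching the zero contribution coming from $s^*b=0$. Failing that, one must restrict the statement to the places in question and check that the definition of $\cI_q$ sees only primes coprime to $N(q)$.
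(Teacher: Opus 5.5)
Your three-way split of the places and your handling of the first two classes match the paper. The places $v\notin S$ with $q\notin\cQ(\cO_v)$ contribute $0$ because $s^*b=0$, and the places with $q\in\cQ(\cO_v)$ are computed by Lemma \ref{PropRes}. You do not need to replace $P_v$ at the latter places, which would change the adelic point in the statement. Since $\cX\to\cQ$ is proper, $P_v\in X_q(K_v)$ extends to an $\cO_v$-point of $\cX$ by the valuative criterion whenever $q\in\cQ(\cO_v)$.

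The genuine gap is the one non-formal step, which you only announce. You need $\addleg{q}{\cI_w}_{n_\tau}=0$ for every place $w$ of $E^\tau$ lying over a $v\notin S$ with $q\notin\cQ(\cO_v)$. This does not follow from ``liftability makes $q$ an $n$-th power modulo the relevant primes''. Lemma \ref{LemLift} only says that $\iota(q^\sigma)\in K_v^{*n}$ for $\sigma\neq\id$. That is, $q\in E_{w'}^{*n}$ for all places $w'\mid v$ of $E$ except the single place $w_E$ determined by $\iota|_E$, and nothing is known at $w_E$.

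The missing observation is the following.
\begin{itemize}
\item Since $L\hookrightarrow K_v$, the place $v$ splits completely in $E$. Hence every place $w$ of $E^\tau$ above $v$ has two places $\tilde w,\tilde w^\tau$ of $E$ above it, both of degree one over $w$, and at least one of them is different from $w_E$.
\item If $\tilde w(q-q^\tau)>0$, coprimality of $q$ and $q^\tau$ makes $q$ a unit at both places. Also $q\equiv q^\tau$ modulo $\cI_w\cO_{E,S}$, so the residue of $q$ in $\F_w$ can be read at either place.
\item Read it at the place different from $w_E$. There $q$ is a unit $n$-th power, so the factor vanishes because $n_\tau\mid n$. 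If $\tilde w(q-q^\tau)=0$, the factor is trivially $0$.
\end{itemize}
This is exactly the paper's argument, and it is what resolves the ``delicate case'' you correctly spotted.

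Your proposed fallback is not viable. The statement cannot be restricted to a subset of places. Moreover, $\cI_q$ need not be coprime to $N(q)$: a prime of $E^\tau$ dividing $q-q^\tau$ can lie over a $v$ at which $q$ has positive valuation at another place of $E$. That is precisely the situation the argument above has to handle.
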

	
	\begin{proof}
		Write:
		\begin{multline}\label{EqINV}
		((P_v)_v, b)_{BM}=
		\sum_{v \in S} \inv_v (b(P_v)) 
		+
		\sum_{\substack{v \notin S, \\ q \in \cQ(\cO_v)}} \inv_v (b(P_v)) +
		\sum_{\substack{v \notin S, \\q \notin \cQ(\cO_v)}} \inv_v (b(P_v)).
		\end{multline}
		The third sum is zero as $P_v \in s(Q^{L,n(1)}(K_v))$ for the $v$ appearing there and $s^*b=0$ by assumption.
		By Lemma \ref{PropRes}, the second sum is
		\begin{equation}\label{Pairing}
		\sum_{\tau \in \Gamma[2]^{**}} a_{\tau}\sum_{\substack{v \notin S, \\ q \in \cQ(\cO_v)}}\sum_{\substack{w \mid v  \\w \in M_{E^{\tau}}}}\addleg{q}{\cI_w}_{n_{\tau}}
		\end{equation}
		By definition of half-spin symbols, we have
		\[
		\operatorname{hs}_{S,n_{\tau}}(q)=\sum_{v \notin S}\sum_{\substack{w \mid v  \\ w \in M_{E^{\tau}}}}\addleg{q}{\cI_w}_{n_{\tau}}.
		\]
		Thus to prove the statement, it suffices to prove that $\addleg{q}{\cI_w}_{n_{\tau}}=0$ for all $w \in M_{E^{\tau}}$ dividing a $v \notin S$ such that $q \notin \cQ(\cO_v)$.
        For such $w$, the existence of $P_v \in X_q(K_v) \cap s(Q^{L,n(1)}(K_v))$ implies that $q\in \im (Q^{L,n,(1)}(K_v) \to Q(K_v))$. I.e.\  there exists a $K$-embedding $\iota:L \to K_v$ such that $\iota(q^{\sigma}) \in K_v^{*n}$ for all $\sigma \in \Gamma^*$. In particular, $v$ splits completely in $E$ and $q\in E_{w'}^{*n}$ for all places $w'\mid v$ of $E$ that are {\bf different from} the one place $w_E \in M_E$ lying below the place $w_L$ of $L$ corresponding to $\iota$. At least one of the two places $w'$ of $E$ dividing the place $w$ of $E^{\tau}$ is different from $w_E$, and thus $\addleg{q}{w'}_{n}=0$ for this $w'$. Writing $\addleg{q}{\cI_w}_{n_{\tau}}=w'(q^{\sigma}-q) \cdot \addleg{q}{w}_{n_{\tau}}=w'(q^{\sigma}-q) \cdot \addleg{q}{w'}_{n_{\tau}}$, where the last identity holds because $w'$ has degree $1$ over $w$, we conclude since $n_{\tau} \mid n$.
	\end{proof}        
	
	\begin{remark*}        
		For a real place $v$, the local Brauer pairing $X^{\sharp}(K_v) \to \qz,\,P_v \mapsto \inv_v (b(P_v))$ might not factor through $\pi_0(X(K_v))$, but it rather only factors through $\pi_0(X^{\sharp}(K_v))$. The connected components of $X^{\sharp}(K_v)$ can be smaller than those of $X(K_v)$, and this ultimately forces us to have some degree of archimedean approximation in our arguments. Luckily, $Q^{\sharp}(K_v)\subset Q(K_v)$ 
		has a ``conic shape'', i.e.\ it is invariant under the action of $\R_{>0}$, and the connected components of $X^{\sharp}(K_v)$ are just inverse images of (some) of those of $Q^{\sharp}(K_v)$. 
		
		So to gain control over $\inv_v(b(P_v(q)))$ at real places, the working solution that we adopted is to make sure that $q$ approximates a given direction in $Q(K_v)$ in the sense of Section \ref{Sec2} (in fact, since it requires no further effort, we do even more, and approximate a simultaneous direction in $\prod_{v\mid \infty}Q(K_v)$). This is our motivation behind Appendix \ref{AppA}.
	\end{remark*}
	
	\section{Triple Variation theorem}\label{Sec11}
	
	The purpose of this section is to compute under certain technical hypotheses the triple variation over a grid of fibers of the Brauer--Manin pairing of the adelic points defined in Section \ref{Sec10}. (The ``technical hypotheses'' are satisfied by the grids produced by Theorem \ref{MainApprThm}.) See Theorem \ref{Thm:3variation} below. 
	
	\subsection{Main result}\label{SSec10.1}
	
	Let $Q=R_{E/K}\G_m$ be a Galois quasi-trivial torus defined over a number field $K$. Let $\Gamma=\Gal(E/K)$, and $n$ be a natural number such that $\mu_n \subset E^*$.
	
	\begin{notation*}
		For a fixed total ordering $<$ of $\Gamma$, an element $\bar\beta  \in H^3(Q \otimes_K \oK,\mu_n)$ may be written uniquely as
		\begin{equation}\label{defcoeffs}
		\bar\beta= \sum_{\sigma_1 < \sigma_2 < \sigma_3} a_{\sigma_1,\sigma_2,\sigma_3}(\bar\beta) \cdot (y^{\sigma_1} \cup y^{\sigma_2} \cup y^{\sigma_3})_n, \ \ a_{\sigma_1,\sigma_2,\sigma_3}(\bar \beta)\in (\Z/n\Z)(-2).
		\end{equation}
		For any three pairwise distinct elements $\sigma_1,\sigma_2,\sigma_3 \in \Gamma$, set $a_{\sigma_1,\sigma_2,\sigma_3}(\bar\beta):=\linebreak (-1)^{\sgn s}a_{s(\sigma_1),s(\sigma_2),s(\sigma_3)}(\bar \beta)$ for $s\in S_3$ a permutation such that $s(\sigma_1)< s(\sigma_2)< s(\sigma_3)$, and set $a_{\sigma_1,\sigma_2,\sigma_3}(\bar\beta):=0$ if two of the indices are repeated. Since the cup-product is alternating, the elements $a_{\sigma_1,\sigma_2,\sigma_3}(\bar \beta)$ just defined do not depend on the chosen ordering of $\Gamma$. 
	\end{notation*}
	
	Acting by ${{g}} \in \Gamma$ on \eqref{defcoeffs} gives
	\begin{equation}\label{Symbols}
	a_{\sigma_1{{g}},\sigma_2{{g}},\sigma_3{{g}}}(\bar \beta^{{{g}}})=a_{\sigma_1,\sigma_2,\sigma_3}(\bar\beta)^{{{g}}}
	\end{equation}
	for all $\sigma_1,\sigma_2,\sigma_3,{{g}} \in \Gamma$.
	
	\begin{notation*}
		For $\bar \beta \in H^3(Q \otimes_K \oK, \mu_n)^{\Gamma_K}$ and $x_1,x_2,x_3 \in E^*$, we let
		\[
		R_{\bar \beta}[x_1,x_2,x_3]_n \coloneqq \sum_{[(\sigma_1,\sigma_2, \sigma_3)] \in \Gamma^3/\Gamma} a_{\sigma_1,\sigma_2,\sigma_3}(\bar \beta) [x_1^{\sigma_1},x_2^{\sigma_2},x_3^{\sigma_3}]_n \in \Z/n\Z,
		\]
		when all the Red\`ei symbols appearing on the right hand side are defined. 
	\end{notation*}
	
	The indexing in the last sum is well-posed as
    \begin{multline*}
        a_{\sigma_1{{g}},\sigma_2{{g}},\sigma_3{{g}}}(\bar \beta) [x_1^{\sigma_1{{g}}},x_2^{\sigma_2{{g}}},x_3^{\sigma_3{{g}}}]_n= \\
         (a_{\sigma_1,\sigma_2,\sigma_3}(\bar \beta)[x_1^{\sigma_1},x_2^{\sigma_2},x_3^{\sigma_3}]_n)^{{{g}}}=a_{\sigma_1,\sigma_2,\sigma_3}(\bar \beta)[x_1^{\sigma_1},x_2^{\sigma_2},x_3^{\sigma_3}]_n
    \end{multline*}
	for all $\sigma_1,\sigma_2,\sigma_3,{{g}}\in \Gamma$, where in the first identity we used \eqref{Symbols} and $\bar \beta^{{{g}}}=\bar\beta$, and in the second we used that each summand $a_{\sigma_1,\sigma_2,\sigma_3}(\bar \beta)[x_1^{\sigma_1},x_2^{\sigma_2},x_3^{\sigma_3}]_n\in (\Z/n\Z)(-2)\otimes (\Z/n\Z)(2)=\Z/n\Z$ is Galois-invariant.
	
	\begin{theorem}[Triple variation]\label{Thm:3variation}
		Let $f: X \to Q$ be a smooth projective fibration over a Galois quasi-trivial torus $Q=R_{E/K}\G_m$ with a multi-section $\bar s:Q^{\oK,n,(1)} \to X$ for some $n$ such that $\mu_n \subset E$. There exists then a finite Galois extension $L/K,\, L \subset \oK$ such that  $\bar s=s \circ (Q^{\oK,n,(1)}  \to Q^{L,n,(1)})$ for some $s:Q^{L,n,(1)} \to X$ and for which, for all sufficiently large finite sets of places $S$ of $K$, the following holds. 
        
        For all $(P_v)_{v \in S} \in X(K_S)$, all $n$-torsion elements $b \in \Br_{\text{hor}}(X/Q)$, all sufficiently small neighbourhoods $U_v$ of $P_v$ for non-archimedean $v \in S$, all arbitrarily small neighbourhood $U_{\infty}$ of the image of $P_{\infty} = (P_v)_{v \in M_K^{\infty}} \in X(K_{\infty})$ in $X(K_{\infty})_{dir}$,
		any $(L,n)$-liftable prime-variational cube
		\[
		\cM:=q_0 \cdot \{q_{1,1},q_{1,2}\}\cdot \{q_{2,1},q_{2,2}\}\cdot \{q_{3,1},q_{3,2}\} \subset \cO_{E,S} \subset E^*= Q(K)
		\]
		that is contained in $f(U_v)$ for all finite $v \in S$, and whose image in $Q(K_{\infty})_{dir}$ is contained in $f(U_{\infty})$, satifies
		\[
		\sum_{\substack{q=q_0q_{1}^iq_{2}^jq_{3}^l\\ 0 \leq i,j,l \leq 1}} (-1)^{i+j+l}\cdot (b,(P_v(q))_{v \in M_K})_{BM} = R_{{\bar \partial (b)}}[\delta_1,\delta_2,\delta_3]_n,
		\]
		where $(P_v(q))_{v \in M_K} \in X_q(\A_K)$ is a compatible choice of adelic lifts constructed through $s$ as in Section \ref{Sec10}, and $\delta_i := q_{i,2}q_{i,1}^{-1}$.
	\end{theorem}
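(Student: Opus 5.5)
The plan is to express the fiberwise Brauer--Manin pairing attached to $b$ as a sum of local invariants of explicit group cochains, and then to take the alternating sum over the cube place by place. Choose $L$ so that the multisection $\bar s$ descends to $s\colon Q^{L,n,(1)}\to X$, so that $L\supset E(\mu_n)$, and so that the Galois closure of $Q^{L,n}\to Q$ trivializes the relevant pieces of $\tau_{\geq 1}Rf_*\G_m$. Then enlarge $S$ until the model $\cX\to\cQ$ and all the cochains below are defined and unramified outside $S$.

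The first step is to rewrite $b$ using relative étale cohomology (Appendix \ref{AppB}) with respect to the cover $Q^{L,n}\to Q$. This cover is Galois with group a wreath-type product of $\mu_n^{\Gamma}$ with $\Gal(L/K)$. Since $s$ trivializes the fibration over $Q^{L,n,(1)}$, $b$ should be represented, relative to $s$, by \v{C}ech data on this cover. Its ``boundary'' is a $3$-cocycle representing $\partial(b)\in H^3(Q,\G_m)$. By Nakaoka's splitting (Appendix \ref{AppC}), the geometric part of this class can be taken to be the explicit cocycle $\sum a_{\sigma_1,\sigma_2,\sigma_3}(\bar\partial b)\, \chi_{y^{\sigma_1}}\cup\chi_{y^{\sigma_2}}\cup\chi_{y^{\sigma_3}}$, plus terms coming from $H^1(K,\Br\bar Q)$ and lower filtration pieces. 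For each $q$ in the cube, I then expect $(b,(P_v(q)))_{BM}=\sum_v\inv_v(\xi_q|_{\Gamma_{K_v}}-\eta_{q,v})$. Here $\xi_q$ is a global $2$-cochain obtained by specializing the \v{C}ech data at $q$, and $\eta_{q,v}$ is the local primitive determined by the local point $P_v(q)$. This is exactly the shape of the definition \eqref{DefRedei} of Redéi symbols.

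The second step splits into three kinds of places.
\begin{itemize}
\item For $v\in S$, every $q\in\cM$ has $P_v(q)\in U_v$ (for archimedean $v$, in the same component of $X^\sharp$, using the direction condition). By continuity, $\inv_v(b(P_v(q)))$ is then constant on the cube, so these terms cancel in the alternating sum.
\item For $v\notin S$ with $\cM\subset\cQ(\cO_v)$, $P_v(q)$ is integral, so the local term is an unramified cochain evaluated on integral data and vanishes.
\item For the remaining $v$, namely primes dividing some $q_0$ or $q_{i,j}$, $P_v(q)=s(q^{L,n})$. By Lemma \ref{LemCompatibleLifts}, the lifts are multiplicative, and quotients of lifts come from $Q^{L,n}(K_v)$. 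Hence $q\mapsto\eta_{q,v}$ varies through a $\Z$-trilinear expansion in the lifts of $\delta_1,\delta_2,\delta_3$.
\end{itemize}
In all cases the alternating sum over the cube kills the constant, linear and bilinear parts, and only the trilinear part of $\xi_q$ survives.

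The third step identifies this trilinear part. Taking the cube-alternating sum of the specialized cocycle gives $\sum a_{\sigma_1,\sigma_2,\sigma_3}\chi_{\delta_1^{\sigma_1}}\cup\chi_{\delta_2^{\sigma_2}}\cup\chi_{\delta_3^{\sigma_3}}$, corestricted from $E$ to $K$. The alternated $\xi_q$ then becomes a primitive of this cocycle, and the alternated $\eta_{q,v}$ become unramified local primitives. Condition $(C)$ holds for $\delta_i^{\sigma_i}$ because the grid is prime-variational and liftable. Summing the invariants and grouping the terms into $\Gamma$-orbits gives exactly $R_{\bar\partial(b)}[\delta_1,\delta_2,\delta_3]_n$; Lemma \ref{Lemma3point7} makes this independent of the choice of decomposition groups.

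The main obstacle is the first step. One has to show that the \v{C}ech/relative-cohomology representative of $b$ can be chosen so that its specialization at $q$ really has trilinear part given by the cup product of the $\chi_{y^{\sigma}}$, and that all remaining parts are at most bilinear in the grid variables. This is where the Hochschild--Serre splitting for wreath products and the homotopies \eqref{TheHomotopy}, \eqref{Equation} are needed to control the correction terms. I would attack it first for the split case $E=K^{d}$, then use \eqref{Symbols} and corestriction to treat general Galois $E/K$.
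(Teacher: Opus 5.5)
You have the right architecture, and it matches the paper's: a relative \v{C}ech representative $(\alpha,\beta)$ of $b$, the pairing written as $\sum_v\inv_v$ of a global primitive of the specialized base cocycle minus local ones, Nakaoka to put the geometric part of $\beta$ in cup-product form, and comparison with \eqref{DefRedei}. But the step you flag as ``the main obstacle'' is the heart of the theorem, and the heuristic you put in its place does not work at the cochain level, which is where the Red\'ei symbol is defined.

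The cube-alternating sum of the specialized cocycle equals $\sum\cores_{E/K}(a_{\id,\sigma_1,\sigma_2}\cdot\chi_{\delta_1}\cup\chi_{\delta_2^{\sigma_1}}\cup\chi_{\delta_3^{\sigma_2}})$ only up to a coboundary $\d\delta$. If the pullback of $\delta$ is ramified somewhere, the induced local primitives of the cup product are not the unramified ones required in \eqref{DefRedei}, and the sum of invariants need not be the Red\'ei symbol.

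Your ``at most bilinear'' intuition is correct for cohomology classes. The paper proves it by K\"unneth and Nakaoka: every class in $\Ker(H^3(G_n^{(4)},\mu_n)\to H^3(\bar G_n^{(4)},\mu_n))$ involves at most two of the four factors, so it is detected on the subgroups $G_n^{[J]}$ with $\#J=2$, where both sides vanish. The missing ingredient is the cochain-level control in Proposition \ref{PropVariation}:
\begin{enumerate}
\item By inclusion--exclusion, $\delta\mapsto\sum_J(-1)^{\#J}\delta_J$, the primitive is chosen to restrict to \emph{zero} on the face subgroups $G_n^{[J]}$, $J=\{0,1,2\},\{0,1,3\},\{0,2,3\}$.
\item Then $D_{\cM}^*\delta$ is unramified everywhere. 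At each $v$ some $\delta_i$ with $i\neq i_v$ is a local $n$-th power (by closeness to $1$ at $S$, and by $(L,n)$-liftability at the primes of the grid), so $D_{\cM}(\Gamma_v)$ lands in a face.
\item Pulling back a single identity on $Q^4$ along one $D_{\cM}$ requires a \emph{multiplicative} global lift $q\mapsto\bar q$. It also requires embeddings $\oK\subset\oK_v$ that make this lift agree with the local lifts $q^{L,n}$ at bad places, keep the lifts close together at $S$, and give integral lifts elsewhere (Proposition \ref{orieogjiounvf}). Your sketch never reconciles the global and local lifts.
\end{enumerate}

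The fiber terms have a second gap. A multisection does not ``trivialize the fibration'' over $Q^{L,n,(1)}$. What is needed is that $\beta$ restricts to the zero \emph{cochain} over $Q^{L,n,(1)}$, after enlarging $L$. The paper gets this from three things: the cup-product form vanishes on $\bar G_{n,\id}$, Lemma \ref{thelemma} lets one modify $\beta$ by a coboundary, and continuity then gives a finite $L$. Only then is $\beta(q,\bar q)_v=0$ at the bad places, so that the pullback of $\alpha$ along $s$ is a cocycle defining a Brauer class $b_\alpha$ on $Q^{L,n,(1)}\cong\G_{m,L}^{\Gamma}$.

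Moreover, at a prime dividing some $q_{i,j}$, only a $2$-dimensional face of the cube consists of bad points; the other four points give unramified data. So no ``trilinear expansion'' over the whole cube is available. The paper instead uses Lemma \ref{LemDoubleVar}: the double variation of $\inv_v(b_\alpha(\cdot))$ on that face vanishes, because the increments of the lifts are $v$-adic units and $b_\alpha$ has order prime to $v$.

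Consequently, the alternated local primitives at bad places are \emph{not} unramified, as your last step asserts. They only have zero invariant relative to an unramified primitive, and that is exactly what must be proved. Likewise, at $v\in S$ the quantity $\inv_v(b(P_v(q)))$ is not defined for a horizontal class, so ``constant by continuity'' has no content as stated. What is locally constant is $\alpha(P,\bar P)$ modulo coboundaries on $X(K_v)\times_{Q(\oK_v)}\tilde Q(\oK_v)$, and exploiting this again requires choosing the lifts $\bar q_v$ close together.
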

	
	The rest of this section is devoted to proving Theorem \ref{Thm:3variation} through an explicit computation with \v{C}ech cochains.
	
	\subsection{Vertical Brauer--Manin pairing via \v{C}ech cohomology}\label{SSec102}
	
	In this subsection, we describe how to compute the Brauer--Manin pairing attached to a horizontal Brauer element using \v{C}ech cocycles. In the computation, it will be more convenient to work with inverse systems of coverings rather than with a single covering. For that reason, we operate with what we call ``procoverings'' (pro-objects in the category of coverings of a scheme). It is straightforward that such inverse systems, just as usual \v{C}ech coverings, give rise to \v{C}ech complexes that can be used to approximate the cohomology of schemes, and to compute it exactly for quasi-projective varieties. Mainly just to settle notation, we briefly pause to lay out these properties. Afterwards, we shift our attention to relative \v{C}ech cohomology and relative cohomology, summarizing the content of Appendix \ref{AppB}. We then conclude this subsection with an explicit formula for the vertical Brauer--Manin pairing.
	
	\vskip2mm
	
	{\bf Procoverings and procovered schemes.~} A {\em procovering} of a scheme $Y$ is an object of the pro-category of coverings of $Y$ (i.e.\ the category of inverse systems of coverings of $Y$), and a refinement of procoverings is a morphism in this pro-category. A {\em procovered scheme} is a pair $(Y,\cU)$ of a scheme $Y$ and a procovering $\cU/Y$. A morphism between procovered schemes $(X,\cV)$ and $(Y,\cU)$ is a pair $(f,\psi)$ of a morphism $f:X \to Y$ and a refinement  $\psi:\cV \to \cU \times_YX$.
	
	If $Y' \to Y$ is a morphism of schemes with $Y'$ an inverse limit of finite étale covers of $Y$, we denote by $\{Y'/Y\}$ its associated procovering and by $(Y,Y')$ the associated procovered scheme.
	
	\vskip2mm
	
	{\bf Pro-\v{C}ech-complex and pro-\v{C}ech-to-étale maps.} Recall that to an étale covering $\cU$ of a scheme $Y$ and a sheaf $F$ on $Y$, we may associate a \v{C}ech complex $\check{C}^{\bullet}(\cU,F)$, and that there are natural maps 
	\begin{equation}\label{EqNatural0}
	\check{H}^{\bullet}(\cU,F) \to H^{\bullet}(Y,F)
	\end{equation}
	from its cohomology to that of $Y$ \cite[\S III.2]{LECcompleto}. For a procovering $\cU$, we define $\check{C}^{\bullet}(\cU,F):=\lim\limits_{\to} \check{C}^{\bullet}(\cU_i,F)$, where $\cU_i,\, i \in I$ is an inverse system of coverings with inverse limit $\cU$. The map \eqref{EqNatural0} extends naturally to procoverings.
	
	\vskip2mm
	
	{\bf Specialization to points.} 
	For a field $\Omega$ and a covering $\cU=\{U_i\to Y\}_{i\in I}$ of a scheme $Y$, we let
	\[
	\cU(\Omega):=\bigcup_i U_i(\Omega).
	\]
	For a procovering $\cU= \lim\limits_{\leftarrow} \cU_i$, we let $\cU(\Omega):=\lim\limits_{\leftarrow}\cU_i(\Omega)$. 
		
	If $y\in Y(\Omega)$ is an $\Omega$-point, then each point $\bar y\in \cU(\Omega^{\text{sep}})$ whose image in $Y$ is (the geometric point corresponding to) $y$ gives rise to a morphism of procovered schemes
	\[
	(y,\bar y):  (\Spec \Omega,\Spec \Omega^{\text{sep}}) \to (Y,\cU),
	\]
	inducing pullback maps
	\begin{equation}\label{EqPullback2}
	\check{C}^{\bullet}(\cU/Y,F) \to \check{C}^{\bullet}(\Omega^{\text{sep}}/\Omega,F|_y),
	\end{equation}
	on \v{C}ech cochains for any étale sheaf $F$ on $Y$. We denote the image of a cochain $\alpha$ under this map by $\alpha(y,\bar y)$.
	
	\vskip1mm
	
	A special example is when $\cU/Y$ is comprised of a single Galois covering $Y' \to Y$, say with group $G$. In this case, \eqref{EqPullback2} becomes
	\begin{equation}\label{EqPullbackgroup}
		\check{C}^{\bullet}(\cU/Y,F) \to \check{C}^{\bullet}(\Omega^{\text{sep}}/\Omega,F|_y),
	\end{equation}
	under the identifications $\check{C}^{\bullet}(\cU/Y,F)=C^{\bullet}(G,F(Y'))$ and $\check{C}^{\bullet}(\Omega^{\text{sep}}/\Omega,F|_y)=\linebreak {C}^{\bullet}(\Gal(\Omega^{sep}/\Omega),F|_y(\bar y))$ given by \eqref{EqIdentificationCechGroup}.
	
	\vskip2mm
	
	{\bf Relative étale cohomology.~}For a morphism of schemes $f:X \to Y$, we denote by $\Sh_{X/Y}$ the category of triples $(F,G,\alpha:F \to f_*G)$, where $F$ (resp.\ $G$) is a sheaf on $Y$ (resp.\ $X$). We call such a triple a {\em relative sheaf}. We define the {\em relative étale cohomology groups} $H^{\bullet}_X(Y;M)$ of a relative sheaf $M=(F,G,\alpha)$ as the derived functors of
	\[
	\Gamma_X(Y,-):\Sh_{X/Y} \to \cA b:(F,G,\alpha)\mapsto \Ker(\alpha:F(Y) \to G(X)).
	\]
	See Appendix \ref{AppB} for details. (See also \cite[Ch.~14]{Friedlander} for an alternative equivalent definition via simplicial schemes). These groups sit in a natural long exact sequence
	\begin{equation}\label{EqRCS}
	\cdots   H^n(Y,F) \to[\alpha^{\dagger} \circ f^*] H^n(X,G) \to H_X^{n+1}(Y,M) \to H^{n+1}(Y,F) \to \cdots .
	\end{equation}
	
	\vskip2mm
	
	{\bf Relative \v{C}ech-to-étale comparison.~}A {\em relative covering} of $f$ is a triple $(\cU,\cV,\psi:\cV \to \cU \times_YX)$, where $\cU$ (resp.\ $\cV$) is a covering of $Y$ (resp.\ $X$), and $\psi$ is a refinement. For a relative covering and a relative étale sheaf $(F,G,\alpha)$, the {\em relative \v{C}ech complex} $\check{C}^{\bullet}(\cV/X,\cU/Y;F,G)$ is the mapping cone of the map of complexes $\check{C}^{\bullet}(\cU/Y,F) \to \check{C}^{\bullet}(\cV/X,G)$ induced by pullback along $f$ and the left adjunct $\alpha^{\dagger}:f^{-1}F\to G$ of $\alpha$. 
	The cohomology of this cone sits in a long exact sequence
	\begin{equation}\label{EqRCCS}
	\cdots \to \check{H}^{n}(\cU/Y,F) \to \check{H}^{n}(\cV/X,G) \to \check{H}^{n+1}(\cV,\cU;F,G) \to \check{H}^{n+1}(\cU/Y,F) \to \cdots.
	\end{equation}
	There are natural maps 
	\begin{equation}\label{EqNatural}
	\check{H}^{\bullet}(\cV/X,\cU/Y;F,G) \to H^{\bullet}_X(Y;F,G)
	\end{equation}
	from {\em relative \v{C}ech cohomology} to {\em relative étale cohomology}. See Appendix \ref{AppB} for the details on their construction. These maps are compatible with \eqref{EqRCS} and \eqref{EqRCCS}.
	
	A {\em relative procovering} of $f:X \to Y$ is a pro-object in the category of relative coverings of $f$. Analogously as for \eqref{EqNatural0}, the maps \eqref{EqNatural} also naturally extend to relative procoverings of $f$.
	
	\vskip2mm
	
	{\bf Vertical Brauer--Manin pairing.~}Let $f:X \to Y$ be a smooth proper morphism with geometrically integral fibers. We denote by $\G_m$ denote the relative sheaf $(\G_m,\G_m,\id)$. We then have an identification (see \eqref{Eq:RelCoh2})
	\begin{equation*}
	{H}^{3}(X,Y;\G_m) = \Br_{\text{hor}}(X/Y).
	\end{equation*}
	
	The following lemma gives the formula computing the vertical Brauer--Manin pairing. Its proof is completely straightforward, and mostly just a matter of unraveling the notation involved.
		
	Assume here that $K$ is a number field.
	
	\begin{lemma}\label{Prop:BMpairing}
		Let $(\cU,\psi:\cV \to \cU \times_YX)$ be a relative procovering of $f$ with $\psi$ surjective. Let $\oK \subset \oK_v$ be given embeddings for each $v$, and let $y \in Y(K), (P_v)_v \in X_y(\A_K)$, $\bar y \in \cU(\oK), \bar P_v \in \cV(\oK_v)$ for each $v$ be such that the diagram of procovered schemes
		\begin{equation}\label{Diagrprocoverd}
		\begin{tikzcd}
		{(\Spec K_v,\Spec \oK_v)} \arrow[d] \arrow[r, "{(P_v,\bar P_v)}"] & {(X,\cV)} \arrow[d, "{(f,\psi)}"] \\
		{(\Spec K,\Spec \oK)} \arrow[r, "{(y,\bar y)}"]                   & {(Y,\cU)}  .                      
		\end{tikzcd}
		\end{equation}
		commutes. Then for any $\check{b} \in \check{H}^{2}(\cV/X,\cU/Y;\G_m)$, any representative $u = (\alpha,\beta)  \in \check{Z}^{2}(\cV/X,\cU/Y;\G_m)$ of $\check{b}$, and any $\gamma \in C^2(\oK/K,\G_m)$ such that
		\[
		\d \gamma = \beta(y,\bar y),
		\]
		we have
		\begin{equation}\label{EqExpre}
		((P_v)_v, b)_{BM} = \sum_v \inv_v \left(\alpha_v - \gamma_v\right),
		\end{equation}
		where $\alpha_v:=\alpha(P_v,\bar P_v), \ \gamma_v := \gamma|_{\Gamma_v}$, and $b$ denotes the image of $\check{b}$ under the map
		\[
		\check{H}^{2}(\cV/X,\cU/Y;\G_m) \to H^{2}(X/Y,\G_m) =\Br_{\text{hor}}(X/Y).
		\]
	\end{lemma}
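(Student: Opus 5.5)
The plan is to unwind both sides of \eqref{EqExpre} into the same sum of local invariants, using the explicit description of the map \eqref{EqNatural} in terms of the long exact sequences \eqref{EqRCS} and \eqref{EqRCCS}. By definition, the Brauer--Manin pairing of $b$ with $(P_v)_v$ is computed by specializing $b$ to $\Br_{\text{hor}}(X_y/y)=\Br X_y/\Br K$, lifting it to an element $b_y \in \Br X_y$, and summing $\inv_v(b_y(P_v))$; the result does not depend on the lift by the Albert--Brauer--Hasse--Noether theorem. The whole proof therefore consists in showing that, after pulling the relative cocycle back to the point, $(\alpha_v-\gamma_v)$ is a Čech $2$-cocycle representing $b_y(P_v)\in \Br K_v$.

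First, I use the commutativity of \eqref{Diagrprocoverd} and the functoriality of relative Čech complexes under morphisms of procovered schemes. This lets me pull $u=(\alpha,\beta)$ back to a relative cocycle for the morphism $X_y\to\Spec K$ with the procoverings $\{\oK/K\}$ and $\cV\times_X X_y$. It can be pulled back further, at each place, to the relative cocycle for $\Spec K_v \to \Spec K$ (via $P_v$) with the coverings $\oK_v/K_v$ and $\oK/K$. Here the cone convention gives $\d\alpha = \pm f^*\beta$. Since $\beta(y,\bar y)$ is a Čech $3$-cocycle for $\oK/K$, it is the image in $H^3(K,\G_m)=0$ under \eqref{EqRCCS}, so a $\gamma$ with $\d\gamma=\beta(y,\bar y)$ exists, and $\gamma$ is exactly the datum needed to lift. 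The pair $(\alpha,\beta)-\d(0,\gamma)$ with respect to the cone differential then has zero base component. Its fiber component $\alpha - f^*\gamma$ is a genuine Čech $2$-cocycle on $\cV\times_X X_y$ with values in $\G_m$.

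Next, I invoke the compatibility of \eqref{EqNatural} with the maps $H^2(X,G)\to H^3_X(Y;M)$ in \eqref{EqRCS} and \eqref{EqRCCS}. This shows that the class of $\alpha-f^*\gamma$ in $\Br X_y$ maps to the specialization of $b$, so it is a valid lift $b_y$. Evaluating at $(P_v,\bar P_v)$ yields $\alpha_v - \gamma|_{\Gamma_v}$ as a cochain in $C^2(\Gamma_v,\oK_v^*)$, using the identification \eqref{EqIdentificationCechGroup}. Summing the local invariants then gives \eqref{EqExpre}. The independence from the choices of $u$ and $\gamma$ follows along the way: changing $\gamma$ by a cocycle changes the sum by an element of $\Br K$, which global reciprocity kills, and changing $u$ by a coboundary changes $\alpha_v-\gamma_v$ by local coboundaries after readjusting $\gamma$.

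I expect the main obstacle to be keeping track of signs and of the exact identification $H^3_X(Y;\G_m)\cong\Br_{\text{hor}}(X/Y)$ (the degree shift in \eqref{Eq:RelCoh2}) together with the direction of the cone. In particular, one must check that the element mapped to $b$ is really $\alpha-f^*\gamma$ and not $\gamma-\alpha$, and that pulling back along the procovered morphisms commutes with \eqref{EqNatural}. I would settle this first by writing out the cone differential in degree $2$ and verifying the compatibility using only the naturality statements from Appendix~\ref{AppB}.
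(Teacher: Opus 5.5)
Your overall strategy matches the paper's: pull back along $y$, refine the base procovering to $\{\oK/K\}$ via $\bar y$, kill $\beta$ using $\check H^3(\oK/K,\G_m)=0$, and recognize the usual Brauer--Manin sum. The paper instead first proves that the right-hand side is independent of $(u,\gamma)$, then chooses a representative with $\beta=0$ and takes $\gamma=0$. Your subtraction of the coboundary of $\gamma$ (viewed as a base cochain) is a harmless streamlining of this, and it gives the independence for free.

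There is, however, a concrete gap in your first step. The pair $\{\oK/K\}$ and $\cV\times_X X_y$ is not a relative procovering of $X_y\to\Spec K$. Members of $\cV\times_X X_y$ map to $(\cU\times_Y y)\times_K X_y$. In general there is no $K$-morphism from the étale $K$-schemes in $\cU\times_Y y$ to $\Spec\oK$. So $\gamma$ cannot be pulled back to that covering, and $\alpha-f^*\gamma$ makes no sense there.

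The correct fibre procovering is $\cV_{\bar y}:=\cV\times_{\cU}\bar y=\cV\times_{\cU\times_YX}(\bar y\times_YX)$. For $\alpha-f^*\gamma$ to define a class in $\Br X_y$, you must check that $\cV_{\bar y}$ actually covers $X_y$. This is exactly where the hypothesis that $\psi$ is surjective is used. Base change of $\psi$ makes $\cV_{\bar y}\to\bar y\times_YX$ surjective, and $\bar y\times_YX\to X_y$ is surjective. Your proposal never invokes this hypothesis, and without it $\cV_{\bar y}$ can even be empty, e.g. if $\bar y$ lies in a member of $\cU$ not hit by $\psi$.

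Finally, the commutativity of the square in the statement is what guarantees $\bar P_v\in\cV_{\bar y}(\oK_v)$. Only then does evaluating the refined cocycle at $(P_v,\bar P_v)$ make sense and give $\alpha_v-\gamma_v$. With this repair, the rest of your argument goes through.
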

	
	\begin{remark}\label{Rmk10.3}
		(1). We note that a $\gamma$ as in the lemma always exists, as $\beta(y,\bar y)$ lies in $\check{Z}^{3}(\oK/K,\G_m)$, and $\check{H}^{3}(\oK/K,\G_m)={H}^{3}(\oK/K,\G_m)=0$ since $K$ is a number field \cite[VII.11.4, p. 199]{CasselsFrohlich}.
		
		(2). Given a relative procovering with $\psi$ surjective, and points $y\in Y(K),(P_v) \in X_y(\A_K)$, one may always find lifts $\bar y,\overline{P_v(q)}$ satisfying the commutativity \eqref{Diagrprocoverd} as follows.
		
		First, given any covering $\cU/Y$ and a point $y \in Y(K)$, we may always find a lift $\bar y \in \cU(\oK)$ lying above it using the surjectivity of $\cU \to Y$.
		Then, given any relative covering $(\cU,\cV;\psi:\cV \to \cU \times_YX)$ with $\psi$ surjective, points $y \in Y(K), (P_v) \in X_y(\A_K)$, and a lift $\bar y \in \cU(\oK)$ of $y$, letting $\overline{P_v}\in \cV(\oK_v)$ be any inverse image of the pair $(\bar y \otimes_{\oK} \oK_v, P_v\otimes_{K_v}\oK_v) \in (\cU\times_YX)(\oK_v) \subset \cU(\oK_v) \times X(\oK_v)$ under the surjective map $\cV(\oK_v) \to (\cU\times_YX)(\oK_v)$, diagram \eqref{Diagrprocoverd} commutes automatically.
	\end{remark}
	
	\begin{proof}
		After pulling back all the objects involved along $y \hookrightarrow Y$, we may assume that $Y=\Spec K$. Morover, we may assume that $(\cU,\psi:\cV \to \cU \times_YX)$ is a relative covering.
		
		\vskip1mm
		
		We first prove that the right hand side of \eqref{EqExpre} does not depend on the choices of $u$ and $\gamma$. The independence on $\gamma$ follows immediately from the Albert--Brauer--Hasse--Noether theorem. For the independence on $u$, let $v=(\psi^*f^*\delta,\d \delta) \in \check{B}^{2}(\cV/X,\cU/Y;\G_m)$, $\delta \in \check{C^2}(\cV/Y,\G_m)$ be a coborder. 
		For each $v$, Diagram \eqref{Diagrprocoverd} induces by pullback a commutative diagram 
		\[
		\begin{tikzcd}
		{\check{C}^{\bullet}(\cV/X,\G_m)} \arrow[rr, "{\alpha\mapsto \alpha(P_v,\bar P_v)}"]                     &  & {\check{C}^{\bullet}(\oK_v/K_v,\G_m)}       \\
		{\check{C}^{\bullet}(\cU/Y,\G_m)} \arrow[u, "\psi^*f^*"] \arrow[rr, "{\alpha \mapsto \alpha(y,\bar y)}"] &  & {\check{C}^{\bullet}(\oK/K,\G_m)} \arrow[u]
		\end{tikzcd}
		\]
		on \v{C}ech cochains, where the horizontal maps are $\alpha \mapsto \alpha(P_v,\bar P_v)$ and $\alpha \mapsto \alpha(y,\bar y)$, respectively. Identifying the image of $\gamma$ under the two compositions, we get
		\[
		(\psi^*f^*)(\delta)(P_v, \bar P_v)=\delta(y,\bar y)|_v.
		\]
		So $\alpha'_v=\alpha_v-\delta(y,\bar y)|_v$, and we infer that the right hand side of \eqref{EqExpre} remains the same after replacing $(u,\gamma)$ by $(u',\gamma'):=(u-v,\gamma-\delta),$ showing the independence on $u$.
		
		\vskip1mm
		
		Finally, to prove formula \eqref{EqExpre}, let us first prove that $\cV_{\bar y}:=\cV \times_{\cU} \bar y$ is a procovering of $X$ via the composition $\cV \times_{\cU} \bar y \to \cV \to X$. Since it is clearly an inverse system of collections of étale maps over $X$, we only need to show that $\cV_{\bar y}:=\cV \times_{\cU} \bar y\to X$ is surjective. To do so, note first that the projection
		\[
		\cV_{\bar y}:=\cV \times_{\cU} \bar y=\cV \times_{(\cU\times_YX)}(\bar y\times_YX) \to \bar y\times_YX 
		\]
		is surjective because the map $\cV \to \cU\times_YX$ is. The projection $\bar y\times_YX \to X$ is surjective as $\bar y \to Y$ is, and so $\cV_{\bar y} \to X$ is surjective too, proving that $\cV_{\bar y}$ is a procovering of $X$ as wished.
		
		We may thus pullback the statement along the refinement of relative procoverings
		\[
		(\bar y/y, \cV_{\bar y} \to \bar y\times_YX ) \to (\cU,\psi:\cV \to f^*\cU),
		\]
		and in doing so reduce the general case to the case $\cU=\{\oK/K\}$.
		In this case, we have $\check{H}^{3}(\cU/K,\G_m)=\check{H}^{3}(\oK/K,\G_m)=0$ (as $K$ is a number field), and so we may choose a representative $u$ for $\check{b}$ with $\beta=0$. When $\beta=0$, the cocycle $\alpha$ is a \v{C}ech cocyle representing a class of the usual Brauer group $\Br X$ whose image in $\Br_{\text{hor}}(X/K)=\Br X/\Br K$ is $b$. Then we may take $\gamma=0$ and formula \eqref{EqExpre} is just the usual expression for the Brauer--Manin pairing, where we only explicitated the pullbacks of the Brauer class to the points $P_v$ via \v{C}ech cocycles.
	\end{proof}

	\subsection{Proof of Theorem \ref{Thm:3variation}}\label{SSec103}
	We divide the proof in five main parts (distributed among respective subsubsections):
	\begin{enumerate}[label=\arabic*.]
		\item in the first we find a suitable \v{C}ech representative $\check{b}$ in a group ${\check{H}^3(\cU/Q,\mu_n)}$ for $b$ with coefficients in $\mu_n$, with $\cU$ the universal pro-finite étale cover of $Q$;
		\item in the second we choose lifts as in Lemma \ref{Prop:BMpairing}, set up the computation, and divide the resulting expression in two parts, containing the contribution from the fibers and the base, respectively;
		\item in the third we show that the first of the two parts above vanishes;
		\item in the fourth we show how the second of the two parts above reduces to a sum of Redéi symbols; in doing so, an important tool is Proposition \ref{PropVariation}, where we compute the ``triple variation'' of a suitable representative cocycles in $Z^3(\pi_1(Q),\mu_n)$;
		\item the fifth part is dedicated to the proof of Proposition \ref{PropVariation}.
	\end{enumerate}
	
	\subsubsection{Choice of \v{C}ech representative}\label{SSSec1031}
	
	We let $Q^{ur}:=\Spec \oK[(y^{\sigma})^{\pm\frac1\omega}]_{\sigma \in \Gamma}$ be the universal cover of $Q$, and $\bar Q:=\Spec \oK[(y^{\sigma})^{\pm1}]_{\sigma \in \Gamma}=Q \otimes_K \oK$.
	
	\begin{proposition}\label{Propcreationalphabeta}
		There exists a relative procovering of $f$ of the form $(\cV,Q^{ur},\psi:\cV \to Q^{ur} \times_QX)$ with $\psi$ surjective, and an element
		\begin{align*}
		(\alpha,\beta)  \in &\check{Z}^3(\cV/X,Q^{ur}/Q;\mu_n) \\
		\subset  &\check{C}^2(\cV/X,\mu_n) \times \check{Z}^3(Q^{ur}/Q,\mu_n)
		\end{align*}
		that maps to $b$ under the composition $\check{Z}^3(\cV/X,Q^{ur}/Q;\mu_n) \to H^3(X/Q,\mu_n) \to \Br_{\text{hor}}(X/Q)$ and such that:
		\begin{enumerate}
			\item $\beta$ lies in the image of $\check{Z}^3(Q^{\oK,n}/Q,\mu_n) \to \check{Z}^3(Q^{ur}/Q,\mu_n)$;
			\item the image of $\beta$ under the composition $\check{Z}^3(Q^{\oK,n}/Q,\mu_n) \to \check{H}^3(Q^{\oK,n}/Q,\mu_n) \to \check{H}^3(Q^{\oK,n}/\bar Q,\mu_n)$ lies in the $(\Z/n\Z)(-2)$-linear span of the cup-products $(y^{\sigma_1}\cup y^{\sigma_2} \cup y^{\sigma_3})_n,\, \{\sigma_1,\sigma_2,\sigma_3\} \in \binom{\Gamma}{3}$;
			\item there exists a finite extension $L'/L$ such that the restriction of $\beta$ to $\check{Z}^3(Q^{ur}/Q^{L',n,(1)},\mu_n)$ is trivial;
		\end{enumerate}
	\end{proposition}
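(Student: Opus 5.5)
The plan is to build the representative in three stages: first find a class in $H^3_X(Q;\mu_n)$ lifting $b$, then choose a Čech cocycle for its image in $H^3(Q,\mu_n)$ with the prescribed shape, and finally lift that cocycle to a relative cocycle.

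\emph{Stage 1: lifting $b$ to $\mu_n$-coefficients.} Since $b$ is $n$-torsion, the Kummer triangle for the relative sheaf $\mu_n\to\G_m\xrightarrow{n}\G_m$ gives a class $b_n\in H^3_X(Q;\mu_n)$ mapping to $b$. Via \eqref{EqRCS}, its image $\partial(b_n)\in H^3(Q,\mu_n)$ lifts $\partial(b)$. In the setting of Theorem \ref{Thm:3variation} we work under the reductions $(R_1)$--$(R_4)$. The geometric part of $\partial(b_n)$ is then the $\Gamma_K$-invariant class $\bar\partial(b)$ in $H^3(\bar Q,\mu_n)$.

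\emph{Stage 2: the Čech cocycle $\beta$.} The torus satisfies $\pi_1(\bar Q)=\hat\Z(1)^\Gamma$, and $\bar Q$ is a $K(\pi,1)$ for $\mu_n$-coefficients. Hence $H^3(Q,\mu_n)=H^3(\pi_1(Q),\mu_n)$, where $\pi_1(Q)=\hat\Z(1)^\Gamma\rtimes\Gamma_K$, and this is computed by Čech cochains of $Q^{ur}/Q$ through \eqref{EqIdentificationCechGroup}.

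\begin{enumerate}
\item For property (1), the class is inflated from the quotient $(\Z/n\Z)(1)^\Gamma\rtimes\Gamma_K$, corresponding to $Q^{\oK,n}/Q$. This holds because the Hochschild--Serre filtration pieces all involve $H^j(\bar Q,\mu_n)=\wedge^j(\Z/n\Z)(-1)^\Gamma$. These groups are already computed on $(\Z/n\Z)(1)^\Gamma$ via the Kummer characters $y^\sigma$, since cup products of the Kummer characters $(y^\sigma)_n$ generate the $\mu_n$-cohomology of $\bar Q$.
\item For property (2), write $\bar\partial(b)=\sum a_{\sigma_1\sigma_2\sigma_3}(y^{\sigma_1}\cup y^{\sigma_2}\cup y^{\sigma_3})_n$. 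The cup product of explicit Kummer $1$-cocycles on $Q^{\oK,n}/\bar Q$ gives a cocycle of exactly this shape, and we modify $\beta$ by a coboundary accordingly.
\item To get an actual cocycle on $\pi_1(Q)$, and not only on the geometric part, we use Appendix \ref{AppC}: the Hochschild--Serre sequence of the wreath-type product $(\Z/n\Z)(1)^\Gamma\rtimes\Gamma_K$ splits at $E_2$. This lets us extend the invariant geometric cocycle to a cocycle on the whole group. The remaining discrepancy lies in lower filtration pieces, coming from $H^1(K,H^2)$ and $H^2(K,H^1)$. I would absorb it by noting that these pieces are killed by exactness of \eqref{erdelta}, i.e.\ $(R_3)$, after subtracting the image of an element of $\Br X_\eta$. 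A constant part, coming from $H^3(K,\mu_n)$, can be corrected freely.
\end{enumerate}

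\emph{Stage 3: the relative cocycle and property (3).} Since $f^*\partial(b_n)=0$ in $H^3(X,\mu_n)$, there is, after refining to a suitable procovering $\cV\to Q^{ur}\times_QX$ (surjective, for instance built from étale neighbourhoods trivializing the relevant $2$-cochains), a cochain $\alpha$ with $\d\alpha=\psi^*f^*\beta$. This produces $(\alpha,\beta)\in\check Z^3$. I would then adjust $\alpha$ by a $2$-cocycle so that the class is exactly $b_n$. The ambiguity is controlled by $\check H^2(\cV/X,\mu_n)\to H^2(X,\mu_n)$, which surjects modulo $\Br$ after refinement. For property (3), restrict $\beta$ to $\pi_1$ of $Q^{L',n,(1)}$. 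There all $y^\sigma$ with $\sigma\ne\id$ are $n$-th powers, so any triple cup product containing two distinct indices dies. The leftover Galois-part pieces are killed by a large enough finite $L'$, chosen so that the restricted group cohomology class is trivial. Choosing the cocycle-level primitive compatibly, by passing to $L'$ large, makes the restriction trivial as a cocycle.

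\emph{Main obstacle.} I expect the hardest step to be making property (2) hold simultaneously with the requirement that the cocycle lives on all of $\pi_1(Q)$. This needs the Nakaoka splitting of Appendix \ref{AppC} together with $(R_3)$ to kill the lower filtration terms.
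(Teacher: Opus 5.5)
\textbf{Gap: the lower filtration of $\partial(b_n)$ (Stage~2, item~3).} The problem is your treatment of the part of $\partial(b_n)$ lying in the lower Hochschild--Serre filtration $F^1H^3(\pi_1(Q),\mu_n)$, i.e.\ the pieces coming from $H^1(K,H^2)$, $H^2(K,H^1)$ and $H^3(K,\mu_n)$. You propose to absorb it by invoking $(R_3)$ and subtracting $r$ of an element of $\Br X_{\eta}$, and to ``correct freely'' the constant part. This fails for three reasons.
\begin{enumerate}
\item The proposition is a step in the proof of Theorem~\ref{Thm:3variation}, whose hypotheses do not include $(R_1)$--$(R_4)$.
\item Subtracting $r(b')$ produces a representative of $b-r(b')$, not of $b$. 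The statement, and hence the computation in Theorem~\ref{Thm:3variation}, concerns the given $b$. Likewise, altering $\beta$ by a constant that is not a coboundary changes the class.
\item Even granting $(R_3)$, it only says that two elements of $\Br_{\text{hor}}(X/Q)$ with the same $\bar\partial$ differ by $r(\Br X_{\eta})$. Your Nakaoka extension $\tilde\beta$ of the geometric cup class is merely a class in $H^3(Q,\mu_n)$. For $\partial(b_n)-[\tilde\beta]$ to be $\partial$ of anything you would need $f^*[\tilde\beta]=0$ in $H^3(X,\mu_n)$, and your construction does not ensure this.
\end{enumerate}

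\textbf{The missing idea.} Nothing needs to be killed: the whole class $\partial(b_n)$, lower filtration included, lifts to $G_n=\bar G_n\rtimes\Gamma_K$ with cup-type geometric restriction.
\begin{itemize}
\item The map $H^q(\bar G_n,\mu_n)_{cup}\to H^q(\pi_1(\bar Q),\mu_n)$ is a $\Gamma_K$-equivariant isomorphism. Hence every $E_2^{p,q}(G_n)\to E_2^{p,q}(\pi_1(Q))$ is surjective, with preimages in $H^p(\Gamma_K,H^q(\bar G_n,\mu_n)_{cup})$.
\item Theorem~\ref{ThmNakaoka} makes the $G_n$-sequence degenerate with split filtration. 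Since differentials commute with an $E_2$-surjection, the $\pi_1(Q)$-sequence degenerates too. So inflation $H^3(G_n,\mu_n)\to H^3(Q,\mu_n)$ is surjective on every graded piece, hence surjective.
\item Lift the pieces with $p\ge 1$ inside $H^p(\Gamma_K,H^{3-p}(\bar G_n,\mu_n)_{cup})$; these restrict to zero on $\bar G_n$. The restriction of the lift to $\bar G_n$ is then its $E^{0,3}$-component, which lies in the cup span. This is Lemma~\ref{LemTwoPoints}(ii)--(iii), and it gives (1) and (2) at once.
\end{itemize}
Your item~1 justification, that the filtration pieces are ``already computed'' on the finite quotient, is also insufficient by itself. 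Surjectivity on $E_2$ does not pass to $E_\infty$ without the degeneration of the $G_n$-sequence.

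\textbf{The other steps.} These are sound in outline. Stage~3 is the paper's five-lemma chase done by hand. However, producing $\alpha$ and hitting $b$ exactly needs \v{C}ech cohomology to compute the étale cohomology of the quasi-projective $X$ in degrees $2$ and $3$; étale-local trivializations alone do not give this. Your route to (3) is a valid alternative to the paper's: kill the restricted class on the open subgroup $\bar G_{n,\id}\rtimes\Gamma_{L'}$ by enlarging $L'$, then apply Lemma~\ref{thelemma}. The paper instead first makes $\beta$ vanish as a cocycle on the finite subgroup $\bar G_{n,\id}$ and then uses continuity of $\beta$.
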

	
	We divide the proof of Proposition \ref{Propcreationalphabeta} in two parts: existence of $(\cV,Q^{ur},\psi:\cV \to Q^{ur} \times_QX)$ and sought representative $(\alpha,\beta)$, and modification of $(\alpha,\beta)$ to satisfy i-iii.
	
	We need the following for the first part.
	
	\begin{lemma}
		Fix a covering $\cU/Q$. The relative coverings of the form $(\cU,\cV,\psi:\cV \to \cU \times_QX)$ with $\psi$ surjective form a filtered system when ordered by refinement. Moreover, the coverings $\cV$ appearing in the relative coverings in this poset are cofinal among all coverings of $X$. The analagous statement holds with procoverings instead of coverings.
	\end{lemma}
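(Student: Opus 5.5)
The plan is to treat the three assertions separately: directedness of the system, cofinality of the coverings $\cV$, and the extension to procoverings.

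For directedness, take two relative coverings $(\cU,\cV_1,\psi_1)$ and $(\cU,\cV_2,\psi_2)$ with $\psi_1,\psi_2$ surjective. We form the covering $\cV_{12}:=\cV_1\times_{\cU\times_QX}\cV_2$ of $X$. Its members are the fibre products $V_{1,i}\times_{\cU\times_Q X}V_{2,j}$, where the base is taken over the appropriate member $U_k\times_QX$ of $\cU\times_QX$, so that both indices map to the same index of $\cU$. Each such member is étale over $X$, being a fibre product of étale schemes over an étale scheme. The induced map $\psi_{12}:\cV_{12}\to\cU\times_QX$ is surjective because a fibre product of surjective morphisms is surjective. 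The two projections $\cV_{12}\to\cV_1$ and $\cV_{12}\to\cV_2$ lie over $\psi_1,\psi_2$, so they are refinements of relative coverings that are the identity on $\cU$. Surjectivity of $\psi_{12}$ also shows that $\cV_{12}\to X$ is surjective, since $\cU\times_QX\to X$ is surjective. Hence $\cV_{12}$ is indeed a covering, and the system is filtered. The ordering here is refinement between relative coverings with $\cU$ fixed. If equalization of parallel refinements is also needed for filteredness in the categorical sense, I would handle it in the usual \v{C}ech way: refinement maps are unique up to homotopy, so only the preorder matters for the colimits we take later.

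For cofinality, start from an arbitrary covering $\cW=\{W_j\to X\}$ of $X$. We set $\cV:=\cW\times_X(\cU\times_QX)$, whose members are $W_j\times_X(U_i\times_QX)=W_j\times_QU_i$. The map $\cV\to\cU\times_QX$ is the base change of the surjection $\bigsqcup W_j\to X$, so it is surjective, and it is étale. The map $\cV\to\cW$ is given by the first projection, so $\cV$ refines $\cW$. Thus every covering of $X$ is refined by one occurring in a relative covering with surjective $\psi$, which is the claimed cofinality.

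For procoverings, we apply the same two constructions levelwise to inverse systems. If $\cU=\varprojlim\cU_i$, then fibre products of pro-objects can be computed indexwise over a common cofiltered index set, after reindexing both systems over the product index category. Surjectivity is checked at each finite stage, and that suffices because refinements of pro-objects are represented by compatible level maps.

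The only mildly delicate point is the bookkeeping of index maps $\tau$ in the fibre-product refinements and their compatibility with $\psi$. I expect this to be routine.
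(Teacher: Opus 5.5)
Your proposal is correct and follows essentially the same route as the paper. You use the fibre product $\cV_1\times_{\cU\times_QX}\cV_2$ for filteredness, the refinement $\cW\times_X(\cU\times_QX)$ for cofinality, and a levelwise reduction for procoverings. Your extra remarks on why the fibre product is still surjective, on the index bookkeeping, and on reindexing pro-systems fill in details the paper leaves implicit.
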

	\begin{proof}
		Given relative coverings $(\cU,\psi_1:\cV_1 \to \cU \times_QX)$ and $(\cU,\psi_2:\cV_2 \to \cU \times_QX)$ with $\psi_1,\psi_2$ surjective, the fibered product $\tilde \cV:=\cV_1 \times_{\psi_1,\cU \times_QX,\psi_2}\cV_2$ is another covering of $X$, and it maps surjectively to $\cU \times_QX$. Then the relative covering $(\cU, \tilde \cV \to \cU \times_QX)$ refines both, proving that the relative coverings in the statement form a filtered system. For the cofinality, note that any covering $\cV/X$ is refined by $\cV \times_X \cU \times_QX$, which can be extended to the relative covering $(\cU, \cV \times_X \cU \times_QX \to \cU \times_QX)$. 
		
		The statement on procoverings follows from that on coverings.
	\end{proof}
	
	\begin{proof}[Proof of Proposition \ref{Propcreationalphabeta}, existence of $(\alpha,\beta)$.]
		Applying the hypercohomology functors $H^{\bullet}(Q,-)$ to the ``truncated Kummer'' exact triangle
		\[
		\tau_{\geq 1}Rf_*\mu_n \to \tau_{\geq 1}Rf_*\G_m \to[{[n]}] \tau_{\geq 1}Rf_*\G_m \to[+1],
		\]
		we infer that the natural map $H^2(Q,\tau_{\geq 1}Rf_*\mu_n) \to H^2(Q,\tau_{\geq 1}Rf_*\G_m)[n]$ is surjective. Equivalently (see \eqref{Eq:RelCoh2}), the map $H^3(X/Q,\mu_n) \to H^3(X/Q,\G_m)[n]$ is surjective. 
		
		\vskip1mm
		
		Let $b_0 \in H^3(X/Q,\mu_n)$ be an inverse image of $b$. For any relative procovering $(\cU,\cV,\psi:\cV \to \cU \times_QX)$ of $f$, we get a commutative diagram with exact rows by comparing  \eqref{EqLES} and (the direct limit of) \eqref{EqLES2}:
		\begin{equation}\label{Equatiopn}
		\begin{tikzcd}[column sep=tiny]
		{\check{H}^3(\cU/Q,\mu_n)} \arrow[r] \arrow[d] & {\check{H}^2(\cV/X,\mu_n)} \arrow[d] \arrow[r] & {\check{H}^3(\cV/X,\cU/Q;\mu_n)} \arrow[d] \arrow[r, "\partial"] & {\check{H}^3(\cU/Q,\mu_n)} \arrow[d] \arrow[r] & {\check{H}^3(\cV/X,\mu_n)} \arrow[d] \\
		{H^3(Q,\mu_n)} \arrow[r, "f^*"]                & {H^2(X,\mu_n)} \arrow[r]                       & {H^3(X/Q,\mu_n)} \arrow[r, "\partial"]                           & {H^3(Q,\mu_n)} \arrow[r, "f^*"]                & {H^3(X,\mu_n)}                      
		\end{tikzcd}
		\end{equation}     
		
		Let $Q^{\text{ur}}=\Spec \oK[(y^{\sigma})^{\pm\frac1\omega}]_{\sigma \in \Gamma}$ be the profinite étale universal cover of $Q$. The pro-\v{C}ech-to-étale maps $\check{H}^k(Q^{\text{ur}}/Q,\mu_n) \to H^k(Q,\mu_n)$ are isomorphisms for all $k$ by Hochschild-Serre's spectral sequence (or rather, its profinite version \cite[Remark III.2.21(b)]{LECcompleto}) and the vanishing $H^k(Q^{\text{ur}},\mu_n)=0,\,k \geq 1$ \cite[Corollary 3.3]{GP}.
		So the first and fourth vertical maps in \eqref{Equatiopn} are isomorphisms when $\cU:=\{Q^{\text{ur}}/Q\}$. 
		
		Taking now the direct limit of diagram \eqref{Equatiopn} over all relative procoverings as in the lemma (taking such limit is allowed by Lemma \ref{LemIndepofRefin}), the second and fifth vertical maps become isomorphisms by the quasi-projectivity of $X$. Then the middle one is an isomorphism as well by the lemma of five. In particular, there exists a relative procovering $(\cU,\cV,\psi:\cV \to \cU \times_QX)$ with $\psi$ surjective for which $b$ lifts to a \v{C}ech class $\check{b}$.
		Let
		\[
		(\alpha,\beta) \in \check{Z}^2(\cV/X,Q^{ur}/Q;\mu_n) \subset \check{C}^2(\cV/X,\mu_n) \times \check{Z}^3(Q^{ur}/Q,\mu_n) 
		\]
		be a representative for $\check{b}$. 
	\end{proof}

	Before proceeding to the next part, we introduce some notation. 
	
	\begin{notation*}
		We set by convention the {\em fundamental group} of an intermediate cover between $Q^{ur}$ and $Q$, i.e.\ a profinite étale integral cover $Q' \to Q$ sitting in a diagram $Q^{ur} \to Q' \to Q$, to be $\pi_1(Q):=\Gal(Q^{ur}/Q)$.
	\end{notation*}
	
	The tower $Q^{ur} \to \bar Q \to Q$ gives by Galois theory a short exact sequence
	\[
	1 \to \pi_1(\bar Q) \to \pi_1 (Q) \to \Gamma_K \to 1.
	\]
	We recall that $\pi_1(\bar Q)=X_{*}(Q) \otimes_{\Z}\widehat \Z(1)$ (the same formula applies for any torus over an algebraically closed field of characterstic $0$). 
	We also let 
	\[
	G_n:=\pi_1(Q)/n\bar \pi_1(\bar Q)=\Gal(Q^{\oK,n}/Q) \text{ and }\bar G_n:=\pi_1(\bar Q)/n\pi_1(\bar Q)=\Gal(Q^{\oK,n}/\bar Q).
	\]
	Note that $\bar G_n =X_{*}(Q) \otimes_{\widehat\Z} (\Z/n\Z)(1) \cong (\Z/n\Z)(1)[\Gamma]$. More explicitly, the last isomorphism is given by:
	\[
	\bar G_n = \bigoplus_{\sigma \in \Gamma} \bar G_{n, \sigma}, \ \text{ with }
	\bar G_{n, \sigma} := \Gal(Q^{\oK,n}/Q^{\oK,n,(\sigma)}),
	\]
	\[
	Q^{\oK,n,(\sigma)} := \Spec \oK[(y^{\sigma'})^{\pm\frac1n},(y^{\sigma})^{\pm1}]_{\sigma' \in \Gamma \s \{\sigma\}}.
	\]
	Note that $\bar G_{n,\sigma}\cong \Z/n\Z$ as an abstract group.
	
	\begin{lemma}
		Let $d:=\#\Gamma$ and $\sigma_1:=\id,\sigma_2,\ldots,\sigma_{d}$ be the elements of $\Gamma$. We have a natural decomposition
		\[
		H^k(\bar G_n,\Z/n\Z)= \bigoplus_{k_1+\cdots +k_d=k} \left(H^{k_1}(\bar G_{n,\sigma_1},\Z/n\Z) \otimes_{\Z/n\Z} \cdots \otimes_{\Z/n\Z} H^{k_d}(\bar G_{n,\sigma_d},\Z/n\Z) \right)
		\]
		given by the cross-product map from the right to the left hand side.
	\end{lemma}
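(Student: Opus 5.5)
The plan is to reduce the statement to the Künneth formula for group cohomology with field coefficients. Since $\bar G_n=\bigoplus_{\sigma\in\Gamma}\bar G_{n,\sigma}$ is a finite direct product of cyclic groups of order $n$, and $\Z/n\Z$ is the coefficient ring, I would first observe that the standard (bar or, better, periodic) resolution of $\Z$ over $\Z[\bar G_n]$ can be taken to be the tensor product of resolutions $P^{(i)}_\bullet\to\Z$ over $\Z[\bar G_{n,\sigma_i}]$, using the identification $\Z[\bar G_n]=\bigotimes_i\Z[\bar G_{n,\sigma_i}]$. Applying $\Hom_{\bar G_n}(-,\Z/n\Z)$ then yields an isomorphism of cochain complexes
\[
C^\bullet(\bar G_n,\Z/n\Z)\simeq \bigotimes_{i=1}^{d} C^\bullet(\bar G_{n,\sigma_i},\Z/n\Z),
\]
where the tensor products are over $\Z/n\Z$ and the $C^\bullet$ are computed from the $P^{(i)}$; under this identification the cross product corresponds to the tensor product of cochains, so it suffices to show the Künneth map is an isomorphism.

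Next I would use the explicit periodic resolution for a cyclic group $C_n=\langle t\rangle$: the complex $\Z[C_n]\xleftarrow{t-1}\Z[C_n]\xleftarrow{N}\Z[C_n]\xleftarrow{t-1}\cdots$. With coefficients $\Z/n\Z$ (trivial action), the cochain complex becomes $\Z/n\Z\xrightarrow{0}\Z/n\Z\xrightarrow{n=0}\Z/n\Z\to\cdots$, i.e.\ all differentials vanish. Hence each factor complex is a complex of free $\Z/n\Z$-modules with zero differential, equal to its own cohomology $H^{k}(\bar G_{n,\sigma_i},\Z/n\Z)\cong\Z/n\Z$ in every degree $k\ge0$. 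The tensor product of complexes with zero differentials has zero differential, so its cohomology in degree $k$ is literally $\bigoplus_{k_1+\cdots+k_d=k}\bigotimes_i H^{k_i}(\bar G_{n,\sigma_i},\Z/n\Z)$, with no Tor correction terms (which would in any case vanish since everything is free over $\Z/n\Z$).

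The only genuine point to check, and the step I expect to require care, is that the isomorphism obtained this way agrees with the cross-product map (so the decomposition is natural and not merely an abstract isomorphism). For this I would invoke the standard fact (e.g.\ \cite[Ch.~V]{Brown}) that the cross product on cohomology is induced by any diagonal-compatible tensor product of projective resolutions, and that the comparison map between the tensor-product resolution and the bar resolution of $\bar G_n$ is unique up to homotopy; thus the map induced on cohomology by the tensor-product identification is exactly iterated cross product $\times$. Naturality (in particular compatibility with the $\Gamma$-action permuting the factors $\bar G_{n,\sigma}$, up to the usual Koszul signs) then follows from functoriality of the cross product, which is what the later computations will use.
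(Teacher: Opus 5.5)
Your proposal is correct and follows the same route as the paper, which simply invokes the K\"unneth formula \cite[Theorem 3.6.3]{Weibel} for the decomposition $\bar G_n=\bigoplus_i \bar G_{n,\sigma_i}$ with $\Z/n\Z$-coefficients. The paper's justification is that $H^p(\Z/n\Z,\Z/n\Z)\cong\Z/n\Z$ is free, so no Tor terms appear; your periodic resolution, whose $\Z/n\Z$-cochain complex has zero differentials, verifies the K\"unneth hypotheses explicitly (including the flatness of coboundaries that the paper leaves implicit), and your appeal to \cite[Ch.~V]{Brown} to identify the isomorphism with the cross product supplies a detail the paper takes for granted.
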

	\begin{proof}
		This is just K\"unneth's formula \cite[Theorem 3.6.3]{Weibel} applied to the direct sum $\bar G_n=\bar G_{n,\sigma_1}\oplus \cdots \oplus \bar G_{n,\sigma_d}$ with coefficients in $\Z/n\Z$. The flatness hypothesis of K\"unneth's formula holds because each $G_{n,\sigma_i}$ is isomorphic as a group to $\Z/n\Z$ and $H^p(\Z/n\Z,\Z/n\Z)\cong \Z/n\Z$ for every $p \geq 0$.
	\end{proof}
	
	We define 
	$H^k(\bar G_n,\Z/n\Z)_{cup} \subset H^k(\bar G_n,\Z/n\Z)$ as the subgroup corresponding, in the K\"unneth decomposition above, to strings $(k_1,\ldots,k_d)$ where each $k_i$ is either $0$ or $1$. 
	
	\vskip1mm
	
	We shall need the following two lemmas to continue the proof of Proposition \ref{Propcreationalphabeta}.
	
	\begin{lemma}\label{LemTwoPoints} The following hold for all $k \geq 0$.
		\begin{enumerate}
			\item The restriction of the inflation map $H^k(\bar G_n,\mu_n) \to H^k(\pi_1(\bar Q),\mu_n)$ to 
			$H^k(\bar G_n,\mu_n) \supset H^k(\bar G_n,\mu_n)_{cup}:=H^k(\bar G_n,\Z/n\Z)_{cup}\otimes_{\Z/n\Z} \mu_n$
			is an isomorphism.
			\item The \v{C}ech-to-\'etale map $H^k(G_n,\mu_n) \to H^k(Q,\mu_n)$ is surjective.
            \item Every element in $H^k(Q,\mu_n)$ admits an inverse image in $H^k(G_n,\mu_n)$ whose restriction to $H^k(\bar G_n,\mu_n)$ lies in $H^k(\bar G_n,\mu_n)_{cup}$.
		\end{enumerate}
	\end{lemma}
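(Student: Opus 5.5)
I will prove the three points of Lemma \ref{LemTwoPoints} in order, since (3) is assembled from (1) and (2).

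For (1), I use that $\pi_1(\bar Q)\cong \widehat{\Z}(1)^{\Gamma}$ is a free profinite abelian group with basis indexed by $\Gamma$. Its mod-$n$ cohomology is the exterior algebra $\wedge^\bullet \Hom(\pi_1(\bar Q),\Z/n\Z)$, generated in degree one by the Kummer characters $\hat y^{\sigma}$; this follows from Künneth applied to $\widehat\Z^d$, using $H^{\geq 2}(\widehat\Z,\Z/n\Z)=0$. The same Künneth decomposition, applied to $\bar G_n=\bigoplus_\sigma \bar G_{n,\sigma}$, lets me compare factor by factor. Inflation $\Z/n\Z\leftarrow\widehat\Z$ is an isomorphism on $H^0$ and on $H^1$, since $\Hom(\Z/n\Z,\Z/n\Z)=\Hom(\widehat\Z,\Z/n\Z)$, and the target has no $H^{\geq 2}$. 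The cross product is compatible with inflation, so inflation carries the summands with all $k_i\in\{0,1\}$ isomorphically onto the full cohomology of $\pi_1(\bar Q)$. It kills the remaining summands, because each of them contains some factor of degree $\geq 2$. Tensoring with $\mu_n$ gives (1).

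For (2), I factor the map $H^k(G_n,\mu_n)\to H^k(Q,\mu_n)$ through $H^k(\pi_1(Q),\mu_n)\cong \check H^k(Q^{ur}/Q,\mu_n)\cong H^k(Q,\mu_n)$, the isomorphism having been established above from $H^{\geq1}(Q^{ur},\mu_n)=0$. It then suffices to show that inflation $H^k(G_n,\mu_n)\to H^k(\pi_1(Q),\mu_n)$ is surjective. For this I compare the Hochschild--Serre spectral sequences for $1\to\bar G_n\to G_n\to\Gamma_K\to1$ and $1\to\pi_1(\bar Q)\to\pi_1(Q)\to\Gamma_K\to1$. On the $E_2$ terms the map is $H^p(\Gamma_K,H^q(\bar G_n,\mu_n))\to H^p(\Gamma_K,H^q(\pi_1(\bar Q),\mu_n))$. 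By (1), the coefficient map is split surjective as a $\Gamma_K$-module map, because the subgroup $H^q(\bar G_n,\mu_n)_{cup}$ is $\Gamma_K$-stable: it is spanned by cup products of the degree-one classes, which $\Gamma_K$ permutes up to cyclotomic twist. Hence the map is surjective on every $E_2^{p,q}$.

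Surjectivity on $E_2$ does not directly give surjectivity on the abutment, and this is the step I expect to be the main obstacle. My plan is to obtain an actual morphism of spectral sequences that splits. The $\Gamma_K$-equivariant splitting $H^q(\pi_1(\bar Q),\mu_n)\cong H^q(\bar G_n,\mu_n)_{cup}\subset H^q(\bar G_n,\mu_n)$ should be lifted to a group-level section. Concretely, I try to show that the Hochschild--Serre spectral sequence for $\pi_1(Q)$ degenerates at $E_2$ with split filtration, using the Nakaoka-type splitting of Appendix \ref{AppC}. The extension $\pi_1(Q)$ is essentially the wreath-like product $\widehat\Z(1)[\Gamma]\rtimes\Gamma_K$, split by the point $1\in Q(K)$, which gives a section of $\pi_1(Q)\to\Gamma_K$. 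That section induces a compatible section $\Gamma_K\to G_n$, and with these sections the abutment of the $G_n$ sequence surjects onto that of the $\pi_1(Q)$ sequence.

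If that splitting argument is awkward, I fall back on an induction on the filtration degree $p$ via the five lemma. The fallback uses surjectivity on $E_2$ together with the fact that the $\pi_1(Q)$ sequence has vanishing differentials out of the image, which again comes from degeneration. With the group-level splitting in hand, (3) is then immediate. Given a class in $H^k(Q,\mu_n)=H^k(\pi_1(Q),\mu_n)$, I decompose it along the split filtration. For each graded piece I pick a preimage in $H^p(\Gamma_K,H^q(\bar G_n,\mu_n)_{cup})$, and I reassemble these using the section. The restriction of the resulting class to $\bar G_n$ is its $E_2^{0,k}$ component, which lies in $H^k(\bar G_n,\mu_n)_{cup}$ by construction.
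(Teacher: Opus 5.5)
Your part (1), and the $E_2$-level surjectivity in part (2), are fine and match the paper. This includes your point that $H^q(\bar G_n,\mu_n)_{cup}$ is $\Gamma_K$-stable, so that inflation is a split surjection of $\Gamma_K$-modules. The gap is at the step you yourself flagged, passing from $E_2$ to the abutment, and your proposed fix does not work.

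Theorem~\ref{ThmNakaoka} requires a semidirect product $H^Y\rtimes G$ in which $G$ acts \emph{only by permuting coordinates}. The group $\pi_1(\bar Q)\cong\Ind_{\Gamma_E}^{\Gamma_K}\widehat\Z(1)$ is not of this form. Its $\Gamma_E$-invariants vanish, because the cyclotomic character of $\Gamma_E$ has infinite image. By contrast, any nontrivial $H^Y$ with a permutation action has nontrivial $\Gamma_E$-invariants, namely elements constant on a $\Gamma_E$-orbit. The section of $\pi_1(Q)\to\Gamma_K$ coming from $1\in Q(K)$ only shows that the extension splits. That gives injectivity of the edge map $H^p(\Gamma_K,\mu_n)\to H^p(\pi_1(Q),\mu_n)$, not degeneration.

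More fundamentally, degeneration of the \emph{target} sequence would not suffice anyway. To lift $\tilde x\in\tilde E^{p,q}_\infty$ you need a preimage in $E_2^{p,q}$ that is a permanent cycle of the \emph{source} sequence. Nothing in your proposal provides that, and neither does your fallback, which again invokes degeneration of the $\pi_1(Q)$ sequence. The same problem affects your (3): ``reassembling using the section'' presupposes that the chosen $E_2$-preimages come from actual classes in $H^k(G_n,\mu_n)$.

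The missing idea is to apply Nakaoka to the source $G_n$, where it does apply because $\mu_n\subset E$. Then $\Gamma_E$ acts trivially on $(\Z/n\Z)(1)$, so $\bar G_n\cong\Ind_{\Gamma_E}^{\Gamma_K}(\Z/n\Z)(1)\cong\Ind_{\Gamma_E}^{\Gamma_K}\Z/n\Z$. Hence $G_n\cong(\Z/n\Z)^{\Gamma}\rtimes\Gamma_K$ with a pure permutation action; take $H=R=\Z/n\Z$, $Y=\Gamma$, $M=\mu_n$. The source sequence then degenerates, with $H^k(G_n,\mu_n)=\bigoplus_{p+q=k}E_2^{p,q}$.

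For the comparison map $\phi$, every $\tilde x\in\tilde E_2$ is some $\phi(x)$, and $\tilde d_r\tilde x=\phi(d_rx)=0$. So the target degenerates as a consequence, $\phi_\infty=\phi_2$ is surjective, and induction along the finite filtrations gives (2). For (3), the summands $H^p(\Gamma_K,H^q(\bar G_n,\mu_n)_{cup})$ of $E_2^{p,q}$ already surject onto $H^k(\pi_1(Q),\mu_n)$ by (1). Restriction to $\bar G_n$ is the projection to $E_2^{0,k}=H^k(\bar G_n,\mu_n)^{\Gamma_K}$, which lands in the cup part by construction.
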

	\begin{proof}[Proof of i.]
		After Tate-twisting, we may prove that the restriction of $\inf: \linebreak H^k(\bar G_n,\Z/n\Z) \to H^k(\pi_1(\bar Q),\Z/n\Z)$ to $H^k(\bar G_n,\Z/n\Z)_{cup}$ is an isomorphism. Let $\pi_1(Q)=\oplus_{\sigma \in \Gamma}\bar G_{\infty,\sigma}$ be a direct sum decomposition lifting the decomposition $\bar G_n=\oplus_{\sigma \in \Gamma}\bar G_{\infty,\sigma}$ with each $\bar G_{\infty,\sigma} \cong \widehat \Z$ (e.g.\ take $\bar G_{\infty,\sigma}:=\Gal(Q^{ur}/Q^{\oK,\infty,\sigma})$ where $Q^{\oK,\infty,(\sigma)}:=\Spec \oK[(y^{\sigma'})^{\pm\frac1\omega},(y^{\sigma})^{\pm1}]_{\sigma' \in \Gamma \s \{\sigma\}}$). The inflation map $\inf:H^k(\bar G_n,\mu_n) \to H^k(\pi_1(\bar Q),\mu_n)$ respects the K\"unneth decompositions associated to the direct sum decompositions, and so we get a commutative diagram:
		\[
		\begin{tikzcd}[column sep=tiny]
		{H^k(\bar G_n,\Z/n\Z)} \arrow[r] \arrow[r, Rightarrow, no head] \arrow[d, "\inf"] & {\bigoplus_{k_1+\cdots +k_d=k} \left(H^{k_1}(\bar G_{n,\sigma_1},\Z/n\Z) \otimes_{\Z/n\Z} \cdots \otimes_{\Z/n\Z} H^{k_d}(\bar G_{n,\sigma_d},\Z/n\Z) \right)} \arrow[d, "\inf"] \\
		{H^k(\pi_1(\bar Q),\Z/n\Z)} \arrow[r, Rightarrow, no head]                        & {\bigoplus_{k_1+\cdots +k_d=k} \left(H^{k_1}(\bar G_{\infty,\sigma_1},\Z/n\Z) \otimes_{\Z/n\Z} \cdots \otimes_{\Z/n\Z} H^{k_d}(\bar G_{\infty,\sigma_d},\Z/n\Z) \right)}       .
		\end{tikzcd}
		\]
		The natural inflation map $\inf:H^k(\Z/n\Z,\Z/n\Z) \to H^k(\widehat \Z,\Z/n\Z)$ is an isomorphism for $k=0,1$, where both sides are $\Z/n\Z$, and is trivial for $k \geq 2$, where $H^k(\widehat \Z,\Z/n\Z)=0$ as the cohomological dimension of $\widehat \Z$ is $1$. So the same holds for each inflation map $\inf:H^k(\bar G_{n,\sigma},\Z/n\Z) \to H^k(\bar G_{\infty,\sigma},\Z/n\Z)$, and the statement follows.
    \end{proof}
		
	\begin{proof}[Proof of ii.] 
        Since $H^k(\pi_1(Q),\mu_n)=H^k(Q,\mu_n)$, it suffices to prove that the inflation map $H^k(G_n,\mu_n) \to H^k(\pi_1(Q),\mu_n)$ is surjective. Let $E^{p,q}_r$ and $\tilde E^{p,q}_r$ be the Hochschild--Serre spectral sequences associated to the the pairs $\bar G_n \triangleleft G_n$ and $\pi_1(\bar Q)\vartriangleleft \pi_1(Q)$, respectively. Their second pages are
		\[
		E^{p,q}_2:=H^p(\Gamma_K,H^q(\bar G_n,\mu_n)), \quad \tilde E^{p,q}_2:=H^p(\Gamma_K,H^q(\pi_1(\bar Q),\mu_n))
		\]
		and they abut to $H^k(G_n,\mu_n)$ and $H^k(\pi_1(Q),\mu_n)$. 
		
		\vskip1mm
		
		Consider, for each pair $p,q$, the composition
		\[
		H^p(\Gamma_K,H^q(\bar G_n,\mu_n)_{cup})\to H^p(\Gamma_K,H^q(\bar G_n,\mu_n))\to H^p(\Gamma_K,H^q(\pi_1(\bar Q),\mu_n)),
		\]
		where the second is the natural map coming from confronting the two sequences.
		This composition is an isomorphism by point i. In particular, the first map is injective and the second is surjective. Since $E^{p,q}_2=H^p(\Gamma_K,H^q(\bar G_n,\mu_n))$ and $\tilde E^{p,q}_2=H^p(\Gamma_K,H^q(\pi_1(\bar Q),\mu_n))$, this proves that the natural maps $E^{p,q}_2 \to \tilde E^{p,q}_2$ are surjective.
		
		\vskip1mm
		
		Since $\bar G_n = \pi_1(\bar Q)/n\pi_1(\bar Q)= X_*(Q) \otimes_{\Z} (\Z/n\Z)(1)=\Ind^{\Gamma_E}_{\Gamma_K}((\Z/n\Z)(1))$ as $\Gamma_K$-modules and $\Ind^{\Gamma_E}_{\Gamma_K}((\Z/n\Z)(1)) \cong \Ind^{\Gamma_E}_{\Gamma_K}(\Z/n\Z)$ (as $\mu_n \subset E$),  the sequence $E^{p,q}_2$ splits at the second page by Nakaoka's Theorem \ref{ThmNakaoka}. Thus, since, as proven above, the maps $E^{p,q}_2 \to \tilde E^{p,q}_2$ are surjective for all $p,q$, the spectral sequence $\tilde E^{p,q}_r$ must also degenerate at the second page and then point ii follows from the surjectivities $E^{p,q}_2 \to \tilde E^{p,q}_2$.
	\end{proof}
		
	\begin{proof}[Proof of iii.]
	    Nakaoka's theorem gives a direct sum decomposition $H^k(G_n,\mu_n)$ $=\bigoplus_{p+q=k}E^{p,q}_2$. The proof above shows that the composition
		\[
		\bigoplus_{p+q=k}H^p(\Gamma_K,H^q(\bar G_n,\mu_n)_{cup}) \to \bigoplus_{p+q=k}E^{p,q}_2=H^k(G_n,\mu_n) \to H^k(\pi_1(\bar Q),\mu_n),
		\]
		is surjective (and in fact, even an isomorphism, but we do not need this). Since the restriction $H^k(G_n,\mu_n) \to H^k(\bar G_n,\mu_n)$ is also expressible as the composition 
		\[
		H^k(G_n,\mu_n) \to E^{0,k}_{2} =H^k(\bar G_n,\mu_n)^{\Gamma_K}\subset H^k(\bar G_n,\mu_n),
		\]
		any element lying in the image of  $\bigoplus_{p+q=k}H^p(\Gamma_K,H^q(\bar G_n,\mu_n)_{cup}) \to \bigoplus_{p+q=k}E^{p,q}_2$ satisfies the sought condition.
	\end{proof}
	
	\begin{lemma}\label{thelemma}
		Let $G$ be a profinite group, $H$ an open subgroup, $M$ a discrete $G$-module, and $k$ a natural number. Any element of $Z^k(G,M)$ that maps to a coborder under the restriction $Z^k(G,M) \to Z^k(H,M)$ is equivalent up to a coborder to an element that maps to $0$ under the same map.
	\end{lemma}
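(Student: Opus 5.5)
The plan is to use corestriction to build an explicit primitive of the restricted cocycle and then subtract a coboundary on $G$ that kills the restriction. Let $\alpha\in Z^k(G,M)$ with $\alpha|_H=\d\gamma$ for some $\gamma\in C^{k-1}(H,M)$. We want $\theta\in C^{k-1}(G,M)$ with $(\d\theta)|_H=\d\gamma$ as cochains, not just in cohomology. Then $\alpha-\d\theta$ is cohomologous to $\alpha$ and restricts to $\alpha|_H-\d\gamma=0$.

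Since restriction of inhomogeneous cochains commutes with $\d$, it suffices to find an extension $\theta$ of $\gamma$, that is, a cochain $\theta\in C^{k-1}(G,M)$ with $\theta|_{H^{k-1}}=\gamma$. Then $(\d\theta)|_H=\d(\theta|_H)=\d\gamma$. The whole point is therefore that cochains on an open subgroup extend to continuous cochains on $G$.

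To construct the extension, recall that continuous inhomogeneous cochains are locally constant maps $G^{k-1}\to M$ for the discrete module $M$. Because $H$ is open, it is also closed, and $H^{k-1}\subset G^{k-1}$ is a clopen subset. Define $\theta:=\gamma$ on $H^{k-1}$ and $\theta:=0$ on the complement $G^{k-1}\s H^{k-1}$. This map is locally constant because both pieces are open and each restriction is locally constant. Hence $\theta$ is a continuous cochain extending $\gamma$.

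In degree $k=0$ there is nothing to extend: a $0$-cochain that is a coboundary on $H$ is $0$, so the claim is trivial. For $k\geq 1$ the construction above applies directly. There is no real obstacle here. The only point requiring care is continuity, and that is settled by $H$ being clopen. In particular no averaging or corestriction is needed, even though that was where one might first look.
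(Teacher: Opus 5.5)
Your argument is correct and is essentially the paper's: both extend the primitive $\gamma$ of $\alpha|_H$ to a continuous cochain on $G$ and subtract its coboundary. The opening reference to corestriction plays no role and can be deleted.

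The only difference is the extension step. The paper composes with a continuous retraction $G\to H$, whereas you extend by zero off the clopen set $H^{k-1}$. The retraction version also works for closed, non-open subgroups of a profinite group. That matters because the paper later applies the lemma to the finite, hence non-open, subgroup $\bar G_{n,\id}\subset G_n$. Extension by zero genuinely needs $H$ to be open.
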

	\begin{proof}
		Write $\alpha|_H=\d (\beta),\, \beta \in C^{k-1}(H,M)$. The restriction $C^{n}(G,M) \to C^n(H,M)$ is surjective for all $n$ (elements of $C^n(H,M)$ correspond to continuous functions $H^n \to M$, and each of these clearly extends to continuous functions $G^n \to M$, e.g.\ by composing with a topological retraction $G \to H$ of $H \to G$). So we can find $\gamma \in C^{k-1}(G,M)$ with $\gamma|_H=\beta$, and $\alpha - \d \gamma$ maps to $0$ in $Z^k(H,M)$.
	\end{proof}    
	
	\begin{proof}[Proof of Proposition \ref{Propcreationalphabeta}, modification of $(\alpha,\beta)$ to satisfy i-iii.] Taking $k=3$ in Lemma \ref{LemTwoPoints}.ii, the map $H^3(G_n,\mu_n)\to  \check{H}^3(Q^{ur}/Q,\mu_n)$ is surjective. Thus we may immediately modify $\beta$ to assume i, and using Lemma \ref{LemTwoPoints}.iii we also get ii. 
		
		\vskip1mm
		
		To get iii, we view $\beta$ as an element of $Z^3(G_n,\mu_n)$ via $\check{Z}^3(Q^{\oK,n},\mu_n) = Z^3(G_n,\mu_n)$ (see \eqref{EqIdentificationCechGroup}). The image of $\beta$ under the composition
		\[
		Z^3(G_n,\mu_n)\to H^3(G_n,\mu_n) \to H^3(\bar G_n,\mu_n)
		\]
		maps to $H^3(\bar G_n,\mu_n)_{cup}$. All elements of $H^3(\bar G_n,\mu_n)_{cup}$ restrict to zero in \linebreak $H^3(\bar G_{n,\id},\mu_n)$ (recall $\bar G_{n,\id}=\Gal(Q^{\oK,n}/Q^{\oK,n,(1)})$), since in each tensor product $H^1(\bar G_{n,\sigma_1},\Z/n\Z) \otimes_{\Z/n\Z}H^1(\bar G_{n,\sigma_2},\Z/n\Z) \otimes_{\Z/n\Z}H^1(\bar G_{n,\sigma_3},\Z/n\Z),\, \{\sigma_1,\sigma_2,\sigma_3\}\in \binom{\Gamma}{3}$, at least one among $\sigma_1,\sigma_2,\sigma_3$ is different from $\id$, and the corresponding factor $H^1(\bar G_{n,\star},\Z/n\Z)$ maps trivially to $\bar G_{n,\id}$, and so thus so does the whole tensor product. Using Lemma \ref{thelemma}, we  we may therefore assume, after modifying by a coborder, that $\beta$ restricts to zero in $Z^3(\bar G_{n,\id},\mu_n)={Z}^3(\Gal(Q^{\oK,n}/Q^{\oK,n,(1)}),\mu_n)$. The subgroups $\Gal(Q^{\oK,n}/Q^{L',n,(1)})<G_n,\, \text{ with }L'/L$ finite form a fundamental system of neighbourhoods of the finite group $\Gal(Q^{\oK,n}/Q^{\oK,n,(1)})$ in $\Gal(Q^{\oK,n}/Q)$. Thus, the cocycle $\beta \in Z^3(\Gal(Q^{\oK,n}/Q),\mu_n)$, being continuous, restricts trivially to $Z^3(\Gal(Q^{\oK,n}/Q^{L',n,(1)}),\mu_n)$ for some $L'$.
	\end{proof}
	
	\subsubsection{Setup for the triple variation}\label{SSSec1032}
	
	Since \v{C}ech cochains with respect to procoverings are just direct limits of \v{C}ech cochains with respect to the underlying inverse systems of coverings, there exists a finite étale connected cover $\tilde Q/Q$ and a refinement $\phi:\tilde \cV\to \tilde Q \times_QX$ such that $(\alpha,\beta)$ lies in the image of $\check{Z}^2(\tilde \cV/X,\tilde Q/Q;\mu_n) \to \check{Z}^2( \cV/X, Q^{ur}/Q;\mu_n)$ with respect to some refinement $(\cV,Q^{ur},\psi) \to (\tilde \cV, \tilde Q, \phi)$. Since $Q^{ur}= \lim\limits_{\leftarrow,(F,m)} Q^{F,m}$, we may assume that $\tilde Q=Q^{F,m}$ for some pair $(F,m)$. We may assume $(F,m) \succcurlyeq (L,n)$ and that the restriction of $\beta$ to $\check{Z}^3(\tilde Q/Q^{L,n,(1)},\mu_n)$ is zero. We enlarge $S$ so that all places of $K$ ramifying in $F$ or dividing $m$ are contained in it, and such that the relative covering $(\tilde Q/Q,\cV/X,\cV \to \tilde Q \times_QX)$ spreads out to a relative covering $(\tilde \cQ/\cQ,\mathfrak V/\cX,\mathfrak V \to \tilde \cQ \times_{\cQ}\cX)$ of $f:\cX \to \cQ$, where $\tilde \cQ:=\Spec \cO_{F,S}[(y^{\sigma})^{\pm\frac 1 {m}}]_{\sigma \in \Gamma}$.
	
	\vskip1mm
	
	For $q \in \cM$, Lemma \ref{Prop:BMpairing} gives
	\begin{equation}\label{Equationornf}
	((P_v)_v,q)_{BM}=\sum_v \inv_v(\alpha(P_v(q),\overline{P_v(q)})-\gamma(q)_v),
	\end{equation}
	for any choice of embeddings  $\oK \subset \oK_v$, lifts $\bar q \in \tilde Q(\oK),\, \overline{P_v(q)} \in \cV(\oK_v)$ satisfying \eqref{Diagrprocoverd}, and any cochain $\gamma(q) \in C^2(\oK/K,\G_m)$ such that $\d \gamma(q)=\beta(q,\bar q)$. 
	We choose the embeddings and the lifts as in the following proposition, and we choose $\gamma(q)$ arbitrarily.
	
	\vskip1mm
	
	For each archimedean $v$, we endow $\tilde Q(\oK_v) = \bigsqcup_{F \hookrightarrow \oK_v} (\oK_v^*)^\Gamma$ with the diagonal $\R_{>0}$-action induced by the inclusions $\R_{>0} \subset \C^* \cong \oK_v^*$.
	
	\begin{proposition}\label{orieogjiounvf}
		There exist embeddings $\oK \subset \oK_v$, lifts $\bar q,\,\overline{P_v(q)}$ satisfying \eqref{Diagrprocoverd}, such that:
		\begin{enumerate}[label=\roman*.]
			\item the point $\bar q$ lies in $\tilde Q(\oK)_{incl} \subset \tilde Q(\oK)$ (``incl'' stands for the inclusion $F \subset \oK$) for each $q \in \cM$;
			\item (multiplicativity) the function $\cM \to \tilde Q(\oK)_{incl}: q \mapsto \bar q$ is multiplicative;
			\item (locality at non-archimedean places in $S$) for each non-archimedean $v \in S$, the localizations $\bar q_v:=\bar q \otimes_{\oK}\oK_v \in \tilde Q(\oK_v),\, q \in \cM$ are arbitrarily close to each other;
			\item (same connected component at archimedean $v$) for each archimedean $v$, the projections $\bar q_v\cdot \R_{>0} \in \tilde Q(\oK_v)/\R_{>0},\, q \in \cM$ of the localizations $\bar q_v \in \tilde Q(\oK_v)$ are arbitrarily close to each other;
			\item (compatibility with the multi-section) for every $v \notin S$ and every $q \in \cM\s \cQ(\cO_v)$, the image of $\bar q$ under the map $\tilde Q(\oK_v) \to Q^{L,n,(1)}(\oK_v)$ is (the $\oK_v$-point associated to) $q^{L,n}$;
			\item (good reduction) for every $v \notin S$ and every $q \in \cM$ such that $q \in \cQ(\cO_v)$, the points $\overline{P_v(q)}$ lie in the natural image of $\mathfrak V(\cO_v^{ur})$ in $\tilde \cV(\overline{K_v})$.
		\end{enumerate}
	\end{proposition}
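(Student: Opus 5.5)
We build the lifts in stages: first the embeddings and the base points $\bar q$ (properties i.--v.), then the points $\overline{P_v(q)}$ (property vi.). For the embeddings we fix $\oK \subset \oK_v$ arbitrarily at $v \in S$. For $v\notin S$ with $\cM\not\subset \cQ(\cO_v)$, Lemma \ref{LemCompatibleLifts} gives an embedding $\iota:L\hookrightarrow K_v$ and multiplicative lifts $q^{L,n}\in Q^{L,n,(1)}(K_v)_{\iota}$. We then choose $\oK\subset\oK_v$ extending $\iota$, which is possible since the choice of embedding is only constrained through $L$. For the remaining $v\notin S$ the embedding is arbitrary.

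Next we choose $\bar q$ multiplicatively. Write $\cM=q_0\cdot\{q_{1,1},q_{1,2}\}\cdots\{q_{3,1},q_{3,2}\}$, and set $\delta_i=q_{i,2}q_{i,1}^{-1}$. It suffices to pick lifts $\bar q_{\text{base}}$ of $q_{\text{base}}:=q_0q_{1,1}q_{2,1}q_{3,1}$ and $\bar\delta_i$ of $\delta_i$ in $\tilde Q(\oK)_{incl}$, which amounts to choosing $m$-th roots of the conjugates of these elements in $\oK$. We then extend multiplicatively using the $F$-group structure of $\tilde Q\otimes F\cong \G_{m}^{\Gamma}$; this gives i.\ and ii.

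For iii.\ and iv.\ we use that all $q\in\cM$ are $v$-adically close to one another for finite $v\in S$, and close in direction for archimedean $v$. Hence each $\delta_i$ is close to $1$, respectively close to a positive real, at these places. The roots $\bar\delta_i$ should then be chosen close to $1$ in $\tilde Q(\oK_v)$, respectively in $\tilde Q(\oK_v)/\R_{>0}$. Multiplicativity then propagates the closeness from the $\bar\delta_i$ to the whole family $\bar q$.

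The main obstacle is v., since one global choice of $m$-th roots must be simultaneously compatible with the given local lifts $q^{L,n}$ at the finitely many bad places $v\notin S$ and with closeness at $S$. First, by Lemma \ref{LemCompatibleLifts}, at each such $v$ the quotients $\delta^{L,n}$ lie in the image of $Q^{L,n}(K_v)_\iota$. This means the required local $n$-th roots of $\delta_i^\sigma$ exist for all $\sigma\in\Gamma$, and the $n$-th roots of $q^\sigma$ exist for $\sigma\neq\id$.

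The matching step is then as follows. The image of $\bar q$ in $Q^{\oK,n,(1)}(\oK)$ is determined by $n$-th roots of $q^\sigma$ for $\sigma \in \Gamma^*$, and these change by $\mu_n$ when the chosen $m$-th roots are changed by $\mu_m$. At each bad $v$ we adjust the roots of $q_{\text{base}}^\sigma$ and of $\delta_i^\sigma$ by roots of unity so that they agree with the chosen local roots. The difficulty is that one global choice must serve all bad $v$ at once. This is possible because for each $v$ only a single generator varies, as $\cM\setminus\cQ(\cO_v)$ is a subgrid $\cM_v$. Concretely, we redefine the local lifts $q^{L,n}$ of Lemma \ref{LemCompatibleLifts} themselves as the images of the global $\bar q$ under the embedding. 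This is legitimate because the lemma allows arbitrary lifts of $q_{\text{base}}$ and $\delta$ provided they are defined over $K_v$. The global roots land in $K_v$ by the liftability statement, since Galois permutes the preimages transitively and one of them is $K_v$-rational; so after composing with a suitable element of $\Gal(\oK_v/K_v)$ (i.e.\ changing the embedding) the image is $K_v$-rational. We therefore choose the embedding $\oK\subset\oK_v$ so that the images of $\bar q_{\text{base}}$ and $\bar\delta_i$ are $K_v$-rational in $Q^{L,n,(1)}$, which requires that this Galois twist extend compatibly with $\iota$.

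Finally, for vi.\ we take $q \in \cQ(\cO_v)$ with $v\notin S$. Then $P_v(q)\in\cX(\cO_v)$, and $\bar q$ extends to a point of $\tilde\cQ(\cO_v^{ur})$ because $\tilde\cQ/\cQ$ is étale over $v$. Since $\mathfrak V\to\tilde\cQ\times_{\cQ}\cX$ is a surjective étale covering, the pair $(\bar q,P_v(q))$ lifts to $\mathfrak V(\cO_v^{ur})$ by Hensel's lemma and the strict henselianity of $\cO_v^{ur}$. For the other places we choose $\overline{P_v(q)}$ as in Remark \ref{Rmk10.3}(2), so that \eqref{Diagrprocoverd} commutes.
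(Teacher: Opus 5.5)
Your arguments for i., ii.\ and vi.\ are fine and match the paper. The argument for iii.\ and iv., however, has a genuine gap.

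\textbf{The gap in iii.\ and iv.} You fix the embeddings $\oK\subset\oK_v$ arbitrarily for $v\in S$. You then try to choose the global $m$-th roots $\bar\delta_i$ so that they are close to $1$ at every $v\in S$. A single global choice cannot do this. Replacing a global root $\sqrt[m]{\delta_i^\sigma}$ by $\zeta\sqrt[m]{\delta_i^\sigma}$ with $\zeta\in\mu_m$ changes its image at \emph{every} place simultaneously. So the discrepancies at the different places of $S$ cannot be corrected independently, and $S$ always contains several places (all archimedean ones, those above $n$, etc.). A minimal example: take $K=E=\Q$, $m=2$, and $\delta$ close to $1$ at two primes $p_1,p_2\in S$ but not a global square. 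Composing the embedding at $p_2$ with the nontrivial automorphism of $\Q(\sqrt\delta)$ makes $\sqrt\delta$ land near $1$ at $p_1$ and near $-1$ at $p_2$. Replacing $\sqrt\delta$ by $-\sqrt\delta$ only swaps the two, so no global choice works.

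\textbf{The missing idea.} Reverse the order of choices. First fix $m$-th roots of all elements of $A=\{q_b^\sigma,\delta_i^\sigma\}$ arbitrarily; this already gives i.\ and ii. Then choose the embedding $\oK\subset\oK_v$ \emph{separately at each place} so that these global roots go to prescribed local roots:
\begin{itemize}
\item close to $1$ for finite $v\in S$;
\item close to $1$ times $\sqrt[m]{t_i}$ after scaling, for archimedean $v$;
\item the roots determined by $q^{L,n}$, for the bad $v\notin S$.
\end{itemize}
This is exactly Lemma \ref{Lem1}. Its hypothesis is that the Kummer algebra $L[\sqrt[m]{y}]_{y\in A}$ is a field, which is Lemma \ref{Lem22}. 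That lemma is proved by linear disjointness: each $q_{i,j}$ is totally ramified in its own Kummer extension and $L$ is unramified there. This multiplicative independence is what allows one embedding to realize an arbitrary tuple of local roots for all $y\in A$ at once. Since the embeddings at distinct places are independent choices, the simultaneity problem disappears. You half-discover this in v., where you move the embedding place by place, but you never use it for the places of $S$.

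\textbf{A secondary point on v.} Redefining $q^{L,n}$ as the image of $\bar q$ under some extension of $\iota$ does formally work. Your ``Galois twist'' is not needed: $\mu_n\subset E\subset K_v$ and $\iota(q^\sigma)\in K_v^{*n}$ for $\sigma\neq\id$, so every $n$-th root already lies in $K_v$. But this changes the points $P_v(q)=s(q^{L,n})$ fixed in Section \ref{Sec10}, and the proposition is stated relative to those points. The paper instead keeps the given $q^{L,n}$. It applies Lemma \ref{Lem1} with $\Omega=L$ to the $n$-th roots $(\sqrt[m]{\cdot}\,)^{m/n}$ of $q_{\text{base}(v)}^\sigma$ and $\delta_i^\sigma$. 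This again relies on the independence supplied by Lemma \ref{Lem22}.
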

	
	Before proving the proposition, it is helpful to rewrite $\cM$ as:
	\[
	\cM = q_b \cdot \{1,\delta_1\} \cdot \{1,\delta_2\} \cdot \{1,\delta_3\}, \, q_b:=q_0q_{1,1}q_{2,1}q_{3,1},\, \delta_i:=q_{i,2}q_{i,1}^{-1}.
	\]
	For $v \notin S$ such that $\cM \not \subset \cQ(\cO_v)$, we also rewrite the intersection $\cM_v:=\cM \cap \cQ(\cO_v)$ as in the proof of Lemma \ref{LemCompatibleLifts}:
	\[
	\cM_v=\begin{cases}
	\cM & \text{ if }i_v=0;\\
	q_{\text{base}(v)}\cdot \prod_{i \neq i_v}\{1,\delta_i\}& \text{ if }i_v \neq 0;
	\end{cases}
	\]
	where $i_v:=0$ if $v(q_0) >0$, otherwise $i_v$ is defined as the first coordinate of the pair $(i_v,j_v)$ for which the associated $q_{i_v,j_v}$ corresponds to the prime ideal associated to $v$, and $q_{\text{base}(v)}:=q_0 \cdot q_{i_v,j_v} \cdot \prod_{i \neq i_v}q_{i_v,1}$. 
		
	\vskip1mm
	
	We need the following two lemmas:
	
	\begin{lemma}\label{Lem1}
		Let $\Omega$ be a field, $\Omega^{sep}$ be a separable closure of $\Omega$, and $\tilde \Omega$ be a second separably closed field containing $\Omega$. Let $m$ be a natural number, and $D=\{y_1,\ldots,y_d\} \subset \Omega^*$ be a subset such that the algebra $\Omega[\sqrt[m]{y}]_{y \in D}:=\Omega[t_1,\ldots,t_d]/(t_i^m-y_i)_{i=1,\ldots,d}$ is a field. Let $\sqrt[m]{y_1},\ldots,\sqrt[m]{y_d}$ be $m$-th roots of $y_1,\ldots,y_d$ in $\Omega^{sep}$. Then for every choice of $m$-th roots $\widetilde{\sqrt[m]{y_1}},\ldots,\widetilde{\sqrt[m]{y_d}}$ of $y_1,\ldots,y_d$ in $\tilde \Omega$, there exists an $\Omega$-embedding $\Omega^{sep} \subset \tilde \Omega$ sending each $\sqrt[m]{y_i}$ to $\widetilde{\sqrt[m]{y_i}}$.
	\end{lemma}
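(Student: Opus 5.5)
The plan is to reduce the statement to the case where $\Omega^{sep}$ is replaced by the finite extension $\Omega' := \Omega(\sqrt[m]{y_1},\ldots,\sqrt[m]{y_d}) \subset \Omega^{sep}$, and then extend to all of $\Omega^{sep}$ by the usual extension argument for embeddings into separably closed fields.

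First, the hypothesis that $A:=\Omega[t_1,\ldots,t_d]/(t_i^m-y_i)$ is a field identifies $A$ with $\Omega'$ via $t_i \mapsto \sqrt[m]{y_i}$. This map is well defined and surjective onto $\Omega'$. It is injective because $A$ is a field and the map is a nonzero ring homomorphism. Second, I define a homomorphism $A \to \tilde\Omega$ by $t_i \mapsto \widetilde{\sqrt[m]{y_i}}$. This is well defined since $\widetilde{\sqrt[m]{y_i}}^m = y_i$. It is automatically injective, again because $A$ is a field. Composing with the inverse of the first isomorphism gives an $\Omega$-embedding $\iota_0 : \Omega' \hookrightarrow \tilde\Omega$ with $\iota_0(\sqrt[m]{y_i}) = \widetilde{\sqrt[m]{y_i}}$.

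Third, I extend $\iota_0$ to $\Omega^{sep}$. The extension $\Omega^{sep}/\Omega'$ is algebraic and separable, and $\tilde\Omega$ is separably closed. By Zorn's lemma, I take a maximal pair $(F,\iota)$ with $\Omega' \subset F \subset \Omega^{sep}$ and $\iota$ extending $\iota_0$. Suppose $F \neq \Omega^{sep}$, and pick $\alpha \in \Omega^{sep}\setminus F$. Its minimal polynomial over $F$ is separable, so its image under $\iota$ is a separable polynomial over $\iota(F)\subset\tilde\Omega$. This polynomial therefore has a root in $\tilde\Omega$, and sending $\alpha$ to that root extends $\iota$ to $F(\alpha)$, contradicting maximality.

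I do not expect a real obstacle here. The only point needing care is injectivity of the map from $A$ into $\tilde\Omega$. Without the field hypothesis, the chosen roots could satisfy extra relations, for instance when $y_i y_j^{-1}$ is an $m$-th power. In that case the roots in $\tilde\Omega$ would not be freely prescribable, and the statement would fail. The field hypothesis is exactly what rules this out.
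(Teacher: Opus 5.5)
Your proposal is correct and follows the paper's argument. Both use the field hypothesis on $\Omega[\sqrt[m]{y}]_{y\in D}$ to make the two evaluation maps embeddings, and then extend the resulting embedding $\Omega(\sqrt[m]{y_1},\ldots,\sqrt[m]{y_d}) \hookrightarrow \tilde\Omega$ to $\Omega^{sep}$ using that $\tilde\Omega$ is separably closed; your Zorn's lemma step just spells out the extension the paper takes for granted.
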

	\begin{proof}
		The roots $\{\sqrt[m]{y}\}_{y \in D}$ and $\{\widetilde{\sqrt[m]{y}}\}_{y \in D}$ determine homomorphisms of $\Omega[\sqrt[m]{y}]_{y \in D}$ with values in $\Omega^{sep}$ and $\tilde \Omega$, respectively. Since $\Omega[\sqrt[m]{y}]_{y \in D}$ is a field, both homomorphisms are embeddings. Extending then the identity on $\Omega[\sqrt[m]{y}]_{y \in D}$ to a field embedding $\Omega^{sep} \subset \tilde \Omega$ gives the desired embedding.
	\end{proof}
	
	\begin{lemma}\label{Lem22}
		The algebra $L_A:=L[\sqrt[m]{y}]_{y \in A}, \quad A:=\{q_b^{\sigma}\}_{\sigma \in \Gamma}\cup \{\delta_1\}_{\sigma \in \Gamma}\cup \{\delta_2\}_{\sigma \in \Gamma}\cup \{\delta_3\}_{\sigma \in \Gamma}$ is a field.
	\end{lemma}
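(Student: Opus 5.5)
The plan is to prove that $L_A$ is a field by counting degrees: we show $[L_A:L]=m^{\#A}$ by adjoining the $m$-th roots one at a time and checking that each step is an irreducible Eisenstein extension at a suitably chosen prime. Throughout, we read $A$ as $\{q_b^{\sigma}\}_{\sigma\in\Gamma}\cup\{\delta_1^{\sigma}\}_{\sigma\in\Gamma}\cup\{\delta_2^{\sigma}\}_{\sigma\in\Gamma}\cup\{\delta_3^{\sigma}\}_{\sigma\in\Gamma}$, so $\#A=4d$ with $d=\#\Gamma$. Once the degree is $m^{4d}$, the algebra $L[t_y]_{y\in A}/(t_y^m-y)$, which has dimension $m^{4d}$ over $L$ and surjects onto the compositum of the root fields, must be that field.

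The first step is to attach a ``witness prime'' to each $y\in A$. For $y=q_b^{\sigma}$ we take $\mathcal P_0^{\sigma}$, where $\mathcal P_0=q_0\cdot\cO_{E,S}$. For $y=\delta_i^{\sigma}$ we take $\mathcal P_{i,2}^{\sigma}$, where $\mathcal P_{i,2}=q_{i,2}\cdot\cO_{E,S}$. We then check the valuations.
\begin{itemize}
\item Since $\cM$ is prime-variational, the primes $q_0,q_{i,j}$ lie over pairwise distinct rational primes, and by Theorem~\ref{ThmShafarevich}(3) they are split over $\Q$.
\item Consequently all the $\Gamma$-conjugates $\mathcal P_0^{\sigma}$, $\mathcal P_{i,j}^{\sigma}$ are pairwise distinct primes of degree one, and they lie outside $S$.
\item Hence $v_{\mathcal P_0^{\sigma}}(q_b^{\sigma})=1$, while every other element of $A$ is a unit at $\mathcal P_0^{\sigma}$.
\item Similarly, $v_{\mathcal P_{i,2}^{\sigma}}(\delta_i^{\sigma})=1$, while all other elements of $A$ are units at $\mathcal P_{i,2}^{\sigma}$.
\end{itemize}

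The second step passes to $L$ and runs an induction. The relevant primes are outside $S$, and $S$ contains the places ramified in $L$ (and in $F$) as well as those dividing $m$. So each witness prime is unramified in $L$, and any prime $\mathfrak p$ of $L$ above it is a uniformizer-level prime for its $y$: $v_{\mathfrak p}(y)=1$, while the other elements are $\mathfrak p$-units. Enumerate $A=\{y_1,\dots,y_{4d}\}$ and set $L_k:=L[\sqrt[m]{y_1},\dots,\sqrt[m]{y_k}]$, assuming inductively that $L_k$ is a field of degree $m^k$. Fix a prime $\mathfrak p$ of $L$ witnessing $y_{k+1}$. Every $y_j$ with $j\le k$ is a unit at $\mathfrak p$, and $\mathfrak p\nmid m$, so $L_k/L$ is unramified at $\mathfrak p$ by Kummer/Abhyankar-type reasoning for $x^m-u$ with $u$ a unit of residue characteristic prime to $m$. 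Therefore, at any prime $\mathfrak P$ of $L_k$ above $\mathfrak p$ we still have $v_{\mathfrak P}(y_{k+1})=1$. The polynomial $x^m-y_{k+1}$ is then Eisenstein at $\mathfrak P$, hence irreducible over $L_k$, so $L_{k+1}$ is a field of degree $m^{k+1}$. Taking $k=4d$ concludes.

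The main point to be careful about is the valuation bookkeeping in the first step. We must confirm that the witness primes are genuinely distinct, so that no other element of $A$ has nonzero valuation there. For the witnesses of the $\delta_i^{\sigma}$ this uses that $q_{i,1}$ and $q_{i,2}$ generate distinct primes. For the witness of $q_b^{\sigma}$ it uses that $q_0$ is coprime to its conjugates, which follows from the degree-one splitting. We must also confirm that $\mathfrak p\nmid m$, so that adjoining roots of units does not ramify; this holds because $S$ was enlarged to contain the places dividing $m$.
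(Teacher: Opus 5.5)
Your approach (successive Eisenstein extensions at witness primes, then a dimension count) is sound, and it uses the same mechanism as the paper: each generator is tamely, totally ramified at a prime where the other generators are unramified. One step is wrong as written, however. You take $\mathcal{P}_0=q_0\cdot\mathcal{O}_{E,S}$ to be a prime ideal. This lemma sits inside the proof of Theorem~\ref{Thm:3variation}, which allows an arbitrary $(L,n)$-liftable prime-variational cube. Prime-variationality only requires the $q_{i,j}$ with $i\geq 1$ to be prime; $q_0$ is constrained only through square-freeness of the norms. The gap occurs in the paper's own application. The grid of Theorem~\ref{Thm:grid} is $k$-dimensional with $(k,n)=1$, and $2\mid n$ always, so $3\mid n$ forces $k\geq 5$. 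Any $3$-dimensional subgrid, as used in Section~\ref{Sec12}, then has base point $q_0=p_0p_{4,j_4}\cdots p_{k,j_k}$, which is not prime. For the same reason, Theorem~\ref{ThmShafarevich}(3) is not available for an arbitrary cube. What you need comes from square-freeness instead: every prime dividing an element of $\mathcal{M}$ has residue degree one over $\mathbb{Q}$. Since $E/K$ is Galois and unramified outside $S$, its $\Gamma$-conjugates are therefore pairwise distinct.

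The repair is easy. One option is to take $\mathcal{P}_0$ to be any prime factor of $q_0\cdot\mathcal{O}_{E,S}$. Square-freeness gives $v_{\mathcal{P}_0}(q_0)=1$, and no nontrivial conjugate of $\mathcal{P}_0$ divides $q_0$, since it would lie over the same rational prime. A more robust option avoids any hypothesis on $q_0$. Set $\mathcal{P}_{1,1}=q_{1,1}\cdot\mathcal{O}_{E,S}$, use $\sigma(\mathcal{P}_{1,1})$ as the witness for $q_b^{\sigma}$, and adjoin all the $q_b^{\sigma}$ before any $\delta_i^{\sigma}$. Your induction only needs the \emph{previously} adjoined elements to be units at the witness. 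The element $\delta_1^{\sigma}$, which has valuation $-1$ at $\sigma(\mathcal{P}_{1,1})$, is adjoined later, so this causes no problem. With either fix the proof is complete.

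The paper takes a slightly different route. It passes to the larger field $L_B$ generated by $m$-th roots of the individual primes and their conjugates, shows $L_B$ is a field by linear disjointness over $E$, and then maps $L_A$ into it. That makes the valuations trivial. The cost is that $q_0^{\sigma}$ must be included in $B$ (the paper's text omits it), and an unstated degree count is needed to see that $L_A\to L_B$ is injective. Your direct induction on $A$ avoids both, at the price of the valuation bookkeeping for the composite elements $q_b^{\sigma},\delta_i^{\sigma}$, which is exactly where the $q_0$ issue crept in.
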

	
	\begin{proof}
		We may embed $L_A$ in
		\begin{equation*}
		L_B := L[\sqrt[m]{y}]_{y \in B}, \quad B:=\{q_{i,j}^{\sigma}\}_{{\sigma\in \Gamma,\,i \in \{1,2,3\},\, j \in \{1,2\}}},
		\end{equation*}
		via $\sqrt[m]{q_b^{\sigma}}\mapsto \sqrt[m]{q_{1,1}^{\sigma}}\sqrt[m]{q_{2,1}^{\sigma}}\sqrt[m]{q_{3,1}^{\sigma}},\, \sqrt[m]{\delta_i^{\sigma}} \mapsto \sqrt[m]{q_{i,2}^{\sigma}}\sqrt[m]{q_{i,1}^{\sigma}}$. Note that $L_B$ is a field. In fact, we have $L_B=L \otimes_E E[\sqrt[m]y]_{y\in B}$ and the fields $L,\, E[\sqrt[m]y],\, y \in B$ are linearly disjoint over $E$, since for each $y$ the field $E[\sqrt[m]{y}]$ is totally ramified over $w_y$ (as $w_y(y)=1$), and unramified over $w_{y'}$ for $y' \neq y$, and $L$ is unramified over each $w_y$. Thus $L_A$ is a field as well as wished.
	\end{proof}

	\begin{proof}[Proof of Proposition \ref{orieogjiounvf}]
		
		Via the identification $\tilde Q(\oK)_{incl}=(\oK^*)^{\Gamma}$, lifts $\bar q \in \tilde Q(\oK)_{incl}$ of a $q \in Q(K)=E^*$ correspond to vectors $(\sqrt[m]{q^{\sigma}})_{\sigma \in \Gamma}$ of $m$-th roots of the conjugates of $q$.
		
		\vskip1mm
		
		Choose arbitrary $m$-th roots in $\oK$ of all elements of $A$. Such choice of roots determines by multiplicative extension a choice of $m$-th roots $\sqrt[m]{q^{\sigma}}$ for all $q \in \cM$ and $\sigma \in \Gamma$. These determine lifts $\bar q \in \tilde Q(\oK)_{incl}$ that are multiplicative by construction, giving i and ii.
		
		\vskip1mm
		
		We shall get points iii-v by appropriately choosing the embeddings $\oK \subset \oK_v$ with the help of the lemmas above.
		
		\vskip1mm
		
		For non-archimedean $v \in S$, we have that each $\delta_i$ is arbitrarily close to $1 \in Q(K_v)$, so the vector $(\delta_i^{\sigma})_{\sigma \in \Gamma} \in E^{\Gamma}$ is $w$-adically arbitrarily close to $(1,\ldots,1)$ for any place $w$ of $E$ dividing $v$. Choosing one $w$ and local $w$-adic roots in $\oE_w=\oK_v$ that are all close to $1$, we may apply Lemma \ref{Lem1} with $E=\Omega$ to infer that there exists an $E$-embedding $\oK \subset \oK_v$ under which all global $m$-th roots of the conjugates of the $\delta_i$ map to the just determined local roots that are close to $1$. This gives iii.
		
		\vskip1mm
		
		For archimedean $v$, we argue analogously after scaling. More precisely, there exists, for each $i$, an element $t_i \in \R_{>0}$ such that $\delta_i\cdot t_i^{-1}$ is arbitrarily close to $1 \in Q(K_v)$. So, for any place $w$ of $E$ dividing $v$, the vector $(\delta_i^{\sigma}t_i^{-1})_{\sigma \in \Gamma} \in E_w^{\Gamma}$ is arbitrarily close to $(1,\ldots,1)$. Arguing as for the non-archimedean case, we find an embedding $\oK \subset \oK_v$ under which each global root $\sqrt[m]{\delta_i^{\sigma}}$ maps to the local root of $\delta_i^{\sigma}t_i^{-1}$ close to $1$ times $\sqrt[m]{t_i}$. This gives iv.
		
		\vskip1mm
		
		Finally, let $v \notin S$ be such that $\cM \cap \cQ(\cO_v)\neq \emptyset$, and $\iota$ be the embedding $L \subset K_v$ determined by the lifts $q^{L,n}$ as in Lemma \ref{LemCompatibleLifts}. The lifts $q^{L,n}$ determine $n$-th roots $\sqrt[n]{q^{\sigma}}$ in $K_v$ for all $q \in \cM_v,\, \sigma \in \Gamma^*$ that are multiplicative in the variable $q \in \cM_v$. Applying Lemma \ref{Lem1} with $L=\Omega$, we find an embedding $\oK \subset \oK_v$ extending the embedding $\iota$ under which each global root
		\[
		\sqrt[n]{q_{\text{base}(v)}^{\sigma}}:=\left(\sqrt[m]{q_{\text{base}(v)}^{\sigma}}\right)^{m/n},\,\sqrt[n]{\delta_i^{\sigma}}:=(\sqrt[m]{\delta_i^{\sigma}})^{m/n},\, \sigma \in \Gamma^*, i \neq i_v
		\]
		determined above maps to the corresponding local root in $\oK_v$. This gives v.
		
		\vskip1mm
		
		For $v \notin S$ such that $\cM \cap \cQ(\cO_v)= \emptyset$, we choose $\oK \subset \oK_v$ arbitrarily.
		
		\vskip1mm
		
		For each $q \in \cM$ and all $v \in S$ or $v$ such that $q \notin \cQ(\cO_v)$, we choose any lift $\overline{P_v(q)} \in \cV(\oK_v)$ as in Remark \ref{Rmk10.3}(2), guaranteeing the compatibility \eqref{Diagrprocoverd}.
		
		\vskip1mm
		
		Finally, for $v \notin S$ and $q \in \cM \cap \cQ(\cO_v)$, it remains to create the lifts $\overline{P_v(q)}$ in such a way that vi holds. Since $q \in \cQ(\cO_v)$ and $\tilde \cQ \to \cQ$ is finite étale, the localization $\bar q_v:\Spec \oK_v \to \tilde Q$ extends to a morphism $\Spec \overline{\cO_v} \to \tilde \cQ$ which arises from a morphism $\Spec \cO_v^{ur} \to \tilde \cQ$. We still denote the latter by $\bar q_v$ with a slight abuse of notation. Consider now the commutative diagram
		\[
		\begin{tikzcd}
		& \Spec \cO_v^{ur} \arrow[d, "{(P_v(q),\bar q_v)}"] \arrow[r] \arrow[ld, dashed] & \Spec \cO_v \arrow[d, "P_v(q)"] \\
		\mathfrak V \arrow[r, "\psi"] & \cX\times_{\cQ}\tilde \cQ \arrow[r]                                            & \cX                            
		\end{tikzcd}
		\]
		The dotted lift may be found by the Henselianity of $\Spec \cO_v^{ur}$ and the surjectivity of $\psi$. Choosing as lift $\overline{P_v(q)}:\Spec \oK_v \to \cV$ the one induced by the map $ \Spec \cO_v^{ur}  \to \mathfrak V$ above, both vi and the compatibility \eqref{Diagrprocoverd} hold.
	\end{proof}
	
	To simplify notation, we let $\Delta^{(3)}:=\sum_{q \in \cM}(-1)^{\sgn(q)}$. Applying $\Delta^{(3)}$ to \eqref{Equationornf}, we get
	\begin{equation}\label{BMPairing}
	\Delta^{(3)} ((P_v)_v,q)_{BM} = \sum_v \inv_v\left( \Delta^{(3)}\alpha(P_v(q),\overline{P_v(q)})-(\Delta^{(3)}\gamma(q))_v\right).
	\end{equation}
	We have $\d (\Delta^{(3)}\gamma(q))=\Delta^{(3)}\beta(q,\bar q)$. Let us prove that this is a completely unramified cochain (in the sense of Section \ref{Sec4}). In fact, for $v\in S$, the cocycle $\beta(q,\bar q)_v \in \check{Z}^3(\oK_v/K_v,\mu_n)$ is constant throughout $\cM$ by our locality assumption since $\check{Z}^3(\oK_v/K_v,\mu_n)=Z^3(\Gamma_v,\mu_n)$ is discrete as a topological space. So its triple variation vanishes. For $v \notin S$ and $q \in \cQ(\cO_v)$, the cocycle $\beta(q,\bar q)_v$ is unramified if $q \in \cQ(\cO_v)$ by our good reduction assumption. For $v \notin S$ and $q \notin \cQ(\cO_v)$, we have $\beta(q,\bar q)_v=0$ as $\beta(q,\bar q)_v$ is the pullback of $\beta$ along the composition
	\[
	{(\Spec K_v,\Spec \oK_v)} \to {(\Spec K,\Spec \oK)} \to[(q,\bar q)] {(Q,\tilde Q)}
	\]
	which, by our assumption of compatibility with the multi-section, coincides with the composition 
	\[
	(\Spec K_v, \Spec \oK_v) \to[(q^{L,n},\bar q_v)] (Q^{L,n,(1)},\tilde Q) \to {(Q,\tilde Q)},
	\]
	and in \ref{SSSec1031} we chose $\beta$ so that its restriction to $(Q^{L,n,(1)},\tilde Q)$ is trivial.
	
	\vskip1mm
	
	Since $\d (\Delta^{(3)}\gamma(q))$ is completely unramified, we may split \eqref{BMPairing} as:
    \begin{multline*}
        \Delta^{(3)} ((P_v)_v,q)_{BM} \\
        = \sum_v \inv_v\left( \Delta^{(3)}\alpha(P_v(q),\overline{P_v(q)})-\eta_v\right)-\sum_v \inv_v \left((\Delta^{(3)}\gamma(q))_v-\eta_v\right),
    \end{multline*}
	where $\eta_v$ is any unramified primitive of $\Delta^{(3)}\beta(q,\bar q)_v$. We treat the two contributions separately.
	
	\subsubsection{First contribution}\label{SSSec1033}
	
	We need the following two auxiliary lemmas. 
	
	\begin{lemma}
		Let $S$ be a scheme, and
		\vskip-2mm
		\[
		\cU'' \mathrel{\overset{\psi_1}{\underset{\psi_2}{\rightrightarrows}}} \cU' \to[\psi_0] \cU
		\]
		be a commutative diagram of procoverings of $S$. Let $P$ be an étale sheaf on $S$, and $\alpha \in \check{C}^{\bullet}({\cU'}/S,P)$ be such that $\d \alpha = \psi_0^* \beta$ for some $\beta \in \check{Z}^{\bullet}({\cU}/S,P)$. Then the two pullbacks $\psi_1^*\alpha$ and $\psi_2^*\alpha$ differ by a coborder.
	\end{lemma}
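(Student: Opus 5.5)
The plan is to use the natural homotopy between two refinement maps recalled in Section~\ref{Sec2}, equation \eqref{TheHomotopy}. The refinements $\psi_0\circ\psi_1$ and $\psi_0\circ\psi_2$ from $\cU''$ to $\cU$ coincide, by the assumed commutativity of the diagram. The refinements $\psi_1,\psi_2:\cU''\rightrightarrows\cU'$ are possibly different, but their natural homotopy $K:=K_{\psi_1,\psi_2}$ satisfies
\[
\psi_1^*-\psi_2^*=\d K+K\d
\]
on $\check{C}^\bullet(\cU'/S,P)$. Applying this to $\alpha$ gives
\[
\psi_1^*\alpha-\psi_2^*\alpha=\d(K\alpha)+K(\psi_0^*\beta).
\]
It therefore suffices to show that $K(\psi_0^*\beta)$ is itself a coborder.

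For this I will compare $K_{\psi_1,\psi_2}\circ\psi_0^*$ with the homotopy attached to the pair of composite refinements $\psi_0\psi_1,\psi_0\psi_2:\cU''\rightrightarrows\cU$. Since formula \eqref{TheHomotopy} is written only in terms of the index maps and restriction morphisms, it is natural in this sense, so
\[
K_{\psi_1,\psi_2}\circ\psi_0^*=K_{\psi_0\psi_1,\psi_0\psi_2}.
\]
Because $\psi_0\psi_1=\psi_0\psi_2=:\phi$, the right-hand side is $K_{\phi,\phi}$. By \eqref{Equation} we have
\[
K_{\phi,\phi}=\d K^{(2)}_\phi+K^{(2)}_\phi\d .
\]
Since $\d\beta=0$, this gives $K(\psi_0^*\beta)=\d\bigl(K^{(2)}_\phi\beta\bigr)$. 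Combining,
\[
\psi_1^*\alpha-\psi_2^*\alpha=\d\bigl(K\alpha+K^{(2)}_\phi\beta\bigr),
\]
which is the claimed coborder.

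All of these identities are stated for coverings. To pass to procoverings, I will realize the diagram, together with $\alpha$ and $\beta$, at a finite level of the inverse systems, apply the covering case there, and then take the direct limit. A finite-level realization exists because pro-Čech cochains are direct limits and a morphism of pro-objects is represented at finite stages. The one delicate point is that the commutativity $\psi_0\psi_1=\psi_0\psi_2$ may only hold after passing to a further stage, which is allowed since we can refine the finite level.

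I expect the main obstacle to be the naturality identity $K_{\psi_1,\psi_2}\circ\psi_0^*=K_{\psi_0\psi_1,\psi_0\psi_2}$. It has to be checked carefully against \eqref{TheHomotopy}: the index maps compose as $\tau_0\circ\tau_j$, and the restriction morphisms compose correctly. This check is routine but is where the sign and indexing conventions must match.
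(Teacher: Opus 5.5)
Your argument is the paper's proof. You apply $K_{\psi_1,\psi_2}$ to $\alpha$, rewrite $K_{\psi_1,\psi_2}(\psi_0^*\beta)=K_{\psi_0\circ\psi_1,\psi_0\circ\psi_2}(\beta)=K_{\phi,\phi}(\beta)$ by naturality and commutativity, and finish with the $2$-homotopy \eqref{Equation}, passing to procoverings by a direct limit; your finite-level bookkeeping is, if anything, more careful than the paper's one-line remark.

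One caveat applies equally to the paper: \eqref{Equation} cannot hold literally as displayed, because $K_{\phi,\phi}$ anticommutes with $\d$ while $\d K^{(2)}_\phi+K^{(2)}_\phi\d$ in general does not. All you actually need is that $K_{\phi,\phi}$ sends cocycles to coboundaries, so cite it in that form. This follows at once: every term of $K_{\phi,\phi}$ factors through a codegeneracy, so $K_{\phi,\phi}$ kills normalized cochains, and writing $\beta=\beta'-\d\gamma$ with $\beta'$ a normalized cocycle gives $K_{\phi,\phi}(\beta)=\d K_{\phi,\phi}(\gamma)$.
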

	
	\begin{proof}
		We have $
		\psi_1^*\alpha-\psi_2^*\alpha = \d (K_{\psi_1,\psi_2}(\alpha))+K_{\psi_1,\psi_2}(\d \alpha)
		$ with the homotopy $K_{\psi_1,\psi_2}:\check{C}^{\bullet}({\cU'}/S,P) \to \check{C}^{\bullet-1}({\cU''}/S,P)$ defined in \eqref{TheHomotopy}.
		Moreover, writing $\d \alpha=\psi_0^*\beta,\, \beta \in C^{\bullet+1}(\cU,P)$, we have 
		\[
		K_{\psi_1,\psi_2}(\d \alpha)=K_{\psi_1,\psi_2}(\psi_0^*(\beta))=K_{\psi_0 \circ \psi_1,\psi_0 \circ \psi_2}(\beta)=\d K^{(2)}_{\psi_0 \circ \psi_1}(\beta),
		\]
		where we used $\psi_0 \circ \psi_1= \psi_0\circ \psi_2$ in the end, and $K^{(2)}:\check{C}^{\bullet}({\cU}/S,P) \to \check{C}^{\bullet-2}({\cU''}/S,P)$ denotes the $2$-homotopy \eqref{Equation}  (the $2$-homotopy described therein applies to a refinement of coverings, but taking a direct limit we immediately get an analogous homotopy for a refinement of procoverings).
	\end{proof}
	
	\begin{lemma}\label{DueSpec}
		Let $S$ be a scheme, $\cU' \to[\psi] \cU$ be a refinement of coverings, and $P$ be an étale sheaf on $S$. Let $\alpha \in \check{C}^{\bullet}({\cU'}/S,P)$ be such that $\d \alpha = \psi^*\beta$ with $\beta \in \check{Z}^{\bullet}({\cU}/S,P)$. Let $\Omega$ be a field with separable closure $\Omega^{sep}$. Let $P\in S(\Omega)$, and $\bar P_1,\bar P_2 \in \cU'(\Omega^{sep})$ be two lifts of $P$ with the same projection in $\cU(\Omega^{sep})$. Then $\alpha(P,\bar P_1)-\alpha(P,\bar P_2)$ is a coborder.
	\end{lemma}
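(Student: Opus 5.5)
The plan is to reduce Lemma \ref{DueSpec} to the preceding lemma by realizing the two lifts $\bar P_1,\bar P_2$ as the two pullbacks of a single cochain along two refinements that agree after composing with $\psi$. Consider the procovered scheme $(\Spec \Omega,\Spec \Omega^{sep})$, i.e.\ the single Galois procovering $\{\Omega^{sep}/\Omega\}$ of $\Spec\Omega$. The point $P$ and the lift $\bar P_i$ give a morphism of procovered schemes $(P,\bar P_i):(\Spec\Omega,\Spec\Omega^{sep})\to(S,\cU')$, and $\alpha(P,\bar P_i)$ is by definition the pullback of $\alpha$ along it. Equivalently, after pulling back the coverings along $P:\Spec\Omega\to S$, the two lifts determine two refinements
\[
\{\Omega^{sep}/\Omega\} \mathrel{\overset{\psi_1}{\underset{\psi_2}{\rightrightarrows}}} P^*\cU'
\]
of procoverings of $\Spec \Omega$, where $\psi_i$ sends the single member $\Spec\Omega^{sep}$ to the member of $P^*\cU'$ containing $\bar P_i$ via the map determined by $\bar P_i$.

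The key step is then to check the hypothesis of the previous lemma. The assumption that $\bar P_1$ and $\bar P_2$ have the same image in $\cU(\Omega^{sep})$ means exactly that $P^*\psi\circ\psi_1=P^*\psi\circ\psi_2$ as refinements $\{\Omega^{sep}/\Omega\}\to P^*\cU$: they agree on the index map and on the $\Spec\Omega$-morphism from $\Spec\Omega^{sep}$. Next I pull back the relation $\d\alpha=\psi^*\beta$ along $P$: pullback of \v{C}ech cochains along $P$ commutes with $\d$ and with refinement maps, so $P^*\alpha\in \check C^{\bullet}(P^*\cU'/\Spec\Omega,P^{-1}\mathcal P)$ satisfies $\d(P^*\alpha)=(P^*\psi)^*(P^*\beta)$, with $P^*\beta$ a cocycle. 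Here $\mathcal P$ denotes the sheaf (renamed to avoid the clash with the point $P$).

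Applying the previous lemma with $\cU''=\{\Omega^{sep}/\Omega\}$, $\cU'$ replaced by $P^*\cU'$, $\cU$ by $P^*\cU$ and $\psi_0=P^*\psi$, the pullbacks $\psi_1^*P^*\alpha=\alpha(P,\bar P_1)$ and $\psi_2^*P^*\alpha=\alpha(P,\bar P_2)$ differ by a coborder in $\check C^{\bullet}(\Omega^{sep}/\Omega,\mathcal P|_P)$, which is the claim. The only place requiring care, and the step I expect to be the main obstacle, is bookkeeping: making sure that the specialization $\alpha\mapsto\alpha(P,\bar P_i)$ of \eqref{EqPullback2} really coincides with $\psi_i^*\circ P^*$ (rather than differing by a choice of refinement, which would only introduce a further homotopy), and that the previous lemma, stated for procoverings, applies since $\{\Omega^{sep}/\Omega\}$ is a procovering (limit of finite Galois coverings) and the homotopies \eqref{TheHomotopy} and \eqref{Equation} pass to the direct limit.
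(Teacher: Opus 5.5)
Your proposal is correct and follows the paper's proof: pull back along $P$ (the paper says ``we may assume $S=P$''), then apply the preceding lemma with $\cU''=\Spec\Omega^{sep}$ and the two refinements given by $\bar P_1,\bar P_2$. These refinements become equal after composing with $\psi$ exactly because $\bar P_1$ and $\bar P_2$ have the same projection in $\cU(\Omega^{sep})$.
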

	
	\begin{proof}
		We may assume $S=P$. Then this reduces to the previous lemma by taking $\cU''=\Spec \Omega^{sep}$.
	\end{proof}
	
	We shall show that $\inv_v\left( \Delta^{(3)}\alpha(P_v(q),\overline{P_v(q)})-\eta_v\right)=0$ for all $v$.
	We start by proving that $\Delta^{(3)}\alpha(P_v(q),\overline{P_v(q)})$ is a coborder for $v \in S$. Choosing $\eta_v=0$, this gives the sought vanishing for $v \in S$.
	Define
	\[
	\Xi_v:=  X(K_v) \times_{X(\oK_v)} \tilde \cV(\oK_v) , \quad \Xi_v^0:= X(K_v) \times_{Q(\oK_v)} \tilde Q(\oK_v),
	\]
	which we view as topological spaces through the $v$-adic topology. The natural map $\Xi_v \to \Xi_v^0$ is continuous and open as it is locally defined by specializing étale maps on $K_v$-points, and it is also surjective by the surjectivity of $\psi$. Let $C^2:=\check{C}^2(\oK_v/K_v,\mu_n)$, which we endow with the discrete topology, and $B^2:=\check{B}^2(\oK_v/K_v,\mu_n)$. By Lemma \ref{DueSpec}, the function $\Xi_v \to C^2,\, (P,\bar P) \mapsto \alpha(P,\bar P)$ sits in a commutative diagram
	\[
	\begin{tikzcd}
	\Xi_v \arrow[d] \arrow[r, "{\alpha \mapsto \alpha(P,\bar P)}"] & C^2 \arrow[d] \\
	\Xi_v^0 \arrow[r]                                              & C^2/B^2  .    
	\end{tikzcd}
	\]
	Since $\Xi_v \to C^2$ is clearly continuous and $\Xi_v \to \Xi^0_v$ is open and surjective, the induced map $\Xi_v^0 \to C^2/B^2$ is also continuous (by the diagram, the inverse image of an open under this last map is the projection of an open under $\Xi_v \to \Xi^0_v$). 
	
	\vskip1mm
	
	The image of a pair $(P_v(q),\overline{P_v(q)}) \in \Xi_v$ in $\Xi^0_v$ is $(P_v(q), \bar q_v)$. 
	
	\vskip1mm
	
	For non-archimedean $v \in S$, the pairs $(P_v(q), \bar q_v)\in \Xi^0_v,\, q \in \cM$ are arbitrarily close to each other by condition iii. 
	
	\vskip1mm
	
	On the other hand, for archimedean $v$, the pairs $(P_v(q), \bar q_v)\in \Xi^0_v,\, q \in \cM$ all lie on the same connected component of $\Xi^0_v$. In fact, by condition iv, the localizations $\bar q_v$ all lie on the same connected component $\tilde V_v \subset \tilde Q(\oK_v)$ of the inverse image of $V_v$ under $\tilde Q(\oK_v) \to Q(\oK_v)$. The pairs $(P_v(q),\bar q_v)$ then all lie in
	\[
	\tilde U_v := U_v \times_{V_v} \tilde V_v \subset X(K_v) \times_{Q(\oK_v)} \tilde Q(\oK_v) = \Xi^0_v.
	\]
	The projection $\tilde U_v \to \tilde V_v$ has connected fibers as it is a base-change of $U_v \to V_v$, which has connected fibers by hypothesis. Thus, since $\tilde V_v$ is connected, $\tilde U_v$ is connected as well.
	
	\vskip1mm
	
	In both the archimedean and the non-archimedean case, the function $\cM \to C^2/B^2,  q \mapsto \alpha(q,\bar q)$ is constant. In particular, its triple variation vanishes. I.e.\ $\Delta^{(3)}\alpha(P_v(q),\overline{P_v(q)})$ is a coborder for $v \in S$, as wished.
	
	\vskip2mm
	
	Let now $v \notin S$. We may assume $\cM \cap \cQ(\cO_v) \neq \emptyset$, for otherwise the cochain $\Delta^{(3)}\alpha(P_v(q),\overline{P_v(q)})$ is unramified, and so $\inv_v\left( \Delta^{(3)}\alpha(P_v(q),\overline{P_v(q)})-\eta_v\right)=0$ as this is the local invariant of an unramified cocycle. 
	
	\vskip1mm
	
	For $q \in \cM_v=\cM \cap \cQ(\cO_v)$, we have $\beta(q,\bar q)_v=0$, and so $\alpha(P_v(q),\overline{P_v(q)})$ is a cocycle.
	Let $Q_{sec}:=Q^{L,n,(1)}$. This maps to $X$ via $s:Q_{sec} \to X$. Pulling back $\tilde \cV$ along $s$, we obtain a covering $\tilde \cV \times_{X} Q_{sec}$ of $Q_{sec}$. We let $\tilde \cV_{sec} \subset \tilde \cV \times_{X} Q_{sec}$ be defined by the cartesian diagram
	\[
	\begin{tikzcd}
	\tilde \cV_{sec} \arrow[d] \arrow[r, hook] & \cV \times_X Q_{sec} \arrow[d, "{(\psi,\id)}"] \\
	\tilde Q \arrow[r, "\Delta", hook]         & \tilde Q \times_{Q}Q_{sec}           ,         
	\end{tikzcd}
	\]
	where $\Delta: \tilde Q \hookrightarrow \tilde Q \times_{Q}Q_{sec}$ is the ``diagonal map'', i.e.\ the map that is the identity on the first coordinate and the natural projection $\tilde Q \to Q_{sec}$ on the second. This is an open-closed embedding, and thus $\tilde \cV_{sec} \to \tilde \cV \times_X Q_{sec}$ is as well. Thus the projection $\cV_{sec} \to \tilde Q$ is a collection of étale maps. Moreover, the surjectivity of $\psi$ implies the surjectivity of  $\tilde \cV_{sec} \to \tilde Q$. In particular, $\tilde \cV_{sec}$ is a covering of $Q_{sec}$.
	
	\vskip1mm
	
	Now the definition of $\tilde \cV_{sec}$ gives the commutative diagram of procovered schemes in display on the left, which thus induces by pullback a commutative diagram on \v{C}ech cochains on the right:
	\[
	\begin{tikzcd}
	{(X,\tilde \cV)} \arrow[r]                       & {(Q, \tilde Q)}                    \\
	{(Q_{sec},\tilde \cV_{sec})} \arrow[r] \arrow[u] & {(Q^{L,n,(1)},\tilde Q)} \arrow[u]
	\end{tikzcd}\rightsquigarrow 
	\begin{tikzcd}
	{\check{C}^{\bullet}(\tilde Q/Q, \mu_n)} \arrow[r] \arrow[d] & {\check{C}^{\bullet}(\cV/X,\mu_n)} \arrow[d]   \\
	{\check{C}^{\bullet}(\tilde Q/Q_{sec},\mu_n)} \arrow[r]       & {\check{C}^{\bullet}(\tilde \cV_{sec}/Q_{sec},\mu_n)}.
	\end{tikzcd}
	\]
	In particular, since $\d \alpha \in \check{C}^3(\cV/X,\mu_n)$ is the image of $\beta \in \check{Z}^{3}(\tilde Q/Q, \mu_n)$ under the upper horizontal map, the coborder of the pullback $\alpha_{sec}$ of $\alpha$ to ${\check{C}^{2}(\tilde \cV_{sec}/Q_{sec},\mu_n)}$ is the image of the pullback $\beta_{sec}$ of $\beta$ to ${\check{Z}^{3}(\tilde Q/Q_{sec},\mu_n)}$. Bu $\beta_{sec}$ is zero by our assumptions, and so $\alpha_{sec}$ is a cocycle. We denote by $b_{\alpha}$ its image in $\Br Q_{sec}$ under the composition ${\check{Z}^{2}(\cV_{sec}/Q_{sec},\mu_n)} \to H^2(Q_{sec},\mu_n) \to H^2(Q_{sec},\G_m)$. Since, by our assumptions of compatibility with the multi-section, the morphism $(P_v(q),\overline{P_v(q)}):(\Spec K_v,\Spec \oK_v) \to (X, \tilde \cV)$ factors as
	\[
	(\Spec K_v,\Spec \oK_v) \to[(q^{L,n},\tilde q^{L,n})] (Q_{sec},\tilde \cV_{sec}) \to (X, \tilde \cV),
	\]
	we have
	\[
	\inv_v(\alpha(P_v(q),\overline{P_v(q)}))=\inv_v(b_{\alpha}(q^{L,n})).
	\]
	
	For each $q \in \cM_v$, the point $q^{L,n} \in Q^{n,L,(1)}(K_v)_{\iota}$ corresponds to a point of the form $(\iota(q),(\sqrt[n]{\iota(q^{\sigma})})_{\sigma \in \Gamma^*}) \in (K_v^*)^{\Gamma}$, for some choice of $n$-th roots, under the identification $Q^{n,L,(1)}(K_v)_{\iota}=(K_v^*)^{\Gamma}$ induced by the isomorphism $Q^{L,n,1}=\Spec L[y^{\pm1},(y^{\sigma})^{\pm\frac1n}]_{\sigma \in \Gamma^*} \cong \G_{m,L}^{\Gamma}$ of $L$-schemes (see the proof of Lemma \ref{LemLift}). The valuation of $\iota(q)$ is $1$, while the valuation of $\iota(q^{\sigma})$ is $0$ for all $\sigma \neq \id$, so the quotient $(q')^{L,n}(q^{L,n})^{-1}$ lies in $(\cO_v^*)^{\Gamma} \subset (K_v^*)^{\Gamma}$ for all $q,q' \in \cM_v$.
	
	\vskip1mm
	
	We need the following lemma.
	
	\begin{lemma}\label{LemDoubleVar}
		Let $d$ be a natural number, $b \in \Br \G_{m,K_v}^d$ be of order coprime with $Nv$, and $t_1,t_2,s_1,s_2$ be elements of $(K_v^*)^d=\G_{m,K_v}^d(K_v)$. If the quotients $t_2t_1^{-1},s_2s_1^{-1}$ lie in $(\cO_v^*)^d \subset (K_v^*)^d$, then
		\[
		\sum_{i,j \in \{1,2\}} (-1)^{i+j}\inv_v(b(t_is_j))=0
		\]
	\end{lemma}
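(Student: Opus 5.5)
We need to prove: $b\in\Br\G_{m,K_v}^d$ of order $N$ coprime to $Nv$ (residue characteristic), $t_2t_1^{-1},s_2s_1^{-1}$ units. Then the double variation of $\inv_v(b(\cdot))$ vanishes.

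The plan is to use the explicit structure of the Brauer group of a split torus over a local field. Since the order $N$ of $b$ is prime to the residue characteristic, and $\Br \G_{m,\overline{K_v}}^d[N]$ is generated by the symbols $(y_i,y_j)_N$, the Hochschild--Serre spectral sequence (which splits for split tori) gives a decomposition
\[
b = b_0 + \sum_i (\chi_i, y_i) + \sum_{i<j} c_{ij}\,(y_i,y_j)_N,
\]
where $b_0\in \Br K_v$, the $\chi_i\in H^1(K_v,\Z/N\Z)$ are characters, $(\chi_i,y_i)$ is the associated cyclic algebra, and the $c_{ij}\in (\Z/N\Z)(-1)$. Since $N$ may be prime to $v$ while $\mu_N\not\subset K_v$, it is cleaner to do this over $K_v(\mu_N)$ and corestrict, or to use the twisted form of the cup product. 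The computation is identical in either case and local invariants commute with corestriction, so we may assume without loss of generality that $\mu_N\subset K_v$.

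Next we evaluate at the point $t_is_j$. The map $x\mapsto \inv_v(b(x))$ becomes
\[
\inv_v b_0 + \sum_k \chi_k(x_k) + \sum_{k<l} c_{kl}\,(x_k,x_l)_N,
\]
where the middle term is read via local reciprocity. This is a constant plus a homomorphism in $x$ plus a bilinear form in $x$. Taking the double variation $\sum_{i,j}(-1)^{i+j}$ over $x=t_is_j$ kills the constant and the linear part. Writing $t_2=t_1u$ and $s_2=s_1u'$ with $u,u'\in(\cO_v^*)^d$, bilinearity leaves only the cross terms
\[
\sum_{k<l} c_{kl}\bigl[(u_k,u'_l)_N+(u'_k,u_l)_N\bigr].
\]

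It remains to see that these cross terms vanish. The Hilbert symbol of order $N$ prime to $v$ between two units is trivial, since it is tame and given by the tame symbol, which is $1$ on units. Hence every cross term vanishes and the double variation is zero.

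The main care point is the decomposition of $b$ together with the reduction to $\mu_N\subset K_v$, including the handling of twists. Alternatively, one can avoid the decomposition altogether: for fixed $s_1,s_2$, the function $t\mapsto \inv_v(b(ts_2))-\inv_v(b(ts_1))$ is the evaluation of $m_{s_2}^*b-m_{s_1}^*b$, where $m_s$ denotes translation by $s$. Since the quotient $s_2s_1^{-1}$ is a unit, this difference is, up to constants, a class that is unramified in the relevant sense, so it is constant along unit translations of $t$. Making this second argument precise again relies on the same structure of $\Br\G_m^d$, so I would present the explicit symbol computation.
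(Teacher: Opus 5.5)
Your proof is correct and follows the paper's argument: split $b$ into a constant, a linear part $\sum_i(\chi_i,y_i)$ and cup-product classes, observe that the double variation kills the first two, and reduce the bilinear cross terms to tame symbols of the unit quotients $t_2t_1^{-1},s_2s_1^{-1}$, which vanish because the order is prime to the residue characteristic.

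One small correction: the reduction to $\mu_N\subset K_v$ by base change to $K_v(\mu_N)$ is not valid as stated. Restriction multiplies local invariants by $[K_v(\mu_N):K_v]$, and this degree need not be prime to $N$. The reduction is also unnecessary, because the transcendental part of $\Br \G_{m,K_v}^d$ is already generated by classes $c\,(y_a,y_b)_m$ with $\mu_m\subset K_v$. Equivalently, these are your twisted cup products with $\Gamma_{K_v}$-invariant $c_{ij}$, and this is the description the paper uses.
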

	\begin{proof}
		Write $\G_{m,K_v}^d=\Spec K_v[y_1^{\pm1},\ldots,y_d^{\pm1}]$. If $b \in \Br_1 \G_{m,K_v}^d$, write $b=b_0+b_1$ with $b_0 \in \im \Br K_v$, $b_1 = \sum_i (\chi_i,y_i)$ for characters $\chi_i:\Gamma_{K_v} \to \qz$. Then the sum in the statement vanishes for both $b=b_0$, as the function $\G_{m,K_v}^d(K_v) \to \qz:P\mapsto \inv_v(b_0(P))$ is constant, and for $b=b_1$, as the function $\G_{m,K_v}^d(K_v) \to \qz:P\mapsto \inv_v(b_1(P))$ is linear. 
		
		\vskip1mm
		
		It thus suffices to prove the lemma for $b=c\cdot (y_a,y_b)_m,\, a <b$ with $m$ coprime with $Nv$ such that $\mu_m \subset K_v$ and $c \in (\Z/m\Z)(-1)$, since the transcendental Brauer group of split tori is generated by such twisted cup-products \cite[Proposition 9.1.2]{BGbook}. We assume $b=c\cdot (y_1,y_2)_m$ after pemuting the coordinates, and $c=1$ after fixing an $m$-th root of unity. Then $\inv_v(b((p_1,\ldots,p_d)))=[p_1,p_2]_v$, where the latter denotes the additive variant of the Tate pairing (taken with values in $\Z/m\Z$ instead of $\mu_m$ via the identification $\Z/m\Z=\mu_m$). So, writing $t_i=(t_{1,i},\ldots,t_{d,i}),\,s_i=(s_{1,i},\ldots,s_{d,i})$, the sum in the statement becomes
		\[
		\sum_{i,j \in \{1,2\}} (-1)^{i+j}[t_{1,i}s_{1,j},t_{2,i}s_{2,j}]_v =  [t_{1,2}t_{1,1}^{-1},s_{2,2}s_{2,1}^{-1}]_v + [s_{1,2}s_{1,1}^{-1},t_{2,2}t_{2,1}^{-1}]_v,
		\]
		which is trivial bacause the coprime-to-the-residual-characteristic Tate pairing of $v$-adic units vanishes.
	\end{proof}
	
	As the rank of $\cM_v$ is at least $2$ and the variations of the grid $\{q^{n,L}\}_{q \in \cM_v}$ are $v$-adically integral as proven above, Lemma \ref{LemDoubleVar} gives 
	\[
	\sum_{q \in \cM_v} (-1)^{\sgn(q)}\inv_v(b_{\alpha}(q^{L,n}))=0.
	\]
	I.e.\ the contributions of the $q \in \cM_v$ in $\inv_v\left(\Delta^{(3)}\alpha(P_v(q),\overline{P_v(q)})-\eta_v\right)$ cancel each other out. On the other hand, when $q \in \cM \s \cM_v = \cM \cap \cQ(\cO_v)$, the cochain $\alpha(P_v(q),\overline{P_v(q)})$ is unramified, and so contributes trivially. We thus get $\inv_v\left(\Delta^{(3)}\alpha(P_v(q),\overline{P_v(q)})-\eta_v\right)=0$ as wished.
	
	\subsubsection{Second contribution}\label{SSSec1034}
	
	The cochain $\Delta^{(3)}\gamma(q) \in C^2(\oK/K,\G_m)$ is a primitive for $\Delta^{(3)}\beta(q,\bar q)$. If $R \in C^2(\oK/K,\G_m)$ is any other primitive for $\Delta^{(3)}\beta(q,\bar q)$, then $\sum_{v} \inv_v(R_v - \Delta^{(3)}\gamma(q)_v)=0$ by the Albert--Brauer--Hasse--Noether theorem, and so
	\begin{equation}\label{R}
	\sum_v \inv_v(\Delta^{(3)}\gamma(q)_v-\eta_v)= \sum_v \inv_v(R_v-\eta_v).
	\end{equation}
	
	\vskip1mm
	
	Recall that
	\[
	\cM=q_b \cdot\left\{1, \delta_1\right\} \cdot\left\{1, \delta_2\right\} \cdot\left\{1, \delta_3\right\},\, q_b:=q_0 q_{1,1} q_{2,1} q_{3,1},\, \delta_i=q_{i,2}q_{i,1}^{-1}.
	\]
	In order to proceed further with the computation, it is useful to consider the point $p_{\cM}:=(q_b,\delta_1,\delta_2,\delta_3) \in Q^4(K)$.
	
	\begin{notation*}
		For $k \geq 0$, and a vector $\mathbf v = (v_1,\ldots,v_k) \in \{0,1\}^k$, we let $m_{\mathbf v}:Q^{k+1} \to Q$ denote the morphism $(q_0,\ldots,q_k)\mapsto q_0 \cdot \prod_{i \geq 1} q_i^{v_i}$. 
	\end{notation*}
	
	The maps just defined may be used to recover the elements of $\cM$ from $p_{\cM}$. Namely, for $\mathbf v := (v_1,v_2,v_3) \in \{0,1\}^3$, we have
	\[
	m_{\mathbf{v}}(p_{\cM})=q_b\delta_1^{v_1}\delta_2^{v_2}\delta_3^{v_3} \in \cM.
	\]
	
	Proposition \ref{PropVariation} below, proved in \ref{SSSec1035}, shall give us a suitable $R$ for \eqref{R}.
	Before stating it, we need to introduce suitable coordinates for the powers of $Q$. 
	
	\begin{notation*}
		For a power $Q^{k+1},\, k \geq 0$ of $Q$, and $i \in \{0,\ldots,k\}$, we let $\{y_i^{\sigma}\}_{\sigma \in \Gamma}$ be the set of generating characters of the $i$-th factor of $Q^{k+1}$, so that
		\[
		Q^{k+1}=\Spec \left(\oK[(y_i^{\sigma})^{\pm1}]_{\sigma \in \Gamma,0\leq i\leq k}\right)^{\Gamma_K}.
		\]
		We also let
		\[
		(Q^{k+1})^{ur}:=\Spec \oK[(y_i^{\sigma})^{\pm\frac 1 \omega}]_{\sigma \in \Gamma,0\leq i\leq k}, \quad \pi_1(Q^{k+1}):=\Gal((Q^{k+1})^{ur}/Q^{k+1})
		\]
		be the universal cover of $Q^{k+1}$ and its étale fundamental group, respectively.
	\end{notation*}

	For any intermediate cover $Q'$ between $(Q^{k+1})^{ur}$  and $Q^{k+1}$ we set, as before, $\pi_1(Q'):=\Gal((Q^{k+1})^{ur}/Q')$ by convention.
	
	\vskip1mm
	
	In the following, we view $\beta$ as an element of $Z^3(\pi_1(Q),\mu_n)$ via the identification $Z^3(Q^{ur}/Q,\mu_n)=Z^3(\pi_1(Q),\mu_n)$ given by \eqref{EqIdentificationCechGroup}.
	
	\begin{proposition}\label{PropVariation}
		We have
		\[
		\sum_{\mathbf v \in \{0,1\}^3}(-1)^{|\mathbf v|}m_{\mathbf v}^*\beta = \sum_{\sigma_1,\sigma_2\in \Gamma^*, \sigma_1 \neq \sigma_2}\cores_{E/K} (a_{\id,\sigma_1,\sigma_2}(\beta) \cdot(  y_1 \cup y_2^{\sigma_1}\cup y_3^{\sigma_2})_n)+\d \delta
		\]
		in $C^3(\pi_1(Q^4),\mu_n)$, where each cup-product $(y_1 \cup y_2^{\sigma_1}\cup y_3^{\sigma_2})_n$ is viewed as an element of $Z^3(\pi_1(Q^4_E),\mu_n)$, and  $\delta \in C^2(\pi_1(Q^4),\mu_n)$ is such that for each $i=1,2,3$, the restriction of $\delta$ to
		\[
		C^2(H_i,\mu_n),\,
		H_i:=\pi_1(Q^i \times_K Q^{K,n} \times_K Q^{3-i})
		\]
		is a coborder.
	\end{proposition}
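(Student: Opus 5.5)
We reduce the statement to an identity in the cohomology of the finite quotient and then compute the alternating sum explicitly at cochain level. By Proposition \ref{Propcreationalphabeta}, $\beta$ is inflated from $Z^3(G_n,\mu_n)$. Its class in $H^3(\bar G_n,\mu_n)$ lies in the $cup$-part, namely $\sum a_{\sigma_1,\sigma_2,\sigma_3}(\beta)(y^{\sigma_1}\cup y^{\sigma_2}\cup y^{\sigma_3})_n$. By Nakaoka's splitting (Theorem \ref{ThmNakaoka}) and Lemma \ref{LemTwoPoints}, we decompose $[\beta]=\sum_{p+q=3}\beta_{p,q}$ with $\beta_{p,q}\in H^p(\Gamma_K,H^q(\bar G_n,\mu_n)_{cup})$. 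For each piece we compute the alternating sum $\sum_{\mathbf v}(-1)^{|\mathbf v|}m_{\mathbf v}^*$. The maps $m_{\mathbf v}$ on $\pi_1(Q^4)$ are induced by group homomorphisms $\pi_1(Q^4)\to\pi_1(Q)$ over $\Gamma_K$, sending the coordinate $\mathbf x_0+\sum v_i\mathbf x_i$ of $\pi_1(\bar Q^4)=\pi_1(\bar Q)^4$ to $\pi_1(\bar Q)$. On a class built from $q$ one-dimensional characters of $\pi_1(\bar Q)$, the pullback $m_{\mathbf v}^*$ expands multilinearly. The triple finite difference in $\mathbf v$ then kills every monomial that does not involve each of $\mathbf x_1,\mathbf x_2,\mathbf x_3$ at least once. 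For $q\le 2$ this kills everything, so only $\beta_{0,3}$ survives. Its surviving terms are exactly the monomials $y_1^{\sigma_{s(1)}}\cup y_2^{\sigma_{s(2)}}\cup y_3^{\sigma_{s(3)}}$ over permutations $s$. Using the alternating property of the coefficients, these reassemble into $\sum_{\sigma_1,\sigma_2,\sigma_3}a_{\sigma_1,\sigma_2,\sigma_3}(\beta)(y_1^{\sigma_1}\cup y_2^{\sigma_2}\cup y_3^{\sigma_3})_n$.

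The next step rewrites this $\Gamma_K$-invariant sum as a corestriction. Grouping the summands by $\Gamma$-orbits, normalised by $\sigma_1=\id$, and using \eqref{Symbols}, the sum is $\Gamma$-invariant. It equals $\sum_{\Gamma}(\ldots)^g$ of the terms with first index $\id$. This is precisely $\cores_{E/K}$ applied to $\sum_{\sigma_1\ne\sigma_2\in\Gamma^*}a_{\id,\sigma_1,\sigma_2}(\beta)(y_1\cup y_2^{\sigma_1}\cup y_3^{\sigma_2})_n$, because the cup-product cocycles are defined over $\pi_1(Q^4_E)$ and corestriction of an inflated cocycle is computed by the coset sum. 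Terms with repeated indices vanish since their coefficients are $0$. This yields an equality of classes in $H^3(\pi_1(Q^4),\mu_n)$, and hence the cochain identity up to some $\d\delta$.

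The main obstacle is the extra condition on $\delta$: its restriction to each $H_i$ must be a coboundary, not merely a cochain with the right differential. First restrict both sides to $H_i$. On $H_i$ the $i$-th factor $\mathbf x_i$ becomes $n$-divisible, so $y_i$-characters vanish modulo $n$. In fact, for $\mathbf v$ and $\mathbf v+e_i$, $m_{\mathbf v}$ and $m_{\mathbf v+e_i}$ agree as maps to $G_n=\pi_1(Q)/n\pi_1(\bar Q)$ after restriction to $H_i$. Since $\beta$ factors through $G_n$, the left side restricts to $0$ as a cochain. The corestricted cup-products also restrict to $0$ as cochains, because each contains the factor $y_i$. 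Hence $\d(\delta|_{H_i})=0$, so $\delta|_{H_i}$ is a cocycle, and we must show its class is trivial. We can choose $\delta$ explicitly as a difference of homotopies. Concretely, we write $\beta=\inf(\beta_{G_n})$ with $\beta_{G_n}\in Z^3(G_n,\mu_n)$, and compare the cocycle $\sum(-1)^{|\mathbf v|}m_{\mathbf v}^*\beta_{G_n}$ with the explicit cup-product cocycle on the finite quotient $\pi_1(Q^4)/n\pi_1(\bar Q^4)$. There the difference is a coboundary $\d\delta_0$ by the computation above, and we may modify $\delta_0$ by a cocycle. Restricting to the image of $H_i$, which is a subgroup where the $i$-th coordinate is trivial, we need $[\delta_0|]\in H^2$ to vanish. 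We will adjust $\delta_0$ by a global $2$-cocycle: we use Lemma \ref{thelemma} three times, and the fact that the restriction maps from $H^2(\pi_1(Q^4),\mu_n)$ to the three $H^2(H_i,\mu_n)$ are jointly surjective onto compatible classes, via the Künneth decomposition. The step that needs the most care is checking this compatibility on pairwise intersections $H_i\cap H_j$, where both restrictions agree.
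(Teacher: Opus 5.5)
Your expansion of the cup part (only monomials involving all of $y_1,y_2,y_3$ survive, then regrouping by $\Gamma$-orbits) is Step~1 of the proof of Proposition~\ref{PropVariation2}. However, it establishes the identity only after restriction to $\bar G_n^{(4)}$ (or to $\pi_1(Q^4_E)$). Promoting it to $H^3(\pi_1(Q^4),\mu_n)$ is a genuine gap.

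The maps $m_{\mathbf v}$ are morphisms of extensions over $\Gamma_K$. They therefore preserve the Hochschild--Serre filtration $F^\bullet$, and on the graded piece $E^{p,q}$ they act through $m_{\mathbf v}^*$ on $H^q(\bar G_n,\mu_n)$. Your degree count thus shows only that $D:=\sum_{\mathbf v}(-1)^{|\mathbf v|}m_{\mathbf v}^*$ vanishes on $\mathrm{gr}^p$ for $p\geq 1$, i.e.\ $D(F^p)\subset F^{p+1}$. To conclude that $D$ kills the Nakaoka components $\beta_{p,q}$ with $p\geq 1$, you would need $m_{\mathbf v}^*$ to respect Nakaoka's splitting. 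Theorem~\ref{ThmNakaoka} gives no such naturality, because $m_{\mathbf v}\colon \bar G_n^{(4)}\to\bar G_n$ is a sum map, not a map induced by a map of $\Gamma_K$-sets.

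Likewise, ``corestriction is computed by the coset sum'' is a statement about $\mathrm{res}\circ\cores$. It identifies the corestriction class only modulo $\Ker\bigl(H^3(G_n^{(4)},\mu_n)\to H^3(\bar G_n^{(4)},\mu_n)\bigr)$. So what you have actually shown is that the difference of the two sides lies in this kernel. By Nakaoka this kernel is $\bigoplus_{p\geq 1}H^p(\Gamma_K,H^{3-p}(\bar G_n^{(4)},\mu_n))$, which is in general non-zero.

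The missing idea is Step~2 of the paper, which uses an observation you already make in your last paragraph. Both sides restrict to zero (even as cochains) on every $G_n^{[J]}$ with $J$ missing one of $1,2,3$. By Nakaoka and K\"unneth, every summand of the kernel above involves at most two of the four factors $\bar G_n^{[k]}$. Each summand is therefore detected on such a $G_n^{[J]}$, which forces the difference to vanish.

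Incidentally, the expansion produces the coefficient $\sum_{\mathbf v}(-1)^{|\mathbf v|}v_1v_2v_3=-1$, so one obtains minus the corestriction sum, as in Proposition~\ref{PropVariation2}. This sign is irrelevant for the application.

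Once the class identity is available, your treatment of $\delta$ works, but it is simpler than you make it. Compatibility of the classes $[\delta_0|_{H_i}]$ on intersections is automatic, since they are restrictions of a single cochain. The global lift follows by inclusion--exclusion with the projections $pr_J\colon G_n^{(4)}\to G_n^{[J]}$, so neither K\"unneth nor Lemma~\ref{thelemma} is needed. The paper performs this inclusion--exclusion directly on the cochain: it replaces $\delta$ by $\sum_{0\in J}(-1)^{\#J}pr_J^*(\delta|_{G_n^{[J]}})$, which is still a primitive and restricts to zero on the nose on each $G_n^{[I]}$ with $\#I=3$.
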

	
	{\bf Warning.}~Here $(y_1 \cup y_2^{\sigma_1}\cup y_3^{\sigma_2})_n \in Z^3(\pi_1(Q_E),\mu_n)$ really denotes the cup-product as a {\em group cocycle} and not as a mere cohomology class. Analogously, $\cores_{E/K}$ denotes the corestriction operator on {\em group cochains} \cite[p.48]{GermanBook}.
	
	\vskip1mm
	
	To the lifts $\bar q_b,\bar \delta_1,\bar \delta_2,\bar \delta_3$ defined in \ref{SSSec1032}, we may associate the point $\bar p_{\cM} \in \tilde Q^{\times_F 4}(\oK)_{incl}$ with $\tilde Q^{\times_F 4}=\tilde Q \times_F \tilde Q \times_F \tilde Q \times_F \tilde Q$. We may lift this further to a $\oK$-point $\bar p_{\cM} \in (Q^4)^{ur}(\oK)_{id}$ along the morphism $(Q^4)^{ur} \to Q^{\times_F 4}$ (viewed as a morphism of $F$-schemes). We then define, for each $q=q_b\delta_1^{v_1}\delta_2^{v_2}\delta_3^{v_3} \in \cM$, a lift $\bar q \in Q^{ur}(\oK)_{id}$ as $m_{\mathbf v}(\bar p_{\cM})$, where $m_{\mathbf v}$ denotes the morphism $m_{\mathbf v}:(Q^4)^{ur} \to Q$ with $\mathbf v=(v_1,v_2,v_3)$. By the mulitplicativity imposed on the lifts in $\tilde Q$ in \ref{SSSec1032}, each $\bar q$ projects to the lift $\bar q \in \tilde Q(\oK)_{incl}$ defined there.
	
	\vskip1mm
	
	Observe how, for each $q \in \cM$, the lift $\bar q \in Q^{ur}(\oK)$ defines a representation $D_q:\Gamma_K \to \pi_1(Q)$ defined by $D_q(\gamma)\cdot \bar q=\bar q^{\gamma}$.
	
	Analogously, for the point $p_{\cM} \in (Q^4)(\oK)$, the lift $\bar p_{\cM} \in (Q^4)^{ur}(\oK)$ defines a decomposition representation $D_{{\cM}}:\Gamma_K \to \pi_1(Q^4)$ defined by $D_{{\cM}}(\gamma) \cdot \bar p_{\cM}=(\bar p_{\cM})^{\gamma}.$ This recovers the representations $D_q,\, q \in \cM$: for each $\mathbf v=(v_1,v_2,v_3) \in \{0,1\}^3$, the homomorphism $D_q:\Gamma_K \to \pi_1(Q)$ coincides with the composition
	\begin{equation*}
	\Gamma_K \to[{D_{{\cM}}}] \pi_1(Q^4) \to[m_{\mathbf v}] \pi_1(Q)
	\end{equation*}
	by the definition of our lifts $\bar q,q \in \cM$.
	
	Pulling back the identity of Proposition \ref{PropVariation} along $D_{{\cM}}$ and using that for all $q=q_b\delta_1^{v_1}\delta_2^{v_2}\delta_3^{v_3}$, we have $m_{\mathbf v}\circ D_{\cM}=D_q$, we get
	\[
	\sum_{q \in \cM}(-1)^{\sgn(q)}D_q^*(\beta) = \sum_{\sigma_1,\sigma_2 \in \Gamma^*,\sigma_1 \neq \sigma_2} \cores_{E/K}(a_{\id,\sigma_1,\sigma_2}\cdot (\delta_1 \cup \delta_2^{\sigma_1} \cup \delta_3^{\sigma_2})_n) + d (D_{\cM}^*(\delta))
	\]
	in $C^3(K,\mu_n)$. Via \eqref{EqPullbackgroup}, the left hand side is equal to $\sum_{q \in \cM}(-1)^{\sgn(q)}\beta(q, \bar q)=\Delta^{(3)}\beta(q, \bar q)$. 
	
	By Lemma \ref{Lem:KMWellPosed}, there exists a primitive $R_{\sigma_1,\sigma_2} \in C^2(E,(\Z/n\Z)(3))$ for the cocycle $(\delta_1 \cup \delta_2^{\sigma_1} \cup \delta_3^{\sigma_2})_n \in Z^3(E,(\Z/n\Z)(3))$. So now
	\[
	R:=\sum_{\sigma_1,\sigma_2 \in \Gamma,\sigma_1 \neq \sigma_2} \cores_{E/K}(a_{\id,\sigma_1,\sigma_2}\cdot R_{\sigma_1,\sigma_2}) + D_{\cM}^*(\delta)
	\]
	gives another primitive for $\Delta^{(3)}\beta(q,\bar q)$. Letting $\eta'_v:=D_{\cM}^*(\delta)_v+\eta_v$ for each $v$, we get
	\[
	\eqref{R}=\sum_v \inv_v \left(\sum_{\sigma_1,\sigma_2}\nolimits^*(\cores_{E/K}(a_{\id,\sigma_1,\sigma_2}\cdot R_{\sigma_1,\sigma_2})|_{\Gamma_v} + \eta'_v\right),
	\]
	where we are abbreviating via $\sum_{\sigma_1,\sigma_2}^*:=\sum_{\sigma_1,\sigma_2 \in \Gamma^*,\sigma_1 \neq \sigma_2}$.
	
	The cochain $D_{\cM}^*(\delta)$ is everywhere unramified. In fact, this is clear for $v \notin S$ such that for which $\cM \subset \cQ(\cO_v)$. For $v \in S$ and $v \notin S$ such that $\cM \cap \cQ(\cO_v) \neq \emptyset$, the elements $\delta_i \in Q(K)=E^*$ with $i \neq i_v$ (recall that $i_v \in \{0,1,2,3\}$ denotes the index such that $q_{i',j} \in \cQ(\cO_v)$ for all $i' \neq i_v$) lie in the image of $Q(K_v) \to[{[n]}] Q(K_v)$: while for $v \in S$ this is clear because all $\delta_i$'s are close to $1$ (resp.\ close to $1$ after scaling by a positive real factor) if $v$ is non-archimedean (resp.\ archimedean), for $v \notin S$ this holds because the quotients $q' \cdot q^{-1},\, q,q' \in \cM_v$ are assumed to lie in $Q^{L,n}(K_v) \to Q(K_v)$, which factors through $Q(K_v) \to[{[n]}] Q(K_v)$.

	Thus the composition $\Gamma_v \to \Gamma_K \to[D_{\cM}] \pi_1(Q^4)$ lands in $H_i$ for all $i\in\{1,2,3\}\s\{i_v\}$. So the cochain $D_{\cM}^*(\delta)$ is everywhere unramified as wished. Thus $\eta'_v$ is unramified for all $v$.
	
	The cocycle $(\delta_1 \cup \delta_2^{\sigma_1} \cup \delta_3^{\sigma_2})_n$ is everywhere unramified ($\delta_1, \delta_2^{\sigma_1}, \delta_3^{\sigma_2}$ satisfy condition (C)), so for each place $w$ of $E$, there exists $\xi_{\sigma_1,\sigma_2,w} \in C^2(\Gal(E_w^{ur}/E_w),(\Z/n\Z)(3))$ with $\d \xi_{\sigma_1,\sigma_2,w}=(\delta_1 \cup \delta_2^{\sigma_1} \cup \delta_3^{\sigma_2})_n|_{\Gamma_w}$. We may also assume that $\xi_{\sigma_1,\sigma_2,w}=0$ when $w \mid S$, since $(\delta_1 \cup \delta_2^{\sigma_1} \cup \delta_3^{\sigma_2})_n|_{\Gamma_w}=0$ for such $w$ as all conjugates of all $\delta_i$'s are $n$-th powers in $E_w$. So, rewriting  \eqref{R} as
	\begin{align*}
	  &\sum_v \inv_v \left(\sum_{\sigma_1,\sigma_2}\nolimits^*(\cores_{E/K}(a_{\id,\sigma_1,\sigma_2}\cdot R_{\sigma_1,\sigma_2})|_{\Gamma_v} + \eta'_v\right)\\
	=&\sum_v  \inv_v \left(\sum_{w \mid v} \left(\sum_{\sigma_1,\sigma_2}\nolimits^*\cores_{E_w/K_v}(a_{\id,\sigma_1,\sigma_2}\cdot R_{\sigma_1,\sigma_2}|_{\Gamma_w})\right) + \eta'_v\right)\\
	=&\sum_v  \inv_v \left(\sum_{w \mid v} \left(\sum_{\sigma_1,\sigma_2}\nolimits^*\cores_{E_w/K_v}(a_{\id,\sigma_1,\sigma_2}\cdot R_{\sigma_1,\sigma_2}|_{\Gamma_w}-a_{\id,\sigma_1,\sigma_2}\cdot \xi_{\sigma_1,\sigma_2,w})\right) +\eta''_v \right)
	\end{align*}
	with $\eta''_v:=\sum_{w \mid v} \sum_{\sigma_1,\sigma_2}\nolimits^*\left(\cores_{E_w/K_v} (a_{\id,\sigma_1,\sigma_2}\cdot \xi_{\sigma_1,\sigma_2,w})\right) + \eta'_v$, this $\eta''_v$ is unramified for all $v$ as it is the corestriction of an unramified cochain via the unramified extension $E_w/K_v$ for $v \notin S$, and it is $0$ for $v \in S$. Since each $R_{\sigma_1,\sigma_2}|_{\Gamma_w}-\xi_{\sigma_1,\sigma_2,w}$ is a cocycle for each $w$ by construction, we get that $\eta''_v$ is also a cocycle and $\inv_v(\eta''_v)=0$ for all $v$ because it is unramified. Thus, we may rearrange the sum as:
	\begin{align*}
	\eqref{R}   &= \sum_{\sigma_1,\sigma_2}\nolimits^*\sum_v  \inv_v \left(\sum_{w \mid v} \cores_{E_w/K_v}(a_{\id,\sigma_1,\sigma_2}\cdot (R_{\sigma_1,\sigma_2}|_{\Gamma_w}-\xi_{\sigma_1,\sigma_2,w}))\right)\\
	&= \sum_{\sigma_1,\sigma_2}\nolimits^*\sum_w \inv_w (a_{\id,\sigma_1,\sigma_2}\cdot (R_{\sigma_1,\sigma_2}|_{\Gamma_w}-\xi_{\sigma_1,\sigma_2,w}))\\
	&= \sum_{\sigma_1,\sigma_2}\nolimits^* a_{\id,\sigma_1,\sigma_2}\cdot \sum_w  \inv_w (R_{\sigma_1,\sigma_2}|_{\Gamma_w}-\xi_{\sigma_1,\sigma_2,w})\\
	&= \sum_{\sigma_1,\sigma_2 \in \Gamma^*, \sigma_1 \neq \sigma_2}a_{\id,\sigma_1,\sigma_2}\cdot [\delta_1,\delta_2^{\sigma_1},\delta_3^{\sigma_2}]_n=R_{{\bar \partial(b)}}[\delta_1,\delta_2,\delta_3],
	\end{align*}
	concluding the proof.
	
	\subsubsection{Proof of Proposition \ref{PropVariation}}\label{SSSec1035}
	
	We need some additional notation before going to the heart of the proof.
	
	Let $\bar Q:=Q \otimes_K \oK=\Spec \oK[y_{\sigma}^{\pm1}]_{\sigma \in \Gamma}$ and $\pi_1(\bar Q):=\Gal(Q^{ur}/\bar Q) \cong \widehat \Z [\Gamma](1)$ be its fundamental group. Galois theory provides a split short exact sequence
	\[
	1 \to \pi_1(\bar Q) \to \pi_1(Q) \to \Gamma_K \to 1.
	\]
	For $k \geq 0$, we let 
    \[
    G_n^{(k+1)}:=\pi_1(Q^{k+1})/n\pi_1(\bar Q^{k+1})\text{ and }\bar G_n^{(k+1)}:=\pi_1(\bar Q^{k+1})/n\pi_1(\bar Q^{k+1}).
    \]These corresponds via Galois Theory to the Galois groups of 
	\[
	\Spec \oK[(y_{i}^{\sigma})^{\pm\frac1n}]_{\sigma \in \Gamma,0 \leq i \leq k}/Q^{k+1}, \quad \Spec \oK[(y_{i}^{\sigma})^{\pm\frac1n}]_{\sigma \in \Gamma,0 \leq i \leq k}/\bar Q^{k+1},
	\]
	respectively. By Galois theory, we have $G_n^{(k+1)}= \bar G_n^{(k+1)} \rtimes \Gamma_K$ and $\bar G_n^{(k+1)} = \bar G_n^{[0]} \oplus \cdot \oplus \bar G_n^{[k]}$, where each $\bar G_n^{[j]}$ denotes the Galois group of
	\[
	\Spec \oK[(y_{i}^{\sigma})^{\pm\frac1n}]_{\sigma \in \Gamma,0 \leq i \leq k}/\Spec \oK[y_j^{\pm1},(y_{i}^{\sigma})^{\pm\frac1n}]_{\sigma \in \Gamma,0 \leq i \leq k,\, i\neq j}.
	\]

	\vskip1mm
	
	For each subset $J \subset \{0,\ldots,k\}$, we let $\bar G_n^{[J]}:=\oplus_{j \in J}\bar G_n^{[j]}$ and $G_n^{[J]}:=\bar G_n^{[J]} \rtimes \Gamma_K$.
	
	\vskip1mm
	
	Recall that we are assuming that $\beta$ lies in the image of the embedding \linebreak $Z^3(G_n,\mu_n) \to Z^3(\pi_1(Q),\mu_n)$. 
	We shall prove a slightly stronger statement than Proposition \ref{PropVariation}:
	\begin{proposition}\label{PropVariation2}
		Viewing $\beta$ as an element of $Z^3(G_n,\mu_n)$, the identity
		\[
		\sum_{\mathbf v \in \{0,1\}^3}m_{\mathbf v}^*(\beta) = - \sum_{\sigma_1,\sigma_2 \in \Gamma}\cores_{E/K}(a_{\id,\sigma_1,\sigma_2}(\beta)\cdot (y_1 \cup y_2^{\sigma_1}\cup y_3^{\sigma_2})_n)+\d \delta
		\]
		holds in $Z^3(G_n^{(4)},\mu_n)$ for some $\delta \in C^2(G_n^{(4)},\mu_n)$ restricting trivially to $C^2(G_n^{(I)},\mu_n)$ for $I=\{0,1,2\},\{0,1,3\}$ and $\{0,2,3\}$.
	\end{proposition}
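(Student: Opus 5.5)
The plan is to reduce to an identity of group cochains on the finite-type group $G_n^{(4)}=\bar G_n^{(4)}\rtimes\Gamma_K$ and then treat $\beta$ through its decomposition given by Nakaoka's splitting. By Lemma \ref{LemTwoPoints} and the choice made in \ref{SSSec1031}, after modifying $\beta$ by a coboundary (which is harmless, since $m_{\mathbf v}^*$ of a coboundary is a coboundary, and the alternating sum of such coboundaries restricts trivially to each $G_n^{(I)}$ once we choose the primitive compatibly), I will write $\beta$ as a sum $\beta=\beta_{3}+\beta_{\le 2}$. Here $\beta_3$ is an explicit cocycle lifting the ``cup'' part $\sum a_{\sigma_1,\sigma_2,\sigma_3}(y^{\sigma_1}\cup y^{\sigma_2}\cup y^{\sigma_3})_n$ of $E_2^{0,3}$; I plan to realize it as $\cores_{E/K}$ of $\sum_{\sigma_1,\sigma_2}a_{\id,\sigma_1,\sigma_2}(y\cup y^{\sigma_1}\cup y^{\sigma_2})_n$ at the cochain level. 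The remaining part $\beta_{\le2}$ comes from the terms $E_2^{p,q}$ with $q\le 2$, built from products of at most two characters $y^{\sigma}$ with cochains pulled back from $\Gamma_K$.

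The key observation is a polynomial-degree count under the maps $m_{\mathbf v}$. A homomorphism $m_{\mathbf v}:G_n^{(4)}\to G_n$ sends each character $y^\sigma$ to $y_0^\sigma+\sum_i v_iy_i^\sigma$, which is affine-linear in $\mathbf v$. A cochain that is multilinear of degree at most $2$ in the $\bar G_n$-characters therefore pulls back to something of degree at most $2$ in $\mathbf v$, and the triple alternating sum $\sum_{\mathbf v}(-1)^{|\mathbf v|}$ annihilates it. This holds exactly at the cochain level for cup-product cochains, and up to an explicit coboundary otherwise. So $\beta_{\le2}$ contributes only $\d\delta$. For $\beta_3$, the triple alternating sum of $(y^{\sigma_1}\cup y^{\sigma_2}\cup y^{\sigma_3})\circ m_{\mathbf v}$ extracts precisely the terms using one variable from each of $y_1,y_2,y_3$ in some order. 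Summing over the $S_3$ orderings and using alternation, these combine to the stated corestriction of $(y_1\cup y_2^{\sigma_1}\cup y_3^{\sigma_2})_n$, with the sign determined by the convention for $m_{\mathbf v}^*$. Cup products are not graded-commutative on the nose, so reordering the factors introduces further coboundaries, which I absorb into $\delta$.

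The vanishing of $\delta$ on $G_n^{(I)}$ for $I=\{0,i,j\}$ needs to be built into the construction. On $G_n^{(I)}$ the variable $y_l$ with $l\notin I$ is identically trivial, so $m_{\mathbf v}$ restricted there depends only on $(v_i,v_j)$, and the triple alternating sum is identically $0$ as a cochain. I will therefore choose each coboundary-correction term (the homotopies for reordering cup products, and the primitives for $\beta_{\le2}$) by a natural, functorial formula, namely the explicit homotopy formulas such as \eqref{Homotopy}. These formulas are also alternating sums over $\mathbf v$ of expressions functorial in $m_{\mathbf v}$, so they vanish on restriction to $G_n^{(I)}$ by the same cancellation.

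The main obstacle I expect is the cochain-level treatment of $\beta_{\le2}$ and of the $\Gamma_K$-twisted parts: the terms in $E_2^{p,q}$ with $p>0$ involve the semidirect product structure, and the conjugation action of $\Gamma_K$ permutes the $y^\sigma$. To handle this, I will first pass to the split description $G_n=\Ind$-type wreath product used in Nakaoka's Theorem \ref{ThmNakaoka}, where explicit cocycle representatives exist. Then I will verify the degree-$\le 2$ cancellation via a Shapiro/corestriction reduction to $\Gamma_E$, where the characters become honest homomorphisms.
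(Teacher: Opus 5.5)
Your proposal has a concrete error in the construction of $\beta_3$, and the step you flag as the main obstacle is a genuine gap that the paper avoids entirely.

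\textbf{The lift of the cup part is off by a factor of $6$.} The subgroup $\bar G_n$ is normal in $G_n$ and contained in $\bar G_n\rtimes\Gamma_E$, so in cohomology $\operatorname{res}_{\bar G_n}\circ\cores_{E/K}=\sum_{\rho\in\Gamma}\rho^*$. Hence your $\beta_3$ restricts to $\sum a_{\tau_1,\tau_2,\tau_3}(\beta)\,(y^{\tau_1}\cup y^{\tau_2}\cup y^{\tau_3})_n$, summed over \emph{all ordered} triples of distinct $\tau_i\in\Gamma$, via $(\tau_1,\tau_2,\tau_3)=(\rho,\sigma_1\rho,\sigma_2\rho)$. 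Both $a_\tau$ and the cup product of degree-one classes are alternating, so this is $6$ times the image $\sum_{\tau_1<\tau_2<\tau_3}a_{\tau_1,\tau_2,\tau_3}(\beta)(y^{\tau_1}\cup y^{\tau_2}\cup y^{\tau_3})_n$ of $\beta$. In the relevant case $n$ is even (since $\mu_2\subset E$), so $6$ is not a unit. Summing over orbit representatives does not repair this for $\Gamma$-orbits of $3$-subsets with stabilizer of order $3$ (cosets of subgroups of order $3$), where each triple is counted three times. Consistently, your $S_3$-bookkeeping, done correctly, returns six times the corestriction in the statement. So $\beta-\beta_3$ does not lie in $\ker\bigl(H^3(G_n,\mu_n)\to H^3(\bar G_n,\mu_n)\bigr)$, and the degree count has nothing to act on. The identity in the statement works only because the corestriction is taken on $G_n^{(4)}$. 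There $y_1,y_2^{\sigma_1},y_3^{\sigma_2}$ are distinct variables, so each ordered triple is counted exactly once; no analogous explicit cocycle exists on a single copy of $Q$.

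\textbf{The treatment of $\beta_{\le 2}$ is a real gap.} Exact cancellation under $\sum_{\mathbf v}(-1)^{|\mathbf v|}m_{\mathbf v}^*$ needs representatives of the summands $H^p(\Gamma_K,H^{3-p}(\bar G_n,\mu_n))$, $p\ge1$, that are polynomial of degree $\le2$ in the coordinates of the crossed homomorphism $G_n\to\bar G_n$. Several things stand in the way:
\begin{itemize}
\item Nakaoka's splitting (Lemma~\ref{LemSD}) is built from the resolution $(C^\bullet)^{\otimes Y}$ and a chosen section, not from inhomogeneous cochains of that shape.
\item Carry (Bockstein) cocycles coming from the factors $H^2(\bar G_{n,\sigma},\Z/n\Z)$ are not of low degree at the cochain level.
\item A Shapiro reduction to $\Gamma_E$ does not cover $H^2(\bar G_n,\mu_n)$, which is not induced from $\Gamma_E$: unordered pairs $\{\sigma,\sigma\tau\}$ with $\tau^2=\id$ have stabilizer of order $2$ (cf.\ the proof of Proposition~\ref{Lem5}).
\end{itemize}

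\textbf{The missing idea.} You never need cochain-level control of $\beta$ beyond its image in $H^3(\bar G_n,\mu_n)$.
\begin{itemize}
\item The multilinear computation on that image shows that the class of $\sum_{\mathbf v}(-1)^{|\mathbf v|}m_{\mathbf v}^*\beta$ minus the corestriction term vanishes in $H^3(\bar G_n^{(4)},\mu_n)$.
\item By Theorem~\ref{ThmNakaoka} and K\"unneth, $\ker\bigl(H^3(G_n^{(4)},\mu_n)\to H^3(\bar G_n^{(4)},\mu_n)\bigr)=\bigoplus_{p\ge1}H^p(\Gamma_K,H^{3-p}(\bar G_n^{(4)},\mu_n))$. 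Each piece involves at most two of the four factors, so this kernel is detected by restriction to the $G_n^{[J]}$ with $\#J=2$.
\item On those subgroups both terms already vanish as cochains. This is your own observation that $m_{\mathbf v}$ there is independent of some $v_l$, together with the fact that $y_1\cup y_2^{\sigma_1}\cup y_3^{\sigma_2}$ involves all three variables.
\item Finally, instead of building every homotopy functorially in $m_{\mathbf v}$, replace an arbitrary primitive $\delta$ by $\sum_{0\in J}(-1)^{\#J}pr_J^*(\delta|_{G_n^{[J]}})$. This is still a primitive and restricts to zero on each $G_n^{(I)}$.
\end{itemize}
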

	
	\begin{proof}
		We divide the proof in three steps: we first prove that the two sides reduce to the same class in $H^3(\bar G_n^{(4)},\mu_n)$, then that they reduce to the same class in $H^3(G_n^{(4)},\mu_n)$, and finally that we may find the primitive $\delta$ for the difference satisfying the sought vanishings.
		
		\vskip1mm
		
		{\em Step 1.} Choose a total ordering $<$ of $\Gamma$. The image of $\beta$ in 
		$H^3(\bar G_n,\mu_n)$ lies in $H^3(\bar G_n,\mu_n)_{cup}$, and so we may write it as 
		\[
		\sum_{\sigma_1 < \sigma_2 < \sigma_3} a_{\sigma_1,\sigma_2,\sigma_3}(\beta)\cdot (y^{\sigma_1} \cup y^{\sigma_2} \cup y^{\sigma_3})_n.
		\]
		In this proof, all cup-products shall denote cup-products of order $n$, so we drop the corresponding subscript from notation.
		For a fixed triple $\sigma_1 < \sigma_2 < \sigma_3$, the triple variation of $y^{\sigma_1} \cup y^{\sigma_2} \cup y^{\sigma_3}$ in $H^3(\bar G_n^{(4)},\mu_n)$ is:
		\begin{align*}
		&\sum_{\mathbf v \in \{0,1\}^3}(-1)^{|\mathbf v|}\cdot m_{\mathbf v}^*(y^{\sigma_1} \cup y^{\sigma_2} \cup y^{\sigma_3})=\sum_{\mathbf v \in \{0,1\}^3} (-1)^{|\mathbf v|}\cdot (y_0^{\sigma_1} + v_1 y_1^{\sigma_1} +v_2y_2^{\sigma_1}+v_3y_3^{\sigma_1}) \\ 
		&\phantom{\sum_{a}}\cup (y_0^{\sigma_2} + v_1 y_1^{\sigma_2} +v_2y_2^{\sigma_2}+v_3y_3^{\sigma_2}) \cup (y_0^{\sigma_3} + v_1 y_1^{\sigma_3} +v_2y_2^{\sigma_3}+v_3y_3^{\sigma_3}) \\ 
		&=\sum_{\mathbf v \in \{0,1\}^3} \sum_{{h}:\{\sigma_1,\sigma_2,\sigma_3\} \to \{0,\ldots,4\}}(-1)^{|\mathbf v|}\cdot v_{{h}(\sigma_1)}v_{{h}(\sigma_2)}v_{{h}(\sigma_3)} \cdot y_{{h}(\sigma_1)}^{\sigma_1}\cup y_{{h}(\sigma_2)}^{\sigma_2}\cup y_{{h}(\sigma_3)}^{\sigma_3},
		\end{align*}
		where the last internal sum is over all functions $h$ as in the index, and in the summand we adopt the convention $v_0:=1$. Swapping the sums we get:
		\begin{align*}
		&=\sum_{{h}:\{\sigma_1,\sigma_2,\sigma_3\} \to \{0,\ldots,4\}} \left(\sum_{\mathbf v \in \{0,1\}^3} (-1)^{|\mathbf v|}\cdot v_{{h}(\sigma_1)}v_{{h}(\sigma_2)}v_{{h}(\sigma_3)} \right) \cdot y_{{h}(\sigma_1)}^{\sigma_1}\cup y_{{h}(\sigma_2)}^{\sigma_2}\cup y_{{h}(\sigma_3)}^{\sigma_3},
		\end{align*}
		Note that the internal sum vanishes if the image of ${h}$ is different than $\{1,2,3\}$ since when this is the case, there exists $i \in \{1,2,3\}\s \im {h}$ and then the summands with $v_i=0$ cancel out those with $v_i=1$. On the other hand, when the image of $h$ is $\{1,2,3\}$, the internal sum becomes $-1$. Since the functions ${h}:\{\sigma_1,\sigma_2,\sigma_3\} \to \{0,\ldots,4\}$ with image $\{1,2,3\}$ are permuted on transitively and faithfully by the $S_3$-action on $\{1,2,3\}$, the sum above then becomes:
		\[
		- \sum_{s \in S_3} y_{s(1)}^{\sigma_1} \cup y_{s(2)}^{\sigma_2} \cup y_{s(3)}^{\sigma_3} = -\sum_{s \in \operatorname{Sym}(\sigma_1,\sigma_2,\sigma_3)} (-1)^{\sgn(s)}  \cdot y_1^{s(\sigma_1)} \cup y_2^{s(\sigma_2)} \cup y_3^{s(\sigma_3)}.
		\]
		The image of the triple variation $\sum_{\mathbf v \in \{0,1\}^3}m_{\mathbf v}^*(\beta)$ of $\beta$ in $H^3(\bar G_n^{(4)},\mu_n)$ is then the weighted sum of the expression above over all triples $\sigma_1 < \sigma_2 < \sigma_3$, with weights $a_{\sigma_1,\sigma_2,\sigma_3}(\beta)$, which is just:
		\[
		-\sum_{\sigma_1,\sigma_2,\sigma_3\, \in \Gamma} a_{\sigma_1,\sigma_2,\sigma_3}(\beta) \cdot (y^{\sigma_1} \cup y^{\sigma_2} \cup y^{\sigma_3})_n,
		\]
		finishing the computation of image of the left hand side in $H^3(\bar G_n,\mu_n)$. 
		
		The opposite of the right hand side is 
		\begin{multline*}
			\sum_{\sigma_1,\sigma_2}\cores_{E/K}(a_{\id,\sigma_1,\sigma_2}(\beta)\cdot (y \cup y^{\sigma_1}\cup y^{\sigma_2})_n)= \sum_{\sigma_1,\sigma_2,\rho}a_{\id,\sigma_1,\sigma_2}(\beta)^{\rho}\cdot (y \cup y^{\sigma_1}\cup y^{\sigma_2})_n^{\rho}\\
			=\sum_{\sigma_1,\sigma_2,\rho} a_{\rho,\sigma_1\rho,\sigma_2\rho}(\beta)\cdot (y^{\rho} \cup y^{\sigma_1\rho}\cup y^{\sigma_2\rho})_n
			=\sum_{\sigma_1,\sigma_2,\sigma_3} a_{\sigma_1,\sigma_2,\sigma_3}(\beta) \cdot (y^{\sigma_1} \cup y^{\sigma_2} \cup y^{\sigma_3})_n
		\end{multline*}
		and coincides with the left hand side as wished.
		
		\vskip1mm
		
		{\em Step 2.} By the point above, the sum 
        \[
        \sum_{\mathbf v \in \{0,1\}^3}m_{\mathbf v}^*(\beta) + \sum_{\sigma_1,\sigma_2 \in \Gamma}\cores_{E/K}(a_{\id,\sigma_1,\sigma_2}(\beta)\cdot (y \cup y^{\sigma_1}\cup y^{\sigma_2})_n) 
        \]
        lies in the kernel of $H^3(G_n^{(4)},\mu_n) \to H^3(\bar G_n^{(4)},\mu_n)$.
		By Nakaoka's theorem, this kernel decomposes as 
		\[
		\bigoplus_{i=0,1,2}H^i(\Gamma_K,H^{2-i}(\bar G_n^{(4)},\mu_n)).
		\]
		Write $\bar G_n^{(4)}=\bar G_n^{[0]} \oplus \bar G_n^{[1]} \oplus \bar G_n^{[2]} \oplus \bar G_n^{[3]}$, where each $\bar G_n^{[i]}$ is a copy of $\bar G_n$. Using K\"unneth's formula, the sum above decomposes as
		\[
		\bigoplus_{i+j_1+\ldots+j_4=2} H^i\left(\Gamma_K,(\Z/n\Z)(1) \otimes_{\Z/n\Z}\bigotimes_{k=0,1,2,3} H^{j_k}(\bar G_n^{[k]},\Z/n\Z)\right).
		\]
		Since in the latter tensor product, for any fixed string $(i,j_1,\ldots,j_4)$, at most two factors appear, it is clear that any element that restricts to zero in $H^3(G_n^{[J]},\mu_n)$ for all subgroups $J \subset \{0,1,2,3\}$ of cardinality $2$ must be zero. Such are both $\sum_{\mathbf v \in \{0,1\}^3}m_{\mathbf v}^*(\beta)$ and $ \sum_{\sigma_1,\sigma_2 \in \Gamma}\cores_{E/K}(a_{\id,\sigma_1,\sigma_2}(\beta)\cdot (y \cup y^{\sigma_1}\cup y^{\sigma_2})_n)$, and thus so is their sum.
		
		\vskip1mm
		
		{\em Step 3.} Let $\delta$ be any primitive of 
		\[
		(\star)= \sum_{\mathbf v \in \{0,1\}^3}m_{\mathbf v}^*(\beta)+ \sum_{\sigma_1,\sigma_2 \in \Gamma}\cores_{E/K}(a_{\id,\sigma_1,\sigma_2}(\beta)\cdot (y \cup y^{\sigma_1}\cup y^{\sigma_2})_n).
		\]
		For each subset $J \subset \{0,1,2,3\}$ with $0 \in J$, we define
		\[
		\delta_J = pr_J^*(\delta)|_{G_n^{[J]}},
		\]
		where $pr_J:G_n^{(4)} \to G_n^{[J]}$ denotes the natural projection. Note that 
		\[
		\d \delta = \sum_{\mathbf v \in \{0,1\}^3}m_{\mathbf v}^*(\beta)+ \sum_{\sigma_1,\sigma_2 \in \Gamma}\cores_{E/K}(a_{\id,\sigma_1,\sigma_2}(\beta)\cdot (y \cup y^{\sigma_1}\cup y^{\sigma_2})_n)
		\]
		restricts trivially to $G^{[J]}_n$ for all proper subgroups $J \subset \{0,1,2,3\}$ with $0 \in J$. Thus 
		\[
		\d \left(\sum_{J}(-1)^{\# J }\delta_J \right) = \d \delta_{\{0,1,2,3\}} = \d \delta
		\]
		and so $\sum_{J}(-1)^{\# J }\delta_J $ is also a primitive of $(\star)$. Replacing $\delta$ with other primitive proves the statement, since this sum restricts trivially to $G_I$ for all $I$ as in the statement.  
	\end{proof}
	
	\section{Proof of Main Theorem}\label{Sec12}
	
	We conclude here the proof of Theorem \ref{Thm: fibrationnew}, and thus also of Theorem \ref{Thm: fibration}, by proving the following:
	\begin{theorem}\label{Thm: fibrationreduced}
		Let $f:X \to Q$ be a smooth projective morphism with rationally connected fibers, with $Q$ a quasi-trivial torus. Further assume that $(R_1)-(R_4)$ hold.
		Then 
		\begin{equation}\label{Eq:fibration2}
		X(K_{\Omega})_{dir}^{\Br_{ur} X} = \overline{\bigcup_{q \in Q(K)} X_q(\A_K)_{\bullet}^{\Br X_q}}. \tag{$\star$}
		\end{equation}
	\end{theorem}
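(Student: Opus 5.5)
The inclusion ``$\supseteq$'' is formal, as in the proof of Theorem \ref{Prop:HWbasechange}. For ``$\subseteq$'', fix a point of $X(K_{\Omega})_{dir}^{\Br_{ur} X}$, a large finite set $S$ of places, and neighbourhoods $U_v$ ($v\in S$ finite) and $U_\infty$. By Theorem \ref{Surj}(iii), $\Br_{\text{hor}}(X/Q)$ is finite, and by $(R_2)$ it has exponent dividing $n$. Using $(R_4)$, fix the multi-section $s:Q^{L,n,(1)}\to X$ (enlarging $L$ as in Theorem \ref{Thm:3variation}) and a finite subgroup $B\subset(\Br X)_+$ surjecting onto $\Br X_\eta/f^*\Br\eta$ with $s^*B=0$. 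Enlarge $S$ so that every requirement of Sections \ref{Sec10}--\ref{Sec11} holds for $B$ and for a finite set of representatives of $\Br_{\text{hor}}(X/Q)$.

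\textbf{Step 1 (base approximation).} Project the local points to $Q$ and shrink the $U_v$ so that the projections lie in $Q^\sharp(K_S)$. Then check orthogonality to $B'_L$. The classes of type (1)--(3) pull back to $\Br X$, and those of type (4) can be compared with elements of $(\Br X)_+$ via Theorem \ref{Teo3}. Since the point is orthogonal to $\Br_{ur}X$ and to $B$ (I expect a standard argument, possibly after approximating by points where the relevant local invariants are locally constant), the projection lies in $Q^\sharp(K_S)^{B'_L}$.

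\textbf{Step 2 (grid construction).} Apply Theorem \ref{Thm:grid} with $k=3$ if $3\nmid n$, and otherwise some $k\ge3$ coprime to $n$, working only with the first three coordinates. Take $M$ large and prescribe a triple Redéi variation $g$, chosen below. This yields an $(L,n)$-liftable prime-variational grid $\cM$ with vanishing half-spin symbols that lies in $f(U_v)$ and $f(U_\infty)$. Section \ref{Sec10} then gives adelic points $(P_v(q))\in X_q(\A_K)$ that approximate the given point. By Proposition \ref{Proprop}, together with vanishing half-spin symbols and $s^*B=0$, the pairing with $b\in B$ reduces to $\sum_{v\in S}\inv_v(b(P_v(q)))$, which equals $0$ by local constancy and orthogonality to $B$. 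Hence every $(P_v(q))$ is orthogonal to $r(\Br X_\eta)=r(B)$.

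\textbf{Step 3 (Smith).} By $(R_3)$, the quotient $A:=\Br_{\text{hor}}(X/Q)/r(\Br X_\eta)$ embeds via $\bar\partial$ into $H^3(\bar Q,\G_m)^{\Gamma_K}[n]$. Define $F:\{1,\dots,M\}^3\to A^\vee=\Hom(A,\qz)$ by sending $q$ to the functional $b\mapsto(b,(P_v(q)))_{BM}$. This is well defined on $A$ by Step 2, and through the specialization isomorphism of Theorem \ref{Surj}(ii) on a Hilbertian set (which must be imposed on the grid via Chebotarev-type choices) it controls $\Br X_q$. Applying Theorem \ref{Thm:3variation} to each elementary sub-cube gives
\[
\Delta^{(3)}F(a,b,c)=\bigl(b\mapsto R_{\bar\partial(b)}[\delta_{1,a},\delta_{2,b},\delta_{3,c}]_n\bigr).
\]
This expression is linear in the prescribed Redéi variation $g(a,b,c)$. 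The map
\[
\Fun_\Gamma(\Gamma^3\setminus\Delta_3,(\Z/n\Z)(2))\to A^\vee
\]
induced by the $a_{\sigma_1,\sigma_2,\sigma_3}(\bar\partial b)$ is surjective, since $\bar\partial$ is injective on $A$ and the coefficients detect $\bar\partial b$. So by lifting the function from Lemma \ref{Lem:Smith0} we can choose $g$ so that $\Delta^{(3)}F$ equals Smith's function. Then $F$ is surjective, and in particular some $q\in\cM$ has $F(q)=0$, so $(P_v(q))\in X_q(\A_K)^{\Br X_q}$.

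The \textbf{main obstacle} is making the dictionary in Step 3 rigorous. Three things must be checked. First, the coefficient map must really surject onto $A^\vee$, including the Tate-twist bookkeeping and $\Gamma$-equivariance. Second, the grid points must land in the Hilbert set where specialization of $\Br_{\text{hor}}$ is an isomorphism. Third, the elementary cubes of $\cM$ must satisfy the cube hypotheses of Theorem \ref{Thm:3variation}, namely $(L,n)$-liftability of sub-cubes and compatibility of the lifts. I would attack the first point through the basis in \eqref{defcoeffs}, pairing coefficient functions with Redéi-valued functions.
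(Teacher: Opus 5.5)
Your architecture is the paper's: Corollary~\ref{ThmTripleVarCustom} (base approximation, Theorem~\ref{Thm:grid} with a prescribed Redéi variation, the lifts of Section~\ref{Sec10}, Proposition~\ref{Proprop} for $r(\Br X_\eta)=r(B)$, Theorem~\ref{Thm:3variation}, and surjectivity of the dual coefficient map via $(R_3)$), followed by Smith's lemma.

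\textbf{The gap is in Step~1, and in the last sentence of Step~2, which relies on it.} Orthogonality of your point to $\Br_{ur}X$ gives no control over the finite sums $\sum_{v\in S}\inv_v(\beta(P_v))$ for $\beta\in\mathcal{B}:=f^*B'_L+B\subset\Br f^{-1}(Q^\sharp)$. Yet (H1)--(H4) and the vanishing claimed in Step~2 are exactly statements about these sums. There are two reasons this fails.
\begin{itemize}
\item These classes are in general ramified on a smooth compactification of $X$. Since $X$ is proper over $Q$ but not over $K$, the inclusion $f^*B_L\subset\Br X$ does not place them in $\Br_{ur}X$. Elements of $B\subset(\Br X)_+$ also have residues along $f^{-1}(Z_\tau)$. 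So your point is simply not known to be orthogonal to $B$.
\item Even for unramified classes, orthogonality constrains the sum over \emph{all} places. Outside $S$, however, the points $P_v(q)$ of Section~\ref{Sec10} are new points, not the original ones. Local constancy near $S$ cannot make an $S$-partial sum vanish. Already for $X=Q$, where $\Br_{ur}X=\Br K$ imposes nothing, (H2) fails for a general $(q_v)_{v\in S}$.
\end{itemize}

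\textbf{The missing ingredient is Harari's formal lemma,} which is how the paper begins the proof of Corollary~\ref{ThmTripleVarCustom}.
\begin{itemize}
\item First move the $P_v$, $v\in S$, slightly into $f^{-1}(Q^\sharp)$. The point is then orthogonal to $\mathcal{B}\cap\Br_{ur}X$.
\item The formal lemma gives a finite $S_1\supseteq S$ and points $P_v\in f^{-1}(Q^\sharp)(K_v)$ for $v\in S_1\setminus S$ such that $\sum_{v\in S_1}\inv_v(\beta(P_v))=0$ for all $\beta\in\mathcal{B}$.
\item Replace $S$ by $S_1$; all the largeness requirements of Sections~\ref{Sec9}--\ref{Sec11} survive this enlargement.
\end{itemize}
Step~1 then becomes (H1)--(H4) via the corestriction formula. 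Step~2 follows from local constancy; at real places this means constancy on connected components of $f^{-1}(Q^\sharp)(K_v)$, which is why the direction approximation is needed.

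\textbf{The three obstacles you list are not where the difficulty lies.}
\begin{itemize}
\item \emph{Coefficient map.} $B_{tr}$ is $n$-torsion by $(R_2)$. The map $H^3(\bar Q,\mu_n)\to H^3(\bar Q,\G_m)[n]$ is an isomorphism because $\Br\bar Q$ is divisible. $H^3(\bar Q,\mu_n)$ is free on the triple cup-products. Hence $B_{tr}\to\Fun_\Gamma(\Gamma^3\setminus\Delta_3,(\Z/n\Z)(-2))$ is injective, and dualizing gives the surjection, exactly as in the paper.
\item \emph{Sub-cubes.} Elementary sub-cubes of an $(L,n)$-liftable grid are themselves $(L,n)$-liftable. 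The multiplicative lifts of Lemma~\ref{LemCompatibleLifts} restrict to compatible lifts on each sub-cube.
\item \emph{Hilbert set.} The paper uses this without comment in its final line. It requires no extra Chebotarev choice. The cover of Lemma~\ref{Lem10} is dominated by some $Q^{F,m}$ with $E(\mu_m)\subset F$. Fix a grid element $q$ and one of its prime factors, which has degree one, lies outside $S$, and is unramified in $F$ once $S$ is enlarged. The valuations of the $q^\sigma$ at the conjugates of that prime form a permutation matrix. Hence $F[t_\sigma]/(t_\sigma^m-q^\sigma)$ is a field, as in Lemma~\ref{Lem22}.
\end{itemize}
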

	
	Let $B_{tr} \coloneqq \Coker(\Br X_{\eta} \to[r] \Br_{\text{hor}}(X/Q))$. We start with the following:
	
	\begin{corollary}[of Theorems \ref{Thm:grid} and \ref{Thm:3variation}]\label{ThmTripleVarCustom}
		In the setting of Theorem \ref{Thm: fibrationreduced}, for a sufficiently large set of places $S$, the following holds. For every collection of local points $(P_v)_{v \in S} \in X(K_S)$, and every function $g:\{1,\ldots,M-1\}^3 \to B_{tr}^D$, there exists a grid of parameters
		\[
		\mathcal M=\{q_0\} \cdot \{ q_{1,1}, \cdots, q_{1,M} \} \cdot \{ q_{2,1}, \cdots, q_{2,M} \} \cdot \{q_{3,1}, \cdots, q_{3,M} \} \subset Q(K)
		\]
		and adelic points $(P_v(q)) \in X_q(\A_K)^{r(\Br X_{\eta})}, q \in \cM$, each approximating $(P_v)_{v \in S}$ arbitrarily well at finite places and lying on the same connected of $f^{-1}(Q^{\sharp})(K_v)$ as $P_v$ for real $v$, such that
		\[
		\Delta^{(3)}(-,(P_v(q)))_{BM}=g 
		\]
		as elements of $\Fun(\{1,\ldots,M-1\}^3, B_{tr}^D)$.
	\end{corollary}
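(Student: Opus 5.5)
We combine the grid construction of Theorem \ref{Thm:grid} with the triple variation formula of Theorem \ref{Thm:3variation}. The plan has three stages: translate $g$ into a prescribed Redéi variation, build the grid and its adelic lifts, and then read off the triple variation of the pairing.

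\emph{Setting up.} By $(R_2)$ every $b \in \Br_{\text{hor}}(X/Q)$ is $n$-torsion. By $(R_4)$ we have a multisection $s:Q^{\oK,n,(1)}\to X$ and a group $B\subset(\Br X)_+$ surjecting onto $\Br X_\eta/f^*\Br\eta$ with $s^*B=0$. We take $L$ large enough that $s$ descends to $Q^{L,n,(1)}$ (Theorem \ref{Thm:3variation}), and we take $S$ as in Section \ref{Sec10} and in Theorem \ref{Thm:3variation}.

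\emph{Translating $g$.} We view the pairing $b\mapsto (b,(P_v(q)))_{BM}$ restricted to $\Br_{\text{hor}}$ as a function on $\cM$. Once the points are orthogonal to $r(\Br X_\eta)$, it descends to an element of $B_{tr}^D$. By $(R_3)$, the map $\bar\partial$ induces an injection $B_{tr}\hookrightarrow H^3(\bar Q,\G_m)^{\Gamma_K}$, and its image consists of $n$-torsion classes whose coefficients $a_{\sigma_1,\sigma_2,\sigma_3}$ (restricted to the cup part, Proposition \ref{Propcreationalphabeta}) depend only on $\bar\partial(b)$. The map
\[
\Fun_\Gamma(\Gamma^3\s\Delta_3,(\Z/n\Z)(2)) \to B_{tr}^D,\qquad h \mapsto \Bigl(b\mapsto \sum_{[\sigma]\in\Gamma^3/\Gamma}a_{\sigma}(\bar\partial b)\,h(\sigma)\Bigr)
\]
is the dual of the injection $b\mapsto (a_\sigma(\bar\partial b))_\sigma$, so it is surjective. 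We therefore lift $g$ pointwise to some $\tilde g:\{1,\ldots,M-1\}^3\to\Fun_\Gamma(\Gamma^3\s\Delta_3,(\Z/n\Z)(2))$.

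\emph{Building the grid.} We choose $q_v:=f(P_v)\in Q^\sharp(K_S)$, after a small perturbation of the $P_v$. We need $(q_v)\in Q^\sharp(K_S)^{B'_L}$, and this is the main obstacle. I would first try to show that each generator of $B'_L$ pulls back to $X$ as an element of $f^*B'_L\subset \Br f^{-1}(Q^\sharp)$ pairing trivially with $(P_v)$, and then move the $q_v$ using the freedom in the $T$-action. The alternative is to enlarge $L$ and argue, via Lemma \ref{LemBMidele} and a Harpaz--Wittenberg type descent, that one may replace $(P_v)$ by a nearby point satisfying these conditions. Here the assumption $(P_v)\in X(K_\Omega)^{\Br_{ur}X}$ of the eventual theorem would really be used; this point may require restricting to such $(P_v)$.

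Granting this, Theorem \ref{Thm:grid} with $k=3$ gives an $(L,n)$-liftable prime-variational grid $\cM$ with triple Redéi variation $\tilde g$ and trivial half-spin symbols. Section \ref{Sec10} then provides adelic lifts $P_v(q)$: approximating $P_v$ at finite places, lying in the right real components by the direction approximation and the $\R_{>0}$-lift used in Proposition \ref{Proprop}'s remark, and given by $s$ at bad places.

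\emph{Orthogonality and reading off the variation.} By Proposition \ref{Proprop} applied to $b\in B$, the pairing equals $\sum_{v\in S}\inv_v b(P_v(q))+\sum_\tau a_\tau \operatorname{hs}(q)$. The half-spin term vanishes, and the local terms at $S$ are constant, equal to $\sum_{v\in S}\inv_v b(P_v)$ by continuity, which is $0$ for unramified $b$. For general $B$ this needs the approximation of $P_v$ on $f^{-1}(Q^\sharp)$-components. This gives orthogonality to $r(\Br X_\eta)$.

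Finally, each $2\times2\times2$ subcube of $\cM$ is an $(L,n)$-liftable prime-variational cube, with $\delta_i=q_{i,a+1}q_{i,a}^{-1}$. Theorem \ref{Thm:3variation} then identifies the triple variation at $(a,b,c)$ with $R_{\bar\partial(b)}[\delta_1,\delta_2,\delta_3]_n$, which is the image of $\tilde g(a,b,c)$ under the surjection above, namely $g(a,b,c)$.
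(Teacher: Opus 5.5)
Your architecture is the paper's. You use $(R_3)$ to embed $B_{tr}$ into $H^3(\bar Q,\G_m)^{\Gamma_K}$, dualize the injective coefficient map to lift $g$ pointwise, and build the grid with Theorem \ref{Thm:grid} for $k=3$. You then read off each $\Delta^{(3)}$-value from Theorem \ref{Thm:3variation} on the $2\times2\times2$ subcubes, using grid-wide multiplicative lifts.

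The gap is the step you leave as ``granting this'', and it resurfaces, unacknowledged, in your orthogonality paragraph. Theorem \ref{Thm:grid} needs $\sum_{v\in S}\inv_v b(f(P_v))=0$ for all $b\in B'_L$. Orthogonality to $r(\Br X_\eta)=r(B)$ needs, via Proposition \ref{Proprop}, that $\sum_{v\in S}\inv_v b(P_v)=0$ for all $b\in B$. Neither holds for an arbitrary $(P_v)_{v\in S}$, and your suggested fixes cannot produce it. For fixed $S$ these sums are locally constant in $(P_v)$ (at real places, on components of $f^{-1}(Q^\sharp)(K_v)$). So perturbing $(P_v)$, or moving it slightly with $T$, changes nothing, while large moves destroy the approximation. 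Your remark that the sum vanishes ``for unramified $b$'' is true but does not apply. Elements of $B\subset(\Br X)_+$ are in general ramified along $f^{-1}(Z_\tau)$; this is exactly why half-spin symbols appear. The classes in $f^*B'_L$ are ramified along $f^{-1}(Z_\tau)$ or along the boundary of a compactification. So in general they are not in $\Br_{ur}X$, and the hypothesis $(P_v)\in X(K_\Omega)^{\Br_{ur}X}$ says nothing about them directly.

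The missing ingredient is Harari's formal lemma, which is how the paper opens its proof. Take a smooth compactification $X^c$ of $X$, the open $U:=f^{-1}(Q^\sharp)$, and the finite group $\mathcal B:=\langle B, f^*B'_L\rangle\subset\Br U$. The initial point is orthogonal to $\Br X^c=\Br_{ur}X\supseteq \mathcal B\cap\Br X^c$. The formal lemma then yields a finite set $S'\supseteq S$ and points $P_v\in U(K_v)$ for $v\in S'\setminus S$ such that $\sum_{v\in S'}\inv_v b(P_v)=0$ for every $b\in\mathcal B$. The $P_v$ for $v\in S$ stay arbitrarily close to the original ones, hence in the same real components.

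Replacing $S$ by $S'$ is harmless, because Section \ref{Sec10}, Theorem \ref{Thm:grid} and Theorem \ref{Thm:3variation} all apply to every sufficiently large $S$. This gives both $(f(P_v))_{v\in S}\in Q^\sharp(K_S)^{B'_L}$ and the vanishing of $\sum_{v\in S}\inv_v b(P_v(q))$ for $b\in B$. After that, the rest of your argument goes through. It also confirms your suspicion about the statement: it holds only for local families coming from $X(K_\Omega)^{\Br_{ur}X}$, after this enlargement of $S$ and of the family.
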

	\begin{proof}
		We enlarge $S$ and $(P_v)_{v \in S}$ via Harari's formal lemma and assume that the local family $(P_v)_{v \in S}$ is orthogonal 
		
		We assume that $S$ is large enough and that $(P_v)_{v \in S}$ has been modified through Harari's Formal lemma, and that the multi-section  $Q \leftarrow Q^{L,n,(1)} \to[s] X$ has been refined as in Proposition \ref{Proprop}. We also assume that $S$ has been enlarged and $s$ refined as in Theorem \ref{Thm:3variation}.
		
		The map $\Br_{\text{hor}}(X/Q) \to[\bar \partial] H^3(Q \otimes_K \oK,\G_m)^{\Gamma_K}$ has kernel $\im \Br X_{\eta}$ by assumption $(R_3)$, and so induces an injection 
		\[
		\iota: B_{tr} \hookrightarrow H^3(Q \otimes_K \oK,\G_m)^{\Gamma_K}.
		\]
		Following the discussion of subsection \ref{SSec10.1}, the homomorphism
		\[
		B_{tr} \to \Fun_{\Gamma}(\Gamma^3\s \Delta_3,(\Z/n\Z)(-2)), \quad b \mapsto a_{\sigma_1,\sigma_2,\gamma}(\iota(b))
		\]
		is also injective, where $\Delta_3 \subset \Gamma^3$ denotes the multi-diagonal. Identifying the dual of $\Fun_{\Gamma}(\Gamma^3\s \Delta_3,(\Z/n\Z)(-2))$ with $\Fun_{\Gamma}(\Gamma^3\s \Delta_3,(\Z/n\Z)(2))$, the dual of the map above is 
		\[
		\begin{matrix}
		&\Fun_{\Gamma}(\Gamma^3\s \Delta_3,(\Z/n\Z)(2)) &\to &B_{tr}^D, \\
		&c(\sigma_1,\sigma_2,\sigma_3) &\mapsto &\left(b\mapsto \sum a_{\sigma_1,\sigma_2,\sigma_3}(\iota(b))c(\sigma_1,\sigma_2,\sigma_3)\right),
		\end{matrix}
		\]
		which is thus surjective. Let $f:\{1,\ldots,M-1\}^3  \to \Fun_{\Gamma}(\Gamma^3\s \Delta_3,(\Z/n\Z)(2))$ be a lift of $g$ through this map.
		
		By Theorem \ref{Thm:grid} there exists an $(L,n)$-liftable grid
		\[
		\mathcal M=\{q_0\} \cdot \{ q_{1,1}, \cdots, q_{1,M} \} \cdot \{ q_{2,1}, \cdots, q_{2,M} \} \cdot \{q_{3,1}, \cdots, q_{3,M} \} \subset Q(K),
		\]
		whose elements approximate the collection $(f(P_v))_{v \in S}$, and on which the Redéi symbols $[\partial p, (\partial q)^{\sigma_1},(\partial r)^{\sigma_2}] (i,j,k)$ are equal to $g'(i,j,k)(\sigma_1,\sigma_2)$. Then by Theorem \ref{Thm:3variation}, for a suitable choices of adelic points $(P_v(q)) \in X_q(\A_K)$, the triple variation of the function
		\[
		\cM \to B_{tr}^D, \ \ q \mapsto (b,(P_v(q)))_{BM}
		\]
		is $g$. All the adelic points $(P_v(q))_v$ approximate $P_v, v \in S$ by construction.
	\end{proof}
	
	To conclude the argument we use the following lemma (this is Lemma \ref{Lem:Smith0} from the introduction, but we also added an explicit expression for $M_0(A)$), a baby-case of work of Alexander Smith:
	\begin{lemma}\label{LemSmith}
		Let $A$ be a finite abelian group of cardinality $a$. For every natural number $M > k\cdot  a  \log a$, there exists $g:\{1,\ldots,M-1\}^3 \to A$ such that every $f:\{1,\ldots,M\}^k \to A$ with $\Delta^{(k)}(f)=g$ is surjective.
	\end{lemma}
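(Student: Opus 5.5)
The plan is an induction on $k$; the constant $k\cdot a\log a$ should come from a coupon-collector count applied once per coordinate. For $k=1$ the statement is elementary. A function $f$ with $\Delta^{(1)}f=g$ has the form $f(x)=c+\sum_{j<x}g(j)$ with $c\in A$ arbitrary. Surjectivity is invariant under translation by $c$, so it is enough to choose $g$ whose partial sums $\sum_{j<x}g(j)$, $x=1,\ldots,M$, visit every element of $A$. This is possible as soon as $M\geq a$: enumerate $A=\{0=a_1,\ldots,a_a\}$ and put $g(j)=a_{j+1}-a_j$, padding with zeros.

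For the inductive step, write $\{1,\ldots,M\}^k=\{1,\ldots,M\}^{k-1}\times\{1,\ldots,M\}$ and freeze the last coordinate. Put $f_t:=f(\cdot,t)$ and $G_t:=\Delta^{(k-1)}f_t$, a function on $\{1,\ldots,M-1\}^{k-1}$. The condition $\Delta^{(k)}f=g$ is equivalent to
\[
G_t=G_1+\sum_{j<t}g(\cdot,j),
\]
where $G_1$ is completely unconstrained. It therefore suffices to find a set $\mathcal B$ of ``good'' functions $g'$ on $\{1,\ldots,M-1\}^{k-1}$, namely those for which every $f'$ with $\Delta^{(k-1)}f'=g'$ is surjective, together with a $g$ such that, \emph{for every} $G_1$, some translate $G_1+\sum_{j<t}g(\cdot,j)$ lies in $\mathcal B$. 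Then $f_t$ is surjective for that $t$, hence so is $f$.

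The natural way to arrange this is to choose the slices $g(\cdot,j)$ uniformly at random and independently, so that the walk $t\mapsto\sum_{j<t}g(\cdot,j)$ is a random walk on $\Fun(\{1,\ldots,M-1\}^{k-1},A)$. One then wants to show that with positive probability the walk meets every translate $\mathcal B-G_1$. For this I would strengthen the induction hypothesis to a \emph{density} statement: a random $g'$ lies in $\mathcal B$ with probability at least $1-\varepsilon_{k-1}$, and $\mathcal B$ is a union of cosets of a subgroup of index bounded in terms of $a$ and $k$. The coset structure is the key point. Failure of surjectivity, i.e.\ missing some $t\in A$, is a condition that should depend only on finitely many ``alternating box sums'' of $g'$, since those sums are invariant under the ambiguity $f'\mapsto f'+\sum_i h_i$ (with $h_i$ independent of $x_i$) in the solution of $\Delta^{(k-1)}f'=g'$. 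With the coset structure, the union bound runs over boundedly many cosets rather than over all $a^{(M-1)^{k-1}}$ possible values of $G_1$. The coupon-collector estimate (about $a\log a$ steps of the walk are needed to meet each of the relevant cosets) then accounts for a factor $a\log a$ per coordinate, whence $M>k\cdot a\log a$.

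The main obstacle is exactly this strengthened hypothesis: showing that the bad set is controlled by few invariant box sums, so that the union over all unconstrained $G_1$ is affordable. If the coset description turns out not to be clean, I would fall back on Smith's explicit combinatorics in \cite[Section 7]{Smith}. There one restricts to a sub-box $\{x_i\in\{\alpha_i,\beta_i\}\}$ and uses that the alternating sum of $f$ over its $2^k$ corners equals a sum of values of $g$, so it is independent of the ambiguity $h$. Choosing $g$ so that these corner sums, across many disjoint diagonal sub-boxes, run through all of $A$ then forces every value of $A$ to be attained.
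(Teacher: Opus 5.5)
Your base case ($M\ge a$ suffices for $k=1$) and your reformulation of the inductive step are correct. The gap is that the proposal stops exactly where the work is, and the strengthened hypothesis you want to carry through the induction is false. Already for one variable and $A=\Z/2\Z$, a solution of $\Delta^{(1)}f'=g'$ is $f'(x)=c+\sum_{j<x}g'(j)$. It is surjective iff it is non-constant iff $g'\neq 0$. So the good set is $\mathcal B=V\setminus\{0\}$ with $V=\Fun(\{1,\ldots,M-1\},A)$, and the only subgroup $H\le V$ with $H+\mathcal B=\mathcal B$ is $H=0$, of index $2^{M-1}$. The invariance of box sums under $f'\mapsto f'+\sum_i h_i$ only says that goodness is a function of $g'$ (all box sums together recover $g'$), not of boundedly many of them. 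Without such structure, your condition $\mathcal B-W=V$, where $W$ is the set of at most $M$ positions of the walk, must hold against all $a^{(M-1)^{k-1}}$ choices of $G_1$. A density bound $1-\varepsilon_{k-1}$ plus a coupon-collector count per coordinate does not control that, and the constant $k\cdot a\log a$ is never actually derived. The fallback fails too. Knowing that alternating corner sums over disjoint sub-boxes run through all of $A$ does not force surjectivity: for $A=\Z/3\Z$, a function with values in $\{0,1\}$ already realizes all differences $f(\beta)-f(\alpha)\in\{0,1,-1\}$. One has to use the full rigidity of $\Delta^{(k)}f=g$, not a few corner sums.

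The missing idea is a one-line count, which is the paper's proof. A function $g$ has the required property iff it is not in the image under $\Delta^{(k)}$ of the set of non-surjective $f$. There are at most $a(a-1)^{M^k}$ non-surjective $f$ (choose a value to miss, then map into the remaining $a-1$ elements), while there are $a^{(M-1)^k}$ candidates $g$. For $k\ge 2$ the inequality $a^{(M-1)^k}>a(a-1)^{M^k}$ follows from $M>ka\log a$, using $\log\frac{a}{a-1}>\frac1a$ and $M^k-(M-1)^k+1\le kM^{k-1}$. So some $g$ is hit by no non-surjective $f$, with no induction and no structure needed. The same count in dimension $k-1$ also bounds the density of the bad set there by $a(a-1)^{M^{k-1}}/a^{(M-1)^{k-1}}$. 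That is small enough for a plain union bound over all $G_1$ in your random-walk scheme once $M$ is of the same order, but then you may as well count in dimension $k$ directly.
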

	\begin{proof}
		Consider the operator
		\[
		\Delta^{(k)}_{ns}:\Fun_{\text{non-surj}}(\{1,\ldots,M\}^k,A) \to \Fun(\{1,\ldots,M-1\}^k,A), \ f \mapsto \Delta^{(k)}(f),
		\]
        where the domain denotes the set of non-surjective functions $\{1,\ldots,M\}^k\to A$.
		A function $g$ satisfies the desired condition in the statement if and only if it does not lie in the image of $\Delta^{(k)}_{ns}$. The cardinality of the domain is $\leq a \cdot (a-1)^{M^k}$, while that of the codomain is $a^{(M-1)^k}$. An exercise in calculus shows that $a^{(M-1)^k} > a \cdot (a-1)^{M^k}$ when $M > k\cdot a   \log a$, and so the sought $g$ exists.
	\end{proof}
	
	\begin{proof}[Proof of Theorem \ref{Thm: fibrationreduced}.]
		Let $g$ be as in Smith's lemma. By Corollary \ref{ThmTripleVarCustom}, we may find a grid $\cM$ for which the triple variation of the function
		\[
		\cM \to B_{tr}^D, \ \ q \mapsto (b,(P_v(q)))_{BM}
		\]
		is equal to $g$. So, this function attains a $0$ on some $q\in \cM$ by the lemma. The corresponding adelic point then lies in $X_q(\A_K)^{\Br_{\text{hor}}(X/Q)}=X_q(\A_K)^{\Br X_q}$, and approximates the point $(P_v)_{v \in S}$ as wished.
	\end{proof}

    \begin{proof}[Proof of Theorem \ref{Thm: fibrationnew}]
        Combine Theorem \ref{Thm: fibrationreduced} and Proposition \ref{PropReductions}.
    \end{proof}
	
	\newpage
	\appendix
	
	\section{Combining Chebotarev and Hecke equidistribution}\label{AppA}
	
	Let $E$ be a number field. 
	Recall that the reciprocity map $rec:C_E \to \Gamma_E^{ab}$ identifies $\Gamma_E^{ab}$ with the profinite completion of $C_E$, and via this map to every finite quotient $A$ of $C_E$, we can associate a finite abelian extension $L/E$ such that $\Gal(L/E) \cong A$, and the quotient map $rec_{L/E}:C_E \to \Gal(L/E)$ is the Artin reciprocity map for $L/E$.
	
	For a prime $\cP$ of $\cO_E$, we denote by $[\cP]$ an idele $(i_v)_v \in \mathbb I_E$ such that $i_v \in E_{v}^{*}$ is a local uniformizer when $v$ is the valuation defined by $\cP$, and such that $i_v=1$ for all other $v$. ($[\cP]$ is not uniquely defined, but this ambiguity is irrelevant for our purposes.)
	
	\begin{theorem}[Chebotarev-Hecke density]\label{Lem:ChebHec}
		Let $L/E$ be a finite Galois extension with group $G$, and let $L \cap E^{ab}$ be its maximal abelian subextension. 
		Let $q:C_E \to B$ be a compact quotient. Assume that $B$ is a real Lie group, 
		and that the abelian extension corresponding to the projection $C_E \to \pi_0(B)$ is $L \cap E^{ab}$. Let $\mu$ be the Haar measure of $B$ normalized so that its connected components have mass $1$. Let $c$ be a conjugacy class in $\Gal(L/E)$, and $U \subseteq B$ be a non-empty open that lies entirely in the connected component of $B$ corresponding to the projection of $c$ in  $\pi_0(B)=\Gal(L \cap E^{ab}/E)$. Then the primes $\cP$ of $E$ such that $q([\cP])$ lies in $U$ and their Frobenius class in $\Gal(L/E)$ is $c$ have density $\frac{|c|}{|G|} \cdot \mu(U)$ when ordered by norm.
	\end{theorem}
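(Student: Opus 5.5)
We reduce the statement to an equidistribution statement on a single compact abelian group attached to the compositum of $L$ with a suitable abelian extension. The plan is to combine the Chebotarev density theorem for $L/E$ with Hecke's equidistribution of Frobenius ideles in compact quotients of $C_E$. Both are instances of a single Chebotarev-type statement for the "Galois group" $G\times_{\Gal(L\cap E^{ab}/E)} B$. Concretely, we consider the compact group
\[
\mathcal G := \{(g,b)\in G\times B : g|_{L\cap E^{ab}} = \overline{b}\in \pi_0(B)\},
\]
where $\overline b$ denotes the component of $b$. The map $\cP \mapsto (\Frob_\cP, q([\cP]))$ is well defined up to conjugacy in the first factor, since $q([\cP])$ is independent of the uniformizer choice modulo the image of $\cO_\cP^*$; we may enlarge the excluded set of primes so that $q$ kills $\cO_\cP^*$ for all but finitely many relevant primes, or replace $q$ by its quotient by the closure of units. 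The hypothesis that $C_E\to\pi_0(B)$ corresponds to $L\cap E^{ab}$ guarantees that these pairs lie in $\mathcal G$ (compatibility of Artin reciprocity). The claim is then that these pairs are equidistributed with respect to Haar measure on $\mathcal G$, modulo conjugation in $G$.

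To prove equidistribution, we use Weyl's criterion. The irreducible representations of $\mathcal G$ are constituents of $\rho\otimes\chi$, where $\rho$ is an irreducible representation of $G$ and $\chi$ is a unitary character of $B$. Each such $\chi\circ q$ is a Hecke character of $E$. Hence for each such representation we must show that $\sum_{N\cP\le x}\operatorname{tr}(\rho\otimes\chi)(\Frob_\cP,q[\cP]) = o(\pi(x))$, unless the representation is trivial on $\mathcal G$. These sums are governed by Hecke $L$-functions twisted by Artin representations, namely $L(s,\rho\otimes\chi)$. By Brauer induction, $\rho = \sum n_i\Ind_{H_i}^G\psi_i$ with $\psi_i$ one-dimensional characters of subgroups $H_i$, so $L(s,\rho\otimes\chi)=\prod L(s,\psi_i\cdot(\chi\circ N_{L^{H_i}/E}))^{n_i}$, a product of Hecke $L$-functions over the fields $L^{H_i}$. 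Each of these is entire and nonvanishing on $\Re s=1$ unless the Hecke character is trivial (Hecke). The standard Tauberian argument (Wiener–Ikehara) then gives the prime-number-theorem estimate for the logarithmic derivative, provided the multiplicity of the pole at $s=1$, which is $\langle \rho\otimes\chi,1\rangle$ computed on $\mathcal G$, vanishes. That reduction, however, must be done with care: $L(s,\rho\otimes\chi)$ is meromorphic with no poles or zeros on $\Re s=1$ except the possible pole, since it is a ratio of products of entire-or-$\zeta$-type Hecke $L$-functions nonvanishing on that line.

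The main obstacle is the identification of the pole order. We must show that $\rho\otimes\chi$ contains the trivial representation of $\mathcal G$ exactly when the twisted $L$-function has a pole, and that the "diagonal" constraint defining $\mathcal G$ is precisely accounted for. That is, when $\chi$ factors through $\pi_0(B)=\Gal(L\cap E^{ab}/E)$, the character $\chi\circ q$ coincides with a character of $G$, and $\rho\otimes\chi$ is in effect an Artin representation of $G$. Here we use the hypothesis on $\pi_0(B)$. When $\chi$ is nontrivial on the identity component $B^0$, we instead need that the Hecke characters $\psi_i\cdot(\chi\circ N)$ are never trivial, since they have nontrivial infinity type. Once Weyl's criterion is established, we apply it to the indicator of $c\times U\subset\mathcal G$. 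Its Haar measure is $\frac{|c|}{|G|}\mu(U)$, because $U$ lies in the single component over the image of $c$, and the fiber of $\mathcal G\to B$ over that component has $|G|/|\Gal(L\cap E^{ab}/E)|$ elements. Approximating this indicator by continuous functions, using $\mu(\partial U)=0$ after shrinking $U$ to a nice open if necessary, gives the density $\frac{|c|}{|G|}\mu(U)$. Finally, the passage from Dirichlet or analytic density to natural density ordered by norm is standard from the Tauberian theorem.
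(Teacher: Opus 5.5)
Your argument is a sound proof, but by a genuinely different route from the paper's. The paper never works with a non-abelian compact group; it uses Deuring's reduction. It fixes $\sigma\in c$ of order $f$ and sets $Z=L^{\langle\sigma\rangle}$. It then considers the degree-one primes $\mathfrak p$ of $Z$ with $\Frob_{\mathfrak p}(L/Z)=\sigma$ and $q([\mathfrak p\cap E])\in U$. These map onto the primes counted in the statement, with fibres of constant size $|G|/(|c|f)$. Their density is $\mu(U)/f$ by the \emph{abelian} case applied to $C_Z\to B\times_{\pi_0(B)}\Gal(L/Z)$, which is surjective by the norm limitation theorem; the abelian case itself is quoted from Lang.

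Your route combines Weyl's criterion on $\mathcal G=G\times_{\pi_0(B)}B$, Brauer induction, and Hecke's theorem for the characters $\psi_i\cdot(\chi\circ q\circ N_{L^{H_i}/E})$. It is the Artin--Brauer proof of Chebotarev, twisted by Hecke characters. It needs more machinery: Brauer's induction theorem with Hecke $L$-functions over each field $L^{H_i}$, and a Tauberian theorem for complex coefficients. It also needs the observation that $(\rho\otimes\chi)|_{\mathcal G}$ is irreducible because $\mathcal G\to G$ is onto, so these restrictions exhaust the irreducible characters of $\mathcal G$. In return it proves more: joint equidistribution of $(\Frob_\cP,q([\cP]))$ in the conjugacy classes of $\mathcal G$. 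The set $c\times U$, with your correct Haar measure $\frac{|c|}{|G|}\mu(U)$, is one instance of this. The paper's route needs only the abelian statement plus elementary counting of primes above primes.

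Two of your steps need tightening. Both reflect imprecisions in the statement itself. The paper's reduction does not address them either, since it treats the abelian case as a black box for the same kind of $B$ and $U$.

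First, ``nontrivial infinity type'' does not exclude a constituent equal to $|\cdot|_F^{it}$ with $t\neq0$. Its $L$-function $\zeta_F(s+it)$ has a pole on $\operatorname{Re}s=1$. What you need is that $q$ kills the diagonal $\R_{>0}\subset C_E$. Then every constituent is trivial on the diagonal $\R_{>0}\subset C_F$, since $N_{F/E}(t)=t^{[F:E]}$ there, and this forces $t=0$. The hypothesis holds for the quotient $C_E/U^0_{\m}\R_{>0}$ used in the proof of Theorem \ref{Thmneq1}. Without it the statement is false. For instance, take $L=E$ and $B=\R_{>0}/e^{\Z}$ via the idelic norm: then $q([\cP])$ records $\log N\cP$ modulo $1$, which has no natural density along primes ordered by norm.

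Second, for an arbitrary open $U$ your ``shrinking'' only gives a lower density $\geq\frac{|c|}{|G|}\mu(U)$. Equality genuinely requires $\mu(\partial U)=0$. It fails, for instance, when $U$ is a union of tiny balls around the countably many points $q([\cP])$ with $\Frob_\cP\in c$. This is harmless for the paper, which only uses the existence of infinitely many such primes.
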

	
	For a subset $W$ of $\Spec \cO_E$, recall that its {\em density }is the limit (if it exists):
	\[
	\delta_E(W)\coloneqq \lim _{n \rightarrow \infty} \frac{\text { Number of } \mathfrak{p} \in W \text { with } \mathbf{N} p \leqq n}{\text { Number of } \mathfrak{p} \in \Spec \cO_E \text { with } \mathbf{N} \mathfrak{p} \leqq n}.
	\]
	Equivalently, by Landau's prime number theorem, it is equal to the limit:
	\[
	\lim _{n \rightarrow \infty} \frac{\text { Number of } \mathfrak{p} \in W \text { with } \mathbf{N} p \leqq n}{n/\log n}.
	\]
	
	\begin{proof}
		We follow the lines of a simple argument due to Duering as presented in \cite[p.169]{Lang}, but using \cite[p.317, Theorem 6]{Lang} (which combines Dirichlet and Hecke equidistribution) instead of Dirichlet's theorem.
		When $L/E$ is abelian, the statement follows directly from \cite[p.317, Theorem 6]{Lang}.
		
		\vskip1mm
		
		We go to the general case. Let $\Delta_{L/E}\subseteq \cO_E$ be the discriminant of the extension $L/E$. Let $\sigma$ be a representative of $c$, $f$ be its order and $Z=L^{\sigma}$. The extension $L/Z$ is cyclic of group $\Gal(L/Z) \cong \langle \sigma \rangle$, and so induces a character $\chi:C_Z \to \Gal(L/Z) \cong \Z/f\Z$.
		
		Let $W$ be the set of primes $\cP$ as in the statement. Define
		\[
		W_Z:=
		\left\{ \mathfrak{p} \in \Spec \cO_Z :\begin{gathered}
		\mathfrak{p} \text{ has degree $1$ over }E,\ (\mathfrak{p}, \Delta_{L/E})=1, \\
		\Frob_{\mathfrak{p}}(L/Z)=\sigma,\  q([\mathfrak{p} \cap E]) \in U
		\end{gathered}\right\}.
		\]
		The map $\mathfrak p \mapsto \mathcal P = \mathfrak p\cap E$ defines a finite-to-one map $W_Z \to W$. The fibers of this map have constant cardinality $\frac{|G|}{|c|f}$. In fact, for a fixed prime $\mathcal P$ of $E$ of Frobenius class $c$, the number of primes $\beta$ above it in $L$ is $|G|/|G_{\beta}|=|G|/f$, where $G_\beta$ denotes the decomposition group of $\beta$. 
		The Frobenii elements $\Frob_{\beta}(L/K),\,\beta|\cP$ are uniformely distributed in the conjugacy class $c$, so exactly a proportion $1/|c|$ of the primes $\beta$ have $\Frob_{\beta}(L/E)=\sigma$. I.e.\ there are $|G|/(|c|f)$ such $\beta$. The association $\beta \mapsto \mathfrak p=\beta \cap E$  gives a bijection between these $\beta$ and the $\mathfrak p$ in the fiber of $W_Z\to W$ above $\mathcal P$, showing that the fibers have the desired cardinality.
		
		\vskip1mm
		
		Note that $\mathbf{N}(\mathfrak p)=\mathbf{N}(\mathcal P)$ for $\mathfrak p\in W_Z$. Thus $\delta_E(W)=\frac{|c|f}{|G|}\delta_Z(W_Z)$. We now compute $\delta_Z(W_Z)$.
		The quotient map $q:C_E \to B$ induces by pullback a homomorphism $q_Z:C_Z \xrightarrow{N_{Z/E}} C_E \to B$. The cokernel of $C_Z \xrightarrow{N_{Z/E}} C_E$ is isomorphic to $\Gal(Z\cap E^{ab}/E)$ by the norm limitation theorem, and is thus finite.     
		Hence the image of the homomorphism $q'_Z:C_Z \to[(q_Z,(-,L/Z))] B \times_{\pi_0(B)} \Gal(L/Z)$ is a subgroup of finite index of the real Lie group $B \times_{\pi_0(B)} \Gal(L/Z)$ and so contains its connected component of the identity. Moreover, the image of $q'_Z$ surjects onto the second factor $\Gal(L/Z)$, which parametrizes the connected components of the fibered product $B \times_{\pi_0(B)} \Gal(L/Z)$, and we deduce that $q'_Z$ is surjective.
		
		The condition $q([\mathfrak{p} \cap E]) \in U$ in the definition on $W_Z$ is equivalent to $q_Z([\mathfrak{p}]) \in U$ (since $N_{Z/E}([\mathfrak{p}])=[\mathfrak{p}\cap E]$ as $\mathfrak{p}$ has degree $1$ over $E$). Since only the primes that have degree $1$ over $\Q$ contribute to the density, the condition ``$\mathfrak{p} \text{ has degree $1$ over }E$'' may be removed when computing the density of $W_Z$, and this density becomes the density of $\mathfrak{p}$ such that $q_Z([\mathfrak{p}]) \in U$ and $\Frob_{\mathfrak{p}}(L/Z)=\sigma$. This is $\frac{1}{f}\mu(U)$ by the abelian (or even cyclic) case applied to $q'_Z$, which yields the statement.
	\end{proof}
	
	In the cyclic strong approximation Theorem \ref{ThmShafarevich}, we use the following consequence of Chebotarev-Hecke density about approximating $v$-adic numbers, $v \in S$ with prime elements of $\cO_{E,S}$. Recall that the {\em direction} of a non-zero element of a real vector space $V$ is its image in $(V \s\{0\})/\R_{>0}$.
	
	\begin{theorem}[Theorem \ref{Thmneq10}]\label{Thmneq1}
		Let $L/E$ be a Galois field extension, and $S$ a finite set of places of $E$ containing the archimedean ones and those where $L/E$ ramifies. Let, for each place $v \in S$, $x_v \in E_v^{*}$. Assume that the idele $(\tilde x_v)_{v} \in \mathbb I _E, \tilde x_v \coloneqq x_v$ if $v \in S$ and $\tilde x_v \coloneqq 1$ if $v \notin S$, is in the kernel of the reciprocity map
		\[
		rec_{L \cap E^{ab}/E}: \mathbb{I}_E \to C_E\to  \Gal(L \cap E^{ab}/E)
		\]
		associated to the maximal abelian subextension $L \cap E^{ab}/E$ of $L/E$. Let $c \subset \Gal(L/E)$ be a conjugacy class which projects to the identity of $\Gal(L \cap E^{ab}/E)$.
		Then there exist prime elements $x \in \cO_{E,S}$ whose Frobenius class in $\Gal(L/E)$ is $c$, approximate $x_v$ arbitrarily well for all finite $v \in S$, and whose direction in the real vector space $E \otimes \R$ approximates the direction of $(x_v)_{v \in M_E^{\infty}} \in E \otimes \R$ arbitrarily well.
	\end{theorem}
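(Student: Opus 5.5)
The plan is to deduce Theorem \ref{Thmneq1} from the Chebotarev--Hecke density Theorem \ref{Lem:ChebHec}, applied to a compact real Lie quotient $B$ of $C_E$ that simultaneously records the $v$-adic classes of an idele at the finite places of $S$, its archimedean direction, and the Artin symbol for $L\cap E^{ab}/E$. Concretely, fix small open subgroups $U_v\subset \cO_v^*$ for finite $v\in S$ (contained in the norm groups from $L$, so that $L\cap E^{ab}$ is contained in the corresponding ray class field), and set $U_{\mathfrak m}:=\prod_{v\in S\,\mathrm{fin}}U_v\times\prod_{v\notin S}\cO_v^*$. Put $B:=C_E/(\overline{U_{\mathfrak m}\cdot \R_{>0}})$, where $\R_{>0}$ is embedded diagonally at the archimedean places. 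This is a compact abelian Lie group, an extension of a finite ray class group by a torus, and it is exactly the quotient governed by the classes in $\prod_{v\in S\,\mathrm{fin}}E_v^*/U_v\times (E\otimes\R)^*/\R_{>0}$ modulo units. If the field attached to $\pi_0(B)$ is bigger than $L\cap E^{ab}$, I would replace $L$ by its compositum with that ray class field $H$ (still Galois over $E$, unramified outside $S$ since $H$ is), and lift $c$ to a class $\tilde c$ in $\Gal(LH/E)$ projecting to $c$ and to the class of the idele $(\tilde x_v)$ in $\Gal(H/E)$. Such a lift exists because $(\tilde x_v)$ is trivial on $L\cap E^{ab}$, which is exactly the compatibility on $L\cap H$. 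Conversely I would shrink $B$ to the quotient whose $\pi_0$ is exactly $(LH)\cap E^{ab}$, which is legitimate since $H\subset LH$.

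Next, choose $U\subset B$ to be a small open neighbourhood of the image $q((\tilde x_v))$ inside its connected component. Theorem \ref{Lem:ChebHec} then gives a positive density of primes $\cP$ with Frobenius class $\tilde c$ and $q([\cP])\in U$. The last step is to translate this into elements. We have $q([\cP])\in U$, with $q([\cP])$ near $q((\tilde x_v))$, so there are $\alpha\in E^*$, $u\in \overline{U_{\mathfrak m}\R_{>0}}$ and $\varepsilon$ near $1$ with $[\cP]=\alpha\,(\tilde x_v)\,u\,\varepsilon$. At places $v\notin S$, $v\neq v_\cP$, this forces $\alpha$ to be a unit, and at $v_\cP$ it forces $\alpha$ to be a uniformizer. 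Hence $x:=\alpha^{-1}$ is a prime element of $\cO_{E,S}$ generating $\cP$. At finite $v\in S$ we get $x\in x_v U_v\varepsilon_v$, and at infinity $x\in x_\infty\R_{>0}\varepsilon_\infty$, which are exactly the required approximations. Since $x$ generates $\cP$, its Frobenius class is that of $\cP$, namely $c$.

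The main obstacle is the setup of $B$, namely matching its $\pi_0$ to the abelian part of the Galois extension so that the hypothesis of Theorem \ref{Lem:ChebHec} holds, together with checking that the chosen component really contains $q((\tilde x_v))$. This is where the reciprocity hypothesis on $(\tilde x_v)$ and the compatibility of $c$ enter. One subtlety is the closure $\overline{U_{\mathfrak m}\R_{>0}}$, which kills the global units: I would handle it by noting that units can be absorbed into $\alpha$. The approximation then holds modulo units, which is harmless because we only need some prime element $x$ whose class satisfies the required closeness.
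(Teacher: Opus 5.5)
Your proposal follows essentially the paper's proof: replace $L$ by its compositum with the (narrow) ray class field $H$, lift $c$ to the class $\{\gamma\}\times c$ with $\gamma$ the Artin image of the idele, apply Theorem~\ref{Lem:ChebHec} to $C_E/U^0_{\mathfrak m}\R_{>0}$, and unwind the quotient topology. Your ``shrinking'' step is vacuous, since $L\cap E^{ab}\subset H$ already gives $LH\cap E^{ab}=(L\cap E^{ab})H=H$.

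There is one inversion slip in your last step. From $[\cP]=\alpha\,(\tilde x_v)\,u\,\varepsilon$, it is $\alpha$, not $\alpha^{-1}$, that is a prime element generating $\cP$ (the element $\alpha^{-1}$ has valuation $-1$ there), and $\alpha$ approximates $x_v^{-1}$ and the direction of $x_\infty^{-1}$. The fix is to run the argument with the idele $(\tilde x_v)^{-1}$, which satisfies the same reciprocity hypothesis: take $U$ around $q\bigl((\tilde x_v)^{-1}\bigr)$, take the $\Gal(H/E)$-component of $\tilde c$ to be its Artin symbol, and then set $x:=\alpha$.
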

	
	\begin{proof}
		Let $\m$ be the conductor of the abelian extension $L \cap E^{ab}/E$. Recall that this is the minimal modulus of $E$ such that $L \cap E^{ab}$ is contained in the ray class field $E_{\m}$ associated to $\m$. Let $F$ be the compositum $LE_\m$. By Galois theory, we may identify $\Gal(F/E)$ with the group of pairs $(\gamma, \delta) \in \Gal(E_\m/E) \times \Gal(L/E)$ such that the images of $\gamma$ and $\delta$ in $\Gal(L\cap E_{\m}/E)=\Gal(L\cap E^{ab}/E)$ coincide. Recall (see e.g.\ the sentence after \cite[Definition 15.22]{HarariBook}) that the reciprocity map $rec_{E_\m/E}$ gives an identification 
		\[
		\Gal(E_{\m}/E)= E^{*}\backslash \I_E /U_\m.
		\]
		Let $\gamma \in \Gal(E_{\m}/E)$ be the image of $(\tilde x_v)_v$ in this quotient. Note that $c' \coloneqq \{\gamma\} \times c$ is a conjugacy class in $\Gal(F/E)$.
		
		Let $U^0_\m$ be the subgroup of $U_\m \coloneqq \prod_v U_v$ (recall the notation from Section \ref{Sec2}) of ideles with archimedean components equal to $1$. 
		We apply Chebotarev-Hecke density (Theorem \ref{Lem:ChebHec}) to
		\[
		\pi:C_E \to C_E/U^0_\m\R_{>0}=E^{*}\backslash \I_E /U^0_\m \R_{>0}
		\]
		(note that this is a compact real Lie group) and the Galois extension $F/E$ to infer that there are primes $\cP$ of $E$ with Frobenius class $c'$ in $\Gal(F/E)$ such that $[\cP] \in E^{*}\backslash \I_E /U_\m\R_{>0}$ approximates the image of $(\tilde x_v)_v$. The compatibility assumption of Theorem \ref{Lem:ChebHec} here is satisfied because $c'$ and $(\tilde x_v)_v$ map to the same element of
		\[
		\Gal(E_{\m}/E)= E^{*}\backslash \I_E /U_\m = \pi_0(E^{*}\backslash \I_E /U^0_\m \R_{>0}).
		\]
		by construction of $c'$. (See the sentence after \cite[Definition 15.22]{HarariBook} for the first identity.)
		
		By definition of quotient topology there exists then $x\cdot \mathfrak{a}\cdot t \in E^{*}U^0_{\m}\R_{>0}$ such that $[\cP]\cdot x^{-1} \cdot \mathfrak{a}^{-1} \cdot t^{-1} \in \mathbb I_E$ approximates $(\tilde x_v)_v$. For such $[\cP], x,  \mathfrak{a},  t$, we have $x \cdot \cO_{E,S}=\cP$, $x\cdot x_v^{-1}$ is congruent to $1 \bmod \m$ for every finite $v \in S$, and $tx$ approximates $(x_v)_{v \in M_E^{\infty}}\in \prod_{v \in M_E^{\infty}}E_v^{*}$. The last condition shows that $x$ approximates the direction of $(x_v)_{v \in M_E^{\infty}}$ as wished. Refining $\m$, $x$ approximates $x_v$ at $S$ arbitrarily well, completing the proof.
	\end{proof}
	
	\section{Relative cohomology}\label{AppB}
	
	Relative (étale) cohomology was defined by Friedlander in \cite[Ch.~14]{Friedlander} in the general context of simplicial schemes. In this appendix we give an alternative definition that avoids the use of simplicial schemes. At the end of this appendix we discuss how to represent relative étale cohomology classes via \v{C}ech cocycles in an analog way as one does for usual (non-relative) cohomology.
	
	\begin{definition}
		A {\em relative abelian (pre)sheaf} of a morphism of schemes $f:X \to Y$ is a triple $(F,G,\alpha)$, with $F \in \operatorname{(P)Sh}_Y,\, G \in \operatorname{(P)Sh}_X,\, \alpha:F \to f_*G$.
	\end{definition}
	
	\begin{notation*}
		We denote by $\operatorname{(P)Sh}_{X/Y}$ the category of relative abelian (pre)sheaves of $f$.
		For $(F,G,\alpha) \in \Sh_{X/Y}$, we denote by $\alpha^{\dagger}:f^{-1}F \to G$ the left adjunct of $\alpha$.
	\end{notation*}
	
	\vskip1mm
	
	The category $\operatorname{(P)Sh}_{X/Y}$ is the comma category $\left(\operatorname{Id}_{\operatorname{(P)Sh}(Y)} \downarrow f_*\right)$. The comma category $(U \downarrow V)$ of a diagram of functors $\cA \to[U] \cC \xleftarrow{V} \cB$ is the category whose objets are triples $(A \in \mathscr{A}, B \in \mathscr{B}, \theta: U A \rightarrow V B)$ \cite[II.6]{MacLane}. Comma categories are abelian if $U$ is right exact and $V$ is left exact \cite[Prop.~5.1]{BH}, so in particular $\Sh_{X/Y}$ is abelian. (One may also easily prove this abelianity directly.)
	
	\vskip1mm
	
	The following lemma proves, in particular, that $\Sh_{X/Y}$ and $\Psh_{X/Y}$ have enough injectives:
	
	\begin{lemma}\label{LemEnoughInjectives}
		Let $V:\cB \to \cA$ be a left exact functor between abelian categories, and $\mathfrak C:=(\id \downarrow V)$ be the comma category of $\cA \to[\id] \cA \xleftarrow{V} \cB$. The injectives objects of $\mathfrak C$ are those of the form $(I\oplus V(J),J,I \oplus V(J) \to V(J))$, with $I$ and $J$ injective in $\cA, \cB$, respectively. Moreover, if $\cA$ and $\cB$ have enough injectives, then $\mathfrak C$ does too.
	\end{lemma}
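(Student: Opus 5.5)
The statement is Lemma~\ref{LemEnoughInjectives}. We work throughout with the adjunction-style description of morphisms in $\mathfrak C$: a morphism $(A,B,\theta)\to(A',B',\theta')$ is a pair $(a,b)$ with $\theta'\circ a=V(b)\circ\theta$. The first step is to identify two right adjoints to the forgetful functors. We will use the following two facts.
\begin{itemize}
\item[(a)] For $I\in\cA$, the object $R_1(I):=(I,0,I\to 0)$ satisfies $\Hom_{\mathfrak C}((A,B,\theta),R_1(I))=\Hom_{\cA}(A,I)$.
\item[(b)] For $J\in\cB$, the object $R_2(J):=(V(J),J,\id)$ satisfies $\Hom_{\mathfrak C}((A,B,\theta),R_2(J))=\Hom_{\cB}(B,J)$. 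Indeed, a morphism is determined by $b:B\to J$, with $a:=V(b)\circ\theta$ forced.
\end{itemize}
Both forgetful functors $(A,B,\theta)\mapsto A$ and $(A,B,\theta)\mapsto B$ are exact, since kernels and cokernels in the comma category are computed componentwise when $V$ is left exact (as in \cite[Prop.~5.1]{BH}). Their right adjoints $R_1$ and $R_2$ therefore preserve injectives. Hence $R_1(I)\oplus R_2(J)=(I\oplus V(J),J,\mathrm{pr})$ is injective whenever $I$ and $J$ are.

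For enough injectives, take $(A,B,\theta)$, embed $A\hookrightarrow I$ and $B\hookrightarrow J$ with $I,J$ injective, and map $(A,B,\theta)$ to $R_1(I)\oplus R_2(J)$ using the two adjunctions. This gives the morphism $(A\to I\oplus V(J),\,B\to J)$, where the first component is $(\iota_A, V(\iota_B)\circ\theta)$. The morphism is a monomorphism because it is componentwise injective: the first component is already injective through $\iota_A$.

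For the converse classification, let $(A,B,\theta)$ be injective. Embed it as above into $R_1(I)\oplus R_2(J)$. Since the source is injective, the embedding splits, so the object is a direct summand of $R_1(I)\oplus R_2(J)$. The remaining step is to show that a direct summand of such an object again has this shape. First, the $B$-component is a summand of $J$, hence injective; call it $J'$. The morphism $\theta:A\to V(J')$ is split epi, since it is a retract of the projection $I\oplus V(J)\to V(J)$. Set $I':=\ker\theta$. Then $I'$ is a summand of $A$, which is itself a summand of $I\oplus V(J)$. Now $V(J)$ need not be injective, so we cannot conclude directly that $I'$ is injective; instead we argue as follows. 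The object $(I',0)$ is a summand of $(A,B,\theta)$ via the splitting, and $\Hom((-),(I',0))=\Hom_{\cA}(-,I')$. Testing injectivity of $(A,B,\theta)$ against objects of the form $(X,0,0)$ then shows that $I'$ is injective in $\cA$. This yields $(A,B,\theta)\cong(I'\oplus V(J'),J',\mathrm{pr})$.

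The main obstacle is this final splitting bookkeeping: checking that the section of $\theta$ together with $\ker\theta$ gives an isomorphism in $\mathfrak C$ compatible with the structure maps.
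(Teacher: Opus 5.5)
The paper gives no argument for this lemma; it only cites \cite[p.16]{Abelian}. Your adjunction proof is therefore a self-contained alternative, and most of it is correct. The objects $R_1(I)=(I,0,0)$ and $R_2(J)=(V(J),J,\id)$ are right adjoints of the two exact projections, so they preserve injectives. This shows that every $(I\oplus V(J),J,\mathrm{pr})$ is injective. Together with the units $(\iota_A,0)$ and $(V(\iota_B)\theta,\iota_B)$, it also shows that $\mathfrak C$ has enough injectives when $\cA$ and $\cB$ do. Your test against objects $(X,0,0)$, using $\Hom_{\mathfrak C}((X,0,0),(A,B,\theta))=\Hom_{\cA}(X,\ker\theta)$, correctly proves that $\ker\theta$ is injective. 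For a section $s$ of $\theta$, the map $(\mathrm{incl}+s,\id_B)\colon(\ker\theta\oplus V(B),B,\mathrm{pr})\to(A,B,\theta)$ is easily checked to be an isomorphism, since componentwise isomorphisms are isomorphisms in $\mathfrak C$.

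There is a gap in the direction ``injective $\Rightarrow$ of this form''. You start by embedding $(A,B,\theta)$ into $R_1(I)\oplus R_2(J)$, which needs $\cA$ and $\cB$ to have enough injectives. The first assertion of the lemma is unconditional; that hypothesis appears only in the ``Moreover'' clause. Both your conclusion that $B$ is injective and your conclusion that $\theta$ is split epi depend on this embedding.

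The fix is to test injectivity directly, as you already do for $\ker\theta$. Since $\Hom_{\mathfrak C}((0,Y,0),(A,B,\theta))=\Hom_{\cB}(Y,B)$ and $Y\mapsto(0,Y,0)$ preserves monomorphisms, $B$ is injective. Next, extend $(0,\id_B)\colon(0,B,0)\to(A,B,\theta)$ along the monomorphism $(0,\id_B)\colon(0,B,0)\hookrightarrow(V(B),B,\id)$. The extension has the form $(s,\id_B)$, and its compatibility condition reads $\theta\circ s=\id_{V(B)}$, so $s$ is the required section of $\theta$. With this change the classification holds with no enough-injectives hypothesis. In the paper's own applications (sheaf categories, $\cA b$, $\Mor(\cA)$) the categories do have enough injectives, so your version would already suffice there.
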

	\begin{proof}
		See \cite[p.16]{Abelian}. \qedhere
	\end{proof}
	
	We give the following definition, borrowing Friedlander's notation.
	
	\begin{definition}[Relative cohomology]
		Let $H^k_X(Y,-),\, k\geq0$ be the $k$-th derived functor of
		\[
		\Gamma_{Y,X}:\Sh_{X/Y} \to \cA b, \ \ (F,G,\alpha) \mapsto \Ker(\alpha: F(Y) \to G(X)).
		\]
	\end{definition}
	
	Below, we shall argue that, for $M^{\bullet}=(F^{\bullet},G^{\bullet},\alpha) \in \cD^+(\Sh_{X/Y})$, the total derived functor $R\Gamma_{Y,X}(M^{\bullet})$ is, in a sense, naturally isomorphic to the cone of the natural transformation $R\Gamma_Y(F^{\bullet}) \to[\alpha] R\Gamma_X(G^{\bullet})$. A slight problem arises: to give precise statements we need to talk about cones in derived categories, which notoriously cannot be defined functorially in general \cite[Prop.~1.2.13]{Verdier}. In the following paragraph, we present a way to circumvent this lack of functoriality by defining the cone functor on the derived category of the morphism category instead of on the morphism category of the derived category.\footnote{Other ways would be to pass to dg-enhancements of the categories in question or use the theory of derivators, but for the sake of self-containement of the paper I preferred to avoid both.}
	
	\vskip2mm
	
	\noindent {\bf Derived category of the morphism category.~}Let $\cA$ be an abelian category, and $\Mor(\cA)$ be its category of morphisms. This is still an abelian category \cite{Tohoku}.
	
	Since $\Ch(\Mor(\cA))=\Mor(\Ch(\cA))$, we shall represent elements of $\Ch(\Mor(\cA))$ as morphisms $A^{\bullet} \to B^{\bullet}$ of chain complexes. (The same identity does not pass to the derived category! Namely, $\cD(\Mor(\cA))$ and $\Mor(\cD(\cA))$ are different categories in general: in view of \cite[Prop.~1.2.13]{Verdier}, this follows for instance from the proposition below.)
	
	\begin{proposition}\label{PropCone}
		There is a functor
		\[
		\Cone(-): \cD(\Mor(\cA)) \to \cD(\cA), \ 
		\]
		and natural exact triangles 
		\[
		A^{\bullet} \to B^{\bullet} \to \Cone(A^{\bullet}\to B^{\bullet}) \to[+1] \ \ \text{ in }\cD(\cA)
		\]
		for $(A^{\bullet}\to B^{\bullet}) \in \cD(\Mor(\cA))$.
	\end{proposition}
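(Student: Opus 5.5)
We construct $\Cone(-)$ at the level of chain complexes and check that it descends to the derived category. For an object $(u:A^{\bullet}\to B^{\bullet})$ of $\Ch(\Mor(\cA))=\Mor(\Ch(\cA))$ we set $\Cone(u)^k:=A^{k+1}\oplus B^k$, with the usual differential $d(a,b)=(-d_Aa,\,u(a)+d_Bb)$. A morphism in $\Ch(\Mor(\cA))$ is a commutative square $(\phi_A,\phi_B)$ from $u$ to $u'$, and it induces $\phi_A[1]\oplus\phi_B$ on cones. This assignment is strictly functorial, so we obtain a functor $\Ch(\Mor(\cA))\to\Ch(\cA)$. The maps $B^{\bullet}\to\Cone(u)\to A^{\bullet}[1]$ are the standard inclusion and projection, and they are natural in the square as well.

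The next step is to pass to homotopy categories. A homotopy in $\Ch(\Mor(\cA))$ between two squares consists of a pair $(h_A,h_B)$ of homotopies satisfying $u'h_A=h_Bu$. Given such a pair, $(a,b)\mapsto(-h_Aa,\,h_Bb)$ gives a homotopy between the induced maps on cones. We check this directly: the cross term involving $u'h_A$ and $h_Bu$ cancels precisely because of the compatibility condition. Hence $\Cone$ descends to a functor $K(\Mor(\cA))\to K(\cA)$.

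To reach $\cD(\cA)$ we must show that quasi-isomorphisms are sent to quasi-isomorphisms. Since kernels and cokernels in $\Mor(\cA)$ are computed componentwise, a morphism in $\Ch(\Mor(\cA))$ is a quasi-isomorphism exactly when both $\phi_A$ and $\phi_B$ are quasi-isomorphisms. The degreewise split short exact sequence $0\to B^{\bullet}\to\Cone(u)\to A^{\bullet}[1]\to0$ gives a long exact cohomology sequence, natural in the square. Applying the five lemma to the map between the two long exact sequences shows that $\Cone(\phi)$ is a quasi-isomorphism. By the universal property of localization, $\Cone$ then factors through $\cD(\Mor(\cA))$.

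Finally, we need the triangles and their naturality. The sequence $A^{\bullet}\to B^{\bullet}\to\Cone(u)\to A^{\bullet}[1]$ is the standard distinguished triangle in $K(\cA)$, and hence also in $\cD(\cA)$. Its three maps come from natural transformations between the functors $\Ch(\Mor(\cA))\to\Ch(\cA)$ given by $u\mapsto A^{\bullet}$, $u\mapsto B^{\bullet}$, $\Cone$ and $A^{\bullet}[1]$. All of these functors descend to $\cD(\Mor(\cA))$ by the same argument, so the transformations descend too.

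The step requiring care is this last one. We must make sure that naturality is asserted only with respect to morphisms in $\cD(\Mor(\cA))$, which are roofs of squares, and never in $\Mor(\cD(\cA))$. This is exactly what avoids the usual non-functoriality of cones.
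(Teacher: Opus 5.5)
Your proposal is correct and follows the paper's proof: the paper likewise takes the usual mapping cone on $\Ch(\Mor(\cA))=\Mor(\Ch(\cA))$, observes that it sends quasi-isomorphisms to quasi-isomorphisms, and concludes that it factors through the derived category. You add details the paper leaves implicit, namely the componentwise quasi-isomorphism criterion with the five lemma and the descent of the natural transformations that give the triangle; the intermediate step through homotopy categories is harmless but unnecessary, since the localization of $\Ch(\Mor(\cA))$ at quasi-isomorphisms already identifies homotopic maps.
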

	\begin{proof}
		The usual mapping cone functor $\Ch(\Mor(\cA)) \to \Ch(\cA),\,(A^{\bullet} \to B^{\bullet})\mapsto \linebreak\Cone(A^{\bullet}\to B^{\bullet})$ defined on the category of chain complexes sends a quasi-isomorphism $(A_1^{\bullet} \to B_1^{\bullet}) \to[\text{q.i.}] (A_2^{\bullet} \to B_2^{\bullet})$ to a quasi-isomorphism of cones $\Cone(A_1^{\bullet} \to B_1^{\bullet}) \to[\text{q.i.}] \Cone(A_2^{\bullet} \to B_2^{\bullet})$, and so passes to the derived category.
	\end{proof}
	
	We observe that, if $\cA$ has enough injectives, then $\Mor(\cA)$ does as well, as follows from Lemma \ref{LemEnoughInjectives}, since the morphism category is just the comma category $(\id \downarrow \id)$. So the statement of the following lemma is well-posed.
	
	\begin{lemma}\label{LemConeIsDerivedKer}
		The functor $\Ker:\Mor(\cA) \to \cA$ sending a morphism to its kernel is left exact. Moreover, if $\cA$ has enough injectives, then the total right derived functor $R\Ker: \cD^+(\Mor(\cA)) \to \cD^+(\cA)$ is the shifted cone $\Cone(-)[-1]$.
	\end{lemma}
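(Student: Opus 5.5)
Left exactness is immediate: $\Ker$ is a right adjoint, its left adjoint being $A\mapsto (A\xrightarrow{\id}A)$. Alternatively, a short exact sequence of morphisms gives a short exact sequence of two-term complexes, and the snake lemma shows that kernels form a left exact sequence. For the derived functor I will compute with injective resolutions. By Lemma \ref{LemEnoughInjectives}, applied to the comma category $(\id\downarrow\id)$, the injective objects of $\Mor(\cA)$ are exactly the morphisms $I\oplus J\to J$ given by the projection, with $I,J$ injective in $\cA$. Hence an object of $\cD^+(\Mor(\cA))$ can be represented by a bounded below complex of such injectives, i.e.\ a morphism of complexes $A^\bullet\to B^\bullet$ in which every component $A^k\to B^k$ is a split epimorphism between injectives. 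Then $R\Ker(A^\bullet\to B^\bullet)$ is represented by the componentwise kernel complex $K^\bullet$, where $K^k=\Ker(A^k\to B^k)$.

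The key step is the comparison with the cone. Because each $A^k\to B^k$ is surjective, there is a short exact sequence of complexes $0\to K^\bullet\to A^\bullet\to B^\bullet\to 0$. The standard comparison lemma, namely that the map $\Cone(K^\bullet\to A^\bullet)\to B^\bullet$ is a quasi-isomorphism for degreewise short exact sequences, then yields a natural quasi-isomorphism $K^\bullet\to \Cone(A^\bullet\to B^\bullet)[-1]$. Explicitly, $\Cone(A\to B)^{k-1}=A^k\oplus B^{k-1}$, and the map is $x\mapsto (x,0)$. I would check that this is a chain map, up to the sign convention for cones, and that it is a quasi-isomorphism. Both follow from the long exact sequences of the short exact sequence and of the cone triangle, together with the five lemma.

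Finally, I need naturality, which requires comparing two functors on $\cD^+(\Mor(\cA))$. The quasi-isomorphism $K^\bullet\to\Cone[-1]$ is natural in morphisms of complexes of injectives, and on the homotopy category of injectives both sides are functorial. Since $\Cone(-)$ from Proposition \ref{PropCone} respects quasi-isomorphisms, the comparison descends to a natural isomorphism $R\Ker\cong\Cone(-)[-1]$ of functors on $\cD^+(\Mor(\cA))$. The main obstacle is bookkeeping rather than mathematics: making the sign conventions of the cone agree with those of the kernel embedding, and verifying compatibility with homotopies. I expect the splitting of each $A^k\to B^k$ to make this routine.
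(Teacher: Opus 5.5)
Your proof is correct and is essentially the paper's argument: injective objects of $\Mor(\cA)$ are (split) epimorphisms, so $R\Ker$ is computed by the degreewise kernel of a complex of injectives, which maps quasi-isomorphically to $\Cone(-)[-1]$ via $x\mapsto (x,0)$; the isomorphism then extends to all of $\cD^+(\Mor(\cA))$ because every object is quasi-isomorphic to such a complex. One small slip: $A\mapsto (A\xrightarrow{\id}A)$ is left adjoint to the source functor, not to $\Ker$ (the left adjoint of $\Ker$ is $A\mapsto (A\to 0)$), but your snake-lemma alternative already gives left exactness.
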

	\begin{proof}
		The first part is clear from the snake lemma. 
		For the second part, let $\cI \subset \Mor(\cA)$ be the full subcategory of injective objects of $\Mor(\cA)$. By Lemma \ref{LemEnoughInjectives}, any morphism $A \to B$ that is injective as an object of $\Mor(\cA)$ is an epimorphism.  In particular, $A^n \to B^n$ is surjective for all $n$ and all $(A^{\bullet} \to B^{\bullet}) \in \Ch(\cI)$. 
		
		We denote by $\cD(\cI) \subset \cD(\Mor(\cA))$ the full subcategory of complexes with entries in $\cI$. We have $R\Ker((A^{\bullet} \to B^{\bullet}))=\Ker(A^{\bullet} \to B^{\bullet})$ for $(A^{\bullet} \to B^{\bullet}) \in \cD^+(\cI)$. By the surjectivity of the maps $A^n \to B^n$, the natural chain map
		\[
		\Ker(A^{\bullet} \to B^{\bullet}) \to \Cone(A^{\bullet} \to B^{\bullet})[-1],
		\]
		defined degree-wise as the composition $\Ker(A^n \to B^n) \to A^n \hookrightarrow A^n\oplus B^{n-1}$, is a quasi-isomorphism. So $R\Ker \cong \Cone(-)[-1]$ on the full subcategory $\cD^+(\cI) \subset \cD^+(\Mor(\cA))$. Since $\Mor(\cA)$ has enough injectives, the inclusion $\cD^+(\cI)\subset \cD^+(\Mor(\cA))$ is essentially surjective, and so the two functors are naturally isomorphic on the whole $\cD^+(\Mor(\cA))$.            
	\end{proof}
	
	\vskip2mm
	
	\noindent {\bf A long exact sequence.~}We express the functor $\Gamma_{Y,X}$ as the composition
	\begin{align*}
	&\Gamma_{Y,X}: \Sh_{X/Y} \to[\Gamma_{X/Y}] \Mor(\cA b) \to[\Ker] \cA b, \quad \text{where}\\
	&\Gamma_{X/Y}(F,G,\alpha):=(F(Y) \to[\alpha(Y)] G(X)).
	\end{align*}
	The functor $\Gamma_{X/Y}$ is left exact and has an exact left adjoint $\Mor(\cA b) \to \Sh_{X/Y}$ sending a morphism $A \to[\alpha] B$ to the constant relative sheaf $(A_Y,B_X,A_Y \to f_*B_X)$, where $A_Y$ and $B_X$ denote the constant sheaves associated to $A$ and $B$ on $X$ and $Y$, and the morphism $A_Y \to f_*B_X$ is adjoint to the morphism $f^{-1}A_Y=A_X \to[\alpha] B_X$. In particular, $\Gamma_{X/Y}$  sends injectives to injectives, so Grothendieck's theorem on the composition of derived functors gives $R\Gamma_{Y,X} = \Cone[-1] \circ R\Gamma_{X/Y}$.
	
	\vskip1mm
	
	For a diagram $\cA \to[\id] \cA \xleftarrow{V} \cB$ with $V$ left exact, we denote by $pr_1:(\id\downarrow V)\to \cA,\,pr_2:(\id\downarrow V)\to \cB$ the two ``projection'' functors $(A,B,\theta)\mapsto A$ and $(A,B,\theta)\mapsto B$. These are exact functors, so $R(pr_i)=pr_i$ for $i=1,2$. Moreover, if $V$ sends injectives to injectives then they also both send injectives to injectives by Lemma \ref{LemEnoughInjectives} (if $V$ does not send injectives to injectives, then only the second projection does). 
	
	\vskip1mm
		
	By these general observations on projections in comma categories, we get $pr_1\circ R\Gamma_{X/Y}\cong R(pr_1 \circ \Gamma_{X/Y})\cong R(\Gamma_{Y}\circ pr_1)\cong R\Gamma_{Y}\circ pr_1$. So $pr_1\circ R\Gamma_{X/Y}\cong R\Gamma_{Y}\circ pr_1$ and, analogously, $pr_2\circ R\Gamma_{X/Y}\cong R\Gamma_{X}\circ pr_2$.
	
	For $M^{\bullet}=(F^{\bullet},G^{\bullet},\alpha^{\bullet})\in\cD^+(\Sh_{X/Y})$, we may consider $R\Gamma_{X/Y}(M^{\bullet})\in \cD^+(\Mor(\cA b))$. The discussion above gives natural derived isomorphisms 
	\begin{equation}\label{NatIsos}
	pr_1(R\Gamma_{X/Y}(M^{\bullet})) \cong R \Gamma_Y(F^{\bullet}),\,pr_2(R\Gamma_{X/Y}(M^{\bullet})) \cong R \Gamma_X(G^{\bullet}),
	\end{equation}
	and the image of $R\Gamma_{X/Y}(M^{\bullet}) \in \cD^+(\Mor(\cA b))$ in $\Mor(\cD^+(\cA b))$ thus defines a map $R\Gamma_Y(F^{\bullet}) \to R \Gamma_X(G^{\bullet})$. We already have a natural map between the two: the composition $R\Gamma_Y(F^{\bullet}) \to[f^*] R \Gamma_X(f^{-1}F^{\bullet}) \to[\alpha^{\dagger}]R \Gamma_X(G^{\bullet}).$
	As one would expect, these two coincide:
	\begin{lemma*}
		The image under $\cD^+(\Mor(\cA b)) \to \Mor(\cD^+(\cA b))$ of $R\Gamma_{X/Y}(M^{\bullet})$ corresponds to the composition $R\Gamma_Y(F^{\bullet}) \to[f^*] R \Gamma_X(f^{-1}F^{\bullet}) \to[\alpha^{\dagger}]R \Gamma_X(G^{\bullet})$ under the natural isomorphisms \eqref{NatIsos}.
	\end{lemma*}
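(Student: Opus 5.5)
The statement is the final unnamed lemma: the image of $R\Gamma_{X/Y}(M^{\bullet})$ in $\Mor(\cD^+(\cA b))$ is identified with $\alpha^{\dagger}\circ f^*$. I would reduce it to injective resolutions in $\Sh_{X/Y}$, where everything is computed by honest chain maps. Using Lemma \ref{LemEnoughInjectives}, I first replace $M^{\bullet}$ by a quasi-isomorphic complex $I^{\bullet}=(I_1^{\bullet}\oplus f_*J^{\bullet},J^{\bullet},\theta)$ of injectives. Here $I_1^n$ is injective on $Y$, $J^n$ is injective on $X$, and $\theta$ is the projection onto $f_*J^{\bullet}$. Because $f_*$ preserves injectives, the first component $F'^{\bullet}:=I_1^{\bullet}\oplus f_*J^{\bullet}$ is an injective resolution of $F^{\bullet}$, and $J^{\bullet}$ is an injective resolution of $G^{\bullet}$. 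This is exactly the fact that $pr_1,pr_2$ preserve injectives, which underlies \eqref{NatIsos}. Then $R\Gamma_{X/Y}(M^{\bullet})$ is the chain map $\theta(Y):F'^{\bullet}(Y)\to J^{\bullet}(X)$, and the isomorphisms \eqref{NatIsos} are realized by $F^{\bullet}\to F'^{\bullet}$ and $G^{\bullet}\to J^{\bullet}$.

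Next I describe the other map concretely. The map $f^*:R\Gamma_Y(F^{\bullet})\to R\Gamma_X(f^{-1}F^{\bullet})$ is computed by choosing an injective resolution $f^{-1}F^{\bullet}\to K^{\bullet}$ and a chain map $f^{-1}F'^{\bullet}\to K^{\bullet}$ extending it. One then takes global sections, precomposed with the adjunction $F'^{\bullet}(Y)\to (f^{-1}F'^{\bullet})(X)$. The map $\alpha^{\dagger}$ is computed by a chain map $K^{\bullet}\to J^{\bullet}$ lifting $\alpha^{\dagger}:f^{-1}F^{\bullet}\to G^{\bullet}$. Any two such lifts are homotopic, so the composite is well defined in $\cD^+(\cA b)$. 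To compare with $\theta(Y)$, I choose a convenient lift: $f^*$ may equally be computed as $F'^{\bullet}(Y)\to (f_*J^{\bullet})(Y)=J^{\bullet}(X)$, using the resolution of $f^{-1}F^{\bullet}$ provided by the adjoint of the relative structure map.

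The key step is showing that the adjoint $\theta^{\dagger}:f^{-1}F'^{\bullet}\to J^{\bullet}$ restricts, along $f^{-1}F^{\bullet}\to f^{-1}F'^{\bullet}$, to $G^{\bullet}\to J^{\bullet}$ composed with $\alpha^{\dagger}$. This follows from the commutative square in $\Sh_{X/Y}$ expressing that $M^{\bullet}\to I^{\bullet}$ is a morphism of relative sheaves, $\theta\circ(F\to F')=f_*(G\to J)\circ\alpha$, after taking adjuncts. Consequently $\theta(Y)=\Gamma_X(\theta^{\dagger})\circ(\text{unit})$ is a chain-level model of $\alpha^{\dagger}\circ f^*$: the map $f^{-1}F^{\bullet}\to f^{-1}F'^{\bullet}$ is not a resolution, but by uniqueness up to homotopy of maps into injective complexes, $\theta^{\dagger}$ factors through $K^{\bullet}$ up to homotopy. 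This diagram chase, which keeps track of the fact that $f^{-1}F'^{\bullet}$ need not be injective, is where I expect the main obstacle to lie.

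Finally I check that the resulting identification is independent of the chosen resolution and natural in $M^{\bullet}$. This follows because any two injective resolutions are related by a homotopy equivalence in $\Sh_{X/Y}$, which projects to compatible homotopy equivalences in both components.
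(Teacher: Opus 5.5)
Your proof is correct, but it takes a more hands-on route than the paper.

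\textbf{The paper's argument.} It is formal. The lemma is restated as the commutativity of a square of natural transformations of functors $\cD^+(\Sh_{X/Y})\to\cD^+(\cA b)$. The top row is $pr_1\circ R\Gamma_{X/Y}\to pr_2\circ R\Gamma_{X/Y}$, and the bottom row is $R\Gamma_Y\circ pr_1\to R\Gamma_X\circ f^{-1}\circ pr_1\to R\Gamma_X\circ pr_2$. The square commutes because it is the ``derived'' version of the evidently commutative underived square.

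\textbf{Your argument.} You fix an injective resolution $M^{\bullet}\to I^{\bullet}$ in $\Sh_{X/Y}$ of the shape given by Lemma \ref{LemEnoughInjectives}, with components $a:F^{\bullet}\to F'^{\bullet}$ and $b:G^{\bullet}\to J^{\bullet}$. You then compare explicit chain-level models using three facts: $\theta(Y)=\Gamma_X(\theta^{\dagger})\circ(\text{unit})$; $\theta^{\dagger}\circ f^{-1}(a)=b\circ\alpha^{\dagger}$; and maps into the bounded-below injective complex $J^{\bullet}$ are determined up to homotopy by their restriction along a quasi-isomorphism.

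\textbf{What each buys.} The paper's version is short and gives naturality for free. Yours makes explicit a point the paper leaves implicit. The bottom row passes through $\Gamma_X\circ f^{-1}\circ pr_1$, and $R(\Gamma_X\circ f^{-1}\circ pr_1)$ need not agree with $R\Gamma_X\circ f^{-1}\circ pr_1$, because $f^{-1}$ need not send injectives to $\Gamma_X$-acyclic objects. Formally, one uses instead that a transformation out of $R\Gamma_Y\circ pr_1$ is determined by its restriction to $\Gamma_Y\circ pr_1$. Your homotopy factorization of $\theta^{\dagger}$ through $K^{\bullet}$ is exactly that step. The cost is that you must check independence of the resolution and naturality by hand, as you note.

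\textbf{Two harmless slips.}
\begin{enumerate}
\item The sentence saying $f^*$ itself ``may equally be computed as'' $F'^{\bullet}(Y)\to J^{\bullet}(X)$ is wrong as written. $J^{\bullet}$ resolves $G^{\bullet}$, not $f^{-1}F^{\bullet}$, and that chain map models $\alpha^{\dagger}\circ f^*$, not $f^*$.
\item $f^{-1}(a)$ is a quasi-isomorphism, since $f^{-1}$ is exact; it is just not an injective resolution.
\end{enumerate}
Your next paragraph handles both correctly.
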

	\begin{proof}
		The statement is equivalent to the commutativity of the diagram
		\[
		\begin{tikzcd}
		pr_1 \circ R\Gamma_{X/Y} \arrow[rr] \arrow[d, "\cong"] &                                                               & pr_2 \circ R\Gamma_{X/Y} \arrow[d, "\cong"] \\
		R\Gamma_Y \circ pr_1 \arrow[r, "f^*\circ pr_1"]                          & R\Gamma_X \circ f^{-1} \circ pr_1 \arrow[r] & R\Gamma_X \circ pr_2                       
		\end{tikzcd}
		\]
		of natural transformations $\cD^+(\Sh_{X/Y}) \to \cD^+(\cA b)$ (the upper row specializes to $[R\alpha^{\bullet}]$ and the lower row to $\alpha^{\dagger}\circ f^*$). But this commutes as it is the ``derived'' of the commutative diagram
		\[
		\begin{tikzcd}
		pr_1 \circ \Gamma_{X/Y} \arrow[rr] \arrow[d, "\cong"] &                                                              & pr_2 \circ \Gamma_{X/Y} \arrow[d, "\cong"] \\
		\Gamma_Y \circ pr_1 \arrow[r, "f^*\circ pr_1"]                          & \Gamma_X \circ f^{-1} \circ pr_1 \arrow[r] & \Gamma_X \circ pr_2      .                 
		\end{tikzcd}
		\]
	\end{proof}
	
	The lemma above gives a natural triangle
	\[
	R\Gamma_Y(F^{\bullet}) \to[\alpha^{\dagger} \circ f^*] R\Gamma_X(F^{\bullet}) \to \Cone(R\Gamma_{X/Y}(M^{\bullet})) \to[+1],
	\]
	and, after shifting:
	\[
	R\Gamma_{Y,X}(M^{\bullet})\to R\Gamma_Y(F^{\bullet}) \to[\alpha^{\dagger} \circ f^*] R\Gamma_X(F^{\bullet}) \to[+1],
	\]
	Specializing at $M=(F,G,\alpha) \in \Sh_{X/Y}$ and taking cohomology, we get a natural long exact sequence:
	\begin{align}
	\notag 0 \to & H_X^0(Y,M) \to H^0(Y,F) \to[\alpha^{\dagger} \circ f^*] H^0(X,G)   \\
	\label{EqLES} \to & H_X^1(Y,M) \to H^1(Y,F) \to[\alpha^{\dagger} \circ f^*] H^1(X,G) \to \cdots .
	\end{align}
	
	\vskip2mm
	
	\begin{proposition}
		When $F=f_*G$, we have natural isomorphisms
		\begin{equation}\label{Eq:RelCoh2}
		H_X^k(Y,M) \cong H^{k-1}(Y,\tau_{\geq 1}Rf_*G)
		\end{equation}
		for all $k \geq 0$ (with the convention that $H^{-1}(Y,\tau_{\geq 1}Rf_*G)=0$).
	\end{proposition}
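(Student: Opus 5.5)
The plan is to compare the relative cohomology triangle established above with the triangle obtained from the truncation $\tau_{\geq 1}$. Take $M=(f_*G,G,\id)$, so that $\alpha^{\dagger}$ is the counit $f^{-1}f_*G\to G$. By Lemma \ref{LemConeIsDerivedKer} and the discussion preceding the statement, we have
\[
R\Gamma_{Y,X}(M)\cong \Cone\bigl(R\Gamma_Y(f_*G)\to R\Gamma_X(G)\bigr)[-1],
\]
and the map in question is $\alpha^{\dagger}\circ f^*$. Grothendieck's composition theorem gives $R\Gamma_X(G)\cong R\Gamma_Y(Rf_*G)$, since $f_*$ preserves injectives. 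Under this identification, $\alpha^{\dagger}\circ f^*$ should become $R\Gamma_Y$ applied to the natural map $f_*G[0]=\tau_{\leq 0}Rf_*G\to Rf_*G$. Checking this is the main step. I would compute with an injective resolution $G\to J^\bullet$. Then $f_*J^\bullet$ is an injective complex representing $Rf_*G$, and the inclusion $\Ker(f_*J^0\to f_*J^1)=f_*G\hookrightarrow f_*J^\bullet$ realizes the map $\tau_{\leq 0}Rf_*G\to Rf_*G$. Taking $\Gamma_Y$ of an injective resolution of $f_*G$ and mapping it to $f_*J^\bullet$ should recover $f^*$ followed by the counit, by the adjunction $f^{-1}\dashv f_*$. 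To make this functorial rather than merely an isomorphism up to choice, I will construct everything inside $\Ch(\Mor(\cA b))$: the relative injective resolution of $M$ given by Lemma \ref{LemEnoughInjectives} can be taken of the form $(I^\bullet\oplus f_*J^\bullet, J^\bullet)$, and $\Gamma_{X/Y}$ of it is the morphism $\Gamma(Y,I^\bullet)\oplus \Gamma(Y,f_*J^\bullet)\to \Gamma(X,J^\bullet)$.

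Once this is in place, the truncation triangle $\tau_{\leq 0}Rf_*G\to Rf_*G\to\tau_{\geq 1}Rf_*G\to^{+1}$, after applying $R\Gamma_Y$, identifies the cone above with $R\Gamma_Y(\tau_{\geq 1}Rf_*G)$. This identification is canonical because it is realized at the chain level: on the explicit injective model, the cone of the subcomplex inclusion $f_*G\hookrightarrow f_*J^\bullet$ is quasi-isomorphic to the quotient $f_*J^\bullet/f_*G=\tau_{\geq1}$, with the convention from Section \ref{Sec2}. It then follows that $R\Gamma_{Y,X}(M)\cong R\Gamma_Y(\tau_{\geq 1}Rf_*G)[-1]$, and taking $H^k$ gives \eqref{Eq:RelCoh2}. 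For $k=0$, both sides vanish, because $\tau_{\geq1}$ is concentrated in degrees $\geq 1$ and $H^{-1}=0$.

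The hardest point is naturality, namely avoiding the non-functoriality of cones in $\cD^+$. I will handle this by working with the specific functor $\Cone$ of Proposition \ref{PropCone} on $\cD(\Mor(\cA))$, and by exhibiting a natural quasi-isomorphism of chain-level cones in $\Ch(\Mor)$ between the relative resolution and the pair $(\tau_{\leq0}f_*J^\bullet\to f_*J^\bullet)$. Naturality in $G$ then follows from the functoriality of injective resolutions up to homotopy.
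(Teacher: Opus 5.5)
Your argument is correct, but it is organized differently from the paper's.

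\textbf{Your route.} You keep the factorization $\Gamma_{Y,X}=\Ker\circ\Gamma_{X/Y}$ through $\Mor(\cA b)$, established just before the statement. The cone is therefore formed after taking global sections. This forces two extra steps:
\begin{itemize}
\item checking that, under $R\Gamma_X(G)\cong R\Gamma_Y(Rf_*G)$, the map $\alpha^{\dagger}\circ f^*$ is $R\Gamma_Y$ applied to $f_*G\to Rf_*G$;
\item obtaining naturality from explicit injective models, namely the cone of the subcomplex inclusion $f_*G\hookrightarrow f_*J^\bullet$ compared with the quotient complex.
\end{itemize}

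\textbf{The paper's route.} It factors $\Gamma_{Y,X}=\Gamma_Y\circ\Ker\circ p$, where $p(F,G,\alpha)=\alpha$ lands in $\Mor(\Sh_Y)$. The cone is then taken in $\cD^+(\Sh_Y)$. The triangle $f_*G\to Rf_*G\to\Cone(Rp(M))$ is compared directly with the truncation triangle. Naturality comes for free, because $\Hom_{\cD}(f_*G,\tau_{\geq1}Rf_*G[-1])=0$ makes the comparison isomorphism unique. Only after that is $R\Gamma_Y$ applied.

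\textbf{Trade-off.} The paper's route is shorter and avoids both the global-sections compatibility check and the chain-level bookkeeping. Yours is more concrete and yields explicit cochain models, which is closer in spirit to the \v{C}ech computations of Section~11.

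\textbf{Two precision points.}
\begin{itemize}
\item In a relative injective resolution $(I^\bullet\oplus f_*J^\bullet,J^\bullet)$ of $(f_*G,G,\id)$, the differential on the $Y$-component cannot be diagonal. If it were, its cohomology would contain the $R^if_*G$. It must have components $f_*J^n\to I^{n+1}$. With that structure, the kernel complex $I^\bullet$ is itself an injective model of $\tau_{\geq1}Rf_*G[-1]$, which gives the result immediately.
\item In the convention of Section~2, $f_*J^\bullet/f_*G$ is only quasi-isomorphic to $\tau_{\geq1}f_*J^\bullet$, not equal to it.
\end{itemize}
Your shift $[-1]$ is the correct one; the $[1]$ displayed in the paper's proof is a typo.
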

	
	\begin{proof}
		We first express the functor $\Gamma_{Y,X}$ as the composition
		\begin{align*}
		&\Gamma_{Y,X}: \Sh_{X/Y} \to[{p}] \Mor(\Sh_{Y})\to[\Ker] \Sh_{Y} \to[\Gamma_Y] \cA b, 
		\end{align*}
		where ${p}((F,G,\alpha:F \to f_*G)):= \alpha$. All functors appearing are left-exact, so $R\Gamma_{Y,X} \cong R\Gamma_Y \circ \Cone \circ R{p}$, and we get a natural isomorphism
		\[
		R\Gamma_{Y,X}(M^{\bullet})\cong R\Gamma_Y(\Cone(R{p}(M^{\bullet})))
		\]
		for $M^{\bullet} \in \cD^+(\Sh_{X/Y})$.
		
		Analogously as in our discussion on $R\Gamma_{X/Y}$, we have natural derived isomorphisms $pr_1(R{p}(M^{\bullet})) \cong F^{\bullet}$ and $pr_2(R{p}(M^{\bullet})) \cong Rf_*(G^{\bullet})$. In this case, the image of $[R{p}(M^{\bullet})] \in \cD^+(\Mor(\Sh_{Y}))$ in $\Mor(\cD^+(\Sh_{Y}))$ is the composition $F^{\bullet} \to[\alpha^{\bullet}] f_*G^{\bullet} \to Rf_*G^{\bullet}$. 
		So we get a natural triangle
		$
		F^{\bullet} \to Rf_*G^{\bullet} \to \Cone(R{p}(M^{\bullet})) \to[+1]
		$
		in $\cD^+(\Sh_Y)$. Specializing at $M=(f_*G,G,id)$ (with everything supported in degree $0$), this triangle becomes
		\[
		f_*G \to Rf_*G \to \Cone(R{p}(M)) \to[+1].
		\]
		Comparing it with the triangle $f_*G \to Rf_*G\to \tau_{\geq 1}Rf_*G \to[+1]$, we get an isomorphism 
		\[
		\Cone(R{p}(M)) \cong \tau_{\geq 1}Rf_*G.
		\]
		Because $\tau_{\geq 1}Rf_*G$ is supported in degree $\geq 1$ and $f_*G$ is supported in degree $0$, we have $\Hom_{\cD}(f_*G,\tau_{\geq 1}Rf_*G[-1])=0$, so this isomorphism is uniquely defined \cite[IV.1.5]{GM} and thus natural.
		Thus we get natural isomorphisms
		\[
		R\Gamma_{Y,X}(M) \cong R\Gamma_Y(\tau_{\geq 1}Rf_*G)[1],
		\]
		and taking cohomology we get \eqref{Eq:RelCoh2}.
	\end{proof}
	
	\vskip1mm
	
	\noindent {\bf Relative coverings and relative \v{C}ech cochains.} 
	
	\begin{definition}
		A {\em relative covering} of a morphism of schemes $f:X\to Y$ is a triple $(\cV,\cU,\psi)$, where $\cU$ (resp.\ $\cV$) is an étale covering of $Y$ (resp.\ $X$), and $\psi$ is a refinement $\cV \to \cU \times_YX$, where we view $\cU \times_YX$ as a covering of $Y$. 
	\end{definition}
	For a morphism $f$, we define the {\em relative \v{C}ech complex} of a relative sheaf $(F,G,\alpha)\in \Sh_{X/Y}$ with respect to a relative covering $(\cV,\cU,\psi)$ as
	\begin{equation}\label{EqRelCech}
	\check{C}^{\bullet}(\cV,\cU;F,G) \coloneqq \Cone(\check{C}^{\bullet}(\cU/Y,F) \to \check{C}^{\bullet}(\cV/X,G)),
	\end{equation}
	where $\Cone$ denotes the {\em mapping cone} in the category of chain complexes, and the map $\check{C}^{\bullet}(\cU,F) \to \check{C}^{\bullet}(\cV,G)$ is $\psi^*\circ\alpha^{\dagger}\circ f^*$ (this is just the natural pullback). We denote the cohomology of this complex by $\check{H}^k(\cV,\cU;F,G), k\geq 0$. Taking hypercohomology of the exact triangle 
	\[
	\check{C}^{\bullet}(\cU/Y,F) \to \check{C}^{\bullet}(\cV/X,G) \to \check{C}^{\bullet}(\cV,\cU;F,G)  \to[+1],
	\]
	we obtain a natural long exact sequence
	\begin{align}\label{EqLES2}
	0 &\to \check{H}^0(\cV,\cU;F,G) \to \check{H}^0(\cU/Y,F) \to[\alpha^{\dagger}\circ \psi^*\circ f^*] \check{H}^0(\cV/X,G) \\
	\notag &\to \check{H}^1(\cV,\cU;F,G) \to \check{H}^1(\cU/Y,F) \to[\alpha^{\dagger}\circ \psi^*\circ f^*] \check{H}^1(\cV/X,G) \to \cdots.
	\end{align}
	
	A {\em refinement} map between two relative coverings $(\cV_1,\cU_1,\psi_1)$ and $(\cV_2,\cU_2,\psi_2)$ of a morphism $f:X \to Y$ is a pair $\theta=(\theta_X,\theta_Y)$ of refinements $\theta_X: \cV_1 \to \cV_2,\, \theta_Y: \cU_1 \to \cU_2$ commuting with $\psi_1$ and $\psi_2$ as in the following diagram
	\[
	\begin{tikzcd}[column sep=large]
	\cV_1 \arrow[d, "\psi_1"'] \arrow[r, "\theta_X"] & \cV_2 \arrow[d, "\psi_2"] \\
	\cU_1 \times_YX \arrow[r, "\theta_Y \times_YX"]  & \cU_2\times_YX   .       
	\end{tikzcd}
	\]
	
	Clearly, a refinement $\theta$ induces by pullback a refinement map $\check{C}^{\bullet}(\cV_2,\cU_2;F,G) \to[\theta^*] \check{C}^{\bullet}(\cV_1,\cU_1;F,G)$ on \v{C}ech complexes, which in turn induces a refinement map on cohomology.
	The following lemma shows that, just as for the non-relative case, the pullback map induced on relative cohomology among two relative coverings do not depend on the choice of refinement. 
	
	\begin{lemma}\label{LemIndepofRefin}
		Let $\theta,\theta':(\cV_1,\cU_1,\psi_1) \to (\cV_2,\cU_2,\psi_2)$ be two refinements between relative coverings of a morphism $f$. The pullbacks $\theta^*,(\theta')^*:\check{H}^{\bullet}(\cV,\cU;F,G) \to \check{H}^{\bullet}(\cV',\cU';F,G)$ coincide.
	\end{lemma}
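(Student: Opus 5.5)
We reduce the statement to the corresponding fact for ordinary \v{C}ech complexes, using functoriality of mapping cones. The refinements induce pullback maps on the two ingredients of \eqref{EqRelCech}:
\[
\theta_Y^*,(\theta'_Y)^*:\check{C}^{\bullet}(\cU_2/Y,F)\to \check{C}^{\bullet}(\cU_1/Y,F),\qquad \theta_X^*,(\theta'_X)^*:\check{C}^{\bullet}(\cV_2/X,G)\to \check{C}^{\bullet}(\cV_1/X,G).
\]
Write $\phi_i:=\psi_i^*\circ\alpha^{\dagger}\circ f^*$ for the structure maps of the two cones. The relative pullback $\theta^*$ is the cone map $(a,c)\mapsto(\theta_Y^*a,\theta_X^*c)$. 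This is a chain map because $\theta_X^*\phi_2=\phi_1\theta_Y^*$, which follows from the commuting square defining a refinement of relative coverings.

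By \eqref{TheHomotopy}, there are natural homotopies $K_Y:=K_{\theta_Y,\theta'_Y}$ and $K_X:=K_{\theta_X,\theta'_X}$. They satisfy $\theta_Y^*-(\theta'_Y)^*=\d K_Y+K_Y\d$, and similarly on $X$. The key point is that the two homotopies are compatible with the structure maps:
\[
K_X\circ\phi_2=\phi_1\circ K_Y.
\]
We check this by applying the explicit formula \eqref{TheHomotopy} termwise. On the $X$-side, the index map of $\cV_1\to\cV_2$ composed with $\psi_2$ equals $\psi_1$ composed with the index map of $\cU_1\times_YX\to\cU_2\times_YX$. Hence each summand $s_{\tau(i_0),\dots,\tau(i_r),\tau'(i_r),\dots}$ restricts identically on both sides. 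Naturality of $f^*$ and of $\alpha^{\dagger}$ with respect to restriction then gives the identity. This verification is the one step needing care, since it requires that the homotopy be the natural one and not an arbitrary one; it holds precisely because the refinements fit into the commuting squares.

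Given compatibility, we define $K(a,c):=(K_Ya,-K_Xc)$ on the cone, with signs adjusted to the cone differential $\d(a,c)=(\d a,\phi a-\d c)$ or its chosen convention. A direct check shows $\d K+K\d=\theta^*-(\theta')^*$. The cross term contributed by $\phi$ is $K_X\phi_2 a-\phi_1K_Ya$, and it vanishes by compatibility. So $\theta^*$ and $(\theta')^*$ are chain homotopic and agree on $\check{H}^{\bullet}$.

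Alternatively, without matching homotopies on the nose, one can use the five lemma on \eqref{EqLES2}. That approach only shows that $\theta^*$ and $(\theta')^*$ are isomorphisms simultaneously, not that they are equal, so the explicit homotopy above is the intended route.
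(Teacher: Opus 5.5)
Your proposal is correct and is essentially the paper's argument. The paper also takes the natural homotopies $K_{\theta_Y,\theta'_Y}$ and $K_{\theta_X,\theta'_X}$ from \eqref{TheHomotopy}, notes that by naturality they commute with the structure maps $\psi_i^*\circ\alpha^{\dagger}\circ f^*$ (via the commuting square of the refinement, which you check termwise), and assembles them into a homotopy on the cones; your sign bookkeeping, in which the cross term cancels by this compatibility, fills in what the paper leaves as ``by construction''.
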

	\begin{proof}
		Write $\theta=(\theta_X,\theta_Y),\, \theta'=(\theta'_X,\theta'_Y).$ From the well-known analogue statement in the usual ``non-relative'' setting (see \eqref{TheHomotopy}), we have homotopies $K_{\theta_X,\theta'_X}:{\check{C}^{\bullet}(\cU/Y,F)} \to {\check{C}^{\bullet-1}(\cU'/Y,F)}$ and $K_{\theta_Y,\theta'_Y}:{\check{C}^{\bullet}(\cV/X,G)} \to {\check{C}^{\bullet-1}(\cV'/X,G)}$ satisfying
		\[
		\d K_{\theta_X,\theta'_X} + K_{\theta_X,\theta'_X} \d =\theta_{2,Y}^*-\theta_{1,Y}^*,\quad \d K_{\theta_Y,\theta'_Y} + K_{\theta_Y,\theta'_Y} \d = \theta_{2,X}^*-\theta_{1,X}^*.
		\]
		By the naturality of the homotopies $K_{\star,\star}$, the diagram
		\[
		\begin{tikzcd}
		{\check{C}^{\bullet}(\cU/Y,F)} \arrow[d, "{K_{\theta_X,\theta'_X}}"'] \arrow[r, "\psi^*\circ f^*"] & {\check{C}^{\bullet}(\cV/X,G)} \arrow[d, "{K_{\theta_Y,\theta'_Y}}"] \\
		{\check{C}^{\bullet-1}(\cU',F)} \arrow[r, "\psi^*\circ f^*"']                                      & {\check{C}^{\bullet-1}(\cV'/X,G)}              .                     
		\end{tikzcd}
		\]
		commutes. Taking cones of the upper and lower rows, the pair $(K_{\theta_X,\theta'_X},K_{\theta_Y,\theta'_Y})$ defines a chain map $K:C^{\bullet}(\cV,\cU;F,G) \to C^{\bullet-1}(\cV,\cU;F,G)$, and by construction $\d K+K \d=\theta^*-(\theta')^*$.
	\end{proof}
	
	Just as in the non-relative case, we have maps from relative \v{C}ech cohomology to relative étale cohomology:
	
	\begin{proposition}\label{PropNatTransf}
		For $M=(F,G,\alpha) \in \Sh_{X/Y}$, there exist natural maps
		\[
		\check{H}^{\bullet}(\cV,\cU;F,G) \to H^{\bullet}_X(Y;F,G).
		\]
		For fixed $X,Y,\cU,\cV$, these form a natural transformation of $\delta$-functors. Moreover, these morphisms are compatible with the long exact sequences \eqref{EqLES} and \eqref{EqLES2}.
	\end{proposition}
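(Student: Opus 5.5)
The plan is to build the maps at the level of derived categories and obtain \v{C}ech classes by the same mechanism as in the non-relative case (Milne, \S III.2). First step: the relative \v{C}ech complex is itself computed as a kernel-of-morphism complex. For a relative covering $(\cV,\cU,\psi)$, consider the functor
\[
\check\Gamma_{\cV,\cU}:\Sh_{X/Y}\to \Mor(\Ch(\cA b)),\quad (F,G,\alpha)\mapsto \bigl(\check{C}^{\bullet}(\cU/Y,F)\to \check{C}^{\bullet}(\cV/X,G)\bigr),
\]
where the map is $\psi^*\circ\alpha^{\dagger}\circ f^*$. Then $\check{C}^{\bullet}(\cV,\cU;F,G)$ is its mapping cone. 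By Lemma \ref{LemConeIsDerivedKer}, up to the shift by $[-1]$, this cone is what $R\Ker$ produces on this morphism of complexes.

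Second step: construct the comparison map. I would take an injective resolution $M\to I^{\bullet}$ in $\Sh_{X/Y}$. By Lemma \ref{LemEnoughInjectives}, each $I^k$ has the form $(J^k\oplus f_*J'^k,\,J'^k,\,\mathrm{pr})$ with $J^k$ injective on $Y$ and $J'^k$ injective on $X$. Build the double complex $\check{C}^{p}(\cV,\cU;I^q)$ and pass to its totalization. This gives the usual zig-zag
\[
\check{C}^{\bullet}(\cV,\cU;F,G)\to \Tot\,\check{C}^{\bullet}(\cV,\cU;I^{\bullet})\leftarrow \Gamma_{Y,X}(I^{\bullet})[?].
\]
The left map comes from the augmentation $M\to I^{\bullet}$. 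The right map is the inclusion of the $p=0$ column. After correcting degrees, its source is $\Cone(I^{\bullet}(Y)\to I'^{\bullet}(X))$, which Lemma \ref{LemConeIsDerivedKer} identifies with $\Gamma_{Y,X}(I^{\bullet})$ up to the shift.

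The right map should be a quasi-isomorphism. The cleanest way to see this is to use the cone description degreewise, since cone commutes with totalization. For each injective $I^q$, the columns reduce to the non-relative \v{C}ech complexes $\check{C}^{\bullet}(\cU,J^q\oplus f_*J'^q)$ and $\check{C}^{\bullet}(\cV,J'^q)$. These are acyclic in positive degrees because \v{C}ech cohomology of injective sheaves vanishes. Note that $f_*J'^q$ is injective, and its \v{C}ech complex on $\cU$ is the \v{C}ech complex of $J'^q$ on $\cU\times_YX$. So the spectral sequence of the double complex degenerates, and inverting the right map yields $\check{H}^{\bullet}(\cV,\cU;F,G)\to H^{\bullet}_X(Y;M)$.

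Third step: the remaining properties. Independence of the chosen resolution follows from uniqueness of injective resolutions up to homotopy. Naturality in $M$ is standard. The $\delta$-functor property comes from the fact that a short exact sequence of relative sheaves has compatible injective resolutions (horseshoe lemma in $\Sh_{X/Y}$), and for fixed coverings $\check{C}$ is exact on such split-in-each-degree sequences of resolutions. Compatibility with \eqref{EqLES} and \eqref{EqLES2} follows because both long exact sequences arise from the same cone triangle: the construction above is functorial in the morphism category. Precisely, the projections $pr_1,pr_2$ applied to the double complex recover the non-relative \v{C}ech-to-derived maps for $F$ on $Y$ and $G$ on $X$. Together with the lemma identifying the image in $\Mor(\cD^+)$ with $\alpha^{\dagger}\circ f^*$, this gives a map of triangles and hence of long exact sequences.

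The main obstacle I expect is bookkeeping: checking the quasi-isomorphism claim for the $p=0$ column, and in particular that the pullback map between the two \v{C}ech resolutions respects the surjectivity needed for the cone to be computed as a kernel. I would handle this by working throughout with cones rather than kernels, and by using that totalization and cones commute up to canonical isomorphism.
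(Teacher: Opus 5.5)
Your argument is correct and produces the same comparison map as the paper, but it is organized differently.

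\textbf{The paper's route.} It never writes down a resolution. It factors the relevant functor as $\Sh_{X/Y}\xhookrightarrow{\iota}\Psh_{X/Y}\xrightarrow{\check H^0(\cU,\cV;-)}\Mor(\cA b)$. It then asserts, by analogy with \cite[Ch.~III, \S2]{LECcompleto}, that the relative \v{C}ech complex computes $R\check H^0(\cU,\cV;-)$ on relative presheaves. Since $\iota$ preserves injectives, $R\Gamma_{X/Y}=R\check H^0(\cU,\cV;-)\circ R\iota$, and the comparison map is induced by $\iota\to R\iota$. Independence of choices, naturality and compatibility with the triangles of Proposition~\ref{PropCone} then come from the formalism.

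\textbf{Your route.} Your double complex is exactly this composite evaluated on an injective resolution $M\to I^\bullet$. What it adds is a proof of the one non-formal input the paper only cites. Using the explicit shape $(J\oplus f_*J',J',\mathrm{pr})$ of injective relative sheaves from Lemma~\ref{LemEnoughInjectives}, you reduce relative \v{C}ech acyclicity to the classical vanishing of $\check H^{>0}(\cU,-)$ on injective sheaves. The same shape gives degreewise surjectivity of $I^q(Y)\to I'^q(X)$, which is what justifies replacing the kernel by the shifted cone as in Lemma~\ref{LemConeIsDerivedKer}.

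\textbf{The trade-off.} You must check by hand what the paper gets for free: independence of the resolution via homotopy uniqueness, $\delta$-compatibility via the horseshoe lemma, and the map of triangles via $pr_1,pr_2$.

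\textbf{One point to state precisely in your Step 3.} For a fixed covering, $\check C^\bullet(\cU,-)$ is not exact on short exact sequences of sheaves. So the $\delta$-functor claim only makes sense for short exact sequences of relative sheaves whose relative \v{C}ech complexes remain exact, for instance sequences that are exact as relative presheaves. For those sequences, your horseshoe argument gives a morphism of short exact sequences of complexes and hence the required commutativity. The paper's statement leaves this restriction implicit as well.
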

	
	Before proving the proposition, let us recall how these maps are constructed in the non-relative setting. For an étale covering $\cU=\{U_i \to Y\}_{i\in I}$ of a scheme $Y$, we may factor the functor $\Gamma_Y:\Sh_Y \to \cA b$ as
	\[
	\Sh_Y \xhookrightarrow{\iota} \Psh_Y \to[\check{H}^0(\cU,-)] \cA b,
	\]
	with $\check{H}^0(\cU,P):=\Ker\left(\prod_{i \in I}P(U_i) \rightrightarrows \prod_{i,j} P(U_i \times_Y U_j)\right)$. Since $\iota$ sends injectives to injectives, we have $R\Gamma_Y = R\check{H}^0(\cU,-) \circ R\iota$. The derived functor $R\check{H}^0(\cU,-)$ is the functor $\check{C}^{\bullet}(\cU,-)$ associating to a complex its total \v{C}ech complex \cite[Ch.\ III.§2]{LECcompleto},
	and so $R\Gamma_Y = \check{C}^{\bullet}(\cU,-)\circ R \iota$. The natural transformation $\iota \to R\iota$ then defines a natural transformation $\check{C}^{\bullet}(\cU,-)\circ \iota \to R\Gamma_Y$ of functors $\Ch^+(\Sh_Y) \to \cD^+(\cA b)$. Equivalently, it gives natural derived maps
	\[
	\check{C}^{\bullet}(\cU,F^{\bullet}) \to R\Gamma_Y(F^{\bullet})
	\]
	for $(F^{\bullet}) \in \Ch^+(\Sh_Y)$. Taking cohomology and specializing to the case where $F^{\bullet}=F$ is supported in degree $0$, we get natural maps:
	\[
	\check{H}^{\bullet}(\cU,F) \to H^{\bullet}(Y,F).
	\]
	
	\begin{proof}[Proof of Proposition \ref{PropNatTransf}]
		Just as the derived functor $R\check{H}^0(\cU,-)$ is the functor $\check{C}^{\bullet}(\cU,-)$ associating to a complex its total \v{C}ech complex, the content of \cite[Ch.\ III.§2]{LECcompleto} also shows that the derived functor $R\check{H}^0(\cU,\cV;-)$ is the functor associating to a complex of relative sheaves its total relative \v{C}ech complex defined via \eqref{EqRelCech}.
		
		The same argument as for the non-relative case works when applied to the composition
		\[
		\Sh_{X/Y} \xhookrightarrow{\iota} \Psh_{X/Y} \to[\check{H}^0(\cU,\cV;-)] \Mor(\cA b),
		\]
		with $\check{H}^0(\cU,\cV;F,G,\alpha):=(\check{H}^0(\cU,F) \to[\check{\alpha}] \check{H^0}(\cV,G)),\,\check{\alpha}:=\alpha^{\dagger}\circ \psi^*\circ f^*$, using that the derived functor $R\check{H}^0(\cU,\cV;-)$ is the functor associating to a complex of relative sheaves its total relative \v{C}ech complex defined via \eqref{EqRelCech}.
	\end{proof}

	\section{Nakaoka's theorem}\label{AppC}
	
	The following theorem is essentially due to Nakaoka \cite{Nakaoka} (see also Leary's subsequent work \cite[Theorem 2.1]{Leary}), but we present it with a slightly modified statement, with profinite groups instead of standard ones, and with more general coefficients. We give a simple full proof below, freely adapted from the original argument of Nakaoka.
	
	\begin{theorem}[Nakaoka]\label{ThmNakaoka}
		Let $G$ be a profinite group, and $R$ be a constant commutative ring. Let $Y$ be a finite set on which $G$ acts continuously by permutations, and $H$ be a profinite group such that $H^{\bullet}(H,R)$ is $R$-projective. Then for any $R[G]$-module $M$ that is flat over $R$, the Hochschild--Serre spectral sequence
		\[
		H^p(G,H^q(H^Y,M)) \Rightarrow H^{p+q}(H^Y \rtimes G,M)
		\]
		degenerates at the second page and the additive extensions in the reconstruction of the cohomology from the $E_{\infty}$-page are all trivial. Moreover, for fixed $H$ and $R$, the additive extensions can be reconstructed in a way that is natural in $Y,G$ and $M$.
	\end{theorem}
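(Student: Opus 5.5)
We follow Nakaoka's idea: build an explicit cochain-level section of the edge/filtration maps using a $G$-equivariant model for the cohomology of $H^Y$, so that every class on the $E_2$-page lifts to a cocycle on $H^Y\rtimes G$.

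First we produce a cochain model for $H^\bullet(H,R)$. Choose a cochain complex $C^\bullet:=C^\bullet(H,R)$ of continuous inhomogeneous cochains. Since $H^\bullet(H,R)$ is $R$-projective, we choose $R$-linear representative cocycles $s:H^\bullet(H,R)\to Z^\bullet(H,R)$. This gives a quasi-isomorphism $\iota:(H^\bullet(H,R),0)\to C^\bullet$. Because $R$-projectivity makes the Künneth formula hold without Tor terms, the tensor power $C^{\otimes Y}$ (via external cross products) computes $H^\bullet(H^Y,R)\cong H^\bullet(H,R)^{\otimes Y}$. The key point is that $\iota^{\otimes Y}:H^\bullet(H,R)^{\otimes Y}\to C^{\otimes Y}$ is a quasi-isomorphism that is \emph{strictly} $G$-equivariant. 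Here $G$ permutes the tensor factors through its action on $Y$, with the Koszul sign, and the signs match on both sides. Tensoring with the $R$-flat module $M$ preserves the quasi-isomorphism and gives
\[
H^\bullet(H,R)^{\otimes Y}\otimes_R M\to C^{\otimes Y}\otimes_R M\to C^\bullet(H^Y,M),
\]
where the last map is the external cup product into cochains of $H^Y$. This last map is $G$-equivariant at the cochain level, because the cross product of inhomogeneous cochains commutes with permutations of factors. Flatness of $M$ together with projectivity of $H^\bullet(H,R)$ gives the universal coefficient identification $H^q(H^Y,M)\cong (H^\bullet(H,R)^{\otimes Y}\otimes M)^q$.

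Next we lift classes from $E_2$. Write the cochains of the semidirect product as a double complex $C^p(G,C^q(H^Y,M))$, which is the standard filtration giving Hochschild--Serre (modulo the usual identification, via Hochschild--Serre's original construction). Take a class in $H^p(G,H^q(H^Y,M))$ and represent it by a $G$-cocycle $c$ with values in $H^\bullet(H,R)^{\otimes Y}\otimes M$, a complex with zero differential. Applying the strictly equivariant chain map above turns $c$ into a $G$-cocycle with values in $Z^q(H^Y,M)$. This is a genuine cocycle of the total complex, and hence a cocycle on $H^Y\rtimes G$, which restricts to $c$ on $E_2^{p,q}$. Therefore every $E_2$ class is a permanent cycle, all differentials vanish, and the sequence degenerates.

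Finally, this construction gives an additive map $\bigoplus_{p+q=k}E_2^{p,q}\to H^k(H^Y\rtimes G,M)$ compatible with the filtration, which splits every extension. It depends only on the fixed section $s$, that is only on $H$ and $R$, so it is natural in $Y$ (for equivariant maps of finite sets), in $G$ and in $M$.

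The main obstacle is the second step. One must check that the external product map into cochains of the semidirect product is really a map of double complexes: the $G$-action on $C^\bullet(H^Y,M)$ coming from conjugation in $H^Y\rtimes G$ has to agree on the nose with permutation of the tensor factors. Signs and the twisting by $M$ also need care. We would first handle this by working with homogeneous cochains, where the permutation action is manifestly strict.
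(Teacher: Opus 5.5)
\textbf{There is a genuine gap, in the step you yourself flag as the main obstacle.} Your overall strategy matches the paper's: take $s^{\otimes Y}$ of an $R$-linear section $s$ of $Z^\bullet(H,R)\to H^\bullet(H,R)$ to obtain a $G$-equivariant section for $H^Y$. However, the claim that the cochain-level cross product $\phi\colon C^\bullet(H,R)^{\otimes Y}\otimes_R M\to C^\bullet(H^Y,M)$ is strictly $G$-equivariant is false. Cross and cup products of standard cochains are graded-commutative only up to homotopy. Passing to homogeneous cochains does not help: the permutation action on $C^\bullet(H^Y,M)$ is strict there, but the Alexander--Whitney cross product $(f_1\times f_2)\bigl((a_i,b_i)_i\bigr)=f_1(a_0,\dots,a_p)\,f_2(b_p,\dots,b_{p+q})$ is not symmetric.

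Concretely, take $H=\Z/2$, $R=\F_2$, $Y=\{1,2\}$ with $G=\Z/2$ acting by the swap, and let $x\in C^1(H,\F_2)$ be the nontrivial character. Then $(x\times x)\bigl((a_1,b_1),(a_2,b_2)\bigr)=x(a_1)x(b_2)$, while its pullback by the swap is $x(b_1)x(a_2)$, even though $x\otimes x$ is swap-invariant. Hence $\phi\circ c$ is not a $G$-cocycle. Its $G$-coboundary is only an $H^Y$-coboundary (of cup-$1$ type), and correcting it would require a coherent system of higher homotopies. Those corrections are exactly where nonzero differentials $d_r$ could arise. So as written, neither the lifting to total cocycles nor the degeneration and splitting is established.

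\textbf{The fix is to never map into the standard cochains of $H^Y$; this is what the paper does.}
\begin{itemize}
\item Use $D^\bullet:=X^\bullet(H,R)^{\otimes Y}\otimes_R M$, the tensor power of the standard $H$-acyclic resolution of $R$, as a resolution of $M$ by $(H^Y\rtimes G)$-modules. Here $G$ acts strictly by permuting tensor factors, with Koszul signs.
\item Prove that $D^\bullet$ is $H^Y$-acyclic and that $(D^\bullet)^{H^Y}=C^\bullet(H,R)^{\otimes Y}\otimes_R M$. This is where $R$-flatness of $(X^n)^H$ and of $M$ is used; see Lemma~\ref{TheLemma}.
\item Consequently $C^\bullet(H,R)^{\otimes Y}\otimes_R M$ itself represents $R\Gamma(H^Y,M)$ in the derived category of $G$-modules.
\item The map $s^{\otimes Y}\otimes M$ is then a strictly $G$-equivariant quasi-isomorphism from the zero-differential complex $H^\bullet(H,R)^{\otimes Y}\otimes M\cong H^\bullet(H^Y,M)$. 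Künneth is needed only on cohomology.
\item Viewing Hochschild--Serre as the hypercohomology spectral sequence of $G$ applied to this complex (Lemma~\ref{LemSD}), a complex with zero differentials gives degeneration at $E_2$ and split extensions, naturally in $Y$, $G$ and $M$.
\end{itemize}
This route also removes the need for your unproved identification of $C^\bullet(H^Y\rtimes G,M)$ with the double complex $C^p(G,C^q(H^Y,M))$.
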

	
	\begin{lemma}\label{LemSD}
		Let $H \rtimes G$ be a semi-direct product of profinite groups, and $M$ be a $G$-module. Let $M \to M^{\bullet}$ be a resolution in acyclic $H$-modules (where we are endowing $M$ with the trivial $H$-action) that is endowed with a $G$-action. Assume further that, for all $p \geq 0$, the surjection 
		\[
		Z^p((M^{\bullet})^H) \to H^p((M^{\bullet})^H) = H^p(H,M)
		\]
		splits as a map of $G$-modules. Then the Hochschild--Serre spectral sequence 
		\[
		H^p(G,H^q(H,M)) \Rightarrow H^{p+q}(H \rtimes G,M)
		\]
		degenerates at the second page and the additive extensions in the reconstruction of the cohomology from the $E_{\infty}$-page are all trivial. Moreover, the additive extensions can be reconstructed in a way that is natural in all the data ($H,G,M,$ the resolution $M \to M^{\bullet}$ and the splittings of $Z^p((M^{\bullet})^H) \to H^p((M^{\bullet})^H)$).
	\end{lemma}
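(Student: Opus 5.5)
The plan is to build an explicit additive splitting at the level of cochains and compare the Hochschild--Serre spectral sequence with that of a \emph{split} double complex. Form the double complex $C^{p,q}:=C^p(G,(M^{q})^H)$, whose total complex computes $H^{\bullet}(H\rtimes G,M)$. This is the standard way the Hochschild--Serre sequence arises once the resolution $M\to M^{\bullet}$ by $H$-acyclic modules carries a compatible $G$-action. Filtering by $p$ gives exactly the spectral sequence in the statement, with $E_1^{p,q}=C^p(G,H^q((M^\bullet)^H))$ after first taking vertical cohomology. The identification $E_2^{p,q}=H^p(G,H^q(H,M))$ is the routine part; I take it from the general theory.

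The key step uses the assumed $G$-equivariant splittings $s_q:H^q(H,M)\to Z^q((M^\bullet)^H)$. These assemble into a morphism of complexes of $G$-modules
\[
\textstyle\bigoplus_q H^q(H,M)[-q]\longrightarrow (M^{\bullet})^H
\]
with zero differential on the source. The map commutes with differentials because the image of $s_q$ consists of cocycles, and it is a quasi-isomorphism because it induces the identity on cohomology. Applying $C^{\bullet}(G,-)$ degreewise gives a morphism of double complexes from $D^{p,q}:=C^p(G,H^q(H,M))$, which has zero vertical differential, to $C^{p,q}$. This morphism respects the $p$-filtrations. On $E_1$ pages it is an isomorphism, since both equal $C^p(G,H^q(H,M))$. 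Hence it is an isomorphism from $E_2$ onward and on the abutments.

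The spectral sequence of $D$ degenerates at $E_2$, because $D$ splits as a direct sum over $q$ of complexes $C^{\bullet}(G,H^q(H,M))[-q]$. Its abutment is therefore $\bigoplus_{p+q=n}H^p(G,H^q(H,M))$. Transporting this across the comparison isomorphism gives the degeneration, and the transported direct-sum decomposition is the trivialization of the additive extensions. Naturality comes for free: each step (the double complex, the map induced by the $s_q$, and the comparison) is functorial in $H,G,M$, the resolution, and the splittings.

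The main obstacle is the convergence and comparison argument in the profinite setting. One must check that continuous cochains $C^p(G,-)$ on discrete modules are exact in the coefficient variable, so that a quasi-isomorphism of bounded-below complexes yields one on total complexes. One must also verify that the filtration-preserving map is an isomorphism on $E_1$, so that the comparison theorem applies to these first-quadrant double complexes. I would handle this by the standard column-wise argument: continuous cochains commute with direct limits, and $C^p(G,-)$ is exact on discrete modules.
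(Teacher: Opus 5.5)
Your argument is correct and is essentially the paper's proof. Both rest on the same observation: the $G$-equivariant splittings give a quasi-isomorphism $\bigoplus_q H^q(H,M)[-q]\to (M^\bullet)^H$ of complexes of $G$-modules, so the spectral sequence becomes that of a complex with zero differentials, which degenerates at $E_2$ with split filtrations. The paper phrases this through $RF_H(M)$ and the hypercohomology spectral sequence, while you realize it concretely with the double complex $C^p(G,(M^q)^H)$ and the comparison theorem for the column filtration.
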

	
	\begin{proof}
		The Hochschild-Serre spectral sequence for $H \rtimes G$ is the Grothendieck spectral sequence associated to the composition
		\[
		(H \rtimes G)-\operatorname{Mod} \to[F_H] G-\operatorname{Mod} \to[F_G] \cA b,
		\]
		where the first functor is $F_H:M \mapsto M^H$, and the second is $F_G:M \mapsto M^G$. 
		
		The resolution $M \to M^{\bullet}$ is a right resolution of $M$ in $(H \rtimes G)$-modules that are $H$-acyclic, so it computes $RF_H$ and we have $RF_H(M)=((M^{\bullet})^H)$. The splittings $s_p: H^p(H,M) = H^p((M^{\bullet})^H) \to Z^p((M^{\bullet})^H),\,p \geq 0$ given by our hypothesis provide a chain map of complexes of $G$-modules
		\[
		\begin{tikzcd}
		\cdots \arrow[r] & H^{p-1}((M^{\bullet})^H) \arrow[r, "0"] \arrow[d, "s_{p-1}"] & H^{p}((M^{\bullet})^H) \arrow[d, "s_p"] \arrow[r, "0"] & H^{p+1}((M^{\bullet})^H) \arrow[d, "s_{p+1}"] \arrow[r] & \cdots \\
		\cdots \arrow[r] & C^{p-1}((M^{\bullet})^H) \arrow[r, "\partial"]               & C^{p}((M^{\bullet})^H) \arrow[r, "\partial"]           & C^{p+1}((M^{\bullet})^H) \arrow[r]                      & \cdots
		\end{tikzcd}
		\]
		that is a quasi-isomorphism by construction, and so in fact $RF_H(M)\cong (H^{\bullet}(H,M))$, where the complex $H^{\bullet}(H,M)$ has trivial differentials. The Grothendieck spectral sequence under consideration coincides with the hypercohomology spectral sequence  ${H}^p(G, H^q(RF_H(M))) \Rightarrow {H}^{p+q}(G,RF_H(M))$, and the hypercohomology spectral sequence of a complex with trivial differentials degenerates at the second page and gives trivial extensions at the $E_{\infty}$-page.
	\end{proof}
	
	\begin{lemma}\label{TheLemma}
		Let $R$ be a commutative ring, $G_1,G_2$ be profinite groups, and, for $i=1,2$, $M_i$ be $R[G_i]$-modules. If $M_2$ and $(M_1)^{G_1}$ are $R$-flat, then $(M_1 \otimes_R M_2)^{G_1 \times G_2}=M_1^{G_1} \otimes_R M_2^{G_2}$. 
		If, additionally, $M_1$ and $M_2$ are $G_1$- and $G_2$-acyclic, respectively, then $M_1 \otimes_R M_2$ is $G_1 \times G_2$-acylic.
	\end{lemma}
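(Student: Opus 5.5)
We verify both claims directly from the definitions, using that $R$-flatness lets $\otimes_R$ commute with the finite limits that compute invariants, and then use a Künneth argument for the acyclicity.

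For the first claim, write $N:=M_1\otimes_R M_2$ and compute invariants in two stages, since $(M_1\otimes_R M_2)^{G_1\times G_2}=\bigl((M_1\otimes_R M_2)^{G_1}\bigr)^{G_2}$.

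\emph{Invariants under $G_1$.} Because $M_1$ is a discrete module, $M_1^{G_1}=\varinjlim_U M_1^{U}$, where $U$ runs over the open normal subgroups of $G_1$. For each such $U$, the module $M_1^{G_1}$ is the kernel of the finite product map
\[
M_1^{U}\to\prod_{g\in G_1/U}M_1,\qquad m\mapsto (gm-m)_g .
\]
Tensoring this left exact sequence with the flat module $M_2$ preserves it. The group $G_1$ acts only on the first factor, so this identifies $(M_1\otimes_R M_2)^{G_1}$ with $M_1^{G_1}\otimes_R M_2$. Here we also use that filtered colimits commute with $\otimes$.

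\emph{Invariants under $G_2$.} The same argument applies with the roles of the factors swapped. Now $G_2$ acts on the second factor, and we use that $M_1^{G_1}$ is flat. This gives
\[
(M_1^{G_1}\otimes_R M_2)^{G_2}=M_1^{G_1}\otimes_R M_2^{G_2},
\]
which proves the first claim.

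For acyclicity, take the standard inhomogeneous (or homogeneous continuous) cochain complexes $C^\bullet(G_i,M_i)$. There is a natural map
\[
C^\bullet(G_1,M_1)\otimes_R C^\bullet(G_2,M_2)\to C^\bullet(G_1\times G_2,\,M_1\otimes_R M_2),
\]
the cross product, which we aim to show is a quasi-isomorphism.

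The cleaner route is to use homogeneous cochains, $C^p(G,M)=\mathrm{Map}_{cont}(G^{p+1},M)^G$. The relevant bicomplex is
\[
\mathrm{Map}(G_1^{p+1}\times G_2^{q+1},\,M_1\otimes M_2)^{G_1\times G_2}.
\]
The first claim identifies this with
\[
\mathrm{Map}(G_1^{p+1},M_1)^{G_1}\otimes\mathrm{Map}(G_2^{q+1},M_2)^{G_2}.
\]
To apply the first claim here, we need that $\mathrm{Map}(G_2^{q+1},M_2)$ is flat, which holds because it is a filtered colimit of finite powers of $M_2$. We also need that $\mathrm{Map}(G_1^{p+1},M_1)^{G_1}\cong\mathrm{Map}(G_1^{p},M_1)$ is flat; I am unsure here and would instead reduce to that directly. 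The totalization computes $H^\bullet(G_1\times G_2,M_1\otimes M_2)$ via the Eilenberg–Zilber comparison between the diagonal and the product resolution.

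By hypothesis, each complex $C^\bullet(G_i,M_i)$ has cohomology concentrated in degree $0$. Using flatness of the terms, the Künneth spectral sequence then gives that the tensor product has cohomology $M_1^{G_1}\otimes M_2^{G_2}$ in degree $0$ and zero in higher degrees.

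The main obstacle is this last flatness bookkeeping. The Künneth step needs flat cocycle and coboundary modules. I would get this by resolving instead: choose the degree-zero truncations and note that $C^\bullet(G_2,M_2)$ is an exact complex of flat modules (augmented by $M_2^{G_2}$, which is only assumed flat in the first claim). If this fails, I would add the hypothesis that $M_2^{G_2}$ is flat, which holds in the intended application (Nakaoka's theorem), where all modules are free over $R$.
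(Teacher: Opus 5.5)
Your argument for the invariants is fine. It is essentially the $n=0$ case of the paper's observation that $H^n(G,M\otimes_R N)=H^n(G,M)\otimes_R N$ for $R$-flat $N$ with trivial $G$-action. One slip: it is $M_1=\bigcup_U M_1^U$, not $M_1^{G_1}$. Flatness of $M_2$ is also what makes $M_1^U\otimes_R M_2\to M_1\otimes_R M_2$ injective, so your kernels really are the invariants.

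The acyclicity part, however, is not proved. Your last step invokes a Künneth spectral sequence for $C^\bullet(G_1,M_1)\otimes_R C^\bullet(G_2,M_2)$, and this is not available under the hypotheses.
\begin{itemize}
\item The terms $C^p(G_1,M_1)$ need not be flat, because $M_1$ is not assumed flat.
\item Even termwise flatness of a nonnegatively graded cochain complex with cohomology only in degree $0$ does not let you commute cohomology with $\otimes_R$. Over $R=\Z/4\Z$, the complex $\Z/4\Z\to\Z/4\Z\to\cdots$ with all differentials multiplication by $2$ is free and exact in positive degrees. After $\otimes_R\Z/2\Z$ all its differentials vanish.
\end{itemize}
Your fallback of assuming $M_2^{G_2}$ flat changes the statement, and it is not needed.

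\emph{The missing idea} is to take cohomology of the double complex in the right order, so that each of the two flatness hypotheses is used exactly once.

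Your identification of the bicomplex with $C^p(G_1,M_1)\otimes_R C^q(G_2,M_2)$ needs no flatness at all. Invariants of the coinduced modules $\mathrm{Map}(G^{p+1},-)$ are given by evaluation. Locally constant maps on a product of profinite sets with values in $A\otimes_R B$ are tensor products of locally constant maps.

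Now fix $q$ and set $N=C^q(G_2,M_2)$. It is $R$-flat, being a filtered colimit of finite direct sums of copies of $M_2$. Hence the cohomology in the $p$-direction is $H^p(G_1,M_1)\otimes_R N$. This vanishes for $p>0$ and equals $M_1^{G_1}\otimes_R N$ for $p=0$.

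So the total complex is quasi-isomorphic to $M_1^{G_1}\otimes_R C^\bullet(G_2,M_2)$. Its cohomology is $M_1^{G_1}\otimes_R H^q(G_2,M_2)$ by flatness of $M_1^{G_1}$, which is zero for $q>0$.

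This is precisely the paper's proof. The paper phrases it as the Hochschild--Serre spectral sequence $H^p(G_2,H^q(G_1,M_1\otimes_R M_2))\Rightarrow H^{p+q}(G_1\times G_2,M_1\otimes_R M_2)$. It combines this with the flat-coefficient formula, applied once with $(G,M,N)=(G_1,M_1,M_2)$ and once with $(G_2,M_2,M_1^{G_1})$. No Künneth or Eilenberg--Zilber comparison is required.
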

	\begin{proof}
		In general, if $G$ is a profinite group, $M$ an $R[G]$-module, and $N$ is an $R$-flat constant module, then $H^n(G,M \otimes_RN)=H^n(G,M) \otimes_RN$ for all $n \geq 0$ (as follows by tensoring  by $N$ the complex $C^{\bullet}(G,M)$ computing the cohomology of $M$). The case $n=0$ gives $(M\otimes_RN)^G=M^G \otimes_RN$. So $(M_1 \otimes_R M_2)^{G_1}=M_1^{G_1} \otimes_R M_2$ and $(M_1^{G_1} \otimes_R M_2)^{G_2}=M_1^{G_1} \otimes_R M_2^{G_2}$, which combine to the first statement. 
		
		For the second part, consider the Hochschild-Serre spectral sequence $H^p(G_2,\linebreak H^q(G_1, M_1 \otimes_R M_2)) \Rightarrow H^{p+q}(G_1 \times G_2, M_1 \otimes_R M_2)$. If $q>0$, we have $H^q(G_1,M_1 \otimes_R M_2)=H^q(G_1,M_1) \otimes_R M_2=0$. If $q=0$ and $p>0$, we have $H^p(G_2,H^q(G_1,M_1 \otimes_R M_2))=H^p(G_2,(M_1)^{G_1}\otimes_RM_2)=(M_1)^{G_1}\otimes_R H^p(G_2,M_2)=0$. So $H^p(G_2,H^q(G_1,\linebreak M_1 \otimes_R M_2))=0$ unless $p=q=0$, and the spectral sequence gives the statement.
	\end{proof}
	
	\begin{proof}[Proof of Theorem \ref{ThmNakaoka}]
		The proof of Theorem \ref{ThmNakaoka} is an application of Lemma \ref{LemSD}.
		
		Let $R \to C^{\bullet}$ be any resolution into $H$-acyclic flat $R$-modules such that $(C^n)^H$ is $R$-flat for every $n \geq 0$. (For instance, take the standard resolution $R \to X^{\bullet}(H,R)$.) Then $R \to D^{\bullet}:=(C^{\bullet})^{\otimes Y}$ is a resolution in flat $R$-modules, and is $H^Y$-acylic by Lemma \ref{TheLemma}. 
		Let $\mathbf{Z}(C^{\bullet}):=\oplus_p Z^p((C^{\bullet})^H)$ and $\mathbf{H}(C^{\bullet}):=\oplus_p H^p((C^{\bullet})^H)$. Define analogously $\mathbf{Z}(D^{\bullet})$ and $\mathbf{H}(D^{\bullet})$. The resolutions $C^{\bullet}$ and $D^{\bullet}$ are, respectively, $H$- and $H^Y$-acyclic, so $\mathbf{H}(C^{\bullet})= \oplus_{p \geq 0} H^{p}(H,R)$ and $\mathbf{H}(D^{\bullet})=\oplus_{p \geq 0} H^{p}(H^Y,R)$. 
		
		By assumption, all the graded pieces of $\mathbf{H}(C^{\bullet})$ are $R$-projective, hence the natural surjection $\mathbf{Z}(C^{\bullet})\to\mathbf{H}(C^{\bullet})$ admits a graded section $s:\mathbf{H}(C^{\bullet})\to\mathbf{Z}(C^{\bullet})$.
		Consider now the commutative diagram
		\[
		\begin{tikzcd}
		\mathbf{H}(C^{\bullet})^{\otimes Y} \arrow[d, "\cong"] \arrow[r, "s^{\otimes Y}"] & \mathbf{Z}(C^{\bullet})^{\otimes Y} \arrow[d] \\
		\mathbf{H}(D^{\bullet}) \arrow[r, dashed]                                         & \mathbf{Z}(D^{\bullet})                      
		\end{tikzcd}
		\]
		where the vertical maps are cross-products. The left one is an isomorphism by K\"unneth's theorem \cite[Theorem 3.6.3]{Weibel}, which applies since the $R$-module $\mathbf{H}(C^{\bullet})$ is $R$-projective and hence $R$-flat. The bottom composition is now a graded section of $\mathbf{Z}(D^{\bullet}) \to \mathbf{H}(D^{\bullet})$ that is equivariant under permutations of $Y$, and so in particular under the action of $G$.
		
		So the resolution $R \to D^{\bullet}$ is a resolution in $H^Y$-acyclics with a $G$-action, and the surjection $\mathbf{Z}(D^{\bullet}) \to \mathbf{H}(D^{\bullet})$ admits a $G$-equivariant graded section. Applying the exact functor $\otimes_RM$ to the resolution $R \to D^{\bullet}$, we obtain a resolution of $M \to D^{\bullet} \otimes_RM$ with the same properties, and we conclude by Lemma \ref{LemSD}.
		
		The naturality holds since, for fixed $H$ and $R$, both $C^{\bullet}$ and $s$ can be fixed, and the rest of the proof is natural in all the data.
	\end{proof}
	
	\bibliographystyle{alpha}
	\bibliography{homspaces}
	
\end{document}